\title{Floer homology and non-fibered knot detection}
\author[John A. Baldwin]{John A. Baldwin}
\address{Department of Mathematics \\ Boston College}
\email{john.baldwin@bc.edu}
\author[Steven Sivek]{Steven Sivek}
\address{Department of Mathematics\\Imperial College London}
\email{s.sivek@imperial.ac.uk}
\thanks{JAB was supported by NSF FRG Grant DMS-1952707.}
\newtheorem*{rep@theorem}{\rep@title}
\newcommand{\newreptheorem}[2]{%
\newenvironment{rep#1}[1]{%
 \def\rep@title{#2 \ref{##1}}%
 \begin{rep@theorem}}%
 {\end{rep@theorem}}}
\newtheorem {theorem}{Theorem}
\newtheorem {lemma}[theorem]{Lemma}
\newtheorem {proposition}[theorem]{Proposition}
\numberwithin{equation}{section}
\numberwithin{theorem}{section}
\theoremstyle{definition}
\newtheorem{definition}[theorem]{Definition}
\newtheorem{remark}[theorem]{Remark}
\newtheorem*{remark*}{Remark}
\newlist{pcases}{enumerate}{1}
\setlist[pcases]{
  label=\bf{Case~\arabic*:}\protect\thiscase.~,
  ref=\arabic*,
  align=left,
  labelsep=0pt,
  leftmargin=0pt,
  labelwidth=0pt,
  parsep=0pt
}
\newcommand{\case}[1][]{%
  \if\relax\detokenize{#1}\relax
    \def\thiscase{}%
  \else
    \def\thiscase{~#1}%
  \fi
  \item
}
\newcommand{\Z}{\mathbb{Z}}
\newcommand{\R}{\mathbb{R}}
\newcommand{\F}{\mathbb{F}}
\newcommand{\Q}{\mathbb{Q}}
\newcommand{\spc}{\operatorname{Spin}^c}
\newcommand{\spinc}{\mathfrak{s}}
\newcommand{\spint}{\mathfrak{t}}
\newcommand\cf{\mathit{CF}}
\newcommand\cfhat{\widehat{\cf}}
\newcommand\hfk{\mathit{HFK}}
\newcommand\hfkhat{\widehat{\hfk}}
\newcommand\cfkinfty{\mathit{CFK}^\infty}
\newcommand\SFH{\mathit{SFH}}
\newcommand\slfrak{\mathfrak{sl}}
\DeclareFontFamily{U}{mathx}{\hyphenchar\font45}
\DeclareFontShape{U}{mathx}{m}{n}{
      <5> <6> <7> <8> <9> <10>
      <10.95> <12> <14.4> <17.28> <20.74> <24.88>
      mathx10
      }{}
\DeclareSymbolFont{mathx}{U}{mathx}{m}{n}
\DeclareMathAccent{\widecheck}{0}{mathx}{"71}
\newcommand{\inr}{\operatorname{int}}
\newcommand{\hfhat}{\widehat{\mathit{HF}}}
\newcommand{\hfp}{\mathit{HF}^+}
\newcommand{\gr}{\operatorname{gr}}
\newcommand{\tr}{\operatorname{tr}}
\newcommand{\coker}{\operatorname{coker}}
\newcommand{\pt}{\mathrm{pt}}
\newcommand{\mirror}[1]{\overline{#1}}
\newcommand{\Kh}{\mathit{Kh}}
\newcommand{\Khr}{\overline{\Kh}}
\newcommand{\dcover}{\Sigma_2}
\newcommand{\Wh}{\operatorname{Wh}}
\DeclareFontFamily{OMX}{MnSymbolE}{}
\DeclareSymbolFont{MnLargeSymbols}{OMX}{MnSymbolE}{m}{n}
\DeclareFontShape{OMX}{MnSymbolE}{m}{n}{
    <-6>  MnSymbolE5
   <6-7>  MnSymbolE6
   <7-8>  MnSymbolE7
   <8-9>  MnSymbolE8
   <9-10> MnSymbolE9
  <10-12> MnSymbolE10
  <12->   MnSymbolE12
}{}
\DeclareFontShape{OMX}{MnSymbolE}{b}{n}{
    <-6>  MnSymbolE-Bold5
   <6-7>  MnSymbolE-Bold6
   <7-8>  MnSymbolE-Bold7
   <8-9>  MnSymbolE-Bold8
   <9-10> MnSymbolE-Bold9
  <10-12> MnSymbolE-Bold10
  <12->   MnSymbolE-Bold12
}{}
\let\llangle\@undefined
\let\rrangle\@undefined
\DeclareMathDelimiter{\llangle}{\mathopen}%
                     {MnLargeSymbols}{'164}{MnLargeSymbols}{'164}
\DeclareMathDelimiter{\rrangle}{\mathclose}%
                     {MnLargeSymbols}{'171}{MnLargeSymbols}{'171}
\tikzset{every picture/.style=thick}
\tikzset{link/.style = { white, double = black, line width = 1.75pt, double distance = 1.25pt, looseness=1.75 }}
\tikzset{linkred/.style = { white, double = red, line width = 1.75pt, double distance = 1.25pt, looseness=1.75 }}
\tikzset{thinlink/.style = { white, double = black, line width = 1.25pt, double distance = 0.75pt, looseness=1.75 }}
\tikzset{link2/.style = { white, double = blue, line width = 1.75pt, double distance = 1.25pt, looseness=1.75 }}
\tikzset{crossing/.style = {draw, circle, dotted, minimum size=0.5cm, inner sep=0, outer sep=0}}
\pgfplotsset{compat=1.12}
\begin{document}

\begin{abstract}
We prove for the first time that knot Floer homology and Khovanov homology can detect non-fibered knots, and that HOMFLY homology detects infinitely many  knots; these theories were previously known to detect a mere six knots, all fibered.  These results rely on our main technical theorem, which gives a complete classification of genus-1 knots in the 3-sphere whose knot Floer homology in the top Alexander grading is 2-dimensional. We  discuss applications of this classification  to problems in Dehn surgery which are carried out in two sequels. These include a proof that $0$-surgery characterizes infinitely many knots,  generalizing results of Gabai from his 1987 resolution of the Property R Conjecture.\end{abstract}

\maketitle
\section{Introduction} \label{sec:intro}

A fundamental question for any knot invariant asks which knots it detects, if any. The most famous open version of this question asks whether the Jones polynomial detects the unknot. In this paper, we study the closely related detection question for knot Floer homology and Khovanov homology, as well as for Khovanov--Rozansky's HOMFLY homology.

Considerable attention has been paid to this question over the last twenty years, and yet  we have only managed to prove that  these homology theories detect  six knots: the unknot \cite{osz-genus, km-unknot}, the  two trefoils and the figure eight \cite{ghiggini,bs-trefoil,bdlls}, and the two  cinquefoils \cite{frw-cinquefoil,bhs-cinquefoil}. Each of these  detection results required substantial new ideas, which have in several cases reverberated far beyond knot detection,  but one thing they have in common is that each (save for that of the unknot)   relied crucially on the knot in question being fibered. This paper expands the  knot detection landscape dramatically. In particular, we prove for the first time that knot Floer homology and Khovanov homology can  detect non-fibered knots, and that HOMFLY homology detects infinitely many  knots.

Our detection results are summarized in the list below.  See Figure~\ref{fig:main-knots} for diagrams of the knots in this list, which are each non-fibered of  Seifert genus one. In particular, $\Wh^\pm(T_{2,3},2)$ is the 2-twisted Whitehead double of the right-handed trefoil with a positive or a negative clasp, respectively, and the $P(-3,3,2n+1)$ are pretzel knots. We prove that:
\begin{itemize}
\item Knot Floer homology detects $5_2$ and $\Wh^+(T_{2,3},2)$.
\item Knot Floer homology detects membership in each of the  sets \[\{ 15n_{43522},  \Wh^-(T_{2,3},2) \} \,\textrm{ and }\, \{ P(-3,3,2n+1) \mid n\in\Z \}.\]
\item Khovanov homology detects $5_2$.
\item Khovanov homology together with the degree of the Alexander polynomial detects $P(-3,3,2n+1)$ for each $n\in \Z$.
\item HOMFLY homology detects $P(-3,3,2n+1)$ for each $n\in \Z$.
\end{itemize}
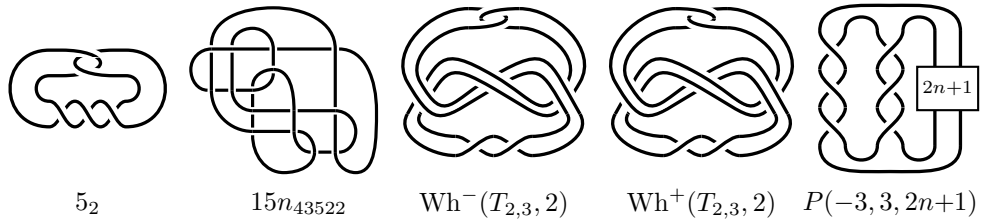
\begin{figure}
\begin{tikzpicture}[scale=0.85,font=\small]
\begin{scope}[xshift=-1.55cm,yshift=1.7cm]
\draw[link] (0.2,-0.4) to[out=0,in=180,looseness=0.75] ++(0.4,-0.4) to[out=0,in=0] ++(0,1.2) -- ++(-0.2,0) arc (90:180:0.6 and 0.2) ++(0.4,0) arc (360:270:0.6 and 0.2) -- ++(-0.2,0) to[out=180,in=180] ++(0,-0.4) to[out=0,in=180,looseness=0.75] ++(0.4,-0.4);
\draw[link] (-0.2,-0.4) to[out=180,in=0,looseness=0.75] ++(-0.4,-0.4) to[out=180,in=180] ++(0,1.2) -- ++(0.2,0) arc (90:0:0.6 and 0.2) ++(-0.4,0) arc (180:270:0.6 and 0.2) -- ++(0.2,0) to[out=0,in=0] ++(0,-0.4) to[out=180,in=0,looseness=0.75] ++(-0.4,-0.4);
\begin{scope}
\clip ([shift={(-\pgflinewidth,-0.5*\pgflinewidth)}]-0.5,-0.9) rectangle ([shift={(\pgflinewidth,0.5*\pgflinewidth)}]0.5,-0.3);
\draw[link,looseness=0.75] (-0.6,-0.4) to[out=0,in=180] ++(0.4,-0.4) to[out=0,in=180] ++(0.4,0.4) to[out=0,in=180] ++(0.4,-0.4);
\draw[link,looseness=0.75] (-0.6,-0.8) to[out=0,in=180] ++(0.4,0.4) to[out=0,in=180] ++(0.4,-0.4) to[out=0,in=180] ++(0.4,0.4);
\foreach \i in {-1,1} {
  \begin{scope}
  \clip ([shift={(-\pgflinewidth,-\pgflinewidth)}]0.4*\i,-0.4) ++(-0.1,0) rectangle ++([shift={(\pgflinewidth,1.5*\pgflinewidth)}]0.2,-0.4);
  \draw[link,looseness=0.75] (0.4*\i,-0.4) ++ (-0.2,0) to[out=0,in=180] ++(0.4,-0.4);
  \end{scope}
}
\end{scope}
\begin{scope}
\clip ([shift={(-0.5*\pgflinewidth,-0.5*\pgflinewidth)}]-0.5,0.15) rectangle ([shift={(0.5*\pgflinewidth,1*\pgflinewidth)}]0.5,0.25);
\draw[link] (-0.4,0.4) arc (90:-90:0.6 and 0.2);
\draw[link] (0.4,0.4) arc (90:270:0.6 and 0.2);
\end{scope}
\node at (0,-2) {$5_2$};
\end{scope}

\begin{scope}
\begin{scope}[scale=0.3,very thick] % code produced by PLink Viewer
    \draw (5.78, 2.76) .. controls (6.31, 2.76) and (6.66, 2.25) .. (6.66, 1.68);
    \draw (6.66, 1.68) .. controls (6.66, 0.97) and (5.86, 0.61) .. 
          (5.05, 0.61) .. controls (4.09, 0.61) and (3.43, 1.54) .. (3.43, 2.56);
    \draw (3.43, 2.96) .. controls (3.43, 3.19) and (3.43, 3.41) .. (3.43, 3.64);
    \draw (3.43, 4.03) .. controls (3.43, 4.26) and (3.43, 4.49) .. (3.43, 4.71);
    \draw (3.43, 5.11) .. controls (3.43, 5.64) and (3.95, 5.99) .. (4.51, 5.99);
    \draw (4.51, 5.99) .. controls (5.39, 5.99) and (5.59, 4.87) .. (5.59, 3.84);
    \draw (5.59, 3.84) .. controls (5.59, 3.48) and (5.59, 3.12) .. (5.59, 2.76);
    \draw (5.59, 2.76) .. controls (5.59, 2.20) and (5.94, 1.68) .. (6.46, 1.68);
    \draw (6.86, 1.68) .. controls (7.09, 1.68) and (7.31, 1.68) .. (7.54, 1.68);
    \draw (7.94, 1.68) .. controls (8.46, 1.68) and (8.81, 2.20) .. 
          (8.81, 2.76) .. controls (8.81, 3.35) and (8.33, 3.84) .. (7.74, 3.84);
    \draw (7.74, 3.84) .. controls (7.09, 3.84) and (6.43, 3.84) .. (5.78, 3.84);
    \draw (5.39, 3.84) .. controls (4.74, 3.84) and (4.08, 3.84) .. (3.43, 3.84);
    \draw (3.43, 3.84) .. controls (2.84, 3.84) and (2.36, 4.32) .. (2.36, 4.91);
    \draw (2.36, 4.91) .. controls (2.36, 5.56) and (2.36, 6.21) .. (2.36, 6.87);
    \draw (2.36, 7.26) .. controls (2.36, 7.79) and (2.87, 8.14) .. 
          (3.43, 8.14) .. controls (4.03, 8.14) and (4.51, 7.66) .. (4.51, 7.06);
    \draw (4.51, 7.06) .. controls (4.51, 6.77) and (4.51, 6.48) .. (4.51, 6.19);
    \draw (4.51, 5.79) .. controls (4.51, 5.26) and (3.99, 4.91) .. (3.43, 4.91);
    \draw (3.43, 4.91) .. controls (3.14, 4.91) and (2.85, 4.91) .. (2.56, 4.91);
    \draw (2.16, 4.91) .. controls (1.93, 4.91) and (1.71, 4.91) .. (1.48, 4.91);
    \draw (1.08, 4.91) .. controls (0.56, 4.91) and (0.21, 5.43) .. 
          (0.21, 5.99) .. controls (0.21, 6.58) and (0.69, 7.06) .. (1.28, 7.06);
    \draw (1.28, 7.06) .. controls (1.64, 7.06) and (2.00, 7.06) .. (2.36, 7.06);
    \draw (2.36, 7.06) .. controls (3.01, 7.06) and (3.66, 7.06) .. (4.31, 7.06);
    \draw (4.71, 7.06) .. controls (5.65, 7.06) and (6.60, 7.06) .. (7.54, 7.06);
    \draw (7.94, 7.06) .. controls (9.32, 7.06) and (9.89, 5.43) .. 
          (9.89, 3.84) .. controls (9.89, 2.37) and (9.89, 0.61) .. 
          (8.81, 0.61) .. controls (8.22, 0.61) and (7.74, 1.09) .. (7.74, 1.68);
    \draw (7.74, 1.68) .. controls (7.74, 2.34) and (7.74, 2.99) .. (7.74, 3.64);
    \draw (7.74, 4.03) .. controls (7.74, 5.04) and (7.74, 6.05) .. (7.74, 7.06);
    \draw (7.74, 7.06) .. controls (7.74, 8.50) and (6.13, 9.21) .. 
          (4.51, 9.21) .. controls (2.92, 9.21) and (1.28, 8.65) .. (1.28, 7.26);
    \draw (1.28, 6.87) .. controls (1.28, 6.21) and (1.28, 5.56) .. (1.28, 4.91);
    \draw (1.28, 4.91) .. controls (1.28, 3.72) and (2.25, 2.76) .. (3.43, 2.76);
    \draw (3.43, 2.76) .. controls (4.08, 2.76) and (4.74, 2.76) .. (5.39, 2.76);
\end{scope}
\node at (1.75,-0.3) {$15n_{43522}$};
\end{scope}

\begin{scope}[xshift=4.8cm,yshift=2.6cm] % nice (imho) drawing of Wh^-(T(2,3),2)
\draw[link] (-0.4,0.15) arc (90:0:0.6 and 0.15) ++ (-0.4,0) arc (180:270:0.6 and 0.15); % clasp
\draw[link] (0.4,0.15) arc (90:180:0.6 and 0.15) ++ (0.4,0) arc (360:270:0.6 and 0.15);
\draw[link] (-1.35,-1.5) to[out=60,in=150] (0,-0.9) to[out=330,in=240] (1.15,-1) to[out=60,in=0,looseness=1] (0.4,-0.15); % clasp top right to one of the middle strands
\draw[link] (-1.15,-1.5) to[out=60,in=150] (0,-1.1) to[out=330,in=240] (1.35,-1) to[out=60,in=0,looseness=1] (0.4,0.15);
\draw[link] (-0.4,0.15) to[out=180,in=120,looseness=1] (-1.35,-1) to[out=300,in=210] (0,-1.1) to[out=30,in=120] (1.15,-1.5); % clasp top left to the other middle strands
\draw[link] (-0.4,-0.15) to[out=180,in=120,looseness=1] (-1.15,-1) to[out=300,in=210] (0,-0.9) to[out=30,in=120] (1.35,-1.5);
\begin{scope} % fix middle crossing
\clip (-0.5,-1.5) rectangle (0.5,-0.5);
\draw[link] (-1.35,-1.5) to[out=60,in=150] (0,-0.9) to[out=330,in=240] (1.15,-1);
\draw[link] (-1.15,-1.5) to[out=60,in=150] (0,-1.1) to[out=330,in=240] (1.35,-1);
\end{scope}
% bottom strand, minus clasp
\draw[link,looseness=1] (-1.35,-1.5) to[out=240,in=180] (-0.6,-2.15) ++ (1.2,0) to[out=0,in=300] (1.35,-1.5);
\draw[link,looseness=1] (-1.15,-1.5) to[out=240,in=180] (-0.6,-1.85) ++ (1.2,0) to[out=0,in=300] (1.15,-1.5);
% clasp
\draw[link,looseness=0.75] (-0.6,-1.85) to[out=0,in=180] ++(0.6,-0.3) ++(0,0.3) to[out=0,in=180] ++(0.6,-0.3); % clasp
\draw[link,looseness=0.75] (-0.6,-2.15) to[out=0,in=180] ++(0.6,0.3) ++(0,-0.3) to[out=0,in=180] ++(0.6,0.3);
\node at (0,-2.9) {$\Wh^-(T_{2,3},2)$};
\end{scope}

\begin{scope}[xshift=8.05cm,yshift=2.6cm] % nice (imho) drawing of Wh^+(T(2,3),2)
\draw[link] (0.4,0.15) arc (90:180:0.6 and 0.15) ++ (0.4,0) arc (360:270:0.6 and 0.15); % clasp
\draw[link] (-0.4,0.15) arc (90:0:0.6 and 0.15) ++ (-0.4,0) arc (180:270:0.6 and 0.15);
\draw[link] (-1.35,-1.5) to[out=60,in=150] (0,-0.9) to[out=330,in=240] (1.15,-1) to[out=60,in=0,looseness=1] (0.4,-0.15); % clasp top right to one of the middle strands
\draw[link] (-1.15,-1.5) to[out=60,in=150] (0,-1.1) to[out=330,in=240] (1.35,-1) to[out=60,in=0,looseness=1] (0.4,0.15);
\draw[link] (-0.4,0.15) to[out=180,in=120,looseness=1] (-1.35,-1) to[out=300,in=210] (0,-1.1) to[out=30,in=120] (1.15,-1.5); % clasp top left to the other middle strands
\draw[link] (-0.4,-0.15) to[out=180,in=120,looseness=1] (-1.15,-1) to[out=300,in=210] (0,-0.9) to[out=30,in=120] (1.35,-1.5);
\begin{scope} % fix middle crossing
\clip (-0.5,-1.5) rectangle (0.5,-0.5);
\draw[link] (-1.35,-1.5) to[out=60,in=150] (0,-0.9) to[out=330,in=240] (1.15,-1);
\draw[link] (-1.15,-1.5) to[out=60,in=150] (0,-1.1) to[out=330,in=240] (1.35,-1);
\end{scope}
% bottom strand, minus clasp
\draw[link,looseness=1] (-1.35,-1.5) to[out=240,in=180] (-0.6,-2.15) ++ (1.2,0) to[out=0,in=300] (1.35,-1.5);
\draw[link,looseness=1] (-1.15,-1.5) to[out=240,in=180] (-0.6,-1.85) ++ (1.2,0) to[out=0,in=300] (1.15,-1.5);
% clasp
\draw[link,looseness=0.75] (-0.6,-1.85) to[out=0,in=180] ++(0.6,-0.3) ++(0,0.3) to[out=0,in=180] ++(0.6,-0.3); % clasp
\draw[link,looseness=0.75] (-0.6,-2.15) to[out=0,in=180] ++(0.6,0.3) ++(0,-0.3) to[out=0,in=180] ++(0.6,0.3);
\node at (0,-2.9) {$\Wh^+(T_{2,3},2)$};
\end{scope}

\begin{scope}[xshift=11cm,yshift=2.4cm]
\draw[link,looseness=1] (-1.1,0) to[out=90,in=180] ++(1.1,0.4) to[out=0,in=90] ++(1.1,-0.4);
\draw[link,looseness=1.5] (-0.7,0) to[out=90,in=90] ++(0.5,0) ++(0.4,0) to[out=90,in=90] ++(0.5,0);
\foreach \i in {0,-0.6,-1.2} {
  \draw[link,looseness=0.75] (-0.7,\i) to[out=270,in=90] ++(-0.4,-0.6) (-0.2,\i) to[out=270,in=90] ++(0.4,-0.6);
  \draw[link,looseness=0.75] (-1.1,\i) to[out=270,in=90] ++(0.4,-0.6) (0.2,\i) to[out=270,in=90] ++(-0.4,-0.6);
}
\draw[link] (0.7,0) -- ++(0,-1.8) ++(0.4,0) -- ++(0,1.8);
\node[draw,thick,rectangle,fill=white,inner sep=2pt,minimum height=1.5em] at (0.9,-0.9) {\tiny$2n{+}1$};
\draw[link,looseness=1.5] (-0.7,-1.8) to[out=270,in=270] ++(0.5,0) ++(0.4,0) to[out=270,in=270] ++(0.5,0);
\draw[link,looseness=1] (-1.1,-1.8) to[out=270,in=180] ++(1.1,-0.4) to[out=0,in=270] ++(1.1,0.4);
\node at (0,-2.7) {$P(-3,3,2n{+}1)$};
\end{scope}

\end{tikzpicture}
\caption{All of the genus-1 nearly fibered knots in $S^3$, up to taking mirrors; the labeled box on the right indicates the number of signed \emph{half}-twists. }
\label{fig:main-knots}
\end{figure}

These new detection results rely on our surprising main result, Theorem~\ref{thm:main-hfk}, which gives a complete classification of what we call \emph{nearly fibered} genus-1 knots in $S^3$. 
We  motivate and explain Theorem~\ref{thm:main-hfk} below, and then state precise versions of the detection results above. We next outline the proof of Theorem~\ref{thm:main-hfk}, which combines in novel ways arguments involving sutured manifolds \cite{gabai-foliations1}, involutions, the cyclic surgery theorem \cite{cgls}, and foundational work of Birman and Menasco on braids \cite{birman-menasco-iii,birman-menasco-iv}. Finally, we discuss applications of this theorem to problems in Dehn surgery, which are carried out in our papers \cite{bs-characterizing, bs-zero}. Perhaps the most striking of these  is our proof in \cite{bs-zero} that $0$-surgery characterizes infinitely many knots, where this was previously only known for the unknot, trefoils, and figure eight by Gabai's  celebrated 1987 work on the Property R Conjecture \cite{gabai-foliations3}.

\subsection{Our results} \label{ssec:results} Recall that knot Floer homology assigns to a knot $K\subset S^3$ a bigraded vector space over $\Q$,
\[\hfkhat(K;\Q) = \bigoplus_{m,a} \hfkhat_m(K,a;\Q),\] where $m$ and $a$ are the Maslov and Alexander gradings, respectively.  Letting \[\hfkhat(K,a;\Q) = \bigoplus_m \hfkhat_m(K,a;\Q),\]
knot Floer homology detects the Seifert genus of $K$ by the formula
\begin{equation}\label{eqn:genus-detection} g(K) = \max \{a \mid \hfkhat(K,a;\Q) \neq 0 \} \end{equation}
\cite{osz-genus}. 
Moreover, $K$ is fibered if and only if \[\dim\hfkhat(K,g(K);\Q)=1\] \cite{ghiggini,ni-hfk}.  The knot Floer homology detection results for the unknot, trefoils, and figure eight  follow readily from these properties, as the first is the only knot of genus zero and the others are the only fibered knots of genus one. Detection for the cinquefoils is substantially more involved \cite{frw-cinquefoil}, but also hinges on the fact that the cinquefoils are fibered.

We focus in this paper on what we call \emph{nearly fibered} knots. These are non-fibered knots which are  as close as possible, from the knot Floer homology perspective, to being fibered:

\begin{definition}\label{def:nearly-fibered-main}
A knot $K \subset S^3$ is \emph{nearly fibered} if $\dim\hfkhat(K,g(K); \Q) =2.$
\end{definition}

Our main result is the complete classification of genus-1 nearly fibered knots:

\begin{theorem} \label{thm:main-hfk}
If $K \subset S^3$ is a genus-1 nearly fibered knot, then $K$ is one of the knots
\[5_2,  \,\, 15n_{43522},\,\, \Wh^-(T_{2,3},2),\,\,
\Wh^+(T_{2,3},2), \,\,P(-3,3,2n+1) \ (n\in\Z)\] shown in Figure \ref{fig:main-knots}, or the  mirror of one of these knots.
\end{theorem}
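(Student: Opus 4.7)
The plan is to work sutured-manifold theoretically. Let $R$ be a minimal genus Seifert surface for $K$; it is a once-punctured torus. Form the balanced sutured manifold $(M,\gamma) = S^3 \ssm \nu(K\cup R)$. Since $R$ is genus-minimizing, $(M,\gamma)$ is taut; its boundary is a closed genus-two surface divided by a single annular suture into two copies of $R$. By Juh\'asz's theorem relating sutured Floer homology to knot Floer homology in the top Alexander grading,
\[\SFH(M,\gamma;\Q) \cong \hfkhat(K,g(K);\Q) \cong \Q^2.\]
Because $K$ is non-fibered, $(M,\gamma)$ is not a product sutured manifold, so rank $2$ is the smallest possible rank of $\SFH$ for such a taut, non-product balanced sutured manifold.

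My next step would be to apply Gabai's sutured manifold hierarchy, combined with Juh\'asz's result that $\dim\SFH$ is non-increasing under well-groomed surface decompositions and that $\SFH$ is nonzero for taut sutured manifolds. This forces the hierarchy to terminate in a product after very few stages. In addition, Juh\'asz's bound on the number of pairwise non-parallel horizontal surfaces by $\tfrac{1}{2}\dim\SFH$ shows that any two horizontal surfaces in $(M,\gamma)$ are isotopic. Combining these constraints with the once-punctured torus topology of $R_\pm \subset \partial M$, I would classify the possible decomposing surfaces (typically disks, annuli, or once-punctured tori with controlled boundary behavior relative to the annular suture) and thereby obtain a short list of model sutured manifolds for $(M,\gamma)$, some of which are parameterized by an integer arising from Dehn twisting along an essential annulus.

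Finally I would reconstruct $K$ from each model $(M,\gamma)$ by analyzing its embeddings into $S^3$ compatible with the suture structure, recovering the knot by identifying $R_+$ with $R_-$ along the annular suture. Classical invariants (Alexander polynomial, hyperbolic volume, signature) would pin down each resulting knot as a member of the stated list. The main obstacle is this last reconstruction step: the infinite family $P(-3,3,2n+1)$ should correspond to a single model of $(M,\gamma)$ whose embedding data carries a $\Z$ of Dehn twist freedom, and extracting precisely this family while showing that the remaining sporadic models yield $5_2$, $15n_{43522}$, and $\Wh^\pm(T_{2,3},2)$ (up to mirrors), with no other knots arising, is where the most delicate topological case analysis will lie.
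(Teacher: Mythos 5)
Your opening moves are correct and largely forced: computing $\SFH(S^3(F)) \cong \Q^2$ via Juh\'asz's decomposition theorem, noting tautness and non-product-ness, and invoking Juh\'asz's polytope results to force essential product annuli into the complement are exactly how the paper begins. You also correctly observe that the polytope bound gives uniqueness of the genus-one Seifert surface. But after that the proposal has two genuine gaps.

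First, the step from ``there exist essential product annuli'' to ``a short list of model sutured manifolds'' is more than a matter of bookkeeping. The paper's actual mechanism is specific: cap $\partial F \times [-1,1]$ with $D^2 \times [-1,1]$ to form a two-torus-boundary manifold $M_F$ containing a distinguished arc $\alpha$, locate an essential product annulus $A$ with essential boundary in both tori, prove (via a second product annulus $B$ and a delicate tautness/$\SFH$ argument) that $\alpha$ can be isotoped into $A$, and only then conclude that $M_F$ is the complement of a $(2,4)$-cable of the unknot or the right-handed trefoil. Your proposal gestures at Gabai hierarchies but does not supply the key intermediate object $M_F$ or the arc-in-annulus lemma, which is what lets the surgery constraints (L-space surgeries, genus bounds, strong quasipositivity) actually bite and produce the two models.

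Second, and more seriously, the reconstruction step is not bridgeable the way you describe. The gluing data for reassembling $S^3$ from $S^3(F)$ is an element of the mapping class group of a once-punctured torus, essentially $\SL_2(\Z)$, not a single $\Z$ of Dehn twists along one annulus. You cannot enumerate the resulting knots and then ``pin each down by classical invariants'' because you have no a priori finite or one-parameter list of candidates. The paper's central new idea, entirely absent from your proposal, is to observe that each model $S^3(F)$ admits a hyperelliptic involution which extends over any regluing; passing to the quotient realizes $K$ as the lift of a braid axis in the branched double cover of $S^3$ over $\tau \cup \beta$ for a fixed tangle $\tau$ and an unknown 3-braid $\beta$. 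The problem then becomes classifying 3-braids $\beta$ with $\tau\cup\beta$ unknotted, which is solved by resolving a crossing, applying the Montesinos trick, and invoking the cyclic surgery theorem and Moser's lens-space surgery classification to nail down $\beta$ up to a small explicit list. Without this branched-cover reduction, the final classification of gluings is, as far as anyone knows, out of reach.
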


The knot Floer homologies of these knots are displayed for reference in Table~\ref{fig:hfk-table}, with the computations explained in Appendix~\ref{sec:hfk-15n43522}.  Together with Theorem~\ref{thm:main-hfk} and the symmetry \[\hfkhat_m(K,a;\Q) \cong \hfkhat_{-m}(\mirror{K},-a;\Q)\]  under taking mirrors, these computations immediately imply the promised detection results for knot Floer homology, stated as Theorems~\ref{thm:main-hfk-detection-1}, \ref{thm:main-hfk-detection-2}, and \ref{thm:main-hfk-detection-3} below. The first of these makes precise our claim that knot Floer homology detects the knots $5_2$ and $\Wh^+(T_{2,3},2)$:

\begin{theorem} \label{thm:main-hfk-detection-1}
Let $K\subset S^3$ be a knot, and let $J\in\{5_2,\Wh^+(T_{2,3},2)\}$. If \[\hfkhat(K;\Q) \cong \hfkhat(J;\Q)\] as bigraded vector spaces, then $K=J$.
\end{theorem}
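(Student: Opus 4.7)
The plan is to deduce this directly from Theorem~\ref{thm:main-hfk} together with the bigraded knot Floer homology data computed in Table~\ref{fig:hfk-table}. First, the hypothesis that $\hfkhat(K;\Q) \cong \hfkhat(J;\Q)$ as bigraded vector spaces immediately yields $g(K) = g(J) = 1$ via the genus detection formula~\eqref{eqn:genus-detection}, and forces $\dim\hfkhat(K,1;\Q) = \dim\hfkhat(J,1;\Q) = 2$. So $K$ is a genus-1 nearly fibered knot in the sense of Definition~\ref{def:nearly-fibered-main}, and Theorem~\ref{thm:main-hfk} constrains $K$ to be one of
\[5_2,\ 15n_{43522},\ \Wh^\pm(T_{2,3},2),\ P(-3,3,2n+1)\ (n\in\Z),\]
or the mirror of such a knot.

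Next I would rule out every candidate other than $J$ itself by comparing bigraded Poincar\'e polynomials using Table~\ref{fig:hfk-table} and the mirror symmetry $\hfkhat_m(K,a;\Q) \cong \hfkhat_{-m}(\mirror{K},-a;\Q)$. For $J = 5_2$, one reads off that the pair of Maslov gradings supported in Alexander grading $\pm 1$, combined with the rest of the bigraded profile, is incompatible with each of $15n_{43522}$, $\Wh^\pm(T_{2,3},2)$, any $P(-3,3,2n+1)$, and all of their mirrors, and also with $\mirror{5_2}$ itself (since $\hfkhat(5_2;\Q)$ is not symmetric under negating both gradings). The same kind of case check handles $J = \Wh^+(T_{2,3},2)$; here the most delicate point is to separate it from its cousin $\Wh^-(T_{2,3},2)$, which is accomplished by comparing the Maslov gradings in the top Alexander grading recorded in the table.

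The main obstacle I anticipate is handling the infinite family $\{P(-3,3,2n+1)\}_{n\in\Z}$ uniformly in $n$, since one must verify that \emph{no} integer $n$ produces a bigraded $\hfkhat$ isomorphic to that of $J$. This reduces to knowing how the Maslov and Alexander gradings of $\hfkhat(P(-3,3,2n+1);\Q)$ depend on $n$, which is exactly the content of the $\hfkhat$-computations compiled in Appendix~\ref{sec:hfk-15n43522}. Once that input is in hand, the remaining argument is a finite bookkeeping check in which the bigraded profiles of the other candidate knots are compared termwise with that of $J$.
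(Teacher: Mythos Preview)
Your proposal is correct and is exactly the approach the paper takes: deduce that $K$ is genus-1 nearly fibered, invoke Theorem~\ref{thm:main-hfk}, and then eliminate the other candidates (and their mirrors) by comparing the bigraded profiles in Table~\ref{fig:hfk-table}. The ``obstacle'' you anticipate is not one: Table~\ref{fig:hfk-table} records a single bigraded profile for the entire family $P(-3,3,2n+1)$, independent of $n$, so no $n$-by-$n$ analysis is needed.
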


The next two theorems make precise our claim that knot Floer homology detects membership in each of the sets \[\{ 15n_{43522},  \Wh^-(T_{2,3},2) \}\,\textrm{ and }\,\{P(-3,3,2n+1)\mid n\in\Z\}.\] Note from  Table~\ref{fig:hfk-table} that knot Floer homology cannot distinguish the  knots in either set.

\begin{theorem} \label{thm:main-hfk-detection-2}
Let $K\subset S^3$ be a knot, and let $J\in\{ 15n_{43522},  \Wh^-(T_{2,3},2) \}$. If \[\hfkhat(K;\Q) \cong \hfkhat(J;\Q)\] as bigraded vector spaces, then $K\in\{ 15n_{43522},  \Wh^-(T_{2,3},2) \}$.
\end{theorem}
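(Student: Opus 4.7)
My plan is to combine Theorem~\ref{thm:main-hfk} with the bigraded knot Floer homology computations recorded in Table~\ref{fig:hfk-table}. Suppose $\hfkhat(K;\Q) \cong \hfkhat(J;\Q)$ as bigraded $\Q$-vector spaces. Applying the genus-detection formula \eqref{eqn:genus-detection} to both sides gives $g(K) = g(J) = 1$, and since Table~\ref{fig:hfk-table} records $\dim\hfkhat(J,1;\Q) = 2$, the bigraded isomorphism forces $\dim\hfkhat(K,1;\Q) = 2$. Thus $K$ is a genus-$1$ nearly fibered knot.

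By Theorem~\ref{thm:main-hfk}, $K$ is one of the knots
\[ 5_2,\ 15n_{43522},\ \Wh^-(T_{2,3},2),\ \Wh^+(T_{2,3},2),\ P(-3,3,2n+1)\ (n\in\Z), \]
or the mirror of one of these. To finish, I would compare $\hfkhat(J;\Q)$ against $\hfkhat(M;\Q)$ and $\hfkhat(\mirror{M};\Q)$ for each candidate $M$, using the mirror symmetry $\hfkhat_m(\mirror{M},a;\Q) \cong \hfkhat_{-m}(M,-a;\Q)$. Table~\ref{fig:hfk-table} shows that the bigraded Floer homologies of the listed knots split into four pairwise non-isomorphic types: one each for $5_2$ and $\Wh^+(T_{2,3},2)$, one shared by the pair $\{15n_{43522},\Wh^-(T_{2,3},2)\}$, and one shared by the family $\{P(-3,3,2n+1)\mid n\in\Z\}$. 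Moreover, inspection of the table reveals that $\hfkhat(J;\Q)$ fails to be invariant under $(m,a)\mapsto(-m,-a)$, so no mirror of a listed knot outside $\{15n_{43522},\Wh^-(T_{2,3},2)\}$ can match $\hfkhat(J;\Q)$ either. Together, these two observations force $K \in \{15n_{43522},\Wh^-(T_{2,3},2)\}$, as desired.

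Granting Theorem~\ref{thm:main-hfk}, the only genuinely new computation required is that of $\hfkhat(15n_{43522};\Q)$, which is carried out in Appendix~\ref{sec:hfk-15n43522}; the Floer homologies of the remaining knots on the list are standard. The main obstacle in the entire argument is of course Theorem~\ref{thm:main-hfk} itself — the classification of genus-$1$ nearly fibered knots — but for the purposes of Theorem~\ref{thm:main-hfk-detection-2} that classification may be taken as given, and the rest is a routine comparison of tabulated bigradings.
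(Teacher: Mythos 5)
Your proposal is correct and takes the same route as the paper, which proves Theorem~\ref{thm:main-hfk-detection-2} (along with Theorems~\ref{thm:main-hfk-detection-1} and \ref{thm:main-hfk-detection-3}) by combining Theorem~\ref{thm:main-hfk} with the computations in Table~\ref{fig:hfk-table} and the mirror symmetry $\hfkhat_m(K,a;\Q) \cong \hfkhat_{-m}(\mirror{K},-a;\Q)$. The only slight imprecision is your penultimate sentence: the failure of $\hfkhat(J;\Q)$ to be symmetric under $(m,a)\mapsto(-m,-a)$ shows that $J$ is chirally distinguished from its own mirror, but by itself it does not rule out a coincidence $\hfkhat(J;\Q)\cong\hfkhat(\mirror{M};\Q)$ for a different $M$ on the list --- that exclusion requires directly comparing $\hfkhat(J;\Q)$ against the mirror-transforms of each table entry (which does work: the total dimensions, or the split $\Q^4_{(-1)}\oplus\Q_{(0)}$ in Alexander grading~$0$, immediately distinguish $J$ from all such mirrors).
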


\begin{theorem} \label{thm:main-hfk-detection-3}
Let $K\subset S^3$ be a knot, and let $J\in \{P(-3,3,2n+1)\mid n\in\Z\}$. If \[\hfkhat(K;\Q) \cong \hfkhat(J;\Q)\] as bigraded vector spaces, then $K\in \{P(-3,3,2n+1)\mid n\in\Z\}$.\end{theorem}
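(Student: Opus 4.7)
The plan is to combine Theorem~\ref{thm:main-hfk} with the explicit knot Floer computations recorded in Table~\ref{fig:hfk-table}, in the style of Theorems~\ref{thm:main-hfk-detection-1} and \ref{thm:main-hfk-detection-2}. Suppose $\hfkhat(K;\Q) \cong \hfkhat(J;\Q)$ as bigraded vector spaces, where $J = P(-3,3,2n+1)$ for some $n\in\Z$. Reading off $g(J) = 1$ and $\dim\hfkhat(J,1;\Q) = 2$ from Table~\ref{fig:hfk-table}, then applying the genus detection formula \eqref{eqn:genus-detection} to $K$, one obtains $g(K) = 1$ and $\dim\hfkhat(K,1;\Q) = 2$. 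Thus $K$ is a genus-1 nearly fibered knot, and Theorem~\ref{thm:main-hfk} identifies $K$ as one of the knots in Figure~\ref{fig:main-knots} or the mirror of one.

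I would then observe that the pretzel family $\{P(-3,3,2m+1)\}_{m\in\Z}$ is closed under taking mirrors: using the dihedral symmetry of pretzel parameters,
\[\mirror{P(-3,3,2n+1)} = P(3,-3,-(2n+1)) \cong P(-3,3,-(2n+1)) = P(-3,3,2(-n-1)+1),\]
which again lies in the family. Hence the theorem reduces to ruling out the possibility that $K$ is one of $5_2$, $15n_{43522}$, $\Wh^-(T_{2,3},2)$, $\Wh^+(T_{2,3},2)$, or the mirror of any of these four knots.

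To carry out this elimination, I would compare the bigraded vector space $\hfkhat(P(-3,3,2n+1);\Q)$ recorded in Table~\ref{fig:hfk-table} with the bigraded Floer homologies of the eight exceptional candidates, invoking the symmetry $\hfkhat_m(K,a;\Q) \cong \hfkhat_{-m}(\mirror{K},-a;\Q)$ to handle the mirrored cases. The distinguishing data may appear in the total dimension, in the Maslov gradings of the two generators in Alexander degree $\pm1$, or in the support in Alexander degree $0$; all of these are visible in the table. The main obstacle is this final, essentially mechanical case-check, and it is made tractable precisely by the tabulation of bigraded Floer homologies in Appendix~\ref{sec:hfk-15n43522}.
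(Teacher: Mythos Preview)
Your proposal is correct and follows exactly the approach the paper takes: the paper states that Theorems~\ref{thm:main-hfk-detection-1}--\ref{thm:main-hfk-detection-3} follow immediately from Theorem~\ref{thm:main-hfk}, the computations in Table~\ref{fig:hfk-table}, and the mirror symmetry $\hfkhat_m(K,a;\Q)\cong\hfkhat_{-m}(\mirror{K},-a;\Q)$. Your write-up simply spells out these steps, including the useful observation that the pretzel family is closed under mirroring, and the final elimination is indeed a mechanical comparison of Maslov gradings (and, for $5_2$ and its mirror, total dimension) in the table.
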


As alluded to above, Theorem~\ref{thm:main-hfk-detection-1} is the first result which shows that knot Floer homology can detect non-fibered knots. We note that it is also the first knot Floer detection result for knots whose   Floer homology is not thin (i.e., not supported in a single $\delta = m-a$ grading).

\begin{table}
\[ \arraycolsep=1em
\begin{array}{cccc}
K & \hfkhat(K,1;\Q) & \hfkhat(K,0;\Q) & \hfkhat(K,-1;\Q) \\[0.25em] \hline &&& \\[-0.75em]
5_2 & \Q^2_{(2)} & \Q^3_{(1)} & \Q^2_{(0)} \\[0.5em]
15n_{43522} & \Q^2_{(0)} & \Q^4_{(-1)} \oplus \Q^{\vphantom{0}}_{(0)} & \Q^2_{(-2)} \\[0.5em]
\Wh^-(T_{2,3},2) & \Q^2_{(0)} & \Q^4_{(-1)} \oplus \Q^{\vphantom{0}}_{(0)} & \Q^2_{(-2)} \\[0.5em] \hline&&&\\[-0.75em]
P(-3,3,2n+1) & \Q^2_{(1)} & \Q^5_{(0)} & \Q^2_{(-1)} \\[0.5em]
\Wh^+(T_{2,3},2) & \Q^2_{(-1)} & \Q^4_{(-2)} \oplus \Q^{\vphantom{0}}_{(0)} & \Q^2_{(-3)}
\end{array} \]
\caption{Knot Floer homologies of genus-1 nearly fibered knots, grouped by whether $\det(K)$ is 7 or 9.  The subscripts denote Maslov gradings.}
\label{fig:hfk-table}
\end{table}

We now turn to our detection results for Khovanov homology. Recall that reduced Khovanov homology also assigns to a knot $K\subset S^3$ a bigraded vector space over $\Q$, \[\Khr(K;\Q) = \bigoplus_{h,q} \Khr^{h,q}(K;\Q),\] where $h$ and $q$ are the homological and quantum gradings, respectively. We use Theorem~\ref{thm:main-hfk} together with Dowlin's spectral sequence from Khovanov homology to knot Floer homology \cite{dowlin} to prove that reduced Khovanov homology detects $5_2$:

\begin{theorem} \label{thm:main-kh}
Let $K \subset S^3$ be a knot, and suppose  that \[ \Khr(K;\Q) \cong \Khr(5_2;\Q) \]
as bigraded vector spaces. Then $K = 5_2$.\end{theorem}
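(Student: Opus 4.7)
The plan is to combine Dowlin's spectral sequence $\Khr(K;\Q) \Rightarrow \hfkhat(K;\Q)$ with the classification of genus-one nearly fibered knots in Theorem~\ref{thm:main-hfk}.

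First I would pin down the Seifert genus of $K$. Because $\dim_\Q \Khr(5_2;\Q) = 7 > 1$, Kronheimer--Mrowka's Khovanov detection of the unknot implies $g(K) \geq 1$. For the reverse inequality, Dowlin's spectral sequence respects an Alexander grading, so the dimension of $\hfkhat(K,a;\Q)$ is bounded above by the dimension of the Alexander-$a$ slice of $\Khr(5_2;\Q)$; the latter vanishes for $|a|>1$, so the genus detection formula~\eqref{eqn:genus-detection} forces $g(K) \le 1$, hence $g(K) = 1$. The same bound also yields $\dim_\Q \hfkhat(K,1;\Q) \le 2$.

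Next I would show that $K$ is nearly fibered. The fibered genus-one knots in $S^3$ are the two trefoils and the figure eight, with reduced Khovanov dimension $3$, $3$, and $5$ respectively --- never $7$. Hence $K$ is non-fibered, giving $\dim_\Q \hfkhat(K,1;\Q) \ge 2$; combined with the Dowlin bound, $\dim_\Q \hfkhat(K,1;\Q) = 2$ and $K$ is nearly fibered. By Theorem~\ref{thm:main-hfk}, $K$ or $\mirror{K}$ is therefore one of $5_2$, $15n_{43522}$, $\Wh^{\pm}(T_{2,3},2)$, or $P(-3,3,2n+1)$ for some $n\in\Z$.

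Finally I would distinguish $5_2$ from this list. Reading $\det(K) = |V_K(-1)|$ off the Jones polynomial --- the graded Euler characteristic of $\Khr(K;\Q)$ --- gives $\det(K) = \det(5_2) = 7$, which immediately rules out $\Wh^+(T_{2,3},2)$ and the entire pretzel family $P(-3,3,2n+1)$, since by the grouping in Table~\ref{fig:hfk-table} those knots all have determinant $9$. Only $5_2$, $15n_{43522}$, $\Wh^-(T_{2,3},2)$, and their mirrors survive, and I would finish by comparing bigraded reduced Khovanov homologies directly; chirality of $5_2$ disposes of $\mirror{5_2}$. I expect the step requiring the most care is verifying the Alexander-grading-preserving property of Dowlin's spectral sequence, since the bounds $g(K)\le 1$ and $\dim_\Q\hfkhat(K,1;\Q)\le 2$ both rest on it; once that is in hand, Theorem~\ref{thm:main-hfk} reduces the remainder to a finite Khovanov computation.
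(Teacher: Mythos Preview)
The decisive step in your outline --- ``Dowlin's spectral sequence respects an Alexander grading, so $\dim_\Q\hfkhat(K,a;\Q)$ is bounded above by the dimension of the Alexander-$a$ slice of $\Khr(5_2;\Q)$'' --- does not hold. Dowlin's spectral sequence $\Khr(K;\Q)\Rightarrow\hfkhat(\mirror K;\Q)$ is known to respect the \emph{$\delta$-grading} up to an overall shift, and that is all. There is no Alexander grading on $\Khr$, and no result in \cite{dowlin} giving you a per-Alexander-grading rank inequality. Both conclusions you draw from this --- $g(K)\le 1$ and $\dim_\Q\hfkhat(K,1;\Q)\le 2$ --- are therefore unsupported. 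You even flag this step as the one ``requiring the most care''; the issue is that it simply is not available.

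The paper's proof extracts everything it can from the $\delta$-grading alone. Since $\Khr(5_2)$ is thin, the $\delta$-grading-preserving spectral sequence forces $\hfkhat(K)$ to be thin with $\dim_\Q\hfkhat(K;\Q)=\det(5_2)=7$ (Lemma~\ref{lem:dim-equal}). From here one must work inside thin knot Floer homology: the paper uses the commuting differentials $\xi=\xi^1$ and $\omega=\omega^1$ on $\hfkhat(K)$, each with homology $\Q$, to prove $\dim\hfkhat(K,g)\le\dim\hfkhat(K,g-1)$ in general, and strictly so when $K$ is fibered with $|\tau(K)|<g$ (Lemma~\ref{lem:thin-fibered}). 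The non-fibered case then forces the profile $(2,3,2)$ and $g=1$, feeding directly into Theorem~\ref{thm:main-hfk}. The fibered case requires a separate argument: one shows $|\tau(K)|<g$ via Rasmussen's $s$-invariant and thinness, then pins down $g=2$ with profile $(1,2,1,2,1)$, and finally derives a contradiction from $\xi\omega=\omega\xi$. None of this is bypassed by Dowlin's result.
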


As mentioned previously, Theorem \ref{thm:main-kh} is the first result showing that Khovanov homology can detect non-fibered knots. Using the same strategy, we can also nearly show for the first time that Khovanov homology detects infinitely many knots:

\begin{theorem} \label{thm:main-kh-pretzel}
Let $K \subset S^3$ be a knot, and suppose for some $n\in\Z$ that 
\[ \Khr(K;\Q) \cong \Khr(P(-3,3,2n+1);\Q) \]
as bigraded vector spaces.  If in addition the Alexander polynomial $\Delta_K(t)$ has degree 1, then $K=P(-3,3,2n+1)$.
\end{theorem}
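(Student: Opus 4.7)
The strategy mirrors that of Theorem~\ref{thm:main-kh}: use Dowlin's spectral sequence to transfer the Khovanov hypothesis to a statement about knot Floer homology, then invoke the classification Theorem~\ref{thm:main-hfk-detection-3}, and finally distinguish members of the pretzel family by their Jones polynomials. First, Dowlin's $\delta$-graded spectral sequence from $\Khr$ to $\hfkhat$, together with $\Khr(K;\Q)\cong\Khr(P(-3,3,2n+1);\Q)$, gives in each $\delta$-grading
\[
\dim_\Q \hfkhat(K,\delta)\;\le\;\dim_\Q \Khr(P(-3,3,2n+1),\delta).
\]
Since each $P(-3,3,2n+1)$ should be Khovanov $\delta$-thin (supported in a single $\delta$-grading), this forces $\hfkhat(K;\Q)$ to be $\delta$-thin as well, so its Maslov grading is determined by its Alexander grading.

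Second, for a $\delta$-thin knot, $\dim_\Q\hfkhat(K,a)$ equals the absolute value of the coefficient of $t^a$ in $\Delta_K(t)$. The hypothesis $\deg\Delta_K=1$ therefore yields $\hfkhat(K,a)=0$ for $|a|\ge 2$, and genus detection~\eqref{eqn:genus-detection} gives $g(K)=1$. A total-dimension count using $\det K=|V_K(-1)|=9$ (read off from the matching of $\Khr$) and the symmetry $\dim\hfkhat(K,a)=\dim\hfkhat(K,-a)$ narrows the possibilities for $\dim\hfkhat(K,1)$; the fibered case $\dim\hfkhat(K,1)=1$ is ruled out because $\Khr(K;\Q)$ does not match the Khovanov homology of $T_{\pm2,3}$ or of $4_1$, and a finer comparison of the bigraded Khovanov data should eliminate the cases $\dim\hfkhat(K,1)\ge 3$. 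Thus $\dim\hfkhat(K,1)=2$ and $\hfkhat(K;\Q)\cong\hfkhat(P(-3,3,2n+1);\Q)$ as bigraded vector spaces, so Theorem~\ref{thm:main-hfk-detection-3} gives $K=P(-3,3,2m+1)$ for some $m\in\Z$. The Jones polynomials $V_{P(-3,3,2m+1)}(q)$ are pairwise distinct (verifiable by an inductive skein computation in the $(2m+1)$-twist region, or via the change in $\Delta_K(t)$ combined with further evaluations), so the Khovanov hypothesis then forces $m=n$.

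The principal obstacle I anticipate is the uniform verification of Khovanov $\delta$-thinness for the entire infinite family $\{P(-3,3,2n+1)\}_{n\in\Z}$, which likely requires an induction on $n$ via the unoriented skein exact triangle applied in the $(2n+1)$-twist region, with careful bookkeeping of grading shifts. A secondary subtlety is ruling out $\dim\hfkhat(K,1)\ge 3$ cleanly without case-by-case computation of $\Khr(P(-3,3,2n+1);\Q)$ for every $n$; here the right approach is likely to exploit the $\delta$-graded bound above in tandem with the exact form of the Alexander polynomial $\Delta_K(t)=a t + b + a t^{-1}$, reducing to finitely many arithmetic possibilities that can be excluded by matching with the known structure of $\Khr(P(-3,3,2n+1);\Q)$ in low homological degrees.
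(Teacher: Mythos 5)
Your plan follows the same overall architecture as the paper's proof: transfer thinness of $\Khr(P(-3,3,2n+1))$ to $\hfkhat(K)$ via Dowlin's $\delta$-graded spectral sequence, use $\deg \Delta_K = 1$ to get $g(K)=1$, pin down the dimension sequence of $\hfkhat(K)$ to identify $K$ as nearly fibered of genus one, invoke the classification, and then separate the pretzels from one another. Two points, one minor and one a genuine gap. The minor point: you do not need to reprove the $\delta$-thinness of $\Khr(P(-3,3,2n+1))$ inductively via skein triangles; it is exactly Starkston's Theorem~4.1 in \cite{starkston-pretzel}, which the paper cites. The genuine gap is your step pinning down $\dim\hfkhat(K,1)=2$: you write that ``a finer comparison of the bigraded Khovanov data should eliminate the cases $\dim\hfkhat(K,1)\geq 3$'' and later gesture at ``finitely many arithmetic possibilities,'' but you never say what actually kills them, and matching against low homological degrees of $\Khr(P(-3,3,2n+1))$ is not the mechanism. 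The missing idea, which your parenthetical $\Delta_K(t)=at+b+at^{-1}$ nearly reaches, is the universal normalization $\Delta_K(1)=1$. Since $\hfkhat(K)$ is thin, the Maslov grading equals the Alexander grading plus a constant, so the Euler characteristic identity gives $\Delta_K(t)=\pm(a_1 t - a_0 + a_1 t^{-1})$ where $a_1=\dim\hfkhat(K,1)$ and $a_0=\dim\hfkhat(K,0)$; hence $|2a_1 - a_0| = |\Delta_K(1)| = 1$, while $2a_1 + a_0 = \det K = 9$, forcing $(a_1,a_0)=(2,5)$ outright. (The paper phrases this slightly differently, first using Lemma~\ref{lem:thin-fibered} and non-fiberedness to narrow to $(2,5,2)$ or $(3,3,3)$ and then ruling out $(3,3,3)$ by $\Delta_K(1)=1$, but the arithmetic constraint alone suffices.) Your final step via Jones polynomials is workable, though the paper simply cites that the $\Khr(P(-3,3,2m+1))$ are pairwise non-isomorphic by \cite{starkston-pretzel,hedden-watson}.
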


We expect that  $\Khr(K;\Q)$ alone should detect each of these pretzel knots. Indeed, their  reduced Khovanov homologies are all 9-dimensional but (unlike their knot Floer homologies) are distinguished by their bigradings. The only  remaining obstacle  is  to show  that there are no fibered knots of genus at least two with the same reduced Khovanov homology as one of these pretzels. We are currently unable to show this, which is the reason for the additional Alexander polynomial hypothesis in Theorem \ref{thm:main-kh-pretzel}.  

On the other hand, we can achieve the desired detection result using the reduced version of Khovanov--Rozansky's HOMFLY homology \cite{khovanov-rozansky-2}. This theory  assigns to a knot $K\subset S^3$ a triply-graded vector space over $\Q$, 
\[\bar{H}(K;\Q) = \bigoplus_{i,j,k} \bar{H}^{i,j,k}(K;\Q),\] which determines the HOMFLY polynomial of $K$. We use the fact that the HOMFLY polynomial encodes the Alexander polynomial, together with  recent results of Wang \cite{wang-split}, to bypass the  obstacle described above and prove for the first time that  HOMFLY homology detects infinitely many knots:

\begin{theorem} \label{thm:main-homfly-detection}
Let $K \subset S^3$ be a knot, and suppose for some $n\in\Z$ that 
\[ \bar{H}(K;\Q) \cong \bar{H}(P(-3,3,2n+1);\Q) \]
as triply-graded vector spaces.  Then $K=P(-3,3,2n+1)$.
\end{theorem}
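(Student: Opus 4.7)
The plan is to reduce Theorem~\ref{thm:main-homfly-detection} to the already-proved Theorem~\ref{thm:main-kh-pretzel}, which identifies $P(-3,3,2n+1)$ once one knows both its reduced Khovanov homology \emph{and} that $\deg \Delta_K = 1$. The task is therefore to extract both the Alexander polynomial and the reduced Khovanov homology of $K$ from the hypothesis $\bar{H}(K;\Q) \cong \bar{H}(P(-3,3,2n+1);\Q)$.

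The Alexander polynomial comes essentially for free. Reduced HOMFLY homology categorifies the reduced HOMFLY polynomial $\bar{P}_K(a,q)$, and an appropriate specialization of $\bar{P}_K$ at $a = 1$ recovers $\Delta_K(t)$. Hence the hypothesis forces $\Delta_K = \Delta_{P(-3,3,2n+1)}$, and a direct Seifert matrix computation for the pretzel family shows this is a Laurent polynomial of degree $1$. This supplies exactly the Alexander polynomial hypothesis of Theorem~\ref{thm:main-kh-pretzel}.

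Extracting the Khovanov isomorphism is the serious step. The basic tool is Rasmussen's family of spectral sequences from reduced HOMFLY homology to reduced Khovanov--Rozansky $\mathfrak{sl}_N$ homology; specialized to $N=2$, this converges to reduced Khovanov homology and gives the dimension bound $\dim \Khr(K;\Q) \le \dim \bar{H}(K;\Q)$. The main obstacle is to upgrade this to an isomorphism of bigraded vector spaces, and this is precisely what Wang's recent work lets us do: his structural results on HOMFLY homology force the $N = 2$ Rasmussen spectral sequence to collapse in our situation, so that $\Khr(K;\Q)$ is determined bigrading-by-bigrading by $\bar{H}(K;\Q)$. This collapse, applied to both $K$ and $P(-3,3,2n+1)$, yields $\Khr(K;\Q) \cong \Khr(P(-3,3,2n+1);\Q)$ as bigraded vector spaces.

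With the Khovanov isomorphism and the degree-$1$ Alexander polynomial in hand, Theorem~\ref{thm:main-kh-pretzel} concludes that $K = P(-3,3,2n+1)$. Conceptually, HOMFLY homology succeeds here where unadorned Khovanov homology did not because it simultaneously controls Khovanov homology (via Rasmussen's spectral sequence, with Wang's input forcing collapse) and determines the Alexander polynomial (via specialization), so it automatically supplies the very genus obstruction that had to be added as a separate hypothesis in Theorem~\ref{thm:main-kh-pretzel}.
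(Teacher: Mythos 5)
Your high-level plan matches the paper's: reduce to Theorem~\ref{thm:main-kh-pretzel} by extracting both $\Delta_K$ and $\Khr(K;\Q)$ from $\bar{H}(K;\Q)$, with the Alexander polynomial coming from the specialization of the HOMFLY polynomial and the Khovanov homology coming from collapse of Rasmussen's $\slfrak_2$ spectral sequence. However, the step where you claim ``Wang's structural results on HOMFLY homology force the $N=2$ Rasmussen spectral sequence to collapse in our situation'' is a genuine gap: Wang's result \cite{wang-split} says nothing about collapse of spectral sequences per se. What it actually provides is invariance of $\dim\bar{H}_N$ under adding full twists to a band, which the paper uses (together with $P(-3,3,1)=6_1$ being a 2-bridge knot that is $N$-thin for $N>4$) to compute $\dim\bar{H}(P(-3,3,2n+1))=9$ for all $n$; this is Lemma~\ref{lem:compute-pretzel-homfly}, and it is a nontrivial computation you cannot skip.

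Once one knows $\dim\bar{H}(K;\Q)=9$, the collapse follows from a squeeze argument that you have omitted entirely: the spectral sequence gives $\dim\Khr(K;\Q)\le\dim\bar{H}(K;\Q)=9$, while the determinant bound $\dim\Khr(K;\Q)\ge\det(K)=|\Delta_K(-1)|=9$ gives the reverse inequality, with $\det(K)=9$ following from the Alexander polynomial $-2t+5-2t^{-1}$ you already extracted. Equality forces collapse at the first page, and only then does \eqref{eq:kr-from-homfly} determine $\bar{H}_2(K;\Q)\cong\Khr(K;\Q)$ bigrading-by-bigrading. Without the determinant lower bound (and the explicit dimension count for $\bar{H}$ of the pretzel), the spectral sequence could a priori have nontrivial differentials and the Khovanov isomorphism would not follow. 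So the gap is twofold: you need Lemma~\ref{lem:compute-pretzel-homfly} as a substantive input, and you need the lower bound $\dim\Khr(K)\ge\det(K)$ to complete the squeeze; merely invoking Wang as a black box that ``forces collapse'' does not suffice.
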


\begin{remark}
Some of the knots in Theorem \ref{thm:main-hfk} may be more familiar under other names. For instance, $6_1$ is  the pretzel knot $P(-3,3,1)$.  The knot $15n_{43522}$ is one of the simplest hyperbolic knots, as tabulated in the census \cite{ckm-500}, where it is labeled $k8_{218}$.  The twisted Whitehead doubles $\Wh^+(T_{2,3},2)$ and $\Wh^-(T_{2,3},2)$ appear in the  tabulation \cite{htw-16} as the knots $15n_{115646}$ and $16n_{696530}$, respectively.
\end{remark}

We  outline  our proof of Theorem \ref{thm:main-hfk} in some detail below. For the reader interested in fewer details, the key new idea is that if $K$ is a genus-1 nearly fibered knot, then the fact that \[\dim \hfkhat(K,1;\Q)=2\] is small allows us to determine the complement of a genus-1 Seifert surface $F$ for $K$.  This complement is not simply a product $F\times[-1,1]$ since $K$ is not fibered, but work of Juh\'asz \cite{juhasz-polytope} provides us with product annuli that we can use to cut the complement into simpler pieces and identify it anyway.  In each case, the complement of $F$ admits an involution which extends over the complement of $K$, and by taking quotients we can reduce the classification problem in Theorem \ref{thm:main-hfk} to a difficult but ultimately solvable question about 3-braids.

\subsection{Proof outline}\label{ssec:proof}

 Let $K\subset S^3$ be a genus-1 nearly fibered knot, so that \[\dim \hfkhat(K,1;\Q) =2.\] Let $F$ be   a genus-1 Seifert surface for $K$. Let us identify a closed tubular neighborhood of $F$ with the product $F\times[-1,1]$, and consider  the sutured Seifert surface complement \[S^3(F):=(M,\gamma)=(S^3\setminus \inr(F\times[-1,1]),\partial F\times[-1,1]).\] Then $S^3$ is recovered by gluing  this neighborhood back in, \begin{equation*}\label{eqn:gluing-map}S^3 = S^3(F) \cup (F\times[-1,1]),\end{equation*} and   $K$ is the image of the suture \[s(\gamma) = \partial F\times \{0\}\] in this glued manifold.  Our strategy is to first identify the  complement $S^3(F)$ abstractly,  in a way which does not remember its  embedding into $S^3$, and  then classify the gluings   that recover $S^3$  from this abstract point of view, so as to ultimately  determine the knot $K$. 
 
 It will be helpful to consider the following  slightly different perspective. Let \[M_F:= S^3(F) \cup (D^2\times [-1,1]),\] in which we identify each circle \[\partial F\times\{t\}\subset \partial S^3(F)\]with the corresponding $\partial D^2 \times \{t\}$. Then $M_F$  has two toroidal boundary components, and can be viewed as the 3-manifold obtained from $S^3_0(K)$ by removing a neighborhood of the capped off Seifert surface. This manifold contains a distinguished arc \[\alpha := \{0\}\times[-1,1]\subset D^2\times[-1,1]\subset M_F,\] whose complement recovers $S^3(F)$, where the suture $s(\gamma)$ is identified with a meridian of $\alpha$.

Work of Juh\'asz \cite{juhasz-decomposition} tells us that the sutured Floer homology of $S^3(F)$ has dimension
\[ \dim\SFH(S^3(F);\Q) =\dim \hfkhat(K,1;\Q) =2. \] This dimension is sufficiently small  that  another theorem of Juh\'asz \cite{juhasz-polytope} guarantees the existence of an essential product annulus $A$ inside of $S^3(F)$.  Because $F$ has genus $1$ we can guarantee that the components of $\partial A$ are homologically essential in their respective copies of $F$, or equivalently in the tori of $\partial M_F$, so by Dehn filling along curves dual to $\partial A$ we can identify $M_F$ as the complement of a 2-component cable link, in which $A$ is the cabling annulus. 

A similar argument shows that the manifold obtained by decomposing $S^3(F)$ along $A$ also contains an essential product annulus $B$, since such decompositions preserve the dimension of sutured Floer homology. We prove that  $B$ separates $S^3(F)\setminus A$ into two pieces, and argue based on the dimensions of the sutured Floer homologies  of these pieces that the component containing  $\gamma$ must be a product sutured manifold. We then use this to show that the arc $\alpha$ in $M_F$ can be isotoped into the cabling annulus $A$. 

It  follows that the manifold obtained by cutting $M_F$ open along the cabling annulus $A$ can alternatively be obtained  by first removing a neighborhood of $\alpha$ to form $S^3(F)$, and then decomposing $S^3(F)$ along a product disk to remove the rest of the annulus $A$. Since $S^3(F)$ is a subset of  $S^3$, this cut-open manifold with torus boundary must then be the complement of a knot $C\subset S^3$, with sutures isotopic to $\partial A$. Moreover, its sutured Floer homology is also 2-dimensional, since product disk decomposition preserves dimension. Using this, we argue that $C$ is an unknot or a trefoil, and  conclude the following:

{
\renewcommand{\thetheorem}{\ref{thm:identify-y-c}}
\begin{theorem}
Up to orientation reversal, $M_F$ must be the complement of the $(2,4)$-cable of either the unknot or the right-handed trefoil, and $\alpha$ is an arc in the cabling annulus.
\end{theorem}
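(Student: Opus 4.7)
The plan is to follow the outline sketched in Subsection~\ref{ssec:proof}. The starting ingredient is that $S^3(F)$ is a taut sutured manifold with $\dim \SFH(S^3(F);\Q) = 2$ by Juh\'asz's decomposition formula, while $K$ being non-fibered prevents $S^3(F)$ from being a product. First I would invoke Juh\'asz's polytope theorem \cite{juhasz-polytope}, which in this low-dimensional regime forces the existence of an essential product annulus $A \subset S^3(F)$. Gluing back the neighborhood $D^2 \times [-1,1]$ closes $A$ to an annulus $\hat{A} \subset M_F$; the resulting decomposition of $M_F$ exhibits it as the complement of a 2-component cable link about some companion knot $C \subset S^3$, with $\hat{A}$ playing the role of the cabling annulus.

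Next, the same machinery should be applied one level deeper. Sutured decomposition along a product annulus preserves the dimension of $\SFH$, so $S^3(F) \setminus A$ again has 2-dimensional sutured Floer homology, and the polytope theorem yields a second essential product annulus $B$. The plan is then to show that $B$ separates $S^3(F) \setminus A$ and that the component meeting the original suture $\gamma$ is a product sutured manifold. This is, I expect, the main obstacle: one has to analyze the possible configurations of $B$ (both non-separating and separating, comparing Euler characteristics on the two sides) and rule out all but the product case using only the $\SFH$-dimension constraint together with the classification of simple sutured pieces with very small $\SFH$. Once that is settled, the product structure on the $\gamma$-side provides enough room to isotope the arc $\alpha \subset D^2 \times [-1,1]$ onto $\hat{A}$.

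With $\alpha$ lying on $\hat{A}$, cutting $M_F$ along $\hat{A}$ can be rephrased as first removing a neighborhood of $\alpha$ to recover $S^3(F)$ and then decomposing $S^3(F)$ along the product disk that is the rest of $\hat{A}$. This presents the far side of the cabling annulus as the exterior of a knot $C \subset S^3$ whose meridional sutured complement has $\dim \SFH = 2$, by invariance of dimension under product-disk decomposition. Combining the knot Floer genus detection of Ozsv\'ath--Szab\'o with the Ghiggini--Ni fiberedness criterion, $C$ must be either the unknot or a trefoil, and after possibly mirroring $K$ one may take the trefoil to be right-handed.

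It remains only to identify the cabling coefficients as $(2,4)$. The winding number of each cable component around $C$ is forced to be $2$ because the cable link has two components joined via the neighborhood of $\alpha$ to give a connected boundary $\partial F$, and the longitudinal twisting parameter is pinned down by the requirement that the two annular components of $\partial \nu(C) \setminus \hat{A}$, together with the regions of $\partial S^3(F)$ they cobound, assemble into a Seifert surface in $S^3$ of Euler characteristic $-1$. A linking-number computation along these lines should produce the value $4$ and complete the identification of $M_F$ as the $(2,4)$-cable complement.
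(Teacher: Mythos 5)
Your outline tracks the paper's strategy closely through the construction of the product annulus $A$, the second annulus $B$, and the isotopy of $\alpha$ onto $A$ (Sections 3 and 4 of the paper). The final step, however, contains a genuine gap that invalidates the identification of the companion knot $C$.

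After cutting along $\hat A$, you claim that $(M_A,\gamma_A)$ is ``the exterior of a knot $C\subset S^3$ whose \emph{meridional} sutured complement has $\dim\SFH = 2$'' and then apply genus detection and the Ghiggini--Ni fiberedness criterion directly to $C$. This is incorrect on two counts. First, the suture $\gamma_A$ is parallel to $\partial A$, which is the cabling slope on $\partial N(C)$, \emph{not} the meridian of $C$. Second, if the suture \emph{were} meridional one would have $\SFH(S^3(C)) \cong \hfkhat(C)$, but $\dim\hfkhat(C)$ is always odd (its Euler characteristic in the Alexander grading is $\Delta_C$, with $|\Delta_C(-1)| = \det(C)$ odd), so $\dim\hfkhat(C)=2$ is impossible for any knot $C$; your chain of deductions would terminate in an immediate contradiction rather than producing the unknot or a trefoil. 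The paper instead identifies $(S^3\setminus N(C),\gamma_r)$ with the sutured complement of the \emph{dual knot} $C'\subset S^3_r(C)$ (the core of the surgery solid torus), so that $\SFH\cong\hfkhat(S^3_r(C),C')\cong\Q^2$; a spectral sequence then forces $\dim\hfhat(S^3_r(C))=2$ and $|H_1(S^3_r(C))|=2$, i.e., $S^3_r(C)$ is an L-space of order $2$. Only then do the L-space slope bound $|r|\geq 2g(C)-1$ of Ozsv\'ath--Szab\'o, the fiberedness criterion of Ghiggini--Ni, and Hedden's strong quasipositivity result combine to pin down $C$ as the unknot or the right-handed trefoil and to fix the cabling slope to $2$.

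Relatedly, your derivation of the $(2,4)$ parameter via a ``winding number'' and ``Euler characteristic'' argument is not carried out and does not reflect where those numbers actually come from: the $2$ is forced by $|H_1(S^3_r(C))|=2$, and the specific framing follows from normalizing the slope $r=2/q$ by a self-homeomorphism of the solid torus (for the unknot) or from $r=2$ directly (for the trefoil). A separate small point: you also need the preliminary observation that $r=0$ is impossible, which the paper handles via Proposition~\ref{prop:longitude-fh} (if $r=0$ and $C$ were nontrivial then $\dim\SFH\geq 4$) together with irreducibility of $M_F$ (to rule out the $(2,0)$-cable of the unknot). Without addressing these points the identification of $M_F$ is not established.
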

\addtocounter{theorem}{-1}
}

This then gives us two possibilities (up to orientation reversal) for $S^3(F)$, which we recall is obtained from $M_F$ by removing a neighborhood of $\alpha$. The next important observation is that in both cases, there is an involution \[\iota:S^3(F)\to S^3(F)\] which fixes $\gamma$ setwise and restricts to a hyperelliptic involution on the once-punctured tori $R_+(\gamma)$ and $R_-(\gamma)$, as shown in Figures \ref{fig:E-T24-quotient} and \ref{fig:E-C24-T23-quotient-1}. The quotient of $S^3(F)$ by this involution is a sutured 3-ball with connected suture. It is natural to identify this quotient 3-ball  with the complement of a thickened disk in $S^3$, \[S^3(F)/\iota \cong S^3(D^2) = (S^3\setminus \inr(D^2\times[-1,1]), \partial D^2 \times [-1,1]),\] and the quotient map realizes $S^3(F)$ as the branched double cover of this ball along a  tangle $\tau\subset S^3(D^2)$, as shown in  Figures \ref{fig:E-T24-quotient} and \ref{fig:E-C24-T23-quotient-2}.

As  discussed at the beginning, $S^3$ is recovered by gluing $F\times[-1,1]$ to $S^3(F)$ by a map which in particular identifies $\partial F\times[-1,1]$ with $\gamma$. For \emph{any} such gluing map $\varphi$, the facts that the once-punctured torus admits a unique hyperelliptic involution up to isotopy, and that this commutes with $\varphi$ up to isotopy -- note that these facts require our assumption that $g(F)=1$ -- imply that $\iota$ extends to an involution $\hat\iota$ of the glued manifold \[Y_\varphi = S^3(F)\cup_\varphi (F\times [-1,1]),\]  whose restriction to the piece $F\times [-1,1]$ is a hyperelliptic involution on each $F\times\{t\}$. The quotient map \[Y_\varphi\to Y_\varphi/\hat{\iota}\] therefore restricts on this piece to a branched double covering \[F\times[-1,1]\to D^2\times[-1,1]\] along some 3-braid \[\beta \subset D^2\times [-1,1].\] It follows that $Y_\varphi$ is the branched double cover of \[S^3(D^2)\cup (D^2\times[-1,1]) \cong S^3\] along the link $\tau\cup \beta$. Moreover, $K$ is the lift of the braid axis \[\kappa = \partial D^2 \times \{0\} = s(\gamma)/\hat{\iota}\] in this double cover, as shown in Figures \ref{fig:E-T24-quotient} and \ref{fig:E-T24-isotopy} in the case that $M_F$ is the complement of the $(2,4)$-cable of the unknot. In particular, $Y_\varphi \cong S^3$ if and only if $\tau\cup \beta$ is an unknot.

This leads to our strategy for identifying $K$:
\begin{enumerate}
\item Identify all 3-braids $\beta$ such that $\tau \cup \beta$ is an unknot.
\item For each such $\beta$, lift $\kappa$ to the branched double cover
\[ \dcover(S^3, \tau \cup \beta) \cong \dcover(S^3, U) \cong S^3, \]
and this lift $\tilde \kappa$ is the corresponding knot $K$.
\end{enumerate}
The first step is generally difficult, and takes up a lot of room in this paper. Our approach is to find a crossing of $\tau$ whose various resolutions are all relatively simple, and then understand surgeries between the branched double covers of these resolutions, making heavy use of the cyclic surgery theorem \cite{cgls} throughout.  We eventually conclude the following:

{
\renewcommand{\thetheorem}{\ref{thm:M_F-E24}}
\begin{theorem}
If $M_F$ is the complement of the $(2,4)$-cable of the unknot, then $K$ must be $5_2$, $15n_{43522}$, or a pretzel knot $P(-3,3,2n+1)$, up to mirroring.
\end{theorem}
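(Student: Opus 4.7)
The plan is to execute the two-step strategy announced just before the statement. Under the hypothesis, $M_F$ is the exterior of the $(2,4)$-torus link and the involution $\iota$ exchanging the two cable components descends $S^3(F)$ to a sutured $3$-ball $S^3(D^2)$ containing an explicit tangle $\tau$. My first task is to draw $\tau$ precisely, together with the images of $\alpha$ and of the suture, so that all subsequent work takes place in a fixed planar picture. The knots $K$ to be classified then correspond to isotopy classes of $3$-braids $\beta \subset D^2 \times [-1,1]$ for which the link $\tau \cup \beta \subset S^3$ is an unknot, and each such $K$ is recovered as the lift of the braid axis $\kappa = \partial D^2 \times \{0\}$ in the branched double cover $\dcover(S^3, \tau \cup \beta) \cong S^3$.

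The heart of the argument is the enumeration of admissible $\beta$. I would single out a crossing $c$ of $\tau$ whose $0$- and $\infty$-resolutions produce simple tangles $\tau_0$ and $\tau_\infty$; for any $\beta$, the triple
\[ (\tau\cup\beta,\ \tau_0\cup\beta,\ \tau_\infty\cup\beta) \]
is an unoriented skein triple, and passing to branched double covers yields a surgery triangle on a single knot $C_\beta$ in a fixed closed $3$-manifold, whose three fillings are the corresponding $\dcover$'s. The unknottedness of $\tau\cup\beta$ forces one of these fillings to be $S^3$. The cyclic surgery theorem of Culler--Gordon--Luecke--Shalen then restricts the other two slopes to distance at most $1$ from the $S^3$ slope on $\partial(S^3\setminus C_\beta)$, which, combined with the classification of knots admitting lens space or reducible small-slope surgeries, severely restricts $C_\beta$ and hence $\beta$. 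I would iterate this analysis at a second well-chosen crossing of $\tau$ to cut down the remaining one-parameter families, ultimately reducing the admissible braids to a single integer-parameter family $\{\beta_n\}$ together with a short list of sporadic solutions.

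For each admissible $\beta$, step two is a direct computation: the lift $\tilde\kappa$ of the braid axis to $\dcover(S^3, \tau\cup\beta) \cong S^3$ is drawn from the braid word via the standard double-cover recipe, and matched against the diagrams in Figure~\ref{fig:main-knots}. The integer-parameter family $\{\beta_n\}$ should reproduce the pretzel family $P(-3,3,2n+1)$ via the classical identification of $3$-strand pretzels as double branched covers of a rational tangle sum, while the sporadic cases should yield $5_2$ and $15n_{43522}$ (up to mirroring), matching the knot Floer data in Table~\ref{fig:hfk-table}.

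The main obstacle I anticipate is the first step. The cyclic surgery theorem only bounds the distance between exceptional slopes, so each crossing-change step leaves a one-parameter family of potential $C_\beta$, and one must run the argument at multiple crossings of $\tau$, carefully bookkeeping to collapse these families onto the single pretzel family $\{P(-3,3,2n+1)\}$ without undercounting sporadic outliers. In particular, ruling out hyperbolic exteriors admitting unexpectedly many small-distance cyclic or Seifert-fibered fillings, and verifying that every surviving candidate indeed corresponds to an unknot closure $\tau\cup\beta$, is where the bulk of the case analysis will reside.
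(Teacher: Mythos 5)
Your high-level strategy matches the paper's: realize $S^3(F)$ as a branched double cover of a sutured $3$-ball along a tangle $\tau$, reduce the problem to classifying $3$-braids $\beta$ with $\tau\cup\beta$ unknotted, resolve a well-chosen crossing to set up a surgery triangle on a knot $\gamma$, and invoke the cyclic surgery theorem. This is exactly what the paper announces in the introduction and does in \S\ref{sec:unknot-24}. However, a few things in your plan are either missing or would not work as stated, and they are the actual content of the proof.

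First, the cyclic surgery theorem gives you nothing without knowing that one of the three branched double covers is a lens space. The paper's key observation is diagrammatic: the crossing is chosen so that the crossing-changed link $L^\beta$ is visibly a \emph{$2$-bridge} knot regardless of $\beta$, so $\dcover(L^\beta)$ is a lens space obtained by \emph{half-integral} surgery on $\gamma$, and CGLS then forces $\gamma$ to be an unknot or torus knot (Lemma~\ref{lem:possible-gamma}). Your proposal says instead that unknottedness of $\tau\cup\beta$ ``forces one of the fillings to be $S^3$'' --- but the $S^3$ filling of $\gamma$ corresponds to the \emph{unresolved} diagram $U=\tau\cup\beta$ itself (the meridional filling), not to any of the three resolutions, and the distance-one condition you invoke is automatic from the Montesinos trick rather than a consequence of CGLS. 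So the way CGLS is entering your argument is not quite right.

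Second, and more seriously, your plan to ``iterate the analysis at a second crossing'' is not how the paper pins down $\beta$, and it is unclear how it would succeed. After restricting $\gamma$, the paper's actual mechanism is the representation $\rho: B_3\to SL_2(\ZZ)$ of~\eqref{eq:B3-action}, which records how $\beta$ reparametrizes the genus-$1$ Heegaard splitting of $\dcover(L^\beta)$. This representation lets the paper translate the constraint ``$\dcover(L^\beta)\cong L(p,q)$'' into an explicit factorization of $\rho(\beta)$ (Lemma~\ref{lem:rho-beta-from-l-beta}); since $\ker\rho=\langle\Delta^4\rangle$ (Lemma~\ref{lem:kernel-rho}), this determines $\beta$ up to powers of the full twist, and the remaining ambiguity is killed by comparing word lengths against the Birman--Menasco classification of links of braid index $\leq 3$ (Lemma~\ref{lem:pin-down-d} and Propositions~\ref{prop:gamma-lht},~\ref{prop:gamma-rht}), together with Murasugi's criterion for $2$-bridge links of braid index $3$ (Proposition~\ref{prop:murasugi-criterion}) in the torus-knot case. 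Without this algebraic machine, the enumeration does not close; a second crossing change gives another surgery triangle but no obvious way to intersect the two constraint sets as cleanly as $\rho$ does.

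So: right skeleton, but the flesh --- the $2$-bridge observation that triggers CGLS, and the $SL_2(\ZZ)$ representation that converts lens-space data into braid words --- is missing, and the ``second crossing'' substitute would not obviously replace it.
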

\addtocounter{theorem}{-1}
}

{
\renewcommand{\thetheorem}{\ref{thm:M_F-C24-T23}}
\begin{theorem}
If $M_F$ is the complement of the $(2,4)$-cable of the right-handed trefoil, then $K$ must be a twisted Whitehead double $\Wh^{\pm}(T_{2,3},2)$.
\end{theorem}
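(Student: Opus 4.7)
The plan is to mirror the two-step strategy laid out after Theorem~\ref{thm:identify-y-c} and executed in Theorem~\ref{thm:M_F-E24}, but with the tangle $\tau \subset S^3(D^2)$ that arises now as the quotient of $S^3(F)/\iota$ when $M_F$ is the complement of the $(2,4)$-cable of the right-handed trefoil. First I would write down this tangle $\tau$ explicitly from the description indicated in Figures~\ref{fig:E-C24-T23-quotient-1} and~\ref{fig:E-C24-T23-quotient-2}, together with the distinguished braid axis $\kappa$ sitting in the boundary disk. Compared to the unknot case of Theorem~\ref{thm:M_F-E24}, the tangle $\tau$ picks up extra crossings coming from the trefoil, but it retains the cabling-annulus structure that controls how branched double covers of resolutions behave.

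The core step, and the main obstacle, is to classify all 3-braids $\beta \subset D^2 \times [-1,1]$ for which $\tau \cup \beta$ is the unknot in $S^3$, so that $Y_\varphi \cong \dcover(S^3, \tau \cup \beta) \cong S^3$. My plan is to pick a distinguished crossing of $\tau$ whose two smoothings yield simple enough tangles that their closures $\tau' \cup \beta$ have well-understood branched double covers; each smoothing then realizes $Y_\varphi$ as a Dehn filling on a fixed 3-manifold with torus boundary. Because $Y_\varphi = S^3$, the cyclic surgery theorem \cite{cgls} together with an analysis of toroidal and small Seifert-fibered fillings on the $(2,4)$-cable space severely restricts the slopes that can occur, and hence restricts $\beta$ up to a finite ambiguity. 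Some care is needed to rule out exceptional Seifert-fibered branched covers that might accidentally be $S^3$, and to track the effect of twisting $\beta$ along the cabling annulus of the trefoil.

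Once the list of admissible braids $\beta$ has been pared down, the second step is to lift the braid axis $\kappa = \partial D^2 \times \{0\}$ to the branched double cover $\dcover(S^3, \tau \cup \beta) \cong S^3$ for each surviving $\beta$, and to identify the resulting knot $\tilde\kappa = K$. I would do this by redrawing $\tau \cup \beta$ as a planar diagram, performing the double cover branched along it, and tracking the preimage of $\kappa$. The cabling structure inherited from the trefoil ensures that the lifts $\tilde\kappa$ sit naturally as pattern knots inside a neighborhood of $T_{2,3}$, with a clasp coming from $\beta$; the two surviving braids should produce precisely the positive and negative clasps, yielding $\Wh^+(T_{2,3},2)$ and $\Wh^-(T_{2,3},2)$ respectively.

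As a consistency check, every resulting $K$ must be genus-1 nearly fibered with $\hfkhat(K;\Q)$ matching one of the rows in Table~\ref{fig:hfk-table}, so any spurious braid surviving the topological analysis can be discarded by a direct Floer-theoretic computation. I expect the braid classification to be the dominant difficulty: the $(2,4)$-cable of the trefoil has a nontrivial JSJ structure, so the exceptional-filling analysis has more cases than in the unknot setting, and carefully tracking the cabling annulus through each tangle resolution will be the technical heart of the argument.
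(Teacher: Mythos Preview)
Your high-level strategy matches the paper's exactly: reduce to classifying 3-braids $\beta$ with $\tau\cup\beta$ unknotted via the Montesinos trick applied to a chosen crossing, then lift $\kappa$ for each survivor. Where you diverge is tactical, and in a direction worth flagging. You expect the trefoil case to be \emph{harder} than the unknot case and plan an exceptional-filling analysis informed by the JSJ structure of the $(2,4)$-cable space; the paper never touches that geometry and finds this case substantially \emph{easier}. The chosen crossing of $\tau$ has the feature that the crossing-changed link $L^\beta$ is visibly a connected sum $T_{-2,3}\#(\tau_{-1/4}\cup\beta)$, so $\dcover(L^\beta)\cong L(3,2)\#\dcover(\tau_{-1/4}\cup\beta)$; since this is a half-integral surgery on the Montesinos curve $\gamma\subset S^3$, irreducibility of non-integral surgeries forces the second summand to be $S^3$, and since no nontrivial torus knot has an order-$3$ lens space surgery, $\gamma$ is an unknot outright---the entire torus-knot case analysis of \S\ref{ssec:gamma-torus-knot} is bypassed. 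The paper then plays a second rational-tangle-replacement trick: two distinct rational fillings of a subtangle both yield links with branched cover $S^3$, so by Gordon--Luecke the complementary tangle has branched cover an unknot exterior, which simultaneously forces $\tau_{1/0}\cup\beta$ to be unknotted and $\hat\beta$ to be the $2$-component unlink. Birman--Menasco plus the $SL_2(\Z)$ representation $\rho$ then pin $\beta$ down to $y^a x^{\pm1} y^{-a}$ in a few lines. Your plan would likely succeed, but these two shortcuts are precisely why \S\ref{sec:trefoil-24} is short; the Floer-theoretic consistency check you propose is never needed.
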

\addtocounter{theorem}{-1}
}

Given that taking the mirror of $K$ corresponds to reversing the orientation of $M_F$, this completes the  proof of Theorem~\ref{thm:main-hfk}.

\begin{remark}
One of the main inspirations for this work and for our approach was a paper by Cantwell and Conlon \cite{cantwell-conlon-52}, who showed (among other things) that if $K$ is either $5_2$ or $P(-3,3,2n+1)$, then $M_F$ is the complement of the $(2,4)$-torus link.  
\end{remark}

\subsection{Other applications}

One of the strengths of knot Floer homology is  its relationship to the Heegaard Floer homology of Dehn surgeries on knots. Indeed, the fact that knot Floer homology detects the unknot can be used to give another proof  that the unknot is uniquely characterized by each of its nontrivial Dehn surgeries (this was first proved via different but similar means by Kronheimer--Mrowka--Ozsv{\'a}th--Szab{\'o}  in \cite{kmos}). Likewise, Ozsv{\'a}th--Szab{\'o} used the fact that  knot Floer homology detects the trefoils and figure eight to prove that these knots are also characterized by each of their nontrivial  surgeries \cite{osz-characterizing}.

In \cite{bs-characterizing}, we use Theorem~\ref{thm:main-hfk} to prove that Dehn surgeries of nearly all rational slopes uniquely characterize the knot $5_2$:

\begin{theorem}[\cite{bs-characterizing}] \label{thm:main-characterizing}
 Let $K \subset S^3$ be a knot, and suppose that $r$ is a rational number for which there is an orientation-preserving homeomorphism
\[ S^3_r(K) \cong S^3_r(5_2). \]
If $r$ is not a positive integer, then $K=5_2$.
\end{theorem}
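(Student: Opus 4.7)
The plan is to upgrade the surgery hypothesis into an isomorphism $\hfkhat(K;\Q) \cong \hfkhat(5_2;\Q)$ of bigraded vector spaces, whereupon Theorem~\ref{thm:main-hfk-detection-1} immediately gives $K = 5_2$. The main engine is the Ni--Wu formula expressing the correction terms $d(S^3_{p/q}(K),\spc)$ in terms of the integers $V_i(K)$ derived from $\cfkinfty(K)$. Writing $r = p/q$ with $q > 0$, the homeomorphism $S^3_r(K) \cong S^3_r(5_2)$ forces every $d$-invariant to match across every $\spc$ structure, which rigidly constrains both the sequence $\{V_i(K)\}$ and the analogous sequence for the mirror.

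From these constraints, one first recovers the basic numerical invariants of $K$. The Casson--Walker invariant of the surgery pins down $\Delta_K''(1)$, and a more refined analysis of $\hfhat$ of the surgery (using the graded Euler characteristic, or working directly with $\hfhat(S^3_0(K))$ in the $r=0$ case) recovers the full Alexander polynomial $\Delta_K(t) = 2t - 3 + 2t^{-1}$, while the Ni--Wu data simultaneously recover the concordance invariant $\tau(K) = \tau(5_2) = 1$. Combining this with the genus-detection formula~\eqref{eqn:genus-detection} and the knot-Floer criterion for fiberedness forces $g(K) = 1$, shows that $K$ is non-fibered, and yields $\dim\hfkhat(K,1;\Q) \geq 2$.

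The crucial remaining step is to upgrade this to the equality $\dim\hfkhat(K,1;\Q) = 2$, so that $K$ qualifies as a genus-1 nearly fibered knot in the sense of Definition~\ref{def:nearly-fibered-main}. With this in hand, Theorem~\ref{thm:main-hfk} confines $K$ to $5_2$, $15n_{43522}$, $\Wh^\pm(T_{2,3},2)$, a pretzel $P(-3,3,2n+1)$, or a mirror, and Table~\ref{fig:hfk-table} together with the already-determined $\Delta_K$ and $\tau$ (the latter also fixing the sign/mirror ambiguity, via for instance the signature encoded in the $d$-invariants) singles out $K = 5_2$. The upper bound $\dim\hfkhat(K,1;\Q) \leq 2$ should come from a further rank count: any additional generator in $\hfkhat(K,1;\Q)$ would either perturb the total rank of $\hfhat(S^3_r(K))$ or shift a specific $d$-invariant away from that prescribed by $5_2$.

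The principal obstacle is carrying out this upper bound uniformly across all admissible slopes, and this is precisely what the exclusion of positive integers buys. Once $r \geq 2g(K)-1 = 1$ is a positive integer, the Ni--Wu formula only probes $V_0(K)$, which is too coarse to rule out competing genus-1 nearly fibered candidates with the same $V_0$; so the argument cannot be expected to run in that range. For $r \leq 0$ and for every non-integer $r$, sufficiently many distinct $\spc$ structures enter the Ni--Wu formula that one can expect to pin down the entire bigraded structure of $\hfkhat(K;\Q)$ and close the proof.
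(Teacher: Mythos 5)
The paper does not actually prove Theorem~\ref{thm:main-characterizing}: it is stated as a result of the forthcoming companion paper \cite{bs-characterizing}, so there is no proof here to compare your attempt against. What I can do is evaluate your sketch on its own terms. Your high-level strategy — extract enough Heegaard Floer data from the homeomorphism $S^3_r(K)\cong S^3_r(5_2)$ to force $\hfkhat(K;\Q)\cong\hfkhat(5_2;\Q)$ and then invoke Theorem~\ref{thm:main-hfk-detection-1} — is the natural and plausibly correct shape for the argument. However, the middle of the argument as written has genuine gaps.

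The most serious one is circularity. You claim that knowing $\Delta_K(t) = 2t-3+2t^{-1}$ together with $\tau(K)=1$ lets you apply the genus-detection formula~\eqref{eqn:genus-detection} and the fiberedness criterion to deduce $g(K)=1$, non-fiberedness, and $\dim\hfkhat(K,1;\Q)\geq 2$. But formula~\eqref{eqn:genus-detection} reads off the genus from the support of $\hfkhat(K)$, which is precisely what you do not yet know; knowing only the Alexander polynomial and $\tau$ gives \emph{no} upper bound on $g(K)$, since the top Alexander grading can have even-dimensional $\hfkhat$ that contributes nothing to $\Delta_K$. You would instead need a quantitative genus bound coming from the rank of $\hfhat(S^3_r(K))$, via the mapping-cone formula and the reduced parts $H_*(A_s^{\mathrm{red}})$ — and that input is not present in your sketch.

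A second gap concerns recovering $\Delta_K(t)$ at all: the Casson–Walker(–Lescop) invariant pins down only the single number $\Delta_K''(1)$, and the Euler characteristic argument you cite really only works directly at $r=0$, where $\chi(\hfp(S^3_0(K),\spinc_i))$ determines the Alexander polynomial coefficient-by-coefficient. For general non-integer or negative $r$, the $\spc$-graded Euler characteristics of $\hfhat(S^3_r(K))$ are all $\pm1$ and carry no such information, while the $d$-invariants determine the $V_i$ and $H_i$ but not $\Delta_K$. Finally, you explicitly defer the crucial upper bound $\dim\hfkhat(K,1;\Q)\leq 2$, which is exactly where the heavy lifting must occur — it needs to rule out larger-genus and larger-rank competitors and, at the final stage, distinguish $5_2$ from the other genus-$1$ nearly fibered knots with $\det = 7$ in Table~\ref{fig:hfk-table} (note that $15n_{43522}$ and $\Wh^-(T_{2,3},2)$ share the same Alexander polynomial as $5_2$, so the Alexander polynomial and a rough $\tau$ computation do not suffice by themselves; you must use the graded $d$-invariants or the total rank). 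As written, the sketch correctly identifies the target and the tools but does not close the argument.
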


This is the strongest result to date concerning characterizing slopes for any hyperbolic knot other than the figure eight. Note that we cannot hope to extend Theorem~\ref{thm:main-characterizing} to all positive integers, since, for example,  $S^3_1(5_2) \cong S^3_1(P(-3,3,8)),$ as shown in \cite{bs-characterizing}.

Using Theorem \ref{thm:main-characterizing},  we  can then determine all of the ways in which the Brieskorn sphere $\Sigma(2,3,11)$ can arise from Dehn surgery on a knot in $S^3$:

\begin{theorem}[\cite{bs-characterizing}]
Given a knot $K \subset S^3$ and a rational number  $r$, there exists an orientation-preserving homeomorphism \[S^3_r(K) \cong \Sigma(2,3,11)\] if and only if $(K,r)$ is either $(T_{-2,3},-\frac{1}{2})$ or $(5_2,-1)$.
\end{theorem}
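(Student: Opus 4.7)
The strategy is to reduce to Theorem~\ref{thm:main-characterizing} when $r=-1$ and to classical surgery classifications for the remaining slopes. The ``if'' direction is standard: $(T_{-2,3},-\tfrac{1}{2})$ produces $\Sigma(2,3,11)$ by Moser's theorem (matching $6q-1=11$ at $q=2$), and $(5_2,-1)$ is a well-known alternative surgery description of this Brieskorn sphere.

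For the ``only if'' direction, suppose $S^3_r(K)\cong \Sigma(2,3,11)$.  Since the target is an integer homology sphere, $r=\pm 1/q$ for some positive integer $q$.  I would first fix the sign of $r$ using the Heegaard Floer $d$-invariant: the Ni--Wu formula gives $d(S^3_{1/q}(K))=-2V_0(K)\le 0$ and $d(S^3_{-1/q}(K))=2V_0(\bar K)\ge 0$, while with the standard Brieskorn orientation $d(\Sigma(2,3,11))=2>0$, so $r=-1/q$ and $V_0(\bar K)=1$.  The Casson invariant identity $\lambda(S^3_{1/n}(K))=\tfrac{n}{2}\Delta_K''(1)$, combined with $\lambda(\Sigma(2,3,11))=-2$, then yields $q\cdot \Delta_K''(1)=4$; since $\Delta_K''(1)$ is an even integer, the only possibilities are $(q,\Delta_K''(1))\in\{(1,4),(2,2)\}$.

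In the first case, $r=-1$, and since $-1$ is not a positive integer, Theorem~\ref{thm:main-characterizing} applied to $S^3_{-1}(K)\cong S^3_{-1}(5_2)$ immediately yields $K=5_2$.  In the second case, $r=-\tfrac{1}{2}$, and I aim to show $K=T_{-2,3}$.  For this I would exploit the small Seifert fibered structure of $\Sigma(2,3,11)$: the classification of knots in $S^3$ admitting small Seifert fibered Dehn surgeries (torus knots via Moser, cables of torus knots, and exceptional hyperbolic surgeries) combined with the Heegaard Floer mapping cone formula applied to $\hfhat(S^3_{-1/2}(K))$ should leave $T_{-2,3}$ as the only candidate.

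The main obstacle is ruling out all non-torus knots in the case $q=2$.  The most direct approach is again Floer-theoretic: from the known Heegaard Floer homology of $\Sigma(2,3,11)$ together with the mapping cone formula, bound $g(K)$ and $\dim\hfkhat(K;\Q)$ to reduce to a finite list containing the genus-one fibered knots (trefoils and figure eight) and the nearly fibered knots classified by Theorem~\ref{thm:main-hfk}.  A case-by-case check of $-\tfrac{1}{2}$-surgery on each candidate, using Table~\ref{fig:hfk-table} and the full bigraded $\hfkhat$ data on the nearly fibered side and classical computations on the fibered side, should eliminate every knot other than the left-handed trefoil, completing the proof.
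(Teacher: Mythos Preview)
This theorem is not proved in the present paper. It is announced as a result of the forthcoming companion article \cite{bs-characterizing}, and the only information given here is the sentence preceding it: ``Using Theorem~\ref{thm:main-characterizing}, we can then determine all of the ways in which the Brieskorn sphere $\Sigma(2,3,11)$ can arise from Dehn surgery on a knot in $S^3$.'' There is therefore no proof in this paper against which to compare your proposal.

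With that caveat, your outline is broadly consistent with that hint. The reduction of the slope to $r\in\{-1,-\tfrac12\}$ via the $d$-invariant and the Casson surgery formula is reasonable (your parity observation $\Delta_K''(1)=2\sum_{i>0}a_i i^2\in 2\Z$ is correct), and your treatment of the case $r=-1$ by invoking Theorem~\ref{thm:main-characterizing} is exactly what the paper signals as the key step. The case $r=-\tfrac12$, however, is not a proof but a plan: you have not shown that the mapping cone computation of $\hfhat(S^3_{-1/2}(K))$ actually forces $g(K)=1$, let alone that it cuts the candidates down to the fibered and nearly fibered genus-one knots of Theorem~\ref{thm:main-hfk}. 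This half-integral case is where the substantive work beyond Theorem~\ref{thm:main-characterizing} lies, and your proposal leaves it essentially open.
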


We note that similar results were achieved for $\Sigma(2,3,5)$ by Ghiggini in \cite{ghiggini}, and for $\Sigma(2,3,7)$ by Ozsv\'ath--Szab\'o in \cite{osz-characterizing}.

Similarly, the only knots for which 0-surgery was previously known to be characterizing are the unknot, trefoils, and figure eight, by a 1987 theorem of Gabai \cite{gabai-foliations3}.  (This is an immediate corollary of Gabai's proof that $S^3_0(K)$ determines the Seifert genus of $K$ as well as whether or not $K$ is fibered.)  Combining the case $r=0$ of Theorem~\ref{thm:main-characterizing} with the main result of \cite{bs-zero} lets us add the infinitely many knots of Theorem~\ref{thm:main-hfk} to this list.

\begin{theorem}[\cite{bs-characterizing,bs-zero}] \label{thm:main-zero}
Let $K \subset S^3$ be a genus-1 nearly fibered knot.  If for some knot $J \subset S^3$ there is an orientation-preserving homeomorphism \[ S^3_0(K) \cong S^3_0(J), \] then $J=K$.
\end{theorem}

\subsection{Coefficients} Every Floer theory and link homology theory in this paper will be considered with coefficients in $\Q$ unless specified otherwise (as in Appendix \ref{sec:hfk-15n43522}). For this reason, we will typically omit the coefficients from our notation for these  theories going forward.

\subsection{Organization}

In \S\ref{sec:sfh-intro}, we review necessary background on sutured Floer homology.  In \S\ref{sec:nearly-fibered}--\S\ref{sec:C}, we classify  the possible pairs $(M_F,\alpha)$, eventually proving Theorem~\ref{thm:identify-y-c}.  In \S\ref{sec:unknot-24}, we determine the knots $K$ arising when $M_F$ is the complement of a cabled unknot, proving Theorem \ref{thm:M_F-E24}. In \S\ref{sec:trefoil-24}, we do the same when $M_F$ is the complement of a cabled trefoil, proving Theorem \ref{thm:M_F-C24-T23}. This proves Theorem~\ref{thm:main-hfk},  and the knot Floer homology detection results in Theorems \ref{thm:main-hfk-detection-1}, \ref{thm:main-hfk-detection-2}, and \ref{thm:main-hfk-detection-3} follow immediately. In \S\ref{sec:kh-to-hfk}, we use Dowlin's spectral sequence to prove  the Khovanov homology detection results in Theorems \ref{thm:main-kh} and \ref{thm:main-kh-pretzel}. We then apply Theorem~\ref{thm:main-kh-pretzel} in \S\ref{sec:homfly} to prove the HOMFLY homology detection result in Theorem~\ref{thm:main-homfly-detection}.  We  finish with Appendix~\ref{sec:hfk-15n43522}, detailing the computations which appear in Table~\ref{fig:hfk-table}.

\subsection{Acknowledgements}

We thank Anna Beliakova, Nathan Dunfield, Matt Hedden, Jen Hom, Siddhi Krishna, Zhenkun Li, Tye Lidman, Nikolai Saveliev,  Josh Wang, and Fan Ye for helpful and interesting conversations related to this work.  We are grateful to the referees for all of their feedback, and in particular for a substantial simplification to the proof of Lemma~\ref{lem:boundary-b-not-meridian}. We also note that Zhenkun and Fan independently discovered some of the results in \S\ref{sec:nearly-fibered}--\S\ref{sec:C}.

\section{Sutured Floer homology background} \label{sec:sfh-intro}

In this section, we briefly review some facts about sutured Floer homology which will be of use in this paper, and establish some notation. See \cite{gabai-foliations1, juhasz-sutured, juhasz-decomposition} for more background.

Following Gabai \cite{gabai-foliations1}, a \emph{sutured manifold} is a pair $(M,\gamma)$, where $M$ is a compact, oriented $3$-manifold and $\gamma \subset \partial M$ is a union of annuli $A(\gamma)$ and tori $T(\gamma)$, all of which are pairwise disjoint.  We identify an oriented simple closed curve inside each annulus that is isotopic to the core of that annulus, and take the \emph{sutures} $s(\gamma)$ to be their union.  We orient the components of $R(\gamma) = \partial M - \inr(\gamma)$ so that their boundary orientations agree with the orientations of $s(\gamma)$, and then let $R_+(\gamma)$ and $R_-(\gamma)$ consist of those components of $R(\gamma)$ whose orientations agree or disagree with the boundary orientation of $\partial M$, respectively.

Juh\'asz \cite[Definition~2.2]{juhasz-sutured} calls $(M,\gamma)$ a \emph{balanced} sutured manifold if $M$ has no closed components, the subsurfaces $R_+(\gamma)$ and $R_-(\gamma)$ have the same Euler characteristic, and every component of $\partial M$ contains an annulus of $A(\gamma)$.  In this case the set of tori $T(\gamma)$ must be empty.

Sutured Floer homology, as defined by Juh{\'a}sz in \cite{juhasz-sutured}, assigns to a balanced sutured manifold $(M,\gamma)$ a vector space over $\Q$,
\[ \SFH(M,\gamma) = \bigoplus_{\spinc \in \spc(M,\gamma)} \SFH(M,\gamma,\spinc), \]
generalizing the hat version of Heegaard Floer homology.  For example, given a knot $K \subset Y$ we consider the sutured knot complement
\[ Y(K) := (Y\setminus N(K), \gamma_\mu), \] whose sutures $s(\gamma_\mu)$ are the union of two oppositely oriented meridians of $K$. Moreover, given a Seifert surface $F$ for $K$, we identify a closed tubular neighborhood of $F$ with the product  $F\times[-1,1]$, and define the sutured Seifert surface complement by
\[ Y(F) := (M,\gamma) = (Y \setminus \inr(F\times[-1,1]), \partial F \times [-1,1]), \] with suture  \[s(\gamma) = \partial F\times\{0\}\] and \[R_\pm(\gamma) = F\times\{\pm 1\}.\] 
Then sutured Floer homology recovers the knot Floer homology of $K$, as well as its summand in the top Alexander grading with respect to $F$, by \begin{align}
\SFH(Y(K)) &\cong \hfkhat(Y,K) \label{eq:sfh-hfk}, \\
\SFH(Y(F)) &\cong \hfkhat(Y,K,[F],g(F)), \label{eq:sfh-hfk-top}
\end{align}
as shown in \cite[Proposition~9.2]{juhasz-sutured} and \cite[Theorem~1.5]{juhasz-decomposition}, respectively. 

Juh\'asz also proved \cite{juhasz-sutured,juhasz-decomposition} that sutured Floer homology detects whether a balanced sutured manifold is taut and  whether it is a product, as stated in Theorem \ref{thm:sfh-taut} below. Recall for this theorem that a  sutured manifold $(M,\gamma)$ is \emph{taut} if it is irreducible and if $R(\gamma)$ is incompressible and Thurston norm-minimizing in \[H_2(M,\gamma).\] It  is a \emph{product} sutured manifold if it is of the form
\[ (M,\gamma) \cong (\Sigma \times [-1,1], \partial \Sigma \times [-1,1]) \] with $s(\gamma) = \partial \Sigma\times\{0\}$, 
where $\Sigma$ is a compact, oriented surface with no closed components.

\begin{theorem} \label{thm:sfh-taut}
Let $(M,\gamma)$ be a balanced sutured manifold.
\begin{itemize}
\item If $(M,\gamma)$ is irreducible and not taut, then $\SFH(M,\gamma) \cong 0$.
\item If $(M,\gamma)$ is taut, then $\dim\SFH(M,\gamma)\geq 1$.
\item If $(M,\gamma)$ is taut and not a product, then $\dim\SFH(M,\gamma)\geq 2$.
\end{itemize}
\end{theorem}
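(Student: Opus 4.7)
The approach is to combine two foundational results: Juhász's decomposition theorem for sutured Floer homology (\cite{juhasz-decomposition}), which exhibits $\SFH$ of a surface-decomposed manifold as a direct summand of $\SFH$ of the original with respect to a suitable splitting of $\spc$ structures, together with Gabai's theorem that every taut balanced sutured manifold admits a hierarchy of well-groomed surface decompositions terminating in a product sutured manifold. These two tools handle all three parts in a unified way.

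For (1), suppose $(M,\gamma)$ is irreducible but not taut. Then $R(\gamma)$ fails to be norm-minimizing, so either some component is compressible in $M$ or the genus can be reduced by a norm-reducing surface. The plan is to produce a well-groomed decomposition reaching a balanced sutured manifold with a ``bad'' feature --- a closed sphere component of $R(\gamma')$ or a reducing sphere in $M'$ --- for which $\SFH = 0$ by the standard vanishing lemmas (no admissible balanced Heegaard diagram can exist in the presence of such features). Choosing the decomposing surface to be maximal across $\spc$ structures (equivalently, building up a hierarchy of norm-reducing surfaces covering every $\spc$ summand) then forces $\SFH(M,\gamma) = 0$.

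For (2), Gabai's theorem produces a hierarchy
$$(M,\gamma) = (M_0,\gamma_0) \rightsquigarrow (M_1,\gamma_1) \rightsquigarrow \cdots \rightsquigarrow (M_n,\gamma_n)$$
along well-groomed surfaces, with $(M_n,\gamma_n) \cong (\Sigma \times I, \partial \Sigma \times I)$ a product sutured manifold. The latter has $\SFH \cong \Q$, since the standard sutured Heegaard diagram for a product has a unique generator. Iterating Juhász's decomposition theorem then yields the chain $\dim\SFH(M_0,\gamma_0) \geq \dim\SFH(M_1,\gamma_1) \geq \cdots \geq \dim\SFH(M_n,\gamma_n) = 1$.

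For (3), the main obstacle will be upgrading this to $\dim\SFH \geq 2$ under the extra hypothesis that $(M,\gamma)$ is not a product. The plan is to argue by contradiction: assuming $\dim\SFH(M,\gamma) = 1$, every well-groomed decomposition out of $(M,\gamma)$ must be dimension-preserving. This tightly constrains the surfaces that can appear in a hierarchy --- in particular, no horizontal surface of positive genus can appear, since decomposing along such a surface splits the manifold into two nontrivial taut sutured pieces whose tensor-product $\SFH$ would force $\dim\SFH(M,\gamma) \geq 2$ by part (2) applied to each piece. Consequently the entire hierarchy must consist only of product-disk and product-annulus decompositions, which reassemble $(M,\gamma)$ as a product, contradicting the hypothesis. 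The technical heart of the argument, and the step I expect to be hardest, is verifying that a taut non-product sutured manifold which contains no essential product disks or annuli must admit a positive-genus horizontal decomposing surface --- a structural fact that ultimately follows from the existence of Gabai's hierarchy itself together with a careful analysis of how the product structure of $(M_n,\gamma_n)$ propagates back up the hierarchy when every intermediate SFH has dimension one.
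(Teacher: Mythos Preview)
The paper does not actually prove this theorem: its ``proof'' consists entirely of citations to \cite[Proposition~9.18]{juhasz-sutured}, \cite[Theorem~1.4]{juhasz-decomposition}, and \cite[Theorem~9.7]{juhasz-decomposition}. So you are attempting something the paper does not, and your proposal should be judged against those original arguments.

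Your outline for (2) is correct and is essentially Juh\'asz's proof: Gabai's hierarchy plus the decomposition theorem. Your sketch for (1) is in the right spirit but vague; the actual argument (due to Ni) is more concrete about how non-tautness of $R(\gamma)$ leads to a decomposition with vanishing $\SFH$.

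Part (3) has a genuine gap. Your key step --- that decomposing along a horizontal surface yields two pieces each with $\dim\SFH \geq 2$, forcing $\dim\SFH(M,\gamma) \geq 2$ --- is circular: to conclude each piece has $\dim\SFH \geq 2$ you would need to know it is not a product, and then invoke the very statement you are proving. If instead you only use part (2) to get $\dim \geq 1$ for each piece, the tensor product could still have dimension $1$. Moreover, even granting that horizontal surfaces are excluded, a Gabai hierarchy can involve decomposing surfaces that are neither product disks, nor product annuli, nor horizontal (e.g.\ surfaces of higher genus with boundary meeting $\gamma$ nontrivially), and you give no argument ruling these out. You correctly flag the ``technical heart'' as hard, but you do not resolve it. Ni's actual proof is quite different: rather than analyzing the shape of a hierarchy, he directly constructs two linearly independent classes in $\SFH$ by exploiting the failure of the vertical fibration to extend, working with the homology of an intermediate sutured manifold in the hierarchy and a careful Heegaard-diagrammatic argument.
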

\begin{proof}
These claims are  \cite[Proposition~9.18]{juhasz-sutured} (whose proof is attributed to Yi Ni), \cite[Theorem~1.4]{juhasz-decomposition}, and \cite[Theorem~9.7]{juhasz-decomposition}, respectively.
\end{proof}

\begin{remark}
\label{rmk:taut}
If $K\subset S^3$ is a knot and $F$ is a genus-minimizing Seifert surface for $K$, then the sutured Seifert surface complement $S^3(F)$  is taut.
\end{remark}

Sutured Floer homology behaves well with respect to sutured manifold decompositions \[(M,\gamma) \stackrel{S}{\leadsto} (M',\gamma')\] for certain  surfaces $S\subset (M,\gamma)$, as stated precisely in \cite[Theorem~1.3]{juhasz-decomposition}.  In this paper, we will be concerned with decompositions along:
\begin{itemize}
\item \emph{product disks}, which are properly embedded disks
\[ S \subset (M,\gamma) \]
such that $\partial S$ meets the sutures $s(\gamma)$ in two points; and
\item \emph{product annuli}, which are properly embedded annuli
\[ S \subset (M,\gamma) \]
such that $\partial S$ has one component in $R_+(\gamma)$ and the other component in $R_-(\gamma)$.
\end{itemize}
The two theorems below state that sutured Floer homology is preserved under product disk decomposition, and under product annulus decomposition with mild additional hypotheses.

\begin{theorem}[{\cite[Lemma~9.13]{juhasz-sutured}}] \label{thm:sfh-disk-decomposition}
Let $(M,\gamma)$ be a balanced sutured manifold. If $(M',\gamma')$ is obtained by decomposing $(M,\gamma)$ along a product disk, then \[\SFH(M,\gamma) \cong \SFH(M',\gamma').\]
\end{theorem}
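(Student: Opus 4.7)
The plan is to reduce this to Juh\'asz's general surface decomposition theorem \cite[Theorem~1.3]{juhasz-decomposition}. That theorem says that for a suitably nice decomposing surface $S \subset (M,\gamma)$, the decomposed manifold $(M',\gamma')$ satisfies
\[ \SFH(M',\gamma') \cong \bigoplus_{\spinc \in O_S} \SFH(M,\gamma,\spinc), \]
where $O_S \subset \spc(M,\gamma)$ is the set of ``outer'' $\spc$ structures, i.e.\ those admitting a representative nowhere-zero vector field that along $S$ never equals the inward normal. So my strategy is: (i) verify that a product disk $D$ qualifies as a decomposing surface to which the theorem applies, and then (ii) show that every $\spinc$ is outer with respect to $D$, so that the displayed direct sum is all of $\SFH(M,\gamma)$.

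Step (i) is straightforward. A product disk meets $s(\gamma)$ transversely in exactly two points, so $\partial D \cap R_\pm(\gamma)$ consists of a single properly embedded arc in $R_\pm(\gamma)$ with endpoints on $s(\gamma)$; a single arc is automatically well-groomed in Juh\'asz's sense, so the hypotheses of the decomposition theorem are satisfied.

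Step (ii) is the heart of the argument. Given any $\spinc \in \spc(M,\gamma)$, I would choose a nowhere-zero representative vector field $v$ on $M$ with the prescribed boundary behaviour on $R_\pm(\gamma)$ and $\gamma$, and homotope $v$ in a collar of $D$ so that $v|_D$ never equals the inward normal $-\nu_D$. The obstruction to such a homotopy (rel $\partial D$) lies in $H^2(D,\partial D;\pi_2(S^2)) \cong \Z$; however, along $\partial D \cap R_\pm(\gamma)$ the boundary data is already aligned with $\pm\nu_D$ (not with $-\nu_D$), which pins down the boundary value of the obstruction cocycle, and along $\partial D\cap\gamma$ the standard suture convention gives a compatible frame. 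A direct check then shows that this cocycle is a coboundary, so $\spinc \in O_D$ and $O_D=\spc(M,\gamma)$.

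The main obstacle is the vector-field bookkeeping in step (ii), but there is an alternative route that avoids it entirely: one can pick a sutured Heegaard diagram $(\Sigma,\alpha,\beta)$ for $(M,\gamma)$ adapted to $D$, arranging that $D\cap\Sigma$ is a single arc $\delta\subset\Sigma$ that is disjoint from every $\alpha$- and $\beta$-curve and runs from one component of $\partial \Sigma$ to the other. Cutting $\Sigma$ along $\delta$ yields a sutured Heegaard diagram for $(M',\gamma')$ with exactly the same $\alpha$- and $\beta$-curves, hence the same generators and the same holomorphic disk counts; this gives the chain-level identification $\SFC(M,\gamma)\cong \SFC(M',\gamma')$ and thus the desired isomorphism.
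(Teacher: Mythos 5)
The paper itself does not re-prove this statement; it is stated as a black-box citation of Juh\'asz's Lemma~9.13 in \cite{juhasz-sutured}, so there is no "paper's proof" to compare against beyond that citation. With that understood, your second route --- adapting a sutured Heegaard diagram so that $D \cap \Sigma$ is a single arc $\delta$ disjoint from the $\alpha$- and $\beta$-curves, cutting along $\delta$, and observing that the generators and holomorphic-disk counts are unchanged --- is essentially Juh\'asz's original argument for Lemma~9.13, and it is the cleaner of your two approaches. Your first route, deducing the result from the later surface decomposition theorem \cite[Theorem~1.3]{juhasz-decomposition} by showing $O_D = \spc(M,\gamma)$, is also a valid strategy (it is the analogue of how Theorem~\ref{thm:sfh-annulus-decomposition} is obtained from \cite[Lemma~8.9]{juhasz-decomposition}), but it is anachronistic relative to the cited lemma, which predates and does not rely on the decomposition theorem.

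The only place your proposal is thin is step~(ii) of the first route: the assertion that the obstruction cocycle in $H^2(D,\partial D;\pi_2(S^2)) \cong \Z$ is a coboundary is stated as ``a direct check'' but not actually carried out, and note that $[D]$ need not vanish in $H_2(M,\partial M;\Z)$ (e.g.\ $\alpha \times I$ in a product $\Sigma\times I$ for $\alpha$ an essential arc), so one cannot dismiss the issue on homological grounds alone. One genuinely has to exhibit the homotopy of $v$ on $D$ rel the boundary data, or appeal to the argument Juh\'asz gives for product annuli in \cite[Lemma~8.9]{juhasz-decomposition}, which adapts to product disks. Since your second route sidesteps this entirely and matches the original proof, I'd lead with that and keep the first route as a remark.
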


\begin{theorem}[{\cite[Lemma~8.9]{juhasz-decomposition}}] \label{thm:sfh-annulus-decomposition}
Let $(M,\gamma)$ be a balanced sutured manifold such that $H_2(M) \cong 0.$  Let \[S \subset (M,\gamma)\] be a product annulus where at least one component of $\partial S$ is nonzero in $H_1(R(\gamma))$. If $(M',\gamma')$ is obtained by decomposing $(M,\gamma)$ along $S$, then
\[ \SFH(M',\gamma') \cong \SFH(M,\gamma). \]
\end{theorem}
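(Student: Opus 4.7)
The plan is to derive the isomorphism from Juh\'asz's general sutured decomposition theorem \cite[Theorem~1.3]{juhasz-decomposition}, which asserts that for any decomposing surface $S\subset(M,\gamma)$ in good position,
\[ \SFH(M',\gamma') \;\cong\; \bigoplus_{\spinc \in \mathcal{O}_S(M,\gamma)} \SFH(M,\gamma,\spinc), \]
where $\mathcal{O}_S(M,\gamma) \subset \spc(M,\gamma)$ is the set of \emph{outer} Spin$^c$ structures with respect to $S$. The task then reduces to showing $\mathcal{O}_S(M,\gamma) = \spc(M,\gamma)$, which makes the right-hand side collapse to $\SFH(M,\gamma)$.

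First I would arrange $S$ in good position. Because $S$ is a product annulus, its two boundary circles already lie in $R_+(\gamma)$ and $R_-(\gamma)$ respectively, disjoint from $s(\gamma)$; a small isotopy in a collar of $R(\gamma)$ suffices to meet Juh\'asz's transversality conventions, and no substantive change to $S$ is needed. Next I would translate outerness into homology. Juh\'asz's definition of $\mathcal{O}_S$ requires extending a nowhere-vanishing vector field representing $\spinc$ so that it agrees with the positive normal of $S$; the obstruction is a relative cohomology class whose pairing with the fundamental class of $S$ can be expressed as a linear combination of $\chi(S)$ and an intersection number of $\partial S$ with cycles dual to $c_1(\spinc)|_{R(\gamma)}$. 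Since $S$ is an annulus, the Euler-characteristic contribution vanishes, and the hypothesis $H_2(M;\Z)=0$ pins down the possible restrictions of $c_1(\spinc)$ to $R(\gamma)$ tightly enough that the remaining piece depends only on $[\partial S]\in H_1(R(\gamma))$. The essentiality of one component of $\partial S$ then provides a nontrivial dual direction along which the vector field can be modified in a neighborhood of $S$, absorbing the obstruction uniformly over $\spinc$ and placing all of $\spc(M,\gamma)$ into $\mathcal{O}_S$.

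The principal obstacle is the obstruction-theoretic bookkeeping in the second step: one must correctly identify Juh\'asz's outerness condition with an explicit homological pairing, and then verify that the two hypotheses — vanishing of $H_2(M;\Z)$ and essentiality of $[\partial S]$ in $H_1(R(\gamma))$ — together force that pairing to be trivial on the \emph{entire} Spin$^c$ space rather than only on a subset. Only one of the two boundary components of $S$ is assumed essential in $H_1(R(\gamma))$, so some care is needed to see that a single such component already suffices to kill the obstruction for every $\spinc$. Once that is established, Juh\'asz's decomposition formula yields the claimed isomorphism $\SFH(M',\gamma') \cong \SFH(M,\gamma)$.
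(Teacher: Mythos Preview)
The paper does not supply its own proof of this statement; it is quoted as \cite[Lemma~8.9]{juhasz-decomposition} with no argument given, so there is nothing here to compare against beyond asking whether your outline is viable and whether it matches Juh\'asz's original.

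At the level of strategy it does: Juh\'asz deduces his Lemma~8.9 from his general decomposition theorem by verifying that a product annulus with the stated boundary hypothesis is an admissible decomposing surface and that every relative $\spc$ structure on $(M,\gamma)$ is outer with respect to it. But your execution of that second step is not a proof. The claim that ``$H_2(M;\Z)=0$ pins down the possible restrictions of $c_1(\spinc)$ to $R(\gamma)$ tightly enough'' names no precise constraint, and the follow-up about a ``nontrivial dual direction along which the vector field can be modified \dots\ absorbing the obstruction uniformly over $\spinc$'' does not specify what modification is made or why a single modification handles every $\spinc$ at once. You have correctly flagged this as the principal obstacle and then left it unresolved; what is written is a plan for an argument rather than an argument. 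To finish you would need Juh\'asz's explicit cohomological criterion for membership in $\mathcal{O}_S$ and an actual computation showing that criterion is satisfied identically under the hypotheses $\chi(S)=0$ and $H_2(M)=0$; the heuristic language in the proposal does not substitute for that.
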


\begin{remark}\label{rmk:taut-annulus}
These two theorems are closely related to the fact that decompositions along product disks and along product annuli preserve tautness \cite[Lemma~3.12]{gabai-foliations1}.
\end{remark}

We say that a product annulus $S\subset (M,\gamma)$ is \emph{essential} if it is incompressible and if it is not isotopic to any component of $\gamma$ by an isotopy which keeps $\partial S$ in $R(\gamma)$ at all times. As discussed in \S\ref{ssec:proof}, our proof of Theorem \ref{thm:main-hfk} relies on finding essential product annuli  in the sutured complement of a genus-1  Seifert surface for a nearly fibered knot. Our main source of such annuli will be the following result:

\begin{theorem} \label{thm:find-annulus}
Let $(M,\gamma)$ be a taut balanced sutured manifold with $H_2(M) \cong 0$, and suppose that $(M,\gamma)$ is not a product $(\Sigma\times[-1,1],\partial \Sigma\times[-1,1])$ in which $\Sigma$ is either an annulus or a pair of pants.  If
\[ \dim \SFH(M,\gamma) < 4 \]
and 
\[ \dim \SFH(M,\gamma) \leq \tfrac{1}{2} b_1(\partial M), \]
then $(M,\gamma)$ contains an essential product annulus $S$.
\end{theorem}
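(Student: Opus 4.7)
The plan is to argue by contradiction using the sutured Floer polytope $P(M,\gamma) \subset H^2(M,\partial M;\R)$ introduced by Juh\'asz in \cite{juhasz-polytope}. Recall that $P(M,\gamma)$ is the convex hull of the classes $c_1(\spinc)/2$ for those $\spinc \in \spc(M,\gamma)$ with $\SFH(M,\gamma,\spinc)\neq 0$. Its vertices are in particular realized by Spin$^c$ structures with $\SFH\neq 0$, so the number of vertices is at most $\dim\SFH(M,\gamma)$, and therefore
\[ \dim P(M,\gamma) \;\leq\; \dim\SFH(M,\gamma) - 1. \]
Under the hypothesis $\dim\SFH(M,\gamma)\leq \tfrac{1}{2}b_1(\partial M)$, this yields the upper bound
\[ \dim P(M,\gamma) \;\leq\; \tfrac{1}{2}b_1(\partial M) - 1. \]

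The plan is then to derive a contradictory \emph{lower} bound by assuming that $(M,\gamma)$ contains no essential product annulus. Here one invokes Juh\'asz's structural theorem from \cite{juhasz-polytope}, which relates the dimension of $P(M,\gamma)$ to the horizontal decomposition theory of $(M,\gamma)$: essential product annuli correspond to directions in $H^2(M,\partial M;\R)$ along which the polytope is degenerate (pairs of faces parallel to a fixed hyperplane), and conversely the absence of essential product annuli forces $P$ to span a subspace of the largest possible dimension permitted by the topology of $\partial M$. Combined with tautness, $H_2(M)\cong 0$, and the exclusion of the two trivial products (which are precisely the sutured manifolds where the polytope degenerates to a point while remaining taut), this should give a lower bound of the form
\[ \dim P(M,\gamma) \;\geq\; \tfrac{1}{2} b_1(\partial M). \]
Matching this against the upper bound above produces the desired contradiction, so an essential product annulus $S$ must exist.

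The main obstacle is locating and applying the precise statement from \cite{juhasz-polytope} that produces the lower bound on $\dim P(M,\gamma)$ under the stated hypotheses, and verifying that the only ``degenerate'' situations (where the polytope is too small to force an annulus) are exactly the two excluded products over an annulus or a pair of pants. Once that lower bound is in hand, the two separate numerical hypotheses $\dim\SFH<4$ and $\dim\SFH\leq \tfrac{1}{2}b_1(\partial M)$ play complementary roles: the second is what drives the contradiction above, while the first ensures that $P(M,\gamma)$ has at most three vertices and so cannot accidentally achieve large dimension through a pathological configuration. A secondary task, which should be straightforward, is to rule out essential product \emph{disks} (or to handle them by a preliminary reduction via Theorem~\ref{thm:sfh-disk-decomposition}) so that one really obtains an annulus, and to verify that the annulus produced is incompressible and not parallel into $\gamma$, both of which follow from the hypothesis that $(M,\gamma)$ is taut and not one of the excluded products.
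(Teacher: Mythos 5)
Your overall strategy—pitting a lower bound on the dimension of the sutured Floer polytope from \cite{juhasz-polytope} against an upper bound coming from the number of nonzero $\SFH$ summands—is the same one the paper uses, and the algebraic chain $\dim P(M,\gamma) \leq (\text{number of vertices}) - 1 \leq \dim\SFH(M,\gamma) - 1$ is correct. However, there is a genuine gap in how you account for the hypothesis $\dim\SFH(M,\gamma) < 4$.

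You assert that this hypothesis ``ensures that $P(M,\gamma)$ has at most three vertices and so cannot accidentally achieve large dimension through a pathological configuration.'' This is not its role, and that reasoning would not close the argument: having at most three vertices only bounds $\dim P$ above by $2$, which is no help in deriving a contradiction with a lower bound of $\tfrac{1}{2}b_1(\partial M)$. The actual function of $\dim\SFH(M,\gamma) < 4$ is to guarantee, via \cite[Corollary~2.2]{juhasz-seifert}, that the taut sutured manifold $(M,\gamma)$ is \emph{horizontally prime}. Horizontal primeness is a standing hypothesis of \cite[Theorem~3]{juhasz-polytope}, the very theorem you are invoking for the lower bound $\dim P(M,\gamma) = \tfrac{1}{2}b_1(\partial M)$ (equivalently $\dim\SFH \geq \tfrac{1}{2}b_1(\partial M)+1$) under the assumption of no essential product annulus. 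Without first establishing horizontal primeness, that theorem simply does not apply, and your planned lower bound is unsupported. Your remarks about the excluded products and the need to rule out essential product disks (via \cite[Lemma~2.13]{juhasz-polytope}) are on target; once you insert the horizontal-primeness step in the correct place, the argument matches the paper's.
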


\begin{proof}
Since $(M,\gamma)$ is taut and $\dim\SFH(M,\gamma)<4$,  \cite[Corollary~2.2]{juhasz-seifert} says that $(M,\gamma)$ is \emph{horizontally prime} (see \cite[Definition~1.7]{juhasz-seifert}).  If $(M,\gamma)$ is also \emph{reduced}, meaning that it does not contain an essential product annulus, and if it is not one of the forbidden products, then \cite[Theorem~3]{juhasz-polytope} says that
\[ \dim \SFH(M,\gamma) \geq \tfrac{1}{2} b_1(\partial M) + 1. \]
By hypothesis, this is not the case, so since $(M,\gamma)$ is not such a product, it is not reduced.  (The products were not excluded in the statement of \cite[Theorem~3]{juhasz-polytope}, but the proof assumes that there are no essential product disks in $(M,\gamma)$, which by \cite[Lemma~2.13]{juhasz-polytope} holds if  and only if $(M,\gamma)$ is not  one of these products.  See \cite[Remark~5.10]{ghosh-li}.)
\end{proof}

Lastly, we record the following for eventual use in our proof of Theorem \ref{thm:identify-y-c}.

\begin{proposition} \label{prop:longitude-fh}
Let $K \subset S^3$ be a nontrivial knot, and let \[(S^3 \setminus N(K), \gamma_0)\] denote the balanced sutured manifold whose sutures $s(\gamma_0)$ are a union of two oppositely oriented Seifert longitudes.  Then
\[ \dim \SFH(S^3 \setminus N(K), \gamma_0) \geq 4. \]
\end{proposition}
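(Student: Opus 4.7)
The plan is to exploit the $\spinc$ conjugation symmetry of sutured Floer homology together with a surface decomposition along a Seifert surface. Set $M := S^3\setminus N(K)$. Since $K$ is nontrivial, its Seifert genus $g = g(K)\geq 1$; let $F$ be a genus-minimizing Seifert surface, and after isotopy assume that $\partial F$ lies in the interior of the annulus $R_+(\gamma_0)$ parallel to the two sutures $\ell_1,\ell_2$.

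First I would decompose $(M,\gamma_0)$ along $F$ to obtain a sutured manifold $(M',\gamma')$, and then analyze its structure. Since $F$ is non-separating in the knot complement, $M' = M\setminus\nu(F)$ is connected. A direct bookkeeping of the cut shows that $\gamma'$ consists of $\ell_1$, $\ell_2$, and a third suture parallel to $\partial F$, and that $R_+(\gamma')$ has two components: an annulus $B_1$ lying in the old $R_+(\gamma_0)$, together with a genus-$g$ surface with one boundary obtained from one copy of $F$ glued along its boundary to a second annulus; $R_-(\gamma')$ is similarly disconnected. Now $(M,\gamma_0)$ is taut for $K$ nontrivial, since its annular $R_\pm$ are incompressible (no longitude bounds a disk in $M$) and norm-minimizing (any Seifert surface has $\chi_-\geq 1$), and $F$ is Thurston norm-minimizing in its class, so Gabai's theorem ensures $(M',\gamma')$ is taut. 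And $(M',\gamma')$ is not a product sutured manifold, because a product $(\Sigma\times I,\partial\Sigma\times\{1/2\})$ with connected total space would force $\Sigma$, and hence $R_+(\gamma')$, to be connected. Theorem~\ref{thm:sfh-taut} then yields $\dim\SFH(M',\gamma')\geq 2$.

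To conclude, Juh\'asz's surface decomposition theorem \cite{juhasz-decomposition} identifies $\SFH(M',\gamma')$ with a direct summand $\bigoplus_{\spinc\in \mathcal{S}^+}\SFH(M,\gamma_0,\spinc)$, where $\mathcal{S}^+$ is the set of ``outer'' $\spinc$ structures with respect to $F$, characterized by $\langle c_1(\spinc),[F]\rangle$ taking a specific extremal value that is nonzero because $g\geq 1$. The $\spinc$ conjugation symmetry $\SFH(\cdot,\spinc)\cong\SFH(\cdot,J\spinc)$ then produces a second summand of equal dimension supported on the opposite extremal value, and these two summands involve disjoint sets of $\spinc$ structures. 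Adding them gives
\[ \dim\SFH(M,\gamma_0) \;\geq\; 2\cdot\dim\SFH(M',\gamma') \;\geq\; 4. \]

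The main obstacle is the careful identification of $(M',\gamma')$ after the cut, in particular the verification that $R_+(\gamma')$ is disconnected, since this is precisely what upgrades the conclusion of Theorem~\ref{thm:sfh-taut} from $\geq 1$ to $\geq 2$; once that is in hand, the conjugation-symmetry doubling is routine.
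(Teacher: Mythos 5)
Your argument is correct, and it takes a genuinely different route from the paper's proof. The paper deduces the bound from two pieces of Friedl--Juh\'asz--Rasmussen's decategorification theory: first, the sutured Turaev torsion $\tau(E_K,\gamma_0)$ vanishes for longitudinal sutures, forcing every nonzero $\spc$ summand of $\SFH(E_K,\gamma_0)$ to be \emph{even}-dimensional; second, the sutured Thurston norm detection theorem together with $x^s([F]) = 2g-1 > 0$ forces at least two distinct $\spc$ structures to carry nonzero summands. These combine to give $\geq 2\cdot 2 = 4$.

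You instead decompose $(E_K,\gamma_0)$ along a minimal genus Seifert surface $F$, verify that the result $(M',\gamma')$ is taut (irreducible, and tautness is preserved under decomposition along a well-groomed norm-minimizing surface) and is \emph{not} a product because $M'$ is connected while $R_+(\gamma')$ is not, and hence conclude $\dim\SFH(M',\gamma')\geq 2$ from the paper's Theorem~\ref{thm:sfh-taut}. Then you invoke Juh\'asz's decomposition theorem to realize this as a summand $\bigoplus_{\spinc\in O_F}\SFH(E_K,\gamma_0,\spinc)$ of outer $\spc$ structures, and double it by $\spc$ conjugation. Your bookkeeping of the cut is accurate: with $\partial F$ isotoped into $\inr R_+(\gamma_0)$, one new suture appears at one side of $F$, and each of $R_\pm(\gamma')$ becomes a genus-$g$ once-punctured surface together with an annulus, so the non-product argument goes through. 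The one place where you are compressing details is the disjointness $O_F\cap \overline{O_F} = \emptyset$: this does hold, because the relative Chern class evaluation $\langle c_1(\spinc,t),[F]\rangle$ is constant on $O_F$ (Juh\'asz computes it in terms of $\chi(F)$ plus a boundary rotation term) and flips sign under conjugation, and for $g\geq 1$ that constant is nonzero --- but the claim that $O_F$ is ``characterized'' by the extremal Chern evaluation, and the sign computation, deserve to be spelled out or referenced rather than asserted. In comparison, the paper's parity-via-torsion argument is shorter and yields slightly more refined information (each nonzero $\spc$ summand is even-dimensional), whereas your route is more geometric and makes transparent where the two factors of two actually come from.
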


\begin{proof} For any balanced sutured manifold $(M,\gamma)$, a choice of  homology orientation for the pair $(M,R_-(\gamma))$ gives rise to an absolute lift of the relative $\Z/2\Z$-grading on $\SFH(M,\gamma)$, and therefore to a well-defined Euler characteristic \[\chi(\SFH(M,\gamma,\spinc))\in \Z\] for each $\spinc\in\spc(M,\gamma)$, as  described in \cite{fjr-decategorification}. Fixing an $H_1(M)$-affine isomorphism \[\iota: \spc(M,\gamma)\to H_1(M),\] these Euler characteristics can be packaged as an element \[\tau(M,\gamma) = \sum_{\spinc\in\spc(M,\gamma)} \chi(\SFH(M,\gamma,\spinc)) \cdot \iota (\spinc)\] of the group ring $\Z[H_1(M)]$.

Let us write $E_K = S^3 \setminus N(K)$ for convenience. Then \[ \tau(E_K,\gamma_0) = 0 \]
as shown in \cite[Example~8.1]{fjr-decategorification}, which means that
\[ \chi(\SFH(E_K,\gamma_0, \spinc)) = 0 \]
for each $\spinc \in \spc(E_K,\gamma_0)$.  In particular, $\dim \SFH(E_K,\gamma_0,\spinc)$ is always even.

Since $K$ is nontrivial, its complement $E_K$ is irreducible. Thus, if we let
\[ S = \{ \spinc \in \spc(E_K,\gamma_0) \mid \SFH(E_K,\gamma_0,\spinc) \not\cong 0 \}, \]
then \cite[Theorem~1.4]{fjr-decategorification} tells us that for all $\alpha \in H_2(E_K,\partial E_K;\R)$, we have
\[ \max_{\spinc,\spint \in S} \langle \spinc-\spint, \alpha \rangle = x^s(\alpha) \]
where $x^s$ is the sutured Thurston norm on $(E_K,\gamma_0)$.  If $\alpha$ is the class of a  Seifert surface for $K$, with genus $g= g(K) \geq 1$, then we compute by \cite[Lemma~7.3]{fjr-decategorification} that
\[ x^s(\alpha) = x(\alpha) = 2g-1, \]
and since this is nonzero there must be two different $\spc$ structures $\spinc$ on $(E_K,\gamma_0)$, each pairing differently with $\alpha$, for which $\SFH(E_K,\gamma_0,\spinc)$ is nonzero.  But then $\SFH(E_K,\gamma_0)$ has dimension at least two in each of these two $\spc$ structures, so we conclude that
\[ \dim \SFH(E_K,\gamma_0) \geq 4 \]
as desired.
\end{proof}

\section{Nearly fibered knots and essential annuli} \label{sec:nearly-fibered}

Let $K\subset S^3$ be a nearly fibered knot of genus $g$, as in Definition \ref{def:nearly-fibered-main}. Then \[ \dim\hfkhat(K,g)=2. \] Since this dimension is less than 4,   \cite[Theorem~2.3]{juhasz-seifert} says that $K$ has a unique genus-$g$ Seifert surface $F$, up to isotopy. In this section, we  will use Theorem \ref{thm:find-annulus} to study essential product annuli in the sutured Seifert surface complement \[S^3(F) = (S^3\setminus \inr(F\times[-1,1]), \partial F \times [-1,1]).\] The  lemma below guarantees the existence of such annuli with  nice boundary properties.

\begin{lemma} \label{lem:product-annulus}
Let $K\subset S^3$ be a nearly fibered knot, and let $F$ be a Seifert surface for $K$ of genus $g = g(K)$.   Then there is an essential product annulus $A$ in the sutured manifold \[(M,\gamma) = S^3(F)\] whose boundary components \[A_\pm=\partial A \cap R_\pm(\gamma)\] 
are not both  boundary-parallel in their respective surfaces $R_\pm(\gamma)$.
\end{lemma}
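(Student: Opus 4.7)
The plan is to apply Theorem~\ref{thm:find-annulus} to the sutured manifold $(M,\gamma)=S^3(F)$ to obtain an essential product annulus, and then upgrade it (if necessary) so that its boundary components satisfy the stated condition.

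First, I would verify the hypotheses of Theorem~\ref{thm:find-annulus}. Tautness of $(M,\gamma)$ is Remark~\ref{rmk:taut}. The vanishing $H_2(M)=0$ follows from the fact that $M$ is a compact orientable 3-manifold with nonempty boundary embedded in $S^3$: any class in $H_2(M)$ is represented by a closed surface, which after compression in $M$ would give an incompressible closed surface in $S^3$, and no such surface exists. Since $K$ is nearly fibered but not fibered (as $\dim\hfkhat(K,g(K))=2>1$, which also rules out $K=U$ and hence forces $g:=g(K)\geq1$), the sutured manifold $(M,\gamma)$ is not a product at all, so in particular it is not one of the forbidden products in Theorem~\ref{thm:find-annulus}. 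By~\eqref{eq:sfh-hfk-top},
\[ \dim\SFH(M,\gamma)=\dim\hfkhat(K,g(K))=2<4. \]
Finally, $\partial M$ is the double of $F$ along $\partial F$, hence a closed oriented surface of genus $2g$, so $b_1(\partial M)=4g\geq 4$ and
\[ \dim\SFH(M,\gamma)=2\leq 2g=\tfrac12 b_1(\partial M). \]
Applying Theorem~\ref{thm:find-annulus} then yields an essential product annulus $A\subset(M,\gamma)$.

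If $A_+$ and $A_-$ are not both boundary-parallel in their respective copies of $R_\pm(\gamma)\cong F$, we are done. Otherwise, each $A_\pm$ cobounds an annulus $C_\pm\subset R_\pm(\gamma)$ with $\partial F\times\{\pm 1\}$, and $T:=A\cup C_+\cup\gamma\cup C_-$ is an embedded torus in $S^3$. I would then study the closure $N$ of the component of $M\setminus A$ containing a collar of $\gamma$: this $N$ has boundary $T$, and since $A$ is not isotopic to $\gamma$ rel~$R(\gamma)$, the region $N$ is not a trivial collar $\gamma\times I$. The plan is to use this nontrivial topology to produce a better annulus, either by finding a compressing disk or essential product annulus inside $N$ or inside its complement $\overline{M\setminus N}$ (to which Theorem~\ref{thm:find-annulus} can be applied again, since the dimension of $\SFH$ is controlled by the decomposition theorems in \S\ref{sec:sfh-intro}), and then using such a disk or annulus to surger $A$ into a new essential product annulus whose boundary avoids being everywhere boundary-parallel.

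The main obstacle is this last modification step. In principle the torus $T$ can sit in $S^3$ in several configurations, and after surgering $A$ one must verify both that the resulting surface is still an essential product annulus in $(M,\gamma)$---i.e.\ incompressible and not isotopic to $\gamma$ rel $R(\gamma)$---and that at least one of its new boundary components is genuinely non-boundary-parallel in $R_\pm(\gamma)$. Ruling out the trivial outcomes of such a surgery, particularly those that would reproduce an annulus isotopic to $\gamma$, is where the real work of the lemma lies; the verification of the hypotheses of Theorem~\ref{thm:find-annulus} is routine by comparison.
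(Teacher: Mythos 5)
Your verification of the hypotheses of Theorem~\ref{thm:find-annulus} is correct and matches the paper in substance (the paper gets $H_2(M)=0$ from Alexander duality rather than compressing surfaces, but that is cosmetic). The gap is in the second half: you correctly identify the remaining problem --- what to do if both $A_\pm$ happen to be boundary-parallel --- but you do not actually solve it, and the strategy you gesture at (surger $A$ into a ``better'' annulus using disks or annuli found in the region $N$ or its complement) is not what the paper does, nor is it clear it can be carried out. As you yourself note, ``ruling out the trivial outcomes of such a surgery\ldots is where the real work of the lemma lies''; that work is missing.

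The paper instead proves outright that the bad case cannot occur, by deriving a contradiction. Assuming both $A_\pm$ are boundary-parallel in $R_\pm(\gamma)$, one reglues $R_+$ to $R_-$ by a homeomorphism chosen to carry $A_+$ to $A_-$, so that $A$ closes up to an embedded torus $T$ inside the knot exterior $E_K$, meeting $F$ in a curve parallel to $\partial F$. One then checks: (i) $T$ is incompressible in $E_K$, because $\pi_1(T)$ is generated by a longitude $\lambda$ of $K$ and a curve $c$ dual to $F$, and a nullhomotopic word $\lambda^i c^j$ forces $j=0$ by intersection with $F$ and then $i=0$ because $K$ is nontrivial; (ii) $T$ is not boundary-parallel, because otherwise cutting the product region back open along $F$ would give an isotopy of $A$ to $\gamma$ rel $R(\gamma)$, contradicting essentiality of $A$. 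So $K$ would be a satellite with companion torus $T$. But $T$ meets $F$ in a single $\partial$-parallel circle, which splits off an annulus in the pattern exterior realizing the pattern as a winding-number-one cable, hence isotopic to the core of the solid torus, forcing $T$ to be boundary-parallel after all --- a contradiction. No surgery on $A$ is needed: the annulus provided by Theorem~\ref{thm:find-annulus} already has the claimed property. I would encourage you to replace your ``upgrade'' step with this contradiction argument, since the modification you sketch would require you to handle all possible embeddings of the torus $T$ in $S^3$ and to re-verify essentiality of the surgered annulus, which is substantially more delicate than the direct argument.
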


\begin{proof}
Let us check that the hypotheses of Theorem \ref{thm:find-annulus} are met. First, note that $S^3(F)$ is not one of the excluded products, since $R_+(\gamma)= F\times\{1\}$  is  not an annulus or  pair of pants. Next, we have that \begin{equation}\label{eqn:homology-zero}H_2(S^3(F)) \cong \tilde{H}^0(F) \cong 0\end{equation} by Alexander duality. We also know that $S^3(F)$ is irreducible (in fact, this sutured manifold is taut, per Remark \ref{rmk:taut}), and that
\[ \dim \SFH(S^3(F)) = \dim \hfkhat(K,g) = 2 \]
by \eqref{eq:sfh-hfk-top}. Note that $g\geq 1$ since the unknot is not nearly fibered. Therefore, \[ \dim \SFH(M,\gamma) =2\leq 2g = \tfrac{1}{2}b_1(\partial M),\] and so Theorem~\ref{thm:find-annulus} provides an essential product annulus $A\subset (M,\gamma)=S^3(F)$. 

Let us suppose for a contradiction that both  boundary components \[A_\pm\subset R_\pm(\gamma)\] of $A$ are   boundary-parallel in their respective surfaces.   We  recover the knot complement \[E_K = S^3 \setminus N(K)\]  from $M$ by gluing $R_+(\gamma)$ to $R_-(\gamma)$ by some homeomorphism, and  we can assume that this gluing map sends $A_+$ to $A_-$ since these curves are boundary-parallel in $R_\pm(\gamma)$, respectively. Then  $A$ becomes a torus $T \subset E_K$ which meets $F$ in a boundary-parallel circle.  

We first claim that $T$ is incompressible in $E_K$.  Indeed, its fundamental group is spanned by a longitude $\lambda$ of $K$ and the image $c$ of a curve
\[ \{\pt\} \times [-1,1] \subset S^1 \times [-1,1] \cong A, \]
which is homologically essential in $E_K$ since it is dual to $F$.  If some product $\lambda^i c^j$ is nullhomotopic in $E_K$ then its homology class satisfies \[0 = [\lambda^ic^j]\cdot F = j,\] so it is a power $\lambda^i$ of the longitude of $K$, but then $i=0$ since $K$ is a nontrivial knot in $S^3$. Therefore, $\lambda^i c^j$ is nullhomotopic in $T$ as well. 

We next claim that $T$ is not boundary-parallel. Indeed, if it were, then  $T$ and $\partial E_K$ would cobound a thickened torus intersecting $F$ in a properly embedded annulus, in which case  cutting $E_K$ back open along $F$ would  give a thickened annulus in $(M,\gamma)$ which is the trace of an isotopy between $A$ and $\gamma$ that keeps $\partial A$ in $R(\gamma)$ at all times.  But $A$ is \emph{essential}, which by definition implies that no such isotopy  exists, a contradiction.

We have shown that under these circumstances $K$ must be a satellite knot, and the torus $T$ splits its exterior into two pieces: the exterior $E_C$ of the companion $C$, and the exterior $E_P$ of the pattern $P\subset S^1\times D^2$.  But then $T$ splits the Seifert surface $F$ into two pieces as well, one of which is an annulus in $E_P$ cobounded by the image  of $A_\pm$ and the boundary $\partial F$.  This annulus gives an isotopy of the pattern $P$ into $T$, where it is identified with a longitude of $C$, so $P$ must be a cable pattern with winding number one.  But this means that $P$ is isotopic to the core of $S^1\times D^2$, so $T$ is boundary-parallel and we have a contradiction.  We conclude that $A_\pm$ cannot both be boundary-parallel, as desired.
\end{proof}

\subsection{The manifold $M_F$}\label{ssec:M_F}
While Lemma~\ref{lem:product-annulus} applies to nearly fibered knots of any genus, we are especially interested in the genus-1 case.  In this setting we introduce the following construction, as in \S\ref{ssec:proof}, which we will refer to repeatedly throughout the paper.

\begin{definition} \label{def:M_F}
Let $F$ be a genus-1 Seifert surface for a nontrivial knot $K\subset S^3$.  We define
\[ M_F = S^3(F) \cup \big(D^2 \times [-1,1]\big) \]
to be the manifold obtained by gluing $D^2\times[-1,1]$ to $S^3(F)$ by a diffeomorphism 
\[  \partial D^2 \times [-1,1]   \cong  \partial F \times [-1,1] \]
which preserves the interval coordinate.  The boundary $\partial M_F$ is a disjoint union of two tori,
\[ T_\pm = (F\times \{\pm1\}) \cup (D^2 \times \{\pm 1\}). \] Let $\alpha$ be the properly embedded arc in $M_F$ given by  \[\alpha = \{0\}\times [-1,1]\subset D^2\times [-1,1].\] Then $(M,\gamma) = S^3(F)$ is clearly recovered  by removing the neighborhood \[N(\alpha) = D^2\times [-1,1]\] of $\alpha$ from $M_F$, with suture $s(\gamma)$  given by  the meridian \[\mu_\alpha = \partial D^2\times\{0\}\] of the arc $\alpha$.

\end{definition}

As noted in \S\ref{ssec:proof}, $M_F$ can also be described as the manifold obtained from the 0-surgery $S^3_0(K)$ by removing a tubular neighborhood of the torus $\hat{F}$ formed by capping off the Seifert surface $F$ with a disk in the solid surgery torus. This perspective shows the following:

\begin{lemma}\label{lem:MF-inc} Let $F$ be a genus-1 Seifert surface for a nontrivial knot $K\subset S^3$. Then  the manifold $M_F$ is irreducible, and  the tori $T_+$ and $T_-$ are incompressible.
\end{lemma}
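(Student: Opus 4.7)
The plan is to deduce the lemma from the corresponding properties of the $0$-surgery $S^3_0(K)$. First, note that since $K$ is nontrivial and admits a genus-$1$ Seifert surface, its Seifert genus is exactly $1$, so $F$ is a minimal genus Seifert surface for $K$. As observed just after Definition~\ref{def:M_F}, $M_F$ is identified with the complement in $S^3_0(K)$ of an open tubular neighborhood of the capped-off surface $\hat F$, and the tori $T_\pm$ are the two parallel copies of $\hat F$ that bound this neighborhood.

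The main input I would invoke is Gabai's foliation theorem: because $F$ is a genus-minimizing Seifert surface for a nontrivial knot, there is a taut foliation on $S^3_0(K)$ having $\hat F$ as a closed leaf. Two standard consequences then follow immediately: $S^3_0(K)$ is irreducible, and $\hat F$ is incompressible in $S^3_0(K)$.

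Given these two facts, the conclusions for $M_F$ are routine. For irreducibility, any embedded $2$-sphere $S \subset M_F$ bounds a ball $B \subset S^3_0(K)$ by irreducibility of the latter; since $\hat F$ is connected, disjoint from $S$, and incompressible (so in particular cannot be contained in a $3$-ball, as every torus in a $3$-ball is compressible), we must have $\hat F \cap B = \emptyset$, and hence $B \subset M_F$. For incompressibility of $T_+$ (the case of $T_-$ being identical), I would observe that $T_+$ is isotopic to $\hat F$ through the product collar $N(\hat F) \subset S^3_0(K)$, so any compressing disk for $T_+$ in $M_F$ can be transported across this collar to a compressing disk for $\hat F$ in $S^3_0(K)$, contradicting the incompressibility of $\hat F$.

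The one substantive hurdle in this plan is the appeal to Gabai's deep theorem producing a taut foliation with $\hat F$ as a leaf; once that is granted, the remaining steps are elementary three-manifold topology.
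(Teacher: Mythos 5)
Your proof is correct, and it rests on exactly the same key input as the paper's: Gabai's theorem (\cite[Corollary~8.2]{gabai-foliations3}) that $S^3_0(K)$ admits a taut foliation with $\hat F$ a compact leaf. The two arguments diverge only in how they cash in this foliation. The paper cuts the foliation open along the leaf $\hat F$ to get a taut foliation on $M_F$ itself whose compact boundary leaves are $T_+$ and $T_-$, and then reads off irreducibility and boundary incompressibility directly from the theory of taut foliations on manifolds with boundary. You instead extract irreducibility of $S^3_0(K)$ and incompressibility of $\hat F$ from the foliation on the closed manifold, and then transfer these facts to $M_F$ by elementary 3-manifold topology (a sphere in $M_F$ bounds a ball missing $\hat F$; a compressing disk for $T_\pm$ slides across the collar to compress $\hat F$). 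Both routes work and are comparably efficient. One small step worth tightening: in the irreducibility argument you conclude from $\hat F\cap B=\emptyset$ that $B\subset M_F$, but $M_F$ is the complement of the full tubular neighborhood $N(\hat F)$, not merely of $\hat F$. The fix is immediate: $N(\hat F)$ is connected and disjoint from $S=\partial B$, so it lies entirely inside $B$ or entirely in $S^3_0(K)\setminus B$; the former would put the torus $\hat F$ inside a ball, which you have already ruled out, so $N(\hat F)\cap B=\emptyset$ and hence $B\subset M_F$.
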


\begin{proof}
\cite[Corollary~8.2]{gabai-foliations3} says that $S^3_0(K)$ admits a taut foliation with $\hat{F}$ a compact leaf. Cutting   open along this leaf  then gives a taut foliation on $M_F$ for which $T_\pm$ are compact leaves, from which the lemma follows.
\end{proof}

We end this section with the following lemma:

\begin{lemma} \label{lem:dehn-fill}
Let $F$ be a genus-1 Seifert surface for a nearly fibered knot $K\subset S^3$, and let \[A\subset M_F\] be the image of the annulus provided by Lemma \ref{lem:product-annulus} under the inclusion of $S^3(F)$ into $M_F.$  Then the boundary components \[A_\pm = \partial A\cap T_\pm\] are each homologically essential in their respective tori $T_\pm$.
\end{lemma}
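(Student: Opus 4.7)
The plan is to argue by contradiction using the incompressibility of $T_\pm$ established in Lemma \ref{lem:MF-inc}. Suppose without loss of generality that $A_+$ is nullhomologous in $T_+$. Since $A_+$ is a simple closed curve on a torus, it must then be nullhomotopic, so it bounds an embedded disk $D_+\subset T_+$.

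The annulus $A$, being a product annulus in $S^3(F)$, meets $\partial M_F = T_+\cup T_-$ precisely in $A_+\cup A_-$, so the union $A\cup_{A_+} D_+$ is an embedded disk in $M_F$ whose single boundary component is $A_-\subset T_-$. After a small isotopy that pushes the interior of $D_+$ into the interior of $M_F$, we obtain a properly embedded disk with boundary $A_-$. By Lemma \ref{lem:MF-inc}, the torus $T_-$ is incompressible in $M_F$, so $A_-$ must bound a disk in $T_-$ as well, and is therefore nullhomologous in $T_-$.

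To translate back to the statement of the lemma, note that each torus $T_\pm$ is obtained from the once-punctured torus $R_\pm(\gamma) = F\times\{\pm 1\}$ by capping off the boundary circle $\partial F\times\{\pm 1\}$ with the disk $D^2\times\{\pm 1\}$, so the inclusion $R_\pm(\gamma)\hookrightarrow T_\pm$ induces an isomorphism on $H_1$. A simple closed curve on a once-punctured torus is nullhomologous if and only if it is boundary-parallel, so we conclude that both $A_+$ and $A_-$ are boundary-parallel in $R_+(\gamma)$ and $R_-(\gamma)$, respectively. This contradicts Lemma \ref{lem:product-annulus}. The only real obstacle is setting up the geometry carefully; once the compressing disk $A\cup D_+$ is identified, the argument is a direct appeal to incompressibility of $T_-$.
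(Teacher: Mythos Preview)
Your argument is correct and follows essentially the same route as the paper: both proofs hinge on capping the annulus $A$ with a disk bounded by one boundary component to produce a compressing disk for the other torus, and then invoking the incompressibility of $T_\pm$ from Lemma~\ref{lem:MF-inc} together with Lemma~\ref{lem:product-annulus}. The only imprecision is the claim that a simple closed curve on a once-punctured torus is nullhomologous if and only if it is boundary-parallel---a curve bounding a disk is also nullhomologous---so you should note that $A_\pm$ cannot bound a disk in $R_\pm(\gamma)$ because $A$ is incompressible; with that one-line fix the argument is complete.
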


\begin{proof}
Lemma \ref{lem:product-annulus} says that at least one of the boundary components of $A$, which we can take to be $A_+$ without loss of generality, is not boundary-parallel in  $R_+(\gamma)$, where \[(M,\gamma)=S^3(F).\] Since $R_+(\gamma)$ is a once-punctured torus in the case at hand, and the torus $T_+$ is obtained by capping off $R_+(\gamma)$ with a disk, it follows that  $A_+$ is homologically essential in $T_+$.
 
It remains to show that $A_-$ is homologically essential in $T_-$. If not, then this means that $A_-$ must be boundary-parallel when viewed as a curve in the once-punctured torus $R_-(\gamma)$. In this case, $A_-$ bounds the disk $D\subset T_-$ which caps off $R_-(\gamma)$ to form $T_-$. Then the union \[A\cup D\]  is a disk bounded by $T_+$. Pushing this disk slightly into the interior of $M_F$ gives a compressing disk for $T_+$. But this contradicts the fact that $T_+$ is incompressible, per Lemma \ref{lem:MF-inc}. It follows that $A_-$ is homologically essential in $T_-$, completing the proof of the lemma.
\end{proof}

This lemma is notable in part for the following consequence, as mentioned in \S\ref{ssec:proof}:

\begin{remark}\label{rmk:cabling}It follows from Lemma \ref{lem:dehn-fill}  that if $F$ is a genus-1 Seifert surface for a nearly fibered knot, then $M_F$ is the complement of a 2-component cable link in some 3-manifold, with \[A\subset M_F\] being the cabling annulus. Indeed, since  the curves $A_\pm$ are homologically essential in $T_\pm$, there are curves $c_\pm\subset T_\pm$ which are homologically dual to $A_\pm$. Then $M_F$ is the complement \[M_F \cong Y\setminus N(L),\] where  $Y$ is the closed 3-manifold obtained by Dehn filling the  tori $T_\pm$ along the curves $c_\pm$, and $L$ is the 2-component link given by the union of the cores of the solid tori in this filling. Recall that our eventual goal is to prove that $M_F$ is the complement of 2-component cables of the unknot or trefoils, per Theorem \ref{thm:identify-y-c}.
\end{remark}

\begin{remark}
As indicated in Lemma \ref{lem:dehn-fill}, we will henceforth view the annulus $A$ of Lemma \ref{lem:product-annulus} as living in $S^3(F)$ or $M_F$ interchangeably.
\end{remark}

\section{On the manifold $M_F$ and the arc $\alpha$} \label{sec:M_F}

Let $K \subset S^3$ be a nearly fibered knot, with a Seifert surface $F$ of genus 1.  Let \[\alpha\subset M_F\] be the arc in Definition \ref{def:M_F} whose complement recovers $S^3(F)$. Per Remark \ref{rmk:cabling}, $M_F$ is the complement of a 2-component cable link, with cabling annulus \[A\subset M_F\] as provided in Lemma \ref{lem:dehn-fill}. By construction, $\alpha$ is disjoint from $A$. Our goal in this section is to prove that it can be isotoped to lie in this cabling annulus, however. This is a key step towards our eventual classification of $M_F$ and thus $S^3(F)$  in the next section. 

\begin{proposition} \label{prop:t-in-cabling-annulus}
Let $F$ be a genus-1 Seifert surface for a nearly fibered knot $K \subset S^3$. Let \[A \subset M_F\] be the annulus provided by Lemma~\ref{lem:dehn-fill}. Then the arc $\alpha$  admits an isotopy, keeping $\partial\alpha$ in  $\partial M_F$ at all times, which carries $\alpha$ to a properly embedded arc in $A$.
\end{proposition}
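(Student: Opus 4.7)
The plan is to extract a second essential product annulus in $S^3(F)$, use it together with $A$ to split off a product sutured region containing the original suture $\gamma$, and then use the product structure to push $\alpha$ onto $A$. First, decompose $(M,\gamma) = S^3(F)$ along $A$ to form a sutured manifold $(M',\gamma')$. Since $A_\pm$ is not boundary-parallel in the once-punctured torus $R_\pm(\gamma)$ by Lemma~\ref{lem:dehn-fill}, and any separating simple closed curve in a once-punctured torus is boundary-parallel, $A_\pm$ is non-separating and therefore homologically essential in $R_\pm(\gamma)$. With $H_2(S^3(F))\cong 0$ by Alexander duality, Theorem~\ref{thm:sfh-annulus-decomposition} yields $\dim\SFH(M',\gamma') = \dim\SFH(S^3(F)) = 2$, and tautness is preserved by Remark~\ref{rmk:taut-annulus}. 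The new $R_\pm(\gamma')$ are pairs of pants, so $(M',\gamma')$ is not one of the excluded products in Theorem~\ref{thm:find-annulus}, and the hypothesis $\tfrac12 b_1(\partial M')\geq 2$ is easily checked. Applying Theorem~\ref{thm:find-annulus} then supplies a second essential product annulus $B\subset(M',\gamma')\subset S^3(F)\setminus A$.

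Next, I would show that $B$ separates $(M',\gamma')$ into two pieces $N_1,N_2$, with $N_1$ meeting the original suture $\gamma$, and then identify $N_1$ as a product. Both facts use the cable-link description of $M_F$ from Remark~\ref{rmk:cabling}: an essential product annulus $B$ disjoint from the cabling annulus $A$ must sit compatibly with the cabled structure, and together with the incompressibility of $T_\pm$ from Lemma~\ref{lem:MF-inc} this rules out the non-separating configurations. After separation, decomposing along $B$ via Theorem~\ref{thm:sfh-annulus-decomposition} and applying additivity of $\SFH$ under disjoint union gives
\[ \dim\SFH(N_1)\cdot\dim\SFH(N_2) \;=\; \dim\SFH(M',\gamma') \;=\; 2. \]
Each $N_i$ is taut by Remark~\ref{rmk:taut-annulus}, so $\dim\SFH(N_i)\geq 1$ by Theorem~\ref{thm:sfh-taut}; hence one piece has $\dim\SFH = 1$ and is a product sutured manifold, while the other has dimension $2$. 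A geometric comparison, using that the piece not containing $\gamma$ inherits a non-trivial fragment of the cabled Seifert geometry between $A$ and $B$, shows that $N_1$ is the product piece and that at least one boundary component of $N_1$ other than $\gamma$ lies on $A$.

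Finally, write $N_1\cong \Sigma\times[-1,1]$ with the $\gamma$-annulus corresponding to one boundary component $\delta$ of $\Sigma$ and at least one other boundary component of $\Sigma$ lying on $A$. Reattaching $D^2\times[-1,1]$ along $\gamma$ to recover $M_F$ caps off $\delta$ in $\Sigma$, producing an enlarged product region $N_1^+\cong\Sigma'\times[-1,1]\subset M_F$ containing $\alpha=\{0\}\times[-1,1]$ as a cocore of the filled-in cylinder. Inside this product, $\alpha$ is ambient-isotopic rel endpoints to $\{p\}\times[-1,1]$ for any $p\in\Sigma'$; choosing $p$ on the $A$-boundary component of $\Sigma'$ slides $\alpha$ onto $A$ while keeping $\partial\alpha$ on $T_\pm$, which is the required isotopy. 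I expect the main obstacle to be the middle step: the dimension count alone does not single out which piece is a product nor which piece bounds on $A$, so some genuine $3$-manifold topology — principally the cable-link structure of $M_F$ combined with the incompressibility of $T_\pm$ — is needed. Once these identifications are in hand, the passage from product structure to the ambient isotopy is essentially a picture.
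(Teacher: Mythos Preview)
Your overall strategy matches the paper's: decompose $S^3(F)$ along $A$ to get $(M',\gamma')$, find a second essential product annulus $B$ there via Theorem~\ref{thm:find-annulus}, show $B$ separates off a product piece containing the original suture $\gamma$, and use that product structure to slide $\alpha$ onto $A$. The paper carries this out through Lemmas~\ref{lem:find-B-homology}--\ref{lem:isotope-t-into-dNC}. However, your execution has two real gaps.

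First, you do not analyze where $\partial B$ sits. Each $B_\pm$ is boundary-parallel in the pair-of-pants $R_\pm(\gamma')$, hence parallel either to a component of $s(\gamma_A)$ or to the meridian $\mu_\alpha$ of $\alpha$. The meridional case must be excluded before anything else: if one $B_\pm$ is meridional and the other is not, capping off in $M_F$ produces a compressing disk for $T_\mp$; if both are meridional, one shows that $B$ is parallel to the $\mu_\alpha$-component of $\gamma'$, contradicting essentiality. This is the content of the paper's Lemma~\ref{lem:boundary-b-not-meridian}, and without it your ``$N_1$ meets $\gamma$, $N_2$ does not'' dichotomy is not even well-posed. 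The separating claim is then a homology argument using $H_2(M_A;R)\cong 0$ together with a Klein-bottle obstruction (Lemmas~\ref{lem:find-B-homology} and~\ref{lem:annulus-b-separating}), not the cable-link geometry you invoke.

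Second, the identification of which piece is the product---the step you rightly flag as the main obstacle---does not require any ``cabled Seifert geometry.'' Once you know $B_\pm$ is parallel to $s(\gamma_A)$, the piece $N_2$ not containing $\mu_\alpha$ has torus boundary with two parallel sutures; if it were a product it would be $(\text{annulus})\times[-1,1]$, and then $B$ would be isotopic through $N_2$ to a component of $\gamma'$, directly contradicting that $B$ is \emph{essential}. So $N_1$ is forced to be the product by essentiality alone. (A minor point: you also need $H_2(M')\cong 0$ to apply Theorems~\ref{thm:sfh-annulus-decomposition} and~\ref{thm:find-annulus} to $(M',\gamma')$; this requires its own Mayer--Vietoris verification, as in Lemma~\ref{lem:find-B-homology}.)
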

Proposition \ref{prop:t-in-cabling-annulus} will follow from a combination of several  lemmas in this section.
To start, note that we can view $M_F$ as a (non-balanced) sutured manifold $(M_F,\gamma_F)$, where $\gamma_F = A(\gamma_F) \sqcup T(\gamma_F)$ is empty and the two boundary tori $T_\pm$ are oriented so that
\begin{align*}
R_+(\gamma_F) &= T_+, &
R_-(\gamma_F) &= T_-.
\end{align*}
Choose an orientation for $A$ and consider the sutured manifold decomposition
\[ (M_F,\gamma_F) \stackrel{A}{\leadsto} (M_A, \gamma_A) \] along $A$, illustrated in Figure~\ref{fig:cable-annulus-decomposition}. In particular, \[(M_A, \gamma_A)\] is a balanced sutured manifold with torus boundary, whose sutures $s(\gamma_A)$ are the union of two oppositely oriented curves of the same slope as the boundary components of $A$. 

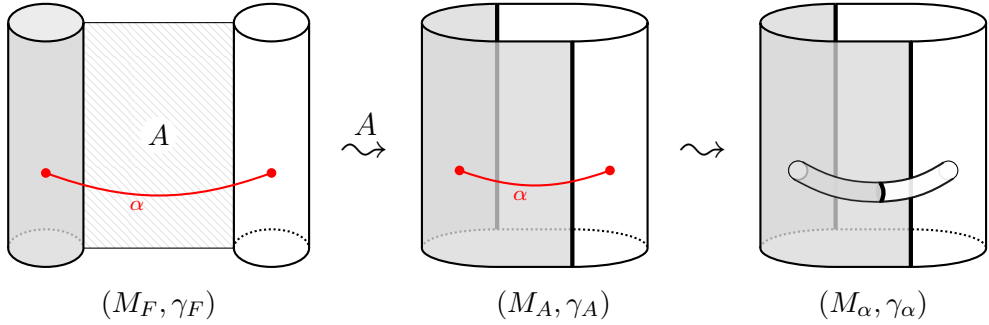
\begin{figure}
\begin{tikzpicture}
% cabling annulus
\draw[thin,pattern=north west lines,pattern color=gray!20] (1,-3) rectangle (3,0);
\node[circle,fill=white,inner sep=1pt] at (2,-1.5) {$A$};
% left complement
\path[fill=gray!30, fill opacity=0.5] (0,-3) -- ++(0,3) arc (180:0:0.5 and 0.25) -- ++(0,-3) arc (0:180:0.5 and 0.25);
\draw (0,0) arc (180:0:0.5 and 0.25);
\draw[densely dotted] (0,-3) arc (180:0:0.5 and 0.25);
\draw[fill=gray!30, fill opacity=0.75] (0,0) -- ++(0,-3) arc(180:360:0.5 and 0.25) -- ++(0,3) arc(0:-180:0.5 and 0.25);
% right complement
\draw (3,0) arc (180:0:0.5 and 0.25);
\draw[densely dotted] (3,-3) arc (180:0:0.5 and 0.25);
\draw (3,0) -- ++(0,-3) arc(180:360:0.5 and 0.25) -- ++(0,3) arc(0:-180:0.5 and 0.25);
% tangle
\draw[red] (0.5,-2) to[bend right=20] node[pos=0.4,below,inner sep=2pt] {\scriptsize$\alpha$} ++(3,0);
\draw[red,fill=red] (0.5,-2) circle (0.05) ++(3,0) circle (0.05);
\node at (2,-3.75) {$(M_F,\gamma_F)$};
\node at (4.75,-1.5) {\LARGE$\stackrel{A}{\leadsto}$};
\node at (7.25,-3.75) {$(M_A,\gamma_A)$};
%
% decompose along A
%
\begin{scope}[xshift=-0.5cm]
\draw (7,0.25) -- ++(1,0) arc (90:-90:1 and 0.25) ++(1,0.25) -- ++(0,-3) ++(-1,-0.25) arc (-90:0:1 and 0.25);
\path[fill=gray!30, fill opacity=0.75] (6,0) arc (180:90:1 and 0.25) -- ++(0,-3) arc (90:180:1 and 0.25) -- ++(0,3);
\draw[densely dotted] (6,-3) arc (180:90:1 and 0.25) -- ++(1,0) arc (90:0:1 and 0.25);
\draw (6,0) arc (180:90:1 and 0.25);
\draw[ultra thick] (7,0.25) -- ++(0,-3);
\draw[fill=gray!30, fill opacity=0.75] (6,0) arc (180:270:1 and 0.25) -- ++(1,0) -- ++(0,-3) -- ++(-1,0) arc (270:180:1 and 0.25) -- ++(0,3);
\draw[ultra thick] (8,-0.25) -- ++(0,-3);
% tangle
\path (7,-2) arc (270:240:1 and 0.25) coordinate (tl);
\path (8,-2) arc (270:300:1 and 0.25) coordinate (tr);
\draw[red] (tl) to[bend right=20] node[pos=0.4,below,inner sep=2pt] {\scriptsize$\alpha$} (tr);
\draw[red,fill=red] (tl) circle (0.05) (tr) circle (0.05);
\end{scope}
%
% remove T
%
\begin{scope}[xshift=4cm]
\draw (7,0.25) -- ++(1,0) arc (90:-90:1 and 0.25) ++(1,0.25) -- ++(0,-3) ++(-1,-0.25) arc (-90:0:1 and 0.25);
\path[fill=gray!30, fill opacity=0.75] (6,0) arc (180:90:1 and 0.25) -- ++(0,-3) arc (90:180:1 and 0.25) -- ++(0,3);
\draw[densely dotted] (6,-3) arc (180:90:1 and 0.25) -- ++(1,0) arc (90:0:1 and 0.25);
\draw (6,0) arc (180:90:1 and 0.25);
\draw[ultra thick] (7,0.25) -- ++(0,-3);
\draw[fill=gray!30, fill opacity=0.75] (6,0) arc (180:270:1 and 0.25) -- ++(1,0) -- ++(0,-3) -- ++(-1,0) arc (270:180:1 and 0.25) -- ++(0,3);
\draw[ultra thick] (8,-0.25) -- ++(0,-3);
% tangle
\path (7,-2) arc (270:240:1 and 0.25) coordinate (tl);
\path (8,-2) arc (270:300:1 and 0.25) coordinate (tr);
\draw[thin,fill=white] (tl) circle (0.125) (tr) circle (0.125);
\path (tl) ++(60:0.125) coordinate (tl-top) (tr) ++(120:0.125) coordinate (tr-top);
\path (tl) ++(240:0.125) coordinate (tl-bot) (tr) ++(300:0.125) coordinate (tr-bot);
\path (tl-top) to[bend right=30] coordinate[pos=0.55] (top-mid) (tr-top);
\path (tl-bot) to[bend right=30] coordinate[pos=0.55] (bot-mid) (tr-bot);
%\draw[red] (tl) to[bend right=20] node[pos=0.4,below,inner sep=2pt] {\scriptsize$T$} (tr);
%\draw[fill] (tl) circle (0.05) (tr) circle (0.05);
\draw[ultra thick] (top-mid) to[bend right=30] (bot-mid);
\draw[fill=gray!30,opacity=0.75] (tl-top) to[bend right=15] (top-mid) to[bend left=15] (bot-mid) to[bend left=15] (tl-bot) arc (240:420:0.125);
\draw[thin,fill=gray!30,opacity=0.75] (tl-top) to[bend right=15] (top-mid) to[bend left=30] (bot-mid) to[bend left=15] (tl-bot) arc (240:60:0.125);
\draw[fill=white,opacity=0.9] (tr-top) to[bend left=15] (top-mid) to[bend left=30] (bot-mid) to[bend right=15] (tr-bot);
\draw[ultra thick] (top-mid) to[bend left=30] (bot-mid);
\node at (5.25,-1.5) {\LARGE$\stackrel{\vphantom{A}}{\leadsto}$};
\node at (7.5,-3.75) {$(M_\alpha,\gamma_\alpha)$};
\end{scope}
\end{tikzpicture}
\caption{Decomposing $(M_F,\gamma_F)$ along the annulus $A$ to form $(M_A,\gamma_A)$, and then removing the arc $\alpha$ to obtain the sutured manifold $(M_\alpha,\gamma_\alpha)$. The thick curves in the middle and right pictures indicate the sutures for these manifolds; there are no sutures on the left because $A(\gamma_F)$ is empty.}
\label{fig:cable-annulus-decomposition}
\end{figure}

Since $\alpha$ is disjoint from  $A$ in $M_F$, we can also view $\alpha$ as a properly embedded arc in $M_A$. From this perspective, we then define the sutured arc complement
\[ (M_\alpha,\gamma_\alpha) := ( M_A \setminus N(\alpha), \gamma_A \cup  N(\mu_\alpha) ) \] pictured on the right side in Figure~\ref{fig:cable-annulus-decomposition}, 
where \[N(\mu_\alpha):=N(\alpha)\cap \partial M_\alpha\] is a neighborhood in $\partial M_\alpha$ of the meridian $\mu_\alpha$ of  $\alpha$.

Note that $(M_\alpha,\gamma_\alpha)$ can alternatively be obtained from $(M,\gamma) = S^3(F)$ via sutured manifold decomposition 
\begin{equation}\label{eqn:annulus-decomposition} S^3(F) \stackrel{A}{\leadsto} (M_\alpha,\gamma_\alpha) \end{equation}
along the product annulus $A$ (to be precise, the annulus whose image in $M_F$ is $A$), and the image of $\gamma$ under this decomposition is $N(\mu_\alpha)$. It follows from  Lemma~\ref{lem:dehn-fill} that at least one (in fact, both) of the boundary components of  \[A\subset S^3(F)\] is homologically essential in $R(\gamma)$. Moreover, we have by Alexander duality as in \eqref{eqn:homology-zero} that \[H_2(S^3(F))\cong 0.\] The product annulus decomposition in \eqref{eqn:annulus-decomposition} therefore preserves sutured Floer homology, \begin{equation} \label{eq:sfh-m_t}
\SFH(M_\alpha,\gamma_\alpha) \cong \SFH(S^3(F)) \cong \Q^2,
\end{equation}
by Theorem~\ref{thm:sfh-annulus-decomposition}. 
Since $S^3(F)$ is taut, it follows that  $(M_\alpha,\gamma_\alpha)$ is taut as well (Remark \ref{rmk:taut-annulus}).

\begin{lemma} \label{lem:find-B-homology}
We have $H_2(M_\alpha;R) \cong 0$ and $H_2(M_A;R) \cong 0$ for any commutative ring $R$.
\end{lemma}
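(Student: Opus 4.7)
The plan is to reduce both vanishing statements to the vanishing of $H_2(S^3(F); R)$. The first step is to establish this via Alexander duality: since $S^3(F) \simeq S^3 \setminus F$ and $F$ is a compact connected surface with one boundary component, Alexander duality gives $H_2(S^3(F); \Z) \cong \tilde H^0(F;\Z) = 0$ and $H_1(S^3(F); \Z) \cong \tilde H^1(F;\Z) \cong \Z^{2g}$. Since $H_1$ is torsion-free, the universal coefficient theorem extends this to $H_2(S^3(F); R) = 0$ for any commutative ring $R$.

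For $M_\alpha$: since $M_\alpha$ is $S^3(F)$ cut along $A$, taking a bicollar $N(A) \cong A \times [-1,1]$ yields $S^3(F) = M_\alpha \cup N(A)$ with $M_\alpha \cap N(A) \simeq A \sqcup A$. Apply Mayer--Vietoris: the terms $H_2(A \sqcup A; R)$, $H_2(N(A); R)$, and $H_2(S^3(F); R)$ all vanish, so $H_2(M_\alpha; R) = 0$ is immediate.

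For $M_A$: use the analogous decomposition $M_F = M_A \cup N(A)$ with intersection $A \sqcup A$, producing the exact sequence
\begin{equation*}
0 \to H_2(M_A; R) \to H_2(M_F; R) \xrightarrow{\partial} H_1(A \sqcup A; R) \cong R \oplus R,
\end{equation*}
so it suffices to show $\partial$ is injective. A further Mayer--Vietoris on $M_F = S^3(F) \cup N(\alpha)$ (with intersection $\partial D^2 \times [-1,1] \simeq S^1$) computes $H_2(M_F; R) \cong R$, the key point being that $[\partial F] = 0$ in $H_1(S^3(F); R)$ since $\partial F$ bounds $F \times \{1\} \subset \partial S^3(F)$. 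The resulting generator of $H_2(M_F; R)$ is the boundary torus $[T_+]$. By Lemma~\ref{lem:dehn-fill}, $A$ meets $T_+$ transversely in the homologically essential circle $A_+$; cutting $T_+$ along $A_+$ yields an annulus in $M_A$ whose Mayer--Vietoris boundary is $[A_+^{(1)}] - [A_+^{(2)}]$ on the two sides of $N(A)$. Since $A_+$ is a boundary component of the annulus $A$, each $[A_+^{(i)}]$ generates $H_1(A^{(i)}; R) \cong R$, so $\partial$ is the map $r \mapsto (r, -r)$, which is injective over any commutative ring $R$, proving $H_2(M_A; R) = 0$. The main conceptual step---and the only real obstacle---is this injectivity of $\partial$, which relies crucially on the homological essentiality of $A_+ \subset T_+$ granted by Lemma~\ref{lem:dehn-fill}.
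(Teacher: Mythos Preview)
Your proof is correct, but the route you take for $H_2(M_A;R)=0$ is genuinely different from the paper's. For $H_2(M_\alpha;R)=0$ the two arguments coincide: both reassemble $S^3(F)$ from $M_\alpha$ and a thickened copy of $A$ and read off the vanishing from Mayer--Vietoris together with $H_2(S^3(F);R)=0$. For $H_2(M_A;R)=0$, however, the paper instead uses the decomposition $M_A \cong M_\alpha \cup_{N(\mu_\alpha)} N(\alpha)$, reducing the problem to showing that $r\cdot[\mu_\alpha]\neq 0$ in $H_1(M_\alpha;R)$ for every $r\neq 0$; this in turn is extracted from the injectivity in the \emph{first} Mayer--Vietoris sequence, together with the observation that $[\mu_\alpha]=\pm[s(\gamma_A)]$ via the pair of pants $R_+(\gamma_\alpha)$. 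Your approach goes through $M_F$ instead: you compute $H_2(M_F;R)\cong R$ and then show that the connecting map for $M_F=M_A\cup N(A)$ is injective. The paper's route has the virtue of recycling the first computation, while yours is more modular and avoids ever touching $H_1(M_\alpha)$.

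One small correction: your final sentence overstates the role of Lemma~\ref{lem:dehn-fill}. The injectivity of $\partial$ does not rely on $A_+$ being homologically essential in $T_+$. What makes $\partial[T_+]=([A_+^{(1)}],-[A_+^{(2)}])$ a non-$R$-torsion element of $H_1(A^{(1)};R)\oplus H_1(A^{(2)};R)$ is simply that $A_+$ is a boundary circle of the annulus $A$, hence generates $H_1(A;R)$; this holds automatically. The essentiality of $A_+\subset T_+$ only tells you that $T_+\cap M_A$ is an annulus rather than a disk together with a once-punctured torus, but the boundary computation is the same either way.
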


\begin{proof} Forgetting about the sutures, note  that the Seifert surface complement \[S^3(F)\cong S^3\setminus \inr(F\times[-1,1])\] can be recovered from $M_\alpha$ by gluing a thickened annulus $N(A')$  along $\gamma_A$ by a map which identifies $\partial A'$ with $s(\gamma_A).$
The Mayer--Vietoris sequence associated to the decomposition
\[ S^3(F) \cong M_\alpha \cup_{\gamma_A} N(A') \]
with coefficients in $R$ (which we momentarily suppress for convenience) reads in part:
\[
\underbrace{H_2( \gamma_A)}_{\cong 0} \to H_2(M_\alpha) \oplus \underbrace{H_2(N(A'))}_{\cong 0} \to \underbrace{H_2(S^3(F))}_{\cong 0} 
\to H_1( \gamma_A) \to H_1(M_\alpha) \oplus H_1(N(A')).
\]
We have that \[H_2(S^3(F); R) \cong \tilde{H}^0(F;R) \cong 0,\] by Alexander duality, so the leftmost portion of the sequence tells us that $H_2(M_\alpha;R) \cong 0$, proving the first claim.

Moreover, the map $H_1( \gamma_A;R) \to H_1(N(A');R)$  sends  the  class  $[s(\gamma_A)]$ to \[[\partial A']=0\in H_1(N(A');R).\] Since the rightmost map in the  sequence is injective, and \[r\cdot[s(\gamma_A)]\neq 0 \in H_1( \gamma_A;R) \textrm{ for all } r\in R\setminus \{0\},\] it follows that \[r\cdot[s(\gamma_A)]\neq 0 \in H_1( M_\alpha;R) \textrm{ for all } r\in R\setminus \{0\}. \] 
 Note that the meridian $\mu_\alpha$ of $\alpha$ and the sutures $s(\gamma_A)$ cobound the pair of pants $R_+(\gamma_\alpha)\subset \partial M_\alpha.$ It follows that \[ [\mu_\alpha] = \pm [s(\gamma_{A})]\in H_1(M_\alpha; R),\] and therefore that \begin{equation}\label{eqn:nonzero-alpha}r\cdot[\mu_\alpha]\neq 0 \in H_1( M_\alpha;R) \textrm{ for all } r\in R\setminus \{0\}. \end{equation}

To prove the second claim, note that $M_A$ is recovered from $M_\alpha$ by gluing back the neighborhood $N(\alpha)$  along the annular neighborhood $N(\mu_\alpha)$ of $\mu_\alpha$,
\begin{equation}\label{eqn:MAdisk} M_A \cong M_\alpha \cup_{N(\mu_\alpha)} N(\alpha). \end{equation} Let us  consider the Mayer--Vietoris sequence corresponding to this decomposition.
Since \[H_2(M_\alpha;R)\cong H_2(N(\alpha);R)\cong H_1(N(\alpha);R)\cong 0,\] the portion of the sequence beginning at $H_2(M_\alpha;R) \oplus H_2(N(\alpha);R)$ has the form
\[ 0 \to H_2(M_A;R) \to \underbrace{H_1(N(\mu_\alpha);R)}_{\cong R} \to H_1(M_\alpha;R), \]
with $H_1(N(\mu_\alpha);R)$   generated by the class $[\mu_\alpha]$.  Then it follows from \eqref{eqn:nonzero-alpha} that the rightmost map is injective, and we conclude by exactness that $H_2(M_A;R) \cong 0$, as desired.\end{proof}

The next lemma provides the product annulus $B$ mentioned in \S\ref{ssec:proof}:

\begin{lemma} \label{lem:M_T-annulus}
There exists an essential product annulus $B \subset (M_\alpha,\gamma_\alpha)$.
\end{lemma}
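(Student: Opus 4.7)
My plan is to apply Theorem~\ref{thm:find-annulus} directly to $(M_\alpha,\gamma_\alpha)$, which will produce the desired essential product annulus $B$. To do so, I need to verify its hypotheses: that $(M_\alpha,\gamma_\alpha)$ is taut, that $H_2(M_\alpha)\cong 0$, that $\dim\SFH(M_\alpha,\gamma_\alpha)<4$ and $\dim\SFH(M_\alpha,\gamma_\alpha)\le\tfrac12 b_1(\partial M_\alpha)$, and that $(M_\alpha,\gamma_\alpha)$ is not a product whose base surface is an annulus or pair of pants.

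Most of these are essentially free. Tautness descends from $S^3(F)$ (Remark~\ref{rmk:taut}) through the essential product annulus decomposition $S^3(F)\stackrel{A}{\leadsto}(M_\alpha,\gamma_\alpha)$, since such decompositions preserve tautness (Remark~\ref{rmk:taut-annulus}). The vanishing $H_2(M_\alpha)\cong 0$ is precisely the first assertion of Lemma~\ref{lem:find-B-homology}. The identification~\eqref{eq:sfh-m_t} gives $\dim\SFH(M_\alpha,\gamma_\alpha)=2$, which is certainly less than $4$; moreover, every product sutured manifold has one-dimensional sutured Floer homology, so the fact that ours is two-dimensional rules out $(M_\alpha,\gamma_\alpha)$ being a product of any kind, hence in particular any of the forbidden ones.

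The only step that requires any real computation is the boundary inequality, which reduces to showing $b_1(\partial M_\alpha)\ge 4$. I plan to handle this via Euler characteristic. Since $F\times[-1,1]$ sits in $S^3$ as a genus-$2$ handlebody, $\partial S^3(F)$ is a closed orientable genus-$2$ surface, so $\chi(\partial S^3(F))=-2$. The boundary $\partial M_\alpha$ is obtained from $\partial S^3(F)$ by excising an open annular neighborhood of the two circles $\partial A\subset R_+(\gamma)\cup R_-(\gamma)$ and then reattaching two copies of the annulus $A$; since annuli and circles all contribute zero to Euler characteristic, a simple inclusion–exclusion yields $\chi(\partial M_\alpha)=-2$ as well. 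Because $\partial M_\alpha$ is closed and orientable, $\chi=2b_0-b_1$, and $b_0\ge 1$ forces $b_1(\partial M_\alpha)\ge 4$, which is exactly what is required.

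I expect this Euler-characteristic bookkeeping to be the only substantive obstacle in the proof; once it is in hand, Theorem~\ref{thm:find-annulus} immediately produces the desired essential product annulus $B\subset(M_\alpha,\gamma_\alpha)$, completing the argument.
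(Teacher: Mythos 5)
Your proposal is correct and takes essentially the same route as the paper: both apply Theorem~\ref{thm:find-annulus} to $(M_\alpha,\gamma_\alpha)$, citing Lemma~\ref{lem:find-B-homology} for $H_2(M_\alpha)\cong 0$, equation~\eqref{eq:sfh-m_t} for $\dim\SFH=2$, Remark~\ref{rmk:taut-annulus} for tautness, and noting that the two-dimensionality of $\SFH$ rules out the excluded products. The one small difference is that the paper simply asserts $\partial M_\alpha$ is a connected genus-$2$ surface, whereas you obtain $b_1(\partial M_\alpha)\geq 4$ via an Euler characteristic count and the identity $b_1 = 2b_0 - \chi$ for a closed orientable surface; your route is slightly more self-contained since it sidesteps the need to check connectivity of $\partial M_\alpha$.
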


\begin{proof}
We know that $(M_\alpha,\gamma_\alpha)$ is a taut balanced sutured manifold, with  $H_2(M_\alpha;\Z)=0$ by Lemma~\ref{lem:find-B-homology}, and its boundary $\partial M_\alpha$ is a connected genus-2 surface.  Then
\[ \dim \SFH(M_\alpha,\gamma_\alpha) = 2 \]
by \eqref{eq:sfh-m_t}, so Theorem~\ref{thm:find-annulus} provides the desired annulus.
\end{proof}

Given the  product annulus $B$ from Lemma~\ref{lem:M_T-annulus},  let us denote its boundary circles by  \[B_\pm = \partial B \cap R_\pm(\gamma_\alpha).\] Neither   $B_+$ nor $B_-$ bounds a disk in $R(\gamma_\alpha)$, since $B$ is essential and hence incompressible. It follows that  $B_+$ and $B_-$ are each boundary-parallel curves in the pairs of pants $R_+(\gamma_\alpha)$ and $R_-(\gamma_\alpha)$, respectively. In particular, $B_\pm$ are each isotopic in $\partial M_\alpha$ either to  a component of $s(\gamma_A)$ or to the meridian $\mu_\alpha$ of $\alpha$. We rule out the latter possibility below:

\begin{lemma} \label{lem:boundary-b-not-meridian}
Neither $B_+$ nor $B_-$ is isotopic in $\partial M_\alpha$ to the meridian $\mu_\alpha$ of $\alpha$.
\end{lemma}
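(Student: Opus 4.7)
The plan is to argue by contradiction. Any essential simple closed curve on a pair of pants is boundary-parallel, and each of $R_\pm(\gamma_\alpha)$ is a pair of pants whose three boundary circles consist of $\mu_\alpha$ together with one side of each of the two components of $s(\gamma_A)$. Thus $B_\pm$ are each isotopic in their respective pair of pants either to $\mu_\alpha$ or to a component of $s(\gamma_A)$. Suppose toward contradiction that $B_+$ is isotopic to $\mu_\alpha$; there are then two subcases according to $B_-$.

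The common tool in both subcases is that $\mu_\alpha$ bounds a meridian disk of $\alpha$ inside $N(\alpha)\subset M_A$. After isotoping $B_+$ onto a parallel copy of $\mu_\alpha$ lying on the top half of the handle of $\partial M_\alpha$, this copy bounds a meridian disk $D^+$ of $\alpha$, and gluing $D^+$ to $B$ along $B_+$ yields a properly embedded disk $D'\subset M_A$ with $\partial D' = B_-$.

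In the first subcase, $B_-$ is isotopic to a component of $s(\gamma_A)$, so $\partial D' = B_-$ is essential on the torus $\partial M_A$ and $D'$ is a compressing disk. To contradict this, the plan is to show that $\partial M_A$ is incompressible in $M_A$. Lemma~\ref{lem:MF-inc} gives that $\partial M_F$ is incompressible in $M_F$; an innermost-disk argument (pushing a candidate compressing disk across the ball $D^2\times[-1,1]$) upgrades the incompressibility of $A$ in $S^3(F)$ to incompressibility in $M_F$; and $A$ is trivially $\partial$-incompressible in $M_F$, because any essential arc on the annulus $A$ would have to be completed by an arc on $\partial M_F$ joining the two disjoint tori $T_\pm$. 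A standard cut-and-paste argument then transfers incompressibility across $A$ to give incompressibility of $\partial M_A$ in $M_A$, contradicting the existence of $D'$.

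In the remaining subcase, $B_-$ is also isotopic to $\mu_\alpha$. Isotope $B_-$ to a parallel copy of $\mu_\alpha$ on the bottom half of the handle and cap off the other end of $B$ by a second meridian disk $D^-$ of $\alpha$ at a different height, forming a $2$-sphere $S = D^+\cup B \cup D^-\subset M_A$. Since $M_A$ is irreducible (by irreducibility of $M_F$ together with incompressibility of $A$), $S$ bounds a $3$-ball $V$; and because $\partial M_A$ is a nonempty torus and hence cannot be contained in a ball sitting inside $M_A$ (otherwise $M_A$ would be $S^3$), $V$ must be the component whose interior contains the sub-arc of $\alpha$ between $D^+$ and $D^-$. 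Removing this unknotted thickened sub-arc from the ball leaves a solid torus $V\setminus N(\alpha)\subset M_\alpha$ whose boundary torus is $B\cup H$, where $H$ is the annulus on the handle of $\partial M_\alpha$ running between the boundaries of the two meridian disks and containing $\mu_\alpha$. Since $B$ and $H$ are parallel longitudinal annuli in this solid torus, $B$ is isotopic in $M_\alpha$ to $H$, which is an annular neighborhood of the $\mu_\alpha$ component of $\gamma_\alpha$; this contradicts the essentiality of $B$. A symmetric argument rules out $B_-$ being isotopic to $\mu_\alpha$. The main obstacle is the rigorous verification that $\partial M_A$ is incompressible in Case~1 (i.e., that cutting $M_F$ along the essential annulus $A$ preserves incompressibility of the boundary), and keeping careful track in Case~2 of which annulus on $\partial M_\alpha$ the product annulus $B$ becomes isotopic to, so that essentiality is genuinely violated.
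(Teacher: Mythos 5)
Case 1 of your argument is sound but more roundabout than the paper's. Rather than cutting $M_F$ along the essential annulus $A$ and verifying (via incompressibility plus $\partial$-incompressibility of $A$ and irreducibility of $M_F$) that boundary incompressibility is preserved, the paper simply pushes the disk $D' = B \cup D^+$ forward through the inclusion $M_A \hookrightarrow M_F$. This gives a disk in $M_F$ bounded by a curve isotopic to $A_- \subset T_-$, and $A_-$ is homologically essential in $T_-$ by Lemma~\ref{lem:dehn-fill}, so incompressibility of $T_-$ (Lemma~\ref{lem:MF-inc}) is contradicted directly. Both routes work, but yours needs the extra cut-and-paste lemma and the observation that a compressing disk can be isotoped off $N(\alpha)$, neither of which the paper has to set up.

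The genuine gap is in Case 2. After producing the 3-ball $V \subset M_A$ bounded by $S = D^+ \cup B \cup D^-$ and locating a sub-arc of $\alpha$ inside $V$, you assert that this sub-arc is \emph{unknotted} and hence that $V \setminus N(\alpha)$ is a solid torus. There is no a priori reason for the arc to be unknotted: a properly embedded arc in a ball can be knotted, in which case $V \setminus N(\alpha)$ is a nontrivial knot exterior, $B$ need not be parallel to the annulus $H$, and your contradiction with essentiality of $B$ evaporates. Ruling this out is precisely what occupies the paper's Case 2. The paper closes up the arc inside a standard ball to form a knot $K_\alpha \subset S^3$, decomposes $(M_\alpha,\gamma_\alpha)$ along the product annulus $B$, and applies Theorem~\ref{thm:sfh-annulus-decomposition} to obtain
\[ \SFH(M_\alpha,\gamma_\alpha) \cong \SFH(M_1,\gamma_1) \otimes \SFH(S^3(K_\alpha)), \]
so that $\dim \hfkhat(K_\alpha) = \dim\SFH(S^3(K_\alpha)) \leq 2$. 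Since knot Floer homology of a nontrivial knot is nonzero in at least three Alexander gradings, $K_\alpha$ must be the unknot, and only then is the arc unknotted. This Floer-theoretic step is essential, and a purely topological substitute is not apparent; without it your Case 2 is incomplete.
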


\begin{proof}
Suppose  that $B_+$ is isotopic in $\partial M_\alpha$ to $\mu_\alpha$  but  $B_-$ is not. From the discussion above,  $B_-$ must then be isotopic in $\partial M_\alpha$ to a component of $s(\gamma_A)$. Recall that $M_A$ is obtained from $M_\alpha$ by gluing back a thickened disk (namely, the neighborhood $N(\alpha)$) along a neighborhood  of the meridian $\mu_\alpha$, as in \eqref{eqn:MAdisk}. It follows that under the inclusion \[M_\alpha\hookrightarrow M_A,\] the boundary component $B_+$ of the annulus $B$ gets capped off with a disk $D$, so that  \[B\cup D\subset M_A\] is a disk bounded by the curve $B_-\subset \partial M_A$. This disk then gives rise under the inclusion \[M_A\hookrightarrow M_F\] to a disk in $M_F$ bounded by the image \[B_-\subset T_-\subset \partial M_F.\] But $B_-$ is isotopic in $T_-$ to the boundary component $A_-$ of the annulus $A$, which by Lemma \ref{lem:dehn-fill} is homologically essential. The fact that this curve bounds a disk in $M_F$ then contradicts the fact that the torus $T_-$ is incompressible, as shown in Lemma \ref{lem:MF-inc}. 

Swapping the roles of $B_+$ and $B_-$ leads to the same contradiction, so let us now assume that the curves $B_\pm$ are both are isotopic in $\partial M_\alpha$ to $\mu_\alpha$.
In this case, reversing the decompositions $S^3(K) \stackrel{F}{\leadsto} S^3(F) \stackrel{A}{\leadsto} (M_\alpha,\gamma_\alpha)$, we can glue $B_+$ to $B_-$ to turn the annulus $B$ into a closed, embedded surface $\Sigma_B$ in $S^3(K)$ that meets $F$ transversely in a single boundary-parallel curve.  Then $\Sigma_B$ must be a torus, since if it were a Klein bottle it could not embed in $S^3(K) \subset S^3$; as a torus in $S^3$, it must bound a solid torus $V_B$ on one side or the other.

If $V_B \subset S^3$ were contained in the knot complement $S^3(K)$, then
\[ V_B \cap F \subset V_B \]
would be a properly embedded, punctured torus (consisting of $F$ minus a collar neighborhood of its boundary) in the solid torus $V_B$; but then it must compress inside $V_B$ and hence in $S^3(K)$, contradicting the incompressibility of $F$.  Thus $V_B$ must not lie entirely in $S^3(K)$, and this means that it must contain $\partial\big(S^3(K)\big) = \partial N(K)$ as well as the knot $K$.  We now argue exactly as in the proof of Lemma~\ref{lem:product-annulus}: the torus $\partial V_B = \Sigma_B$ must be incompressible in $S^3(K)$, realizing $K$ as a satellite knot, but then the annulus $F \cap V_B$ provides an isotopy from $K$ to its companion knot, so the satellite pattern must have been trivial.  This means that $\Sigma_B = \partial V_B$ is boundary-parallel in $S^3(K)$.  Decomposing again along $F$ and then $A$, we conclude that our original annulus $B$ must have been parallel to an annular neighborhood of $\mu_\alpha$ in $\partial M_\alpha$.  But this contradicts the claim from Lemma~\ref{lem:M_T-annulus} that $B$ is essential, so we are done.
\end{proof}

The proof of Lemma~\ref{lem:boundary-b-not-meridian} in the case where both of $B_\pm$ are isotopic to $\mu_\alpha$ was substantially longer in the original version of this paper; we thank one of the referees for providing the much simpler argument used here.

\begin{lemma} \label{lem:annulus-b-separating}
The annulus $B$ separates $M_A$, and its oriented boundary meets the torus $\partial M_A$ in a pair of parallel but oppositely oriented essential curves.
\end{lemma}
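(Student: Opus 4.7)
The plan is to combine the constraints on $\partial B$ from Lemma \ref{lem:boundary-b-not-meridian} with the vanishing of $H_2(M_A)$ from Lemma \ref{lem:find-B-homology}, proving separation via a $\Z/2$-coefficient computation and then extracting the oriented claim from the resulting decomposition.

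First I would locate $\partial B$ on the torus $\partial M_A$. As observed in the discussion before Lemma \ref{lem:boundary-b-not-meridian}, each of $B_+$ and $B_-$ is boundary-parallel in the pair of pants $R_\pm(\gamma_\alpha)$, and hence isotopic in $\partial M_\alpha$ to one of the three components of $s(\gamma_\alpha)$: either $\mu_\alpha$ or one of the two parallels of $s(\gamma_A)$. Lemma \ref{lem:boundary-b-not-meridian} rules out $\mu_\alpha$, so after including $M_\alpha \hookrightarrow M_A$ (which caps off the $\mu_\alpha$ component), $B_+$ and $B_-$ become essential curves on $\partial M_A$ both parallel to the suture $s(\gamma_A)$, and in particular parallel to each other as unoriented curves.

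For the separation statement I would pass to $\Z/2$ coefficients: parallelism gives $[B_+] = [B_-]$ in $H_1(\partial M_A;\Z/2)$, so
\[ [\partial B] = [B_+] + [B_-] = 0 \in H_1(\partial M_A;\Z/2). \]
Combined with $H_2(M_A;\Z/2)=0$ (which follows from Lemma \ref{lem:find-B-homology} by universal coefficients), the long exact sequence of the pair $(M_A,\partial M_A)$ forces $[B]=0$ in $H_2(M_A,\partial M_A;\Z/2)$. Since $B$ is a properly embedded connected orientable surface, this is precisely the condition that $B$ separate $M_A$. Writing the resulting decomposition as $M_A = M_1 \cup_B M_2$, the boundary $\partial M_1$ is then a closed oriented surface of the form $B \cup A'$, where $A'\subset \partial M_A$ is an annulus cobounded by $\partial B$. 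Because $A'$ is a $2$-chain supported on the closed surface $\partial M_A$, its boundary is null-homologous there, and so $\partial B = -\partial A'$ is null-homologous too. This gives $[B_+] = -[B_-]$ in $H_1(\partial M_A;\Z)$, which is the statement that the oriented boundary of $B$ consists of two parallel but oppositely oriented essential curves. The substantive work has already been done in the preceding lemmas; no step here poses a serious obstacle.
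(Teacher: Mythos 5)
Your proof is correct and takes a route that is genuinely different from the paper's. The paper argues by contradiction over $\Z$: assuming $B$ is nonseparating, it uses the exact sequence with $H_2(M_A;\Z)\cong 0$ to deduce $[B_+]\neq[B_-]$, which combined with parallelism forces $[B_+]=-[B_-]$; it then unions $B$ with the annulus in $\partial M_A$ cobounded by $B_\pm$ to build a Klein bottle in $M_A$, observes that a Klein bottle in an orientable $3$-manifold is one-sided and hence nonseparating, and derives a contradiction from the mod-$2$ intersection pairing together with $H_2(M_A;\Z/2\Z)\cong 0$. You instead work over $\Z/2$ from the start, where parallelism of the unoriented curves $B_\pm$ immediately gives $[\partial B]=0$, and then $H_2(M_A;\Z/2\Z)\cong 0$ and the exact sequence give $[B]=0$ in $H_2(M_A,\partial M_A;\Z/2\Z)$, which (for a two-sided connected surface in a connected manifold) is equivalent to $B$ separating; you then recover the oriented statement by a separate chain-level observation on $\partial M_1$. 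Both arguments turn on the vanishing of $H_2(M_A)$ from Lemma~\ref{lem:find-B-homology} and the parallelism from Lemma~\ref{lem:boundary-b-not-meridian}, but yours is more direct and avoids the auxiliary Klein bottle entirely; the cost is that by forgetting orientations early you have to supply the short chain argument at the end, whereas the paper's integral computation produces the oriented conclusion automatically. One small presentational point: your $B_\pm$ carry the boundary orientation induced by $B$, so that $\partial B = B_+ + B_-$ and your conclusion reads $[B_+]=-[B_-]$; the paper instead orients $B_\pm$ so that $\partial B = B_+ \sqcup -B_-$ and concludes $[B_+]=[B_-]$. These are the same statement, but it is worth flagging to avoid confusion on reading the two side by side.
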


\begin{proof} 
Let us orient $B$ as well as  its boundary curves $B_+$ and $B_-$ so that \[\partial B= B_+ \sqcup -B_-.\] Recall from Lemma~\ref{lem:find-B-homology} that $H_2(M_A) = 0$.  Therefore, the long exact sequence of the pair $(M_A,\partial M_A)$ reads in part:
\[ 0 \to H_2(M_A,\partial M_A) \xrightarrow{\partial_*} H_1(\partial M_A) \to H_1(M_A). \]
If $B$ is nonseparating in $M_A$ then it is nonzero in $H_2(M_A,\partial M_A)$. It then follows from the exact sequence above that the class $[\partial B]$ is nonzero in $H_1(\partial M_A)$, and hence that \begin{equation}\label{eqn:B-not-equal}[B_+] \neq [B_-]\in H_1(\partial M_A).\end{equation}  Let us suppose for a contradiction that this is the case.

As discussed  before Lemma~\ref{lem:boundary-b-not-meridian}, $B_+$ and $B_-$ are each isotopic in $\partial M_\alpha$ either to components of the sutures $s(\gamma_A)$ or to a meridian of the arc $\alpha$, as unoriented curves. We ruled out the latter possibility in Lemma~\ref{lem:boundary-b-not-meridian}. Therefore, when viewed as curves in $\partial M_A$, $B_\pm$ are each isotopic to components of  $s(\gamma_A)$ (and are thus core circles of $R_\pm(\gamma_A)$). In particular, $B_+$ and $B_-$ are isotopic to one another as unoriented curves in $\partial M_A$. Given \eqref{eqn:B-not-equal}, it must therefore be the case that $B_+$ and $B_-$ are parallel, oppositely oriented curves in $\partial M_A$.  

Forgetting their orientation, these curves cobound an annulus in $\partial M_A$, whose union with $B$ is then a Klein bottle \[\Sigma \subset M_A.\]  Since $M_A$ is orientable, the Klein bottle $\Sigma$ must be one-sided and in particular nonseparating. This implies  that the mod-$2$ intersection pairing
\[ H_1(M_A,\partial M_A;\Z/2\Z) \times H_2(M_A;\Z/2\Z) \to \Z/2\Z \]
is nonzero.  But this contradicts the fact that $H_2(M_A;\Z/2\Z)=0$, by Lemma~\ref{lem:find-B-homology}.  Therefore, $[B_+] = [B_-]$, and then $B$ has the desired properties.
\end{proof}

\begin{lemma} \label{lem:isotope-t-into-dNC}
The arc $\alpha \subset M_A$ can be isotoped rel endpoints so that it lies in $\partial M_A$ and meets the sutures $s(\gamma_A)$ transversely in a single point.
\end{lemma}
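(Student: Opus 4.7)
The plan is to apply the essential product annulus $B$ from Lemma~\ref{lem:M_T-annulus} to decompose $(M_\alpha,\gamma_\alpha)$, identify a product sutured manifold structure on the piece containing $N(\mu_\alpha)$, and use this structure to write down an isotopy of $\alpha$ explicitly. By Lemma~\ref{lem:annulus-b-separating}, $B$ separates $M_A$ into two pieces $M_1,M_2$; label them so that $\alpha\subset M_2$. Decomposing $(M_\alpha,\gamma_\alpha)$ along $B$ then gives
\[ (M_\alpha,\gamma_\alpha)\stackrel{B}{\leadsto}(N_1,\eta_1)\sqcup(N_2,\eta_2), \]
where $N_1=M_1$, $N_2=M_2\setminus N(\alpha)$, and the meridional suture $N(\mu_\alpha)$ lies in $\eta_2$. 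Since $H_2(M_\alpha)=0$ by Lemma~\ref{lem:find-B-homology} and each component of $\partial B$ is essential in $R(\gamma_\alpha)$ (being isotopic to a component of $s(\gamma_A)$ rather than to $\mu_\alpha$, by Lemma~\ref{lem:boundary-b-not-meridian}), Theorem~\ref{thm:sfh-annulus-decomposition} gives
\[ \SFH(N_1,\eta_1)\otimes\SFH(N_2,\eta_2)\cong\SFH(M_\alpha,\gamma_\alpha)\cong\mathbb{Q}^2. \]
Both pieces inherit tautness (Remark~\ref{rmk:taut-annulus}), so each has $\dim\SFH\geq 1$ by Theorem~\ref{thm:sfh-taut}; hence exactly one of the two has dimension one and is a product sutured manifold.

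The hard part will be to show that this product piece is $(N_2,\eta_2)$, the component containing the meridional suture. I expect this to involve a careful bookkeeping of the sutures---in particular, accounting for the new suture created on each side of $B$ wherever $\partial B$ transitions between oppositely labeled regions of $R(\gamma_\alpha)$---to conclude that $(N_1,\eta_1)$ carries two sutures on a torus boundary, while $(N_2,\eta_2)$ carries three sutures on a genus-$2$ boundary with pair-of-pants $R_\pm$. If $(N_1,\eta_1)$ were the product, an Euler characteristic count would force it to be the sutured solid torus $A\times I$; I plan to rule this case out by combining the resulting product structure on $N_1$ with the essentiality of $B$ to derive a topological contradiction with the structure of $(M_A,\gamma_A)$.

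Granting that $(N_2,\eta_2)\cong P\times I$ with $P$ a pair of pants, reattaching $N(\alpha)\cong D^2\times I$ to $N_2$ caps off the boundary component of $P$ corresponding to $\mu_\alpha$, so that $M_2\cong A'\times I$ is a sutured solid torus in which $\alpha$ is identified with the vertical arc through the center of the capping disk $D^2\subset A'$. To produce the isotopy, fix a point $q\in\partial A'$ on the boundary circle corresponding to the component $s_2\subset s(\gamma_A)$ inherited by $M_2$, and deform $\alpha$ rel endpoints to the arc that runs horizontally in $A'\times\{-1\}$ from the bottom endpoint of $\alpha$ to $(q,-1)$, then vertically along $\{q\}\times I$ to $(q,1)$, then horizontally in $A'\times\{1\}$ back to the top endpoint of $\alpha$. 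Choosing both horizontal segments to project to the same path in $A'$ ensures this arc is isotopic rel endpoints to $\alpha$; it lies entirely in $\partial M_A\cap M_2$ and meets $s(\gamma_A)$ transversely in a single point, at its crossing with $s_2$.
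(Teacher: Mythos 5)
Your approach is the same as the paper's --- decompose $(M_\alpha,\gamma_\alpha)$ along the separating annulus $B$, conclude from the rank computation and Theorem~\ref{thm:sfh-taut} that exactly one piece is a product sutured manifold, rule out the piece disjoint from $\alpha$ via essentiality of $B$, and then use the product structure on the other piece to push $\alpha$ into $\partial M_A$ --- but you leave the pivotal step as a promise. You write that you ``plan to rule this case out\dots\ to derive a topological contradiction'' without producing one. The argument is short and should be included: if $(N_1,\eta_1)$ were a product, then since $\partial N_1$ is a torus with two parallel annular sutures, it would have to be $\bigl((S^1\times I)\times[-1,1],\ (S^1\times\partial I)\times[-1,1]\bigr)$. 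One of those two annular sutures is a copy of $B$ and the other came from $\gamma_\alpha$, so this product structure hands you an isotopy sliding $B$ onto a component of $\gamma_\alpha$ with $\partial B$ remaining in $R(\gamma_\alpha)$ throughout --- exactly the isotopy that essentiality of $B$ forbids.

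Two smaller remarks. Your parenthetical about new sutures appearing ``wherever $\partial B$ transitions between oppositely labeled regions of $R(\gamma_\alpha)$'' misdescribes product-annulus decompositions: $\partial B$ never transitions, as one component lies entirely in $R_+$ and the other in $R_-$; the new suture on each copy of $B$ is instead a core curve separating the part of that copy glued into the new $R_+$ from the part glued into the new $R_-$. Your suture counts come out right regardless. Finally, in the closing isotopy you assert the arc ``lies entirely in $\partial M_A\cap M_2$'' without explaining why it avoids the copy of $B$ inside $\partial M_2$; the point, made explicit in the paper, is that this copy of $B$ is a collar of one of the two annular sutures of the product piece, so routing the arc through the other annular suture (the one inherited from $\gamma_A$) and keeping its horizontal segments outside that collar is what guarantees the arc stays in $\partial M_A$.
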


\begin{proof}
Lemma~\ref{lem:annulus-b-separating} implies that decomposing $(M_\alpha,\gamma_\alpha)$ along the product annulus $B$ produces a disconnected balanced sutured manifold
\[ (M_\alpha,\gamma_\alpha) \stackrel{B}{\leadsto} (M_2,\gamma_2) \sqcup (M_3,\gamma_3), \]
where we have labeled the components so that $(M_2,\gamma_2)$ has two  sutures and $(M_3,\gamma_3)$ has three, as depicted in Figure~\ref{fig:B-essential-boundary}.
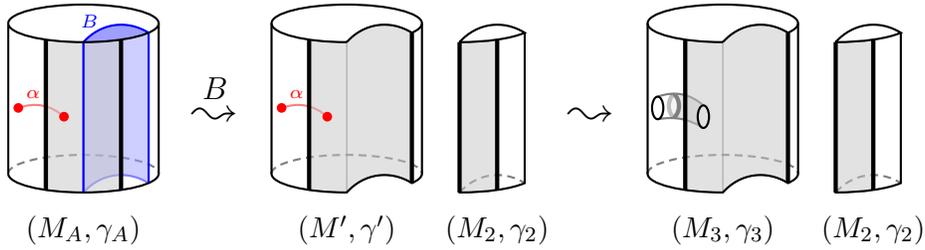
\begin{figure}
\begin{tikzpicture}
\begin{scope}
\draw[densely dashed] (-1,-2) arc(180:0:1 and 0.25);
\begin{scope} % the annulus B
\coordinate (bleft) at (270:1 and 0.25);
\coordinate (bright) at (330:1 and 0.25);
\clip (-1,0) -- ++(0,-2) arc (180:360:1 and 0.25) -- ++(0,2) arc (0:180:1 and 0.25); % make sure endpoints of boundary of B don't go too far
\draw[blue,fill=blue!75, fill opacity=0.5] (bleft) to[bend left=40] coordinate[pos=0.3] (blabel) (bright) -- ++(0,-2) to[bend right=40] ($(bleft)+(0,-2)$) -- ++(0,2);
\end{scope}
\begin{scope} % the tangle T
\path (210:1 and 0.25) -- ++(0,-1) coordinate (tl);
\path (255:1 and 0.25) -- ++(0,-1) coordinate (tr);
\draw[red] (tl) to[bend left=30] coordinate[pos=0.3] (tmid) (tr);
\end{scope}
\begin{scope} % sutures
\fill[gray!30,fill opacity=0.75] (240:1 and 0.25) coordinate (leftsuture) -- ++(0,-2) arc (240:300:1 and 0.25) -- ++(0,2) coordinate (rightsuture) arc (300:240:1 and 0.25);
\fill[white,fill opacity=0.5] (leftsuture) arc (240:180:1 and 0.25) -- ++(0,-2) arc (180:240:1 and 0.25) -- ++(0,2);
\fill[white,fill opacity=0.5] (rightsuture) arc (300:360:1 and 0.25) -- ++(0,-2) arc (360:300:1 and 0.25) -- ++(0,2);
\draw[ultra thick] (leftsuture) -- ++(0,-2) (rightsuture) -- ++(0,-2); 
\end{scope}
\draw[red,fill=red] (tl) circle (0.05) (tr) circle (0.05); % draw endpoints of T
\path[red] (tmid) -- ++(0,0.15) node {\tiny$\alpha$}; % label T
\path[blue] (blabel) ++(-0.15,0.1) node {\tiny$B$}; % label B
\draw[blue] (bleft) -- ++(0,-2) (bright) -- ++(0,-2); % redraw boundary of B
\draw (0,0) ellipse (1 and 0.25);
\draw (1,0) -- ++(0,-2) arc(0:-180: 1 and 0.25) -- ++(0,2);
\node at (0,-2.75) {$(M_A,\gamma_A)$};
\end{scope}
\node at (1.75,-1) {\LARGE$\stackrel{B}{\leadsto}$};
\begin{scope}[xshift=3.5cm]
\draw[densely dashed] (-1,-2) arc(180:0:1 and 0.25);
\begin{scope} % the annulus B
\coordinate (bleft) at (270:1 and 0.25);
\coordinate (bright) at (330:1 and 0.25);
\clip (-1,0) -- ++(0,-2) arc (180:360:1 and 0.25) -- ++(0,2) arc (0:180:1 and 0.25); % make sure endpoints of boundary of B don't go too far
\draw[thin] (bleft) to[bend left=40] coordinate[pos=0.3] (blabel) (bright) -- ++(0,-2) to[bend right=40] ($(bleft)+(0,-2)$) -- ++(0,2);
\end{scope}
\begin{scope} % the tangle T
\path (210:1 and 0.25) -- ++(0,-1) coordinate (tl);
\path (255:1 and 0.25) -- ++(0,-1) coordinate (tr);
\draw[red] (tl) to[bend left=30] coordinate[pos=0.3] (tmid) (tr);
\end{scope}
\begin{scope} % sutures
\fill[gray!30,fill opacity=0.75] (240:1 and 0.25) coordinate (leftsuture) -- ++(0,-2) arc (240:270:1 and 0.25) to[bend left=40] ($(bright)+(0,-2)$) -- ++(0,2) coordinate (rightsuture) to[bend right=40] (bleft) arc (270:240:1 and 0.25);
\fill[white,fill opacity=0.5] (leftsuture) arc (240:180:1 and 0.25) -- ++(0,-2) arc (180:240:1 and 0.25) -- ++(0,2);
\fill[white,fill opacity=0.5] (rightsuture) arc (330:360:1 and 0.25) -- ++(0,-2) arc (360:330:1 and 0.25) -- ++(0,2);
\draw[ultra thick] (leftsuture) -- ++(0,-2) (rightsuture) -- ++(0,-2); 
\end{scope}
\draw[red,fill=red] (tl) circle (0.05) (tr) circle (0.05); % draw endpoints of T
\path[red] (tmid) -- ++(0,0.15) node {\tiny$\alpha$}; % label T
\coordinate (bleft) at (270:1 and 0.25);
\draw (bleft) arc (270:-30:1 and 0.25) coordinate (bright) to[bend right=40] (bleft);
\draw (1,0) -- ++(0,-2) arc(0:-30: 1 and 0.25) to[bend right=40] ($(bleft)+(0,-2)$) arc (270:180:1 and 0.25) -- ++(0,2);
\begin{scope} %% the other component
\path (1.5,0) ++(270:1 and 0.25) coordinate (bbl) arc(270:300:1 and 0.25) coordinate (bbm) arc (300:330:1 and 0.25) coordinate (bbr);
\draw[densely dashed] ($(bbl)+(0,-2)$) to[bend left=40] ($(bbr)+(0,-2)$);
\fill[gray!30,fill opacity=0.75] (bbl) arc(270:300:1 and 0.25) -- ++(0,-2) arc (300:270:1 and 0.25) -- ++(0,2);
\fill[white,fill opacity=0.5] (bbm) arc (300:330:1 and 0.25) -- ++(0,-2) arc (330:300:1 and 0.25) -- ++(0,2);
\begin{scope} % do some clipping to fix corners
\clip ($(bbl)+(-0.015,0.5)$) rectangle ($(bbr)+(0.015,-2.5)$);
\draw (bbl) arc (270:330:1 and 0.25) to[bend right=40] (bbl);
\end{scope}
\draw (bbl) -- ++(0,-2) arc (270:330:1 and 0.25) -- ++(0,2);
\draw[ultra thick] (bbl) -- ++(0,-2) (bbm) -- ++(0,-2); % sutures
\node at (0,-2.75) {$(M',\gamma')$};
\path let \p1 = (bbm) in node at (\x1,-2.75) {$(M_2,\gamma_2)$};
\end{scope}
\end{scope}
\node at (6.75,-1) {\LARGE$\stackrel{\vphantom{B}}{\leadsto}$};
\begin{scope}[xshift=8.5cm]
\draw[densely dashed] (-1,-2) arc(180:0:1 and 0.25);
\begin{scope} % the annulus B
\coordinate (bleft) at (270:1 and 0.25);
\coordinate (bright) at (330:1 and 0.25);
\clip (-1,0) -- ++(0,-2) arc (180:360:1 and 0.25) -- ++(0,2) arc (0:180:1 and 0.25); % make sure endpoints of boundary of B don't go too far
\draw[thin] (bleft) to[bend left=40] coordinate[pos=0.3] (blabel) (bright) -- ++(0,-2) to[bend right=40] ($(bleft)+(0,-2)$) -- ++(0,2);
\end{scope}
\begin{scope} % the tangle T
\path (210:1 and 0.25) -- ++(0,-1) coordinate (tl);
\path (255:1 and 0.25) -- ++(0,-1) coordinate (tr);
\path ($(tl)+(0,0.15)$) to[bend left=30] coordinate[pos=0.33] (tmtop) ($(tr)+(0,0.15)$);
\path ($(tl)-(0,0.15)$) to[bend left=20] coordinate[pos=0.33] (tmbot) ($(tr)-(0,0.15)$);
\fill[gray!40,fill opacity=0.75] (tmtop) to[bend right=45] (tmbot) to[bend left=13] ($(tr)-(0,0.15)$) arc (270:450:0.075 and 0.15) to[bend right=20] (tmtop); % back of T tube
\begin{scope}
\clip ($(tl)+(0,0.15)$) to[bend left=30] ($(tr)+(0,0.15)$) -- ($(tr)-(0,0.15)$) to[bend right=20] ($(tl)-(0,0.15)$) -- cycle;
\draw[ultra thick] (tmtop) to[bend right=45] (tmbot);
\fill[gray!40,fill opacity=0.75] (tmtop) to[bend left=45] (tmbot) to[bend left=13] ($(tr)-(0,0.15)$) arc (270:90:0.075 and 0.15) to[bend right=20] (tmtop); % front of T tube
\fill[white,fill opacity=0.5] (tmtop) to[bend left=45] (tmbot) to[bend left=13] ($(tl)-(0,0.15)$) arc (270:450:0.075 and 0.15) to[bend right=20] (tmtop);
\draw[ultra thick] (tmtop) to[bend left=45] (tmbot);
\end{scope}
\draw ($(tl)+(0,0.15)$) to[bend left=30] ($(tr)+(0,0.15)$);
\draw ($(tl)-(0,0.15)$) to[bend left=20] ($(tr)-(0,0.15)$);
\end{scope}
\begin{scope} % sutures
\fill[gray!30,fill opacity=0.75,even odd rule] (240:1 and 0.25) coordinate (leftsuture) -- ++(0,-2) arc (240:270:1 and 0.25) to[bend left=40] ($(bright)+(0,-2)$) -- ++(0,2) coordinate (rightsuture) to[bend right=40] (bleft) arc (270:240:1 and 0.25) (tr) ellipse (0.075 and 0.15);
\fill[white,fill opacity=0.5] (leftsuture) arc (240:180:1 and 0.25) -- ++(0,-2) arc (180:240:1 and 0.25) -- ++(0,2);
\fill[white,fill opacity=0.5] (rightsuture) arc (330:360:1 and 0.25) -- ++(0,-2) arc (360:330:1 and 0.25) -- ++(0,2);
\draw[ultra thick] (leftsuture) -- ++(0,-2) (rightsuture) -- ++(0,-2); 
\end{scope}
\draw (tl) ellipse (0.075 and 0.15);
\draw[fill=gray!20,fill opacity=0.75] (tr) ellipse (0.075 and 0.15);
%\path[red] (tmid) -- ++(0,0.15) node {\tiny$\alpha$}; % label T
\coordinate (bleft) at (270:1 and 0.25);
\draw (bleft) arc (270:-30:1 and 0.25) coordinate (bright) to[bend right=40] (bleft);
\draw (1,0) -- ++(0,-2) arc(0:-30: 1 and 0.25) to[bend right=40] ($(bleft)+(0,-2)$) arc (270:180:1 and 0.25) -- ++(0,2);
\begin{scope} %% the other component
\path (1.5,0) ++(270:1 and 0.25) coordinate (bbl) arc(270:300:1 and 0.25) coordinate (bbm) arc (300:330:1 and 0.25) coordinate (bbr);
\draw[densely dashed] ($(bbl)+(0,-2)$) to[bend left=40] ($(bbr)+(0,-2)$);
\fill[gray!30,fill opacity=0.75] (bbl) arc(270:300:1 and 0.25) -- ++(0,-2) arc (300:270:1 and 0.25) -- ++(0,2);
\fill[white,fill opacity=0.5] (bbm) arc (300:330:1 and 0.25) -- ++(0,-2) arc (330:300:1 and 0.25) -- ++(0,2);
\begin{scope} % do some clipping to fix corners
\clip ($(bbl)+(-0.015,0.5)$) rectangle ($(bbr)+(0.015,-2.5)$);
\draw (bbl) arc (270:330:1 and 0.25) to[bend right=40] (bbl);
\end{scope}
\draw (bbl) -- ++(0,-2) arc (270:330:1 and 0.25) -- ++(0,2);
\draw[ultra thick] (bbl) -- ++(0,-2) (bbm) -- ++(0,-2); % sutures
\end{scope}
\node at (0,-2.75) {$(M_3,\gamma_3)$};
\path let \p1 = (bbm) in node at (\x1,-2.75) {$(M_2,\gamma_2)$};
\end{scope}
\end{tikzpicture}
\caption{We decompose $(M_A,\gamma_A)$ along $B$ to obtain $(M_2,\gamma_2)\sqcup (M',\gamma')$.  Removing   $\alpha$  and adding a meridional suture produces $(M_2,\gamma_2) \sqcup (M_3,\gamma_3)$, which is also the result of decomposing $(M_\alpha,\gamma_\alpha)$ along $B$.}
\label{fig:B-essential-boundary}
\end{figure}
Indeed, in $M_A$ the components of $\partial B$ are core circles of the annuli $R_+(\gamma_A)$ and $R_-(\gamma_A)$, so decomposing $(M_A,\gamma_A)$ along the separating $B$ produces a disjoint union of two sutured manifolds, with two sutures each, \[ (M_A,\gamma_A) \stackrel{B}{\leadsto} (M_2,\gamma_2) \sqcup (M',\gamma'). \]  One of these components is disjoint from the arc $\alpha$, so we label it $(M_2,\gamma_2)$. We then remove a tubular neighborhood of $\alpha$ from the other component $(M',\gamma')$ and add a meridional suture $\mu_\alpha$ to get $(M_3,\gamma_3)$.

Since the components of $\partial B$ are homologically essential in $R(\gamma_A)$, we have that \begin{align*}
\SFH(M_\alpha,\gamma_\alpha) &\cong \SFH((M_2,\gamma_2) \sqcup (M_3,  \gamma_3)) \\
&\cong \SFH(M_2,\gamma_2) \otimes \SFH(M_3,\gamma_3),
\end{align*}
by Theorem~\ref{thm:sfh-annulus-decomposition}.
Since the left side is 2-dimensional, per \eqref{eq:sfh-m_t}, it follows that
\[ \dim\SFH(M_i,\gamma_i) =1 \]
for some $i\in\{2,3\}$.  Then Theorem~\ref{thm:sfh-taut} tells us that the corresponding $(M_i,\gamma_i)$ is a product sutured manifold (note that $(M_i,\gamma_i)$ is taut since $(M_\alpha,\gamma_\alpha)$ is taut, per Remark \ref{rmk:taut-annulus}).

Suppose first that $(M_2,\gamma_2)$ is a product sutured manifold.  Since $\partial M_2$ is a torus and the sutures $s(\gamma_2)$ consist of two parallel essential curves on this torus, $R_+(\gamma_2)$ is an annulus and so there is a homeomorphism
\[ (M_2,\gamma_2) \cong \big((S^1\times I) \times [-1,1], (S^1\times \partial I) \times [-1,1]\big). \]
But if this is the case, then $B$ could have been isotoped onto the component of $\gamma_\alpha$ which became a  component of $\gamma_2$, by an isotopy keeping $\partial B$ in $R(\gamma_\alpha)$ at all times. This  contradicts the fact that $B$ is  essential.

It follows that  $(M_3,\gamma_3)$ is a product sutured manifold.  Since $R_+(\gamma_3)$ is a pair of pants $P$, we have that
\[ (M_3,\gamma_3) \cong (P\times[-1,1], \partial P \times [-1,1]). \]
One component of the sutures $s(\gamma_3)$ is a meridian $\mu_\alpha$ of $\alpha$, and $(M',\gamma')$ is recovered by gluing back a thickened disk $D^2\times I$ along an annular neighborhood of this meridian. The meridian $\mu_\alpha$ corresponds to a certain boundary component of $P$. Letting \[S^1\times [0,1] = P\cup D^2\] be the annulus formed by capping off this boundary component with a disk, we then have the identification
\[ (M',\gamma') \cong \big((S^1\times [0,1]) \times [-1,1], (S^1\times \{0,1\}) \times [-1,1]\big), \]
where the arc $\alpha\subset M'$ is given by \[ \alpha = \{\pt\} \times [-1,1] \subset M' \]
for some point \[\pt \in D^2\subset (S^1\times [0,1]),\] as depicted  in Figure~\ref{fig:mprime-product}.
\begin{figure}
\begin{tikzpicture}
\begin{scope}
\begin{scope}
\clip (-2,0) -- ++(0,-3) arc (180:360:2 and 0.5) -- ++(0,3) arc (0:180:2 and 0.5);
\draw[densely dashed] (2,-3) arc (0:180:2 and 0.5); % back outer curve
\draw[densely dashed] (0.8,-3) arc (0:180:0.8 and 0.2); % back inner curve
\fill[gray!30,fill opacity=0.75] (2,0) arc (0:180:2 and 0.5) -- ++(0,-1.5) arc (180:0:2 and 0.5) -- ++(0,1.5); % fill outer tube, back
\draw[ultra thick] (2,-1.5) arc (0:180:2 and 0.5); % draw outer suture, back
\fill[gray!40,fill opacity=0.75] (0.8,0) arc (0:180:0.8 and 0.2) -- ++(0,-1.5) arc (180:0:0.8 and 0.2) -- ++(0,1.5); % fill inner tube, back
\fill[white,opacity=0.5] (0.8,-1.5) arc (0:180:0.8 and 0.2) -- ++(0,-1.5) arc (180:0:0.8 and 0.2) -- ++(0,1.5);
\draw[ultra thick] (0.8,-1.5) arc (0:180:0.8 and 0.2); % draw inner suture, back
\fill[gray!40,fill opacity=0.75] (0.8,0) arc (360:180:0.8 and 0.2) -- ++(0,-1.5) arc (180:360:0.8 and 0.2) -- ++(0,1.5); % fill inner tube, front
\fill[white,opacity=0.5] (0.8,-1.5) arc (360:180:0.8 and 0.2) -- ++(0,-1.5) arc (180:360:0.8 and 0.2) -- ++(0,1.5);
\draw[ultra thick] (0.8,-1.5) arc (360:180:0.8 and 0.2); % draw inner suture, front
\draw (0.8,0) -- ++(0,-3) arc (360:180:0.8 and 0.2) -- ++(0,3); % draw inner tube
\draw[red] (1.4,0) coordinate (tt) -- coordinate[pos=0.75] (tm) ++(0,-3) coordinate (tb); % draw T
\draw[red,fill=red] (tb) circle (0.05); % draw T bottom endpoint
\fill[gray!30,fill opacity=0.75] (-2,0) -- ++(0,-1.5) arc (180:360:2 and 0.5) -- ++(0,1.5) arc (360:180:2 and 0.5); % fill outer tube
\fill[white,fill opacity=0.5] (2,-1.5) arc (360:180:2 and 0.5) -- ++(0,-1.5) arc (180:360:2 and 0.5) -- ++(0,1.5);
\draw[fill=gray!30,fill opacity=0.75,even odd rule] (0,0) ellipse (2 and 0.5) ellipse (0.8 and 0.2); % fill top annulus
\draw[ultra thick] (2,-1.5) arc (360:180:2 and 0.5); % draw outer suture, front
\draw[red,fill=red] (tt) circle (0.05); % top endpoint of T
\path (tm) node[red,right,inner sep=2pt] {\tiny$\alpha$};
\end{scope}
\draw (0,0) ellipse (2 and 0.5); % redraw top outer ellipse
\draw (-2,0) -- ++(0,-3) arc (180:360:2 and 0.5) -- ++(0,3); % draw outside of outer tube
\end{scope}
\node at (3,-1.5) {\LARGE$\to$};
\begin{scope}[xshift=6cm]
\draw (0,0) ellipse (2 and 0.5); % draw top outer ellipse before clipping
\begin{scope}
\clip (-2,0) -- ++(0,-3) arc (180:360:2 and 0.5) -- ++(0,3) arc (0:180:2 and 0.5);
\draw[densely dashed] (2,-3) arc (0:180:2 and 0.5); % back outer curve
\draw[densely dashed] (0.8,-3) arc (0:180:0.8 and 0.2); % back inner curve
\fill[gray!30,fill opacity=0.75] (2,0) arc (0:180:2 and 0.5) -- ++(0,-1.5) arc (180:0:2 and 0.5) -- ++(0,1.5); % fill outer tube, back
\draw[ultra thick] (2,-1.5) arc (0:180:2 and 0.5); % draw outer suture, back
\fill[gray!40,fill opacity=0.75] (0.8,0) arc (0:180:0.8 and 0.2) -- ++(0,-1.5) arc (180:0:0.8 and 0.2) -- ++(0,1.5); % fill inner tube, back
\fill[white,opacity=0.5] (0.8,-1.5) arc (0:180:0.8 and 0.2) -- ++(0,-1.5) arc (180:0:0.8 and 0.2) -- ++(0,1.5);
\draw[ultra thick] (0.8,-1.5) arc (0:180:0.8 and 0.2); % draw inner suture, back
\fill[gray!40,fill opacity=0.75] (0.8,0) arc (360:180:0.8 and 0.2) -- ++(0,-1.5) arc (180:360:0.8 and 0.2) -- ++(0,1.5); % fill inner tube, front
\fill[white,opacity=0.5] (0.8,-1.5) arc (360:180:0.8 and 0.2) -- ++(0,-1.5) arc (180:360:0.8 and 0.2) -- ++(0,1.5);
\draw[ultra thick] (0.8,-1.5) arc (360:180:0.8 and 0.2); % draw inner suture, front
\draw (0.8,0) -- ++(0,-3) arc (360:180:0.8 and 0.2) -- ++(0,3); % draw inner tube
\begin{scope} % outer portion of T, clipped to keep bottom vertex inside the figure
\clip (-2,0) arc (180:0:2 and 0.5) -- ++(0,-3) arc (360:180:2 and 0.5) -- ++(0,3);
\draw[red] (1.4,0) coordinate (tt) -- (-30:2 and 0.5) coordinate (tcorner) -- coordinate[pos=0.7] (tm) ++(0,-3) -- (1.4,-3) coordinate (tb); % draw T
\draw[red,fill=red] (tb) circle (0.05); % draw T bottom endpoint
\end{scope}
\fill[gray!30,fill opacity=0.75] (-2,0) -- ++(0,-1.5) arc (180:360:2 and 0.5) -- ++(0,1.5) arc (360:180:2 and 0.5); % fill outer tube
\fill[white,fill opacity=0.5] (2,-1.5) arc (360:180:2 and 0.5) -- ++(0,-1.5) arc (180:360:2 and 0.5) -- ++(0,1.5);
\draw[fill=gray!30,fill opacity=0.75,even odd rule] (0,0) ellipse (2 and 0.5) ellipse (0.8 and 0.2); % fill top annulus
\draw[ultra thick] (2,-1.5) arc (360:180:2 and 0.5); % draw outer suture, front
\draw[red,fill=red] (tt) circle (0.05);
\draw[red] (tt) -- (tcorner) -- ++(0,-3); % outer portion of T
\path (tm) node[red,left,inner sep=2pt] {\tiny$\alpha$};
\end{scope}
\draw (-2,0) -- ++(0,-3) arc (180:360:2 and 0.5) -- ++(0,3); % draw outside of outer tube
\end{scope}
\end{tikzpicture}
\caption{Left, the product sutured manifold $(M',\gamma')$, together with the arc $\alpha$.  Right, the same manifold with $\alpha$ isotoped into $\partial M'$.}
\label{fig:mprime-product}
\end{figure}
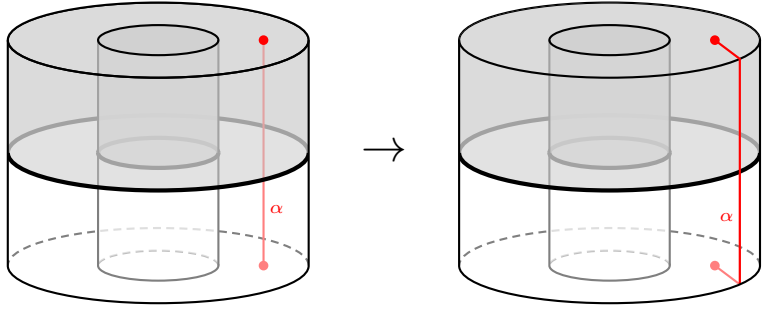

The portion of $\partial M'$ which came from the annulus $B$ (i.e., which was  in the interior of $M_A$) is contained in a tubular neighborhood $N \subset \partial M'$ of one of the two components of $\gamma'$, let us say the component \[(S^1\times\{0\})\times[-1,1].\] Since $\alpha$ is disjoint from $B$, we can isotope this arc  into $\partial M' \setminus N$ while keeping its endpoints fixed, so that it meets the other component \[(S^1\times \{1\})\times[-1,1]\] of $\gamma'$  in an arc  $\{\pt\} \times [-1,1]$, as indicated in Figure \ref{fig:mprime-product}.  Gluing $(M',\gamma')$ and $(M_2,\gamma_2)$ back together to form $(M_A,\gamma_A)$, this gives an isotopy in $M_A$ which fixes the endpoints of $\alpha$ while carrying $\alpha$ to an arc in $\partial M_A$ which meets the sutures $s(\gamma_A)$ in one point.\end{proof}

With Lemma~\ref{lem:isotope-t-into-dNC} in hand, we may now complete the proof of Proposition~\ref{prop:t-in-cabling-annulus}.

\begin{proof}[Proof of Proposition~\ref{prop:t-in-cabling-annulus}]
Viewing $\alpha$ as an arc in $M_A$, Lemma~\ref{lem:isotope-t-into-dNC} says that we can isotope it rel its endpoints to lie in $\partial M_A$, so that it meets the sutures $s(\gamma_A)$ transversely in a single point, as shown on the left side of Figure~\ref{fig:glue-a-back-together}.
\begin{figure}
\begin{tikzpicture}
\begin{scope}[xshift=-7cm]
\draw (7,0.25) -- ++(1,0) arc (90:-90:1 and 0.25) ++(1,0.25) -- ++(0,-3) ++(-1,-0.25) arc (-90:0:1 and 0.25);
\path[fill=gray!30, fill opacity=0.75] (6,0) arc (180:90:1 and 0.25) -- ++(0,-3) arc (90:180:1 and 0.25) -- ++(0,3);
\draw[densely dotted] (6,-3) arc (180:90:1 and 0.25) -- ++(1,0) arc (90:0:1 and 0.25);
\draw (6,0) arc (180:90:1 and 0.25);
\draw[ultra thick] (7,0.25) -- ++(0,-3);
\draw[fill=gray!30, fill opacity=0.75] (6,0) arc (180:270:1 and 0.25) -- ++(1,0) -- ++(0,-3) -- ++(-1,0) arc (270:180:1 and 0.25) -- ++(0,3);
\draw[ultra thick] (8,-0.25) -- ++(0,-3);
% tangle
\path (7,-2) arc (270:240:1 and 0.25) coordinate (tl);
\path (8,-2) arc (270:300:1 and 0.25) coordinate (tr);
\draw[red] (tl) -- node[pos=0.4,below,inner sep=2pt] {\scriptsize$\alpha$} (tr);
\draw[red,fill=red] (tl) circle (0.05) (tr) circle (0.05);
\node at (7.5,-3.75) {$M_A$};
\end{scope}
\node at (3,-1.5) {\LARGE$\hookrightarrow$};
%
% glue A back together
%
\begin{scope}[xshift=4cm]
% cabling annulus
\draw[thin,pattern=north west lines,pattern color=gray!20] (1,-3) rectangle (3,0);
\node[circle,fill=white,inner sep=1pt] at (2,-0.75) {$A$};
% left complement
\path[fill=gray!30, fill opacity=0.5] (0,-3) -- ++(0,3) arc (180:0:0.5 and 0.25) -- ++(0,-3) arc (0:180:0.5 and 0.25);
\draw (0,0) arc (180:0:0.5 and 0.25);
\draw[densely dotted] (0,-3) arc (180:0:0.5 and 0.25);
\draw[fill=gray!30, fill opacity=0.75] (0,0) -- ++(0,-3) arc(180:360:0.5 and 0.25) -- ++(0,3) arc(0:-180:0.5 and 0.25);
% right complement
\draw (3,0) arc (180:0:0.5 and 0.25);
\draw[densely dotted] (3,-3) arc (180:0:0.5 and 0.25);
\draw (3,0) -- ++(0,-3) arc(180:360:0.5 and 0.25) -- ++(0,3) arc(0:-180:0.5 and 0.25);
% tangle
\draw[red,fill=red] (0.5,-2) coordinate (tl) circle (0.05) ++(3,0) coordinate (tr) circle (0.05);
\path (tl) arc (270:360:0.5 and 0.25) coordinate (phi-l);
\path (tr) arc (270:180:0.5 and 0.25) coordinate (phi-r);
\fill[red!25,fill opacity=0.75] (tl) arc (270:360:0.5 and 0.25) -- (phi-r) arc (180:270:0.5 and 0.25) -- (tl);
\draw[red] (tl) to node[pos=0.4,below,inner sep=2pt] {\scriptsize$\alpha$} (tr);
%\draw[red,fill=red] (phi-l) circle (0.04) (phi-r) circle (0.04);
\draw[red,fill=red] (phi-l) circle (0.04) -- node[pos=0.4,above,inner sep=2pt] {\scriptsize$\phi(\alpha)$} (phi-r) circle (0.04);
\draw[red,densely dotted] (tl) arc (270:360:0.5 and 0.25) coordinate (phi-l);
\draw[red,densely dotted] (tr) arc (270:180:0.5 and 0.25) coordinate (phi-r);
\node at (2,-3.75) {$M_F$};
\end{scope}
\end{tikzpicture}
\caption{Viewing $M_A$ as a submanifold of $M_F$, the arc $\alpha \subset \partial M_A$ lies in a push-off of the annulus $A$.  On the right we see the region swept out by the isotopy of $\alpha$ into $A$.}
\label{fig:glue-a-back-together}
\end{figure}
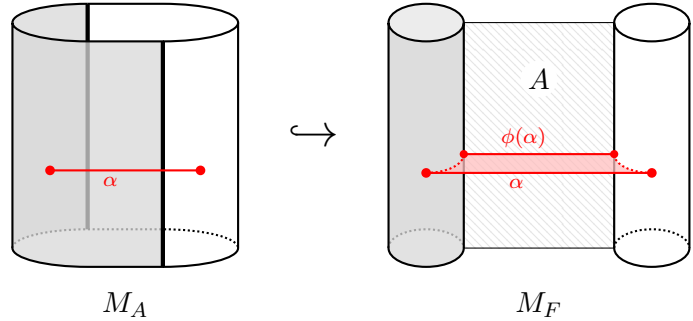
Recall that $M_A$ was formed from $M_F$ by removing the interior of a tubular neighborhood $A \times [-1,1]$, where the original cabling annulus $A$ is identified as $A\times\{0\}$.  We can arrange the interval coordinate so that $\alpha \subset A\times \{1\}$, and then the desired isotopy is simply $\phi_t(x) = (x,1-t)$ for $x\in \alpha$.
\end{proof}

\section{Identifying the manifolds $M_F$ and $S^3(F)$} \label{sec:C}

Let $K \subset S^3$ be nearly fibered, with a genus-1 Seifert surface $F$. According to Proposition \ref{prop:t-in-cabling-annulus}, we can assume that the  arc \[\alpha\subset M_F\] in Definition \ref{def:M_F}, whose complement recovers $S^3(F)$, lies in the annulus \[A\subset M_F\] of Lemma \ref{lem:dehn-fill}; moreover, Remark~\ref{rmk:cabling} says that $A$ is a cabling annulus.  In this section, we use these facts to identify the manifold $M_F$ and hence the sutured Seifert surface complement $S^3(F)$.  Specifically, we prove the following:

\begin{theorem} \label{thm:identify-y-c}
Let $K\subset S^3$ be a nearly fibered knot, with genus-1 Seifert surface $F$.  Then, up to possibly replacing $K$ with its mirror, the manifold $M_F$ is the complement of either:
\begin{enumerate}
\item the $(2,4)$-cable of the unknot in $S^3$, or
\item the $(2,4)$-cable of the right-handed trefoil in $S^3$.
\end{enumerate}
In each case the arc $\alpha$ is a properly embedded arc in the cabling annulus. 
\end{theorem}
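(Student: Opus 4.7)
The plan is to cut $M_F$ along the cabling annulus $A$, identify the resulting manifold as the complement of a knot $C \subset S^3$, and then use the constraint that the sutured Floer homology has dimension $2$ to determine $C$.

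First, by Proposition~\ref{prop:t-in-cabling-annulus}, isotope $\alpha$ to lie on $A$. The manifold obtained by cutting $M_F$ along $A$ then coincides with the manifold obtained from $S^3(F)$ by decomposing along the product disk $D = A \setminus N(\alpha)$: cutting $M_F$ along $A$ can be performed by first removing a neighborhood of $\alpha$ (which restores the $S^3(F)$ structure) and then cutting along the remaining disk $D$. Since product disk decomposition preserves sutured Floer homology (Theorem~\ref{thm:sfh-disk-decomposition}), the resulting sutured manifold $(N, \gamma_N)$ satisfies $\dim \SFH(N,\gamma_N) = \dim \SFH(S^3(F)) = 2$. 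The lemmas of \S\ref{sec:M_F} ensure that $N$ has a single connected torus boundary component.

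Because $S^3(F)$ embeds naturally in $S^3$ and the product-disk decomposition can be performed inside $S^3$, the manifold $N$ sits in $S^3$ as a codimension-$0$ submanifold with torus boundary. Alexander's theorem---that every torus in $S^3$ bounds a solid torus on at least one side---then yields $N = S^3 \setminus N(C)$ for some knot $C \subset S^3$, with $\gamma_N$ a pair of oppositely oriented parallel curves on $\partial N(C)$ of slope equal to that of $\partial A$.

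It remains to show that the constraint $\dim \SFH(S^3 \setminus N(C), \gamma_N) = 2$ forces $C$ to be the unknot or the right-handed trefoil (up to mirror, which corresponds to reversing the orientation of $M_F$), with $\gamma_N$ the $(2,4)$-cable slope. Proposition~\ref{prop:longitude-fh} immediately rules out any nontrivial $C$ with a Seifert-longitudinal suture. For other candidate slopes, the plan is to analyze the Dehn fillings of $(N,\gamma_N)$ using the cyclic surgery theorem \cite{cgls} together with sutured Floer dimension bounds, ruling out other cable parameters $(2,2n)$ and other small candidate knots $C$. This final classification step is the main technical obstacle, and is expected to occupy the bulk of the proof.
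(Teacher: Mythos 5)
Your setup matches the paper exactly: you decompose $M_F$ along $A$, observe that this coincides with a product-disk decomposition of $S^3(F)$ (after isotoping $\alpha$ into $A$ via Proposition~\ref{prop:t-in-cabling-annulus}), deduce that the result $(M_A,\gamma_A)$ is a sutured knot complement $(S^3\setminus N(C),\gamma_r)$ of total $\SFH$-dimension $2$ by Theorem~\ref{thm:sfh-disk-decomposition}, and note that Proposition~\ref{prop:longitude-fh} kills the case of a nontrivial $C$ with Seifert-longitudinal sutures.

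The gap is the final classification, which you acknowledge but sketch with the wrong tool. The cyclic surgery theorem is not what carries the argument here: one doesn't a priori know that $(S^3\setminus N(C),\gamma_r)$ Dehn-fills to a lens space or a manifold with cyclic $\pi_1$, the filling slope need not be non-integral (it turns out to be $r=\pm 2$), and CGLS says nothing about which slopes are possible once the knot is a torus knot. The missing idea is an L-space argument. Identify $(S^3\setminus N(C),\gamma_r)$ with the sutured complement of the core $C'$ of $r$-surgery on $C$, so that $\hfkhat(S^3_r(C),C')\cong\Q^2$. For $r\neq 0$ the core is rationally nullhomologous, so the spectral sequence $\hfkhat(S^3_r(C),C')\Rightarrow\hfhat(S^3_r(C))$ together with a parity argument forces $\dim\hfhat(S^3_r(C))=2$ and $|H_1(S^3_r(C))|=2$; that is, $S^3_r(C)$ is an L-space and $r=\pm 2/q$. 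One then invokes the constraint that a nontrivial knot can have an L-space surgery of slope $r$ only if $|r|\geq 2g(C)-1$ (and, when $r>0$, that $C$ be fibered and strongly quasipositive), which pins down $C$ as the unknot (with $r=2/q$, all homeomorphic to the $\gamma_2$ case) or a trefoil (with $r=\pm 2$), hence the $(2,4)$-cables. You should also close the case $r=0$, $C$ unknot, which Proposition~\ref{prop:longitude-fh} does not touch: there $M_F$ would be the complement of the $(2,0)$-torus link and hence reducible, contradicting Lemma~\ref{lem:MF-inc}. Finally, your sketch leaves unexplained why the cabling parameter comes out as exactly $(2,4)$ rather than some other $(2,2n)$; that is precisely what the slope $r=\pm2$ and the homeomorphism invariance of $(S^3\setminus N(U),\gamma_{2/q})$ buy you.
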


\begin{proof}
As defined in \S\ref{sec:M_F}, the manifold $(M_A,\gamma_A)$ is obtained from $(M_F,\gamma_F)$ by decomposing along the cabling annulus $A$ provided in Lemma \ref{lem:dehn-fill}, \[ (M_F,\gamma_F) \stackrel{A}{\leadsto} (M_A,\gamma_A). \]Recall from Definition \ref{def:M_F} that  \[(M,\gamma) = S^3(F)\] can be recovered from $M_F$ by removing a neighborhood $N(\alpha)$ of the arc $\alpha$, where the suture $s(\gamma)$ is identified with a meridian $\mu_\alpha$ of $\alpha$. 
 By Proposition \ref{prop:t-in-cabling-annulus}, we can  assume that $\alpha\subset A$. Therefore, when we remove a neighborhood of $\alpha$ from $M_F$ to form $S^3(F)$, what remains of the cabling annulus $A$ is a product disk $D\subset S^3(F)$. Thus, $(M_A,\gamma_A)$ can alternatively be obtained via the product disk decomposition  \[ S^3(F) \stackrel{D}{\leadsto} (M_A,\gamma_A), \] as indicated in Figure \ref{fig:product-disk}.

\begin{figure}
\begin{tikzpicture}
\begin{scope} % M_F
% left complement
\path[fill=gray!30, fill opacity=0.5] (0,-3) -- ++(0,3) arc (180:0:0.5 and 0.25) -- ++(0,-3) arc (0:180:0.5 and 0.25);
\draw (0,0) arc (180:0:0.5 and 0.25);
\draw[densely dotted] (0,-3) arc (180:0:0.5 and 0.25);
\draw[fill=gray!30, fill opacity=0.75] (0,0) -- ++(0,-3) arc(180:360:0.5 and 0.25) -- ++(0,3) arc(0:-180:0.5 and 0.25);
% right complement
\draw (3,0) arc (180:0:0.5 and 0.25);
\draw[densely dotted] (3,-3) arc (180:0:0.5 and 0.25);
\draw (3,0) -- ++(0,-3) arc(180:360:0.5 and 0.25) -- ++(0,3) arc(0:-180:0.5 and 0.25);
% cabling annulus
\path (0.5,0) ++ (-45:0.5 and 0.25) coordinate (rect-nw) -- ++(0,-1.35) coordinate (alpha-l);
\path (3.5,-3) ++ (225:0.5 and 0.25) coordinate (rect-se) -- ++(0,1.65) coordinate (alpha-r);
\draw[thin,pattern=north west lines,pattern color=gray!20] (rect-nw) rectangle (rect-se);
\node[circle,fill=white,inner sep=1pt] at (2,-0.75) {$A$};
\draw[red,fill=red] (alpha-l) circle (0.05) -- node[pos=0.5,above,inner sep=2pt] {\scriptsize$\alpha$} (alpha-r) circle (0.05);
\node at (2,-3.75) {$(M_F,\gamma_F)$};
\end{scope}
\node at (4.625,-1.25) {\huge$\stackrel{\vphantom{D}}{\leadsto}$};
\begin{scope}[xshift=5.25cm] % S^3(F)
% left complement
\path[fill=gray!30, fill opacity=0.5] (0,-3) -- ++(0,3) arc (180:0:0.5 and 0.25) -- ++(0,-1.05) to[out=270,in=180] ++(0.2,-0.2) -- ++(0.8,0) arc(90:270:0.1 and 0.25) -- ++(-0.8,0) to[out=180,in=90] ++(-0.2,-0.2) -- ++(0,-1.05) arc (0:180:0.5 and 0.25);
\draw (0,0) arc (180:0:0.5 and 0.25);
\draw[densely dotted] (0,-3) arc (180:0:0.5 and 0.25);
\path (0,0) -- ++(0,-3) arc(180:360:0.5 and 0.25) -- ++(0,1.05) to[out=90,in=180] ++(0.2,0.2) -- ++(0.8,0) coordinate (suture-bottom);
\begin{scope} % suture back
\clip (1,-1.25) rectangle (3,-1.75);
\draw[ultra thick,densely dotted] (suture-bottom) ++(0.02,-0.02) arc (270:90:0.1 and 0.27);
\end{scope}
\path[fill=gray!30, fill opacity=0.75] (0,0) -- ++(0,-3) arc(180:360:0.5 and 0.25) -- ++(0,1.05) to[out=90,in=180] ++(0.2,0.2) -- ++(0.8,0) arc (-90:90:0.1 and 0.25) -- ++(-0.8,0) to[out=180,in=270] ++(-0.2,0.2) -- ++(0,1.05) arc(0:-180:0.5 and 0.25);
\draw (0,0) -- ++(0,-3) arc(180:360:0.5 and 0.25) -- ++(0,1.05) to[out=90,in=180] ++(0.2,0.2) -- ++(0.8,0) arc (-90:90:0.1 and 0.25) -- ++(-0.8,0) to[out=180,in=270] ++(-0.2,0.2) -- ++(0,1.05) arc(0:-180:0.5 and 0.25);
\begin{scope} % suture front
\clip (1,-1.25) rectangle (3,-1.75);
\draw[ultra thick] (suture-bottom) ++(0,-0.02) arc (-90:90:0.1 and 0.27);
\end{scope}
% right complement
\draw (3,0) arc (180:0:0.5 and 0.25);
\draw[densely dotted] (3,-3) arc (180:0:0.5 and 0.25);
\draw (3,0) -- ++(0,-1.05) to[out=270,in=0] ++(-0.2,-0.2) -- ++(-1.3,0) ++(0,-0.5) -- ++(1.3,0) to[out=0,in=90] ++(0.2,-0.2) -- ++(0,-1.05) arc(180:360:0.5 and 0.25) -- ++(0,3) arc(0:-180:0.5 and 0.25);
% product disk
\path (0.5,0) ++ (-45:0.5 and 0.25) coordinate (rect-nw) ++(0,-3) coordinate (rect-sw);
\path (3.5,-3) ++ (225:0.5 and 0.25) coordinate (rect-se) ++(0,3) coordinate (rect-ne) ++(-0.2,-1.2) coordinate (d-corner-1);
\path (rect-se) ++(-0.2,1.55) coordinate (d-corner-2);
%\draw[thin,pattern=north west lines,pattern color=gray!20] (rect-nw) rectangle (rect-se);
\draw[thin,pattern=north west lines,pattern color=gray!20] (rect-nw) -- ++(0,-1) to[out=270,in=180] ++(0.2,-0.2) -- (d-corner-1) to[out=0,in=270] ++(0.2,0.2) -- (rect-ne) -- cycle;
\draw[thin,pattern=north west lines,pattern color=gray!20] (rect-sw) -- ++(0,1.35) to[out=90,in=180] ++(0.2,0.2) -- (d-corner-2) to[out=0,in=90] ++(0.2,-0.2) -- (rect-se) -- cycle;
\node[circle,fill=white,inner sep=1pt] at (2,-0.75) {$D$};
%\draw[red,fill=red] (alpha-l) circle (0.05) -- node[pos=0.5,above,inner sep=2pt] {\scriptsize$\alpha$} (alpha-r) circle (0.05);
\node at (2,-3.75) {$S^3(F)$};
\end{scope}
\node at (9.875,-1.25) {\huge$\stackrel{D}{\leadsto}$};
\begin{scope}[xshift=10.5cm] % M_A
\draw (1,0.25) -- ++(1,0) arc (90:-90:1 and 0.25) ++(1,0.25) -- ++(0,-3) ++(-1,-0.25) arc (-90:0:1 and 0.25);
\path[fill=gray!30, fill opacity=0.75] (0,0) arc (180:90:1 and 0.25) -- ++(0,-3) arc (90:180:1 and 0.25) -- ++(0,3);
\draw[densely dotted] (0,-3) arc (180:90:1 and 0.25) -- ++(1,0) arc (90:0:1 and 0.25);
\draw (0,0) arc (180:90:1 and 0.25);
\draw[ultra thick] (1,0.25) -- ++(0,-3);
\draw[fill=gray!30, fill opacity=0.75] (0,0) arc (180:270:1 and 0.25) -- ++(1,0) -- ++(0,-3) -- ++(-1,0) arc (270:180:1 and 0.25) -- ++(0,3);
\draw[ultra thick] (2,-0.25) -- ++(0,-3);
\node at (1.5,-3.75) {$(M_A,\gamma_A)$};
\end{scope}

\end{tikzpicture}
\caption{A schematic picture  which shows that decomposing $(M_F,\gamma_F)$ along the cabling annulus $A$ is the same as first removing a neighborhood of $\alpha\subset A$ and then decomposing along the product disk $D$.}
\label{fig:product-disk}
\end{figure}
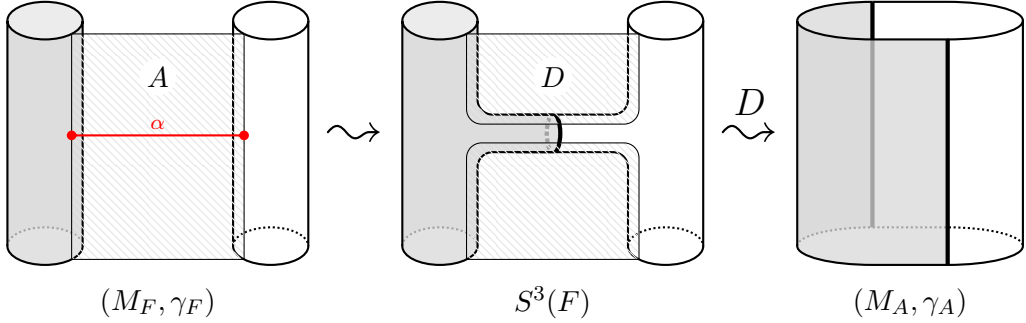

This shows in particular that $M_A$ is a subset of $S^3$, as \[M_A\subset S^3(F)\subset S^3.\] Since $M_A$ has torus boundary, it follows that $M_A$ can be identified with the complement of a knot $C\subset S^3$, and moreover that we have an identification of sutured manifolds, \[(M_A,\gamma_A)\cong (S^3\setminus N(C),\gamma_r),\] where the sutures $s(\gamma_r)$ are a union of two parallel oppositely oriented curves of slope $r$, with respect to the Seifert framing of $C$. Furthermore, we have \begin{equation}\label{eqn:dim-0} \SFH(S^3\setminus N(C),\gamma_r) \cong \SFH(S^3(F))\cong\Q^2, \end{equation} by Theorem~\ref{thm:sfh-disk-decomposition}. It remains to determine the slope $r$ and the knot $C$.

Suppose first that $r=0$. Then $C$ is  the unknot, because otherwise we would have \[\dim \SFH(S^3\setminus N(C),\gamma_0)\geq 4,\] by Proposition~\ref{prop:longitude-fh}, contradicting \eqref{eqn:dim-0}.  But then $M_F$ is the complement of the $(2,0)$-cable of the unknot in $S^3$, which contradicts the fact in Lemma \ref{lem:MF-inc} that $M_F$ is  irreducible. 

The above argument shows that  $r\neq 0$.
Note that we can identify $(S^3\setminus N(C),\gamma_r)$ as the sutured complement of the core $C' \subset S^3_r(C)$ of $r$-surgery on $C$, whose sutures are a union of two oppositely oriented meridians of $C'$.  With this in mind, equation \eqref{eq:sfh-hfk} becomes
\begin{align*}
\hfkhat(S^3_r(C), C') &\cong \SFH(S^3_r(C)(C')) \\
&\cong \SFH(S^3 \setminus N(C), \gamma_r) \cong \Q^2.
\end{align*}
Since $r\neq 0$, the core $C'$ is rationally nullhomologous in $S^3_r(C)$. It follows that there is a spectral sequence
\[ \Q^2 \cong \hfkhat(S^3_r(C),C') \ \Longrightarrow\ \hfhat(S^3_r(C)) \]
leading to the chain of inequalities
\begin{align*}
1 \leq |H_1(S^3_r(C);\Z)| &\leq \dim \hfhat(S^3_r(C)) \\
&\leq \dim \hfkhat(S^3_r(C),C') = 2.
\end{align*}
We conclude that  \begin{equation}\label{eqn:dim-2}\dim\hfhat(S^3_r(C))=2,\end{equation} as this dimension has the same parity as \[\dim\hfkhat(S^3_r(C), C')=2.\] It also has the same parity as
\[ \chi\big( \hfhat(S^3_r(C)) \big) = |H_1(S^3_r(C);\Z)|, \]
which then implies that  \begin{equation}\label{eqn:euler}|H_1(S^3_r(C);\Z)| = 2. \end{equation} Combining \eqref{eqn:dim-2} and \eqref{eqn:euler}, we have shown that  $S^3_r(C)$ is an L-space. Moreover,  if $r=p/q$ with $q\geq0$ and $\gcd(p,q)=1$ then $|p|=2$.

We now recall from \cite[Proposition~9.6]{osz-rational} (see \cite[\S2]{hom-cabling} for details) that if $C \subset S^3$ is a nontrivial knot, then $r$-surgery on $C$ can only be an L-space if  \[|r| \geq 2g(C)-1.\] Moreover, if we also have that $r>0$, then $C$ must additionally be fibered \cite{ghiggini,ni-hfk} and strongly quasipositive \cite{hedden-positivity}.  Note that when $C$ is knotted, we have that \[0 < |r| < 1 \leq 2g(C)-1\]  for slopes $r=\pm2/q$ unless $q=1$, so there are three cases to consider:
\begin{enumerate}
\item $C$ is an unknot and $r=2/q$ for some odd $q\in\Z$. \label{i:yc-case1}

\item $C$ is knotted and $r=2$.  Then $S^3_2(C)$ is an L-space, so $g(C) = 1.$ Then $C$ must be the right-handed trefoil since this is the only genus-1, fibered, strongly quasipositive knot in the 3-sphere.  \label{i:yc-case2}

\item $C$ is knotted and $r=-2$.  Then $S^3_{-2}(C)$ is an L-space, so again $g(C) \leq 1$.  But now $C$ must be the \emph{left}-handed trefoil, since its mirror $\mirror{C}$ admits a positive L-space surgery and is therefore the right-handed trefoil, as discussed above. \label{i:yc-case3}
\end{enumerate}

In case \eqref{i:yc-case1} it follows that
\[ (M_A,\gamma_A) \cong (S^3 \setminus N(U), \gamma_2), \]
since any two choices of $\gamma_{2/q}$ are related by a homeomorphism of the solid torus $S^3 \setminus N(U)$.  We conclude that 
\[ M_F \cong S^3 \setminus N(C_{2,4}(U)) \cong S^3 \setminus N(T_{2,4}). \]
Similarly, in case \eqref{i:yc-case2} we have that \[ (M_A,\gamma_A) \cong (S^3 \setminus N(T_{2,3}), \gamma_2), \] and  therefore conclude that \[ M_F \cong S^3 \setminus N(C_{2,4}(T_{2,3})). \]
This leaves only case \eqref{i:yc-case3}, in which \[ (M_A,\gamma_A) \cong (S^3 \setminus N(T_{-2,3}), \gamma_{-2}). \] Then we have 
\begin{align*}
M_F &\cong S^3 \setminus N(C_{2,-4}(T_{-2,3})) \\
&\cong -\left( S^3 \setminus N(C_{2,4}(T_{2,3})) \right).
\end{align*}
But in this case we can replace $K$ with its mirror $\mirror{K}$, and doing so replaces $M_F$ with $-M_F$, so again case~\eqref{i:yc-case2} applies here and we are done.
\end{proof}

\section{The $(2,4)$-cable of the unknot} \label{sec:unknot-24}

In this lengthy section, we determine all knots $K\subset S^3$ which arise from the first case of Theorem~\ref{thm:identify-y-c}, in which $M_F$ is the complement of the $(2,4)$-cable of the unknot. Our goal is to prove the following:

\begin{theorem} \label{thm:M_F-E24}
Let $K\subset S^3$ be a nearly fibered knot with genus-1 Seifert surface $F$, and suppose that \[M_F \cong S^3 \setminus N(T_{2,4}).\]  Then $K$ is one of the knots
\[ 5_2, \ 15n_{43522}, \text{ or } P(-3,3,2n+1)\ (n\in\Z) \]
or their mirrors.
\end{theorem}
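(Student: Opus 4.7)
The plan is to exploit the strong involution on $M_F \cong S^3 \setminus N(T_{2,4})$ given by $\pi$-rotation about an axis disjoint from $T_{2,4}$, following the outline in \S\ref{ssec:proof}. First I would verify that this involution carries the cabling annulus $A$ to itself and, after removing a neighborhood of $\alpha \subset A$, restricts to an involution $\iota$ of $S^3(F)$ that fixes the suture $\gamma$ setwise and acts as the hyperelliptic involution on each once-punctured torus $R_\pm(\gamma)$. Identifying the quotient $S^3(F)/\iota$ with $S^3(D^2) = (S^3 \setminus \inr(D^2 \times [-1,1]),\, \partial D^2 \times [-1,1])$ would then realize $S^3(F)$ as the double branched cover along an explicit $2$-string tangle $\tau \subset S^3(D^2)$, as pictured in Figure~\ref{fig:E-T24-quotient}.

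Next, since the once-punctured torus admits a unique hyperelliptic involution up to isotopy, for any gluing $\varphi$ producing a closed manifold $Y_\varphi = S^3(F)\cup_\varphi (F\times[-1,1])$ the involution $\iota$ extends to an involution $\hat\iota$ on $Y_\varphi$ acting as the hyperelliptic involution on each fiber of $F\times[-1,1]$. The restriction of the quotient map to the product piece is a double branched cover of $D^2\times[-1,1]$ along some $3$-braid $\beta$, so $Y_\varphi \cong \Sigma_2(S^3, \tau \cup \beta)$. In particular, $Y_\varphi \cong S^3$ if and only if $\tau \cup \beta$ is an unknot, and in that case $K$ is the lift $\tilde\kappa$ of the braid axis $\kappa = \partial D^2 \times \{0\}$ in this double cover.

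This reduces the theorem to the two tasks highlighted in the introduction: (1) enumerate the $3$-braids $\beta$ with $\tau \cup \beta$ an unknot, and (2) for each such $\beta$, identify $\tilde\kappa$. For task (1), I would pick a distinguished crossing in $\tau$ and consider its $0$- and $\infty$-resolutions. The Montesinos trick relates the double branched covers of the three variants by Dehn surgeries on a common knot in some intermediate $3$-manifold; because the cover of $\tau \cup \beta$ is $S^3$, the cyclic surgery theorem \cite{cgls}, together with detection properties of lens spaces and small Seifert fibered spaces, should severely restrict the possible slopes and hence the possible braid words. Iteratively simplifying $\beta$ this way (and normalizing via Markov moves inside the solid torus $D^2\times[-1,1]$), the set of permissible braids should collapse to a finite list of families, one of which is genuinely parameterized by an integer (corresponding to the pretzels $P(-3,3,2n+1)$). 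For task (2), I would compute $\tilde\kappa$ diagrammatically for a representative of each family, using the standard description of the double cover of $S^3$ branched over a link, and match the resulting knots against $5_2$, $15n_{43522}$, $P(-3,3,2n+1)$, and their mirrors (the mirror ambiguity accounting for the orientation of $M_F$, as already noted in \S\ref{ssec:proof}).

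The main obstacle is task (1): organizing the braid classification so that no case is missed. The cyclic surgery theorem gives its sharpest conclusions only at irreducible, non-Seifert fibered covers, so the exceptional slopes and degenerate intermediate $3$-manifolds have to be treated by hand; and ensuring that the one-parameter family of solutions genuinely corresponds to the $(-3,3,2n+1)$-pretzels requires carefully tracking how the integer twist parameter in $\beta$ passes through the branched cover to become the parameter $n$ in $\tilde\kappa$. Verifying that the classification is closed under mirroring, as well as confirming that no two distinct braids in our list yield the same knot $K$ beyond the pairs already allowed by Theorems~\ref{thm:main-hfk-detection-2} and~\ref{thm:main-hfk-detection-3}, adds further bookkeeping but should be routine once the cyclic-surgery-driven enumeration is complete.
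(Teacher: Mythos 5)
Your proposal follows essentially the same route as the paper: the involution realizing $S^3(F)$ as the double branched cover of $S^3(D^2)$ along a tangle $\tau$, the uniqueness of the hyperelliptic involution to extend $\iota$ over any gluing $Y_\varphi$, the reduction to classifying 3-braids $\beta$ with $\tau\cup\beta$ unknotted, and the use of the Montesinos trick together with the cyclic surgery theorem to constrain $\beta$. The paper's execution of your task (1) fills in the bookkeeping you flag as the main obstacle: it identifies the curve $\gamma \subset \dcover(U) \cong S^3$ obtained by resolving a chosen crossing, proves via \cite{cgls} that $\gamma$ is an unknot or nontrivial torus knot, and then uses the representation $\rho: B_3 \to SL_2(\Z)$ on the first homology of the double cover of a 3-punctured disk to translate the constraint $\dcover(L^\beta) \cong S^3_{(2n+1)/2}(\gamma)$ into a factorization of $\rho(\beta)$ that is unique up to $\Delta^4 = \ker\rho$. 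Pinning down the $\Delta^4$-power requires matching braid word lengths against the Birman--Menasco classification of 3-braid closures yielding torus links, which is the step your sketch does not spell out but would need. One detail to correct: you write of normalizing via Markov moves inside $D^2\times[-1,1]$, but Markov stabilization changes the braid index and is not available here; the relevant moves are conjugation by powers of $y$ (Lemma~\ref{lem:y-conjugation}), braid reversal (Lemma~\ref{lem:reverse-braid}), and mirroring (Lemma~\ref{lem:mirror-braid}), all of which fix the 3-braid structure and are shown to preserve the unknottedness of $\tau\cup\beta$ and to act predictably on $K_\beta$.
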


The key observation is that under the hypotheses of Theorem \ref{thm:M_F-E24}, $M_F$ admits an involution which is rotation by $180^\circ$ about an axis of symmetry containing the arc $\alpha \subset M_F$.  This then gives rise to an involution $\iota$ of the sutured Seifert surface complement \[(M,\gamma) = S^3(F)\] obtained by removing a neighborhood of $\alpha$ from $M_F$, where $s(\gamma)$  is identified with a meridian $\mu_\alpha$ of  $\alpha$. This involution is depicted on the left side of Figure~\ref{fig:E-T24-quotient},
while the right side illustrates the quotient \[S^3(F)/\iota,\] which is a sutured 3-ball with connected suture. As suggested by the figure, it is natural to identify this quotient 3-ball  with the complement of a thickened disk in $S^3$, \[S^3(F)/\iota \cong S^3(D^2) = (S^3\setminus \inr(D^2\times[-1,1]), \partial D^2 \times [-1,1]),\] and the quotient map realizes $S^3(F)$ as the branched double cover of this ball along a  tangle $\tau\subset S^3(D^2)$, as shown in  Figure \ref{fig:E-T24-quotient}.

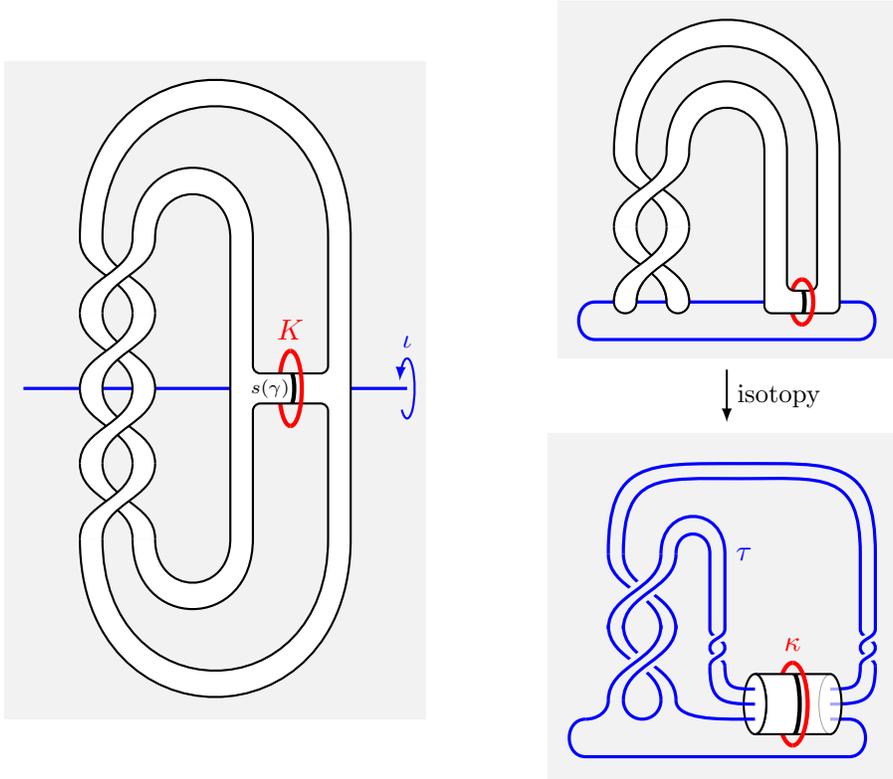
\begin{figure}
\begin{tikzpicture}
% Draw S^3(F)
\begin{scope}[yshift=-0.8cm]
\path[fill=gray!10] (-1,-2.4) rectangle (4.6,6.35);
\draw[blue,very thick] (-0.75,2) -- (4.35,2); % axis of involution
\draw[blue,-latex] (4.35,2) ++ (-135:0.1 and 0.4) arc (-135:165:0.1 and 0.4);
\path[blue] (4.35,2) ++ (0,0.4) node[above] {\small$\iota$};
% predraw some of the twist region to fix the colors
\begin{scope}
\path[fill=white] (0.7,3) to[out=270,in=90] (0,2) to[out=270,in=90] (0.7,1) -- (1,1) to[out=90,in=270] (0.3,2) to[out=90,in=270] (1,3) -- cycle;
\path[fill=white] (0,3) to[out=270,in=90] (0.7,2) to[out=270,in=90] (0,1) -- (0.3,1) to[out=90,in=270] (1,2) to[out=90,in=270] (0.3,3) -- cycle;
\end{scope}
\foreach \y in {4,3,2,1} { % twist region
\begin{scope}
\path[fill=white] (0,\y) to[out=270,in=90] ++(0.7,-1) -- ++(0.3,0) to[out=90,in=270] ++(-0.7,1) -- cycle;
\draw (0,\y) to[out=270,in=90] ++(0.7,-1) ++(0.3,0) to[out=90,in=270] ++(-0.7,1);
\path[fill=white] (1,\y) to[out=270,in=90] ++(-0.7,-1) -- ++(-0.3,0) to[out=90,in=270] ++(0.7,1) -- cycle;
\draw (1,\y) to[out=270,in=90] ++(-0.7,-1) ++ (-0.3,0) to[out=90,in=270] ++(0.7,1);
\end{scope}
}
\draw[red,ultra thick] (2.8,2.5) arc (90:270:0.15 and 0.5); % back of K
\begin{scope}[every path/.append style={looseness=2}] % rest of exterior
\path[fill=white] (1,0) to[out=270,in=270] ++(1,0) -- ++(0,4) to[out=90,in=90] ++(-1,0) -- ++(-0.3,0) to[out=90,in=90] ++(1.6,0) -- ++(0,-4) to[out=270,in=270] ++(-1.6,0) -- cycle;
\path[fill=white] (0.3,0) to[out=270,in=270] ++(3,0) -- ++(0,4) to[out=90,in=90] ++(-3,0) -- ++(-0.3,0) to[out=90,in=90] ++(3.6,0) -- ++(0,-4) to[out=270,in=270] ++(-3.6,0) -- cycle;
\path[fill=white] (2,1.8) rectangle (3.6,2.2);
\path[fill=white] (2.4,2.2) arc (270:180:0.1) -- ++(0,-0.1) -- ++(0.1,0); % NW corner of N(alpha)
\path[fill=white] (3.2,2.2) arc (-90:0:0.1) -- ++(0,-0.1) -- ++(-0.1,0); % NE corner of N(alpha)
\path[fill=white] (2.4,1.8) arc (90:180:0.1) -- ++(0,0.1) -- ++(0.1,0); % SW corner of N(alpha)
\path[fill=white] (3.2,1.8) arc (90:0:0.1) -- ++(0,0.1) -- ++(-0.1,0); % SE corner of N(alpha)
\draw (1,0) to[out=270,in=270] ++(1,0) -- ++(0,4) to[out=90,in=90] ++(-1,0);
\draw (0,0) to[out=270,in=270] ++(3.6,0) -- ++(0,4) to[out=90,in=90] ++(-3.6,0);
\draw (0.3,0) to[out=270,in=270] ++(3,0) -- ++(0,1.7) arc (0:90:0.1) -- ++(-0.8,0) arc (90:180:0.1) -- ++(0,-1.7) to[out=270,in=270] ++(-1.6,0);
\draw (0.3,4) to[out=90,in=90] ++(3,0) -- ++(0,-1.7) arc (0:-90:0.1) -- ++(-0.8,0) arc (270:180:0.1) -- ++(0,1.7) to[out=90,in=90] ++(-1.6,0);
\end{scope}
\begin{scope}
\clip (2.2,1.8) rectangle (3,2.2);
\draw[ultra thick] (2.8,2) ++(0,0.22) arc (90:-90:0.05 and 0.22) node[midway,left,inner sep=1pt] {\tiny$s(\gamma)$};
\end{scope}
\draw[red,ultra thick] (2.8,2.5) node[above] {$K$} arc (90:-90:0.15 and 0.5);
\end{scope}

% Quotient by the involution
\begin{scope}[xshift=7.1cm, yshift=0.35cm]
\path[fill=gray!10] (-0.75,1.25) rectangle (3.75,6);
\draw[red,ultra thick] (2.5,2) ++(0,0.3) arc (90:270:0.15 and 0.3);
\draw[blue,very thick,looseness=1.5] (-0.25,2) -- (3.25,2) to[out=0,in=0] ++(0,-0.5) -- ++(-3.5,0) to[out=180,in=180] ++(0,0.5); % branch locus
%\draw[blue,-latex] (4.35,2) ++ (-135:0.1 and 0.4) arc (-135:165:0.1 and 0.4);
\foreach \y in {4,3} { % twist region
\begin{scope}
\path[fill=white] (0,\y) to[out=270,in=90] ++(0.7,-1) -- ++(0.3,0) to[out=90,in=270] ++(-0.7,1) -- cycle;
\draw (0,\y) to[out=270,in=90] ++(0.7,-1) ++(0.3,0) to[out=90,in=270] ++(-0.7,1);
\path[fill=white] (1,\y) to[out=270,in=90] ++(-0.7,-1) -- ++(-0.3,0) to[out=90,in=270] ++(0.7,1) -- cycle;
\draw (1,\y) to[out=270,in=90] ++(-0.7,-1) ++ (-0.3,0) to[out=90,in=270] ++(0.7,1);
\end{scope}
}
\begin{scope} % add in bottom of twists
\clip (-0.25,1.75) rectangle (1.25,2.25);
\path[fill=white] (0,3) to[out=270,in=90] ++(0.7,-1) arc (180:360:0.15) to[out=90,in=270] ++(-0.7,1);
\path[fill=white] (0.7,3) to[out=270,in=90] ++(-0.7,-1) arc (180:360:0.15) to[out=90,in=270] ++(0.7,1);
\draw (0,3) to[out=270,in=90] ++(0.7,-1) arc (180:360:0.15) to[out=90,in=270] ++(-0.7,1);
\draw (0.7,3) to[out=270,in=90] ++(-0.7,-1) arc (180:360:0.15) to[out=90,in=270] ++(0.7,1);
\end{scope}
\begin{scope}[every path/.append style={looseness=2}] % rest of exterior
\path[fill=white] (0,4) to[out=90,in=90] ++(3,0) -- ++(0,-2) -- ++(0,-0.05) arc (0:-90:0.1) -- ++(-0.8,0) arc (270:180:0.1) -- ++(0,0.05) -- ++(0,2) to[out=90,in=90] ++(-1,0) -- ++(-0.3,0) to[out=90,in=90] ++(1.6,0) -- ++(0,-1.75) arc (180:270:0.1) -- ++(0.2,0) arc (-90:0:0.1) -- ++(0,1.75) to[out=90,in=90] ++(-2.4,0) -- cycle;
\draw (0,4) to[out=90,in=90] ++(3,0) -- ++(0,-2) -- ++(0,-0.05) arc (0:-90:0.1) -- ++(-0.8,0) arc (270:180:0.1) -- ++(0,0.05) -- ++(0,2) to[out=90,in=90] ++(-1,0);
\draw (0.7,4) to[out=90,in=90] ++(1.6,0) -- ++(0,-1.75) arc (180:270:0.1) -- ++(0.2,0) arc (-90:0:0.1) -- ++(0,1.75) to[out=90,in=90] ++(-2.4,0);
\draw[red,ultra thick] (2.5,2) ++(0,0.3) arc (90:-90:0.15 and 0.3);
\end{scope}
\begin{scope} % draw band corresponding to image of suture
\clip (2.2,1.85) rectangle (2.8,2.15);
\draw[ultra thick] (2.5,2) ++(0,0.17) arc (90:-90:0.035 and 0.17); % node[midway,left,inner sep=1pt] {\tiny$\gamma$};
\end{scope}
\end{scope}

\draw[-latex] (8.6,1.45) -- node[right] {\small{isotopy}} (8.6,0.75);

% Isotope a bit
\begin{scope}[xshift=6.975cm, yshift=-5cm]
\path[fill=gray!10] (-0.75,1) rectangle (4,5.6);
\draw[red,ultra thick] (2.5,2) ++(0,0.55) arc (90:270:0.2 and 0.55);
% start filling in complement
\path[fill=white] (2,2.4) -- ++(1,0) arc (90:-90:0.15 and 0.4) -- ++(-1,0) arc (-90:90:0.15 and 0.4);
% parts of branch locus near right end of complement
\draw[blue,very thick,looseness=1.5] (-0.25,2) ++(0,-0.2) to[out=180,in=180] ++(0,-0.5) -- ++(3.5,0) to[out=0,in=0] ++(0,0.5) -- ++(-0.25,0);
\draw[blue,very thick,looseness=1.5] (3,2) to[out=0,in=270] (3.6,2.5);
\draw[blue,very thick,looseness=1.5] (3,2.2) to[out=0,in=270] (3.4,2.5);
\begin{scope} % draw complement, after isotopy
\draw[thin] (3,2.4) arc (90:270:0.15 and 0.4);
\draw[fill=white,fill opacity=0.66] (2,2.4) -- ++(1,0) arc (90:-90:0.15 and 0.4) -- ++(-1,0) arc (-90:90:0.15 and 0.4);
\draw[fill=white] (2,2) ellipse (0.15 and 0.4);
\begin{scope} % draw image of suture
\clip (2.4,1.6) rectangle (2.8,2.4);
\draw[ultra thick] (2.5,2.42) arc (90:-90:0.095 and 0.42);% node[midway,left,inner sep=1pt] {\tiny$\gamma$};
\end{scope}
\end{scope}
\draw[red,ultra thick] (2.5,2) ++(0,0.55) node[above] {$\kappa$} arc (90:-90:0.2 and 0.55);
% rest of the branch locus
\begin{scope}[every path/.append style={blue,very thick}]
\draw (-0.25,1.8) to[out=0,in=270] (0.05,2.05);
\draw (0.25,2.05) to[out=270,in=180] (0.5,1.8) to[out=0,in=270] (0.75,2.05);
\draw (0.95,2.05) to[out=270,in=180,looseness=0.75] (1.8,1.8) -- (2,1.8);
\foreach \y in {2.05,3} { % draw twists
  \foreach \x in {0.75,0.95} { \draw(\x,\y) to [out=90,in=270] ++(-0.7,1); }
  \path[fill=gray!10] (0,\y) ++(-0.1,0.2) -- ++(1,0.8) -- ++(0.45,0) -- ++(-1,-0.8) -- ++(-0.45,0);
  \foreach \x in {0.05,0.25} { \draw (\x,\y) to[out=90,in=270] ++(0.7,1); }
}
\path[blue] (1.6,4) node[right] {$\tau$};
\begin{scope}[every path/.append style={looseness=1.5}] % rest of exterior
\draw (0.25,4) to[out=90,in=180] (1.8,5) to[out=0,in=90] (3.4,4) -- (3.4,3) to[out=270,in=90,looseness=1] (3.6,2.75) ++(-0.2,0) to[out=270,in=90,looseness=1] (3.6,2.5);
\foreach \x/\y in {3.5/3,3.5/2.75} {
  \path[fill=gray!10] (\x,\y) ++(0.15,-0.02) -- ++(-0.3,-0.1) -- ++(0,-0.12) -- ++(0.3,0.1) -- cycle;
}
\draw (0.05,4) to[out=90,in=180] (1.8,5.2) to[out=0,in=90] (3.6,4) -- (3.6,3) to[out=270,in=90,looseness=1] (3.4,2.75) ++(0.2,0) to[out=270,in=90,looseness=1] (3.4,2.5);
\draw (0.95,4) to[out=90,in=90,looseness=2] (1.4,4) -- (1.4,3) to[out=270,in=90,looseness=1] ++(0.2,-0.25) ++(-0.2,0) to[out=270,in=90,looseness=1] ++(0.2,-0.25);
\foreach \x/\y in {1.5/3,1.5/2.75} {
  \path[fill=gray!10] (\x,\y) ++(0.15,-0.02) -- ++(-0.3,-0.1) -- ++(0,-0.12) -- ++(0.3,0.1) -- cycle;
}
\draw (0.75,4) to[out=90,in=90,looseness=2] (1.6,4) -- (1.6,3) to[out=270,in=90,looseness=1] ++(-0.2,-0.25) ++(0.2,0) to[out=270,in=90,looseness=1] ++(-0.2,-0.25);
\draw (1.6,2.5) to[out=270,in=180] (2,2.2);
\draw (1.4,2.5) to[out=270,in=180] (2,2);
\end{scope}
\end{scope}
\end{scope}
\end{tikzpicture}
\caption{Taking the quotient of $S^3(F) \cong M_F\setminus N(\alpha)$  by an involution $\iota$ in the case where $M_F \cong S^3 \setminus N(T_{2,4})$.  On the left, $S^3(F)$ is the complement in $S^3$ of the white region, the involution is rotation by $180^\circ$ about the horizontal axis (in blue), and the meridian of $\alpha$ (in red) is isotopic in $S^3(F)$ to a pushoff of $K$.  The quotient (right) is a 3-ball, viewed as the complement in $S^3$ of the white region; when we isotope this white region to become a standard $D^2 \times [-1,1]$, the branch locus is carried along to become the tangle $\tau$.}
\label{fig:E-T24-quotient}
\end{figure}

Now, under the identification \[\gamma =\partial F\times[-1,1],\] we can assume that $\iota$ restricts on each $\partial F\times\{t\}\subset \gamma$ to a  rotation of $\partial F$ which is independent of $t$.  Recall that $S^3$ is recovered by gluing $F\times[-1,1]$ back into \[S^3(F) = S^3\setminus \inr(F\times[-1,1])\] by a map which in particular identifies \[\partial F\times[-1,1]\cong \gamma\] via the identity. Any such gluing map \[\varphi:\partial (F\times [-1,1])\to \partial S^3(F)\] is then determined by its restrictions to the once-punctured tori
\begin{align*}
\varphi_+&:F\times\{+1\}\to R_+(\gamma),\\
\varphi_-&:F\times\{-1\}\to R_-(\gamma).
\end{align*}
Note that $\iota$ restricts to a hyperelliptic involution on each of the once-punctured tori \[R_\pm(\gamma)\subset \partial S^3(F).\] Pulling back the involution $\iota$ via the maps $\varphi_\pm$ then induces hyperelliptic involutions \[\iota_\pm:F\times\{\pm 1\}\to F\times\{\pm 1\}\] which agree on the boundary under the canonical identification of these two surfaces. Since  once-punctured tori admit  unique hyperelliptic involutions up to isotopy, we can  extend $\iota_\pm$ to all of $F\times[-1,1]$ by a map restricting to a hyperelliptic involution on each $F\times\{t\}$.

In summary, we have shown that $\iota$ extends to an involution $\hat\iota$ of the glued manifold \[Y_\varphi = S^3(F)\cup_\varphi (F\times [-1,1]),\]  whose restriction to the piece $F\times [-1,1]$ is a hyperelliptic involution on each $F\times\{t\}$. The  quotient map \[Y_\varphi\to Y_\varphi/\hat{\iota}\] therefore restricts on this piece to a branched double covering \[F\times[-1,1]\to D^2\times[-1,1]\] along some 3-braid \[\beta \subset D^2\times [-1,1].\] It follows that $Y_\varphi$ is the branched double cover of \[S^3(D^2)\cup (D^2\times[-1,1]) \cong S^3\] along the link $\tau\cup \beta$. Moreover, $K$ is the lift $\tilde\kappa$ of the braid axis \[\kappa = \partial D^2 \times \{0\} = s(\gamma)/\hat{\iota}\] of $\beta$ in this double cover. Since  $Y_\varphi\cong S^3$ if and only if $\tau\cup \beta$ is an unknot, we  conclude the following:

\begin{lemma} \label{lem:E24-as-branched-cover}
Suppose that $K\subset S^3$ is a nearly fibered knot with genus-1 Seifert surface $F$, and that \[M_F \cong S^3 \setminus N(T_{2,4}).\]  Then there is a 3-braid $\beta \in B_3$ such that $\tau\cup\beta$ is an unknot in $S^3$, and such that the lift
\[ \tilde\kappa \subset \dcover(\tau\cup\beta) \cong S^3 \]
of $\kappa$ is isotopic to $K$.
\end{lemma}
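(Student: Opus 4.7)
The argument is largely sketched in the paragraphs preceding the lemma, so my plan is to organize that construction into a proof. The first step is to exhibit the involution $\iota$ on $S^3(F)$. The exterior of $T_{2,4}$ admits an obvious rotational symmetry of order two whose fixed axis passes through the center of the cabling annulus $A$. By Proposition~\ref{prop:t-in-cabling-annulus} we may isotope $\alpha$ into $A$, and a further isotopy within $A$ places $\alpha$ on this axis; removing an equivariant tubular neighborhood of $\alpha$ then induces the desired involution $\iota$ of $S^3(F)$. Direct inspection of the quotient (as illustrated in Figure~\ref{fig:E-T24-quotient}) shows that it is the sutured $3$-ball $S^3(D^2)$, that the quotient map exhibits $S^3(F)$ as a double branched cover with branch locus a tangle $\tau$, and that the suture $s(\gamma) = \mu_\alpha$ descends to the braid axis $\kappa = \partial D^2 \times \{0\}$.

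Next I would extend $\iota$ across any regluing. After identifying $\gamma = \partial F \times [-1,1]$, the restriction of $\iota$ to each circle $\partial F \times \{t\}$ is a $180^\circ$ rotation independent of $t$. For any gluing $\varphi$ recovering $Y_\varphi = S^3(F) \cup_\varphi (F \times [-1,1])$, pulling back $\iota|_{R_\pm(\gamma)}$ via $\varphi_\pm$ produces hyperelliptic involutions $\iota_\pm$ of $F \times \{\pm 1\}$ that agree with the prescribed rotation on the boundary. Because the hyperelliptic involution of a once-punctured torus is unique up to isotopy rel boundary, these can be joined by a fiberwise family to produce an involution $\hat\iota$ of $Y_\varphi$ whose restriction to each level $F \times \{t\}$ is hyperelliptic. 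The quotient of $F \times [-1,1]$ by this involution is $D^2 \times [-1,1]$, and the fixed-point set descends to a $3$-braid $\beta \in B_3$, since the hyperelliptic involution of a once-punctured torus has exactly three fixed points.

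Assembling the two pieces, $Y_\varphi/\hat\iota$ is the union of $S^3(D^2)$ and $D^2 \times [-1,1]$ glued along $\partial D^2 \times [-1,1]$, which is $S^3$, and the quotient map presents $Y_\varphi$ as the double branched cover $\dcover(\tau \cup \beta)$. When $\varphi$ is the gluing that recovers the original $S^3$ from $K$, so that $Y_\varphi \cong S^3$, the positive solution of the Smith conjecture forces $\tau \cup \beta$ to be an unknot. Since $K$ is isotopic in $S^3(F)$ to a pushoff of the suture $\mu_\alpha$, its image under $\hat\iota$ is isotopic to the braid axis $\kappa$, and hence $K$ is isotopic to the preferred lift $\tilde\kappa \subset \dcover(\tau\cup\beta) \cong S^3$, as claimed.

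The main technical input is the uniqueness statement for the hyperelliptic involution of the once-punctured torus rel boundary, which is what allows $\iota$ to extend over $F \times [-1,1]$ regardless of the mapping class of $\varphi$; the remaining content is the identification of the quotient of $S^3(F)$ with $S^3(D^2)$, which is an inspection of the symmetry of $T_{2,4}$ and of the placement of $\alpha$ on its axis.
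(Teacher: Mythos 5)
Your proposal is correct and follows essentially the same route as the paper's own discussion preceding the lemma: exhibit the rotational symmetry of $M_F \cong S^3 \setminus N(T_{2,4})$ with axis containing $\alpha$, take the quotient to express $S^3(F)$ as a branched double cover of $S^3(D^2)$ over a tangle $\tau$, use uniqueness of the hyperelliptic involution to extend $\iota$ over $F \times [-1,1]$ so that $Y_\varphi$ is the double branched cover of $\tau \cup \beta$ and $K$ lifts $\kappa$, and conclude via the Smith conjecture that $\tau\cup\beta$ is unknotted. The minor refinements you add (isotoping $\alpha$ within the cabling annulus onto the fixed axis, and citing Smith/Waldhausen explicitly) are consistent with, and slightly sharpen, what the paper leaves implicit.
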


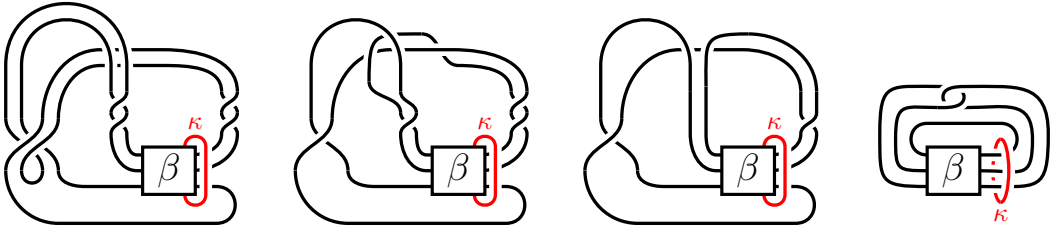
\begin{figure}
\begin{tikzpicture}[scale=0.7]
\begin{scope} % start of the isotopy
\draw[link,looseness=1] (1,0) to[out=90,in=270] ++(-0.7,1) ++(-0.3,0) to[out=270,in=90] ++(0.7,-1);
\draw[link,looseness=1] (0,0) to[out=90,in=270] ++(0.7,1) ++(0.3,0) to[out=270,in=90] ++(-0.7,-1);
\draw[link,looseness=2] (0.3,0) to[out=270,in=270] (0.7,0);
\draw[link,looseness=1] (0.7,1) to[out=90,in=180] (1.8,2.3) -- (3,2.3) to[out=0,in=90] (4.4,1.5);
\draw[link,looseness=1] (1,1) to[out=90,in=180] (1.8,2.0) -- (3,2.0) to[out=0,in=90] (4.1,1.5);
\draw[link,looseness=1] (4.1,1.5) to[out=270,in=90] ++(0.3,-0.4) ++(-0.3,0) to[out=270,in=90] ++(0.3,-0.4) to[out=270,in=0] (3.5,0);
\draw[link,looseness=1] (4.4,1.5) to[out=270,in=90] ++(-0.3,-0.4) ++(0.3,0) to[out=270,in=90] ++(-0.3,-0.4) to[out=270,in=0] (3.5,0.3);
% twists leading into left edge of braid
\draw[link] (0,1) -- ++(0,1) to[out=90,in=90] (2.3,2) -- ++(0,-0.5);
\draw[link] (0.3,1) -- ++(0,1) to[out=90,in=90] (2,2) -- ++(0,-0.5);
\draw[link,looseness=1] (2,1.5) to[out=270,in=90] ++(0.3,-0.4) ++(-0.3,0) to[out=270,in=90] ++(0.3,-0.4) to[out=270,in=180] (2.5,0.3) -- ++(1,0);
\draw[link,looseness=1] (2.3,1.5) to[out=270,in=90] ++(-0.3,-0.4) ++(0.3,0) to[out=270,in=90] ++(-0.3,-0.4) to[out=270,in=180] (2.5,0.0) -- ++(1,0);
\draw[link,looseness=1] (1,0) to[out=270,in=180] (1.5,-0.3) -- (4,-0.3) to[out=0,in=0,looseness=1.75] (4,-1) -- (1,-1) to[out=180,in=270] (0,0);
% box for the braid
\draw[linkred] (3.4,0.45) to[out=90,in=90] node[midway,above,inner sep=2pt,red] {\small$\kappa$} ++(0.4,0) -- ++(0,-0.9) to[out=270,in=270] ++(-0.4,0);
\draw[very thick,fill=white] (2.6,-0.45) rectangle (3.6,0.45);
\node at (3.1,0) {\Large$\beta$};
\end{scope}

\begin{scope}[xshift=5.5cm] % step 2
\draw[link,looseness=1] (1,0) to[out=90,in=270] ++(-0.7,1);% ++(-0.3,0) to[out=270,in=90] ++(0.7,-1);
\draw[link,looseness=1] (0,0) to[out=90,in=270] ++(0.7,1) ;%++(0.3,0) to[out=270,in=90] ++(-0.7,-1);
%\draw[link,looseness=2] (0.3,0) to[out=270,in=270] (0.7,0);
\draw[link,looseness=1] (4.1,1.5) to[out=90,in=0] (3.1,2) to[out=180,in=0] ++ (-0.7,0.6);
\draw[link,looseness=1] (0.7,1) to[out=90,in=180] (1.8,2.3) -- (3,2.3) to[out=0,in=90] (4.4,1.5);
\draw[link,looseness=1] (4.1,1.5) to[out=270,in=90] ++(0.3,-0.4) ++(-0.3,0) to[out=270,in=90] ++(0.3,-0.4) to[out=270,in=0] (3.5,0);
\draw[link,looseness=1] (4.4,1.5) to[out=270,in=90] ++(-0.3,-0.4) ++(0.3,0) to[out=270,in=90] ++(-0.3,-0.4) to[out=270,in=0] (3.5,0.3);
% twists leading into left edge of braid
\draw[link] (1.4,1.7) to[out=90,in=180] ++(1,0.9);
%\draw[link] (0,1) -- ++(0,1) to[out=90,in=90] (2.3,2) -- ++(0,-0.5);
\draw[link] (0.3,1) -- ++(0,1) to[out=90,in=90] (2,2) -- ++(0,-0.5);
\draw[link,looseness=1] (2,1.5) to[out=270,in=90] ++(0.3,-0.4) ++(-0.3,0) to[out=270,in=90] ++(0.3,-0.4) to[out=270,in=180] (2.5,0.3) -- ++(1,0);
\draw[link,looseness=1] (2,1.1) to[out=90,in=270] ++(-0.6,0.6);
\draw[link,looseness=1] (2.3,1.5) ++(-0.3,-0.4) ++(0.3,0) to[out=270,in=90] ++(-0.3,-0.4) to[out=270,in=180] (2.5,0.0) -- ++(1,0);
\draw[link,looseness=1] (1,0) to[out=270,in=180] (1.5,-0.3) -- (4,-0.3) to[out=0,in=0,looseness=1.75] (4,-1) -- (1,-1) to[out=180,in=270] (0,0);
% box for the braid
\draw[linkred] (3.4,0.45) to[out=90,in=90] node[midway,above,inner sep=2pt,red] {\small$\kappa$} ++(0.4,0) -- ++(0,-0.9) to[out=270,in=270] ++(-0.4,0);
\draw[very thick,fill=white] (2.6,-0.45) rectangle (3.6,0.45);
\node at (3.1,0) {\Large$\beta$};
\end{scope}

\begin{scope}[xshift=11cm] % step 3
\draw[link,looseness=1] (1,0) to[out=90,in=270] ++(-0.7,1);% ++(-0.3,0) to[out=270,in=90] ++(0.7,-1);
\draw[link,looseness=1] (0,0) to[out=90,in=270] ++(0.7,1) ;%++(0.3,0) to[out=270,in=90] ++(-0.7,-1);
%\draw[link,looseness=2] (0.3,0) to[out=270,in=270] (0.7,0);
\draw[link,looseness=1] (4.4,1.5) to[out=90,in=0] (3.1,2.6);% to[out=180,in=0] ++ (-0.7,0);
\draw[link,looseness=1] (0.7,1) to[out=90,in=180] (1.8,2.3) -- (3,2.3) to[out=0,in=90] (4.1,1.5);
\draw[link,looseness=1] (4.1,1.5) -- ++(0,-0.4) to[out=270,in=90] ++(0.3,-0.4) to[out=270,in=0] (3.5,0);
\draw[link,looseness=1] (4.4,1.5) -- ++(0,-0.4) to[out=270,in=90] ++(-0.3,-0.4) to[out=270,in=0] (3.5,0.3);
% twists leading into left edge of braid
\draw[link] (2.3,1.6) to[out=90,in=180] ++(0.8,1);
\draw[link] (0.3,1) -- ++(0,1) to[out=90,in=90] (2,2) -- ++(0,-1.3);
\draw[link,looseness=1] (2.3,1.6) -- ++(0,-0.9) to[out=270,in=180] (2.5,0.3) -- ++(1,0);
\draw[link,looseness=1] (2,1.5) -- ++(0,-0.8) to[out=270,in=180] (2.5,0.0) -- ++(1,0);
\draw[link,looseness=1] (1,0) to[out=270,in=180] (1.5,-0.3) -- (4,-0.3) to[out=0,in=0,looseness=1.75] (4,-1) -- (1,-1) to[out=180,in=270] (0,0);
% box for the braid
\draw[linkred] (3.4,0.45) to[out=90,in=90] node[midway,above,inner sep=2pt,red] {\small$\kappa$} ++(0.4,0) -- ++(0,-0.9) to[out=270,in=270] ++(-0.4,0);
\draw[very thick,fill=white] (2.6,-0.45) rectangle (3.6,0.45);
\node at (3.1,0) {\Large$\beta$};
\end{scope}

\begin{scope}[xshift=17.5cm] % final step
\draw[linkred] (1.4,0.6) arc (90:270:0.15 and 0.6);
\draw[link] (0,0.3) to[out=180,in=180] ++(0,0.6) -- ++(1.4,0) to[out=0,in=0] ++(0,-0.6) -- ++(-0.4,0);
\draw[link] (0,0) to[out=180,in=270] ++(-0.6,0.6) to[out=90,in=180] ++(0.9,0.6) to[out=0,in=270,looseness=1] ++(0.4,0.2) to[out=90,in=0,looseness=1] ++(-0.4,0.2) to[out=180,in=90] ++(-1.2,-0.9) to[out=270,in=180] ++(0.9,-1);
\draw[link] (1,0) to[out=0,in=270] ++(1,0.6) to[out=90,in=0,looseness=1.4] ++(-1.3,0.6) to[out=180,in=270,looseness=1] ++(-0.4,0.2) to[out=90,in=180,looseness=1] ++(0.4,0.2) to[out=0,in=90,looseness=1.4] ++(1.6,-0.9) to[out=270,in=0] ++(-1.3,-1);
\draw[link] (0.7,1.4) to[out=270,in=0,looseness=1] ++(-0.4,-0.2) -- ++(-0.1,0); % redraw a crossing
\draw[linkred] (1.4,0.6) arc (90:-90:0.15 and 0.6) node[below,red,inner sep=2pt] {\small$\kappa$};
\draw[very thick,fill=white] (0,-0.45) rectangle (1,0.45);
\node at (0.5,0) {\Large$\beta$};
\end{scope}

\end{tikzpicture}
\caption{An isotopy of the unknot $U = \tau \cup \beta$ in the complement of $\kappa$.}
\label{fig:E-T24-isotopy}
\end{figure}

Figure~\ref{fig:E-T24-isotopy} shows an isotopy of the unknot $U=\tau\cup\beta$ into a simpler form, which we will use in the subsections below.

In the sequel we will often write $K = K_\beta$ when $K$ arises from a given braid $\beta \in B_3$ in the sense of Lemma~\ref{lem:E24-as-branched-cover}.  We write each 3-braid as a word in
\[
x =
\begin{tikzpicture}[baseline=-0.3em, link/.append style={looseness=0.75}]
\draw[link] (0,0) to[out=0,in=180] ++(0.6,0.3);
\draw[link] (0,0.3) to[out=0,in=180] ++(0.6,-0.3);
\draw[link] (0,-0.3) -- ++(0.6,0);
\end{tikzpicture}
\quad\text{and}\quad
y = 
\begin{tikzpicture}[baseline=-0.3em, link/.append style={looseness=0.75}]
\draw[link] (0,-0.3) to[out=0,in=180] ++(0.6,0.3);
\draw[link] (0,0) to[out=0,in=180] ++(0.6,-0.3);
\draw[link] (0,0.3) -- ++(0.6,0);
\end{tikzpicture}
\]
where $x$ and $y$ denote positive crossings between the top two strands and the bottom two strands, respectively.  The following observations will help simplify our analysis in the following subsections.

\begin{lemma} \label{lem:reverse-braid}
Let $r: B_3 \to B_3$ be the map which reverses a braid word, defined recursively by \[r(1)=1 \,\textrm{ and }\, r(gw)=r(w)g\] for any $g\in\{x^{\pm1},y^{\pm1}\}$.  If $\beta$ is a 3-braid for which $\tau \cup \beta$ is unknotted, then $\tau \cup r(\beta)$ is also unknotted and $K_\beta \cong K_{r(\beta)}$.
\end{lemma}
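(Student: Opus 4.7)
The plan is to exhibit an involution $h \colon S^3 \to S^3$ that preserves the tangle $\tau$ and the axis $\kappa$ setwise, and that carries the braid $\beta$ inside its braid box to its reverse $r(\beta)$. Once such an $h$ is in hand, the lemma follows in two steps: first, $\tau \cup r(\beta) = h(\tau \cup \beta)$ is an unknot in $S^3$ since $\tau \cup \beta$ is; and second, $h$ lifts to a homeomorphism between the branched double covers $\Sigma_2(S^3, \tau \cup \beta)$ and $\Sigma_2(S^3, \tau \cup r(\beta))$ (both of which are $S^3$ since $\tau \cup \beta$ and $\tau \cup r(\beta)$ are unknotted), and this lift sends the preimage $\tilde\kappa = K_\beta$ to the preimage $\tilde\kappa = K_{r(\beta)}$.

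To construct $h$, I would use the simplified diagram for $\tau \cup \beta \cup \kappa$ appearing in the rightmost panel of Figure~\ref{fig:E-T24-isotopy}, which exhibits a manifest reflection symmetry through the horizontal plane bisecting the braid box. Under this reflection, the braid box is carried to itself, the round circle $\kappa$ on the side is carried to itself, and the remaining arcs of $\tau$ wrapping around the box are interchanged in the appropriate top-bottom symmetric pattern. Crucially, a horizontal reflection applied to a vertical braid $g_1 g_2 \cdots g_n$ (with $g_i \in \{x^{\pm 1}, y^{\pm 1}\}$) preserves the sign of each individual crossing, because the overstrand of a crossing remains the overstrand under a top-bottom flip, while reversing the order in which the generators appear. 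Hence $h(\beta) = g_n \cdots g_2 g_1 = r(\beta)$, as desired.

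With $h$ constructed, the lifting argument is straightforward: because $h$ sends the branch set $\tau \cup \beta$ to the branch set $\tau \cup r(\beta)$ and is an involution, it induces a homeomorphism $\tilde h \colon \Sigma_2(S^3, \tau \cup \beta) \to \Sigma_2(S^3, \tau \cup r(\beta))$, and the $h$-invariance of $\kappa$ guarantees that $\tilde h$ carries the lift of $\kappa$ on one side to the lift of $\kappa$ on the other. Under the identifications of both covers with $S^3$, this gives the homeomorphism $K_\beta \cong K_{r(\beta)}$.

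The main obstacle is the visual verification that the diagram in the final panel of Figure~\ref{fig:E-T24-isotopy} really does admit the claimed horizontal reflection symmetry fixing $\tau$ and $\kappa$ setwise; this amounts to a direct inspection of the simplified tangle. A secondary point worth mentioning is that $h$ is orientation-reversing on $S^3$, but the orientation issue does not affect the conclusion since the statement only asserts a homeomorphism $K_\beta \cong K_{r(\beta)}$ of knots (not an orientation-preserving one), and in any event mirror symmetry has already been built into our analysis via the ``up to replacing $K$ by its mirror'' clause of Theorem~\ref{thm:identify-y-c}.
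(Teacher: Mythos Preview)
Your argument has a genuine error in the analysis of what a reflection does to a braid.  You claim that a reflection through a plane perpendicular to the braid direction (your ``horizontal reflection'' of a ``vertical braid'') takes $\beta$ to $r(\beta)$, reasoning that the overstrand of each crossing remains the overstrand.  That observation is true but does not imply what you want: after the flip, the overstrand now enters the crossing from the \emph{other} position, so the generator $\sigma_i$ becomes $\sigma_i^{-1}$.  Equivalently, any reflection is orientation-reversing on $\mathbb{R}^3$ and hence reverses the handedness of every crossing.  The net effect is $\beta \mapsto \beta^{-1} = r(m(\beta))$, not $r(\beta)$.  As a sanity check, your reflection applied to the braid closure $\widehat{\sigma_1^3}$ (the right-handed trefoil) would have to produce its mirror, the left-handed trefoil $\widehat{\sigma_1^{-3}}$, whereas $r(\sigma_1^3) = \sigma_1^3$.

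There is a second problem: the diagram in the rightmost panel of Figure~\ref{fig:E-T24-isotopy} is not symmetric under your reflection.  The tangle $\tau$ has a clasp above the braid box with nothing corresponding below it, so no top--bottom flip can preserve $\tau$.  The symmetry that \emph{is} present is a $180^\circ$ rotation about a vertical axis through the center of the picture, which is what the paper uses.  This rotation is orientation-preserving, reverses the left--right order of crossings in the (horizontal) braid, and preserves the handedness of each crossing, so it genuinely sends $\beta$ to $r(\beta)$ while carrying $\tau$ and $\kappa$ to themselves.  Once you replace your reflection by this rotation, the rest of your lifting argument goes through and in fact matches the paper's proof; as a bonus the orientation worry you flag at the end disappears, since the rotation is orientation-preserving and hence yields an honest isotopy $K_\beta \cong K_{r(\beta)}$ rather than merely a mirror relation.
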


\begin{proof}
We can rotate the diagram of the unknot $U=\tau\cup\beta$ on the right side of Figure~\ref{fig:E-T24-isotopy} about a vertical axis, and this preserves the tangle $\tau$ and the linked curve $\kappa$ while replacing the braid $\beta$ with its reverse $r(\beta)$.  It follows that $\tau\cup r(\beta)$ is also unknotted, since it is isotopic to the unknot $U$.  This isotopy also carries $\kappa$ to itself, so up to isotopy $\kappa$ must lift to both $K_\beta$ and $K_{r(\beta)}$ in the branched double cover of $U$, hence $K_\beta \cong K_{r(\beta)}$.
\end{proof}

\begin{lemma} \label{lem:mirror-braid}
Let $m: B_3 \to B_3$ be the map which mirrors a braid word, defined recursively by \[m(1) = 1\,\textrm{ and }m(gw) = g^{-1} m(w)\] for any $g\in \{x^{\pm1},y^{\pm1}\}$.    If $\beta$ is a 3-braid for which $\tau\cup\beta$ is unknotted, then $\tau \cup \big(m(\beta)y\big)$ is also unknotted, and $K_{m(\beta)y}$ is the mirror of $K_\beta$.
\end{lemma}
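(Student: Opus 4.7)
The plan is to take the entire unknot diagram $U = \tau \cup \beta \cup \kappa$ pictured on the right of Figure~\ref{fig:E-T24-isotopy} and reflect it through the plane of the page. This operation sends every positive crossing to a negative one and vice versa; it therefore carries $U$ to another diagram of an unknot (call it $\bar U$), preserves the planar loop $\kappa$ setwise, and transforms the braid box $\beta = g_1 g_2 \cdots g_k$ into the braid box $g_1^{-1} g_2^{-1} \cdots g_k^{-1} = m(\beta)$.  The only remaining piece of the diagram to understand is the image $\bar\tau$ of $\tau$ under this reflection.

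The key diagrammatic step is to verify that $\bar\tau$ is ambient isotopic, rel endpoints and in the complement of $\kappa$, to the tangle obtained from $\tau$ by appending a single positive crossing $y$ of the bottom two strands (placed just above the braid box, so that it can be absorbed into the braid as the final generator).  This is a direct check from the explicit form of $\tau$: the tangle is already symmetric under an ambient involution up to a controlled defect, and that defect is visibly a single $\sigma$-type crossing between the pair of strands that in our conventions is labeled $y$.  I would carry this out by drawing $\bar\tau$ next to $\tau$, isotoping strands past each other via Reidemeister moves, and tracking the single residual crossing.

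Granting this identification, the reflected diagram $\bar U$ is isotopic to $\tau \cup \bigl(m(\beta)\, y\bigr) \cup \kappa$, so $\tau \cup \bigl(m(\beta)\, y\bigr)$ is an unknot in $S^3$ and Lemma~\ref{lem:E24-as-branched-cover} produces a corresponding knot $K_{m(\beta)y}$.  The reflection of the diagram is realized by an orientation-reversing self-diffeomorphism $\phi \colon S^3 \to S^3$ taking $\tau \cup \beta$ to $\tau \cup (m(\beta)y)$ and fixing $\kappa$ up to isotopy.  It lifts to an orientation-reversing diffeomorphism between the two branched double covers, each of which is $S^3$, carrying the lift $\tilde\kappa = K_\beta$ to the lift $K_{m(\beta)y}$.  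Since an orientation-reversing self-diffeomorphism of $S^3$ sends each knot to its mirror, we conclude $K_{m(\beta)y} \cong \mirror{K_\beta}$.

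The main (and essentially only nontrivial) step is the diagrammatic lemma $\bar\tau \simeq \tau \cup y$; once this is in hand, the unknottedness of $\tau \cup (m(\beta)y)$ and the mirror identification for $K_{m(\beta)y}$ both follow formally from Lemma~\ref{lem:E24-as-branched-cover} applied to the reflected picture.
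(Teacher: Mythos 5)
Your proposal is correct and is essentially the paper's argument run in the opposite direction: you reflect $\tau \cup \beta \cup \kappa$ through the page and isotope the result to $\tau \cup \bigl(m(\beta)y\bigr) \cup \kappa$, whereas the paper (Figure~\ref{fig:beta-mirror}) starts from $\tau\cup\bigl(m(\beta)y\bigr)\cup\kappa$ and isotopes it to the mirror of $\tau\cup\beta\cup\kappa$. Both hinge on exactly the diagrammatic fact you single out, that the mirror of $\tau$ differs from $\tau$ by a single $y$-crossing that can be absorbed into the braid box, and the final step passing to an orientation-reversing diffeomorphism of the branched double cover is the same.
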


\begin{proof}
\begin{figure}
\begin{tikzpicture}
\begin{scope}
\draw[linkred] (2.2,0.6) arc (90:270:0.15 and 0.6);
\draw[link] (0,0.3) to[out=180,in=180] ++(0,0.6) -- ++(2.2,0) to[out=0,in=0] ++(0,-0.6) -- ++(-1.2,0);
\draw[link] (1,0) -- ++(0.2,0) (1,-0.3) -- ++(0.2,0);
\draw[link] (0,0) to[out=180,in=270] ++(-0.6,0.6) to[out=90,in=180] ++(0.9,0.6) to[out=0,in=270,looseness=1] ++(0.4,0.2) to[out=90,in=0,looseness=1] ++(-0.4,0.2) to[out=180,in=90] ++(-1.2,-0.9) to[out=270,in=180] ++(0.9,-1);
\draw[link] (1.8,0) to[out=0,in=270] ++(1,0.6) to[out=90,in=0,looseness=1.4] ++(-1.3,0.6) -- ++(-0.8,0) to[out=180,in=270,looseness=1] ++(-0.4,0.2) to[out=90,in=180,looseness=1] ++(0.4,0.2) -- ++(0.8,0) to[out=0,in=90,looseness=1.4] ++(1.6,-0.9) to[out=270,in=0] ++(-1.3,-1);
\draw[link,looseness=0.75] (1.2,-0.3) to[out=0,in=180] ++(0.6,0.3);
\draw[link,looseness=0.75] (1.2,0) to[out=0,in=180] ++(0.6,-0.3);
\draw[link] (0.7,1.4) to[out=270,in=0,looseness=1] ++(-0.4,-0.2) -- ++(-0.1,0); % redraw a crossing
\draw[linkred] (2.2,0.6) arc (90:-90:0.15 and 0.6) node[below,red,inner sep=2pt] {\small$\kappa$};
\draw[very thick,fill=white] (0,-0.45) rectangle (1,0.45);
\node at (0.5,0) {$m(\beta)$};
\end{scope}

\begin{scope}[xshift=4.75cm]
\draw[linkred] (2.2,0.6) arc (90:270:0.15 and 0.6);
\draw[link] (0,0.3) to[out=180,in=180] ++(0,0.6) -- ++(2.2,0) to[out=0,in=0] ++(0,-0.6) -- ++(-1.2,0);
\draw[link] (1,0) -- ++(0.8,0) (1,-0.3) -- ++(0.8,0);
\draw[link] (0,0) to[out=180,in=270] ++(-0.6,0.6) to[out=90,in=180] ++(0.9,0.6) to[out=0,in=270,looseness=1] ++(0.4,0.2) to[out=90,in=0,looseness=1] ++(-0.4,0.2) to[out=180,in=90] ++(-1.2,-0.9) to[out=270,in=180] ++(0.9,-1);
\draw[link] (1.8,0) to[out=0,in=270] ++(1,0.6) to[out=90,in=0,looseness=1.4] ++(-1.3,0.6) ++(-0.8,0) to[out=180,in=270,looseness=1] ++(-0.4,0.2) to[out=90,in=180,looseness=1] ++(0.4,0.2) ++(0.8,0) to[out=0,in=90,looseness=1.4] ++(1.6,-0.9) to[out=270,in=0] ++(-1.3,-1);
\draw[link,looseness=0.75] (0.7,1.2) -- ++(0.2,0) to[out=0,in=180] ++(0.6,0.4);
\draw[link,looseness=0.75] (0.7,1.6) -- ++(0.2,0) to[out=0,in=180] ++(0.6,-0.4);
\draw[link] (0.7,1.4) to[out=270,in=0,looseness=1] ++(-0.4,-0.2) -- ++(-0.1,0); % redraw a crossing
\draw[linkred] (2.2,0.6) arc (90:-90:0.15 and 0.6) node[below,red,inner sep=2pt] {\small$\kappa$};
\draw[very thick,fill=white] (0,-0.45) rectangle (1,0.45);
\node at (0.5,0) {$m(\beta)$};
\end{scope}

\begin{scope}[xshift=9.5cm]
\draw[linkred] (1.4,0.6) arc (90:270:0.15 and 0.6);
\draw[link] (0,0.3) to[out=180,in=180] ++(0,0.6) -- ++(1.4,0) to[out=0,in=0] ++(0,-0.6) -- ++(-0.4,0);
\draw[link] (0,0) to[out=180,in=270] ++(-0.6,0.6) to[out=90,in=180] ++(0.9,0.6) to[out=0,in=270,looseness=1] ++(0.4,0.2) to[out=90,in=0,looseness=1] ++(-0.4,0.2) to[out=180,in=90] ++(-1.2,-0.9) to[out=270,in=180] ++(0.9,-1);
\draw[link] (1,0) to[out=0,in=270] ++(1,0.6) to[out=90,in=0,looseness=1.4] ++(-1.3,0.6) to[out=180,in=270,looseness=1] ++(-0.4,0.2) to[out=90,in=180,looseness=1] ++(0.4,0.2) to[out=0,in=90,looseness=1.4] ++(1.6,-0.9) to[out=270,in=0] ++(-1.3,-1);
\draw[link] (0.3,1.6) to[out=0,in=90,looseness=1] ++(0.4,-0.2); % redraw a crossing
\draw[linkred] (1.4,0.6) arc (90:-90:0.15 and 0.6) node[below,red,inner sep=2pt] {\small$\kappa$};
\draw[very thick,fill=white] (0,-0.45) rectangle (1,0.45);
\node at (0.5,0) {$m(\beta)$};
\end{scope}
\end{tikzpicture}
\caption{An isotopy takes the tangle $\tau \cup (m(\beta)y)$ to the mirror of the tangle $\tau \cup \beta$.}
\label{fig:beta-mirror}
\end{figure}
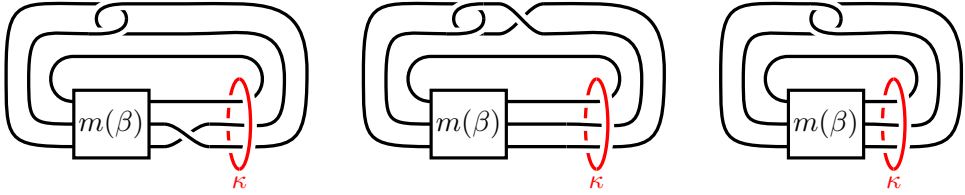
In Figure~\ref{fig:beta-mirror} we perform an isotopy of $U = \tau \cup \big(m(\beta)y\big)$ in the complement of $\kappa$, and we quickly find ourselves with a mirror image (reflecting across the plane of the page) of the diagram used to recover $K_\beta$.  Thus if $\tau \cup \beta$ is unknotted then so is $\tau \cup \big(m(\beta)y\big)$, and the unknot $\kappa$ for $\tau \cup \big(m(\beta)y\big)$ lifts to the mirror of the lift $K_\beta$ of the corresponding knot in the $\tau \cup \beta$ diagram.
\end{proof}

We remark that the mirror of $\beta$ is equal to the reverse of $\beta^{-1}$, i.e., $m(\beta) = r(\beta^{-1})$.

\begin{lemma} \label{lem:y-conjugation}
If $\beta \in B_3$ produces an unknot $U=\tau\cup\beta$, then so does $y^a \beta y^{-a}$ for any $a \in \Z$, and moreover $K_{y^a\beta y^{-a}} \cong K_\beta$.
\end{lemma}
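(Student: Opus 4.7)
The plan is to realize the conjugation $\beta \mapsto y^a \beta y^{-a}$ as an ambient isotopy of the link $\tau \cup \beta$ in $S^3$ that fixes $\kappa$ setwise. Granting such an isotopy, the link $\tau \cup (y^a\beta y^{-a})$ is the same link in $S^3$ as $\tau \cup \beta$, so its unknottedness is preserved; moreover, the identification of the branched double cover with $S^3$ is compatible with the isotopy, carrying the lift $K_\beta$ of $\kappa$ to the lift $K_{y^a\beta y^{-a}}$ of $\kappa$, which gives $K_{y^a\beta y^{-a}} \cong K_\beta$.

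To construct the isotopy, I would first read off from the rightmost panel of Figure~\ref{fig:E-T24-isotopy} the structure of $\tau$ near the $\beta$ box. The generator $y$ acts on the two lower boundary points at each end of the box, and the two relevant arcs of $\tau$ are U-shaped: one joins the two lower entry-points of $\beta$ (at the top of the box), and the other joins the two lower exit-points (at the bottom). From the figure, the exit-side U-arc is threaded once by the meridional circle $\kappa$, while the entry-side U-arc is disjoint from $\kappa$.

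The key move is then to absorb each inserted factor $y^{\pm a}$ into the adjacent U-arc by a full rotation. Spinning the exit-side U-arc by $a$ half-turns about the axis joining its two endpoints trades $a$ extra crossings just outside the $\beta$ box for the original untwisted arc farther out in $\tau$; since $\kappa$ is a meridian of this arc, the spin extends to an ambient isotopy of $S^3$ fixing $\kappa$ setwise. The analogous spin of the entry-side U-arc by $-a$ half-turns absorbs the $y^{-a}$ factor and takes place in a region disjoint from $\kappa$ entirely. Combining the two spins returns the diagram to $\tau \cup \beta$, realizing the desired ambient isotopy.

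The only delicate point I anticipate is checking that the spin of the $\kappa$-linked U-arc genuinely fixes $\kappa$ setwise. This should follow from the fact that $\kappa$ is a small meridional circle and the spin can be taken to be a rotation about a diameter of that circle, but making the rotation explicit — and confirming that it is compatible with the framing of $\kappa$ needed for the branched double cover construction — is the step that requires the most care. Once this is done, both the unknottedness and the identification $K_{y^a\beta y^{-a}} \cong K_\beta$ follow immediately.
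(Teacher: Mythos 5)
Your approach is the same as the paper's: the paper simply asserts that $\tau \cup (y\beta y^{-1})$ is isotopic to $\tau \cup \beta$ in the complement of $\kappa$, and concludes by induction on $a$, exactly as you set up. Your idea of realizing the conjugation by an ambient isotopy absorbing the $y^{\pm a}$ factors into the arcs of $\tau$ adjacent to the $\beta$ box is the right mechanism behind that one-line claim.

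That said, your description of the local picture is imprecise in a few places, and the flagged concern about $\kappa$ is real but your proposed resolution is not quite the right one. First, the two arcs of $\tau$ that cap off the lower pair of strands on either side of the box are not simple unlinked U-arcs: they clasp with one another (this clasp is the crossing resolved in Figure~\ref{fig:E-T24-resolutions}, and is essential to the rest of the section). Spinning one of those arcs therefore drags the clasp along; the isotopy still exists, but it is a sequence of Reidemeister moves sliding the inserted twist along the arc past the clasp, not a free rotation of an unobstructed cap. Second, $\kappa$ is the braid axis, so it links all three strands of $\beta$, and in the diagram it threads the exit-side arc at \emph{two} points (once for each of the strands emanating from $b_2$ and $b_3$), not once. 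Third, the spin axis you propose (the segment joining the two endpoints of the exit-side arc) need not pass through the center of $\kappa$, so ``rotation about a diameter of $\kappa$'' is not literally available; the cleaner resolution is that sliding the $y^a$ crossing along the two parallel strands of the arc past the meridian $\kappa$ is an unobstructed local move, and the crossing can then be unscrewed at the far end of the cap by Reidemeister~I moves. With these corrections, your argument does give the isotopy in $S^3 \setminus \kappa$ that the paper takes for granted.
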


\begin{proof}
It is straightforward to see that $\tau \cup (y\beta y^{-1})$ is isotopic to $\tau \cup \beta$ in the complement of $\kappa$, so the lemma follows by induction on $a$.
\end{proof}

We now outline the proof of Theorem~\ref{thm:M_F-E24}.

\begin{proof}[Proof of Theorem~\ref{thm:M_F-E24}]
By Lemma~\ref{lem:E24-as-branched-cover}, it suffices to classify the braids $\beta \in B_3$ such that $U = \tau\cup\beta$ is unknotted, and to determine $K = K_\beta$ for each of them.

Supposing that $U$ is an unknot, in Subsection~\ref{ssec:bdc-beta} we will identify an arc (see Figure~\ref{fig:E-T24-resolutions}) that lifts to a knot $\gamma$ in the branched double cover $\dcover(U) \cong S^3$.  We will argue via the cyclic surgery theorem \cite{cgls} that $\gamma$ must be an unknot or a torus knot, and study various surgeries on $\gamma$ which must be lens spaces or connected sums of lens spaces.  In Subsections~\ref{ssec:gamma-unknot} and \ref{ssec:gamma-torus-knot}, we will study the cases $\gamma \cong U$ and $\gamma \cong T_{p,q}$ separately, proving in Propositions~\ref{prop:gamma-unknot} and \ref{prop:gamma-torus} that $\beta$ must be one of
\[ x^{-1},\ xy, \text{ or } x^ny^{-1}xy \ (n\in\Z) \]
or 
\[ x^3y^{-1}x^2y \text{ or } x^{-3}yx^{-2} \]
respectively, up to reversal and conjugation by powers of $y$.  Lemmas~\ref{lem:reverse-braid} and \ref{lem:y-conjugation} tell us that it is enough to consider these particular braids.

After classifying these braids, we devote Subsection~\ref{ssec:E24-examples} to determining the knot $K_\beta$ for each of
\[ \beta = x^{-1},\ x^ny^{-1}xy, \text{ or } x^3y^{-1}x^2y. \]
These cases occupy Propositions~\ref{prop:recover-5_2}, \ref{prop:recover-pretzels}, and \ref{prop:recover-15n43522}, respectively, and they recover the knots $5_2$, $P(-3,3,2n+1)$, and $15n_{43522}$.  The only remaining braids are
\[ \beta = xy = m(x^{-1}) y \]
and
\[ \beta = x^{-3}yx^{-2} = m(x^3y^{-1}x^2y) y, \]
but then Lemma~\ref{lem:mirror-braid} says that the corresponding $K_\beta$ are the mirrors of knots which we already found, so the proof is complete.
\end{proof}

The remainder of this lengthy section is devoted to proving the results cited in the proof of Theorem~\ref{thm:M_F-E24}.

\subsection{Resolutions and the 3-braid $\beta$} \label{ssec:bdc-beta}

In Figure~\ref{fig:E-T24-resolutions} we take a fixed crossing (indicated by a dashed arc) of the unknot diagram from Figure~\ref{fig:E-T24-isotopy} and modify it in several ways, changing the crossing to produce a new knot $L^\beta$ and also resolving the crossing in two different ways to produce the links $L^\beta_0$ and $L^\beta_1$.  It is clear from the diagrams that $L^\beta$ is a two-bridge knot, and that $L^\beta_0 \cong \widehat{\beta}$ and $L^\beta_1\cong \widehat{\beta y^{-1}}$ are both closures of 3-braids.
\begin{figure}
\begin{tikzpicture}
\begin{scope}
\draw[link] (0,0.3) to[out=180,in=180] ++(0,0.6) -- ++(1,0) to[out=0,in=0] ++(0,-0.6);
\draw[link] (0,0) to[out=180,in=270] ++(-0.6,0.6) to[out=90,in=180] ++(0.9,0.6) to[out=0,in=270,looseness=1] ++(0.4,0.2) to[out=90,in=0,looseness=1] ++(-0.4,0.2) to[out=180,in=90] ++(-1.2,-0.9) to[out=270,in=180] ++(0.9,-1);
\draw[link] (1,0) to[out=0,in=270] ++(0.6,0.6) to[out=90,in=0] ++(-0.9,0.6) to[out=180,in=270,looseness=1] ++(-0.4,0.2) to[out=90,in=180,looseness=1] ++(0.4,0.2) to[out=0,in=90] ++(1.2,-0.9) to[out=270,in=0] ++(-0.9,-1);
\draw[link] (0.7,1.4) to[out=270,in=0,looseness=1] ++(-0.4,-0.2); % redraw a crossing
\draw[very thick,fill=white] (0,-0.45) rectangle (1,0.45);
\node at (0.5,0) {\Large$\beta$};
\draw[red,densely dashed] (0.5,1.6) ++ (0.25,0) arc (0:180:0.25 and 0.15);

\draw[link] (1,-2) -- ++(-1,1);
\draw[link] (0,-2) -- ++(1,1);
\draw[red,densely dashed] (0.5,-1.5) ++ (45:0.5) arc (45:135:0.5);
\node[below] at (0.5,-2) {$U^{\vphantom{\beta}}$};
\end{scope}

\begin{scope}[xshift=3.5cm]
\draw[link] (0,0.3) to[out=180,in=180] ++(0,0.6) -- ++(1,0) to[out=0,in=0] ++(0,-0.6);
\draw[link, looseness=3.5] (0,0) to[out=180,in=180] ++(0,-0.3) (1,0) to[out=0,in=0] ++(0,-0.3);
\draw[very thick,fill=white] (0,-0.45) rectangle (1,0.45);
\node at (0.5,0) {\Large$\beta$};

\draw[link] (0,-2) -- ++(1,1);
\draw[link] (1,-2) -- ++(-1,1);
\node[below] at (0.5,-2) {$L^\beta$};
\end{scope}

\begin{scope}[xshift=7cm]
\draw[link] (0,0.3) to[out=180,in=180] ++(0,0.6) -- ++(1,0) to[out=0,in=0] ++(0,-0.6);
\draw[link] (0,0) to[out=180,in=180] ++(0,1.2) -- ++(1,0) to[out=0,in=0] ++(0,-1.2);
\draw[link] (0,-0.3) to[out=180,in=180] ++(0,1.8) -- ++(1,0) to[out=0,in=0] ++(0,-1.8);
\draw[very thick,fill=white] (0,-0.45) rectangle (1,0.45);
\node at (0.5,0) {\Large$\beta$};

\draw[link] (0,-2) to[out=45,in=135] ++(1,0) (0,-1) to[out=-45,in=-135] ++(1,0);
\node[below] at (0.5,-2) {$L_0^\beta$};
\end{scope}

\begin{scope}[xshift=10.5cm]
\begin{scope}[xshift=-0.25cm]
\draw[link] (0,0.3) to[out=180,in=180] ++(0,0.6) -- ++(1.5,0) to[out=0,in=0] ++(0,-0.6) -- ++(-0.5,0);
\draw[link] (0,-0.3) to[out=180,in=180] ++(0,1.8) -- ++(1.5,0) to[out=0,in=0] ++(0,-1.8) -- ++(-0.1,0) to[out=180,in=0,looseness=0.75] ++(-0.3,0.3) -- ++(-0.1,0);
\draw[link] (0,0) to[out=180,in=180] ++(0,1.2) -- ++(1.5,0) to[out=0,in=0] ++(0,-1.2) -- ++(-0.1,0) to[out=180,in=0,looseness=0.75] ++(-0.3,-0.3) -- ++(-0.1,0);
\draw[very thick,fill=white] (0,-0.45) rectangle (1,0.45);
\node at (0.5,0) {\Large$\beta$};
\end{scope}
\draw[link] (0,-2) to[out=45,in=-45] ++(0,1) (1,-2) to[out=135,in=-135] ++(0,1);
\node[below] at (0.5,-2) {$L_1^\beta$};
\end{scope}\end{tikzpicture}
\caption{Resolving the topmost crossing in the clasp of $U = \tau \cup \beta$ in several different ways.}
\label{fig:E-T24-resolutions}
\end{figure}
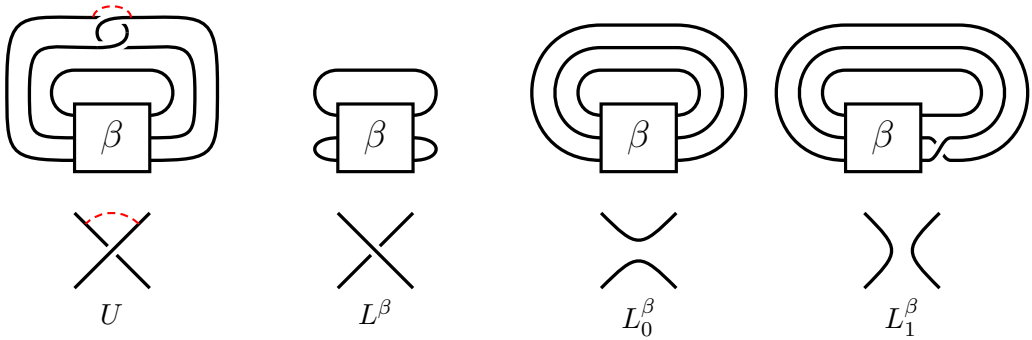

The dashed arc on the left side of Figure~\ref{fig:E-T24-resolutions} lifts to a simple closed curve $\gamma$ in the branched double cover $\dcover(U) \cong S^3$.  Then the Montesinos trick \cite{montesinos} says that $\dcover(L^\beta)$ can be realized as a half-integral surgery on $\gamma$:
\begin{equation} \label{eq:dcover-lbeta}
\dcover(L^\beta) \cong S^3_{(2n+1)/2}(\gamma) \text{ for some } n \in \Z.
\end{equation}
(Indeed, the branch loci $U$ and $L^\beta$ agree outside a neighborhood of the indicated arc, so $\dcover(L^\beta)$ and $\dcover(U)$ agree outside the branched double cover of that neighborhood, which in either case is a solid torus.  This says that $\dcover(\beta)$ comes from some surgery on $\gamma$ in $\dcover(U) \cong S^3$, and then it must be half-integral because the peripheral curves in $S^3 \setminus N(\gamma)$ whose fillings produce $\dcover(U)$ and $\dcover(L^\beta)$ have distance two in $\partial N(\gamma)$.)
Similarly, the $0$- and $1$-resolutions of that crossing correspond to consecutive integral surgeries on $\gamma$, which are each distance-1 from the $\frac{2n+1}{2}$-surgery corresponding to the crossing change: that is,
\begin{align} \label{eq:dcover-lbeta-i}
\dcover(L^\beta_0) &\cong S^3_n(\gamma), &
\dcover(L^\beta_1) &\cong S^3_{n+1}(\gamma).
\end{align}
To see that $\dcover(L^\beta_0)$ and $\dcover(L^\beta_1)$ are homeomorphic to $S^3_n(\gamma)$ and $S^3_{n+1}(\gamma)$ respectively, and not vice versa, we note that the ordered triple $(\dcover(L^\beta), \dcover(L^\beta_1), \dcover(L^\beta_0))$ forms a \emph{surgery triad} \cite[Proposition~2.1]{osz-branched}, meaning that these three manifolds are all Dehn fillings of $S^3\setminus N(\gamma)$ along oriented curves $\alpha, \alpha_1, \alpha_0 \subset \partial N(\gamma)$ such that
\begin{equation} \label{eq:triad-intersections}
\alpha \cdot \alpha_1 = \alpha_1 \cdot \alpha_0 = \alpha_0 \cdot \alpha = -1.
\end{equation}
(Note that following \cite[Figure~1]{osz-branched}, their ``$L_0$'' and ``$L_1$'' are our $L^\beta_1$ and $L^\beta_0$.)  Up to reversing the orientation of all three curves simultaneously we can assume that $\alpha=(2n+1)\mu+2\lambda$, where $\mu$ and $\lambda$ are a meridian and longitude of $\gamma$ and $\partial N(\gamma)$ is oriented so that $\mu\cdot\lambda=-1$, and then there is no way to choose signs for $\alpha_1 = \pm(n\mu+\lambda)$ and $\alpha_0 = \pm((n+1)\mu+\lambda)$ so that \eqref{eq:triad-intersections} is satisfied.  On the other hand,
\[ (\alpha,\alpha_1,\alpha_0) = \big( (2n+1)\mu+2\lambda, -(n+1)\mu-\lambda, -n\mu-\lambda \big) \]
does satisfy \eqref{eq:triad-intersections}, so $\dcover(L^\beta_0)$ and $\dcover(L^\beta_1)$ must correspond to $n$- and $(n+1)$-surgeries in that order as claimed.

From this discussion we immediately deduce the following.

\begin{lemma} \label{lem:possible-gamma}
The knot $\gamma \subset \dcover(U) \cong S^3$ is either an unknot or a nontrivial torus knot.
\end{lemma}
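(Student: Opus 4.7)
The plan is to combine two observations. First, $L^\beta$ is a two-bridge knot, as already noted just before the statement of the lemma; indeed, Figure~\ref{fig:E-T24-resolutions} displays $L^\beta$ in a form where, after the crossing change, the two strands of the former clasp of $U$ become unlinked, leaving a diagram manifestly exhibiting two bridges. The branched double cover of any two-bridge knot is a lens space (with $S^3 = L(1,0)$ as the degenerate case corresponding to the unknot), and so $\Sigma_2(L^\beta)$ has cyclic fundamental group. Combining this with~\eqref{eq:dcover-lbeta}, I would conclude that
\[
S^3_{(2n+1)/2}(\gamma) \cong \Sigma_2(L^\beta)
\]
has cyclic fundamental group for some $n \in \Z$.

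I would then invoke the cyclic surgery theorem of Culler--Gordon--Luecke--Shalen \cite{cgls}, which asserts that for any non-torus knot $K \subset S^3$, the distance between two slopes yielding cyclic Dehn fillings of its exterior is at most $1$. Since the trivial slope $\infty$ always gives $S^3$ (a cyclic manifold) and has distance $|q|$ from a slope $p/q$ in lowest terms, every nontrivial cyclic surgery slope on a non-torus knot in $S^3$ must therefore be an integer. As the slope $(2n+1)/2$ has denominator $2$, the knot $\gamma$ cannot be a non-torus knot, and so it is either the unknot or a nontrivial torus knot, as claimed.

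There is no real obstacle in the proof of this lemma itself; it is essentially immediate once one recognizes $L^\beta$ as two-bridge and recalls the cyclic surgery theorem. The genuine difficulty lies in what comes afterward in Subsections~\ref{ssec:gamma-unknot} and~\ref{ssec:gamma-torus-knot}, where one must parlay this crude dichotomy into a complete classification of $\beta$. There, the consecutive integral surgeries $S^3_n(\gamma)$ and $S^3_{n+1}(\gamma)$ of~\eqref{eq:dcover-lbeta-i} — which are the branched double covers of $\widehat\beta$ and $\widehat{\beta y^{-1}}$ — must be matched against the short list of lens spaces or connected sums of lens spaces that can be obtained by integer surgery on the unknot or a torus knot, and each matching must then be promoted to an identification of $\beta$ itself as a 3-braid.
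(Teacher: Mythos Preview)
Your proof is correct and follows essentially the same approach as the paper: both observe that $L^\beta$ is two-bridge so that $\dcover(L^\beta)$ is a lens space, and then apply the cyclic surgery theorem to the half-integral slope $(2n+1)/2$ from \eqref{eq:dcover-lbeta} to rule out all knots except the unknot and nontrivial torus knots. Your version spells out the distance-$1$ formulation of the cyclic surgery theorem a bit more explicitly, but the argument is identical in substance.
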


\begin{proof}
Since $L^\beta$ is a 2-bridge knot, we know that $\dcover(L^\beta)$ is a lens space.  But the cyclic surgery theorem \cite{cgls} says that a non-integral surgery on $\gamma\subset S^3$ can only produce a lens space if $\gamma$ is an unknot or a nontrivial torus knot $T_{p,q}$.
\end{proof}

We will handle each of the two possible outcomes of Lemma~\ref{lem:possible-gamma} separately in the following subsections.  The remainder of this subsection is devoted to some computations that will prove useful in that work.

To set the stage, we cut the given 2-bridge diagram of $L^\beta$ along a pair of vertical lines passing just by $\beta$ on either side.  Taking the double cover branched over each piece of $L^\beta$ in turn gives a genus-1 Heegaard splitting of $\dcover(L^\beta)$, illustrated in Figure~\ref{fig:L-beta-heegaard}.
\begin{figure}
\begin{tikzpicture}
\begin{scope}
\draw[link] (-0.3,0.3) to[out=180,in=180] ++(0,0.6) -- ++(1.6,0) to[out=0,in=0] ++(0,-0.6) -- cycle;
\draw[link, looseness=3.5] (-0.3,0) to[out=180,in=180] ++(0,-0.3) -- (1.3,-0.3) to[out=0,in=0] ++(0,0.3) -- cycle;
\draw[very thick,fill=white] (0,-0.45) rectangle (1,0.45);
\node at (0.5,0) {\Large$\beta$};
\draw[densely dotted] (-0.3,1.2) -- (-0.3,-0.6) (1.3,1.2) -- (1.3,-0.6);
\node[below] at (0.5,-0.6) {$\dcover(L^\beta)$};
\end{scope}

\begin{scope}[xshift=4cm]
\draw[link] (-0.3,0.3) to[out=180,in=180] ++(0,0.6);
\draw[link, looseness=3.5] (-0.3,0) to[out=180,in=180] ++(0,-0.3);
\draw[densely dotted] (-0.3,1.2) -- (-0.3,-0.6);
\node[below] at (-0.3,-0.6) {$S^1\times D^2$};
\end{scope}

\begin{scope}[xshift=5.25cm]
\foreach \y in {0.9,0.3,0,-0.3} { \draw[link] (-0.3,\y) -- (1.3,\y); }
\draw[very thick,fill=white] (0,-0.45) rectangle (1,0.45);
\node at (0.5,0) {\Large$\beta$};
\draw[densely dotted] (-0.3,1.2) -- (-0.3,-0.6) (1.3,1.2) -- (1.3,-0.6);
\node[below] at (0.5,-0.6) {$T^2\times I$};
\end{scope}

\begin{scope}[xshift=6.5cm]
\draw[link] (1.3,0.3) to[out=0,in=0] ++(0,0.6);
\draw[link, looseness=3.5] (1.3,0) to[out=0,in=0] ++(0,-0.3);
\draw[densely dotted] (1.3,1.2) -- (1.3,-0.6);
\node[below] at (1.3,-0.6) {$S^1\times D^2$};
\end{scope}

\node[below] at (2.125,-0.6) {$\cong\vphantom{S^1}$};
\node[below] at (4.75,-0.6) {$\cup\vphantom{S^1}$};
\node[below] at (6.75,-0.6) {$\cup\vphantom{S^1}$};
\end{tikzpicture}
\caption{A genus-1 Heegaard splitting of $\dcover(L^\beta)$.}
\label{fig:L-beta-heegaard}
\end{figure}
The solid tori $S^1\times D^2$ on either side of this splitting would be glued together to form $S^1\times S^2$ if the braid $\beta$ were trivial.  But in general, the effect of gluing the middle $T^2\times I$ to either $S^1\times D^2$ is to reparametrize its boundary: the braid generators $x$ and $y$ act as positive Dehn twists along essential curves in $S^1\times S^1$, which we have labeled $c_x$ and $c_y$ and oriented in Figure~\ref{fig:bdc-ball-arcs}.  Gluing after this reparametrization produces the desired Heegaard splitting of $\dcover(L^\beta)$.
\begin{figure}
\begin{tikzpicture}
\begin{scope} % draw the ball
\draw (0,0) circle (2);
\clip (0,0) circle (2);
\draw[very thin] (-2,0) arc (180:0:2 and 0.5); % back of equator
\foreach \i in {1,2,3,4} { \path[rotate=-80] (25+20*\i:2 and 0.625) coordinate (p\i); }
\begin{scope}
\draw[rotate=-80,very thin,fill=gray!10,fill opacity=0.75] (0,0) ellipse (2 and 0.625);
\clip[rotate=-80] (0,0) ellipse (2 and 0.625);
\path[fill=blue!15] (p1) to[out=175,in=160,looseness=4.5] (p2) -- ++(1,0) -- ++(0,1) -- cycle;
\draw[blue] (p1) to[out=175,in=160,looseness=4.5] (p2);
\draw[blue] (p3) to[out=205,in=195,looseness=4] (p4);
\draw[rotate=-80,semithick] (0:2 and 0.5) arc (0:180:2 and 0.625);
\end{scope}
\draw[semithick] (-2,0) arc (180:360:2 and 0.5); % front of equator
\draw (0,0) circle (2); % redraw boundary circle
\foreach \i in {1,2,3} { \draw[red,fill=red] (p\i) circle (0.05); }
\draw[rotate=-80,very thick,red] (45:2 and 0.625) arc (45:65:2 and 0.625) node[midway,right,inner sep=2pt] {\small$\alpha_y$} arc (65:85:2 and 0.625) node[pos=0.7,right,inner sep=2pt] {\small$\alpha_x$};
\end{scope}
\node at (3,0) {\huge$\xrightarrow{\dcover}$};
\begin{scope}[xshift=7cm] % the torus
% the y circle
\begin{scope}
\draw[rotate=15,red,fill=blue!15,thin] (0,-0.975) ellipse (0.125 and 0.55);
\draw[rotate=15,red,very thick] (0,-0.975) ++ (0,-0.55) arc (-90:90:0.125 and 0.55) node[right,pos=0.55,inner sep=3pt] {$c_y$};
\draw[rotate=15,red,->] (0,-0.975) ++(35:0.125 and 0.55) arc (35:25:0.125 and 0.55);
\clip[rotate=15] (0,-0.975) ellipse (0.125 and 0.55);
\path[rotate=15] (0,-0.975) ++ (-38:0.125 and 0.55) coordinate (b1);
\path[rotate=15] (0,-0.975) ++ (142:0.125 and 0.55) coordinate (b2);
\draw[blue] (b1) -- (b2);
\end{scope}
% phantom circle on the other side
\begin{scope}
\draw[rotate=15,very thin] (0,0.95) ++ (-90:0.125 and 0.575) arc (-90:90:0.125 and 0.575);
\draw[rotate=15,ultra thin] (0,0.95) ++ (90:0.125 and 0.575) arc (90:270:0.125 and 0.575);
\clip[rotate=15] (0,0.95) ellipse (0.125 and 0.575);
\path[rotate=15] (0,0.95) ++ (33:0.125 and 0.575) coordinate (b3);
\path[rotate=15] (0,0.95) ++ (213:0.125 and 0.575) coordinate (b4);
\draw[blue] (b3) -- (b4);
\end{scope}
% the x circle
\draw[red,very thick] (0,0) ellipse (2.625 and 1.25);
\draw[red,->] (0,0) ++(120:2.625 and 1.25) arc (120:130:2.625 and 1.25);
\node[below,red,inner sep=3pt] at (60:2.625 and 1.25) {$c_x$};
% the torus itself
\draw (0,0) ellipse (3 and 1.5);
\begin{scope} % add genus
\draw (0,3.1) ++ (240:3.5) arc (240:300:3.5);
\clip (0,3.1) -- ++(240:3.5) arc (240:300:3.5) -- cycle;
\draw (0,-3.15) ++ (60:3.5) arc (60:120:3.5);
\end{scope}
\end{scope}
\end{tikzpicture}
\caption{Lifting arcs $\alpha_x$ and $\alpha_y$ in a 3-ball to closed curves $c_x$ and $c_y$ in a solid torus, viewed as its branched double cover over a pair of properly embedded arcs.}
\label{fig:bdc-ball-arcs}
\end{figure}
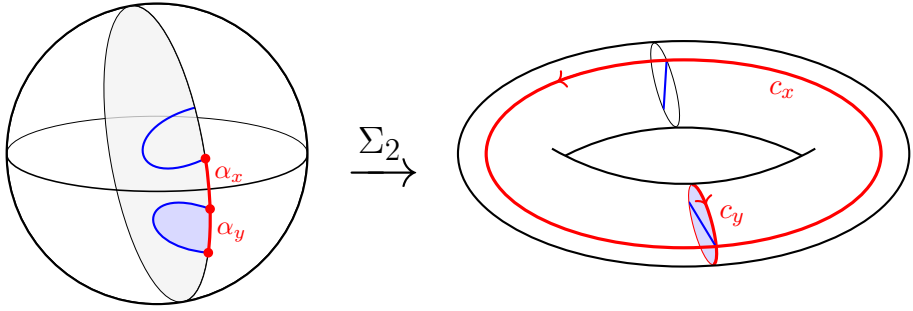

The braid generators $x$ and $y$ act on the homology of the leftmost $S^1\times D^2$ by
\begin{align*}
[c_x] \cdot x = [\tau_{c_x}(c_x)] &= [c_x], &
[c_x] \cdot y = [\tau_{c_y}(c_x)] &= [c_x]+[c_y], \\
[c_y] \cdot x = [\tau_{c_x}(c_y)] &= [c_y]-[c_x], &
[c_y] \cdot y= [\tau_{c_y}(c_y)] &= [c_y].
\end{align*}
Equivalently, we can view them as fixing that $S^1\times D^2$, but acting on the rightmost $S^1\times D^2$ by the inverse of the above action:
\begin{align*}
x \cdot [c_x] = [\tau_{c_x}(c_x)] &= [c_x], &
y \cdot [c_x] = [\tau_{c_y}(c_x)] &= [c_x]-[c_y], \\
x \cdot [c_y] = [\tau_{c_x}(c_y)] &= [c_x]+[c_y], &
y \cdot [c_y] = [\tau_{c_y}(c_y)] &= [c_y].
\end{align*}
Thus if we fix the ordered basis $([c_x],[c_y])$, then the (left) action of $B_3$ on the rightmost $H_1(\partial(S^1\times D^2)) \cong \Z^2$ is given by a homomorphism
\[ \rho: B_3 \to SL_2(\Z) \]
defined by
\begin{align} \label{eq:B3-action}
\rho(x) &= \begin{pmatrix} 1 & 1 \\ 0 & 1 \end{pmatrix}, &
\rho(y) &= \begin{pmatrix} 1 & 0 \\ -1 & 1 \end{pmatrix}.
\end{align}
One can verify that this is well-defined, since $\rho(xyx)=\rho(yxy) = \left(\begin{smallmatrix} 0 & 1 \\ -1 & 0 \end{smallmatrix}\right)$; and that $\rho(\Delta^2) = -I$, where $\Delta^2 = (xyx)^2 = (xy)^3$ is the full twist which generates the center of $B_3$.

\begin{lemma} \label{lem:kernel-rho}
The kernel of $\rho$ is generated by $\Delta^4$.
\end{lemma}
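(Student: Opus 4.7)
The plan is to exploit the classical identification $B_3/Z(B_3) \cong PSL_2(\Z)$, where $Z(B_3)$ is generated by the full twist $\Delta^2$. First I would observe that $\rho$ is surjective: the matrices $\rho(x) = \left(\begin{smallmatrix} 1 & 1 \\ 0 & 1 \end{smallmatrix}\right)$ and $\rho(y) = \left(\begin{smallmatrix} 1 & 0 \\ -1 & 1 \end{smallmatrix}\right)$ are the standard generators of $SL_2(\Z)$. Moreover, the stated computation $\rho(\Delta^2) = -I$ immediately gives $\rho(\Delta^4) = I$, so $\langle \Delta^4 \rangle \subseteq \ker(\rho)$.

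Next, composing with the quotient $SL_2(\Z) \twoheadrightarrow PSL_2(\Z)$ produces a surjective homomorphism $\bar\rho: B_3 \to PSL_2(\Z)$ that kills $\Delta^2$, hence factors through $B_3/\langle \Delta^2\rangle$. I would then invoke the classical fact that the induced map $B_3/\langle\Delta^2\rangle \to PSL_2(\Z)$ is an isomorphism. One way to see this concretely is via the change of generators $a = xy$, $b = xyx$: from $xyx = yxy$ one obtains the presentation $B_3 = \langle a, b \mid a^3 = b^2 \rangle$, in which $a^3 = b^2 = \Delta$ generates the center, so that
\[ B_3/\langle \Delta^2\rangle \cong \langle a, b \mid a^3, b^2 \rangle \cong \Z/3 \ast \Z/2 \cong PSL_2(\Z). \]
Alternatively one can cite this directly from standard references on $B_3$ and the modular group.

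Having established that $\ker(\bar\rho) = \langle \Delta^2\rangle$, the conclusion is then essentially formal. Given $g \in \ker(\rho)$, we have $\bar\rho(g) = 1$, so $g = \Delta^{2k}$ for some $k \in \Z$. Applying $\rho$ gives $I = \rho(\Delta^{2k}) = (-I)^k$, which forces $k$ to be even, so $g \in \langle \Delta^4\rangle$. Combined with the inclusion $\langle\Delta^4\rangle \subseteq \ker(\rho)$ observed above, this yields $\ker(\rho) = \langle \Delta^4\rangle$.

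The only nonroutine step is the identification $B_3/\langle\Delta^2\rangle \cong PSL_2(\Z)$; everything else is bookkeeping with the presentation and the sign of $\rho(\Delta^2)$. Since this identification is classical, the main decision is whether to cite it or include the short derivation above via the generators $a = xy$, $b = xyx$; I would include the brief derivation since it occupies only a couple of lines and keeps the argument self-contained.
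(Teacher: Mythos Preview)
Your proof is correct and takes a genuinely different route from the paper's. The paper proceeds by invoking Murasugi's classification of conjugacy classes in $B_3$: every braid is conjugate to one of the forms $\Delta^{2d} x y^{-a_1} \cdots x y^{-a_n}$ (with $a_i \geq 0$, some $a_i > 0$), $\Delta^{2d} y^m$, or $\Delta^{2d} x^m y^{-1}$ with $m \in \{-1,-2,-3\}$, and then computes $\rho$ on each type to check directly that the only ones sent to the identity are the powers $\Delta^{4d}$. Your argument instead uses the structural identification $B_3/\langle\Delta^2\rangle \cong PSL_2(\Z)$ together with a parity argument from $\rho(\Delta^2) = -I$.

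Your approach is shorter and more conceptual; the paper's is more hands-on but fits naturally since Murasugi's classification is invoked again later (e.g.\ in Lemma~\ref{lem:pin-down-d} and Proposition~\ref{prop:T23-braids}). Two small points: with the paper's conventions $\Delta = xyx$, so your generators satisfy $a^3 = b^2 = \Delta^2$ (not $\Delta$), and it is $\Delta^2$ that generates the center. Also, your presentation argument as written only establishes that $B_3/\langle\Delta^2\rangle$ and $PSL_2(\Z)$ are \emph{abstractly} isomorphic; to conclude $\ker(\bar\rho) = \langle\Delta^2\rangle$ you should note that $\bar\rho(a) = \pm\left(\begin{smallmatrix}0&1\\-1&1\end{smallmatrix}\right)$ and $\bar\rho(b) = \pm\left(\begin{smallmatrix}0&1\\-1&0\end{smallmatrix}\right)$ are precisely the standard order-$3$ and order-$2$ generators in the free product decomposition of $PSL_2(\Z)$, so that the induced map really is the isomorphism (alternatively, invoke Hopficity of $\Z/3 \ast \Z/2$).
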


\begin{proof}
If $w \in B_3$ satisfies $\rho(w) = I$, then the same is true for every conjugate of $w$, and Murasugi \cite{murasugi-3-braid} showed that $w$ is conjugate to one of
\begin{enumerate}
\item $\Delta^{2d} xy^{-a_1} xy^{-a_2} \cdots xy^{-a_n}$, where all $a_i$ are nonnegative and at least one is positive;
\item $\Delta^{2d} y^m$ for some $m\in\Z$; or
\item $\Delta^{2d} x^my^{-1}$ where $m=-1,-2,-3$.
\end{enumerate}
In the second and third cases we compute that
\[ \rho(\Delta^{2d}y^m) = (-1)^d \begin{pmatrix} 1 & 0 \\ -m & 1 \end{pmatrix}
\quad\text{and}\quad
\rho(\Delta^{2d}x^my^{-1}) = (-1)^d \begin{pmatrix} m+1 & m \\ 1 & 1 \end{pmatrix}, \]
so the only such braids in the kernel are $\Delta^{4d} = \Delta^{2\cdot 2d} y^0$.  For the first case we have
\[ \rho(xy^{-a_1} \cdots xy^{-a_n}) = \begin{pmatrix} a_1+1 & 1 \\ a_1 & 1 \end{pmatrix} \cdots \begin{pmatrix} a_n+1 & 1 \\ a_n & 1 \end{pmatrix}, \]
and a straightforward induction on $n \geq 1$ shows that its entries are nonnegative integers, and that the top right entry is strictly positive.  In particular it cannot be $\pm I$ since it is not diagonal, so
\[ \rho(\Delta^{2d}xy^{-a_1} \cdots xy^{-a_n}) = (-1)^d\rho(xy^{-a_1} \cdots xy^{-a_n}) \]
is not the identity either.  We conclude that $\rho(w) = I$ if and only if $w$ is conjugate to some power of $\Delta^4$, and then it must actually be that power of $\Delta^4$ since $\Delta^2$ is central.\end{proof}

\begin{lemma} \label{lem:bdc-2-bridge}
If the representation \eqref{eq:B3-action} satisfies 
\[ \rho(\beta) = \begin{pmatrix} a & b \\ c & d \end{pmatrix}, \]
then we have $\dcover(L^\beta) \cong S^3_{b/d}(U)$.
\end{lemma}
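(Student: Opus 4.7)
The plan is to extract the lens space surgery description directly from the genus-1 Heegaard decomposition $\dcover(L^\beta)\cong V_L\cup_\phi V_R$ displayed in Figure~\ref{fig:L-beta-heegaard}, where $V_L,V_R$ are solid tori arising as branched double covers of balls with trivial tangles and $\phi\colon\partial V_R\to\partial V_L$ is the gluing induced by the intervening $T^2\times I$ piece. The discussion preceding the lemma already identifies $\phi_{\ast}=\rho(\beta)$ in the ordered basis $([c_x],[c_y])$, so the main work is to recognize which curves on $\partial V_L$ correspond to the meridian and longitude of an unknot exterior in $S^3$.

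First I would use Figure~\ref{fig:bdc-ball-arcs} to observe that $[c_y]$ bounds a compressing disk in each of $V_L$ and $V_R$, hence is the meridian of each solid torus, while $[c_x]$ is a longitude. Since $V_L$ is abstractly a solid torus, I would identify it with the exterior $S^3\setminus N(U)$ of a standard unknot $U$, choosing the identification so that the preferred Seifert longitude $\lambda_U$ agrees with the meridian of $V_L$ (namely, the curve on $\partial V_L$ bounding a disk inside $V_L$). Thus $\lambda_U=[c_y]_L$ and $\mu_U=[c_x]_L$.

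I would then calibrate orientations by checking the trivial braid. For $\beta=1$, $\rho(1)=I$ so $\phi_{\ast}([c_y]_R)=[c_y]_L=\lambda_U$; this is $0$-surgery on $U$, producing $S^3_0(U)\cong S^1\times S^2$. This matches a direct inspection of the diagram: the top cap together with the top strand forms one unknotted component, while the two side clasps together with the middle and bottom strands form a second, disjoint unknotted component, so $L^1$ is the two-component unlink and $\dcover(L^1)\cong S^1\times S^2$.

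Finally, for arbitrary $\beta$, the meridian of $V_R$ is glued to
\[
\phi_{\ast}\bigl([c_y]_R\bigr) \;=\; \rho(\beta)\bigl([c_y]\bigr) \;=\; b\,[c_x]_L+d\,[c_y]_L \;=\; b\,\mu_U+d\,\lambda_U,
\]
which is a curve of slope $b/d$ on $\partial N(U)$; gluing $V_R$ along this slope is exactly Dehn surgery on $U$ with coefficient $b/d$, so $\dcover(L^\beta)\cong S^3_{b/d}(U)$. The principal obstacle is pure bookkeeping: correctly identifying meridians and longitudes on both sides of the Heegaard splitting and pinning down the orientation of the unknot identification, which the $\beta=1$ sanity check resolves unambiguously.
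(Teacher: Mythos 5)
Your proof is correct and follows essentially the same argument as the paper's: read off the genus-1 Heegaard splitting from Figure~\ref{fig:L-beta-heegaard}, observe that $c_y$ bounds a disk in the right-hand solid torus so that $\rho(\beta)[c_y] = b[c_x]+d[c_y]$ is the filling slope on $\partial V_L$, and identify $V_L$ with the unknot exterior so that $c_x$ and $c_y$ are the meridian and Seifert longitude of $U$, giving surgery slope $b/d$. The $\beta=1$ sanity check is a harmless addition not present in the paper but consistent with the remark there that the two solid tori glue to $S^1\times S^2$ when $\beta$ is trivial.
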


\begin{proof}
The curve $c_y$ bounds a disk in the rightmost $S^1\times D^2$ of Figure~\ref{fig:L-beta-heegaard}, so then $\beta \cdot [c_y] = b[c_x] + d[c_y]$ bounds a disk in the rightmost $(T^2\times I) \cup (S^1\times D^2)$.  Thus we can obtain the branched double cover of $L^\beta$ by Dehn filling the leftmost $S^1\times D^2$ along $b[c_x]+d[c_y]$.  Thinking of the left $S^1\times D^2$ as the complement of an unknot in $S^3$, the oriented curves $c_x$ and $c_y$ correspond to a meridian and longitude of that unknot, respectively, so this amounts to a Dehn filling of slope $\frac{b}{d}$.
\end{proof}

\begin{lemma} \label{lem:bdc-trace}
We have $\tr \rho(\beta) = 2 \pm |H_1(\dcover(L^\beta_0);\Z)|$, where we define $|H_1|=0$ if $H_1$ is infinite.
\end{lemma}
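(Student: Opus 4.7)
The plan is to identify $L^\beta_0$ with the trace closure of $\beta$ as a 3-braid, compute $H_1$ of its double branched cover via a mapping-torus description, and finish with an elementary determinant calculation. From the diagram of $L^\beta_0$ in Figure~\ref{fig:E-T24-resolutions}, the three nested arcs above $\beta$ connect each strand entering the braid box on the left to the corresponding strand exiting on the right, so $L^\beta_0$ is exactly the trace closure $\widehat\beta$ of $\beta$ in $S^3$. Placing $\widehat\beta$ inside the standard solid torus $V = D^2 \times S^1 \subset S^3$ complementary to a braid axis $A$, and using that the double cover of $D^2$ branched over three points is a once-punctured torus $\Sigma_{1,1}$, I would identify the branched double cover $\dcover(V,\widehat\beta)$ with the mapping torus $M_{\tilde\beta}$ of the natural lift $\tilde\beta \colon \Sigma_{1,1} \to \Sigma_{1,1}$ of $\beta$. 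The full cover $\dcover(\widehat\beta)$ is then obtained from $M_{\tilde\beta}$ by gluing in the branched double cover of the axis neighborhood $V_A = S^3 \setminus \inr V$; because $\lk(A,\widehat\beta) = 3$ is odd, this cover is again a solid torus whose meridian on $\partial M_{\tilde\beta} \cong T^2$ coincides with the mapping-torus direction.

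By construction, $\tilde\beta_*$ on $H_1(\Sigma_{1,1};\Z) \cong \Z^2$, expressed in the basis obtained by lifting the arcs $\alpha_x, \alpha_y$ of Figure~\ref{fig:bdc-ball-arcs} to $c_x, c_y$, is exactly the representation $\rho$ of \eqref{eq:B3-action}: both are instances of the reduced Burau representation of $B_3$ at $t=-1$. Granting this, the Wang sequence of $M_{\tilde\beta}$ yields $H_1(M_{\tilde\beta};\Z) \cong \bigl(\Z^2/(\rho(\beta) - I)\Z^2\bigr) \oplus \Z$, and filling in the axis neighborhood kills the free $\Z$ summand generated by the mapping-torus direction, producing
\[ H_1(\dcover(L^\beta_0);\Z) \cong \Z^2/(\rho(\beta) - I)\Z^2. \]

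Finally, writing $\rho(\beta) = \left(\begin{smallmatrix}a & b \\ c & d\end{smallmatrix}\right)$ and using $\det\rho(\beta) = 1$, a direct expansion gives
\[ \det(\rho(\beta) - I) = (a-1)(d-1) - bc = \det\rho(\beta) - \tr\rho(\beta) + 1 = 2 - \tr\rho(\beta), \]
so $|H_1(\dcover(L^\beta_0);\Z)|$ equals $|2 - \tr\rho(\beta)|$, understood as $0$ when the cokernel is infinite (precisely when $\tr\rho(\beta)=2$). The claimed identity $\tr\rho(\beta) = 2 \pm |H_1(\dcover(L^\beta_0);\Z)|$ follows. The main obstacle in executing this plan is the compatibility check that the two manifestations of the Burau representation agree on the nose in the chosen bases; everything else is straightforward once this is in place.
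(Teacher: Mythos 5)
Your proof is correct and takes essentially the same route as the paper: both identify $L^\beta_0\cong\widehat\beta$, recognize $\dcover(\widehat\beta)$ as an open book (equivalently a mapping torus with the axis filled in) with once-punctured torus pages and monodromy acting on $H_1$ by $\rho(\beta)$, and conclude via the determinant of $\rho(\beta)-I$. The ``compatibility with Burau'' worry you flag at the end is moot, since $\rho$ is \emph{defined} in \eqref{eq:B3-action} precisely as the action of the lifted Dehn twists $\tau_{c_x},\tau_{c_y}$ on $H_1$ of the branched double cover of the disk, so no separate check is needed.
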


\begin{proof}
Inspecting the diagram for $L_0^\beta \cong \widehat\beta$ in Figure~\ref{fig:E-T24-resolutions}, we see that its branched double cover admits an open book decomposition whose binding is the lift of the braid axis; the pages are punctured tori (i.e., the double cover of a disk with three branch points), and the monodromy acts on the homology of the pages by $\rho(\beta)$.  It follows that
\[ H_1(\dcover(L_0^\beta);\Z) \cong \coker( \rho(\beta) - I: \Z^2 \to \Z^2 ). \]
Thus if this order is finite then it equals $\left|\det(\rho(\beta)-I)\right|$, and otherwise $\det(\rho(\beta)-I) = 0$.  Writing $\rho(\beta)=\left(\begin{smallmatrix}a&b\\c&d\end{smallmatrix}\right)$ with $ad-bc=1$, we compute this order up to sign as
\begin{align*}
\det(\rho(\beta)-I) &= \det \begin{pmatrix}a-1&b\\c&d-1\end{pmatrix} \\
&= (a-1)(d-1)-bc = (ad-bc) - (a+d) + 1,
\end{align*}
which is equal to $2-\tr(\rho(\beta))$, so $\tr(\rho(\beta)) = 2 \pm |H_1(\dcover(L_0^\beta))|$ as claimed.
\end{proof}

According to \eqref{eq:dcover-lbeta} and \eqref{eq:dcover-lbeta-i}, we have some $n\in\Z$ such that
\[ \dcover(L^\beta) \cong S^3_{(2n+1)/2}(\gamma) \quad\text{and}\quad \dcover(L^\beta_0) \cong S^3_n(\gamma), \]
so $|H_1(\dcover(L^\beta_0))| = |n|$ and we can write the conclusion of Lemma~\ref{lem:bdc-trace} more simply as
\[ \tr \rho(\beta) = 2\pm n. \]

\begin{lemma} \label{lem:rho-beta-from-l-beta}
Let $\beta$ be a 3-braid such that the link $L^\beta$ of Figure~\ref{fig:E-T24-resolutions} satisfies
\[ \dcover(L^\beta) \cong S^3_{p/q}(U), \qquad 0 < q \leq p. \]
Let $\bar{q}$ be any integer with $q\cdot\bar{q} \equiv 1 \pmod{p}$, and write
\[ q \cdot \bar{q} = rp + 1 \]
for some $r\in\Z$.  Then either
\begin{equation} \label{eq:rho-beta-factor-1}
\rho(\beta) = (-1)^e \begin{pmatrix} 1 & 0 \\ k & 1 \end{pmatrix} \begin{pmatrix} \bar{q} & p \\ r & q \end{pmatrix} \begin{pmatrix} 1 & 0 \\ \ell & 1 \end{pmatrix} = \rho(\Delta^{4d+2e} y^{-k}) \begin{pmatrix} \bar{q} & p \\ r & q \end{pmatrix} \rho(y^{-\ell})
\end{equation}
or
\begin{equation} \label{eq:rho-beta-factor-2}
\rho(\beta) = (-1)^e \begin{pmatrix} 1 & 0 \\ k & 1 \end{pmatrix} \begin{pmatrix} q & p \\ r & \bar{q} \end{pmatrix} \begin{pmatrix} 1 & 0 \\ \ell & 1 \end{pmatrix} = \rho(\Delta^{4d+2e}y^{-k}) \begin{pmatrix} q & p \\ r & \bar{q} \end{pmatrix} \rho(y^{-\ell})
\end{equation}
where $d\in\Z$ and $e\in\{0,1\}$.
\end{lemma}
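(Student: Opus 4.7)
The plan is to translate the statement entirely into a question about factoring a matrix in $SL_2(\Z)$, then verify the factorization by direct computation. By Lemma~\ref{lem:bdc-2-bridge}, writing $\rho(\beta) = \left(\begin{smallmatrix} a & b \\ c & d' \end{smallmatrix}\right)$, the hypothesis says $\dcover(L^\beta) \cong S^3_{b/d'}(U)$, which must be homeomorphic to $L(p,q)$ as an oriented $3$-manifold. The classification of lens spaces then forces $b/d' = p/q'$ for some integer $q'$ with $q' \equiv q$ or $q' \equiv \bar q \pmod{p}$. Coprimality of $p$ and $q'$ together with $\det\rho(\beta) = 1$ then forces $(b,d') = \epsilon(p,q')$ for some $\epsilon \in \{\pm 1\}$.

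I would handle the two congruence cases in parallel. In the first, write $q' = q + kp$ for a unique $k \in \Z$, and expand the product
\[ \begin{pmatrix} 1 & 0 \\ k & 1 \end{pmatrix} \begin{pmatrix} \bar q & p \\ r & q \end{pmatrix} \begin{pmatrix} 1 & 0 \\ \ell & 1 \end{pmatrix} = \begin{pmatrix} \bar q + p\ell & p \\ k(\bar q + p\ell) + r + q\ell & kp + q \end{pmatrix}, \]
whose bottom-right entry is $q' = q + kp$ and whose top-right entry is $p$. Matching the top-left entry to $a$ -- which satisfies $a \equiv \bar q \pmod p$ because $aq' \equiv \epsilon \pmod p$ -- uniquely determines $\ell$; once $\ell$ and $k$ are fixed, the bottom-left entries must agree because both sides have determinant $\epsilon$. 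The overall sign $\epsilon$ is absorbed by taking $e = 0$ or $1$. The second case is essentially identical with $q$ and $\bar q$ interchanged.

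Finally, I would identify $(-1)^e\left(\begin{smallmatrix} 1 & 0 \\ k & 1 \end{smallmatrix}\right)$ with $\rho(\Delta^{4d+2e} y^{-k})$ for any $d \in \Z$, using the computations $\rho(\Delta^2) = -I$ and $\rho(y^{-1}) = \left(\begin{smallmatrix} 1 & 0 \\ 1 & 1 \end{smallmatrix}\right)$, and likewise identify $\left(\begin{smallmatrix} 1 & 0 \\ \ell & 1 \end{smallmatrix}\right)$ with $\rho(y^{-\ell})$. The free parameter $d$ is included precisely because $\Delta^4 \in \ker\rho$ by Lemma~\ref{lem:kernel-rho}, so that no information is lost when later lifting the matrix factorization back to a braid factorization.

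The main obstacle is bookkeeping rather than ideas: keeping track of the $\pm 1$ sign arising from lens-space equivalence and the $(-1)^e$ sign arising from the determinant, checking that $k$, $\ell$, and $e$ all end up as integers (with $r$ integral by the defining relation $q\bar q = rp + 1$), and confirming that the two mod-$p$ congruence classes $q' \equiv q^{\pm 1}$ exhaust every rational surgery slope on the unknot producing $L(p,q)$.
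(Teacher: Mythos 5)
Your proposal is correct and follows essentially the same route as the paper: apply Lemma~\ref{lem:bdc-2-bridge} to read off the surgery slope $b/d'$, invoke the oriented classification of lens spaces to pin down $(b,d')$ up to a sign and a choice of representative modulo $p$ in each of the two congruence classes $q$, $\bar q$, then match entries of the product of elementary matrices. Two small bookkeeping slips worth fixing, though neither breaks the argument: from $ad'-bc=1$ and $(b,d')=\epsilon(p,q')$ one gets $aq'\equiv\epsilon\pmod p$, hence $a\equiv\epsilon\bar q\pmod p$ rather than $a\equiv\bar q$ outright — it is only after absorbing $\epsilon$ into $(-1)^e$ that the residue becomes $\bar q$; and both sides of the eventual equation have determinant $1$, not $\epsilon$, since the scalar $(-1)^e$ contributes $(-1)^{2e}=1$ to the determinant (the relevant point, which you correctly use, is that the bottom-left entry is forced once $a$, $b$, $d'$ are matched and $b\neq 0$).
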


\begin{proof}
Suppose that we have
\[ \rho(\beta) = \begin{pmatrix} a & b \\ c & d \end{pmatrix}. \]
Then by Lemma~\ref{lem:bdc-2-bridge} and the classification of lens spaces up to orientation-preserving homeomorphism, we must have
\[ \begin{pmatrix} b \\ d \end{pmatrix} = \pm \begin{pmatrix} p \\ q+kp \end{pmatrix} \text{ or } \pm \begin{pmatrix} p \\ \bar{q}+kp \end{pmatrix} \]
for some $k\in\Z$.  In this case, since $\det \rho(\beta) = 1$, we know that $\rho(\beta)$ must have the form
\[ \rho(\beta) = \pm\begin{pmatrix} \bar{q}+\ell p & p \\ r+k\bar{q} + \ell(q+kp) & q+kp \end{pmatrix}
\text{ or }
\pm\begin{pmatrix} q+\ell p & p \\ r+kq + \ell(\bar{q}+kp) & \bar{q}+kp \end{pmatrix}
\]
for some integers $k$ and $\ell$.  These matrices factor exactly as in \eqref{eq:rho-beta-factor-1} and \eqref{eq:rho-beta-factor-2}, completing the proof.
\end{proof}

In either case of Lemma~\ref{lem:rho-beta-from-l-beta}, we have 
\begin{equation} \label{eq:trace-rho-beta}
\tr \rho(\beta) = (-1)^e(q+\bar{q} + (k+\ell)p),
\end{equation}
which by Lemma~\ref{lem:bdc-trace} is equal to $2 \pm |H_1(\dcover(L^\beta_0);\Z)|$.  In other words, we must have
\begin{equation} \label{eq:beta-trace-constraint}
(-1)^e(q+\bar{q}+(k+\ell)p) = 2 \pm n,
\end{equation}
which will be useful in the following subsections.

\subsection{The case $\gamma=U$} \label{ssec:gamma-unknot}

For now we suppose that the curve $\gamma \subset \dcover(U) \cong S^3$ from Subsection~\ref{ssec:bdc-beta} is unknotted.  We recall from \eqref{eq:dcover-lbeta} that $\dcover(L^\beta) \cong S^3_{(2n+1)/2}(U)$ for some integer $n$.  Thus in Lemma~\ref{lem:rho-beta-from-l-beta} we can take
\[ (p,q,\bar{q},r) = (2n+1, 2,n+1,1) \text{ or } (2n+1,n+1,2,1). \]
This gives
\begin{equation} \label{eq:rho-beta-u1}
\begin{aligned}
\rho(\beta) &= \rho(\Delta^{4d+2e}y^{-k}) \begin{pmatrix} n+1 & 2n+1 \\ 1 & 2 \end{pmatrix} \rho(y^{-\ell}) \\
&= \rho(\Delta^{4d+2e}y^{-k}) \begin{pmatrix} 1 & n \\ 0 & 1 \end{pmatrix} \begin{pmatrix} 1 & 0 \\ 1 & 1 \end{pmatrix} \begin{pmatrix} 1 & 1 \\ 0 & 1 \end{pmatrix} \rho(y^{-\ell}) \\
&= \rho(\Delta^{4d+2e}y^{-k} x^n y^{-1} x y^{-\ell})
\end{aligned}
\end{equation}
in the first case, and
\begin{equation} \label{eq:rho-beta-u2}
\begin{aligned}
\rho(\beta) &= \rho(\Delta^{4d+2e}y^{-k}) \begin{pmatrix} 2 & 2n+1 \\ 1 & n+1 \end{pmatrix} \rho(y^{-\ell}) \\
&= \rho(\Delta^{4d+2e}y^{-k}) \begin{pmatrix} 1 & 1 \\ 0 & 1 \end{pmatrix} \begin{pmatrix} 1 & 0 \\ 1 & 1 \end{pmatrix} \begin{pmatrix} 1 & n \\ 0 & 1 \end{pmatrix} \rho(y^{-\ell}) \\
&= \rho(\Delta^{4d+2e} y^{-k} x y^{-1} x^n y^{-\ell} )
\end{aligned}
\end{equation}
in the second.

In each of \eqref{eq:rho-beta-u1} and \eqref{eq:rho-beta-u2}, the braid $\beta$ is uniquely determined up to the value of $d\in\Z$, since Lemma~\ref{lem:kernel-rho} says that $\Delta^4$ generates $\ker(\rho)$.  In fact, we can disregard the braids arising from \eqref{eq:rho-beta-u2}, because up to conjugation by powers of $y$, they are all obtained by \emph{reversing} the braids from \eqref{eq:rho-beta-u1}: we have
\begin{align*}
r(\Delta^{4d+2e}y^{-k} x^ny^{-1}xy^{-\ell}) &= \Delta^{4d+2e} y^{-\ell} x y^{-1} x^n y^{-k} \\
&= y^{k-\ell} \cdot \left(\Delta^{4d+2e} y^{-k} x y^{-1} x^n y^{-\ell}\right) \cdot y^{-(k-\ell)}.
\end{align*}
It follows by Lemmas~\ref{lem:reverse-braid} and \ref{lem:y-conjugation} that every knot $K$ with $M_F \cong S^3 \setminus N(T_{2,4})$ and $\gamma$ unknotted has the form $K \cong K_\beta$, where
\[ \beta = \Delta^{4d+2e} y^{-k} x^n y^{-1} x y^{-\ell} \]
is one of the braids in \eqref{eq:rho-beta-u1}.  Recalling that $\Delta^2$ generates the center of $B_3$, we can now rewrite them as
\begin{equation} \label{eq:rho-beta-u-conj}
y^k \beta y^{-k} = \Delta^{4d+2e} x^n y^{-1} x y^{-(k+\ell)}
\end{equation}
with Lemma~\ref{lem:y-conjugation} in mind.

\begin{lemma} \label{lem:pin-down-d}
Suppose that $\beta$ is a 3-braid of the form \eqref{eq:rho-beta-u-conj}, and that its closure $L_0^\beta = \hat\beta$ has branched double cover $S^3_n(U)$.  Then the following must be true:
\begin{itemize}
\item If $n\neq \pm1$ then $6(2d+e) = (k+\ell)\pm1$.
\item If $n = \pm1$ then $6(2d+e) + n - (k+\ell) \in \{-2,0,2\}$.
\end{itemize}
\end{lemma}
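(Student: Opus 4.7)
The plan is to identify $\hat\beta$ as a specific 2-bridge link and then use the classification of its $B_3$-representatives to pin down the exponent sum $e(\beta)$; the stated relations among $d,e,k,\ell$ then fall out by direct computation.

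Since by hypothesis $\dcover(\hat\beta)\cong S^3_n(U)\cong L(n,1)$ is a lens space (interpreted as $S^1\times S^2$ when $n=0$), Hodgson--Rubinstein's theorem implies that $\hat\beta$ is a 2-bridge link, and the classification of 2-bridge links by their branched double covers identifies $\hat\beta$ with the $(2,n)$-torus link $T(2,n)$.

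Next I claim that $e(\beta)\in\{n-1,\,n+1\}$ when $n\neq\pm 1$, and $e(\beta)\in\{-2,0,2\}$ when $n=\pm 1$. For $n\neq\pm 1$, the link $T(2,n)$ has braid index two and its $B_2$-representative $x^n$ (of exponent sum $n$) is unique up to $B_2$-conjugation; by Markov's theorem, every $\beta\in B_3$ with $\hat\beta\cong T(2,n)$ is $B_3$-conjugate to a single Markov stabilization of $x^n$, namely $x^ny$ or $x^ny^{-1}$. (This last rigidity statement can be read off from Murasugi's trichotomy of $B_3$-conjugacy classes, or from Birman--Menasco's analysis of 3-braid closures of 2-bridge links.) Since exponent sum is a conjugation invariant, $e(\beta)\in\{n-1,n+1\}$. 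For $n=\pm 1$ we have $T(2,n)=U$, and the same reasoning applied to the trivial 1-braid shows that every $B_3$-representative of the unknot is conjugate to a double Markov stabilization $x^{\pm1}y^{\pm1}$, whence $e(\beta)\in\{-2,0,2\}$.

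Finally, since $e(\Delta)=e(xyx)=3$, we have $e(\Delta^{4d+2e})=6(2d+e)$, and for $\beta=\Delta^{4d+2e}\,x^n\,y^{-1}\,x\,y^{-(k+\ell)}$ we compute
\[
e(\beta) \;=\; 6(2d+e)+n+(-1)+1+(-(k+\ell)) \;=\; 6(2d+e)+n-(k+\ell).
\]
For $n\neq\pm 1$, setting $e(\beta)=n\pm 1$ rearranges to $6(2d+e)=(k+\ell)\pm 1$. For $n=\pm 1$, the condition $e(\beta)\in\{-2,0,2\}$ reads directly as $6(2d+e)+n-(k+\ell)\in\{-2,0,2\}$.

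The main technical input is the classification step that pins down $e(\beta)$ from the link type of $\hat\beta$: namely, that every 3-braid representative of $T(2,n)$ has exponent sum in $\{n-1,n+1\}$ (and similarly $\{-2,0,2\}$ in the unknot case). This is the rigidity statement underlying the entire argument, and it is what prevents arbitrary insertions of $\Delta^4$ (which would change $e$ by $12$ while preserving $\rho(\beta)$) from producing new representatives of $T(2,n)$; once this rigidity is granted, the remainder of the proof is a straightforward accounting of exponent sums.
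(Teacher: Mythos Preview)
Your proof is correct and follows essentially the same route as the paper: identify $\hat\beta\cong T_{2,n}$ via Hodgson--Rubinstein, invoke the Birman--Menasco classification of its 3-braid representatives up to conjugacy, and compare the exponent sum (what the paper calls ``word length'' $|\beta|$) of those representatives against the explicit computation $6(2d+e)+n-(k+\ell)$ from the form \eqref{eq:rho-beta-u-conj}. One small caveat: your appeal to Markov's theorem does not by itself yield that every 3-braid representative is conjugate to a single stabilization of $x^n$---that is precisely the nontrivial content of the Birman--Menasco result you cite immediately afterward, so the Markov step should be dropped and Birman--Menasco invoked directly.
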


\begin{proof}
Since $S^3_n(U)$ is a lens space, Hodgson and Rubinstein \cite{hodgson-rubinstein} proved that it is the branched double cover of exactly one link in $S^3$, which we know to be the $(2,n)$ torus link.  Thus $\hat\beta \cong T_{2,n}$, and so Birman and Menasco \cite{birman-menasco-iii} proved that up to conjugacy, we have
\[ \beta \sim \begin{cases} x^n y^{\pm1}, & n \neq \pm1 \\ xy,\ xy^{-1},\ \text{or } x^{-1}y^{-1}, & n=\pm1. \end{cases} \]
Now we can read from \eqref{eq:rho-beta-u-conj} that $\beta$ has exponent sum
\[ \varepsilon(\beta) = 6(2d+e) + n - (k+\ell), \]
where $\varepsilon: B_3 \to \Z$ is the homomorphism defined by $\varepsilon(x)=\varepsilon(y)=1$.  This exponent sum is invariant under conjugation, so $\varepsilon(\beta)$ must also be equal to $n\pm1$ if $n\neq \pm1$ and one of $2,0,-2$ otherwise.  The lemma follows immediately.
\end{proof}

\begin{proposition} \label{prop:gamma-unknot}
Let $\beta \in B_3$ be a braid for which $U=\tau\cup \beta$ is unknotted and the curve $\gamma \subset \dcover(U) \cong S^3$ is also unknotted.
Up to reversal, there is some integer $a\in\Z$ such that $y^a \beta y^{-a}$ is one of the 3-braids
\[ x^{-1},\ xy,\ \text{or }x^ny^{-1}xy \ (n\in\Z). \]
\end{proposition}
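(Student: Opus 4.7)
The plan is to exploit both the $0$- and $1$-resolutions of the distinguished crossing in Figure~\ref{fig:E-T24-resolutions}, not just the one leading to $L^\beta$. Since $\gamma$ is unknotted, \eqref{eq:dcover-lbeta-i} identifies $\dcover(L_0^\beta) \cong S^3_n(U)$ and $\dcover(L_1^\beta) \cong S^3_{n+1}(U)$ as lens spaces, and by Hodgson--Rubinstein each is the branched double cover of a unique link, so $\widehat\beta \cong T_{2,n}$ and $\widehat{\beta y^{-1}} \cong T_{2,n+1}$. Applying the Birman--Menasco classification used in Lemma~\ref{lem:pin-down-d} to both $\beta$ and $\beta y^{-1}$ yields, for $|n|\neq 1$ and $|n+1|\neq 1$,
\[ \beta \sim_{B_3} x^n y^{\pm 1} \quad \text{and} \quad \beta y^{-1} \sim_{B_3} x^{n+1} y^{\pm 1}. \]

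I would next take exponent sums in $B_3^{\mathrm{ab}} = \Z$ (a conjugation invariant), which read $|\beta| \in \{n-1, n+1\}$ and $|\beta|-1 = |\beta y^{-1}| \in \{n, n+2\}$. The only common solution is $|\beta| = n+1$, so $\beta \sim_{B_3} x^n y$ and $\beta y^{-1} \sim_{B_3} x^{n+1} y^{-1}$. Combined with the form \eqref{eq:rho-beta-u-conj}, whose exponent sum is $12d + 6e + n - m$, this pins down $m = 12d + 6e - 1$.

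To eliminate the $\Delta^{4d+2e}$ factor, I would compare traces in $SL_2(\Z)$. A direct computation gives $\tr\rho(x^n y) = 2-n$ and
\[ \tr\rho(\beta) = (-1)^e\bigl(n+3+m(2n+1)\bigr) = (-1)^e\bigl(n+3+(12d+6e-1)(2n+1)\bigr), \]
and the identity $\tr\rho(\beta) = \tr\rho(x^n y)$ (a consequence of $\beta \sim_{B_3} x^n y$) becomes an explicit Diophantine equation in $d$, $e$, $n$. For $|n|$ sufficiently large the only solution with $e \in \{0,1\}$ is $(d,e)=(0,0)$, giving $m = -1$ and hence $\beta = x^n y^{-1} x y$ after undoing the conjugation by $y^k$; the braid relation identity $x^n y^{-1} x y = x \cdot x^n y \cdot x^{-1}$ then confirms consistency with Birman--Menasco.

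The remaining small values of $n$, where $|n|=1$ or $|n+1|=1$, have to be handled separately: there the relevant torus link degenerates to an unknot or unlink and Birman--Menasco admits extra conjugacy classes such as $\beta \sim xy$, $xy^{-1}$, $x^{-1}y^{-1}$. I would enumerate these by hand, using Lemma~\ref{lem:reverse-braid} and conjugation by powers of $y$ to collapse redundancies, which produces the additional listed forms $x^{-1}$ and $xy$. The main obstacle I anticipate is this small-$n$ case analysis, since the trace equation also admits a handful of sporadic integer solutions at small $|n|$ and one must carefully verify that no genuinely new braid arises beyond the three listed families.
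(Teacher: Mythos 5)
Your approach is essentially the paper's (Birman--Menasco on braid closures plus the trace and word-length constraints from $\rho:B_3\to SL_2(\Z)$), but with one genuine streamlining: by applying Hodgson--Rubinstein and Birman--Menasco to \emph{both} resolutions $L_0^\beta\cong\widehat\beta\cong T_{2,n}$ and $L_1^\beta\cong\widehat{\beta y^{-1}}\cong T_{2,n+1}$, and comparing exponent sums, you determine the specific conjugacy class $\beta\sim x^n y$ and hence the exact equality $\tr\rho(\beta)=2-n$. The paper instead uses only $L_0^\beta$ and lives with the ambiguity $\tr\rho(\beta)=2\pm n$ from Lemma~\ref{lem:bdc-trace}, which forces a four-way case split on the sign of the trace. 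Your version collapses that to a single Diophantine equation (whose only solution for generic $n$ is $(d,e)=(0,0)$, giving $\beta\sim x^n y^{-1}xy$ after conjugating by a power of $y$), and in fact the equation at $e=0$ reduces to $12d(2n+1)=0$ with no ``sufficiently large $|n|$'' hypothesis needed — the unique solution is $d=0$ for every $n$ with $2n+1\neq 0$. The trade-off is a slightly larger set of exceptional $n$: you must treat $n\in\{-2,-1,0,1\}$ rather than only $n=\pm1$, since Birman--Menasco must apply simultaneously to $T_{2,n}$ and $T_{2,n+1}$.

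The one substantive gap is that the exceptional-$n$ analysis is sketched but not carried out, and in the paper this is where most of the work actually lives (the four cases of Proposition~\ref{prop:gamma-unknot}, the braid-word simplifications using $\Delta^2 = yxyxyx$ and the relation $xyx=yxy$, the observation that $x$ is absorbed into the family as $x^0y^{-1}xy$ via conjugation, and the diagrammatic computation ruling out $xy^{-1}$ and $x^{-1}y^{-1}$ because $\tau\cup\beta$ is then a trefoil rather than the unknot as required, cf.\ Figure~\ref{fig:braids-xY-XY}). You correctly flag that sporadic solutions arise at small $|n|$ and that conjugacy by $y$ and braid reversal (Lemmas~\ref{lem:reverse-braid} and \ref{lem:y-conjugation}) must be used to show the list closes up. Until those cases are enumerated — including verifying that the two braids $xy^{-1},x^{-1}y^{-1}$ are geometrically excluded rather than algebraically — the argument is incomplete, but the strategy is sound and, modulo that routine check, gives a cleaner path to the generic family $x^ny^{-1}xy$ than the paper's case-by-case treatment.
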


\begin{proof}
As discussed above, it suffices to consider $\beta$ as in \eqref{eq:rho-beta-u1}.  We fix $n\in\Z$ so that $\dcover(L^\beta)$, $\dcover(L^\beta_0)$, and $\dcover(L^\beta_1)$ are all surgeries on $\gamma$ of slopes $\frac{2n+1}{2}$, $n$, and $n+1$ respectively, as guaranteed by \eqref{eq:dcover-lbeta} and \eqref{eq:dcover-lbeta-i}.  Then in particular $\dcover(L^\beta_0) \cong S^3_n(U)$, with first homology of order $|n|$, so Lemma~\ref{lem:bdc-trace} now says that
\[ 2 \pm |n| = \tr(\rho(\beta)) = (-1)^e(n+3+(k+\ell)(2n+1)) \]
for $\beta$ as in \eqref{eq:rho-beta-u1}.  After multiplying through by $(-1)^e$, we have four cases, where in each case we can determine the value of $e\in\{0,1\}$ from the sign of the constant term $(-1)^e\cdot 2$.  These cases are:

\vspace{1em}
\noindent\underline{Case 1}: $n+3 + (k+\ell)(2n+1) = n + 2$, so $e=0$.

This simplifies to $(k+\ell)(2n+1) = -1$, so $(k+\ell,n)$ is either $(1,-1)$ or $(-1,0)$.  Then \eqref{eq:rho-beta-u-conj} becomes
\begin{align*}
y^k \beta y^{-k} &= \Delta^{4d} x^{-1} y^{-1} x y^{-1} = \Delta^{4d} yx^{-1}y^{-2} \\
\text{or }
y^k \beta y^{-k} &= \Delta^{4d} y^{-1} x y, % k+\ell=-1, n = 0
\end{align*}
respectively, where we have simplified the first braid using the relation $x^{-1}y^{-1}x = yx^{-1}y^{-1}$.  Lemma~\ref{lem:pin-down-d} says that $d=0$ in each case, so now \eqref{eq:rho-beta-u-conj} becomes
\begin{equation} \label{eq:beta-U-case1}
y^a\beta y^{-a} = x^{-1} y^{-1} \text{\ or\ } x
\end{equation}
for some $a\in\Z$.

\vspace{1em}
\noindent\underline{Case 2}: $n+3 + (k+\ell)(2n+1) = -n + 2$, so $e=0$.

After rearranging we get
\[ (k+\ell+1)(2n+1)=0, \]
and $2n+1$ is nonzero so we must have $k+\ell=-1$.  Now we apply Lemma~\ref{lem:pin-down-d} to see that if $n\neq\pm1$ then $12d = -1\pm1$, while if $n=\pm1$ then $12d + (\pm1) - (-1) \in \{-2,0,2\}$.  Thus in either case $d=0$, and so \eqref{eq:rho-beta-u-conj} becomes
\begin{equation} \label{eq:beta-U-case2}
y^k\beta y^{-k} = x^n y^{-1} x y.
\end{equation}

\vspace{1em}
\noindent\underline{Case 3}: $n+3 + (k+\ell)(2n+1) = n - 2$, so $e=1$.

This simplifies to $(k+\ell)(2n+1) = -5$, so $(k+\ell,n)$ is one of $(5,-1)$, $(-5,0)$, $(1,-3)$, or $(-1,2)$. In each of these cases, equation \eqref{eq:rho-beta-u-conj} and Lemma~\ref{lem:pin-down-d} give us
\begin{align*}
y^k \beta y^{-k} &= \Delta^{4d+2} x^{-1} y^{-1} x y^{-5} = \Delta^{4d+2} yx^{-1}y^{-6}, && 6(2d+1)-1-5 \in \{-2,0,2\} \\
y^k \beta y^{-k} &= \Delta^{4d+2} y^{-1} x y^{5}, && 6(2d+1) = -5 \pm 1 \\
y^k \beta y^{-k} &= \Delta^{4d+2} x^{-3} y^{-1} x y^{-1}, && 6(2d+1) = 1 \pm 1\\
\text{or }
y^k \beta y^{-k} &= \Delta^{4d+2} x^2 y^{-1} x y, && 6(2d+1) = -1 \pm 1,
\end{align*}
respectively.  The third and fourth braids are ruled out by Lemma~\ref{lem:pin-down-d} because there is no such $d\in\Z$, whereas the first and second braids must have $d=0$ and $d=-1$ respectively.  Thus in the first case we have
\begin{align*}
y^k\beta y^{-k} = \Delta^2 yx^{-1}y^{-6} &= y \cdot yxyxyx \cdot x^{-1}y^{-6} \\
&= y^2 \cdot xyx \cdot y^{-5} = y^2 \cdot yxy \cdot y^{-5} \\
&= y^3 \cdot xy^{-1} \cdot y^{-3},
\end{align*}
while we can rearrange the second case to get
\begin{align*}
y^{k+1} \beta y^{-(k+1)} = (xyxyxy)^{-1} xy^4 &= y^{-1}(xyx)^{-1} y^{-1} x^{-1} \cdot xy^4 \\
&= y^{-1}(yxy)^{-1} y^3 \\
&= y^{-2} x^{-1} y^{2}.
\end{align*}
Thus up to conjugation by powers of $y$, the possible braids in this case are
\begin{equation} \label{eq:beta-U-case3}
y^a\beta y^{-a} = xy^{-1} \text{ or } x^{-1}.
\end{equation}

\vspace{1em}
\noindent\underline{Case 4}: $n+3 + (k+\ell)(2n+1) = -n - 2$, so $e=1$.

This condition is equivalent to
\[ (k+\ell+1)(2n+1) = -4, \]
and $2n+1$ is odd so it must be $\pm1$, hence $(k+\ell,n)$ is either $(-5,0)$ or $(3,-1)$.  The first of these already appeared in case 3, leading to
\[ y^{k+1}\beta y^{-(k+1)} = y^{-2}x^{-1}y^2. \]
In the second case, equation \eqref{eq:rho-beta-u-conj} becomes
\[ y^k \beta y^{-k} = \Delta^{4d+2} x^{-1} y^{-1} x y^{-3} = \Delta^{4d+2} y x^{-1} y^{-4}, \]
while Lemma~\ref{lem:pin-down-d} says that $6(2d+1)+(-1)-3 \in \{-2,0,2\}$, hence $d=0$.  Thus
\begin{align*}
y^{k-1} \beta y^{1-k} = \Delta^2 x^{-1}y^{-3} &= yxyxyx \cdot x^{-1}y^{-3} \\
&= y\cdot xyx\cdot y^{-2} = y\cdot yxy \cdot y^{-2} \\
&= y^2 \cdot xy \cdot y^{-2}.
\end{align*}
Thus in this case the possible braids all have the form
\begin{equation} \label{eq:beta-U-case4}
y^a\beta y^{-a} = x^{-1} \text{\ or\ } xy.
\end{equation}

\vspace{1em}
We now combine the lists of braids enumerated in \eqref{eq:beta-U-case1}, \eqref{eq:beta-U-case2}, \eqref{eq:beta-U-case3}, and \eqref{eq:beta-U-case4} to see that for some $a\in\Z$, the braid $y^a \beta y^{-a}$ must be one of
\[ x,\ x^{-1},\ xy,\ xy^{-1},\ x^{-1}y^{-1},\ \text{or }x^ny^{-1}xy \ (n\in\Z). \]
But we can eliminate $xy^{-1}$ and $x^{-1}y^{-1}$ from this list, because filling the tangle $\tau$ in with either of these produces a right-handed trefoil, as shown in Figure~\ref{fig:braids-xY-XY}.
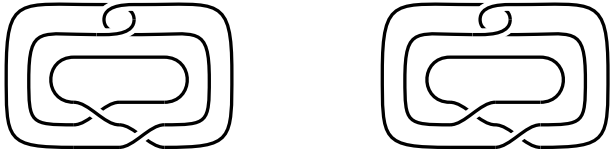
\begin{figure}
\begin{tikzpicture}
\begin{scope}
\draw[link] (0,0.3) to[out=180,in=180] ++(0,0.6) -- ++(1.2,0) to[out=0,in=0] ++(0,-0.6);
\draw[link] (0,0) to[out=180,in=270] ++(-0.6,0.6) to[out=90,in=180] ++(0.9,0.6) ++(0.1,0) to[out=0,in=270,looseness=1] ++(0.4,0.2) to[out=90,in=0,looseness=1] ++(-0.4,0.2) -- ++(-0.1,0) to[out=180,in=90] ++(-1.2,-0.9) to[out=270,in=180] ++(0.9,-1);
\draw[link] (1.2,0) to[out=0,in=270] ++(0.6,0.6) to[out=90,in=0] ++(-0.9,0.6) -- ++(-0.1,0) to[out=180,in=270,looseness=1] ++(-0.4,0.2) to[out=90,in=180,looseness=1] ++(0.4,0.2) -- ++(0.1,0) to[out=0,in=90] ++(1.2,-0.9) to[out=270,in=0] ++(-0.9,-1);
\draw[link] (0.8,1.4) to[out=270,in=0,looseness=1] ++(-0.4,-0.2) -- ++(-0.1,0); % redraw a crossing
\draw[link,looseness=0.75] (0,0) to[out=0,in=180] ++(0.6,0.3) ++(0,-0.3) to[out=0,in=180] ++(0.6,-0.3);
\draw[link,looseness=0.75] (0,0.3) to[out=0,in=180] ++(0.6,-0.3) ++(0,-0.3) to[out=0,in=180] ++(0.6,0.3);
\draw[link] (0,-0.3) -- ++(0.6,0) ++(0,0.6) -- ++(0.6,0);
\end{scope}

\begin{scope}[xshift=5cm]
\draw[link] (0,0.3) to[out=180,in=180] ++(0,0.6) -- ++(1.2,0) to[out=0,in=0] ++(0,-0.6);
\draw[link] (0,0) to[out=180,in=270] ++(-0.6,0.6) to[out=90,in=180] ++(0.9,0.6) ++(0.1,0) to[out=0,in=270,looseness=1] ++(0.4,0.2) to[out=90,in=0,looseness=1] ++(-0.4,0.2) -- ++(-0.1,0) to[out=180,in=90] ++(-1.2,-0.9) to[out=270,in=180] ++(0.9,-1);
\draw[link] (1.2,0) to[out=0,in=270] ++(0.6,0.6) to[out=90,in=0] ++(-0.9,0.6) -- ++(-0.1,0) to[out=180,in=270,looseness=1] ++(-0.4,0.2) to[out=90,in=180,looseness=1] ++(0.4,0.2) -- ++(0.1,0) to[out=0,in=90] ++(1.2,-0.9) to[out=270,in=0] ++(-0.9,-1);
\draw[link] (0.8,1.4) to[out=270,in=0,looseness=1] ++(-0.4,-0.2) -- ++(-0.1,0); % redraw a crossing
\draw[link,looseness=0.75] (0,0.3) to[out=0,in=180] ++(0.6,-0.3) to[out=0,in=180] ++(0.6,-0.3);
\draw[link,looseness=0.75] (0,0) to[out=0,in=180] ++(0.6,+0.3) ++(0,-0.6) to[out=0,in=180] ++(0.6,0.3);
\draw[link] (0,-0.3) -- ++(0.6,0) ++(0,0.6) -- ++(0.6,0);
\end{scope}
\end{tikzpicture}
\caption{The knot $\tau\cup\beta$ is a right-handed trefoil when $\beta$ is $xy^{-1}$ or $x^{-1}y^{-1}$.}
\label{fig:braids-xY-XY}
\end{figure}
The braid $x$ is also redundant, because if $y^a \beta y^{-a} = x$ then
\[ y^{a-1} \beta y^{-(a-1)} = y^{-1} x y = x^0 y^{-1} x y \]
belongs to the family $x^n y^{-1} xy$.  Thus we can remove it, and we are now left with exactly the list of braids promised in this proposition.
\end{proof}

\subsection{The case where $\gamma$ is a torus knot} \label{ssec:gamma-torus-knot}

In this subsection we will suppose that $\gamma \cong T_{p,q}$ for some $p$ and $q$.  Our goal is to prove the following.

\begin{proposition} \label{prop:gamma-torus}
Suppose that $\beta \in B^3$ is a 3-braid for which $U = \tau \cup \beta$ is unknotted, and the curve $\gamma \subset S^3$ is a nontrivial knot.  Then for some $a\in\Z$ we have
\[ y^a \beta y^{-a} = x^3 y^{-1} x^2 y \text{ or } x^{-3}yx^{-2} \]
up to braid reversal.
\end{proposition}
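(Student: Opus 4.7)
Our plan parallels that of Proposition~\ref{prop:gamma-unknot}, but now leverages Moser's classification of Dehn surgeries on torus knots rather than only the unknot.  Since $\dcover(L^\beta) \cong S^3_{(2n+1)/2}(T_{p,q})$ must be a lens space and $T_{p,q}$ is nontrivial, Moser's theorem forces $|2n+1 - 2pq| = 1$, so that $n = pq$ or $n = pq - 1$; after replacing $K$ by its mirror via Lemma~\ref{lem:mirror-braid} if necessary, we may take $p, q > 0$.  Moser further identifies this lens space as (an orientation of) $L(2n+1, 2q^2)$, which we feed into Lemma~\ref{lem:rho-beta-from-l-beta} to produce an explicit factorization of $\rho(\beta)$, parameterized by $d \in \Z$, $e \in \{0,1\}$, and integers $k, \ell$.

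Next I would exploit the trace constraint of Lemma~\ref{lem:bdc-trace}, which reads $\tr\rho(\beta) = 2 \pm |n|$ here, since $\dcover(L^\beta_0) \cong S^3_n(T_{p,q})$ has $|H_1| = |n|$.  Substituting the factorization from the previous step produces a Diophantine equation in $p, q, n, k, \ell, e$, analogous to~\eqref{eq:beta-trace-constraint}.  In parallel, note that whichever of $n$ or $n+1$ equals $pq$ corresponds to the reducible surgery $S^3_{pq}(T_{p,q}) \cong L(p,q) \# L(q,p)$, so the corresponding closed 3-braid ($\hat\beta$ or $\widehat{\beta y^{-1}}$) must be a composite 2-bridge link.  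Meanwhile the other integer-slope surgery is itself a lens space, so its 3-braid closure is a 2-bridge link.  Birman--Menasco's classifications of composite and 2-bridge closed 3-braids then restrict the braid (or $\beta y^{-1}$) to a short list of conjugacy classes whose word length and trace are readily computable.

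From there the proof finishes by case analysis along the lines of Subsection~\ref{ssec:gamma-unknot}: the word length of the factored $\rho(\beta)$ must agree with that of a representative in each conjugacy class, and the trace equation then determines $p$ and $q$.  After sifting through cases, the only solutions are $(p,q) = (2,3)$ paired with specific values of $n$, $k$, $\ell$, yielding $\beta = x^3 y^{-1} x^2 y$ or its reverse $x^{-3} y x^{-2}$ (which together with Lemmas~\ref{lem:reverse-braid} and~\ref{lem:mirror-braid} accounts for the two options in the proposition), after absorbing $\Delta^{4d}$ factors via Lemma~\ref{lem:kernel-rho} and conjugating by powers of $y$ via Lemma~\ref{lem:y-conjugation}.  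The main obstacle is this final elimination: unlike the one-parameter situation of Subsection~\ref{ssec:gamma-unknot}, the two-parameter family of torus knots opens up many more branches, and systematically ruling out every $(p,q) \neq (2,3)$ requires tightly combining the trace, determinant, surgery, and Birman--Menasco constraints.
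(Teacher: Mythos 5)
Your outline matches the paper's strategy in broad strokes — Moser's classification pins the slope to $pq\pm\tfrac12$, the reducible surgery $S^3_{pq}(T_{p,q})\cong L(p,q)\#L(q,p)$ forces the corresponding 3-braid closure to be composite, and the trace constraint plus word-length arguments finish things off — but there are two problems, one a substantial acknowledged gap and one a concrete error.

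First, the elimination of all $(p,q)\neq(2,3)$ is not an afterthought; it is the bulk of the proof, and you leave it unexecuted. The paper passes from ``$\dcover(\hat\beta')$ is a connected sum of lens spaces'' to ``$\hat\beta'$ is a composite link'' via Kim--Tollefson, then applies the Birman--Menasco \emph{braid index theorem} to conclude each summand is a 2-braid closure $T_{a,2}$, giving $|p|\equiv\pm1\pmod{|q|}$ and $|q|\equiv\pm1\pmod{|p|}$, hence (Lemma~\ref{lem:p-pm1-q}) $P=Q\pm1$. The other integer-slope surgery gives a lens space that must be the branched double cover of a 3-braid closure, and that constraint is supplied by \emph{Murasugi's} criterion on braid index of 2-bridge links (Proposition~\ref{prop:murasugi-criterion}), not by Birman--Menasco. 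Without this pair of number-theoretic constraints you cannot whittle the torus knots down to the finite list $T_{\pm2,3},T_{\pm3,4},T_{\pm4,5}$, and the trace constraint from Lemma~\ref{lem:bdc-trace} is only strong enough to finish the job after that finite list is in hand.

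Second, your claim that $x^{-3}yx^{-2}$ is ``the reverse'' of $x^3y^{-1}x^2y$ is false: $r(x^3y^{-1}x^2y)=yx^2y^{-1}x^3$, which is $y$-conjugate to the original, not to $x^{-3}yx^{-2}$. The two braids in the proposition are genuinely distinct up to reversal and $y$-conjugation; they arise from the two sub-cases $\gamma\cong T_{-2,3}$ (giving $n=-6$, handled in Proposition~\ref{prop:gamma-lht}) and $\gamma\cong T_{2,3}$ (giving $n=5$, handled in Proposition~\ref{prop:gamma-rht}). The relation between them is $x^{-3}yx^{-2}=m(x^3y^{-1}x^2y)y$, i.e., mirroring, which the paper exploits only later, in the proof of Theorem~\ref{thm:M_F-E24}, to avoid computing both knots. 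Your early reduction to $p,q>0$ by mirroring therefore conflates the two cases and would produce only one of the two output braids; Proposition~\ref{prop:gamma-torus} as stated is an enumeration up to reversal and $y$-conjugation only, so both braids must appear.
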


We recall from Subsection~\ref{ssec:bdc-beta} that $L^\beta$ is a 2-bridge link, so that
\[ S^3_{(2n+1)/2}(\gamma) \cong \dcover(L^\beta) \]
is a lens space.  The only half-integral lens space surgeries on $\gamma \cong T_{p,q}$ are those of slopes $pq \pm \frac{1}{2}$ \cite{moser}, so we must have $n+\frac{1}{2} \in \{pq-\frac{1}{2}, pq+\frac{1}{2}\}$, hence exactly one of
\begin{align}
n &= pq-1: &
S^3_n(\gamma) &\cong S^3_{(pq-1)/q^2}(U), &
S^3_{n+1}(\gamma) &\cong S^3_{p/q}(U) \# S^3_{q/p}(U) \label{eq:n-pq-1} \\
n &= pq: &
S^3_n(\gamma) &\cong S^3_{p/q}(U) \# S^3_{q/p}(U), &
S^3_{n+1}(\gamma) &\cong S^3_{(pq+1)/q^2}(U). \label{eq:n-pq}
\end{align}
occurs.  These surgeries were determined by Moser \cite[Proposition~3.2]{moser}, though we follow the notational conventions of Gordon \cite[Corollary~7.4]{gordon}.

We now observe that whether $n=pq-1$ or $n=pq$, we have found a 3-braid $\beta' \in \{\beta,\beta y^{-1}\}$ whose closure has branched double cover
\[ \dcover(\widehat{\beta'}) \cong S^3_{p/q}(U) \# S^3_{q/p}(U), \]
which is not prime.  A theorem of Kim and Tollefson \cite{kim-tollefson} says that the link $\widehat{\beta'}$ is therefore a nontrivial connected sum
\[ \widehat{\beta'} \cong L_1 \# L_2, \]
where $\dcover(L_1) \cong S^3_{p/q}(U)$ and $\dcover(L_2) \cong S^3_{q/p}(U)$.  Now since $L_1 \# L_2$ has braid index at most $3$ and the summands $L_i$ are nontrivial, the ``braid index theorem'' of Birman and Menasco \cite{birman-menasco-iv} shows that $L_1$ and $L_2$ are each closures of 2-braids.  Thus we can write
\begin{align*}
L_1 &\cong T_{a,2}, & 
L_2 &\cong T_{b,2}, &
(a,b &\neq \pm1,0)
\end{align*}
where $a$ and $b$ cannot be $\pm1$ or $0$ because the branched double covers are nontrivial rational homology spheres, hence are neither $S^3$ nor $S^1\times S^2$.  Then we have
\begin{align*}
S^3_{p/q}(U) &\cong \begin{cases} S^3_{a/1}(U) & a>0 \\ S^3_{|a|/(|a|-1)}(U) & a < 0, \end{cases} &
S^3_{q/p}(U) &\cong \begin{cases} S^3_{b/1}(U) & b>0 \\ S^3_{|b|/(|b|-1)}(U) & b < 0. \end{cases} &
\end{align*}
In particular, this is only possible if $|p| = |a|$ and $|q| = |b|$, and if moreover
\begin{align*}
|q| &\equiv \pm1 \pmod{|p|}, &
|p| &\equiv \pm1 \pmod{|q|}.
\end{align*}

\begin{lemma} \label{lem:p-pm1-q}
Let $P,Q \geq 2$ be coprime positive integers satisfying
\[ P \equiv \pm1 \pmod{Q} \quad\text{and}\quad Q \equiv \pm1 \pmod{P}. \]
Then $P = Q \pm 1$.
\end{lemma}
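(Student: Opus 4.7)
The plan is to reduce to the elementary observation that a positive integer strictly between $1$ and $P$ cannot be congruent to $\pm1$ modulo $P$ unless it equals $P-1$. In fact, only one of the two hypotheses will be needed.

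First, by the symmetry of the hypotheses in $P$ and $Q$, I may assume without loss of generality that $P \geq Q$. I can then immediately rule out $P = Q$, since in that case $\gcd(P,Q) = P \geq 2$ contradicts coprimality. Thus $P > Q \geq 2$.

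Next, I use the congruence $Q \equiv \pm 1 \pmod{P}$. The integers in $\{1, 2, \dots, P-1\}$ that are congruent to $\pm 1$ modulo $P$ are precisely $1$ and $P - 1$. Since $2 \leq Q < P$, this forces $Q = P - 1$, i.e. $P = Q + 1$. Swapping the roles of $P$ and $Q$ recovers the case $P = Q - 1$, so in all cases $P = Q \pm 1$.

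There is no real obstacle here; the lemma is a short piece of elementary number theory, and the only mild subtlety is noticing that the hypothesis $Q \geq 2$ (rather than $Q \geq 1$) is what allows one to discard the congruence class of $1$ modulo $P$ and conclude equality with $P - 1$. In the write-up I will simply observe in passing that the other congruence $P \equiv \pm 1 \pmod{Q}$ is automatically satisfied once $P = Q \pm 1$, so the statement could be strengthened, but this is not needed in the application.
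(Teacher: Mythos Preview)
Your proof is correct and in fact cleaner than the paper's. The paper writes $P = kQ \pm 1$ from the first congruence, observes $k \geq 1$, and then uses the second congruence together with a small case analysis to rule out $k \geq 2$. You instead exploit the symmetry of the hypotheses to assume $P > Q$ up front, after which the single congruence $Q \equiv \pm1 \pmod{P}$ and the bound $2 \leq Q < P$ immediately force $Q = P-1$; no case analysis is needed, and as you note the other congruence becomes automatic. Both arguments are elementary, but yours isolates the essential point more directly and shows that the lemma really only needs one of the two congruence hypotheses once the ordering is fixed.
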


\begin{proof}
Write $P = kQ \pm1$, where $P,Q \geq 2$ implies that $k \geq 1$.  If $k \geq 2$ then we have $P \geq 2Q-1$, so either $P=3$ and then $Q=2$ (hence $P=Q+1$), or $P > 3$ and then we have
\[ 1 < Q \leq \frac{P+1}{2} < P-1. \]
(The last two inequalities are equivalent to $P \geq 2Q-1$ and $P > 3$ respectively.)  But if $1 < Q < P-1$ then we cannot possibly have $Q \equiv \pm1 \pmod{P}$, so there are no other solutions with $k \geq 2$ and thus we must have $P=Q\pm1$.
\end{proof}

Lemma~\ref{lem:p-pm1-q} says that for $\gamma \cong T_{p,q}$, if we write $P=|p|$ and $Q=|q|$ then $P = Q \pm 1$, and \eqref{eq:n-pq-1} and \eqref{eq:n-pq} tell us that either
\[ S^3_{(pq-1)/q^2}(U) \quad\text{or}\quad S^3_{(pq+1)/q^2}(U) \]
is the branched double cover of a 3-braid, depending on whether $n=pq-1$ or $n=pq$ respectively.  Reversing orientation if exactly one of $p$ and $q$ is negative replaces that 3-braid with its mirror, which is still a 3-braid, and the surgered manifold is then
\[ -S^3_{(pq\pm1)/q^2}(U) \cong S^3_{(-pq\mp1)/q^2}(U) \cong S^3_{(PQ \mp 1)/Q^2}(U), \]
so in any case we see that one of
\[ S^3_{(PQ-1)/Q^2}(U) \quad\text{or}\quad S^3_{(PQ+1)/Q^2}(U) \]
is the branched double cover of a 3-braid.  This gives us strong restrictions on $P$ and $Q$ by the following result of Murasugi.

\begin{proposition}[{\cite[Proposition~7.2]{murasugi-braid}}] \label{prop:murasugi-criterion}
Let $L_{r/s}$ be the 2-bridge link with branched double cover $L(r,s) = S^3_{r/s}(U)$, where $0<s<r$ and $s$ is odd.  Then $L_{r/s}$ has braid index $2$ if and only if $s=1$, and it has braid index $3$ if and only if either
\begin{enumerate}
\item there are integers $c,d > 0$ such that $(r,s) = (2cd+3c+3d+4, 2c+3)$, or
\item there are $c,d>0$ such that $(r,s) = (2cd+c+d+1, 2c+1)$.
\end{enumerate}
\end{proposition}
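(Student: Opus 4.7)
The plan is to combine a classical fact about braid index two with Schubert's continued-fraction description of two-bridge links and Murasugi's normal form for three-braids. Throughout, one writes $r/s = [a_1, a_2, \ldots, a_n]$ for a continued fraction expansion; this determines the standard $4$-plat diagram of $L_{r/s}$, and the parity of $s$ ensures a canonical choice of expansion.

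First I would dispose of the braid index two case. Any link that is the closure of a two-braid is a torus link $T_{2,n}$ for some $n \in \Z$, with branched double cover the lens space $L(n,1) = S^3_{n/1}(U)$. Since $L(r,s)$ with $0<s<r$ determines the pair $(r,s)$ up to replacing $s$ by its multiplicative inverse modulo $r$, the identification $\dcover(L_{r/s}) \cong L(n,1)$ forces $s = 1$; conversely, $L_{r/1} \cong T_{2,r}$ visibly has braid index two.

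For braid index three, the forward direction is an explicit construction. Given $(r,s)$ in family (1) or (2), the prescribed parametrization lets one write down a short continued fraction expansion of $r/s$, from which the standard plat diagram can be isotoped to the closure of an explicit three-braid in $B_3$. One then verifies that this closure is the desired two-bridge link with the correct lens space double cover, for instance by computing $\rho(\beta)$ using the homomorphism of equation \eqref{eq:B3-action} and invoking Lemma \ref{lem:bdc-2-bridge}. For the converse direction, one starts with an arbitrary three-braid $\beta$ whose closure is a two-bridge link, places $\beta$ into one of Murasugi's three conjugacy normal forms (the same classification used in the proof of Lemma \ref{lem:kernel-rho}), and enumerates exactly those conjugacy classes whose plat closure is a two-bridge link. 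Translating that enumerated list of braids back through the continued fraction dictionary yields precisely the two families appearing in the statement.

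The main obstacle is this converse: ruling out all three-braids whose closures happen to be two-bridge but do not fit either parametrization. The subtle point is that one needs a sharp \emph{lower} bound on the braid index of $L_{r/s}$, not just an upper bound witnessed by an explicit diagram. Murasugi's approach in \cite{murasugi-braid} handles this by combining the Morton--Franks--Williams inequality applied to the HOMFLY polynomial of $L_{r/s}$ (which is computable directly from the continued fraction) with the explicit enumeration of short three-braid normal forms, and this combination of ingredients is what would occupy the bulk of the work.
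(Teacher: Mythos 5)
The paper does not prove this proposition at all: it is stated as a direct citation of Murasugi's result, and the authors rely on it as a black box. So there is no ``paper's proof'' to compare against. Your sketch is a reasonable reconstruction of the strategy one expects Murasugi to use --- Schubert's continued-fraction normal form for two-bridge links, the Morton--Franks--Williams inequality to bound the braid index from below via the HOMFLY polynomial, and an explicit diagrammatic construction for the upper bound --- and you correctly identify the two-braid case as an easy consequence of the classification of lens spaces.

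That said, what you have written is a plan, not a proof, and you concede as much in your final sentence. Two pieces of the argument are genuinely missing. First, in the braid-index-3 direction you need to actually compute the MFW bound for $L_{r/s}$ as a function of the continued-fraction expansion of $r/s$, establish that it is sharp for two-bridge links (this is the substantive content of Murasugi's theorem for alternating links, and is not free), and then solve the resulting diophantine condition to see that it singles out exactly the two families $(2cd+3c+3d+4,\,2c+3)$ and $(2cd+c+d+1,\,2c+1)$. Second, for the converse you propose to enumerate three-braid conjugacy classes whose closures are two-bridge; Murasugi's normal form makes this finite-in-spirit but there is real work in converting each normal form to a fraction $r/s$ and matching it against the two families --- note in particular that the split between families (1) and (2) reflects different shapes of continued-fraction expansions and is not something that falls out automatically. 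Without these computations the proposal identifies the right tools but does not actually establish the statement. Since the paper itself treats this as cited background, the honest assessment is that your outline is consistent with the known proof but does not constitute one.
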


\begin{remark} \label{rem:murasugi-corollary}
We note that in the first and second cases of Proposition~\ref{prop:murasugi-criterion} we have
\[ r = \frac{(2d+3)s - 1}{2} \quad\text{and}\quad r = \frac{(2d+1)s + 1}{2}, \]
respectively, so if $L_{r/s}$ has braid index $3$ then $s$ divides either $2r+1$ or $2r-1$.  In particular, if the braid index is \emph{at most} $3$ then we can draw the same conclusion, since braid index $2$ implies $s=1$.
\end{remark}

Putting all of this together, we can now show the following.

\begin{lemma} \label{lem:possible-gamma-torus}
Suppose that the link $\tau \cup \beta$ is unknotted, and that $\gamma$ is not an unknot.  Then $\gamma$ or its mirror must be one of the torus knots $T_{2,3}$, $T_{3,4}$, or $T_{4,5}$, and 
$(\gamma,n)$ must be one of the pairs indicated in Table~\ref{fig:gamma-table}.
\end{lemma}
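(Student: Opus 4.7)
The plan is to combine the constraints on lens space surgeries of torus knots derived in the paragraphs leading up to this lemma with Murasugi's criterion for low braid-index 2-bridge links, and then to run a bounded case check. The preceding discussion already shows that $\gamma = T_{p,q}$ for coprime $p,q$, and if we set $P := |p|$ and $Q := |q|$ and (after swapping $p$ and $q$, which leaves $T_{p,q}$ unchanged) assume $P > Q$, then Lemma~\ref{lem:p-pm1-q} forces $P = Q+1$.  Moreover by \eqref{eq:n-pq-1}--\eqref{eq:n-pq} we have $n \in \{pq-1, pq\}$, and exactly one of $S^3_n(\gamma)$, $S^3_{n+1}(\gamma)$ is the reducible manifold $L(p,q)\#L(q,p)$ while the other is the lens space $L(r, Q^2)$ with $r = PQ \pm 1 = Q^2 + Q \pm 1$.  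Since this lens space is the branched double cover of either $L^\beta_0 = \widehat\beta$ or $L^\beta_1 = \widehat{\beta y^{-1}}$, and each of these is the closure of a 3-braid, the corresponding 2-bridge link has braid index at most $3$.

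Next, I would apply Remark~\ref{rem:murasugi-corollary}: there must exist an odd $s$ with $0 < s < r$ satisfying $L(r,s) \cong L(r, Q^2)$ as unoriented manifolds (equivalently $s \equiv \pm Q^{\pm 2} \pmod{r}$) and with either $s = 1$ or $s \mid 2r \pm 1$.  To make this usable, I would compute the four residues $\pm Q^{\pm 2} \pmod r$ explicitly from the identity $Q(Q+1) \equiv \mp 1 \pmod{r}$, which yields $Q^{-1} \equiv \mp(Q+1)$ and hence $Q^{-2} \equiv (Q+1)^2 \pmod{r}$.  For $r = Q^2 + Q + 1$ these reduce to the integers $Q^2,\, Q+1,\, Q,\, Q^2+1$, and to a similar list for $r = Q^2 + Q - 1$; in either case the odd representatives have absolute value at most $Q^2 + O(Q)$.

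The main obstacle, and the bulk of the work, is the elementary but tedious bounded case check.  For each $Q \geq 5$ I would carry out the explicit polynomial division of $2r \pm 1$ by each odd residue $s$ in the list above, showing in every case that the remainder is a constant of absolute value at most $3$; since the candidate $s$ always exceeds $3$ in this range, the divisibility $s \mid 2r \pm 1$ cannot hold, ruling out $Q \geq 5$.  For the finitely many remaining values $Q \in \{2,3,4\}$ and both sign choices in $r = Q^2 + Q \pm 1$, I would check the divisibility by hand; the surviving pairs are exactly
\[ (\gamma, n) \in \{(T_{2,3}, 5),\ (T_{2,3}, 6),\ (T_{3,4}, 12),\ (T_{4,5}, 19)\}, \]
matching Table~\ref{fig:gamma-table}.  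The mirror case is handled symmetrically: passing from $\beta$ to the braid associated to $\overline K$ via Lemma~\ref{lem:mirror-braid} reverses orientation on each branched double cover and hence on $\gamma$, so the analogous enumeration for $\overline\gamma$ supplies the remaining entries of the table.
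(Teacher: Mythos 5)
Your approach is essentially the same as the paper's: reduce to $P=Q+1$ via Lemma~\ref{lem:p-pm1-q}, identify the lens space $L(r,Q^2)$ with $r=Q^2+Q\pm 1$ arising as $\dcover$ of a 3-braid closure, and then use the Murasugi divisibility constraint from Remark~\ref{rem:murasugi-corollary} to bound $Q$. The paper streamlines the enumeration by splitting on the parity of $P$ and choosing one convenient odd representative in each case (writing $L(r,Q^2)\cong -L(r,Q+\epsilon)$ when $Q$ is even so that $s=Q+\epsilon$, and using $s=Q^2$ directly when $Q$ is odd), whereas you list all four residues $\pm Q^{\pm 2}\pmod r$ and test the odd ones; these are just two bookkeeping choices for the same argument, and both land on the same four surviving $(\gamma,n)$.

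One imprecision in your write-up: you claim that dividing $2r\pm 1$ by each odd residue $s$ gives ``a constant of absolute value at most $3$.'' That is true for the linear residues (e.g.\ $s=Q+\epsilon$, $s=Q-1$, $s=Q+2$), which is what drives the bound, but it is false for the quadratic ones: dividing $2r\pm 1=2Q^2+2Q+2\epsilon\pm 1$ by $s=Q^2+1$ leaves remainder $2Q\pm 1$, and by $s=Q^2$ or $s=Q^2-3$ leaves a remainder that is linear in $Q$, not constant. The conclusion is unaffected (the divisor grows quadratically and the remainder only linearly, so divisibility still fails once $Q$ is past the small cases you check by hand), but the argument as written does not cover those residues. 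Also, in the final mirror step it would be cleaner to simply cite the orientation-reversal discussion immediately preceding the lemma rather than Lemma~\ref{lem:mirror-braid}, which concerns the braid $\beta$ rather than $\gamma$; both get you to the same place, but the former is what the paper actually uses.
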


\begin{table}
\[ \arraycolsep=1em
\begin{array}{c||cc|cc|cc}
\gamma & T_{2,3} & T_{-2,3} & T_{3,4} & T_{-3,4} & T_{4,5} & T_{-4,5} \\[0.25em]
\hline &&&&&&\\[-0.75em]
n & 5,6 & -6,-7 & 12 & -13 & 19 & -20
\end{array}
\]
\caption{Possible torus knots $\gamma$ and the associated values of $n$ for which $\dcover(L^\beta)$ is $\frac{2n+1}{2}$-surgery on $\gamma$, as tabulated in Lemma~\ref{lem:possible-gamma-torus}.}
\label{fig:gamma-table}
\end{table}

\begin{proof}
Lemma~\ref{lem:possible-gamma} says that $\gamma \cong T_{p,q}$ for some $p$ and $q$, and we have argued that if $P=|p|$ and $Q=|q|$ then $P=Q\pm1$; without loss of generality, we write $P=Q+1 \geq 3$.  We consider each parity of $P$ separately, and determine in each case which lens space $S^3_{(PQ\pm1)/Q^2}(U)$ must arise as the branched double cover of a 3-braid.  Up to orientation, we know that the corresponding $S^3_{(pq\pm1)/q^2}(U)$ is either $n$-surgery (i.e., $\dcover(L_0^\beta)$) or $(n+1)$-surgery (i.e., $\dcover(L_1^\beta)$) on $\gamma$, so the value of $n$ follows immediately and then the precise lens spaces are determined by the relations
\begin{align*}
S^3_{pq\pm1}(T_{p,q}) &\cong S^3_{(pq\pm1)/q^2}(U), &
S^3_{pq}(T_{p,q}) &= S^3_{p/q}(U) \# S^3_{q/p}(U)
\end{align*}
and the relations $S^3_{r/s}(U) \cong S^3_{r/(s+kr)}(U)$ and $S^3_{r/s}(U) \cong -S^3_{-r/s}(U)$ for all $r,s,k$.

\vspace{1em}
\noindent \underline{Case 1}:
$P$ is odd.  Then $Q$ is even, so if $\epsilon = \pm1$ then
\[ L(PQ+\epsilon, Q^2) = L(Q^2+Q+\epsilon, Q^2) \cong -L(Q^2+Q+\epsilon, Q+\epsilon) \]
and $Q+\epsilon$ is odd.  According to Murasugi's result, and in particular Remark~\ref{rem:murasugi-corollary}, it follows that
\[ s = Q+\epsilon \]
divides one of
\[ 2r \pm 1 = 2(Q^2+Q+\epsilon) \pm 1, \]
hence it also divides
\[ (2r \pm 1 - 2s) - 2s(Q-\epsilon) = (2Q^2 \pm 1) - 2(Q^2-1) = 2\pm1. \]
Thus $s$ must be either $1$ or $3$.  Then $2 \leq Q = s-\epsilon$ says that $(P,Q)$ is either $(3,2)$ or $(5,4)$.  We determine the following possibilities:
\begin{itemize}
\item If $s=1$ then $(P,Q)=(3,2)$ and $\epsilon = -1$, so the lens space in question is $L(5,4)$.

\item If $s=3$ and $\epsilon=+1$ then $(P,Q) = (3,2)$ and the lens space is $L(7,4)$.

\item If $s=3$ and $\epsilon=-1$ then $(P,Q) = (5,4)$ and the lens space is $L(19,16)$.
\end{itemize}

\vspace{1em}
\noindent\underline{Case 2}:
$P$ is even.  Then $Q$ is odd, so if $\epsilon=\pm1$ then 
\[ L(PQ+\epsilon, Q^2) = L(Q^2+Q+\epsilon, Q^2) \]
(with $Q^2$ odd) arises as the branched double cover of a 3-braid closure if
\[ s = Q^2 \quad\text{divides}\quad 2r\pm1 = 2(Q^2+Q+\epsilon)\pm1. \]
This is equivalent to $Q^2$ dividing $2Q + (2\epsilon \pm1) \leq 2Q+3$, but given that $Q$ is odd and $Q \geq 2$, we have $Q^2 > 2Q+3$ unless $Q=3$.  So $(P,Q) = (4,3)$ and $\epsilon=1$, and the lens space in question must be $L(13,9)$.

\vspace{1em}
This completes the identification of the lens spaces in question when $\gamma = T_{p,q}$ and $p,q$ are both positive.  If one of $p$ and $q$ is negative, then we can apply the same argument to the mirror of $\gamma$ to determine the value of $-n$ and the proposition follows.
\end{proof}

In fact, we can rule out most of the pairs $(\gamma,n)$ appearing in Lemma~\ref{lem:possible-gamma-torus} as well.

\begin{lemma} \label{lem:gamma-trefoil}
If $\gamma$ is a nontrivial torus knot, then $(\gamma,n,k+\ell,e)$ is either
\[ (T_{2,3},5,-2,1) \quad\text{or}\quad (T_{-2,3},-6,-1,0). \]
\end{lemma}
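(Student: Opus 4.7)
The plan is to run equation \eqref{eq:beta-trace-constraint} against each of the eight candidate pairs $(\gamma,n)$ tabulated in Lemma~\ref{lem:possible-gamma-torus} and show that only two survive with integer $k+\ell$ and $e\in\{0,1\}$. First, for each candidate I would put $\dcover(L^\beta) \cong S^3_{(2n+1)/2}(\gamma)$ into the normalized form $S^3_{p'/q'}(U)$ with $0 < q' \leq p'$. Moser's formula gives $S^3_{pq \pm 1/2}(T_{p,q}) \cong L(2pq \pm 1,\, 2q^2)$; for the negative torus knots $T_{-P,Q}$ I would first reverse orientation, using $S^3_{(2n+1)/2}(T_{-P,Q}) \cong -S^3_{-(2n+1)/2}(T_{P,Q})$, and then apply the identification $-L(p,q) \cong L(p,-q)$ to land in standard form.

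Next, for each normalized $(p',q')$ I would compute an inverse $\bar{q'}$ satisfying $q'\bar{q'} \equiv 1 \pmod{p'}$ and plug into
\[
(-1)^e(q' + \bar{q'} + (k+\ell)p') \;=\; 2 \pm n,
\]
the relation produced by combining \eqref{eq:trace-rho-beta} with Lemma~\ref{lem:bdc-trace}. This is a linear Diophantine equation in $k+\ell$ once $e$ is chosen, and it is solvable precisely when the right-hand side is congruent to $\pm(q'+\bar{q'})$ modulo $p'$.

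The main obstacle, such as it is, is purely arithmetic bookkeeping: I expect the six discarded cases to fail on congruence grounds. Concretely, for $(T_{2,3},6)$ the lens space is $L(13,5)$, so $q'+\bar{q'} = 5+8 = 13$ and the constraint demands that $13 \mid 2 \pm 6$, which is impossible; identical congruence obstructions ($13 \mid 13(1+k+\ell) = -5$ or $9$) eliminate $(T_{-2,3},-7)$, while $(T_{\pm3,4},\pm12,\mp13)$ fails with $25 \nmid 2 \pm 12$ (normalized lens spaces $L(25,7)$ and $L(25,18)$, for which $q'+\bar{q'}=25$) and $(T_{\pm4,5},\pm19,\mp20)$ fails with analogous divisibility issues against the residues $q'+\bar{q'}=43$ or $35$ modulo $p'=39$.

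Finally, for the two surviving cases I would solve the constraint explicitly: for $(T_{2,3},5)$ the relevant lens space is $L(11,7)$, so $q'+\bar{q'}=7+8=15$, forcing $e=1$ and $15+11(k+\ell)=-7$, i.e.\ $(k+\ell,e)=(-2,1)$; for $(T_{-2,3},-6)$ one finds $L(11,4)$, so $q'+\bar{q'}=4+3=7$, forcing $e=0$ and $7+11(k+\ell)=-4$, i.e.\ $(k+\ell,e)=(-1,0)$. These are precisely the two tuples asserted by the lemma.
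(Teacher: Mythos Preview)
Your proposal is correct and follows essentially the same approach as the paper: both tabulate the eight candidate pairs $(\gamma,n)$ from Lemma~\ref{lem:possible-gamma-torus}, normalize $\dcover(L^\beta)$ as a lens space $L(p,q)$, and test the trace constraint \eqref{eq:beta-trace-constraint} as a congruence modulo $p$. Your lens space representatives occasionally differ from the paper's (e.g.\ you write $L(11,7)$ where the paper has $L(11,8)$), but since these differ by replacing $q$ with $\bar q$ the sum $q+\bar q$ is unaffected and the arithmetic goes through identically.
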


\begin{proof}
In Table~\ref{fig:gamma-table-traces} we tabulate the possible pairs $(\gamma,n)$ from Lemma~\ref{lem:possible-gamma-torus}, together with
\begin{itemize}
\item the corresponding lens spaces
\[ \dcover(L^\beta) \cong S^3_{(2n+1)/2}(\gamma) \cong L(p,q) := S^3_{p/q}(U) \]
for some integers $p$ and $q$;
\item the integers $p$ and $q$, as well as $\bar{q}$ such that $q\cdot\bar{q} \equiv 1 \pmod{p}$; and
\item the resulting trace of $\rho(\beta)$, as determined by \eqref{eq:trace-rho-beta}, given that Lemma~\ref{lem:rho-beta-from-l-beta} says that $\rho(\beta)$ must have one of the two forms \eqref{eq:rho-beta-factor-1} or \eqref{eq:rho-beta-factor-2}.
\end{itemize}
\begin{table}
\[ \arraycolsep=0.75em
\begin{array}{ccccccc}
\gamma & n & \dcover(L^\beta) & p & q & \bar{q} & \tr \rho(\beta) \\[0.25em] \hline \\[-0.75em]
T_{2,3} & 5 & L(11,8) & 11 & 8 & 7 & (-1)^e(15+11(k+\ell)) \\[0.25em]
T_{-2,3} & -6 & L(11,3) & 11 & 3 & 4 & (-1)^e(7+11(k+\ell)) \\[0.25em] \hline \\[-0.75em]

T_{2,3} & 6 & L(13,8) & 13 & 8 & 5 & (-1)^e(13+13(k+\ell)) \\[0.25em]
T_{-2,3} & -7 & L(13,5) & 13 & 5 & 8 & (-1)^e(13+13(k+\ell)) \\[0.25em] \hline \\[-0.75em]

T_{3,4} & 12 & L(25,18) & 25 & 18 & 7 & (-1)^e(25+25(k+\ell)) \\[0.25em]
T_{-3,4} & -13 & L(25,7) & 25 & 7 & 18 & (-1)^e(25+25(k+\ell)) \\[0.25em] \hline \\[-0.75em]

T_{4,5} & 19 & L(39,32) & 39 & 32 & 11 & (-1)^e(43+39(k+\ell)) \\[0.25em]
T_{-4,5} & -20 & L(39,7) & 39 & 7 & 28 & (-1)^e(35+39(k+\ell))
\end{array} \]
\caption{Possible values of $\tr \rho(\beta)$ for each torus knot $\gamma$ and integer $n$.}
\label{fig:gamma-table-traces}
\end{table}
The lens spaces $\dcover(L^\beta)$ in Table~\ref{fig:gamma-table-traces} are determined by the formulas
\[ S^3_{(2rs\pm 1)/2}(T_{r,s}) \cong S^3_{(2rs\pm1)/(2r^2)}(U), \]
which again follow from \cite{moser} or \cite{gordon}.

Lemma~\ref{lem:bdc-trace} tells us that $\tr \rho(\beta) = 2\pm n$, so we inspect Table~\ref{fig:gamma-table-traces} to see whether this is possible.  We have
\begin{align*}
(\gamma,n) &= (T_{2,3},6): & \tr \rho(\beta) &\equiv 0 \pmod{13}, & 2\pm n &\equiv 8,9\pmod{13} \\
(\gamma,n) &= (T_{3,4},12): & \tr \rho(\beta) &\equiv 0 \pmod{25}, & 2\pm n &\equiv 14,15\pmod{25} \\
(\gamma,n) &= (T_{4,5},19): & \tr \rho(\beta) &\equiv 4,35 \pmod{39}, & 2\pm n &\equiv 21,22\pmod{39} %\\
\end{align*}
and the computations for $(T_{-2,3},-7)$, $(T_{-3,4},-13)$, and $(T_{-4,5},-20)$ are identical, so there is no solution in any of these cases.  This leaves only
\begin{align*}
(\gamma,n) &= (T_{2,3},5): & (-1)^e(15+11(k+\ell)) = 2\pm 5
\end{align*}
with solution $(k+\ell,e) = (-2,1)$, and
\begin{align*}
(\gamma,n) &= (T_{-2,3},-6): & (-1)^e(7+11(k+\ell)) = 2\pm (-6)
\end{align*}
with solution $(k+\ell,e) = (-1,0)$.
\end{proof}

\begin{proposition} \label{prop:gamma-lht}
If $\gamma = T_{-2,3}$, then up to reversal, there is some integer $a$ such that
\[ y^a \beta y^{-a} = x^3 y^{-1} x^2 y. \]
\end{proposition}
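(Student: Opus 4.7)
The plan is to specialize the framework of Subsection~\ref{ssec:bdc-beta} to $\gamma = T_{-2,3}$, following closely the strategy of Proposition~\ref{prop:gamma-unknot} together with Lemma~\ref{lem:pin-down-d}. First I would fix the parameters $(n,k+\ell,e)=(-6,-1,0)$ using Lemma~\ref{lem:gamma-trefoil}, and read off from Table~\ref{fig:gamma-table-traces} that $\dcover(L^\beta)\cong L(11,3)$. This places us in the setting of Lemma~\ref{lem:rho-beta-from-l-beta} with $(p,q,\bar q,r)=(11,3,4,1)$, yielding two possible forms for $\rho(\beta)$ controlled by the matrices $\left(\begin{smallmatrix}4&11\\1&3\end{smallmatrix}\right)$ and $\left(\begin{smallmatrix}3&11\\1&4\end{smallmatrix}\right)$. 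A direct continued-fraction calculation factors these as $\rho(x^3y^{-1}x^2)$ and $\rho(x^2y^{-1}x^3)$, respectively.

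Next I would apply Lemma~\ref{lem:kernel-rho} to lift each factorization to a braid that is unique up to multiplication by $\Delta^{4d}$, and then conjugate by $y^k$ and use $k+\ell=-1$ to obtain
\[ y^k\beta y^{-k}=\Delta^{4d}x^3y^{-1}x^2y \quad\text{or}\quad y^k\beta y^{-k}=\Delta^{4d}x^2y^{-1}x^3y. \]
A short calculation using centrality of $\Delta^{4d}$ shows that the reverse of the first braid is conjugate by $y$ in $B_3$ to the second braid, with the same value of $d$, so by Lemmas~\ref{lem:reverse-braid} and \ref{lem:y-conjugation} only the first case needs to be analyzed, up to braid reversal.

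The main obstacle will be ruling out all $d\neq 0$. Here I would mimic the strategy of Lemma~\ref{lem:pin-down-d}. By \eqref{eq:dcover-lbeta-i} we have $\dcover(L_0^\beta)=\dcover(\widehat\beta)\cong S^3_{-6}(T_{-2,3})$, which \cite{moser} identifies with the nontrivial connected sum of lens spaces $L(2,1)\#L(3,1)$ up to orientation. Kim--Tollefson \cite{kim-tollefson} then shows that $\widehat\beta$ is itself a nontrivial connected sum $L_1 \# L_2$, and the braid index theorem of Birman--Menasco \cite{birman-menasco-iv} forces each $L_i$ to be the closure of a 2-braid; combining these constraints with Hodgson--Rubinstein \cite{hodgson-rubinstein} applied to each summand pins down $\widehat\beta$ as the connected sum of a Hopf link and a trefoil of specific handedness. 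Finally, invoking the Birman--Menasco classification of 3-braids with prescribed closure \cite{birman-menasco-iii}, I would enumerate the conjugacy classes of 3-braid representatives of this connected sum and compare the exponent sum $12d+5$ (equivalently, the word length $12d+7$) of $\Delta^{4d}x^3y^{-1}x^2y$ with those of the known representatives, which should force $d=0$. Setting $a=k$ then gives $y^a\beta y^{-a}=x^3y^{-1}x^2y$ and completes the proof.
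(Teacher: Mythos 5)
Your proof is correct and follows the paper's framework for the first four steps (fixing $(n,k+\ell,e)=(-6,-1,0)$, identifying $\dcover(L^\beta)\cong L(11,3)$, factoring the two candidate matrices as $\rho(x^3y^{-1}x^2)$ and $\rho(x^2y^{-1}x^3)$, and reducing to one family via braid reversal). Where you depart from the paper is in pinning down $d$: you examine the $0$-resolution $L^\beta_0 = \widehat{\beta}$, whose branched double cover is $S^3_{-6}(T_{-2,3}) \cong \pm\bigl(L(2,1)\#L(3,1)\bigr)$, and then invoke Kim--Tollefson plus Hodgson--Rubinstein on each prime summand plus the braid index theorem to recognize $\widehat\beta$ as a connected sum of a Hopf link and a trefoil, and finally the composite case of the Birman--Menasco 3-braid classification to constrain the exponent sum. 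The paper instead examines the $1$-resolution $L^\beta_1 = \widehat{\beta y^{-1}}$, whose branched double cover is the single lens space $L(5,1)$, so Hodgson--Rubinstein alone identifies $\widehat{\beta y^{-1}}$ as $T_{2,5}$; this keeps the argument in the \emph{prime} torus-knot case of Birman--Menasco, avoiding Kim--Tollefson and the composite classification. Both routes lead to an exponent-sum constraint that only $d=0$ can satisfy (in your version, $a+b\in\{-5,-1,1,5\}$ while $12d+5$ must match, forcing $d=0$), so your argument goes through; the paper's choice of resolution is just the lighter tool.

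One small terminological correction: you equate the exponent sum $12d+5$ with ``the word length $12d+7$,'' but these are not interchangeable. The latter is the literal letter count of the unreduced word $\Delta^{4d}x^3y^{-1}x^2y$ (and only when $d\geq 0$); the conjugacy invariant you actually want to compare against the Birman--Menasco representatives is the exponent sum $12d+5$, which is what the paper denotes $|\beta|$ and which you do compute correctly.
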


\begin{proof}
In this case we have $(n,k+\ell,e) = (-6,-1,0)$ and $\dcover(L^\beta) = L(11,3)$ by Lemma~\ref{lem:gamma-trefoil}, so we can write
\[ \rho(y^k\beta y^{-k}) = \rho(\Delta^{4d}) \begin{pmatrix} 4 & 11 \\ 1 & 3 \end{pmatrix} \rho(y) \quad\text{or}\quad \rho(\Delta^{4d}) \begin{pmatrix} 3 & 11 \\ 1 & 4 \end{pmatrix} \rho(y) \]
by \eqref{eq:rho-beta-factor-1} and \eqref{eq:rho-beta-factor-2}.  We compute that 
\begin{align*}
\begin{pmatrix} 4 & 11 \\ 1 & 3 \end{pmatrix} &= \begin{pmatrix} 1 & 3 \\ 0 & 1 \end{pmatrix} \begin{pmatrix} 1 & 0 \\ 1 & 1 \end{pmatrix} \begin{pmatrix} 1 & 2 \\ 0 & 1 \end{pmatrix} = \rho(x^3 y^{-1} x^2) \\
\begin{pmatrix} 3 & 11 \\ 1 & 4 \end{pmatrix} &= \begin{pmatrix} 1 & 2 \\ 0 & 1 \end{pmatrix} \begin{pmatrix} 1 & 0 \\ 1 & 1 \end{pmatrix} \begin{pmatrix} 1 & 3 \\ 0 & 1 \end{pmatrix} = \rho(x^2 y^{-1} x^3),
\end{align*}
and since $\ker(\rho)$ is generated by $\Delta^4$ it follows that
\[ y^k\beta y^{-k} = \Delta^{4d} x^3y^{-1}x^2y \text{ or } \Delta^{4d} x^2y^{-1}x^3y \]
for some $d\in \Z$.  These two families of braids are reverses of each other, since 
\[ \beta = \Delta^{4d} y^{-k} x^2 y^{-1} x^3 y^{k+1} \quad\Longrightarrow\quad r(\beta) = \Delta^{4d} y^{k+1} (x^3y^{-1} x^2 y)y^{-(k+1)}, \]
so we need only consider the first family, namely
\[ \beta = \Delta^{4d} y^{-k} x^3 y^{-1} x^2 y^{k+1}. \]

In order to determine $d$, we recall that the link $L^\beta_1$ from Figure~\ref{fig:E-T24-resolutions} is the closure of $\beta y^{-1}$, and by \eqref{eq:dcover-lbeta-i} we have
\[ \dcover(L^\beta_1) \cong S^3_{n+1}(\gamma) = S^3_{-5}(T_{-2,3}) \cong S^3_{-5/4}(U) \cong L(5,1). \]
As a lens space, this must be the branched double cover of a unique knot \cite{hodgson-rubinstein}, so we have $\widehat{\beta y^{-1}} \cong T_{2,5}$.  Then Birman and Menasco's classification theorem from \cite{birman-menasco-iii} says that $\beta y^{-1}$ must be conjugate to $x^5 y^{\pm1}$, so that $\beta$ has exponent sum
\[ \varepsilon(\beta) = \varepsilon(\beta y^{-1}) + 1 = 6 \pm 1. \]
On the other hand, we can read off the explicit form for $\beta$ above that $\varepsilon(\beta) = 12d + 5$, so we must have $d=0$.
\end{proof}

\begin{proposition} \label{prop:gamma-rht}
If $\gamma = T_{2,3}$, then up to reversal, there is some integer $a$ such that
\[ y^a \beta y^{-a} = x^{-3}y x^{-2}. \]
\end{proposition}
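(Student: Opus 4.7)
The plan is to parallel the proof of Proposition~\ref{prop:gamma-lht}. By Lemma~\ref{lem:gamma-trefoil}, when $\gamma = T_{2,3}$ we have $(n, k+\ell, e) = (5, -2, 1)$, and Table~\ref{fig:gamma-table-traces} identifies $\dcover(L^\beta) \cong L(11,8)$.  Applying Lemma~\ref{lem:rho-beta-from-l-beta} with $p=11$, $q=8$, $\bar{q}=7$, and $r=5$ then yields
\[ \rho(y^k \beta y^{-k}) = \rho(\Delta^{4d+2}) \begin{pmatrix} 7 & 11 \\ 5 & 8 \end{pmatrix} \rho(y^2) \quad\text{or}\quad \rho(\Delta^{4d+2}) \begin{pmatrix} 8 & 11 \\ 5 & 7 \end{pmatrix} \rho(y^2) \]
for some $d \in \Z$, according to the two possible choices of middle matrix.

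The next step, and the principal computational hurdle, is to realize each of these right-hand sides as $\rho$ of an explicit 3-braid word.  A direct matrix computation gives
\[ \rho(y^{-1}x^{-3}yx^{-2}y) = \begin{pmatrix} 15 & -11 \\ 11 & -8 \end{pmatrix} \quad\text{and}\quad \rho(y^{-1}x^{-2}yx^{-3}y) = \begin{pmatrix} 14 & -11 \\ 9 & -7 \end{pmatrix}, \]
which match the right-hand sides above (using $\rho(\Delta^{4d+2}) = -I$ modulo $\Delta^4$). Since $\ker\rho = \langle\Delta^4\rangle$ by Lemma~\ref{lem:kernel-rho}, these identities determine $\beta$ up to a factor of $\Delta^{4d}$; conjugating further by $y^{k+1}$ then produces the two families
\[ y^{k+1}\beta y^{-(k+1)} = \Delta^{4d}\, x^{-3}yx^{-2} \quad\text{or}\quad \Delta^{4d}\, x^{-2}yx^{-3}. \]
These are interchanged by braid reversal, since $r(x^{-3}yx^{-2}) = x^{-2}yx^{-3}$, so by Lemma~\ref{lem:reverse-braid} it suffices to analyze the first.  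Unlike in Proposition~\ref{prop:gamma-lht}, the middle matrices here admit no clean factorization of the form $\rho(x^{a_1}y^{-b_1}x^{a_2})$, so some guided experimentation is needed to produce the five-letter words above; this is the most delicate point of the proof.

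Finally, I pin down $d$ using the branched double cover $\dcover(L_0^\beta) \cong S^3_5(T_{2,3}) \cong L(5,4) \cong -L(5,1)$ (by Moser).  By Hodgson--Rubinstein this lens space is the branched double cover of a unique knot, namely $T_{-2,5}$, so $\widehat{\beta} \cong T_{-2,5}$. Then Birman--Menasco's classification of 3-braid closures forces $\beta$ to be conjugate to $x^{-5}y^{\pm 1}$, of word length $-4$ or $-6$.  Since $|\Delta^{4d}x^{-3}yx^{-2}| = 12d - 4$, we must have $d = 0$, and the proposition follows.
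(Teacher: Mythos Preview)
Your argument is correct and follows the same overall architecture as the paper's proof: invoke Lemma~\ref{lem:gamma-trefoil} for $(n,k+\ell,e)=(5,-2,1)$ and $\dcover(L^\beta)=L(11,8)$, apply Lemma~\ref{lem:rho-beta-from-l-beta}, identify the braid via $\ker\rho=\langle\Delta^4\rangle$, reduce the two cases to one by reversal, and pin down the power of $\Delta^4$ using Hodgson--Rubinstein plus Birman--Menasco on $\widehat\beta\cong T_{-2,5}$.

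The one genuine difference is in how you realize the matrices as $\rho$ of a braid word. The paper factors $\left(\begin{smallmatrix}7&11\\5&8\end{smallmatrix}\right)$ and $\left(\begin{smallmatrix}8&11\\5&7\end{smallmatrix}\right)$ by continued fractions as $\rho(xy^{-2}xy^{-1}x)$ and $\rho(xy^{-1}xy^{-2}x)$, obtains $y^k\beta y^{-k}=\Delta^{4d+2}xy^{-2}xy^{-1}xy^2$ (word length $12d+8$, forcing $d=-1$), and then spends several lines of braid-relation manipulation to simplify $\Delta^{-2}xy^{-2}xy^{-1}xy^2$ down to $y^{-1}x^{-3}yx^{-2}y$. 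You instead verify directly that $\rho(y^{-1}x^{-3}yx^{-2}y)$ and $\rho(y^{-1}x^{-2}yx^{-3}y)$ already equal the required matrices, landing immediately on $y^{k+1}\beta y^{-(k+1)}=\Delta^{4d}x^{-3}yx^{-2}$ with word length $12d-4$, hence $d=0$. Your route is shorter and bypasses the braid simplification entirely; the paper's route is more algorithmic (the continued-fraction factorization always succeeds without guesswork) but then has to pay for it with the reduction step. Your remark that the middle matrices ``admit no clean factorization of the form $\rho(x^{a_1}y^{-b_1}x^{a_2})$'' is accurate---the paper needs five factors rather than three here, which is exactly why its subsequent simplification is longer than in Proposition~\ref{prop:gamma-lht}.
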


\begin{proof}
In this case we have $(n,k+\ell,e) = (5,-2,1)$ and $\dcover(L^\beta) = L(11,8)$ by Lemma~\ref{lem:gamma-trefoil}, so we can write
\[ \rho(y^k\beta y^{-k}) = \rho(\Delta^{4d+2}) \begin{pmatrix} 7 & 11 \\ 5 & 8 \end{pmatrix} \rho(y^2) \quad\text{or}\quad \rho(\Delta^{4d+2}) \begin{pmatrix} 8 & 11 \\ 5 & 7 \end{pmatrix} \rho(y^2) \]
by \eqref{eq:rho-beta-factor-1} and \eqref{eq:rho-beta-factor-2}.  We compute that 
\begin{align*}
\begin{pmatrix} 7 & 11 \\ 5 & 8 \end{pmatrix} &= \begin{pmatrix} 1 & 1 \\ 0 & 1 \end{pmatrix} \begin{pmatrix} 1 & 0 \\ 2 & 1 \end{pmatrix} \begin{pmatrix} 1 & 1 \\ 0 & 1 \end{pmatrix} \begin{pmatrix} 1 & 0 \\ 1 & 1 \end{pmatrix} \begin{pmatrix} 1 & 1 \\ 0 & 1 \end{pmatrix} = \rho(xy^{-2}xy^{-1}x) \\
\begin{pmatrix} 8 & 11 \\ 5 & 7 \end{pmatrix} &= \begin{pmatrix} 1 & 1 \\ 0 & 1 \end{pmatrix} \begin{pmatrix} 1 & 0 \\ 1 & 1 \end{pmatrix} \begin{pmatrix} 1 & 1 \\ 0 & 1 \end{pmatrix} \begin{pmatrix} 1 & 0 \\ 2 & 1 \end{pmatrix} \begin{pmatrix} 1 & 1 \\ 0 & 1 \end{pmatrix} = \rho(xy^{-1}xy^{-2}x),
\end{align*}
so now since $\Delta^4$ generates $\ker(\rho)$ we have
\[ y^k\beta y^{-k} = \Delta^{4d+2} xy^{-2}xy^{-1}xy^2 \text{ or } \Delta^{4d+2} xy^{-1}xy^{-2}xy^2 \]
for some $d\in\Z$.

In order to determine $d$, we note that the braid closure $\widehat\beta = L^\beta_0$ satisfies
\[ \dcover(L^\beta_0) \cong S^3_5(T_{2,3}) \cong S^3_{5/4}(U) \cong \dcover(T_{-2,5}), \]
so $L^\beta_0 \cong T_{-2,5}$ since every lens space is the branched double cover of a unique knot \cite{hodgson-rubinstein}.  Then $\beta$ must be conjugate to either $x^{-5}y$ or $x^{-5}y^{-1}$ \cite{birman-menasco-iii}, so its exponent sum is $\varepsilon(\beta) = -5 \pm 1$.  But in either of the above families we have $\varepsilon(\beta) = 12d+8$, so in fact $d=-1$.  Moreover, if we reverse the second family above then we get
\[ \beta = \Delta^{-2} y^{-k} xy^{-1}xy^{-2}xy^{k+2}  \quad\Longrightarrow\quad r(\beta) = y^{k+2} ( \Delta^{-2} xy^{-2}xy^{-1}x y^2 ) y^{-(k+2)}, \]
so up to reversal it suffices to consider only the family of braids
\[ y^k\beta y^{-k} = \Delta^{-2} xy^{-2}xy^{-1}xy^2. \]
We can simplify this somewhat by writing
\begin{align*}
y^k \beta y^{-k} &= y^{-1}x^{-1}y^{-1}x^{-1} y^{-1}x^{-1} \cdot xy^{-2}xy^{-1}xy^2 \\
&= y^{-1}x^{-1}y^{-1}\cdot \underbrace{x^{-1}y^{-3}x}_{=yx^{-3}y^{-1}} \cdot y^{-1}xy^2 \\
&= y^{-1}x^{-4}y^{-2}xy^2 \\ % so this gives x^{-4} y^{-2} xy up to conjugation
&= y^{-1}x^{-3} y \cdot \underbrace{y^{-1} x^{-1} y^{-2}}_{=x^{-2}y^{-1}x^{-1}} \cdot xy^2 \\
&= y^{-1} x^{-3} y x^{-2} y
\end{align*}
and so
\[ y^{k+1} \beta y^{-(k+1)} = x^{-3} y x^{-2} \]
as claimed.
\end{proof}

We can now complete the main result of this subsection.

\begin{proof}[Proof of Proposition~\ref{prop:gamma-torus}]
Lemmas~\ref{lem:possible-gamma-torus} and \ref{lem:gamma-trefoil} tell us that if $\gamma$ is knotted then it must be a trefoil.  If it is a left-handed trefoil then Proposition~\ref{prop:gamma-lht} says that up to reversal, $\beta$ is conjugate to $x^3 y^{-1} x^2 y$ by some power of $y$.  Otherwise it is a right-handed trefoil, so by Proposition~\ref{prop:gamma-rht}, either $\beta$ or its reverse is conjugate to $x^{-3} y x^{-2}$ by some power of $y$.
\end{proof}

\subsection{Some knots arising from specific braids} \label{ssec:E24-examples}

In this subsection we consider several families of 3-braids $\beta$ that arise in Propositions~\ref{prop:gamma-unknot} and \ref{prop:gamma-torus}, producing unknots when inserted into the tangle $\tau$ of Figure~\ref{fig:E-T24-isotopy}.  We will determine the corresponding nearly fibered knots $K = K_\beta$ which arise as lifts of $\kappa$ to $\dcover(U) \cong S^3$.  The results are summarized in Table~\ref{fig:unknot-braid-table};
\begin{table}
\[ \arraycolsep=1em
\begin{array}{c||ccc}
\beta & y^a x^{-1} y^a & y^ax^ny^{-1}xy^{1-a} & y^ax^3y^{-1}x^2y^{1-a} \\[0.25em] \hline \\[-0.75em]
K_\beta & 5_2 & P(-3,3,2n+1) & 15n_{43522}
\end{array}
\]
\caption{Some braids $\beta$ such that $\tau \cup \beta$ is unknotted, and the resulting knots $K=K_\beta$.}
\label{fig:unknot-braid-table}
\end{table}
the proofs in each case occupy Propositions~\ref{prop:recover-5_2}, \ref{prop:recover-pretzels}, and \ref{prop:recover-15n43522}, respectively.

\begin{proposition} \label{prop:recover-5_2}
The family of braids $\beta = y^a x^{-1} y^{-a}$ produces $K_\beta \cong 5_2$.
\end{proposition}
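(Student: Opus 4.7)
By Lemma~\ref{lem:y-conjugation}, we have $K_{y^a x^{-1} y^{-a}} \cong K_{x^{-1}}$, so it suffices to treat the case $\beta = x^{-1}$. The plan is to insert $\beta = x^{-1}$ into the rightmost diagram in Figure~\ref{fig:E-T24-isotopy}, draw the resulting picture of $\tau \cup \beta \cup \kappa \subset S^3$ explicitly, verify that $U = \tau\cup x^{-1}$ is unknotted (so the construction of Lemma~\ref{lem:E24-as-branched-cover} applies and $\dcover(U) \cong S^3$), and then identify the lift $\tilde\kappa = K_\beta$ in this branched double cover as the knot $5_2$.

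The first step is to simplify the diagram of $U$ while \emph{tracking $\kappa$}. With $\beta = x^{-1}$ the three strands of $\tau$ meet only through a single extra half-twist between the top two strands, so a short sequence of Reidemeister moves carried out in the complement of $\kappa$ will reduce $U$ to a standard round unknot in $S^3$. Once $U$ has this standard form, the branched double cover $\dcover(U)$ is identified with $S^3$ via the quotient by a $180^\circ$ rotation, and $\kappa$ becomes a strongly invertible knot whose preimage $\tilde\kappa$ can be read off directly from the transformed diagram.

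The second step is to recognize the resulting diagram as $5_2$. I expect the lift, after a few further simplifications, to give a small-crossing diagram that is manifestly the standard one for $5_2$ (as pictured on the far left of Figure~\ref{fig:main-knots}). If the identification is not visually obvious, it can be clinched by computing a short list of invariants from the lifted diagram and matching them against the known invariants of $5_2$ (Seifert genus $1$, Alexander polynomial $2-3t+2t^{2}$, determinant $7$, non-fibered), together with the fact that the hypothesis forces $\tilde\kappa$ to be nearly fibered of genus $1$ with $|H_1(\dcover(\tilde\kappa))| = 7$: combined with Theorem~\ref{thm:main-hfk}, only the knot $5_2$ and its mirror occur in the relevant determinant-$7$ family of Table~\ref{fig:hfk-table}, and the mirror is excluded by the orientation of the diagram.

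The main obstacle is entirely pictorial: faithfully tracking $\kappa$ through the sequence of isotopies that straightens $U$, and then performing the branched double cover on a diagram where $\kappa$ sits in a potentially awkward position. Once the picture is drawn correctly the identification with $5_2$ is mechanical, so the bulk of the write-up will consist of a carefully labelled figure realizing this isotopy together with a short description of the double-cover computation.
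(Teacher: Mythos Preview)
Your primary approach is exactly the paper's: reduce to $\beta=x^{-1}$ via Lemma~\ref{lem:y-conjugation}, isotope $U\cup\kappa$ until $U$ bounds a planar disk, lift $\kappa$ to the branched double cover, and recognize the result as $5_2$. The paper carries this out in Figure~\ref{fig:braid-5_2} and the accompanying proof is only a few lines of prose describing that figure, so your plan is correct and essentially identical.

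One warning about your fallback: you propose to clinch the identification by invoking Theorem~\ref{thm:main-hfk} to say that the only genus-1 nearly fibered knot with determinant $7$ is $5_2$ up to mirroring. This is circular. Proposition~\ref{prop:recover-5_2} is one of the inputs to Theorem~\ref{thm:M_F-E24}, which in turn is an input to Theorem~\ref{thm:main-hfk}; see the proof outlines of both theorems. So you cannot appeal to the classification here. You must actually finish the pictorial identification (which is not hard once the diagram is drawn), or else use only invariants computed directly from the lifted diagram without invoking the main theorem.
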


\begin{proof}
By Lemma~\ref{lem:y-conjugation} it suffices to take $a=0$, so $\beta = x^{-1}$.  We insert this into the tangle $\tau$ from Figure~\ref{fig:E-T24-isotopy}, apply an isotopy so that $U = \tau \cup \beta$ bounds a planar disk and $\kappa$ winds around it, and then cut $\kappa$ open along that disk and glue two copies together to construct the lift $K_\beta = \tilde\kappa$.  This process is illustrated in Figure~\ref{fig:braid-5_2}, where we isotope $U \cup \kappa$ into a convenient position and then take the branched double cover with respect to $U$ at the last step; the resulting diagram of $\tilde\kappa$ is isotopic to $5_2$ as claimed.
\begin{figure}
\begin{tikzpicture}
\begin{scope} % initial tangle
\draw[linkred] (1.0,0.6) arc (90:270:0.15 and 0.6);
\draw[link] (0,0.3) to[out=180,in=180] ++(0,0.5) -- ++(1.4,0) to[out=0,in=0] ++(0,-0.5) -- ++(-0.4,0);
\draw[link] (0,0) to[out=180,in=270] ++(-0.6,0.6) to[out=90,in=180] ++(0.9,0.6) to[out=0,in=270,looseness=1] ++(0.4,0.2) to[out=90,in=0,looseness=1] ++(-0.4,0.2) to[out=180,in=90] ++(-1.2,-0.9) to[out=270,in=180] ++(0.9,-1);
\draw[link] (1,0) to[out=0,in=270] ++(1,0.6) to[out=90,in=0,looseness=1.4] ++(-1.3,0.6) to[out=180,in=270,looseness=1] ++(-0.4,0.2) to[out=90,in=180,looseness=1] ++(0.4,0.2) to[out=0,in=90,looseness=1.4] ++(1.6,-0.9) to[out=270,in=0] ++(-1.3,-1);
\draw[link] (0.7,1.4) to[out=270,in=0,looseness=1] ++(-0.4,-0.2) -- ++(-0.1,0); % redraw a crossing
\draw[linkred] (1.0,0.6) arc (90:-90:0.15 and 0.6) node[below,red,inner sep=2pt] {\small$\kappa$};
%\draw[very thick,fill=white] (0,-0.45) rectangle (1,0.45);
%\node at (0.5,0) {\Large$\beta$};
\draw[link,looseness=0.75] (0,0.3) to[out=0,in=180] ++(0.6,-0.3) -- ++(0.4,0);
\draw[link,looseness=0.75] (0,0) to[out=0,in=180] ++(0.6,0.3) -- ++(0.4,0);
\draw[link] (0,-0.3) -- ++(1,0);
\end{scope}

\begin{scope}[xshift=4cm] % isotopy step 1
\draw[linkred] (0.85,0.5) arc (90:270:0.3 and 0.55);
\draw[link] (0,0.3) to[out=180,in=180] ++(0,0.4) -- ++(1.2,0) to[out=0,in=0] ++(0,-0.3) -- ++(-0.2,0) to[out=180,in=180] ++(0,-0.2);
\draw[link] (1.3,0.9) to[out=180,in=270] (0.7,1.4) to[out=90,in=0,looseness=1] ++(-0.4,0.2) to[out=180,in=90] ++(-1.2,-0.9) to[out=270,in=180] ++(0.9,-1);
\draw[link] (1,0) to[out=0,in=270] ++(1,0.6) to[out=90,in=0,looseness=1.4] ++(-1.3,0.6) to[out=180,in=270,looseness=1] ++(-0.4,0.2) to[out=90,in=180,looseness=1] ++(0.4,0.2) to[out=0,in=90,looseness=1.4] ++(1.6,-0.9) to[out=270,in=0] ++(-1.3,-1);
\draw[link] (1.3,0.9) to[out=180,in=270] (0.7,1.4); % redraw a crossing
\draw[linkred] (0.85,0.5) arc (90:-90:0.3 and 0.55) node[below,red,inner sep=2pt] {\small$\kappa$};
\draw[link] (1,0.2) -- ++(0.3,0) to [out=0,in=270] ++(0.3,0.35) to[out=90,in=0] ++(-0.3,0.35);
%%\draw[very thick,fill=white] (0,-0.45) rectangle (1,0.45);
%%\node at (0.5,0) {\Large$\beta$};
\draw[link,looseness=0.75] (0,0.3) to[out=0,in=180] ++(0.6,-0.3) -- ++(0.4,0);
%\draw[link,looseness=0.75] (0,0) to[out=0,in=180] ++(0.6,0.3) -- ++(0.4,0);
\draw[link] (0,-0.3) -- ++(1,0);
\end{scope}

\begin{scope}[xshift=8cm] % isotopy step 2
\draw[link] (0.7,-0.1) -- ++(-0.7,0) to[out=180,in=270,looseness=1] ++(-0.5,0.7) to[out=90,in=180] ++(0.5,0.6);
\draw[linkred] (0.85,0.5) arc (90:270:0.3 and 0.55);
\draw[link] (0,0.3) to[out=180,in=180] ++(0,0.4) -- ++(1.2,0) to[out=0,in=0] ++(0,-0.3) -- ++(-0.2,0) to[out=180,in=180] ++(0,-0.2);
\draw[link] (1.3,0.9) to[out=180,in=270] (0.7,1.4) to[out=90,in=0,looseness=1] ++(-0.4,0.2) to[out=180,in=90] ++(-1.2,-0.9) to[out=270,in=180] ++(0.9,-1);
\draw[link] (0,1.2) -- ++(0.3,0) to[out=0,in=180,looseness=1] ++(0.8,0.4) to[out=0,in=90] ++(1.2,-0.9) to[out=270,in=0] ++(-1.3,-1);
%\draw[link] (1.3,0.9) to[out=180,in=270] (0.7,1.4); % redraw a crossing
\draw[linkred] (0.85,0.5) arc (90:-90:0.3 and 0.55) node[below,red,inner sep=2pt] {\small$\kappa$};
\draw[link] (1,0.2) -- ++(0.3,0) to [out=0,in=270] ++(0.3,0.35) to[out=90,in=0] ++(-0.3,0.35);
%%\draw[very thick,fill=white] (0,-0.45) rectangle (1,0.45);
%%\node at (0.5,0) {\Large$\beta$};
\draw[link,looseness=0.75] (0,0.3) to[out=0,in=180] ++(0.6,-0.2) -- ++(0.1,0) to[out=0,in=0,looseness=2] ++(0,-0.2);
%\draw[link,looseness=0.75] (0,0) to[out=0,in=180] ++(0.6,0.3) -- ++(0.4,0);
\draw[link] (0,-0.3) -- ++(1,0);
\end{scope}

\begin{scope}[yshift=-3cm,xshift=-0.25cm] % isotopy step 3
\draw[link] (0.7,-0.1) -- ++(-0.7,0) to[out=180,in=270,looseness=1] ++(-0.5,0.7) to[out=90,in=180] ++(0.5,0.6);
\draw[link] (1.6,-0.05) to[out=270,in=0,looseness=1] ++(-0.8,-0.25) -- ++(-0.5,0) to[out=180,in=180,looseness=2] ++(0,-0.3);
\draw[linkred] (0.85,0.5) arc (90:270:0.3 and 0.65);
\draw[link] (0,0.3) to[out=180,in=180] ++(0,0.4) -- ++(1.2,0) to[out=0,in=0] ++(0,-0.3) -- ++(-0.2,0) to[out=180,in=180] ++(0,-0.2);
\draw[link] (0,1.2) -- ++(0.3,0) to[out=0,in=180,looseness=1] ++(0.8,0.4) to[out=0,in=90] ++(1.2,-0.9) to[out=270,in=0] ++(-1.3,-1.3);
\draw[linkred] (0.85,0.5) arc (90:-90:0.3 and 0.65) node[below,red,inner sep=2pt] {\small$\kappa$};
\draw[link] (1,0.2) -- ++(0.3,0) to [out=0,in=90,looseness=1] ++(0.3,-0.25);
%%\draw[very thick,fill=white] (0,-0.45) rectangle (1,0.45);
%%\node at (0.5,0) {\Large$\beta$};
\draw[link,looseness=0.75] (0,0.3) to[out=0,in=180] ++(0.6,-0.2) -- ++(0.1,0) to[out=0,in=0,looseness=2] ++(0,-0.2);
\draw[link] (0.3,-0.6) -- ++(0.7,0);
\end{scope}

\begin{scope}[yshift=-3cm,xshift=3.25cm] % isotopy step 4
\draw[link] (0.7,-0.1) -- ++(-0.7,0) to[out=180,in=180,looseness=1.5] ++(0,-1.15) -- ++(1,0) to[out=0,in=0,looseness=2] ++(0,0.65);
\draw[link] (1.6,-0.05) to[out=270,in=0,looseness=1] ++(-0.8,-0.25) -- ++(-0.5,0) to[out=180,in=180,looseness=2] ++(0,-0.3);
\draw[linkred] (0.85,0.5) arc (90:270:0.3 and 0.65);
\draw[link] (0,0.3) to[out=180,in=180] ++(0,0.4) -- ++(1.2,0) to[out=0,in=0] ++(0,-0.3) -- ++(-0.2,0) to[out=180,in=180] ++(0,-0.2);
\draw[linkred] (0.85,0.5) arc (90:-90:0.3 and 0.65) node[below,red,inner sep=2pt] {\small$\kappa$};
\draw[link] (1,0.2) -- ++(0.3,0) to [out=0,in=90,looseness=1] ++(0.3,-0.25);
%%\draw[very thick,fill=white] (0,-0.45) rectangle (1,0.45);
%%\node at (0.5,0) {\Large$\beta$};
\draw[link,looseness=0.75] (0,0.3) to[out=0,in=180] ++(0.6,-0.2) -- ++(0.1,0) to[out=0,in=0,looseness=2] ++(0,-0.2);
\draw[link] (0.3,-0.6) -- ++(0.7,0);
\end{scope}

\begin{scope}[yshift=-3cm,xshift=6.25cm] % redraw so U is clearly a circle
\draw[link] (0.3,0.5) coordinate (ec) ellipse (0.5 and 1);
\draw[linkred] (ec) ++ (0.3,0.4) -- ++(0.3,0) to[out=0,in=0] ++(0,0.3) -- ++(-1,0) to[out=180,in=180] ++(0,-0.9) -- node[pos=0.3,above,inner sep=2pt,red] {\small$\kappa$} ++(1,0) to[out=0,in=0] ++(0,-0.3) -- ++(-0.3,0) to[out=180,in=180] ++(0,-0.3) -- ++(0.3,0) to[out=0,in=0] ++(0,0.9) -- ++(-0.3,0) to[out=180,in=180] ++(0,0.3);
\foreach \x/\y in {-40/-20,15/35,105/150} {
  \draw[link] (ec) ++(\x:0.5 and 1) arc (\x:\y:0.5 and 1); % fix ellipse crossings
}
\end{scope}

\begin{scope}[yshift=-3cm,xshift=8.75cm] % move kappa, step 1
\draw[link] (0.3,0.5) coordinate (ec) ellipse (0.5 and 1);
\draw[linkred] (ec) ++ (0.3,0.4) -- ++(0.3,0) to[out=0,in=0] ++(0,0.3) to[out=180,in=90,looseness=1] ++(-0.75,-0.6) -- ++(0,-0.6) to[out=270,in=180,looseness=1] ++(0.6,-0.6);
\draw[linkred] (ec) ++ (0.6,-0.5) -- ++(-0.3,0) to[out=180,in=180] ++(0,-0.3) -- ++(0.3,0) to[out=0,in=0] node[pos=0.9,above right,inner sep=1pt,red] {\small$\kappa$} ++(0,0.9) -- ++(-0.3,0) to[out=180,in=180] ++(0,0.3);
\draw[linkred] (ec) ++ (0.45,-1.1) to[out=0,in=0] ++(0,0.6); % fix kappa crossing
\foreach \x/\y in {-40/-20,15/35,255/295} {
  \draw[link] (ec) ++(\x:0.5 and 1) arc (\x:\y:0.5 and 1); % fix ellipse crossings
}
\end{scope}

\begin{scope}[yshift=-6.25cm,xshift=-0.25cm] % move kappa, step 2
\draw[link] (0.3,0.5) coordinate (ec) ellipse (0.5 and 1);
\draw[linkred] (ec) ++ (0.3,0.4) -- ++(0.3,0) to[out=0,in=0,looseness=1] ++(0,0.3) to[out=180,in=90,looseness=1] ++(-0.85,-0.5) -- ++(0,-0.6) to[out=270,in=180,looseness=1] ++(0.7,-0.7);
\draw[linkred] (ec) ++ (0.3,0.4) to[out=180,in=90,looseness=1] ++(-0.35,-0.4) -- ++(0,-0.1) to[out=270,in=180,looseness=1] ++(0.35,-0.4) -- ++(0.8,0) to[out=0,in=0,looseness=2] ++(0,0.3) node[above right,red,inner sep=1pt] {\small$\kappa$}-- ++(-0.8,0) to[out=180,in=180] ++(0,0.3) -- ++(0.3,0);
\draw[linkred] (ec) ++ (0.6,0.1) to[out=0,in=90,looseness=1] ++(0.3,-0.3) -- ++(0,-0.45) to[out=270,in=0,looseness=1] ++(-0.45,-0.45);
\draw[linkred] (ec) ++ (1.1,-0.5) -- ++(-0.4,0); % fix kappa crossing
\draw[linkred] (ec) ++ (0.9,-0.35) -- ++(0,0.2); % fix another crossing
\foreach \x/\y in {-5/35,210/295} {
  \draw[link] (ec) ++(\x:0.5 and 1) arc (\x:\y:0.5 and 1); % fix ellipse crossings
}
\end{scope}

\begin{scope}[yshift=-6.25cm,xshift=2.75cm] % move kappa, step 3
\draw[link] (-0.2,0.5) coordinate (ec) ellipse (0.75 and 1);
\draw[linkred] (ec) ++ (0.3,0.4) -- ++(0.9,0) to[out=0,in=90,looseness=1] ++(0.45,-0.45) -- ++(0,-0.3) to[out=270,in=0,looseness=1] ++(-0.45,-0.45) coordinate (ofix) -- ++(-0.9,0) to[out=180,in=270,looseness=1] ++(-0.6,0.6) -- ++ (0,0.3) to[out=90,in=180,looseness=1] ++(0.6,0.6) -- ++(0.9,0) to[out=0,in=90,looseness=1] ++(0.75,-0.75) -- ++(0,-0.3) to[out=270,in=0,looseness=1] ++(-0.45,-0.75);
\draw[linkred] (ec) ++ (0.3,0.4) to[out=180,in=90,looseness=1] ++(-0.35,-0.4) -- ++(0,-0.1) to[out=270,in=180,looseness=1] ++(0.35,-0.4) -- ++(0.8,0) coordinate (gfix) to[out=0,in=0,looseness=2] ++(0,0.3) node[above right,red,inner sep=1pt] {\small$\kappa$} -- ++(-0.8,0) to[out=180,in=180] ++(0,0.3) -- ++(0.3,0);
\draw[linkred] (ec) ++ (0.6,0.1) to[out=0,in=90,looseness=1] ++(0.45,-0.3) -- ++(0,-0.45) to[out=270,in=180,looseness=1] ++(0.45,-0.45);
\draw[linkred] (gfix) ++(-0.4,0) -- ++(0.4,0) to[out=0,in=0,looseness=2] ++(0,0.3); % fix kappa crossing
\draw[linkred] (ec) ++ (1.05,-0.35) -- ++(0,0.2); % fix another crossing
\draw[linkred] (ofix) -- ++(-0.5,0); % fix another crossing
\foreach \x/\y in {-5/90} {
  \draw[link] (ec) ++(\x:0.75 and 1) arc (\x:\y:0.75 and 1); % fix ellipse crossings
}
\end{scope}

\node at (5.4,-5.75) {\huge$\xrightarrow{\dcover}$};

\begin{scope}[yshift=-5.75cm,xshift=8.15cm] % double cover
\draw[thinlink] (0,0) -- (0,-1.25);
\foreach \rot in {0,180} {
\begin{scope}[rotate=\rot]
% middle two strands
\draw[linkred,looseness=1] (0,-0.15) to[out=0,in=180] ++(0.6,0.3) ++(0,-0.3) to[out=0,in=180] ++(0.6,0.3);
\draw[linkred,looseness=1] (0,0.15) to[out=0,in=180] ++(0.6,-0.3) ++(0,0.3) to[out=0,in=180] ++(0.6,-0.3) to[out=0,in=0,looseness=2] ++(0,-0.3) -- ++(-1.2,0);
% top two strands
\draw[linkred,looseness=2] (0,0.75) -- ++(1.2,0) to[out=0,in=0] ++(0,-0.6);
\draw[linkred,looseness=2] (0,0.45) -- ++(1.2,0) to[out=0,in=0] ++(0,-1.2) -- ++(-1.2,0);
\end{scope}
}
\node[below,red] at (0.6,-0.75) {$\tilde\kappa$};
\draw[thinlink] (0,0) -- (0,1.25);
% fix crossings over the branch locus 
\foreach \y in {-0.45,-0.75} { \draw[linkred] (-0.1,\y) -- (0.1,\y); }
\begin{scope}
\clip (-0.1,-0.3) rectangle (0.1,0);
\draw[linkred,looseness=1] (-0.6,0.15) to[out=0,in=180] ++(0.6,-0.3) to[out=0,in=180] ++(0.6,0.3);
\end{scope}
\end{scope}
\end{tikzpicture}
\caption{Recovering $K_\beta \cong 5_2$ in the case $\beta = x^{-1}$.  In the last step we indicate the axis of symmetry (i.e., the preimage of $U$) for reference.}
\label{fig:braid-5_2}
\end{figure}
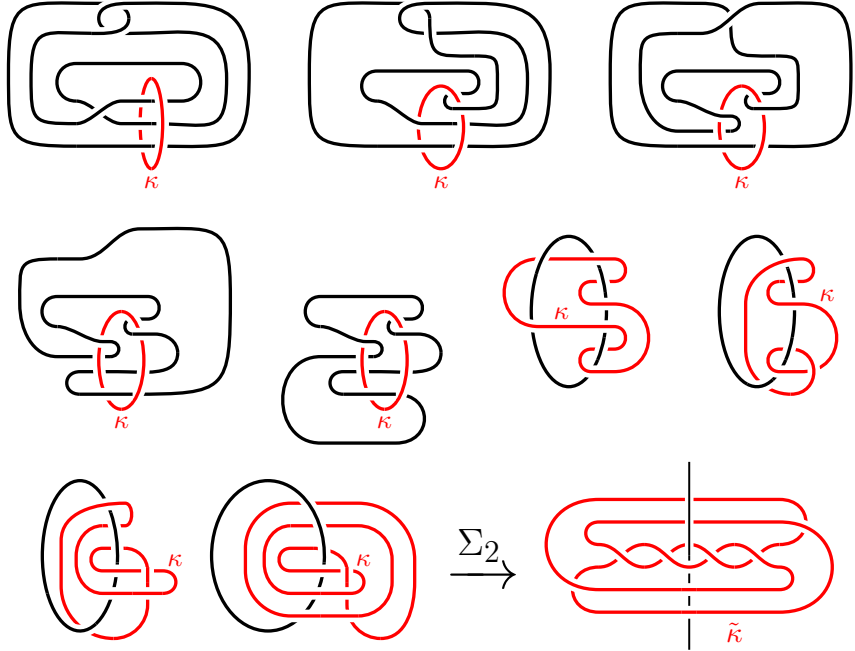
\end{proof}

\begin{proposition} \label{prop:recover-pretzels}
The braids $\beta = y^a x^n y^{-1} x y^{1-a}$ produce $K_\beta \cong P(-3,3,2n+1)$.
\end{proposition}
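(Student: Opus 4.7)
The plan is to follow the explicit diagrammatic strategy of Proposition~\ref{prop:recover-5_2}. By Lemma~\ref{lem:y-conjugation}, conjugating by $y^a$ does not change $K_\beta$ up to isotopy, so it suffices to treat the case $a = 0$, with $\beta = x^n y^{-1} x y$, for every $n \in \Z$.

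First, I would insert $\beta$ into the right-hand diagram of Figure~\ref{fig:E-T24-isotopy} and perform an isotopy of $U = \tau \cup \beta$ in the complement of $\kappa$, along the lines of the first four panels of Figure~\ref{fig:braid-5_2}. The aim is to position $U$ as the unknotted boundary of an embedded disk in the plane, with $\kappa$ passing through that disk in a controlled way. In carrying this out, the $x^n$ factor produces a twist region with $|n|$ crossings between a single pair of adjacent strands, while the fixed subword $y^{-1}xy$ contributes a bounded number of crossings of constant form on the orthogonal pair of strands; the rest of $\tau$ supplies a second fixed block of crossings.

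Second, once $U$ bounds a disk in the plane, I would form the branched double cover $\dcover(U) \cong S^3$ as in the final panel of Figure~\ref{fig:braid-5_2}, by cutting $\kappa$ along that disk and gluing two copies together via the deck involution. Three distinct twist regions should then appear in the resulting diagram of $\tilde\kappa$: two fixed regions coming from the $\tau$ and $y^{-1}xy$ pieces, which contribute pretzel tangles of $-3$ and $+3$ half-twists respectively (both independent of $n$), and a third region coming from the $x^n$ block, which lifts to $2n+1$ half-twists --- each braid generator in the base producing two half-twists in the cover, with a parity adjustment arising from the structure of the clasp where $\kappa$ meets the twist region. After this identification, a routine rearrangement of the resulting diagram into standard pretzel form matches Figure~\ref{fig:main-knots}.

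The main obstacle is simply executing the isotopies cleanly and tracking signs with care --- in particular verifying that the two fixed tangles have opposite signs (rather than, say, both $+3$) and that the $x^n$ region yields $2n+1$ half-twists with the correct sign convention. A useful sanity check is the specialization $n = 0$, where $\beta = y^{-1}xy$ is conjugate to $x$ and the lift should be $P(-3,3,1) \cong 6_1$; another is that the determinant of every $P(-3,3,2n+1)$ equals $9$, matching $|H_1(\dcover(L^\beta_0))| \pm 2$ via Lemma~\ref{lem:bdc-trace} applied to this family.
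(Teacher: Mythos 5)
Your plan matches the paper's own proof: reduce to a convenient conjugate via Lemma~\ref{lem:y-conjugation}, isotope $U = \tau\cup\beta$ so that it bounds an embedded disk, pass to the branched double cover, and read off the three twist regions giving a pretzel $P(\pm 3,\mp 3,2n+1)$, just as in Figures~\ref{fig:braid-pretzel} and \ref{fig:braid-pretzel-2}. One caveat: your second sanity check is off --- Lemma~\ref{lem:bdc-trace} relates $\tr\rho(\beta)$ to $|H_1(\dcover(L^\beta_0))|$, which for these braids equals $|n|$ (not a constant), whereas $\det(P(-3,3,2n+1))=9$ is the order of $H_1$ of a double cover of the \emph{upstairs} $S^3\cong\dcover(U)$ branched over $\tilde\kappa$, so the two quantities are not directly comparable via that lemma.
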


\begin{proof}
Again by Lemma~\ref{lem:y-conjugation} we need only consider $\beta = y x^n y^{-1} x$.  In Figure~\ref{fig:braid-pretzel} we insert this braid into $\tau \sqcup \kappa$ and perform an isotopy so that the unknot $U = \tau \cup \beta$ clearly bounds a disk, and then in Figure~\ref{fig:braid-pretzel-2} we use this to lift $\kappa$ to the knot $K_\beta = \tilde\kappa$ in the branched double cover $\dcover(U) \cong S^3$.  In the end we are left with a diagram of $P(3,-3,2n+1)$, which is isotopic to $P(-3,3,2n+1)$.
\begin{figure}
\begin{tikzpicture}
\begin{scope} % initial tangle
\draw[linkred] (3,0.6) arc (90:270:0.15 and 0.6);
\draw[link] (0,0.3) to[out=180,in=180] ++(0,0.5) -- ++(3.4,0) to[out=0,in=0] ++(0,-0.5) -- ++(-0.6,0);
\draw[link] (0,0) to[out=180,in=270] ++(-0.6,0.6) to[out=90,in=180] ++(0.9,0.6) to[out=0,in=270,looseness=1] ++(0.4,0.2) to[out=90,in=0,looseness=1] ++(-0.4,0.2) to[out=180,in=90] ++(-1.2,-0.9) to[out=270,in=180] ++(0.9,-1);
\draw[link] (2.8,0) -- ++(0.2,0) to[out=0,in=270] ++(1,0.6) to[out=90,in=0,looseness=1.4] ++(-1.3,0.6) -- ++(-2,0) to[out=180,in=270,looseness=1] ++(-0.4,0.2) to[out=90,in=180,looseness=1] ++(0.4,0.2) -- ++(2,0) to[out=0,in=90,looseness=1.4] ++(1.6,-0.9) to[out=270,in=0] ++(-1.3,-1) -- ++(-0.2,0);
\draw[link] (0.7,1.4) to[out=270,in=0,looseness=1] ++(-0.4,-0.2) -- ++(-0.1,0); % redraw a crossing
\draw[link,looseness=0.75] (0.6,-0.3) -- ++(1,0) ++(0.6,0.3) to[out=0,in=180] ++(0.6,0.3);
\draw[link,looseness=0.75] (0,-0.3) to[out=0,in=180] ++ (0.6,0.3) -- ++(1,0) to[out=0,in=180] ++(0.6,-0.3) -- ++(0.6,0);
\draw[link,looseness=0.75] (0.0,0.3) -- ++(2.2,0) to[out=0,in=180] ++(0.6,-0.3);
\draw[link,looseness=0.75] (0,0) to[out=0,in=180] ++(0.6,-0.3);
\draw[link,looseness=0.75] (1.6,-0.3) to[out=0,in=180] ++(0.6,0.3);
\draw[ultra thick,fill=white] (0.8,0.45) rectangle ++(0.6,-0.6);
\node at (1.1,0.15) {$n$};
\draw[linkred] (3,0.6) arc (90:-90:0.15 and 0.6) node[below,red,inner sep=2pt] {\small$\kappa$};
\end{scope}

\begin{scope}[xshift=6.5cm] % isotopy step 1
\draw[linkred] (3,0.6) arc (90:270:0.15 and 0.6);
\draw[link] (0,0.3) to[out=180,in=180] ++(0,0.5) -- ++(3.4,0) to[out=0,in=0] ++(0,-0.5) -- ++(-0.6,0);
\draw[link] (0,-0.3) to[out=180,in=270] ++(-0.9,1) to[out=90,in=180] ++(1.2,0.7) -- ++(2.4,0) to[out=0,in=90,looseness=1.5] ++(1.6,-0.9);
\draw[link] (2.8,0) -- ++(0.2,0) to[out=0,in=270] ++(1,0.6) to[out=90,in=0,looseness=1.4] ++(-1.3,0.6) -- ++(-2,0) to[out=180,in=270,looseness=1] ++(-0.4,0.2) to[out=90,in=180,looseness=1] ++(0.4,0.2) -- ++(2.3,0) to[out=0,in=90,looseness=1.4] ++(1.6,-0.9) to[out=270,in=0] ++(-1.3,-1) -- ++(-0.5,0);
\draw[link,looseness=0.75] (0,-0.3) to[out=0,in=180] ++ (0.6,0.3) -- ++(1,0) to[out=0,in=180] ++(0.6,-0.3) -- ++(0.6,0);
\draw[link] (4.3,0.5) to[out=270,in=0,looseness=1.25] ++(-0.9,-1.35) -- ++(-1,0) to[out=180,in=180] (2.2,0);
\draw[link,looseness=0.75] (2.2,0) to[out=0,in=180] ++(0.6,0.3);
\draw[link,looseness=0.75] (0.0,0.3) -- ++(2.2,0) to[out=0,in=180] ++(0.6,-0.3);
\draw[ultra thick,fill=white] (0.8,0.45) rectangle ++(0.6,-0.6);
\node at (1.1,0.15) {$n$};
\draw[linkred] (3,0.6) arc (90:-90:0.15 and 0.6) node[left,red,inner sep=3pt] {\small$\kappa$};
\end{scope}

\begin{scope}[yshift=-2.75cm] % isotopy step 2
\draw[linkred] (3,0.6) arc (90:270:0.15 and 0.6);
\draw[link] (0,0.3) to[out=180,in=180] ++(0,0.5) -- ++(3.4,0) to[out=0,in=0] ++(0,-0.5) -- ++(-0.6,0);
\draw[link] (0,0) to[out=180,in=180] ++(0,1.1) -- ++(2.7,0) to[out=0,in=90,looseness=1.1] ++(1.6,-0.6);
\draw[link] (2.8,0) -- ++(1.8,0) to[out=0,in=0,looseness=2] ++(0,-0.3) -- ++(-1.8,0);
\draw[link,looseness=0.75] (0,0) -- ++(1.6,0) to[out=0,in=180] ++(0.6,-0.3) -- ++(0.6,0);
\draw[link] (4.3,0.5) to[out=270,in=0,looseness=1.25] ++(-0.9,-1.35) -- ++(-1,0) to[out=180,in=180] (2.2,0);
\draw[link] (4.6,0) -- ++(-1,0);
\draw[link,looseness=0.75] (2.2,0) to[out=0,in=180] ++(0.6,0.3);
\draw[link,looseness=0.75] (0.0,0.3) -- ++(2.2,0) to[out=0,in=180] ++(0.6,-0.3);
\draw[ultra thick,fill=white] (0.8,0.45) rectangle ++(0.6,-0.6);
\node at (1.1,0.15) {$n$};
\draw[linkred] (3,0.6) arc (90:-90:0.15 and 0.6) node[left,red,inner sep=3pt] {\small$\kappa$};
\end{scope}

\begin{scope}[yshift=-2.75cm,xshift=6.5cm] % isotopy step 3
\draw[linkred] (2.4,0.6) arc (90:270:0.15 and 0.6);
\draw[link] (0,0) to[out=180,in=180] ++(0,1.1) -- ++(2.7,0) to[out=0,in=180,looseness=0.75] ++(1,0.3) to[out=0,in=0] ++(0,-0.6) -- ++(-3.7,0) to[out=180,in=180] ++(0,-0.5);
\draw[link] (2.8,0.3) -- ++(1.2,0) to[out=0,in=0,looseness=2] ++(0,-0.6) -- ++(-1.2,0);
\draw[link,looseness=0.75] (0,0) -- ++(1.6,0) to[out=0,in=180] ++(0.6,-0.3) -- ++(0.6,0);
\draw[link,looseness=0.75] (0.0,0.3) -- ++(2.8,0); % to[out=0,in=180] ++(0.6,-0.3);
\draw[linkred] (2.4,0.6) to[out=0,in=180,looseness=0.75] ++(0.6,-0.55) to[out=0,in=270,looseness=1] ++(0.3,0.55) to[out=90,in=90,looseness=5] ++(0.3,0) to[out=270,in=0,looseness=1.25] ++(-0.6,-0.75) to[out=180,in=0,looseness=0.75] ++(-0.6,-0.45) node[left,red,inner sep=3pt] {\small$\kappa$};
\draw[link] (3,0.3) -- ++(1,0); % redraw some crossings
\draw[link] (3.45,0.8) -- ++(0.25,0);
\draw[ultra thick,fill=white] (0.8,0.45) rectangle ++(0.6,-0.6);
\node at (1.1,0.15) {$n$};
\end{scope}

\begin{scope}[yshift=-5.5cm] % isotopy step 4
\draw[linkred] (2.4,0.6) arc (90:270:0.15 and 0.6) node[red,left,inner sep=3pt] {\small$\kappa$};
\draw[link] (2.8,0.3) -- ++(0.6,0) to[out=0,in=0,looseness=2] ++(0,-0.6) -- ++(-0.6,0);
\draw[link,looseness=0.75] (0.4,0) -- ++(1,0) to[out=0,in=180] ++(0.6,-0.3) -- ++(0.6,0);
\draw[link,looseness=0.75] (0.4,0.3) -- ++(2.2,0); % to[out=0,in=180] ++(0.6,-0.3);
\draw[link,looseness=0.75] (0.4,0.3) to[out=180,in=90] ++(-0.3,-0.15);
\draw[linkred] (2.4,0.6) to[out=0,in=180,looseness=0.75] ++(0.6,-0.55) to[out=0,in=270,looseness=1] ++(0.3,0.55) to[out=90,in=0,looseness=1] ++(-0.6,0.2) -- ++(-2.7,0) to[out=180,in=180] ++(0,-0.5) to[out=0,in=90,looseness=0.75] ++(0.3,-0.15) to[out=270,in=0,looseness=0.75] ++(-0.3,-0.15);
\draw[linkred] (2.4,-0.6) to[out=0,in=180,looseness=0.75] ++(0.6,0.45) to[out=0,in=270] ++(0.6,0.75) to[out=90,in=0,looseness=1] ++(-0.6,0.5) -- ++(-3,0) to[out=180,in=180] ++(0,-1.1); 
\draw[link,looseness=0.75] (0.4,0) to[out=180,in=270] ++(-0.3,0.15);
\draw[link] (2.8,0.3) -- ++(0.6,0) to[out=0,in=0,looseness=2] ++(0,-0.6); % redraw some crossings
\draw[ultra thick,fill=white] (0.8,0.45) rectangle ++(0.6,-0.6);
\node at (1.1,0.15) {$n$};
\end{scope}

\begin{scope}[yshift=-5.5cm,xshift=4.75cm] % isotopy step 4
\draw[linkred] (2.1,0.6) arc (90:270:0.15 and 0.6) node[red,left,inner sep=3pt] {\small$\kappa$};
\draw[link] (2.5,0.3) -- ++(0.6,0) to[out=0,in=0,looseness=2] ++(0,-0.6) -- ++(-0.6,0);
\draw[link,looseness=0.75] (0.4,0) -- ++(1,0) to[out=0,in=180] ++(0.6,-0.3) -- ++(0.6,0);
\draw[link,looseness=0.75] (0.4,0.3) -- ++(2.2,0); % to[out=0,in=180] ++(0.6,-0.3);
\draw[link,looseness=0.75] (1.4,0.3) to[out=180,in=90] ++(-0.3,-0.15);
\draw[linkred] (2.1,0.6) to[out=0,in=180,looseness=0.75] ++(0.6,-0.55) to[out=0,in=270,looseness=1] ++(0.3,0.55) to[out=90,in=0,looseness=1] ++(-0.6,0.2) -- ++(-2.4,0) to[out=180,in=180] ++(0,-0.5) -- ++(1,0) to[out=0,in=90,looseness=0.75] ++(0.3,-0.15) to[out=270,in=0,looseness=0.75] ++(-0.3,-0.15) -- ++(-1,0);
\draw[linkred] (2.1,-0.6) to[out=0,in=180,looseness=0.75] ++(0.6,0.45) to[out=0,in=270] ++(0.6,0.75) to[out=90,in=0,looseness=1] ++(-0.6,0.5) -- ++(-2.7,0) to[out=180,in=180] ++(0,-1.1); 
\draw[link,looseness=0.75] (1.4,0) to[out=180,in=270] ++(-0.3,0.15);
\draw[link] (2.5,0.3) -- ++(0.6,0) to[out=0,in=0,looseness=2] ++(0,-0.6); % redraw some crossings
\draw[ultra thick,red,fill=white] (0.2,0.45) rectangle ++(0.6,-0.6);
\node[red] at (0.5,0.15) {$n$};
\end{scope}

\begin{scope}[yshift=-5.5cm,xshift=9cm] % straighten out U
\draw[link] (0,0.4) coordinate (ec) ellipse (0.3 and 1);
\draw[linkred] (ec) ++(0.6,0.15) to[out=0,in=0] ++(0,-0.9);
\draw[linkred] (ec) ++ (0.6,0.75) to[out=180,in=180,looseness=7] ++(0,-0.3) -- ++(0.6,0) to[out=0,in=0] ++(0,-0.6) -- ++(-0.6,0) to[out=180,in=270,looseness=0.75] ++(-0.6,0.15) to[out=90,in=180,looseness=0.75] ++(0.6,0.15);
\draw[linkred] (ec) ++ (0.6,-0.75) to[out=180,in=270,looseness=0.75] ++(-0.6,0.15) to[out=90,in=180,looseness=0.75] ++(0.6,0.15) -- ++(0.6,0) node[below,red,inner sep=2pt] {\small$\kappa$} to[out=0,in=0] ++(0,1.2);
\draw[ultra thick,red,fill=white] (ec) ++ (0.6,0.9) rectangle ++(0.6,-0.6);
\path (ec) ++ (0.9,0.6) node[red] {$n$};
\foreach \x/\y in {36/90,0/-40} {
  \draw[link] (ec) ++(\x:0.3 and 1) arc (\x:\y:0.3 and 1); % fix ellipse crossings
}
\end{scope}

\end{tikzpicture}
\caption{Recovering $K_\beta \cong P(-3,3,2n+1)$ in the case $\beta = y x^n y^{-1} x$, part 1: isotoping $U \cup \kappa$ so that $U$ bounds a disk in the plane.  Here each box labeled ``$n$'' contains $n$ signed crossings.}
\label{fig:braid-pretzel}
\end{figure}
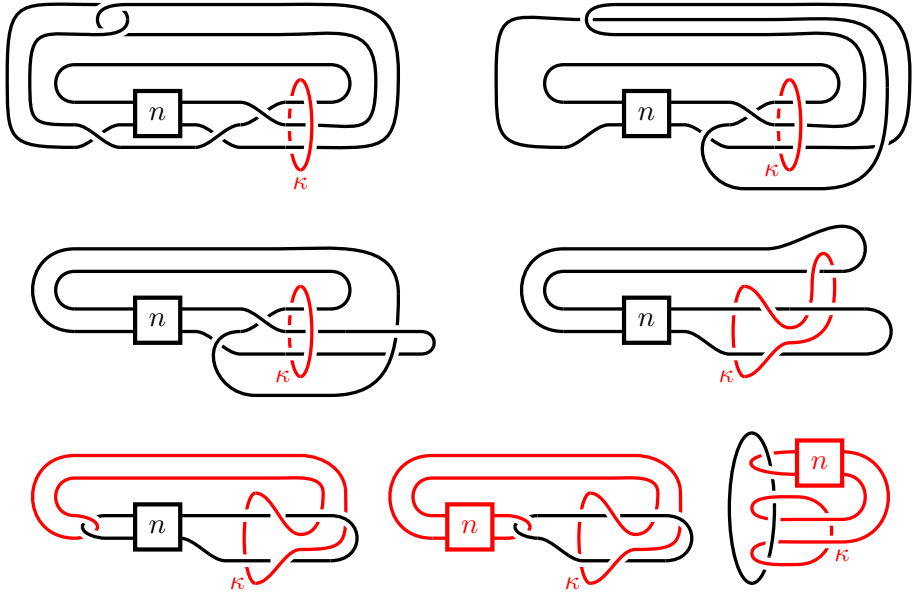
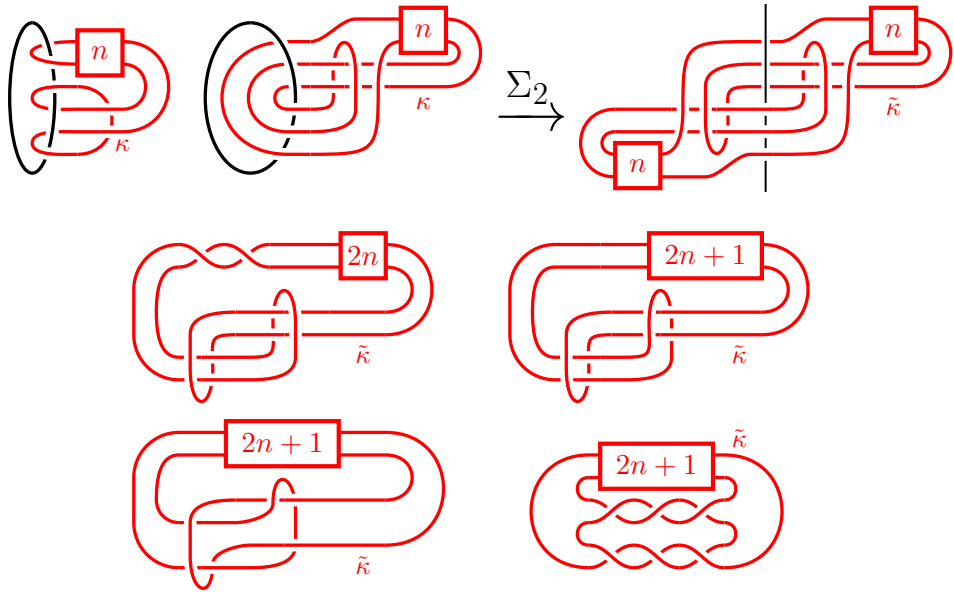
\begin{figure}
\begin{tikzpicture}
\begin{scope} % U straightened out
\draw[link] (0,0) coordinate (ec) ellipse (0.3 and 1);
\draw[linkred] (ec) ++(0.6,0.15) to[out=0,in=0] ++(0,-0.9);
\draw[linkred] (ec) ++ (0.6,0.75) to[out=180,in=180,looseness=7] ++(0,-0.3) -- ++(0.6,0) to[out=0,in=0] ++(0,-0.6) -- ++(-0.6,0) to[out=180,in=270,looseness=0.75] ++(-0.6,0.15) to[out=90,in=180,looseness=0.75] ++(0.6,0.15);
\draw[linkred] (ec) ++ (0.6,-0.75) to[out=180,in=270,looseness=0.75] ++(-0.6,0.15) to[out=90,in=180,looseness=0.75] ++(0.6,0.15) -- ++(0.6,0) node[below,red,inner sep=2pt] {\small$\kappa$} to[out=0,in=0] ++(0,1.2);
\draw[ultra thick,red,fill=white] (ec) ++ (0.6,0.9) rectangle ++(0.6,-0.6);
\path (ec) ++ (0.9,0.6) node[red] {$n$};
\foreach \x/\y in {36/90,0/-40} {
  \draw[link] (ec) ++(\x:0.3 and 1) arc (\x:\y:0.3 and 1); % fix ellipse crossings
}
\end{scope}

\begin{scope}[xshift=3.5cm] % now make kappa look braided near U
\draw[link] (-0.6,0) ellipse (0.6 and 1);
\foreach \y in {0.15,0.45,0.75} {
  \draw[linkred] (-0.1,\y) ++(0.3,0) -- ++(-0.3,0) to[out=180,in=180,looseness=2] ++(0,-2*\y) -- ++(0.3,0);
}
\draw[link] (-0.6,0) ++ (0:0.6 and 1) arc (0:90:0.6 and 1); % fix ellipse crossings
\draw[linkred] (0.2,0.75) to[out=0,in=180,looseness=0.75] ++(0.6,0.3) -- ++(1.2,0) to[out=0,in=0] ++(0,-0.9);
\foreach \y in {0.15,0.45} { \draw[linkred] (0.65,\y) -- ++(1.35,0); }
\draw[linkred] (0.65,0.15) -- ++(0.6,0);
\draw[linkred] (0.2,-0.15) to[out=0,in=270] ++(0.3,0.45) to[out=90,in=90,looseness=5] ++(0.3,0) -- ++(0,-0.45) to[out=270,in=0,looseness=1] ++(-0.6,-0.3);
\foreach \y in {0.15,0.45} { \draw[linkred] (0.2,\y) -- ++(0.45,0); }
\draw[linkred] (0.2,-0.75) to[out=0,in=270] ++(0.9,0.9) to[out=90,in=180,looseness=1] ++(0.3,0.6) -- ++(0.6,0) to[out=0,in=0,looseness=2] ++(0,-0.3);
\node[below,red] at (1.7,0.15) {\small$\kappa$};
\draw[ultra thick,red,fill=white] (1.4,1.2) rectangle ++(0.6,-0.6);
\path (1.7,0.9) node[red] {$n$};
\end{scope}

\node at (6.625,0) {\huge$\xrightarrow{\dcover}$};

\begin{scope}[xshift=9.75cm] % branched cover
\draw[thinlink] (0,0) -- ++(0,-1.25);
\foreach \r in {0,180} {
\begin{scope}[rotate=\r]
\draw[linkred] (0,0.75) -- ++(0.2,0) to[out=0,in=180,looseness=0.75] ++(0.6,0.3) -- ++(1.2,0) to[out=0,in=0] ++(0,-0.9);
\foreach \y in {0.15,0.45} { \draw[linkred] (0.65,\y) -- ++(1.35,0); }
\draw[linkred] (0.65,0.15) -- ++(0.6,0);
\draw[linkred] (0,-0.15) -- ++(0.2,0) to[out=0,in=270] ++(0.3,0.45) to[out=90,in=90,looseness=5] ++(0.3,0) -- ++(0,-0.45) to[out=270,in=0,looseness=1] ++(-0.6,-0.3) -- ++(-0.2,0);
\foreach \y in {0.15,0.45} { \draw[linkred] (0.0,\y) -- ++(0.65,0); }
\draw[linkred] (0,-0.75) -- ++(0.2,0) to[out=0,in=270] ++(0.9,0.9) to[out=90,in=180,looseness=1] ++(0.3,0.6) -- ++(0.6,0) to[out=0,in=0,looseness=2] ++(0,-0.3);
\draw[ultra thick,red,fill=white] (1.4,1.2) rectangle ++(0.6,-0.6);
\path (1.7,0.9) node[red] {$n$};
\end{scope}
}
\node[below,red] at (1.7,0.15) {\small$\tilde\kappa$};
\draw[thinlink] (0,0) -- ++(0,1.25);
\foreach \y in {-0.15,-0.45,-0.75} { \draw[linkred] (-0.1,\y) -- (0.1,\y); } % fix crossings over the branch locus
\end{scope}

\begin{scope}[yshift=-3cm,xshift=2.7cm] % merge the two n-twist boxes
\draw[linkred] (0.45,1.05) -- ++(1.55,0) to[out=0,in=0] ++(0,-1.2);
\foreach \y in {-0.15,0.15} { \draw[linkred] (0.65,\y) -- ++(1.35,0); }
\draw[linkred] (0.65,0.15) -- ++(0.6,0);
\draw[linkred] (-0.75,-0.45) -- ++(0.95,0) to[out=0,in=270] ++(0.3,0.45) to[out=90,in=90,looseness=5] ++(0.3,0) -- ++(0,-0.45) to[out=270,in=0,looseness=1] ++(-0.6,-0.3) -- ++(-0.95,0);
\foreach \y in {0.15,0.45} { \draw[linkred] (0.0,\y-0.3) -- ++(0.65,0); }
\draw[linkred] (0.45,0.75) -- ++(1.55,0) to[out=0,in=0,looseness=2] ++(0,-0.6);
\draw[linkred] (0,-0.15) to[out=180,in=90] ++(-0.3,-0.45) to[out=270,in=270,looseness=5] ++(-0.3,0) -- ++(0,0.45) to[out=90,in=180,looseness=1] ++(0.6,0.3);
\foreach \y in {-0.45,-0.75} { \draw[linkred] (-0.45,\y) -- ++(0.3,0); } % fix some crossings
\draw[linkred] (-0.75,-0.45) to[out=180,in=270,looseness=1] ++(-0.3,0.6) to[out=90,in=180,looseness=1] ++(0.3,0.6);
\draw[linkred] (-0.75,-0.75) to[out=180,in=270,looseness=1] ++(-0.6,0.6) -- ++(0,0.6) to[out=90,in=180,looseness=1] ++(0.6,0.6);
\draw[linkred,looseness=1] (-0.75,0.75) to[out=0,in=180] ++(0.6,0.3) ++(0,-0.3) to[out=0,in=180] ++(0.6,0.3);
\draw[linkred,looseness=1] (-0.75,1.05) to[out=0,in=180] ++(0.6,-0.3) ++(0,0.3) to[out=0,in=180] ++(0.6,-0.3);
\draw[ultra thick,red,fill=white] (1.4,1.2) rectangle ++(0.6,-0.6);
\path (1.7,0.9) node[red] {$2n$};
\node[below,red] at (1.7,-0.15) {\small$\tilde\kappa$};
\end{scope}

\begin{scope}[yshift=-3cm,xshift=7.7cm] % get rid of two half-twists
\draw[linkred] (-0.15,1.05) -- ++(2.15,0) to[out=0,in=0] ++(0,-1.2);
\foreach \y in {0.15,0.45} { \draw[linkred] (0.0,\y-0.3) -- ++(0.65,0); }
\draw[linkred] (-0.75,-0.45) -- ++(0.95,0) to[out=0,in=270] ++(0.3,0.45) to[out=90,in=90,looseness=5] ++(0.3,0) -- ++(0,-0.45) to[out=270,in=0,looseness=1] ++(-0.6,-0.3) -- ++(-0.95,0);
\foreach \y in {-0.15,0.15} { \draw[linkred] (0.65,\y) -- ++(1.35,0); }
\draw[linkred] (-0.15,0.75) -- ++(2.15,0) to[out=0,in=0,looseness=2] ++(0,-0.6);
\draw[linkred] (0,-0.15) to[out=180,in=90] ++(-0.3,-0.45) to[out=270,in=270,looseness=5] ++(-0.3,0) -- ++(0,0.45) to[out=90,in=180,looseness=1] ++(0.6,0.3);
\foreach \y in {-0.45,-0.75} { \draw[linkred] (-0.45,\y) -- ++(0.3,0); } % fix some crossings
\draw[linkred] (-0.75,-0.45) to[out=180,in=270,looseness=1] ++(-0.3,0.6) to[out=90,in=180,looseness=1] ++(0.3,0.6) -- ++(0.6,0);
\draw[linkred] (-0.75,-0.75) to[out=180,in=270,looseness=1] ++(-0.6,0.6) -- ++(0,0.6) to[out=90,in=180,looseness=1] ++(0.6,0.6) -- ++(0.6,0);
\draw[ultra thick,red,fill=white] (0.5,1.2) rectangle ++(1.5,-0.6);
\path (1.25,0.9) node[red] {$2n+1$};
\node[below,red] at (1.7,-0.15) {\small$\tilde\kappa$};
\end{scope}

\begin{scope}[yshift=-5.5cm,xshift=2.7cm] % now make kappa look braided near U
\draw[linkred] (-0.15,1.05) -- ++(2.15,0) to[out=0,in=0] ++(0,-1.5);
\foreach \y in {0.15} { \draw[linkred] (0.0,\y) -- ++(0.65,0); }
\draw[linkred] (-0.75,-0.15) -- ++(0.75,0) to[out=0,in=270,looseness=0.75] ++(0.5,0.25) to[out=90,in=90,looseness=4] ++(0.3,-0.1) -- ++(0,-0.45) to[out=270,in=0,looseness=1] ++(-0.6,-0.3) -- ++(-0.95,0);
++(0.95,0) to[out=0,in=270] ++(0.3,0.45) to[out=90,in=90,looseness=5] ++(0.3,0) -- ++(0,-0.45) to[out=270,in=0,looseness=1] ++(-0.6,-0.3) -- ++(-0.95,0);
\foreach \y in {-0.45,0.15} { \draw[linkred] (0.65,\y) -- ++(1.35,0); }
\draw[linkred] (-0.15,0.75) -- ++(2.15,0) to[out=0,in=0,looseness=2] ++(0,-0.6);
\draw[linkred] (0.65,-0.45) -- (0.2,-0.45) to[out=180,in=90,looseness=0.75] ++(-0.5,-0.25) to[out=270,in=270,looseness=4] ++(-0.3,0.1) -- ++(0,0.45) to[out=90,in=180,looseness=1] ++(0.6,0.3);
\foreach \y in {-0.75} { \draw[linkred] (-0.45,\y) -- ++(0.3,0); } % fix some crossings
\draw[linkred] (-0.75,-0.15) to[out=180,in=270,looseness=1] ++(-0.3,0.3) to[out=90,in=180,looseness=1] ++(0.3,0.6) -- ++(0.6,0);
\draw[linkred] (-0.75,-0.75) to[out=180,in=270,looseness=1] ++(-0.6,0.6) -- ++(0,0.6) to[out=90,in=180,looseness=1] ++(0.6,0.6) -- ++(0.6,0);
\draw[ultra thick,red,fill=white] (-0.125,1.2) rectangle ++(1.5,-0.6);
\path (0.625,0.9) node[red] {$2n+1$};
\node[below,red] at (1.7,-0.45) {\small$\tilde\kappa$};
\end{scope}

\begin{scope}[yshift=-5.5cm,xshift=7.4cm]
\foreach \y in {0.45,-0.15} { \draw[linkred] (0,\y) to[out=180,in=180] ++(0,-0.3) ++ (1.8,0) to[out=0,in=0] ++(0,0.3); }
\draw[linkred] (0,-0.75) to[out=180,in=180] ++(0,1.5) -- ++(1.8,0) node[above right,red,inner sep=2pt] {\small$\tilde\kappa$} to[out=0,in=0] ++(0,-1.5);
\draw[linkred] (0,0.45) -- ++(1.8,0);
\foreach \x in {0,0.6,1.2} { \draw[linkred,looseness=1] (\x,0.15) to[out=0,in=180] ++(0.6,-0.3) (\x,-0.75) to[out=0,in=180] ++(0.6,0.3); }
\foreach \x in {0,0.6,1.2} { \draw[linkred,looseness=1] (\x,-0.15) to[out=0,in=180] ++(0.6,0.3) (\x,-0.45) to[out=0,in=180] ++(0.6,-0.3); }
\draw[ultra thick,red,fill=white] (0.15,0.3) rectangle ++(1.5,0.6);
\path (0.9,0.6) node[red] {$2n+1$};
\end{scope}

\end{tikzpicture}
\caption{Recovering $K_\beta \cong P(-3,3,2n+1)$ in the case $\beta = y x^n y^{-1} x$, part 2: taking branched covers to construct the claimed pretzel knots.}
\label{fig:braid-pretzel-2}
\end{figure}
\end{proof}

\begin{proposition} \label{prop:recover-15n43522}
The braids $\beta = y^a x^3 y^{-1} x^2 y^{1-a}$ produce $K_\beta \cong 15n_{43522}$, possibly up to mirroring.
\end{proposition}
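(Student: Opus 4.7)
The plan is to follow the same recipe that succeeded in Propositions~\ref{prop:recover-5_2} and \ref{prop:recover-pretzels}. First, by Lemma~\ref{lem:y-conjugation} it suffices to consider the single braid $\beta = x^3 y^{-1} x^2 y$, since conjugation by $y^a$ does not change the isotopy class of $K_\beta$. Then I would draw the unknotted link $U = \tau \cup \beta$ together with the linked curve $\kappa$ by substituting this word into the tangle template on the right of Figure~\ref{fig:E-T24-isotopy}.

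Next, I would carry out an explicit planar isotopy of $U$ (keeping track of $\kappa$) that transforms $U$ into the boundary of an embedded disk in the plane, analogous to the first half of Figures~\ref{fig:braid-pretzel} and \ref{fig:braid-5_2}. Cutting along this disk, gluing two copies, and lifting $\kappa$ then yields a diagram for $\tilde\kappa = K_\beta$ in the branched double cover $\dcover(U) \cong S^3$, via Lemma~\ref{lem:E24-as-branched-cover}. Because $\beta$ has word length $7$, the raw lifted diagram will have on the order of $14$--$16$ crossings coming from the two copies of $\tau \cup \beta$ together with the crossings where the disk in the plane is doubled; this is already in the range of complexity expected for $15n_{43522}$.

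The main obstacle, in contrast to the previous two propositions, is that the resulting knot is hyperbolic with $15$ crossings and no particularly clean symmetric model, so simplifying the branched-cover diagram by hand into a standard picture of $15n_{43522}$ (as drawn in Figure~\ref{fig:main-knots}) is delicate. My plan to overcome this would be twofold: (i) use Reidemeister moves to reduce the diagram to at most $15$ crossings and to match a planar pattern that can be recognized as the census knot $k8_{218}$; (ii) as a consistency check, compute the Alexander polynomial and knot Floer homology of the resulting diagram and verify that they agree with the values for $15n_{43522}$ recorded in Table~\ref{fig:hfk-table} and computed in Appendix~\ref{sec:hfk-15n43522}. Since $\det(K_\beta) = 7$ and the remaining possible nearly fibered knots in that determinant class are only $5_2$, $15n_{43522}$, $\Wh^-(T_{2,3},2)$, and their mirrors, matching even coarse invariants such as the Alexander polynomial and the bigraded knot Floer homology suffices to identify $K_\beta$ up to mirror among the candidates produced by Theorem~\ref{thm:M_F-E24}.

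Finally, the ``up to mirroring'' clause in the statement reflects that replacing $K$ by its mirror corresponds to reversing the orientation of $M_F$, which was already built into the setup of Theorem~\ref{thm:identify-y-c}; once $K_\beta$ is identified with \emph{either} $15n_{43522}$ or its mirror on the nose, the conclusion follows.
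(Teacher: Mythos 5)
Your reduction to the single braid $\beta = x^3 y^{-1} x^2 y$ via Lemma~\ref{lem:y-conjugation}, and the general plan of drawing $U\cup\kappa$ and lifting to the branched double cover, matches the paper exactly.  The problem is that you never actually carry out the identification step.  Your primary route (i) --- simplifying the lifted $\sim\!15$-crossing diagram by Reidemeister moves until you recognize $15n_{43522}$ --- is precisely the approach the paper \emph{considers and abandons}: they remark that repeating the procedure of Propositions~\ref{prop:recover-5_2} and \ref{prop:recover-pretzels} is ``not very enlightening because we find it hard to identify 15-crossing knots from their diagrams.''  The paper's actual proof gives the link $U\cup\kappa$ to SnapPy, performs a $(2,0)$ orbifold Dehn filling along $U$, extracts the manifold double cover, and has SnapPy certify (by constructing and comparing canonical triangulations) that the result is the complement of $15n_{43522}$.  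Since you neither execute (i) nor propose the computer-assisted certification, the decisive step is missing.

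Your fallback (ii) does not close the gap.  It is circular: the assertion that the only nearly fibered genus-1 knots with determinant $7$ are $5_2$, $15n_{43522}$, and $\Wh^-(T_{2,3},2)$ (up to mirror) is exactly the content of Theorem~\ref{thm:main-hfk}, which this proposition is in the middle of proving, and the ``candidates produced by Theorem~\ref{thm:M_F-E24}'' are likewise unavailable, since Proposition~\ref{prop:recover-15n43522} is a step in the proof of Theorem~\ref{thm:M_F-E24}.  Even setting circularity aside, Table~\ref{fig:hfk-table} shows that $15n_{43522}$ and $\Wh^-(T_{2,3},2)$ have \emph{identical} bigraded knot Floer homology, so matching $\hfkhat$ against Table~\ref{fig:hfk-table} cannot separate them.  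One could rescue this by additionally noting that for $K_\beta$ the Seifert surface complement $M_F$ is $S^3\setminus N(T_{2,4})$ by construction, whereas for $\Wh^-(T_{2,3},2)$ it is $S^3\setminus N(C_{2,4}(T_{2,3}))$, and these are distinguished as invariants of the knot by uniqueness of the genus-1 Seifert surface; but you do not make that argument, and even with it you would still need a non-circular source for the finite candidate list.  The honest way to finish, as in the paper, is a direct (and certifiable) identification of the branched cover.
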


\begin{proof}
In this case, Lemma~\ref{lem:y-conjugation} says that we need only consider $\beta = x^3y^{-1}x^2 y$, as shown in Figure~\ref{fig:braid-15n43522}.  We can repeat the same procedure as in Propositions~\ref{prop:recover-5_2} and \ref{prop:recover-pretzels} to find $K_\beta$, but this is not very enlightening because we find it hard to identify 15-crossing knots from their diagrams.

Instead, we ask SnapPy \cite{snappy} to do the hard work for us: we give it the link $U\cup\kappa$ on the left side of Figure~\ref{fig:braid-15n43522}, do a $(2,0)$-Dehn filling of $U$ (i.e., an orbifold Dehn filling of $U$ with meridional slope, so that $U$ has cone angle $\pi$), and then look at the double covers of the result that are not themselves orbifolds.  SnapPy can produce triangulations of these, and it identifies one of them as the complement of $15n_{43522}$, so this must be $K_\beta$.
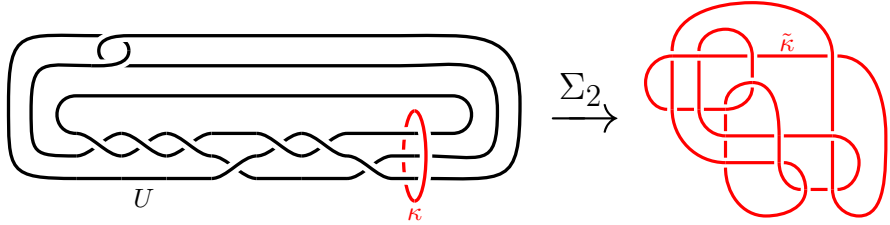
\begin{figure}
\begin{tikzpicture}
\begin{scope} % initial tangle
\draw[linkred] (4.5,0.6) arc (90:270:0.15 and 0.6);
\draw[link] (0,0.3) to[out=180,in=180] ++(0,0.5) -- ++(5,0) to[out=0,in=0] ++(0,-0.5) -- ++(-0.6,0);
\draw[link] (0,0) to[out=180,in=270] ++(-0.6,0.6) to[out=90,in=180] ++(0.9,0.6) to[out=0,in=270,looseness=1] ++(0.4,0.2) to[out=90,in=0,looseness=1] ++(-0.4,0.2) to[out=180,in=90] ++(-1.2,-0.9) to[out=270,in=180] ++(0.9,-1);
\draw[link] (4.4,0) -- ++(0.2,0) to[out=0,in=270] ++(1,0.6) to[out=90,in=0,looseness=1.4] ++(-1.3,0.6) -- ++(-3.6,0) to[out=180,in=270,looseness=1] ++(-0.4,0.2) to[out=90,in=180,looseness=1] ++(0.4,0.2) -- ++(3.6,0) to[out=0,in=90,looseness=1.4] ++(1.6,-0.9) to[out=270,in=0] ++(-1.3,-1) -- ++(-0.2,0);
\draw[link] (0.7,1.4) to[out=270,in=0,looseness=1] ++(-0.4,-0.2) -- ++(-0.1,0); % redraw a crossing
\foreach \x in {0,1,2,4,5} {
 \draw[link,looseness=0.75] (0.6*\x,0) to[out=0,in=180] ++(0.6,0.3);
 \draw[link,looseness=0.75] (0.6*\x,0.3) to[out=0,in=180] ++(0.6,-0.3);
 \draw[link] (0.6*\x,-0.3) -- ++(0.6,0);
}
\draw[link,looseness=0.75] (1.8,0) to[out=0,in=180] ++(0.6,-0.3) (3.6,-0.3) to[out=0,in=180] ++(0.6,0.3) -- ++(0.3,0);
\draw[link,looseness=0.75] (1.8,-0.3) to[out=0,in=180] ++(0.6,0.3) (3.6,0) to[out=0,in=180] ++(0.6,-0.3) -- ++(0.3,0);
\draw[link] (1.8,0.3) -- ++(0.6,0) (3.6,0.3) -- ++(0.9,0);
\draw[linkred] (4.5,0.6) arc (90:-90:0.15 and 0.6) node[below,red,inner sep=2pt] {\small$\kappa$};
\node[below] at (0.9,-0.3) {\small$U$};
\end{scope}

\node at (6.75,0.75) {\huge$\xrightarrow{\dcover}$};

\node[red,above] at (9.45,1.25) {\small$\tilde\kappa$};
\begin{scope}[xshift=7.5cm,yshift=-1cm,scale=0.33,very thick,color=red] % code produced by PLink Viewer
    \draw (5.78, 2.76) .. controls (6.31, 2.76) and (6.66, 2.25) .. (6.66, 1.68);
    \draw (6.66, 1.68) .. controls (6.66, 0.97) and (5.86, 0.61) .. 
          (5.05, 0.61) .. controls (4.09, 0.61) and (3.43, 1.54) .. (3.43, 2.56);
    \draw (3.43, 2.96) .. controls (3.43, 3.19) and (3.43, 3.41) .. (3.43, 3.64);
    \draw (3.43, 4.03) .. controls (3.43, 4.26) and (3.43, 4.49) .. (3.43, 4.71);
    \draw (3.43, 5.11) .. controls (3.43, 5.64) and (3.95, 5.99) .. (4.51, 5.99);
    \draw (4.51, 5.99) .. controls (5.39, 5.99) and (5.59, 4.87) .. (5.59, 3.84);
    \draw (5.59, 3.84) .. controls (5.59, 3.48) and (5.59, 3.12) .. (5.59, 2.76);
    \draw (5.59, 2.76) .. controls (5.59, 2.20) and (5.94, 1.68) .. (6.46, 1.68);
    \draw (6.86, 1.68) .. controls (7.09, 1.68) and (7.31, 1.68) .. (7.54, 1.68);
    \draw (7.94, 1.68) .. controls (8.46, 1.68) and (8.81, 2.20) .. 
          (8.81, 2.76) .. controls (8.81, 3.35) and (8.33, 3.84) .. (7.74, 3.84);
    \draw (7.74, 3.84) .. controls (7.09, 3.84) and (6.43, 3.84) .. (5.78, 3.84);
    \draw (5.39, 3.84) .. controls (4.74, 3.84) and (4.08, 3.84) .. (3.43, 3.84);
    \draw (3.43, 3.84) .. controls (2.84, 3.84) and (2.36, 4.32) .. (2.36, 4.91);
    \draw (2.36, 4.91) .. controls (2.36, 5.56) and (2.36, 6.21) .. (2.36, 6.87);
    \draw (2.36, 7.26) .. controls (2.36, 7.79) and (2.87, 8.14) .. 
          (3.43, 8.14) .. controls (4.03, 8.14) and (4.51, 7.66) .. (4.51, 7.06);
    \draw (4.51, 7.06) .. controls (4.51, 6.77) and (4.51, 6.48) .. (4.51, 6.19);
    \draw (4.51, 5.79) .. controls (4.51, 5.26) and (3.99, 4.91) .. (3.43, 4.91);
    \draw (3.43, 4.91) .. controls (3.14, 4.91) and (2.85, 4.91) .. (2.56, 4.91);
    \draw (2.16, 4.91) .. controls (1.93, 4.91) and (1.71, 4.91) .. (1.48, 4.91);
    \draw (1.08, 4.91) .. controls (0.56, 4.91) and (0.21, 5.43) .. 
          (0.21, 5.99) .. controls (0.21, 6.58) and (0.69, 7.06) .. (1.28, 7.06);
    \draw (1.28, 7.06) .. controls (1.64, 7.06) and (2.00, 7.06) .. (2.36, 7.06);
    \draw (2.36, 7.06) .. controls (3.01, 7.06) and (3.66, 7.06) .. (4.31, 7.06);
    \draw (4.71, 7.06) .. controls (5.65, 7.06) and (6.60, 7.06) .. (7.54, 7.06);
    \draw (7.94, 7.06) .. controls (9.32, 7.06) and (9.89, 5.43) .. 
          (9.89, 3.84) .. controls (9.89, 2.37) and (9.89, 0.61) .. 
          (8.81, 0.61) .. controls (8.22, 0.61) and (7.74, 1.09) .. (7.74, 1.68);
    \draw (7.74, 1.68) .. controls (7.74, 2.34) and (7.74, 2.99) .. (7.74, 3.64);
    \draw (7.74, 4.03) .. controls (7.74, 5.04) and (7.74, 6.05) .. (7.74, 7.06);
    \draw (7.74, 7.06) .. controls (7.74, 8.50) and (6.13, 9.21) .. 
          (4.51, 9.21) .. controls (2.92, 9.21) and (1.28, 8.65) .. (1.28, 7.26);
    \draw (1.28, 6.87) .. controls (1.28, 6.21) and (1.28, 5.56) .. (1.28, 4.91);
    \draw (1.28, 4.91) .. controls (1.28, 3.72) and (2.25, 2.76) .. (3.43, 2.76);
    \draw (3.43, 2.76) .. controls (4.08, 2.76) and (4.74, 2.76) .. (5.39, 2.76);
\end{scope}
\end{tikzpicture}
\caption{The braid $\beta = x^3y^{-1}x^2y$ leads to $K_\beta \cong 15n_{43522}$.}
\label{fig:braid-15n43522}
\end{figure}
\end{proof}

\begin{remark}
SnapPy looks for isometries between a given pair of hyperbolic manifolds by first attempting to produce a canonical triangulation of each, and then comparing the resulting triangulations combinatorially.  Thus when it succeeds, as in the proof of Proposition~\ref{prop:recover-15n43522}, the result is certifiably true: it has found identical triangulations of each, and it does not need any numerical approximation to verify that the triangulations agree.
\end{remark}

As explained at the beginning of this section, this completes the proof of Theorem~\ref{thm:M_F-E24}.

\section{The $(2,4)$-cable of the trefoil} \label{sec:trefoil-24}

In this section, we determine all knots $K\subset S^3$ which arise from the second case of Theorem~\ref{thm:identify-y-c}, in which $M_F$ is the complement of the $(2,4)$-cable of the right-handed trefoil.  Our goal is to prove the following:

\begin{theorem} \label{thm:M_F-C24-T23}
Let $K\subset S^3$ be a nearly fibered knot with genus-1 Seifert surface $F$, and suppose that \[M_F \cong S^3 \setminus N(C_{2,4}(T_{2,3})).\]  Then $K$ is one of the twisted Whitehead doubles
\[ \Wh^{+}(T_{2,3},2)\textrm{ or }\Wh^{-}(T_{2,3},2). \]
\end{theorem}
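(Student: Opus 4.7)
The plan is to run the same branched-double-cover strategy used for Theorem~\ref{thm:M_F-E24}, but with the cabling torus trefoil replacing the cabled unknot. First, I would exhibit an involution $\iota$ of $(M,\gamma)=S^3(F)$, analogous to the one in Figure~\ref{fig:E-T24-quotient}, realized as rotation about an axis of symmetry of $S^3\setminus N(C_{2,4}(T_{2,3}))$ containing the arc $\alpha\subset A$ supplied by Proposition~\ref{prop:t-in-cabling-annulus}; the quotient is a sutured 3-ball $S^3(D^2)$ branched along a tangle $\tau'$ (this time genuinely knotted, reflecting the trefoil companion). As in \S\ref{sec:unknot-24}, since once-punctured tori admit a unique hyperelliptic involution up to isotopy, any gluing $\varphi$ extends $\iota$ to an involution $\hat\iota$ of $Y_\varphi = S^3(F)\cup_\varphi(F\times[-1,1])$, presenting $Y_\varphi$ as the double cover of $S^3$ branched over $\tau'\cup\beta$ for some $3$-braid $\beta\in B_3$, with $K$ recovered as the lift $\tilde\kappa$ of the braid axis. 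In particular $Y_\varphi\cong S^3$ iff $\tau'\cup\beta$ is unknotted, so Theorem~\ref{thm:M_F-C24-T23} reduces to classifying such $\beta$ and computing the corresponding lifts.

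Next I would carry out the classification of $\beta$ by mimicking \S\ref{ssec:bdc-beta}--\S\ref{ssec:gamma-torus-knot}. After fixing a preferred crossing of $\tau'\cup\beta$ whose three local replacements produce a $2$-bridge knot $L^\beta$ together with two $3$-braid closures $L_0^\beta=\widehat\beta$ and $L_1^\beta=\widehat{\beta y^{-1}}$, the Montesinos trick realizes $\dcover(L^\beta)$, $\dcover(L_0^\beta)$, $\dcover(L_1^\beta)$ as consecutive surgeries on a curve $\gamma'\subset\dcover(\tau'\cup\beta)\cong S^3$ of slopes $\tfrac{2n+1}{2}, n, n+1$. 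Since $\dcover(L^\beta)$ is a lens space, the cyclic surgery theorem forces $\gamma'$ to be an unknot or nontrivial torus knot, and then trace/determinant identities for the $B_3$-action $\rho$ on $H_1(T^2)$ (Lemmas~\ref{lem:bdc-2-bridge}--\ref{lem:rho-beta-from-l-beta}, which transfer verbatim since they concern only the genus-$1$ Heegaard splitting) together with Birman--Menasco's classification of $3$-braids with torus-knot closures will pin $\beta$ down to a small explicit list of conjugacy classes modulo $y$-conjugation and braid reversal.

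The main obstacle, and the step that will consume most of the section, is the more intricate tangle $\tau'$: because the branch locus now encodes the trefoil companion, the clasp in the analogue of Figure~\ref{fig:E-T24-isotopy} is replaced by a more knotted picture, so the isotopies of Lemmas~\ref{lem:reverse-braid}--\ref{lem:y-conjugation} that symmetrize and conjugate $\beta$ have to be re-verified in the new diagram, and the simple fillings that reduced $\tau\cup\beta$ to a trefoil in Figure~\ref{fig:braids-xY-XY} will instead rule out (or identify) different exceptional braids. I expect that after this bookkeeping the surviving braids will be very few — essentially two families producing lifts whose diagrams, as in Propositions~\ref{prop:recover-5_2}--\ref{prop:recover-15n43522}, can be simplified directly by hand or certified by SnapPy.

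Finally, for each surviving $\beta$, I would compute $K_\beta$ by inserting $\beta$ into the diagram of $\tau'\cup\kappa$, pushing $\kappa$ onto a disk bounded by the unknot $\tau'\cup\beta$, and taking the double branched cover of the resulting planar picture to read off a diagram of $\tilde\kappa$. The expected output in each case is a clasped Whitehead-style pattern embedded in a neighborhood of the trefoil companion: the two sign choices in the clasp correspond precisely to the two twisted Whitehead doubles $\Wh^{\pm}(T_{2,3},2)$, with twisting parameter forced to $2$ because $g(\tilde\kappa)=1$ and $\tilde\kappa$ must be nearly fibered (so the twisting is fixed by the known Seifert genus-$1$ constraint on doubles of the trefoil with the relevant framing). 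Combining this identification with the list of admissible $\beta$ then yields Theorem~\ref{thm:M_F-C24-T23}.
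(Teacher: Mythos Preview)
Your overall architecture is correct and matches the paper: the involution on $S^3(F)$, the identification of $K$ as the lift of the braid axis in $\dcover(\tau'\cup\beta)$, and the reduction to classifying $3$-braids $\beta$ with $\tau'\cup\beta$ unknotted are exactly how the paper proceeds. The difficulty is in the step where you assume the chosen crossing change produces a $2$-bridge knot $L^\beta$. In the trefoil case this fails: the crossing change the paper uses yields $L^\beta \cong T_{-2,3}\#(\tau_{-1/4}\cup\beta)$, a connected sum, so $\dcover(L^\beta)$ is not a priori a lens space and you cannot invoke the cyclic surgery theorem directly as in \S\ref{ssec:bdc-beta}.

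The paper gets around this with a different chain of ideas. Since $\dcover(L^\beta)$ is a non-integral surgery on $\gamma'$, it is irreducible (Gordon--Luecke), forcing the second summand to be $S^3$ and hence $\tau_{-1/4}\cup\beta$ to be unknotted; then $\dcover(L^\beta)\cong L(3,2)$, and now the cyclic surgery theorem plus Moser's lens space surgeries show $\gamma'$ is an unknot with $n=1$, whence $\tau_{-1/7}\cup\beta$ is also unknotted. At this point, rather than running the full trace/Murasugi analysis of \S\ref{ssec:gamma-unknot}--\S\ref{ssec:gamma-torus-knot}, the paper makes two further rational tangle replacements: the two unknotted fillings $\tau_{-1/4}$ and $\tau_{-1/7}$ give two $S^3$ fillings of the same knot complement, so Gordon--Luecke's knot complement theorem forces that knot to be unknotted, and a third filling then shows $\dcover(\hat\beta)\cong S^1\times S^2$, i.e.\ $\hat\beta$ is a $2$-component unlink by Tollefson. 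This extremely strong constraint, combined with the $\rho$-representation computation, pins $\beta$ down to $y^a x^{\pm1} y^{-a}$ immediately, with no torus-knot case analysis needed. Your plan to ``mimic \S\ref{ssec:bdc-beta}--\S\ref{ssec:gamma-torus-knot}'' therefore misses the key new ingredients (Gordon--Luecke irreducibility, the knot complement theorem, and Tollefson) that make the trefoil case tractable.
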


Just as in Section \ref{sec:unknot-24}, we observe that under the hypotheses of Theorem \ref{thm:M_F-C24-T23}, the sutured Seifert surface complement $S^3(F)$ admits an involution $\iota$, illustrated in Figure~\ref{fig:E-C24-T23-quotient-1}, realizing this complement as the branched double cover of a sutured $3$-ball along a tangle $\tau$, as shown in Figure~\ref{fig:E-C24-T23-quotient-2}.  The exact same reasoning as in the previous section then implies the following analogue of Lemma~\ref{lem:E24-as-branched-cover}:

\begin{figure}
\begin{tikzpicture}
% Draw S^3(F)
\begin{scope}[xshift=3.775cm,yshift=-1cm]
\path[fill=gray!10] (-3.6,-2.1) rectangle (4.7,6.35);
\draw[blue,very thick] (-3.35,2) -- (4.35,2); % axis of involution
\draw[blue,-latex] (4.35,2) ++ (-135:0.1 and 0.4) arc (-135:165:0.1 and 0.4);
\path[blue] (4.35,2) ++ (0,0.4) node[above] {\small$\iota$};

\begin{scope}[every path/.append style={looseness=0.75}] % twist region
\foreach \y in {4,8/3,4/3} {
\foreach \x in {-0.6,0} {
  \path[fill=white] (\x,\y) to[out=270,in=90] ++(1.3,-4/3) -- ++(0.4,0) to[out=90,in=270] ++(-1.3,4/3) -- cycle;
  \draw (\x,\y) to[out=270,in=90] ++(1.3,-4/3) ++(0.4,0) to[out=90,in=270] ++(-1.3,4/3);
}
\foreach \x in {1.1,1.7} {
  \path[fill=white] (\x,\y) to[out=270,in=90] ++(-1.3,-4/3) -- ++(-0.4,0) to[out=90,in=270] ++(1.3,4/3) -- cycle;
  \draw (\x,\y) to[out=270,in=90] ++(-1.3,-4/3) ++ (-0.4,0) to[out=90,in=270] ++(1.3,4/3);
}
}
\end{scope}

\begin{scope}[every path/.append style={looseness=2}] % left side of trefoil
\path[fill=white] (-0.6,4) to[out=90,in=90] ++(-1,0) -- ++(0,-4) to[out=270,in=270] ++(1,0) -- ++(0.4,0) to[out=270,in=270] ++(-1.8,0) -- ++(0,4) to[out=90,in=90] ++(1.8,0);
\draw (-0.6,4) to[out=90,in=90] ++(-1,0) -- ++(0,-4) to[out=270,in=270] ++(1,0) ++(0.4,0) to[out=270,in=270] ++(-1.8,0) -- ++(0,4) to[out=90,in=90] ++(1.8,0);

\path[fill=white] (0,4) to[out=90,in=90] ++(-2.2,0) -- ++(0,-4) to[out=270,in=270] ++(2.2,0) -- ++(0.4,0) to[out=270,in=270] ++(-3,0) -- ++(0,4) to[out=90,in=90] ++(3,0);
\draw (0,4) to[out=90,in=90] ++(-2.2,0) -- ++(0,-4) to[out=270,in=270] ++(2.2,0) ++(0.4,0) to[out=270,in=270] ++(-3,0) -- ++(0,4) to[out=90,in=90] ++(3,0);
\end{scope}

\begin{scope}[every path/.append style={looseness=2}] % right side of trefoil
\path[fill=white] (1.1,4) to[out=90,in=90] ++(2.2,0) to[out=270,in=90,looseness=1] ++(-1.1,-4/3) -- ++(0.4,0) to[out=90,in=270,looseness=1] ++(1.1,4/3) to[out=90,in=90] ++(-3,0);
\draw (1.1,4) to[out=90,in=90] ++(2.2,0) to[out=270,in=90,looseness=1] ++(-1.1,-4/3) ++(0.4,0) to[out=90,in=270,looseness=1] ++(1.1,4/3) to[out=90,in=90] ++(-3,0);
\path[fill=white] (1.7,4) to[out=90,in=90] ++(0.5,0) to[out=270,in=90,looseness=1] ++(1.1,-4/3) -- ++(0.4,0) to[out=90,in=270,looseness=1] ++(-1.1,4/3) to[out=90,in=90] ++(-1.3,0);
\draw (1.7,4) to[out=90,in=90] ++(0.5,0) to[out=270,in=90,looseness=1] ++(1.1,-4/3) ++(0.4,0) to[out=90,in=270,looseness=1] ++(-1.1,4/3) to[out=90,in=90] ++(-1.3,0);

\path[fill=white] (1.7,0) to[out=270,in=270] ++(0.5,0) to[out=90,in=270,looseness=1] ++(1.1,4/3) -- ++(0.4,0) to[out=270,in=90,looseness=1] ++(-1.1,-4/3) to[out=270,in=270] ++(-1.3,0);
\draw (1.7,0) to[out=270,in=270] ++(0.5,0) to[out=90,in=270,looseness=1] ++(1.1,4/3) ++(0.4,0) to[out=270,in=90,looseness=1] ++(-1.1,-4/3) to[out=270,in=270] ++(-1.3,0);
\path[fill=white] (1.1,0) to[out=270,in=270] ++(2.2,0) to[out=90,in=270,looseness=1] ++(-1.1,4/3) -- ++(0.4,0) to[out=270,in=90,looseness=1] ++(1.1,-4/3) to[out=270,in=270] ++(-3,0);
\draw (1.1,0) to[out=270,in=270] ++(2.2,0) to[out=90,in=270,looseness=1] ++(-1.1,4/3) ++(0.4,0) to[out=270,in=90,looseness=1] ++(1.1,-4/3) to[out=270,in=270] ++(-3,0);
\end{scope}

\draw[red,ultra thick] (2.95,2.5) arc (90:270:0.15 and 0.5); % back of K
\begin{scope}[every path/.append style={looseness=2}] % rest of exterior
\path[fill=white] (2.2,1.8) rectangle (3.7,2.2);
\path[fill=white] (2.2,4/3) rectangle (2.6,8/3);
\path[fill=white] (3.3,4/3) rectangle (3.7,8/3);
\path[fill=white] (2.7,2.2) arc (270:180:0.1) -- ++(0,-0.1) -- ++(0.1,0); % NW corner of N(alpha)
\path[fill=white] (3.2,2.2) arc (-90:0:0.1) -- ++(0,-0.1) -- ++(-0.1,0); % NE corner of N(alpha)
\path[fill=white] (2.7,1.8) arc (90:180:0.1) -- ++(0,0.1) -- ++(0.1,0); % SW corner of N(alpha)
\path[fill=white] (3.2,1.8) arc (90:0:0.1) -- ++(0,0.1) -- ++(-0.1,0); % SE corner of N(alpha)
\draw (2.2,4/3) -- ++(0,4/3) (3.7,4/3) -- ++(0,4/3);
\draw (2.6,8/3) -- (2.6,2.3) arc (180:270:0.1) -- ++(0.5,0) arc (270:360:0.1) -- (3.3,8/3);
\draw (2.6,4/3) -- (2.6,1.7) arc (180:90:0.1) -- ++(0.5,0) arc (90:0:0.1) -- (3.3,4/3);
\end{scope}
\begin{scope}
\clip (2.35,1.8) rectangle (3.15,2.2);
\draw[ultra thick] (2.95,2) ++(0,0.22) arc (90:-90:0.05 and 0.22) node[midway,left,inner sep=1pt] {\tiny$s(\gamma)$};
\end{scope}
\draw[red,ultra thick] (2.95,2.5) node[above] {$K$} arc (90:-90:0.15 and 0.5);
\end{scope}
\end{tikzpicture}
\caption{The involution $\iota$ of $S^3(F) \cong M_F\setminus N(\alpha)$  in the case where $M_F \cong S^3 \setminus N(C_{2,4}(T_{2,3}))$, given by $180^\circ$ rotation about the horizontal axis (in blue). The meridian of $\alpha$ (in red) is isotopic in $S^3(F)$ to a pushoff of $K$.}
\label{fig:E-C24-T23-quotient-1}
\end{figure}
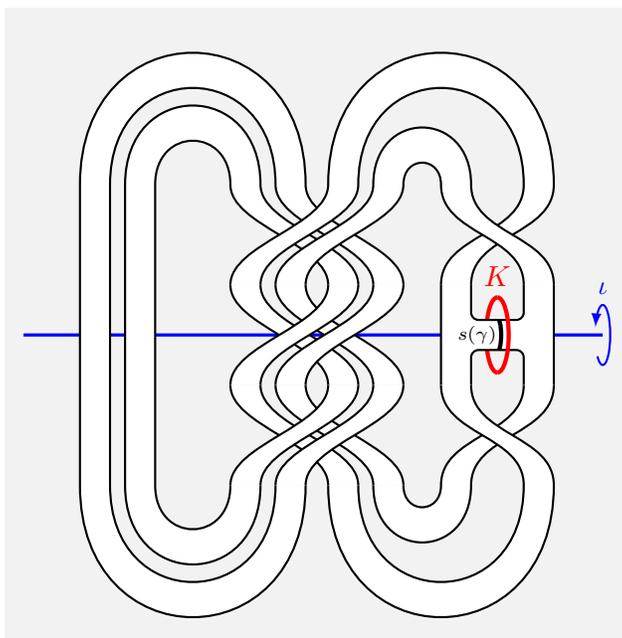
\begin{figure}
\begin{tikzpicture}
%%% quotient and isotopy
% Quotient by the involution
\begin{scope}[yshift=-9cm,scale=2/3]
\path[fill=gray!10] (-3.6,0) rectangle (4.7,6.1);
\draw[blue,very thick,looseness=1.5] (-2.85,2) -- (4,2) arc (90:0:0.25) -- ++(0,-0.9) arc (360:270:0.25) -- ++(-6.85,0) arc (270:180:0.25) -- ++(0,0.9) arc (180:90:0.25); % branch locus

\begin{scope}[every path/.append style={looseness=0.75}] % twist region
\foreach \y in {4} {
\foreach \x in {-0.6,0} {
  \path[fill=white] (\x,\y) to[out=270,in=90] ++(1.3,-5/3) -- ++(0.4,0) to[out=90,in=270] ++(-1.3,5/3) -- cycle;
  \draw (\x,\y) to[out=270,in=90] ++(1.3,-5/3) ++(0.4,0) to[out=90,in=270] ++(-1.3,5/3);
}
\foreach \x in {1.1,1.7} {
  \path[fill=white] (\x,\y) to[out=270,in=90] ++(-1.3,-5/3) -- ++(-0.4,0) to[out=90,in=270] ++(1.3,5/3) -- cycle;
  \draw (\x,\y) to[out=270,in=90] ++(-1.3,-5/3) ++ (-0.4,0) to[out=90,in=270] ++(1.3,5/3);
}
}
\end{scope}
\begin{scope}[every path/.append style={looseness=2}]
\foreach \x in {1.7,1.1} {
  \draw[fill=white] (\x,7/3) -- ++(0,-1/3) to[out=270,in=270] ++(-1.7,0) -- ++(0,1/3) ++(0.4,0) -- ++(0,-1/3) to[out=270,in=270] ++(0.9,0) -- ++(0,1/3);
}
\draw[blue,very thick] (0.55,2) -- (-1,2); % redraw part of axis
\end{scope}

\begin{scope}[every path/.append style={looseness=2}] % left side of trefoil
\path[fill=white] (-0.6,4) to[out=90,in=90] ++(-1,0) -- ++(0,-2) arc (360:180:0.2) -- ++(0,2) to[out=90,in=90] ++(1.8,0);
\draw (-0.6,4) to[out=90,in=90] ++(-1,0) -- ++(0,-2) arc (360:180:0.2) -- ++(0,2) to[out=90,in=90] ++(1.8,0);

\path[fill=white] (0,4) to[out=90,in=90] ++(-2.2,0) -- ++(0,-2) arc (360:180:0.2) -- ++(0,2) to[out=90,in=90] ++(3,0);
\draw (0,4) to[out=90,in=90] ++(-2.2,0) -- ++(0,-2) arc (360:180:0.2) -- ++(0,2) to[out=90,in=90] ++(3,0);
\end{scope}

\draw[red,ultra thick] (2.95,2.5) arc (90:270:0.15 and 0.5); % back of K
\begin{scope}[every path/.append style={looseness=2}] % rest of exterior
\path[fill=white] (2.3,1.8) rectangle (3.6,2.2);
\path[fill=white] (2.7,2.2) arc (270:180:0.1) -- ++(0,-0.1) -- ++(0.1,0); % NW corner of N(alpha)
\path[fill=white] (3.2,2.2) arc (-90:0:0.1) -- ++(0,-0.1) -- ++(-0.1,0); % NE corner of N(alpha)
\draw (2.6,8/3) -- (2.6,2.3) arc (180:270:0.1) -- ++(0.5,0) arc (270:360:0.1) -- (3.3,8/3);
\end{scope}

\begin{scope}[every path/.append style={looseness=2}] % right side of trefoil
\path[fill=white] (1.1,4) to[out=90,in=90] ++(2.2,0) to[out=270,in=90,looseness=1] ++(-1.1,-4/3) -- ++(0,1.9-8/3) arc (180:270:0.1) -- ++(0.3,0) -- ++(0,8/3-1.8) to[out=90,in=270,looseness=1] ++(1.1,4/3) to[out=90,in=90] ++(-3,0);
\draw (1.1,4) to[out=90,in=90] ++(2.2,0) to[out=270,in=90,looseness=1] ++(-1.1,-4/3) -- ++(0,1.9-8/3) arc (180:270:0.1) -- ++(0.3,0) ++(0,0.5) -- ++(0,8/3-2.3) to[out=90,in=270,looseness=1] ++(1.1,4/3) to[out=90,in=90] ++(-3,0);
\path[fill=white] (1.7,4) to[out=90,in=90] ++(0.5,0) to[out=270,in=90,looseness=1] ++(1.1,-4/3) -- ++(0,1.8-8/3) -- ++(0.3,0) arc (270:360:0.1) -- ++(0,8/3-1.9) to[out=90,in=270,looseness=1] ++(-1.1,4/3) to[out=90,in=90] ++(-1.3,0);
\draw (1.7,4) to[out=90,in=90] ++(0.5,0) to[out=270,in=90,looseness=1] ++(1.1,-4/3) ++(0,1.8-8/3) -- ++(0.3,0) arc (270:360:0.1) -- ++(0,8/3-1.9) to[out=90,in=270,looseness=1] ++(-1.1,4/3) to[out=90,in=90] ++(-1.3,0);
%\path[fill=white] (1.7,0) to[out=270,in=270] ++(0.5,0) to[out=90,in=270,looseness=1] ++(1.1,4/3) -- ++(0.4,0) to[out=270,in=90,looseness=1] ++(-1.1,-4/3) to[out=270,in=270] ++(-1.3,0);
%\draw (1.7,0) to[out=270,in=270] ++(0.5,0) to[out=90,in=270,looseness=1] ++(1.1,4/3) ++(0.4,0) to[out=270,in=90,looseness=1] ++(-1.1,-4/3) to[out=270,in=270] ++(-1.3,0);
%\path[fill=white] (1.1,0) to[out=270,in=270] ++(2.2,0) to[out=90,in=270,looseness=1] ++(-1.1,4/3) -- ++(0.4,0) to[out=270,in=90,looseness=1] ++(1.1,-4/3) to[out=270,in=270] ++(-3,0);
%\draw (1.1,0) to[out=270,in=270] ++(2.2,0) to[out=90,in=270,looseness=1] ++(-1.1,4/3) ++(0.4,0) to[out=270,in=90,looseness=1] ++(1.1,-4/3) to[out=270,in=270] ++(-3,0);
\draw (2.6,1.8) -- (3.3,1.8);
\draw (2.6,8/3) -- (2.6,2.3) arc (180:270:0.1) -- (3.2,2.2) arc (270:360:0.1) -- (3.3,8/3);
\end{scope}
\begin{scope}
\clip (2.8,1.8) rectangle (3.2,2.2);
\draw[ultra thick] (2.95,2.22) arc (90:-90:0.04 and 0.22);% node[midway,left,inner sep=1pt] {\tiny$\gamma$};
\end{scope}
\draw[red,ultra thick] (2.95,2.5) arc (90:-90:0.15 and 0.5);
\end{scope}

% Do an isotopy
\begin{scope}[xshift=7cm,yshift=-10cm]
\tikzset{linkb/.style = { gray!10, double = blue, line width = 1.75pt, double distance = 1.25pt, looseness=1.75 }}
\path[fill=gray!10] (-3.35,0) rectangle (4.05,6.1);
\begin{scope}[yshift=-0.2cm]
\draw[red,ultra thick] (2.5,2) ++(0,0.55) arc (90:270:0.2 and 0.55);
% start filling in complement
\path[fill=white] (2,2.4) -- ++(1,0) arc (90:-90:0.15 and 0.4) -- ++(-1,0) arc (-90:90:0.15 and 0.4);
% parts of branch locus near complement, with left-handed twists
\draw[linkb,looseness=1.5] (-2.75,2) ++(0,-0.2) to[out=180,in=180] ++(0,-1) -- ++(6,0) to[out=0,in=0] ++(0,1) -- ++(-0.25,0);
\draw[linkb,looseness=0.75] (3,2.2) to[out=0,in=270,looseness=1.5] (3.4,2.4) to[out=90,in=270] ++(0.2,0.3) ++(-0.2,0) to[out=90,in=270] ++(0.2,0.3); % right side
\draw[linkb,looseness=0.75] (3,2) to[out=0,in=270,looseness=1.5] (3.6,2.4) to[out=90,in=270] ++(-0.2,0.3) ++(0.2,0) to[out=90,in=270] ++(-0.2,0.3);
\begin{scope} % draw complement, after isotopy
\draw[thin] (3,2.4) arc (90:270:0.15 and 0.4);
\draw[fill=white,fill opacity=0.66] (2,2.4) -- ++(1,0) arc (90:-90:0.15 and 0.4) -- ++(-1,0) arc (-90:90:0.15 and 0.4);
\draw[fill=white] (2,2) ellipse (0.15 and 0.4);
\end{scope}
\draw[blue,very thick,looseness=0.75] (2,2) to[out=180,in=270,looseness=1.5] (1.4,2.4); % left side
\draw[blue,very thick,looseness=0.75] (2,2.2) to[out=180,in=270,looseness=1.5] (1.6,2.4);
\draw[blue,very thick] (2,1.8) -- ++(-0.6,0);
\draw[linkb,looseness=0.75] (1.4,2.4) to[out=90,in=270] ++(0.2,0.3) ++(-0.2,0) to[out=90,in=270] ++(0.2,0.3);
\draw[linkb,looseness=0.75] (1.6,2.4) to[out=90,in=270] ++(-0.2,0.3) ++(0.2,0) to[out=90,in=270] ++(-0.2,0.3);
% crossing near complement
\foreach \x in {1.4,1.6} { \draw[linkb] (\x,3) to[out=90,in=270,looseness=0.75] ++(1.4,1.5); }
\foreach \x in {3.4,3.6} { \draw[linkb] (\x,3) to[out=90,in=270,looseness=0.75] ++(-1.4,1.5); }

\draw[linkb] (1.4,1.8) -- (0,1.8); % extend strand coming from braid complement
\draw[linkb] (1,2) to[out=270,in=270,looseness=2.25] ++(-1.4,0) (0.8,2) to[out=270,in=270,looseness=2.5] ++(-1,0);
\draw[linkb] (0.4,2) to[out=270,in=270,looseness=2.25] ++(-1.4,0) (0.2,2) to[out=270,in=270,looseness=2.5] ++(-1,0);
\foreach \x in {0.2,0.4,0.8,1} { \draw[linkb] (\x,2) to[out=90,in=270,looseness=0.75] ++(-1.4,1.5); }
\foreach \x in {-0.2,-0.4,-0.8,-1} { \draw[linkb] (\x,2) to[out=90,in=270,looseness=0.75] ++(1.4,1.5); }
\draw[linkb] (0,1.8) -- (-1.2,1.8); % extend strand coming from braid complement

% join strands to curve on bottom left
\draw[linkb] (-1,3.5) to[out=90,in=90] ++(-0.8,0) -- ++(0,-1.5);
\draw[linkb] (-1.2,3.5) to[out=90,in=90] ++(-0.4,0) -- ++(0,-1.5);
\draw[linkb] (-0.4,3.5) to[out=90,in=90] ++(-2,0) -- ++(0,-1.5);
\draw[linkb] (-0.6,3.5) to[out=90,in=90] ++(-1.6,0) -- ++(0,-1.5);
\draw[linkb] (-2.4,2) to[out=270,in=0] ++(-0.2,-0.2) -- ++(-0.15,0);
\draw[linkb] (-2.2,2) to[out=270,in=270] ++(0.4,0);
\draw[linkb] (-1.6,2) to[out=270,in=180] ++(0.2,-0.2) -- ++(0.4,0);

% add crossings joining middle to right
\draw[linkb] (0.4,3.5) -- ++(0,1) to[out=90,in=90,looseness=1.75] (3.0,4.5);
\draw[linkb] (0.6,3.5) -- ++(0,1) to[out=90,in=90,looseness=1.75] (2.8,4.5);
\draw[linkb] (1.0,3.5) -- ++(0,1) to[out=90,in=90,looseness=1.75] (2.2,4.5);
\draw[linkb] (1.2,3.5) -- ++(0,1) to[out=90,in=90,looseness=1.75] (2.0,4.5);

\node[blue,left,inner sep=2pt] at (0.4,4.5) {$\tau$};

\draw[red,ultra thick] (2.5,2) ++(0,0.55) node[above] {$\kappa$} arc (90:-90:0.2 and 0.55);
\begin{scope} % draw image of suture
\clip (2.4,1.6) rectangle (2.8,2.4);
\draw[ultra thick] (2.5,2.42) arc (90:-90:0.095 and 0.42);% node[midway,left,inner sep=1pt] {\tiny$\gamma$};
\end{scope}

\end{scope}
\end{scope}

% \draw[-latex] (8.6,3.45) -- node[right] {\small{isotopy}} ++(0,-0.7); % leftover from the N(T_{2,4}) case
\end{tikzpicture}
\caption{Taking the quotient of $S^3(F)$  by the involution $\iota$ from  Figure~\ref{fig:E-C24-T23-quotient-1}, followed by an isotopy.  The quotient has branch locus $\tau$ (blue), and a curve $\kappa$ (red) which lifts to  $K$.}
\label{fig:E-C24-T23-quotient-2}
\end{figure}
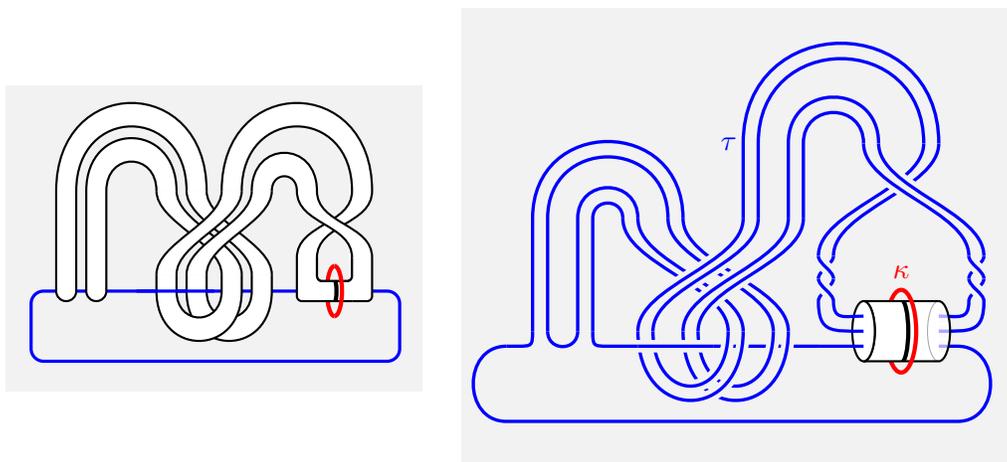

\begin{figure}
\begin{tikzpicture}
\begin{scope}
\draw[linkred,ultra thick] (3.1,2) ++(0,0.55) arc (90:270:0.15 and 0.55);
% parts of branch locus near complement, with left-handed twists
\draw[link,looseness=1.5] (-2.4,2) ++(0,-0.2) to[out=270,in=180] ++(1,-1) -- ++(4.65,0) to[out=0,in=0] ++(0,1) -- ++(-0.25,0);
\draw[link,looseness=0.75] (2,2.2) -- ++(1,0) to[out=0,in=270,looseness=1.5] (3.4,2.4) to[out=90,in=270] ++(0.2,0.3) ++(-0.2,0) to[out=90,in=270] ++(0.2,0.3); % right side
\draw[link,looseness=0.75] (2,2) -- ++(1,0) to[out=0,in=270,looseness=1.5] (3.6,2.4) to[out=90,in=270] ++(-0.2,0.3) ++(0.2,0) to[out=90,in=270] ++(-0.2,0.3);
\draw[link,looseness=0.75] (2,2) to[out=180,in=270,looseness=1.5] (1.4,2.4); % left side
\draw[link,looseness=0.75] (2,2.2) to[out=180,in=270,looseness=1.5] (1.6,2.4);
\draw[link] (3,1.8) -- ++(-1.6,0);
\draw[link,looseness=0.75] (1.4,2.4) to[out=90,in=270] ++(0.2,0.3) ++(-0.2,0) to[out=90,in=270] ++(0.2,0.3);
\draw[link,looseness=0.75] (1.6,2.4) to[out=90,in=270] ++(-0.2,0.3) ++(0.2,0) to[out=90,in=270] ++(-0.2,0.3);
% crossing near complement
\foreach \x in {1.4,1.6} { \draw[link] (\x,3) to[out=90,in=270,looseness=0.75] ++(1.4,1.5); }
\foreach \x in {3.4,3.6} { \draw[link] (\x,3) to[out=90,in=270,looseness=0.75] ++(-1.4,1.5); }

\draw[link] (1.4,1.8) -- (0,1.8); % extend strand coming from braid complement
\draw[link] (1,2) to[out=270,in=270,looseness=2.25] ++(-1.4,0) (0.8,2) to[out=270,in=270,looseness=2.5] ++(-1,0);
\draw[link] (0.4,2) to[out=270,in=270,looseness=2.25] ++(-1.4,0) (0.2,2) to[out=270,in=270,looseness=2.5] ++(-1,0);
\foreach \x in {0.2,0.4,0.8,1} { \draw[link] (\x,2) to[out=90,in=270,looseness=0.75] ++(-1.4,1.5); }
\foreach \x in {-0.2,-0.4,-0.8,-1} { \draw[link] (\x,2) to[out=90,in=270,looseness=0.75] ++(1.4,1.5); }
\draw[link] (0,1.8) -- (-1.2,1.8); % extend strand coming from braid complement

% join strands to curve on bottom left
\draw[link] (-1,3.5) to[out=90,in=90] ++(-0.8,0) -- ++(0,-1.5);
\draw[link] (-1.2,3.5) to[out=90,in=90] ++(-0.4,0) -- ++(0,-1.5);
\draw[link] (-0.4,3.5) to[out=90,in=90] ++(-2,0) -- ++(0,-1.5);
\draw[link] (-0.6,3.5) to[out=90,in=90] ++(-1.6,0) -- ++(0,-1.5);
\draw[link] (-2.4,2) -- ++(0,-0.2);
\draw[link] (-2.2,2) to[out=270,in=270] ++(0.4,0);
\draw[link] (-1.6,2) to[out=270,in=180] ++(0.2,-0.2) -- ++(0.4,0);

% add crossings joining middle to right
\draw[link] (0.4,3.5) -- ++(0,1) to[out=90,in=90,looseness=1.75] (3.0,4.5);
\draw[link] (0.6,3.5) -- ++(0,1) to[out=90,in=90,looseness=1.75] (2.8,4.5);
\draw[link] (1.0,3.5) -- ++(0,1) to[out=90,in=90,looseness=1.75] (2.2,4.5);
\draw[link] (1.2,3.5) -- ++(0,1) to[out=90,in=90,looseness=1.75] (2.0,4.5);

\node[left,inner sep=2pt] at (0.4,4.5) {$\tau$};
\begin{scope}[xshift=-0.2cm]
\draw[very thick,fill=white] (2,1.55) rectangle (3,2.45);
\node at (2.5,2) {\Large$\beta$};
\end{scope}
\draw[linkred,ultra thick] (3.1,2) ++(0,0.55) node[red,above] {$\kappa$} arc (90:-90:0.15 and 0.55);
\end{scope}

%%%

\begin{scope}[xshift=7cm]
\draw[linkred,ultra thick] (3.1,2) ++(0,0.55) arc (90:270:0.15 and 0.55);
% parts of branch locus near complement, with left-handed twists
\draw[link,looseness=1.5] (-2.4,2) ++(0,-0.2) to[out=270,in=180] ++(1,-1) -- ++(4.65,0) to[out=0,in=0] ++(0,1) -- ++(-0.25,0);
\draw[link,looseness=0.75] (2,2.2) -- ++(1,0) to[out=0,in=270,looseness=1.5] (3.4,2.4) to[out=90,in=270] ++(0.2,0.3) ++(-0.2,0) to[out=90,in=270] ++(0.2,0.3); % right side
\draw[link,looseness=0.75] (2,2) -- ++(1,0) to[out=0,in=270,looseness=1.5] (3.6,2.4) to[out=90,in=270] ++(-0.2,0.3) ++(0.2,0) to[out=90,in=270] ++(-0.2,0.3);
\draw[link,looseness=0.75] (2,2) to[out=180,in=270,looseness=1.5] (1.4,2.4); % left side
\draw[link,looseness=0.75] (2,2.2) to[out=180,in=270,looseness=1.5] (1.6,2.4);
\draw[link] (3,1.8) -- ++(-1.6,0);
\draw[link,looseness=0.75] (1.4,2.4) to[out=90,in=270] ++(0.2,0.3) ++(-0.2,0) to[out=90,in=270] ++(0.2,0.3);
\draw[link,looseness=0.75] (1.6,2.4) to[out=90,in=270] ++(-0.2,0.3) ++(0.2,0) to[out=90,in=270] ++(-0.2,0.3);
% crossing near complement
\foreach \x in {1.4,1.6} { \draw[link] (\x,3) to[out=90,in=270,looseness=0.75] ++(1.4,1.5); }
\foreach \x in {3.4,3.6} { \draw[link] (\x,3) to[out=90,in=270,looseness=0.75] ++(-1.4,1.5); }

\draw[link] (1.4,1.8) -- (0,1.8); % extend strand coming from braid complement
\draw[link] (1,2) to[out=270,in=270,looseness=2.25] ++(-1.4,0);
\draw[link] (0.2,2) to[out=270,in=270,looseness=2.5] ++(-1,0);
\foreach \x in {0.2,1} { \draw[link] (\x,2) to[out=90,in=270,looseness=0.75] ++(-1.4,1.5); }
\foreach \x in {-0.4,-0.8} { \draw[link] (\x,2) to[out=90,in=270,looseness=0.75] ++(1.4,1.5); }
\draw[link] (0,1.8) -- (-1.2,1.8); % extend strand coming from braid complement

% join strands to curve on bottom left
\draw[link] (-1.2,3.5) to[out=90,in=90] ++(-0.4,0) -- ++(0,-1.5);
\draw[link] (-0.4,3.5) to[out=90,in=90] ++(-2,0) -- ++(0,-1.5);
\draw[link] (-2.4,2) -- ++(0,-0.2);
\draw[link] (-1.6,2) to[out=270,in=180] ++(0.2,-0.2) -- ++(0.4,0);

% add crossings joining middle to right
%\draw[link] (0.4,4.5) to[out=90,in=90,looseness=1.75] (3.0,4.5);
\draw[link] (0.6,3.5) -- ++(0,1) to[out=90,in=90,looseness=1.75] (2.8,4.5);
\draw[link] (1.0,3.5) -- ++(0,1) to[out=90,in=90,looseness=1.75] (2.2,4.5);
%\draw[link] (1.2,4.5) to[out=90,in=90,looseness=1.75] (2.0,4.5);

% close up tangle with new arc next to the "\tau" label
%\draw[link] (0.4,4.5) to[out=270,in=270] (1.2,4.5);
\draw[link] (2,4.5) to[out=90,in=90] (3,4.5);
\draw[link] (2.8,4.5) to[out=90,in=90,looseness=1.75] (0.6,4.5); % redraw crossing

\node[left,inner sep=2pt] at (0.6,4.5) {$\tau$};
\begin{scope}[xshift=-0.2cm]
\draw[very thick,fill=white] (2,1.55) rectangle (3,2.45);
\node at (2.5,2) {\Large$\beta$};
\end{scope}
\draw[linkred,ultra thick] (3.1,2) ++(0,0.55) node[red,above] {$\kappa$} arc (90:-90:0.15 and 0.55);
\end{scope}

%%%

\begin{scope}[yshift=-5.5cm]
\draw[linkred,ultra thick] (3.1,2) ++(0,0.55) arc (90:270:0.15 and 0.55);
% parts of branch locus near complement, with left-handed twists
\draw[link,looseness=1.5] (-2.4,2) ++(0,-0.2) to[out=270,in=180] ++(1,-1) -- ++(4.65,0) to[out=0,in=0] ++(0,1) -- ++(-0.25,0);
\draw[link,looseness=0.75] (2,2.2) -- ++(1,0) to[out=0,in=270,looseness=1.5] (3.4,2.4) to[out=90,in=0,looseness=1.5] ++(-0.4,0.4); % right side
\draw[link,looseness=0.75] (2,2) -- ++(1,0) to[out=0,in=270,looseness=1.5] (3.6,2.4) -- ++(0,0.6);
\draw[link,looseness=0.75] (2,2) to[out=180,in=270,looseness=1.5] (1.4,2.4); % left side
\draw[link,looseness=0.75] (2,2.2) to[out=180,in=270,looseness=1.5] (1.6,2.4);
\draw[link] (3,1.8) -- ++(-1.6,0);
\draw[link,looseness=0.75] (1.4,2.4) -- ++(0,0.6);
\draw[link,looseness=0.75] (1.6,2.4) to[out=90,in=180,looseness=1.5] ++(0.4,0.4) -- ++(1,0);
% crossing near complement
\foreach \x in {1.4} { \draw[link] (\x,3) to[out=90,in=270,looseness=0.75] ++(1.4,1.5); }
\foreach \x in {3.6} { \draw[link] (\x,3) to[out=90,in=270,looseness=0.75] ++(-1.4,1.5); }

\draw[link] (1.4,1.8) -- (0,1.8); % extend strand coming from braid complement
\draw[link] (1,2) to[out=270,in=270,looseness=2.25] ++(-1.4,0);
\draw[link] (0.2,2) to[out=270,in=270,looseness=2.5] ++(-1,0);
\foreach \x in {0.2,1} { \draw[link] (\x,2) to[out=90,in=270,looseness=0.75] ++(-1.4,1.5); }
\foreach \x in {-0.4,-0.8} { \draw[link] (\x,2) to[out=90,in=270,looseness=0.75] ++(1.4,1.5); }
\draw[link] (0,1.8) -- (-1.2,1.8); % extend strand coming from braid complement

% join strands to curve on bottom left
\draw[link] (-1.2,3.5) to[out=90,in=90] ++(-0.4,0) -- ++(0,-1.5);
\draw[link] (-0.4,3.5) to[out=90,in=90] ++(-2,0) -- ++(0,-1.5);
\draw[link] (-2.4,2) -- ++(0,-0.2);
\draw[link] (-1.6,2) to[out=270,in=180] ++(0.2,-0.2) -- ++(0.4,0);

% add crossings joining middle to right
\draw[link] (0.6,3.5) -- ++(0,1) to[out=90,in=90,looseness=1.75] (2.8,4.5);
\draw[link] (1.0,3.5) -- ++(0,1) to[out=90,in=90,looseness=1.75] (2.2,4.5);

\node[left,inner sep=2pt] at (0.6,4.5) {$\tau$};
\begin{scope}[xshift=-0.2cm]
\draw[very thick,fill=white] (2,1.55) rectangle (3,2.45);
\node at (2.5,2) {\Large$\beta$};
\end{scope}
\draw[linkred,ultra thick] (3.1,2) ++(0,0.55) arc (90:-90:0.15 and 0.55) node[red,below] {$\kappa$};
\end{scope}

%%%

\begin{scope}[yshift=-5.5cm,xshift=7cm]
\draw[linkred,ultra thick] (3.1,2) ++(0,0.55) arc (90:270:0.15 and 0.55);
% parts of branch locus near complement, with left-handed twists
\draw[link,looseness=1.5] (2,0.8) -- ++(1.25,0) to[out=0,in=0] ++(0,1) -- ++(-0.25,0);
\draw[link,looseness=0.75] (2,2.2) -- ++(1,0) to[out=0,in=270,looseness=1.5] (3.4,2.4) to[out=90,in=0,looseness=1.5] ++(-0.4,0.4); % right side
\draw[link,looseness=0.75] (2,2) -- ++(1,0) to[out=0,in=270,looseness=1.5] (3.6,2.4) -- ++(0,0.6);
\draw[link,looseness=0.75] (2,2) to[out=180,in=270,looseness=1.5] (1.4,2.4); % left side
\draw[link,looseness=0.75] (2,2.2) to[out=180,in=270,looseness=1.5] (1.6,2.4);
\draw[link] (3,1.8) -- ++(-1.6,0);
\draw[link,looseness=0.75] (1.4,2.4) -- ++(0,0.6);
\draw[link,looseness=0.75] (1.6,2.4) to[out=90,in=180,looseness=1.5] ++(0.4,0.4) -- ++(1,0);
% crossing near complement
\foreach \x in {1.4} { \draw[link] (\x,3) to[out=90,in=270,looseness=0.75] ++(1.4,1.5); }
\foreach \x in {3.6} { \draw[link] (\x,3) to[out=90,in=270,looseness=0.75] ++(-1.4,1.5); }

\draw[link] (1.4,1.8) -- (0,1.8); % extend strand coming from braid complement
\draw[link,looseness=2] (-0.8,2) -- ++(0,-0.2) to[out=270,in=270] ++(1,0) to[out=90,in=90] ++(1,0) -- ++(0,-0.2) to[out=270,in=180,looseness=1] ++(0.8,-0.8);
\draw[link] (1,1.8) -- (1.4,1.8); % fix a crossing
\draw[link] (0,1.8) -- (-1.2,1.8); % extend strand coming from braid complement
\draw[link] (-1.2,2) to[out=270,in=270,looseness=2.5] ++(1,-0);
\draw[link] (-0.2,2) to[out=90,in=90,looseness=2] ++(-1.5,0.3) to[out=270,in=180,looseness=0.75] ++(1,-0.5);
\foreach \x in {-0.4,-0.8} { \draw[link] (\x-0.4,2) to[out=90,in=270,looseness=0.75] ++(1.8,1.5); }

% add crossings joining middle to right
\draw[link] (0.6,3.5) -- ++(0,1) to[out=90,in=90,looseness=1.75] (2.8,4.5);
\draw[link] (1.0,3.5) -- ++(0,1) to[out=90,in=90,looseness=1.75] (2.2,4.5);

\node[left,inner sep=2pt] at (0.6,4.5) {$\tau$};
\begin{scope}[xshift=-0.2cm]
\draw[very thick,fill=white] (2,1.55) rectangle (3,2.45);
\node at (2.5,2) {\Large$\beta$};
\end{scope}
\draw[linkred,ultra thick] (3.1,2) ++(0,0.55) arc (90:-90:0.15 and 0.55) node[red,below] {$\kappa$};
\end{scope}

\end{tikzpicture}
\caption{An isotopy of the tangle $\tau \cup \beta$ in the complement of $\kappa$.}
\label{fig:tau-T23-isotopy}
\end{figure}
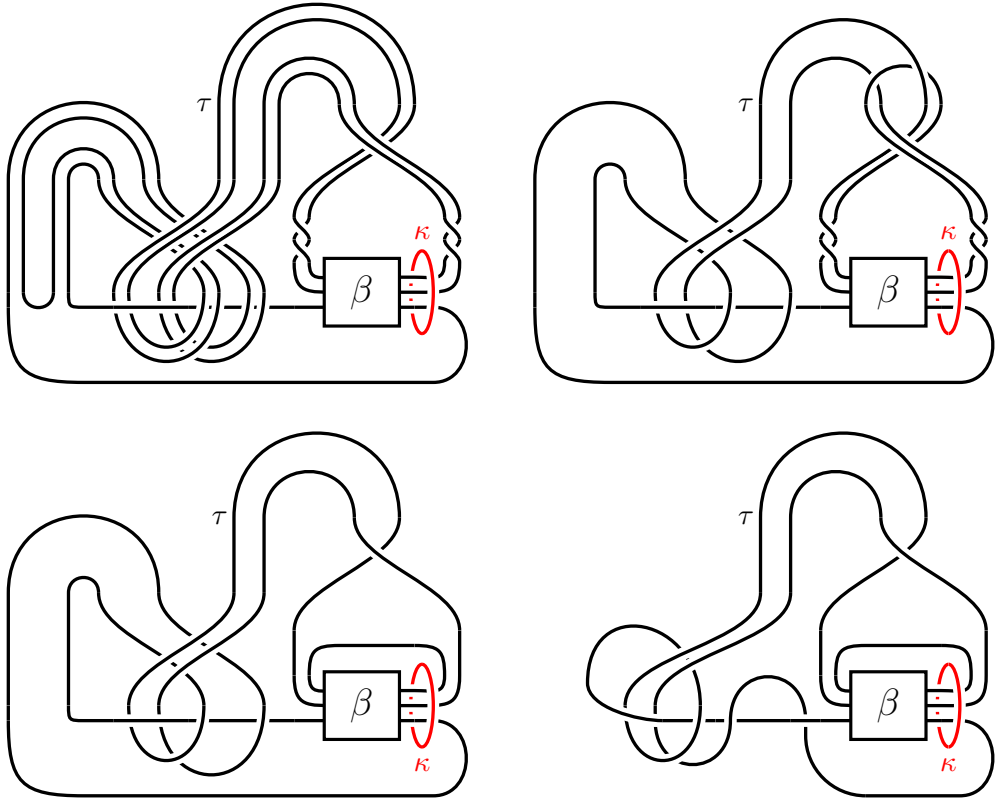

\begin{lemma} \label{lem:C24-T23-as-branched-cover}
Suppose that $K\subset S^3$ is a nearly fibered knot with genus-1 Seifert surface $F$, and that \[M_F \cong S^3 \setminus N(C_{2,4}(T_{2,3})).\]  Then there is a tangle $\tau$ and a 3-braid $\beta \in B_3$, depicted in Figure~\ref{fig:tau-T23-isotopy}, such that $\tau\cup\beta$ is an unknot in $S^3$, and such that the lift
\[ \tilde\kappa \subset \dcover(\tau\cup\beta) \cong S^3 \]
of the pictured curve $\kappa$ is isotopic to $K$.
\end{lemma}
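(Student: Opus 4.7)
The proof plan follows the same strategy used to establish Lemma~\ref{lem:E24-as-branched-cover}; only the identification of the relevant involution on $M_F$ requires new input. First I would observe that the $(2,4)$-cable of the right-handed trefoil admits a $180^\circ$ rotational symmetry about a horizontal axis, reflecting the fact that $C_{2,4}(T_{2,3})$ sits on a standardly embedded torus whose complementary solid tori are exchanged by a strong inversion. Using Proposition~\ref{prop:t-in-cabling-annulus}, I may assume that $\alpha$ lies in the cabling annulus $A$, and after a further isotopy within $A$ I may assume that $\alpha$ is carried to itself by this rotation. Removing a neighborhood of $\alpha$ then descends the symmetry to an involution $\iota$ on $S^3(F)$ which preserves $\gamma$ setwise and restricts to a hyperelliptic involution on each of the once-punctured tori $R_\pm(\gamma)$, as illustrated in Figure~\ref{fig:E-C24-T23-quotient-1}.

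Next I would identify the quotient $S^3(F)/\iota$ with the sutured 3-ball $S^3(D^2)$ with connected suture, so that the quotient map realizes $S^3(F)$ as the branched double cover of $S^3(D^2)$ along a properly embedded tangle $\tau$, as depicted in Figure~\ref{fig:E-C24-T23-quotient-2}. Since the once-punctured torus admits a unique hyperelliptic involution up to isotopy, for any gluing map $\varphi$ producing
\[ Y_\varphi = S^3(F) \cup_\varphi (F\times[-1,1]), \]
the restriction of $\iota$ to $\gamma$ (which acts on $\partial F$ as a rotation independent of the interval coordinate) extends to an involution $\hat\iota$ of $Y_\varphi$ whose restriction to $F\times[-1,1]$ is a hyperelliptic involution on each slice $F\times\{t\}$. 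Then the quotient $Y_\varphi \to Y_\varphi/\hat\iota$ restricts on $F\times[-1,1]$ to a branched double cover along a 3-braid $\beta \subset D^2 \times [-1,1]$, and so $Y_\varphi$ is the branched double cover of
\[ S^3(D^2) \cup (D^2 \times [-1,1]) \cong S^3 \]
along the link $\tau \cup \beta$. In particular $Y_\varphi \cong S^3$ if and only if $\tau \cup \beta$ is unknotted, and in this case the knot $K$, being the image of the suture $s(\gamma)$ in the gluing, is the lift $\tilde\kappa$ of the braid axis $\kappa = \partial D^2 \times \{0\}$ in the branched double cover.

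Finally, to match the precise picture claimed in the lemma statement, I would carry out the isotopy of $\tau \cup \beta$ in the complement of $\kappa$ displayed across the four panels of Figure~\ref{fig:tau-T23-isotopy}, which rearranges the image of $\tau$ from its initial appearance on the right of Figure~\ref{fig:E-C24-T23-quotient-2} into the simpler form pictured on the last panel. The main obstacle here is conceptual rather than computational: verifying that $M_F$ genuinely carries the claimed $180^\circ$ symmetry and that $\alpha$ can be arranged to lie on its axis, so that the descent to $S^3(F)$ makes sense. Once this is established, the remainder is essentially a transcription of the argument preceding Lemma~\ref{lem:E24-as-branched-cover}, followed by the pictorial isotopy.
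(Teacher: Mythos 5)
Your proposal is correct and follows the same approach the paper takes: the paper itself disposes of this lemma by pointing to Figures~\ref{fig:E-C24-T23-quotient-1} and~\ref{fig:E-C24-T23-quotient-2} and invoking ``the exact same reasoning as in the previous section,'' which is exactly the argument you reconstruct (descend a $180^\circ$ symmetry of $M_F$ past $\alpha$ to an involution $\iota$ of $S^3(F)$, identify the quotient with $S^3(D^2)$ branched over a tangle $\tau$, extend $\iota$ over $F\times[-1,1]$ using the uniqueness of the hyperelliptic involution, and conclude that $Y_\varphi \cong \dcover(\tau\cup\beta)$ with $K$ the lift of $\kappa$). One imprecision worth fixing: $C_{2,4}(T_{2,3})$ does not lie on a standardly embedded torus with complementary solid tori --- it lies on $\partial N(T_{2,3})$, which is knotted, and the relevant symmetry is the strong inversion of the trefoil (an involution of $S^3$ fixing a circle and carrying the trefoil to itself), extended over $N(T_{2,3})$ so as to preserve the cable and the cabling annulus; the complementary pieces are not exchanged but each preserved. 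This does not affect the rest of the argument.
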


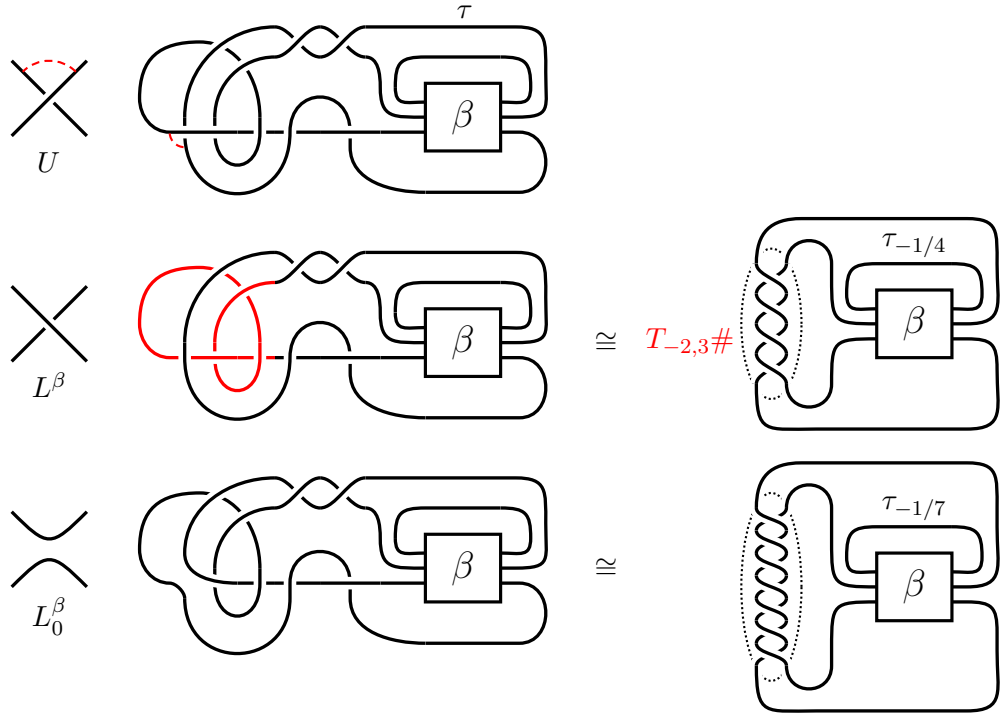
\begin{figure}
\begin{tikzpicture}

\begin{scope}
%\draw[linkred,ultra thick] (3.1,2) ++(0,0.55) arc (90:270:0.15 and 0.55);
% parts of branch locus near complement
\draw[link,looseness=1.5] (2,1) -- ++(1.25,0) to[out=0,in=0] ++(0,0.8) -- ++(-0.25,0); % below the braid
\draw[link,looseness=0.75] (2,2.2) -- ++(1,0) to[out=0,in=270,looseness=1.5] (3.4,2.4) to[out=90,in=0,looseness=1.5] ++(-0.4,0.4); % right side
\draw[link,looseness=0.75] (2,2) -- ++(1.2,0) to[out=0,in=270,looseness=1.75] (3.6,2.6) to[out=90,in=0,looseness=1.5] ++(-0.4,0.6) -- ++(-2,0);
\draw[link,looseness=0.75] (2,2) to[out=180,in=270,looseness=1.5] (1.4,2.4) to[out=90,in=0,looseness=1] (1.2,2.8); % left side
\draw[link,looseness=0.75] (2,2.2) to[out=180,in=270,looseness=1.5] (1.6,2.4);
\draw[link] (3,1.8) -- ++(-1.6,0);
\draw[link,looseness=0.75] (1.6,2.4) to[out=90,in=180,looseness=1.5] ++(0.4,0.4) -- ++(1,0);

% add a left-handed twist above the braid
\draw[link,looseness=0.75] (1.2,2.8) to[out=180,in=0] ++(-0.6,0.4) ++(0,-0.4) to[out=180,in=0] ++(-0.6,0.4);
\draw[link,looseness=0.75] (1.2,3.2) to[out=180,in=0] ++(-0.6,-0.4) ++(0,0.4) to[out=180,in=0] ++(-0.6,-0.4);

\draw[link] (1.4,1.8) -- (0,1.8); % extend strand coming from braid complement
\draw[link,looseness=2] (-1.2,2) -- ++(0,-0.2) to[out=270,in=270] ++(1.4,0) to[out=90,in=90] ++(0.8,0) -- ++(0,-0.2) to[out=270,in=180,looseness=1] ++(1,-0.6);
\draw[link] (0.6,1.8) -- (1.4,1.8); % fix a crossing
\draw[link] (0,1.8) -- (-0.5,1.8); % extend strand coming from braid complement
\draw[link] (-0.8,2) -- ++(0,-0.2) to[out=270,in=270,looseness=2.5] ++(0.6,-0) -- ++(0,0.2);
\draw[link] (-0.2,2) to[out=90,in=0,looseness=1] ++(-0.8,1) to[out=180,in=90,looseness=1.25] ++(-0.8,-0.75) to[out=270,in=180,looseness=1] ++(0.4,-0.45) -- ++(0.9,0);

\draw[link,looseness=1] (0,3.2) to[out=180,in=90] (-1.2,2);
\draw[link,looseness=1] (0,2.8) to[out=180,in=90] (-0.8,2);

\node[above,inner sep=3pt] at (2.525,3.2) {$\tau$};
\begin{scope}%[xshift=-0.2cm]
\draw[very thick,fill=white] (2,1.55) rectangle (3,2.45);
\node at (2.5,2) {\Large$\beta$};
\end{scope}
%\draw[linkred,ultra thick] (3.1,2) ++(0,0.55) arc (90:-90:0.15 and 0.55) node[red,below] {$\kappa$};
\draw[red,densely dashed] (-1.2,1.8) ++ (185:0.2) arc (185:270:0.2); % mark crossing for skein relation

\begin{scope}[xshift=-3.5cm,yshift=1.75cm]
\draw[link] (1,0) -- ++(-1,1);
\draw[link] (0,0) -- ++(1,1);
\draw[red,densely dashed] (0.5,0.5) ++ (45:0.5) arc (45:135:0.5);
\node[below] at (0.5,0) {$U^{\vphantom{\beta}}$};
\end{scope}
\end{scope}

%%%

\begin{scope}[yshift=-3cm]
%\draw[linkred,ultra thick] (3.1,2) ++(0,0.55) arc (90:270:0.15 and 0.55);
% parts of branch locus near complement
\draw[link,looseness=1.5] (2,1) -- ++(1.25,0) to[out=0,in=0] ++(0,0.8) -- ++(-0.25,0); % below the braid
\draw[link,looseness=0.75] (2,2.2) -- ++(1,0) to[out=0,in=270,looseness=1.5] (3.4,2.4) to[out=90,in=0,looseness=1.5] ++(-0.4,0.4); % right side
\draw[link,looseness=0.75] (2,2) -- ++(1.2,0) to[out=0,in=270,looseness=1.75] (3.6,2.6) to[out=90,in=0,looseness=1.5] ++(-0.4,0.6) -- ++(-2,0);
\draw[link,looseness=0.75] (2,2) to[out=180,in=270,looseness=1.5] (1.4,2.4) to[out=90,in=0,looseness=1] (1.2,2.8); % left side
\draw[link,looseness=0.75] (2,2.2) to[out=180,in=270,looseness=1.5] (1.6,2.4);
\draw[link] (3,1.8) -- ++(-1.6,0);
\draw[link,looseness=0.75] (1.6,2.4) to[out=90,in=180,looseness=1.5] ++(0.4,0.4) -- ++(1,0);

% add a left-handed twist above the braid
\draw[link,looseness=0.75] (1.2,2.8) to[out=180,in=0] ++(-0.6,0.4) ++(0,-0.4) to[out=180,in=0] ++(-0.6,0.4);
\draw[link,looseness=0.75] (1.2,3.2) to[out=180,in=0] ++(-0.6,-0.4) ++(0,0.4) to[out=180,in=0] ++(-0.6,-0.4);

\draw[link] (1.4,1.8) -- (0,1.8); % extend strand coming from braid complement
\draw[linkred] (0,1.8) -- (-0.5,1.8); % extend strand coming from braid complement
\draw[linkred] (-0.8,2) -- ++(0,-0.2) to[out=270,in=270,looseness=2.5] ++(0.6,-0) -- ++(0,0.2);
\draw[linkred] (-0.2,2) to[out=90,in=0,looseness=1] ++(-0.8,1) to[out=180,in=90,looseness=1.25] ++(-0.8,-0.75) to[out=270,in=180,looseness=1] ++(0.4,-0.45) -- ++(0.9,0);
\draw[link,looseness=2] (-1.2,2) -- ++(0,-0.2) to[out=270,in=270] ++(1.4,0) to[out=90,in=90] ++(0.8,0) -- ++(0,-0.2) to[out=270,in=180,looseness=1] ++(1,-0.6);
\draw[link] (0.6,1.8) -- (1.4,1.8); % fix a crossing

\draw[link,looseness=1] (0,3.2) to[out=180,in=90] (-1.2,2);
\draw[linkred,looseness=1] (0,2.8) to[out=180,in=90] (-0.8,2);

%\node[above,inner sep=3pt] at (2.525,3.2) {$\tau$};
\begin{scope}%[xshift=-0.2cm]
\draw[very thick,fill=white] (2,1.55) rectangle (3,2.45);
\node at (2.5,2) {\Large$\beta$};
\end{scope}
%\draw[linkred,ultra thick] (3.1,2) ++(0,0.55) arc (90:-90:0.15 and 0.55) node[red,below] {$\kappa$};

\begin{scope}[xshift=-3.5cm,yshift=1.75cm]
\draw[link] (0,0) -- ++(1,1);
\draw[link] (1,0) -- ++(-1,1);
\node[below] at (0.5,0) {$L^\beta$};
\end{scope}
\end{scope}

%%%

\begin{scope}[yshift=-3cm,xshift=6cm]
\node at (-1.6,2) {$\cong$};
\node[left,red] at (0.25,2) {$T_{-2,3}\#$};
\begin{scope}[yshift=0.25cm]
\node[above,inner sep=2pt] at (2.5,2.8) {$\tau_{-1/4}$};
\draw[densely dotted] (0.6,2) ellipse (0.4 and 1);
%\draw[linkred,ultra thick] (3.1,2) ++(0,0.55) arc (90:270:0.15 and 0.55);
% parts of branch locus near complement
\draw[link,looseness=0.75] (2,2.2) -- ++(1,0) to[out=0,in=270,looseness=1.5] (3.4,2.4) to[out=90,in=0,looseness=1.5] ++(-0.4,0.4); % right side
\draw[link,looseness=0.75] (2,2) -- ++(1.2,0) to[out=0,in=270,looseness=1.5] (3.6,2.8) to[out=90,in=0,looseness=1.5] ++(-0.4,0.6) -- ++(-2.2,0) to[out=180,in=90,looseness=1.25] (0.4,2.8);
\draw[link] (2,2) to[out=180,in=270] (1.4,2.6) -- ++(0,0.2) to[out=90,in=90] (0.8,2.8); % left side
\draw[link,looseness=0.75] (2,2.2) to[out=180,in=270,looseness=1.5] (1.6,2.4) to[out=90,in=180,looseness=1.5] ++(0.4,0.4) -- ++(1,0);

% add left-handed twists
\foreach \y in {0,...,3} {
 \draw[link,looseness=0.75] (0.8,2.8-0.4*\y) to[out=270,in=90] ++(-0.4,-0.4);
 \draw[link,looseness=0.75] (0.4,2.8-0.4*\y) to[out=270,in=90] ++(0.4,-0.4);
}

\draw[link] (0.8,1.2) to[out=270,in=270] (1.4,1.2) to[out=90,in=180] (2,1.8);
\draw[link] (0.4,1.2) to[out=270,in=180] (1.2,0.6) -- ++(2,0) to[out=0,in=270,looseness=1.5] (3.6,1.4) to[out=90,in=0,looseness=1.5] ++(-0.4,0.4) -- ++(-1.2,0); % below the braid

\begin{scope}%[xshift=-0.2cm]
\draw[very thick,fill=white] (2,1.55) rectangle (3,2.45);
\node at (2.5,2) {\Large$\beta$};
\end{scope}
%\draw[linkred,ultra thick] (3.1,2) ++(0,0.55) arc (90:-90:0.15 and 0.55) node[red,below] {$\kappa$};
\end{scope}
\end{scope}

%%%

\begin{scope}[yshift=-6cm]
%\draw[linkred,ultra thick] (3.1,2) ++(0,0.55) arc (90:270:0.15 and 0.55);
% parts of branch locus near complement
\draw[link,looseness=1.5] (2,1) -- ++(1.25,0) to[out=0,in=0] ++(0,0.8) -- ++(-0.25,0); % below the braid
\draw[link,looseness=0.75] (2,2.2) -- ++(1,0) to[out=0,in=270,looseness=1.5] (3.4,2.4) to[out=90,in=0,looseness=1.5] ++(-0.4,0.4); % right side
\draw[link,looseness=0.75] (2,2) -- ++(1.2,0) to[out=0,in=270,looseness=1.75] (3.6,2.6) to[out=90,in=0,looseness=1.5] ++(-0.4,0.6) -- ++(-2,0);
\draw[link,looseness=0.75] (2,2) to[out=180,in=270,looseness=1.5] (1.4,2.4) to[out=90,in=0,looseness=1] (1.2,2.8); % left side
\draw[link,looseness=0.75] (2,2.2) to[out=180,in=270,looseness=1.5] (1.6,2.4);
\draw[link] (3,1.8) -- ++(-1.6,0);
\draw[link,looseness=0.75] (1.6,2.4) to[out=90,in=180,looseness=1.5] ++(0.4,0.4) -- ++(1,0);

% add a left-handed twist above the braid
\draw[link,looseness=0.75] (1.2,2.8) to[out=180,in=0] ++(-0.6,0.4) ++(0,-0.4) to[out=180,in=0] ++(-0.6,0.4);
\draw[link,looseness=0.75] (1.2,3.2) to[out=180,in=0] ++(-0.6,-0.4) ++(0,0.4) to[out=180,in=0] ++(-0.6,-0.4);

\draw[link] (1.4,1.8) -- (0,1.8); % extend strand coming from braid complement
\draw[link] (0,1.8) -- (-0.5,1.8); % extend strand coming from braid complement
\draw[link] (-0.8,2) -- ++(0,-0.2) to[out=270,in=270,looseness=2.5] ++(0.6,-0) -- ++(0,0.2);
\draw[link] (-0.2,2) to[out=90,in=0,looseness=1] ++(-0.8,1) to[out=180,in=90,looseness=1.25] ++(-0.8,-0.75) to[out=270,in=180,looseness=1] ++(0.4,-0.45) to[out=0,in=90,looseness=0.75] ++(0.2,-0.2); % ends at (-1.2,1.6)
\draw[link,looseness=1.8] (-1.2,1.6) to[out=270,in=270] ++(1.4,0) -- ++(0,0.2) to[out=90,in=90] ++(0.8,0) -- ++(0,-0.2) to[out=270,in=180,looseness=1] ++(1,-0.6);
\draw[link] (0.6,1.8) -- (1.4,1.8); % fix a crossing

\draw[link,looseness=1] (0,3.2) to[out=180,in=90] (-1.2,2.2) to[out=270,in=180] (-0.5,1.8);
\draw[link,looseness=1] (0,2.8) to[out=180,in=90] (-0.8,2);

%\node[above,inner sep=3pt] at (2.525,3.2) {$\tau$};
\begin{scope}%[xshift=-0.2cm]
\draw[very thick,fill=white] (2,1.55) rectangle (3,2.45);
\node at (2.5,2) {\Large$\beta$};
\end{scope}
%\draw[linkred,ultra thick] (3.1,2) ++(0,0.55) arc (90:-90:0.15 and 0.55) node[red,below] {$\kappa$};
%\draw[red,densely dashed] (-1.2,1.8) ++ (185:0.2) arc (185:270:0.2); % mark crossing for skein relation

\begin{scope}[xshift=-3.5cm,yshift=1.75cm]
\draw[link] (0,0) to[out=45,in=135] ++(1,0) (0,1) to[out=-45,in=-135] ++(1,0);
\node[below] at (0.5,0) {$L_0^\beta$};
\end{scope}
\end{scope}

%%%

\begin{scope}[yshift=-6cm,xshift=6cm]
\node at (-1.6,2) {$\cong$};
\begin{scope}[yshift=-0.25cm]
\node[above,inner sep=2pt] at (2.5,2.8) {$\tau_{-1/7}$};
\draw[densely dotted] (0.6,2) ellipse (0.4 and 1.25);
%\draw[linkred,ultra thick] (3.1,2) ++(0,0.55) arc (90:270:0.15 and 0.55);
% parts of branch locus near complement
\draw[link,looseness=0.75] (2,2.2) -- ++(1,0) to[out=0,in=270,looseness=1.5] (3.4,2.4) to[out=90,in=0,looseness=1.5] ++(-0.4,0.4); % right side
\draw[link,looseness=0.75] (2,2) -- ++(1.2,0) to[out=0,in=270,looseness=1.5] (3.6,2.8) -- ++(0,0.25) to[out=90,in=0,looseness=1.5] ++(-0.4,0.6) -- ++(-2.2,0) to[out=180,in=90,looseness=1.25] (0.4,3.05);
\draw[link] (2,2) to[out=180,in=270] (1.4,2.6) -- ++(0,0.45) to[out=90,in=90] ++(-0.6,0); % left side
\draw[link,looseness=0.75] (2,2.2) to[out=180,in=270,looseness=1.5] (1.6,2.4) to[out=90,in=180,looseness=1.5] ++(0.4,0.4) -- ++(1,0);

% add left-handed twists
\foreach \y in {0,...,6} {
 \draw[link,looseness=0.75] (0.8,3.05-0.3*\y) to[out=270,in=90] ++(-0.4,-0.3);
 \draw[link,looseness=0.75] (0.4,3.05-0.3*\y) to[out=270,in=90] ++(0.4,-0.3);
}

\draw[link] (0.8,0.95) to[out=270,in=270] (1.4,0.95) -- ++(0,0.25) to[out=90,in=180] (2,1.8);
\draw[link] (0.4,0.95) to[out=270,in=180] (1.2,0.35) -- ++(2,0) to[out=0,in=270,looseness=1.5] (3.6,1.15) -- ++(0,0.25) to[out=90,in=0,looseness=1.5] ++(-0.4,0.4) -- ++(-1.2,0); % below the braid

\begin{scope}%[xshift=-0.2cm]
\draw[very thick,fill=white] (2,1.55) rectangle (3,2.45);
\node at (2.5,2) {\Large$\beta$};
\end{scope}
%\draw[linkred,ultra thick] (3.1,2) ++(0,0.55) arc (90:-90:0.15 and 0.55) node[red,below] {$\kappa$};
\end{scope}
\end{scope}

\end{tikzpicture}
\caption{A crossing change and $0$-resolution of $\tau\cup\beta$ at the indicated crossing.}
\label{fig:C24-T23-resolution}
\end{figure}

With Lemma~\ref{lem:C24-T23-as-branched-cover} at hand, we are left to determine which braids $\beta$ cause $\tau\cup\beta$ to be unknotted.  Supposing that it is indeed an unknot $U$, we choose a crossing in Figure~\ref{fig:C24-T23-resolution}, indicated by a red dashed arc, and produce two link diagrams $L^\beta$ and $L^\beta_0$ by changing that crossing and by taking its $0$-resolution, respectively.  We can see in Figure~\ref{fig:C24-T23-resolution} that
\begin{align*}
L^\beta &\cong T_{-2,3} \# (\tau_{-1/4} \cup \beta), &
L^\beta_0 & \cong (\tau_{-1/7} \cup \beta)
\end{align*}
where $\tau_{-1/4}$ and $\tau_{-1/7}$ are tangle diagrams differing only in the circled rational sub-tangles, having $-4$ and $-7$ half-twists respectively.

\begin{lemma} \label{lem:tau-twists-unknotted}
If $\tau\cup\beta$ is an unknot, then so are $\tau_{-1/4}\cup\beta$ and $\tau_{-1/7}\cup\beta$.
\end{lemma}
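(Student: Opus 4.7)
The plan is to apply the Montesinos trick at the crossing indicated by the red dashed arc in Figure~\ref{fig:C24-T23-resolution}, letting $\gamma\subset\dcover(\tau\cup\beta)\cong S^3$ denote the lift of the unknotting arc. As in Subsection~\ref{ssec:bdc-beta}, there will be an integer $n$ for which
\[ \dcover(L^\beta)\cong S^3_{(2n+1)/2}(\gamma) \quad\text{and}\quad \dcover(L^\beta_0)\cong S^3_n(\gamma). \]
Combining these with the identifications $L^\beta \cong T_{-2,3}\#(\tau_{-1/4}\cup\beta)$ and $L^\beta_0 \cong \tau_{-1/7}\cup\beta$ read off from the figure, and writing $X:=\dcover(\tau_{-1/4}\cup\beta)$, this becomes
\[ S^3_{(2n+1)/2}(\gamma)\cong \dcover(T_{-2,3})\# X \quad\text{and}\quad S^3_n(\gamma)\cong\dcover(\tau_{-1/7}\cup\beta). \]
The goal is to show that both right-hand sides are $S^3$; once done, each of $\tau_{-1/4}\cup\beta$ and $\tau_{-1/7}\cup\beta$ is a link whose branched double cover is $S^3$, forcing each to be the unknot by the Smith conjecture.

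The main step will be to show $\gamma$ is the unknot, and this is where the main obstacle lies. Suppose for contradiction that $\gamma$ is nontrivial. Then the theorem of Gordon and Luecke that reducible Dehn surgeries on nontrivial knots in $S^3$ occur only at integer slopes, applied to the half-integer slope $(2n+1)/2$, would force $S^3_{(2n+1)/2}(\gamma)$ to be irreducible; since $\dcover(T_{-2,3})\# X$ is reducible whenever $X\not\cong S^3$, we would obtain $X\cong S^3$ and $\dcover(L^\beta)\cong\dcover(T_{-2,3})$, a lens space of order $3$. The cyclic surgery theorem \cite{cgls} would then force $\gamma$ to be a nontrivial torus knot $T_{p,q}$ with $p,q\geq 2$, and Moser's classification \cite{moser} shows that the half-integer lens space surgeries on such torus knots produce lens spaces of order $|2pq\pm 1|\geq 11$, contradicting the order-$3$ requirement.

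Once $\gamma$ is known to be the unknot, the rest is routine: $\dcover(L^\beta)\cong S^3_{(2n+1)/2}(U)$ is a lens space of order $|2n+1|$, and comparing with the prime decomposition $\dcover(T_{-2,3})\# X$ forces $X\cong S^3$ and $|2n+1|=3$. An orientation-preserving comparison of lens spaces (using $\dcover(T_{-2,3})\cong L(3,2)$) then pins down $n=1$, whence $\dcover(\tau_{-1/7}\cup\beta)\cong S^3_1(U)\cong S^3$, completing the proof.
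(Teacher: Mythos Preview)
Your proposal is correct and uses essentially the same ingredients as the paper's proof: the Montesinos trick, Gordon--Luecke's theorem that non-integral surgery on a knot in $S^3$ is irreducible, the cyclic surgery theorem, Moser's classification of torus knot surgeries, and Waldhausen's theorem (which you call the Smith conjecture) that only the unknot has $S^3$ as its branched double cover.

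The only organizational difference is that the paper first uses irreducibility of $S^3_{(2n+1)/2}(\gamma)$ directly (valid whether or not $\gamma$ is trivial) to conclude $X\cong S^3$ once and for all, and only then classifies $\gamma$. You instead wrap the ``$\gamma$ nontrivial'' case in a contradiction argument, which forces you to re-derive $X\cong S^3$ a second time via prime decomposition once you know $\gamma$ is the unknot. This is harmless redundancy, not a gap; the paper's ordering is just slightly more economical.
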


\begin{proof}
Just as in Section~\ref{sec:unknot-24}, the Montesinos trick tells us that there is a curve $\gamma \subset \dcover(U) \cong S^3$ and an integer $n\in\Z$ such that
\begin{align*}
\dcover(L^\beta) &\cong S^3_{(2n+1)/2}(\gamma), &
\dcover(L^\beta_0) &\cong S^3_n(\gamma).
\end{align*}
Since $\dcover(L^\beta)$ arises as non-integral surgery on a knot $\gamma\subset S^3$, it must be irreducible \cite{gordon-luecke-integral}.  But we also know that
\[ \dcover(L^\beta) \cong L(3,2) \# \dcover(\tau_{-1/4}\cup\beta), \]
and if this is irreducible then the second summand must be $S^3$, so then $\tau_{-1/4}\cup\beta$ must be unknotted \cite{waldhausen-involution}.

Now that we have $\dcover(L^\beta) \cong L(3,2) \cong S^3_{3/2}(U)$ arising from a non-integral surgery on $\gamma$, of slope $\frac{2n+1}{2}$, we know that $\gamma$ must be an unknot or a torus knot \cite{cgls}.  In fact it cannot be a nontrivial torus knot, since otherwise no surgery would produce a lens space of order 3 \cite{moser}.  So $\gamma$ is an unknot, and then we must have $\frac{2n+1}{2} = \frac{3}{2}$, or $n=1$.  But in this case we have
\[ \dcover(L^\beta_0) \cong S^3_n(\gamma) \cong S^3_1(U) \cong S^3, \]
so again by \cite{waldhausen-involution} we can conclude that $\tau_{-1/7} \cup \beta \cong L^\beta_0$ is an unknot.
\end{proof}

\begin{lemma} \label{lem:beta-closure}
If $\tau\cup\beta$ is an unknot, then the link $\tau_{1/0}\cup\beta$ depicted in Figure~\ref{fig:tau-0-tangle} is an unknot, and the 3-braid closure $\hat\beta$ is a 2-component unlink.
\end{lemma}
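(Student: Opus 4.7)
Plan: The strategy is to extract more from the double-branched-cover framework already used in Lemma~\ref{lem:tau-twists-unknotted}. I view the circled rational subtangle in Figure~\ref{fig:C24-T23-resolution} as a filling of a fixed 2-string tangle $T_\beta \subset B^3$, so that $\tau_r\cup\beta = T_\beta\cup R_r$ as $r$ varies over rational slopes, and I let $X_\beta$ denote the double cover of $B^3$ branched over $T_\beta$. Then $X_\beta$ has torus boundary, and for each rational slope $r$ there is an identification $\dcover(\tau_r\cup\beta) \cong X_\beta(r)$ with the corresponding Dehn filling. Lemma~\ref{lem:tau-twists-unknotted} supplies two distinct slopes, namely $r = -1/4$ (because $\tau_{-1/4}\cup\beta$ is unknotted) and $r = -1/7$ (because $\tau_{-1/7}\cup\beta$ is unknotted), for which this filling is $S^3$.

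By the Gordon--Luecke knot complement theorem, any compact, orientable 3-manifold with torus boundary that embeds in $S^3$ as a non-trivial knot complement admits $S^3$ as a Dehn filling along only one slope, namely the meridian of that knot.  Since $X_\beta$ has two distinct $S^3$ fillings, it must therefore be the complement of the unknot, i.e.\ a solid torus.  Its meridional slope $\mu \subset \partial X_\beta$ is then characterized by $\Delta(\mu,-1/4) = \Delta(\mu,-1/7) = 1$, where $\Delta(p/q,p'/q') = |pq'-p'q|$ is the intersection number of slopes in the tangle parameterization; writing $\mu = a/b$, the system $|4a+b| = |7a+b| = 1$ forces $a = 0$ and $b = \pm 1$, so $\mu = 0$.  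Since $\Delta(1/0, 0) = 1$ as well, we conclude $X_\beta(1/0) \cong S^3$, and Waldhausen's theorem that links in $S^3$ with double branched cover $S^3$ are unknots then tells us that $\tau_{1/0}\cup\beta$ is an unknot.

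For the identification of $\hat\beta$, I use the complementary computation: the only slope $r$ for which $X_\beta(r) \not\cong S^3$ is the meridian $r = 0$, and meridional filling of a solid torus gives $S^1\times S^2$.  On the other hand, $R_0$ (two horizontal arcs) closes up the branch endpoints of $T_\beta$ in the ambient diagram in such a way that a short visual isotopy identifies $\tau_0\cup\beta$ with the 3-braid closure $\hat\beta$.  Since $S^1\times S^2$ is the double branched cover of a link in $S^3$ precisely when that link is a 2-component unlink, this forces $\hat\beta$ itself to be a 2-component unlink, completing the proof.

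The main obstacle is getting the slope and tangle conventions exactly right: one must verify that in the parameterization of $R_r$ inherited from Figure~\ref{fig:C24-T23-resolution}, the two $S^3$-producing slopes really are $-1/4$ and $-1/7$ (as opposed to another pair differing by a change of basis on $\partial X_\beta$), that $1/0$ corresponds to the trivial vertical tangle drawn in Figure~\ref{fig:tau-0-tangle}, and that $R_0$ exhibits $\tau_0\cup\beta$ as $\hat\beta$ rather than as some other closure.  Once those conventions are nailed down, the appeals to Gordon--Luecke and Waldhausen, together with the elementary slope computation and the isotopy extracting $\hat\beta$, all proceed without further difficulty.
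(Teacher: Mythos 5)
Your proposal is correct and follows essentially the same route as the paper: both arguments take the branched double cover of the tangle exterior, observe that it has two distinct $S^3$ fillings coming from the unknottedness of $\tau_{-1/4}\cup\beta$ and $\tau_{-1/7}\cup\beta$, invoke the Gordon--Luecke knot complement theorem to conclude it is a solid torus, and then finish with Waldhausen (for the $1/0$ filling) and Tollefson (for the $0/1 = \hat\beta$ filling). The only cosmetic difference is that you pin down the meridian slope explicitly as $0/1$ in the downstairs tangle coordinates by solving $|4a+b|=|7a+b|=1$, whereas the paper phrases the same distance bookkeeping in terms of the upstairs filling slopes $\tfrac{1}{n}$, $\tfrac{1}{n-3}$, $\tfrac{1}{n+4}$, and $\tfrac{0}{1}$; these are equivalent computations.
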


\begin{proof}
We take the tangles $\tau_{-1/4}$ and $\tau_{-1/7}$ in Figure~\ref{fig:C24-T23-resolution} and replace their circled twist regions with rational tangles of slopes $\frac{1}{0}$ or $\frac{0}{1}$ to get the tangles $\tau_{1/0}$ and $\tau_{0/1}$ depicted in Figure~\ref{fig:tau-0-tangle}, observing that
\[ \tau_{0/1} \cup \beta \cong\hat\beta. \]
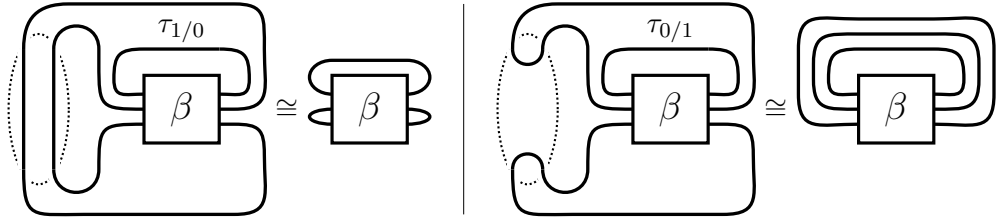
\begin{figure}
\begin{tikzpicture}
\begin{scope}
\begin{scope}
\node[above,inner sep=2pt] at (2.5,2.8) {$\tau_{1/0}$};
\draw[densely dotted] (0.6,2) ellipse (0.4 and 1);
\draw[link,looseness=0.75] (2,2.2) -- ++(1,0) to[out=0,in=270,looseness=1.5] (3.4,2.4) to[out=90,in=0,looseness=1.5] ++(-0.4,0.4); % right side
\draw[link,looseness=0.75] (2,2) -- ++(1.2,0) to[out=0,in=270,looseness=1.5] (3.6,2.8) to[out=90,in=0,looseness=1.5] ++(-0.4,0.6) -- ++(-2.2,0) to[out=180,in=90,looseness=1.25] (0.4,2.8);
\draw[link] (2,2) to[out=180,in=270] (1.4,2.6) -- ++(0,0.2) to[out=90,in=90] (0.8,2.8); % left side
\draw[link,looseness=0.75] (2,2.2) to[out=180,in=270,looseness=1.5] (1.6,2.4) to[out=90,in=180,looseness=1.5] ++(0.4,0.4) -- ++(1,0);

% add rational tangle
\draw[link] (0.4,2.8) -- ++(0,-1.6);
\draw[link] (0.8,2.8) -- ++(0,-1.6);

\draw[link] (0.8,1.2) to[out=270,in=270] (1.4,1.2) to[out=90,in=180] (2,1.8);
\draw[link] (0.4,1.2) to[out=270,in=180] (1.2,0.6) -- ++(2,0) to[out=0,in=270,looseness=1.5] (3.6,1.4) to[out=90,in=0,looseness=1.5] ++(-0.4,0.4) -- ++(-1.2,0); % below the braid

\begin{scope}%[xshift=-0.2cm]
\draw[very thick,fill=white] (2,1.55) rectangle (3,2.45);
\node at (2.5,2) {\Large$\beta$};
\end{scope}
\end{scope}

\node at (3.9,2) {$\cong$};

\begin{scope}[xshift=2.5cm]
\draw[link] (2,2.2) arc (270:90:0.3 and 0.225) -- ++(1,0)  arc (90:-90:0.3 and 0.225);
\draw[link] (2,2) arc (90:270:0.3 and 0.1) -- ++(1,0) arc (-90:90:0.3 and 0.1);
\begin{scope}
\draw[very thick,fill=white] (2,1.55) rectangle (3,2.45);
\node at (2.5,2) {\Large$\beta$};
\end{scope}
\end{scope}
\end{scope}

% left picture: max x = 5.8
% right picture: min x = 0.2
\draw[very thin] (6.25,3.4) -- ++(0,-2.8);

\begin{scope}[xshift=6.5cm]
\begin{scope}
\node[above,inner sep=2pt] at (2.5,2.8) {$\tau_{0/1}$};
\draw[densely dotted] (0.6,2) ellipse (0.4 and 1);
\draw[link,looseness=0.75] (2,2.2) -- ++(1,0) to[out=0,in=270,looseness=1.5] (3.4,2.4) to[out=90,in=0,looseness=1.5] ++(-0.4,0.4); % right side
\draw[link,looseness=0.75] (2,2) -- ++(1.2,0) to[out=0,in=270,looseness=1.5] (3.6,2.8) to[out=90,in=0,looseness=1.5] ++(-0.4,0.6) -- ++(-2.2,0) to[out=180,in=90,looseness=1.25] (0.4,2.8);
\draw[link] (2,2) to[out=180,in=270] (1.4,2.6) -- ++(0,0.2) to[out=90,in=90] (0.8,2.8); % left side
\draw[link,looseness=0.75] (2,2.2) to[out=180,in=270,looseness=1.5] (1.6,2.4) to[out=90,in=180,looseness=1.5] ++(0.4,0.4) -- ++(1,0);

% add rational tangle
\draw[link] (0.4,2.8) to[out=270,in=270] ++(0.4,0);
\draw[link] (0.4,1.2) to[out=90,in=90] ++(0.4,0);

\draw[link] (0.8,1.2) to[out=270,in=270] (1.4,1.2) to[out=90,in=180] (2,1.8);
\draw[link] (0.4,1.2) to[out=270,in=180] (1.2,0.6) -- ++(2,0) to[out=0,in=270,looseness=1.5] (3.6,1.4) to[out=90,in=0,looseness=1.5] ++(-0.4,0.4) -- ++(-1.2,0); % below the braid

\begin{scope}%[xshift=-0.2cm]
\draw[very thick,fill=white] (2,1.55) rectangle (3,2.45);
\node at (2.5,2) {\Large$\beta$};
\end{scope}
\end{scope}

\node at (3.9,2) {$\cong$};

\begin{scope}[xshift=3cm]
\draw[link,looseness=1.5] (2,2.2) -- ++(1,0) to[out=0,in=270] ++(0.4,0.2) to[out=90,in=0] ++(-0.4,0.4) -- ++(-1,0) to[out=180,in=90] ++(-0.4,-0.4) to[out=270,in=180] (2,2.2); % top strand
\draw[link,looseness=1.5] (2,2) -- ++(1,0) to[out=0,in=270] ++(0.6,0.4) -- ++(0,0.2) to[out=90,in=0] ++(-0.6,0.4) -- ++(-1,0) to[out=180,in=90] ++(-0.6,-0.4) -- ++(0,-0.2) to[out=270,in=180] (2,2); % middle strand
\draw[link,looseness=1.5] (2,1.8) -- ++(1,0) to[out=0,in=270] ++(0.8,0.4) -- ++(0,0.6) to[out=90,in=0] ++(-0.8,0.4) -- ++(-1,0) to[out=180,in=90] ++(-0.8,-0.4) -- ++(0,-0.6) to[out=270,in=180] (2,1.8); % bottom strand

\begin{scope}
\draw[very thick,fill=white] (2,1.55) rectangle (3,2.45);
\node at (2.5,2) {\Large$\beta$};
\end{scope}
\end{scope}
\end{scope}

\end{tikzpicture}
\caption{Two rational tangle replacements produce the links $\tau_{1/0}\cup\beta$ and $\tau_{0/1}\cup\beta \cong \hat\beta$.}
\label{fig:tau-0-tangle}
\end{figure}%
Lemma~\ref{lem:tau-twists-unknotted} says that $\tau_{-1/4} \cup \beta$ and $\tau_{-1/7} \cup \beta$ are both unknotted, so their branched double covers satisfy
\[ \dcover(\tau_{-1/4} \cup \beta) \cong \dcover(\tau_{-1/7} \cup \beta) \cong S^3. \]
In particular, if we remove the circled rational subtangles from either unknot, then the branched double cover of what remains is a knot complement $S^3 \setminus N(L)$, and it has two different Dehn fillings (corresponding to the rational tangles of slopes $-\frac{1}{4}$ and $-\frac{1}{7}$) which both produce $S^3$.  Then $L$ must be an unknot \cite[Theorem~2]{gordon-luecke-complement}, and the fillings that produce $\dcover(\tau_{-1/4}\cup\beta)$ and $\dcover(\tau_{-1/7}\cup\beta)$ must have slopes $\frac{1}{n}$ and $\frac{1}{n-3}$ for some $n\in\Z$.

It follows that if we replace these rational tangles with one of slope $\frac{1}{0}$, then this corresponds to a Dehn filling of $S^3 \setminus N(L)$ of slope $\frac{1}{n+4}$, and then
\[ \dcover(\tau_{1/0} \cup \beta) \cong S^3_{1/(n+4)}(L) \cong S^3 \]
since $L$ is unknotted.  We apply Waldhausen's result \cite{waldhausen-involution} once again to see that $\tau_{1/0} \cup \beta$ is an unknot.

Similarly, if we instead use the rational tangle that produces $\tau_{0/1} \cup \beta$, then the corresponding Dehn filling of $S^3 \setminus N(L)$ is at distance one from both the $\frac{1}{n}$- and $\frac{1}{n-3}$-fillings, so it must have slope $\frac{0}{1}$.  In other words, we have shown that
\[ \dcover(\hat\beta) \cong \dcover(\tau_{0/1} \cup \beta) \cong S^3_0(L) \cong S^1\times S^2. \]
But the only link in $S^3$ with branched double cover $S^1\times S^2$ is the two-component unlink \cite{tollefson}, so this determines $\hat\beta$ up to isotopy.
\end{proof}

We can now apply methods from Section~\ref{sec:unknot-24} to determine all of the possible braids $\beta$ to which Lemma~\ref{lem:C24-T23-as-branched-cover} might apply.

\begin{proposition} \label{prop:T23-braids}
If $\tau \cup \beta$ is unknotted, where $\tau$ is the tangle shown in Figure~\ref{fig:tau-T23-isotopy}, then
\[ \beta = y^a x^{\pm1} y^{-a} \]
for some $a\in\Z$.
\end{proposition}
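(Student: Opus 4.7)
The plan is to exploit the strong constraints from Lemma~\ref{lem:beta-closure}, which says both that $\hat\beta$ is a 2-component unlink and that the auxiliary link $\tau_{1/0}\cup\beta$ is unknotted, together with a $y$-conjugation symmetry of $\tau\cup\beta$ analogous to Lemma~\ref{lem:y-conjugation}.

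First I would classify the 3-braids $\beta$ whose closure is the 2-component unlink.  Using Murasugi's conjugacy classification of $B_3$ \cite{murasugi-3-braid} (invoked already in the proof of Lemma~\ref{lem:kernel-rho}), I would work through each of the three normal forms $\Delta^{2d}xy^{-a_1}\cdots xy^{-a_n}$, $\Delta^{2d}y^m$, and $\Delta^{2d}x^my^{-1}$, computing the closure in each case.  The expected outcome is that the only conjugacy classes in $B_3$ with 2-component unlink closure are those of $x^{\pm 1}$ and $y^{\pm 1}$; any nontrivial central-twist contribution $\Delta^{2d}$ forces extra unremovable crossings in the closure.

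Next I would establish the $y$-conjugation analogue of Lemma~\ref{lem:y-conjugation}: conjugation $\beta\mapsto y^a\beta y^{-a}$ preserves the isotopy class of $\tau\cup\beta$ in the complement of $\kappa$.  As in the unknot case, this is a diagrammatic isotopy that slides the two upper strands of the braid box through the portion of $\tau$ immediately above them in Figure~\ref{fig:tau-T23-isotopy}, which admits the required rotational symmetry.

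The main obstacle, and the bulk of the remaining work, is to promote $B_3$-conjugacy to $y$-conjugacy and to rule out the candidates $\beta\sim_{B_3} y^{\pm 1}$.  For this, the decisive tool is the unknottedness of $\tau_{1/0}\cup\beta$ from Lemma~\ref{lem:beta-closure}.  The link $\tau_{1/0}\cup\beta$, depicted in Figure~\ref{fig:tau-0-tangle}, is essentially the plat-like closure of $\beta$ together with a few fixed extra arcs, and is a 2-bridge link whose type I can compute directly for each candidate $\beta$.  I would show that $\beta\sim y^{\pm 1}$ produces a nontrivial 2-bridge link (ruling those out), and that among $B_3$-conjugates of $x^{\pm 1}$, the only ones which simultaneously make $\tau\cup\beta$ and $\tau_{1/0}\cup\beta$ unknotted are exactly those of the form $y^ax^{\pm 1}y^{-a}$.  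Finally, I would verify the converse by exhibiting an explicit isotopy showing that $\tau\cup(y^ax^{\pm 1}y^{-a})$ is an unknot, in the spirit of Figures~\ref{fig:tau-T23-isotopy} and \ref{fig:braid-5_2}.
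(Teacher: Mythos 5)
Your plan correctly identifies the two decisive inputs — both conclusions of Lemma~\ref{lem:beta-closure} and a conjugacy classification of $3$-braids with $2$-unlink closure — and that matches the skeleton of the paper's proof. But the central step is missing, and two of the auxiliary steps are shaky.

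The crucial gap is that you never say how to pass from ``$\tau_{1/0}\cup\beta$ is unknotted'' to a \emph{finite, explicit} list of candidate braids. You propose to ``compute the $2$-bridge type for each candidate $\beta$,'' but the conjugacy class of $x^{\pm 1}\sim y^{\pm1}$ in $B_3$ is infinite and not all conjugates have the form $y^a x^{\pm1}y^{-a}$, so this is not an algorithm. The paper resolves exactly this point with the representation $\rho\colon B_3\to SL_2(\Z)$ from \eqref{eq:B3-action}: Lemma~\ref{lem:rho-beta-from-l-beta}, applied with $(p,q,\bar q,r)=(1,1,1,0)$ because $\dcover(\tau_{1/0}\cup\beta)\cong S^3$, forces a factorization
\[ \rho(\beta)=\rho\bigl(\Delta^{4d+2e}y^{-k}xy^{-\ell}\bigr), \]
and since $\ker\rho=\langle\Delta^4\rangle$ (Lemma~\ref{lem:kernel-rho}) one gets $\beta=\Delta^{4d+2e}y^{-k}xy^{-\ell}$ on the nose. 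Only then does the word-length/trace constraint coming from ``$\hat\beta$ is a $2$-unlink'' (Birman--Menasco) pin down $d$, $e$, and $k+\ell$. Without $\rho$, you have no handle on the infinitely many conjugates.

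Two smaller issues. First, your proposed $y$-conjugation symmetry of the trefoil tangle $\tau$ — ``$\tau\cup y^a\beta y^{-a}$ is isotopic to $\tau\cup\beta$ in the complement of $\kappa$'' — is not established by any simple diagrammatic slide here, and the structure of Lemma~\ref{lem:T23-only-2} strongly suggests it fails as a period-$1$ move: the paper's tangle flype (Figure~\ref{fig:T23-Kbeta-1}) only absorbs $y$-twists two at a time, and passing from $a=0$ to $a=1$ requires a separate global rotation argument (Figure~\ref{fig:T23-four-diagrams}). Fortunately, the proposition as stated (unknottedness $\Rightarrow$ the form $y^ax^{\pm1}y^{-a}$) does not need this symmetry, so this is a detour rather than a fatal flaw, but it should be dropped. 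Second, the sentence about ``ruling out $\beta\sim y^{\pm1}$ while keeping $\beta\sim x^{\pm1}$'' is confused, since $x$ and $y$ are conjugate in $B_3$ (via $x=(yx)y(yx)^{-1}$); you presumably mean to exclude the literal braids $y^{\pm1}$, which indeed do not have the required form, but this exclusion again comes for free from the $SL_2(\Z)$ constraint rather than from a $2$-bridge computation.
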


\begin{proof}
Lemma~\ref{lem:beta-closure} tells us that the knot $\tau_{1/0} \cup \beta$ on the left side of Figure~\ref{fig:tau-0-tangle} is an unknot, with branched double cover $S^3$.  Using the representation $\rho: B_3 \to SL_2(\Z)$ from \eqref{eq:B3-action}, which was defined by
\begin{align*}
\rho(x) &= \begin{pmatrix} 1 & 1 \\ 0 & 1 \end{pmatrix}, &
\rho(y) &= \begin{pmatrix} 1 & 0 \\ -1 & 1 \end{pmatrix}, 
\end{align*}
we apply Lemma~\ref{lem:rho-beta-from-l-beta} with $(p,q,\bar{q},r)=(1,1,1,0)$ to see that
\[ \rho(\beta) = (-1)^e \begin{pmatrix} 1 & 0 \\ k & 1 \end{pmatrix} \begin{pmatrix} 1 & 1 \\ 0 & 1 \end{pmatrix} \begin{pmatrix} 1 & 0 \\ \ell & 1 \end{pmatrix} = \rho(\Delta^{4d+2e} y^{-k} x y^{-\ell}) \]
for some integers $e\in\{0,1\}$ and $d,k,\ell$.  (Note that the knot labeled $L^\beta$ in Lemma~\ref{lem:rho-beta-from-l-beta}, as depicted in Figure~\ref{fig:E-T24-resolutions}, is our $\tau_{1/0}\cup\beta$, and that the two cases \eqref{eq:rho-beta-factor-1} and \eqref{eq:rho-beta-factor-2} of Lemma~\ref{lem:rho-beta-from-l-beta} coincide since $q=\bar{q}$.)  In fact, we recall from Lemma~\ref{lem:kernel-rho} that $\ker(\rho)$ is generated by $\Delta^4$, so we must have
\[ \beta = \Delta^{4d+2e} y^{-k} x y^{-\ell}. \]

Now we use the other conclusion of Lemma~\ref{lem:beta-closure}, namely that the 3-braid closure $\hat\beta$ is a 2-component unlink.  Viewing this as the $(2,0)$-torus link, Birman and Menasco \cite{birman-menasco-iii} proved that $\beta$ must be conjugate to either $y$ or $y^{-1}$, so that its exponent sum is $\pm1$ and 
\[ \tr \rho(\beta) = \tr \rho(y^{\pm1}) = 2. \]
But we can also compute that
\begin{align*}
\tr \rho(\beta) = \tr\rho(y^k\beta y^{-k}) &= (-1)^e \tr\left( \begin{pmatrix} 1 & 1 \\ 0 & 1 \end{pmatrix} \begin{pmatrix} 1 & 0 \\ k+\ell & 1 \end{pmatrix} \right) \\
&= (-1)^e (k+\ell+2).
\end{align*}
Thus $(k+\ell,e)$ is either $(0,0)$ or $(-4,1)$.

Suppose first that $(k+\ell,e) = (0,0)$.  Then $\beta = \Delta^{4d} y^{\ell} x y^{-\ell}$ for some integer $d$.  In this case its exponent sum is $12d+1$, and since this is equal to $\pm1$ we must have $d=0$.

In the remaining case we have $(k+\ell,e)=(-4,1)$, so $\beta = \Delta^{4d+2} y^{\ell} \cdot y^4x \cdot y^{-\ell}$ for some $d$.  The exponent sum is $12d+11 = \pm1$, so then $d=-1$ and we have
\[ y^{-\ell}\beta y^\ell = \Delta^{-2} y^4 x. \]
We now use the braid relation $xyx=yxy$ to see that
\[ y^2(\Delta^2 x^{-1})y^{-2} = y^2(y\cdot xyx\cdot y)y^{-2} = y^2(y^2 x y^2)y^{-2} = y^4 x, \]
and since $\Delta^2$ is central it follows that
\[ y^{-\ell}\beta y^\ell = \Delta^{-2} y^4 x = y^2 x^{-1} y^{-2} \]
or
\[ \beta = y^{\ell+2} x^{-1} y^{-(\ell+2)}. \]
This completes the proof.
\end{proof}

We now determine the knots $K_\beta$ that arise in Lemma~\ref{lem:C24-T23-as-branched-cover}.

\begin{lemma} \label{lem:T23-only-2}
Suppose that $K$ satisfies the hypotheses of Lemma~\ref{lem:C24-T23-as-branched-cover}, and write $K = K_\beta$ where $K$ arises as the lift of the curve $\kappa$ in the branched double cover of the unknot $U = \tau \cup \beta$.  Then $K$ is isotopic to either $K_x$ or $K_{x^{-1}}$.
\end{lemma}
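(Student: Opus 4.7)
The plan is to mirror the strategy from Section~\ref{sec:unknot-24}, where Lemma~\ref{lem:y-conjugation} reduced the classification of knots $K_\beta$ modulo conjugation of $\beta$ by powers of $y$. Specifically, I aim to prove the analogue of Lemma~\ref{lem:y-conjugation} for the tangle $\tau$ of Figure~\ref{fig:tau-T23-isotopy}: if $\tau \cup \beta$ is unknotted then so is $\tau \cup (y\beta y^{-1})$, and moreover $K_{y\beta y^{-1}} \cong K_\beta$. Given this, a straightforward induction on $|a|$ shows that $K_{y^a \beta y^{-a}} \cong K_\beta$ for every $a \in \Z$. Combined with Proposition~\ref{prop:T23-braids}, which says $\beta = y^a x^{\pm1} y^{-a}$ for some $a \in \Z$, this immediately gives $K_\beta \cong K_x$ or $K_\beta \cong K_{x^{-1}}$.

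The heart of the argument is therefore the geometric claim that conjugation of $\beta$ by $y$ can be absorbed by an ambient isotopy of $\tau \cup \beta \cup \kappa$. Concretely, one visualizes the braid $y\beta y^{-1}$ inside the braid box as $\beta$ with an extra positive crossing of the bottom two strands attached just above the box and a matching negative crossing just below. I would push the lower $y^{-1}$ crossing downward out of the box and follow the two lower strands of $\tau$ emerging from the box: once the crossing reaches the region of $\tau$ immediately below the box, it can be slid around using the local structure of $\tau$ visible in Figure~\ref{fig:tau-T23-isotopy}, until it meets and cancels the $y$ placed above the box. Because the curve $\kappa$ is a small meridian of the top strand of $\beta$, it is disjoint from both crossings throughout the motion and so is carried along to an isotopic curve.

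The main obstacle will be verifying that this isotopy is genuinely available for the more intricate tangle $\tau$ arising from the $(2,4)$-cable of the trefoil; unlike the simple tangle in Section~\ref{sec:unknot-24}, here the strands passing near the braid box contain numerous crossings, so I will need to check that a single $y^{\pm 1}$ crossing really can be pushed through that portion of $\tau$ without obstruction. The clearest way to record this is to produce a short sequence of pictures, analogous to Figure~\ref{fig:beta-mirror}, transforming $\tau \cup (y\beta y^{-1}) \cup \kappa$ into $\tau \cup \beta \cup \kappa$. Once this local isotopy is established, the rest of the proof is formal: induction on $|a|$ plus the conclusion of Proposition~\ref{prop:T23-braids} reduces $K_\beta$ to one of $K_x$ or $K_{x^{-1}}$, completing the lemma.
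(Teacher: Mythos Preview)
Your outline is plausible but differs from the paper's route, and its central step—the single-$y$ slide through $\tau$—is left unverified.

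The paper does not attempt a direct analogue of Lemma~\ref{lem:y-conjugation} for this tangle. After Proposition~\ref{prop:T23-braids} gives $\beta = y^a x^\epsilon y^{-a}$, the argument first isotopes $U\cup\kappa$ (Figure~\ref{fig:T23-Kbeta-1}) so that the $y^{\pm a}$ factors appear as twist regions flanking a small two-strand sub-tangle carrying $\kappa$ and the $-\epsilon$ box. It then uses the elementary identity that adding one positive half-twist on one side and one negative half-twist on the other has the same effect as vertically flipping the enclosed sub-tangle; applied twice, this shows that conjugation by $y^2$ leaves the link unchanged, so $K_\beta$ depends only on $a \pmod 2$. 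The two remaining identifications $K_x \cong K_{yxy^{-1}}$ and $K_{x^{-1}} \cong K_{yx^{-1}y^{-1}}$ are then checked by explicit isotopy (Figures~\ref{fig:T23-tangle-simplify-0}--\ref{fig:T23-four-diagrams}). The point is that the $y^2$-reduction is essentially free, and one then needs only two concrete isotopies rather than a general sliding lemma through the cabled-trefoil portion of $\tau$.

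Your route, if the single-$y$ slide can be made to work, would be cleaner and would in fact yield the stronger statement that $K_{y\beta y^{-1}} \cong K_\beta$ for arbitrary $\beta$. But you correctly flag the obstacle yourself, and the paper's parity-plus-cases method sidesteps it entirely.

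One small correction: $\kappa$ is not a meridian of the top strand alone but the braid axis encircling all three strands (see Figures~\ref{fig:E-C24-T23-quotient-2} and~\ref{fig:tau-T23-isotopy}). This does not break your argument—a crossing of the bottom two strands can still be pushed past such an axis—but the description should be accurate.
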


\begin{proof}
By Proposition~\ref{prop:T23-braids} we know that $\beta = y^a x^\epsilon y^{-a}$, where $a\in\Z$ and $\epsilon=\pm1$.  These are illustrated in Figure~\ref{fig:T23-Kbeta-1}, where we have started with a slight isotopy of the unknot $U = \tau\cup\beta$ from Figure~\ref{fig:C24-T23-resolution}.
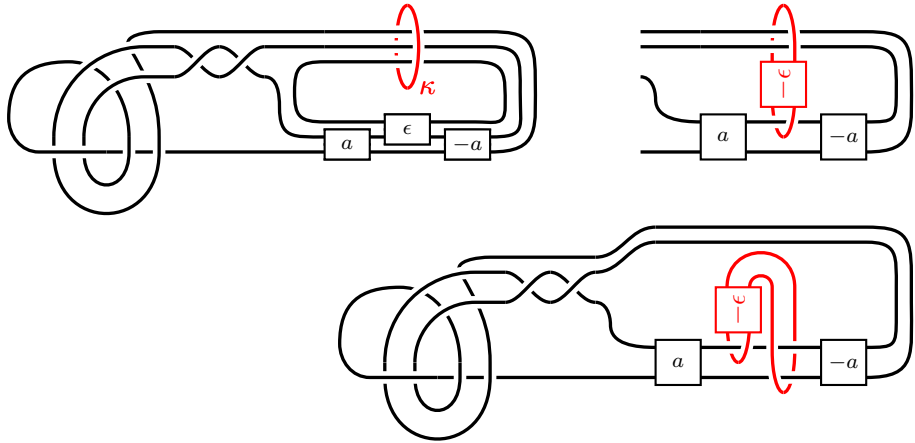
\begin{figure}
\begin{tikzpicture}

\begin{scope}
\draw[linkred,ultra thick] (3.1,3.2) ++(0,0.55) arc (90:270:0.15 and 0.55);
% braid region
\draw[link,looseness=1.5] (2,1.8) -- ++(2.2,0) to[out=0,in=270] (4.8,2.8) to[out=90,in=0] ++(-0.6,0.6) -- ++(-2.2,0); % bottom strand
\draw[link,looseness=1.5] (2,2.2) -- ++(2,0) to[out=0,in=270] (4.4,2.4) to[out=90,in=0] ++(-0.4,0.6) -- ++(-2,0) to[out=180,in=90] ++(-0.4,-0.4) to[out=270,in=180] ++(0.4,-0.4); % top strand
\draw[link,looseness=0.75] (2,2) -- ++(2.2,0) to[out=0,in=270,looseness=1.75] (4.6,2.6) to[out=90,in=0,looseness=1.5] ++(-0.4,0.6) -- ++(-2.2,0); % middle strand

% add a left-handed twist above the braid
\draw[link,looseness=0.75] (1.2,2.8) to[out=180,in=0] ++(-0.6,0.4) ++(0,-0.4) to[out=180,in=0] ++(-0.6,0.4);
\draw[link,looseness=0.75] (1.2,3.2) to[out=180,in=0] ++(-0.6,-0.4) ++(0,0.4) to[out=180,in=0] ++(-0.6,-0.4);
\draw[link] (2,3.2) -- (1.2,3.2); % coming into top right of twist
\draw[link] (2,2) to[out=180,in=270,looseness=1.5] (1.4,2.4) to[out=90,in=0,looseness=1] (1.2,2.8); % coming into bottom right of twist

\draw[link] (2,1.8) -- (-0.9,1.8); % extend strand coming from bottom of braid region
\draw[link,looseness=2] (-1.6,2) -- ++(0,-0.2) to[out=270,in=270] ++(1.4,0) -- ++(0,0.2) to[out=90,in=300,looseness=1] ++(-0.4,1.2) to[out=120,in=180,looseness=1] ++(0.4,0.2) -- ++(2.2,0); % end up at top left of braid region
\draw[link] (-1.2,2) -- ++(0,-0.2) to[out=270,in=270,looseness=2.5] ++(0.6,-0) -- ++(0,0.2);
\draw[link] (-0.6,2) to[out=90,in=0,looseness=1] ++(-0.8,1) to[out=180,in=90,looseness=1.25] ++(-0.8,-0.75) to[out=270,in=180,looseness=1] ++(0.4,-0.45) -- ++(0.9,0);

\draw[link,looseness=1] (0,3.2) -- ++(-0.4,0) to[out=180,in=90] (-1.6,2); % come out left end of twist
\draw[link,looseness=1] (0,2.8) -- ++(-0.4,0) to[out=180,in=90] (-1.2,2);

% twist boxes
\draw[fill=white] (2,1.7) rectangle (2.6,2.1);
\node at (2.3,1.9) {\scriptsize$a$};
\draw[fill=white] (2.8,1.9) rectangle (3.4,2.3);
\node at (3.1,2.1) {\scriptsize$\epsilon$};
\draw[fill=white] (3.6,1.7) rectangle (4.2,2.1);
\node at (3.9,1.9) {\scriptsize$-a$};

\draw[linkred,ultra thick] (3.1,3.2) ++(0,0.55) arc (90:-90:0.15 and 0.55) node[red,right] {$\kappa$};
\end{scope}

\begin{scope}
\draw[linkred,ultra thick] (3.1,3.2) ++(0,0.55) arc (90:270:0.15 and 0.55);
% braid region
\draw[link,looseness=1.5] (2,1.8) -- ++(2.2,0) to[out=0,in=270] (4.8,2.8) to[out=90,in=0] ++(-0.6,0.6) -- ++(-2.2,0); % bottom strand
\draw[link,looseness=1.5] (2,2.2) -- ++(2,0) to[out=0,in=270] (4.4,2.4) to[out=90,in=0] ++(-0.4,0.6) -- ++(-2,0) to[out=180,in=90] ++(-0.4,-0.4) to[out=270,in=180] ++(0.4,-0.4); % top strand
\draw[link,looseness=0.75] (2,2) -- ++(2.2,0) to[out=0,in=270,looseness=1.75] (4.6,2.6) to[out=90,in=0,looseness=1.5] ++(-0.4,0.6) -- ++(-2.2,0); % middle strand

% add a left-handed twist above the braid
\draw[link,looseness=0.75] (1.2,2.8) to[out=180,in=0] ++(-0.6,0.4) ++(0,-0.4) to[out=180,in=0] ++(-0.6,0.4);
\draw[link,looseness=0.75] (1.2,3.2) to[out=180,in=0] ++(-0.6,-0.4) ++(0,0.4) to[out=180,in=0] ++(-0.6,-0.4);
\draw[link] (2,3.2) -- (1.2,3.2); % coming into top right of twist
\draw[link] (2,2) to[out=180,in=270,looseness=1.5] (1.4,2.4) to[out=90,in=0,looseness=1] (1.2,2.8); % coming into bottom right of twist

\draw[link] (2,1.8) -- (-0.9,1.8); % extend strand coming from bottom of braid region
\draw[link,looseness=2] (-1.6,2) -- ++(0,-0.2) to[out=270,in=270] ++(1.4,0) -- ++(0,0.2) to[out=90,in=300,looseness=1] ++(-0.4,1.2) to[out=120,in=180,looseness=1] ++(0.4,0.2) -- ++(2.2,0); % end up at top left of braid region
\draw[link] (-1.2,2) -- ++(0,-0.2) to[out=270,in=270,looseness=2.5] ++(0.6,-0) -- ++(0,0.2);
\draw[link] (-0.6,2) to[out=90,in=0,looseness=1] ++(-0.8,1) to[out=180,in=90,looseness=1.25] ++(-0.8,-0.75) to[out=270,in=180,looseness=1] ++(0.4,-0.45) -- ++(0.9,0);

\draw[link,looseness=1] (0,3.2) -- ++(-0.4,0) to[out=180,in=90] (-1.6,2); % come out left end of twist
\draw[link,looseness=1] (0,2.8) -- ++(-0.4,0) to[out=180,in=90] (-1.2,2);

% twist boxes
\draw[fill=white] (2,1.7) rectangle (2.6,2.1);
\node at (2.3,1.9) {\scriptsize$a$};
\draw[fill=white] (2.8,1.9) rectangle (3.4,2.3);
\node at (3.1,2.1) {\scriptsize$\epsilon$};
\draw[fill=white] (3.6,1.7) rectangle (4.2,2.1);
\node at (3.9,1.9) {\scriptsize$-a$};

\draw[linkred,ultra thick] (3.1,3.2) ++(0,0.55) arc (90:-90:0.15 and 0.55) node[red,right] {$\kappa$};
\end{scope}

%%%
\begin{scope}[xshift=5cm]
\draw[linkred,ultra thick] (3.1,3.2) ++(0,0.55) arc (90:180:0.15 and 0.55) -- ++(0,-0.65) arc (180:270:0.15 and 0.55);
% braid region
\draw[link,looseness=1.5] (2,1.8) -- ++(2.2,0) to[out=0,in=270] (4.8,2.8) to[out=90,in=0] ++(-0.6,0.6) -- ++(-2.2,0); % bottom strand
%\draw[link,looseness=1.5] (2,2.2) -- ++(2,0) to[out=0,in=270] (4.4,2.4) to[out=90,in=0] ++(-0.4,0.6) -- ++(-2,0) to[out=180,in=90] ++(-0.4,-0.4) to[out=270,in=180] ++(0.4,-0.4); % top strand
\draw[link,looseness=0.75] (2,2.2) -- ++(2.2,0) to[out=0,in=270,looseness=1.75] (4.6,2.6) to[out=90,in=0,looseness=1.5] ++(-0.4,0.6) -- ++(-2.2,0); % middle strand

\foreach \y in {1.8,3.2,3.4} { \draw[link] (2,\y) -- ++(-0.8,0); }
\draw[link] (2,2.2) to[out=180,in=270,looseness=1] (1.4,2.5) to[out=90,in=0,looseness=1] (1.2,2.8); % coming into bottom right of twist

% twist boxes
\draw[fill=white] (2,1.7) rectangle (2.6,2.3);
\node at (2.3,2) {\scriptsize$a$};
%\draw[fill=white] (2.8,1.9) rectangle (3.4,2.3);
%\node at (3.1,2.1) {\scriptsize$\epsilon$};
\draw[fill=white] (3.6,1.7) rectangle (4.2,2.3);
\node at (3.9,2) {\scriptsize$-a$};

\draw[linkred,ultra thick] (3.1,3.2) ++(0,0.55) arc (90:0:0.15 and 0.55) -- ++(0,-0.65) arc (0:-90:0.15 and 0.55);
\draw[red,fill=white] (2.8,2.4) rectangle (3.4,3);
\node[red,rotate=90] at (3.1,2.7) {\small$-\epsilon$};
\end{scope}

\begin{scope}[xshift=4.4cm,yshift=-3cm]
\draw[linkred,ultra thick] (2.95,3) -- ++(0,-0.45) arc (180:270:0.15 and 0.55); % back of kappa
\draw[linkred,ultra thick] (2.95,3) to[out=90,in=90] ++(0.9,0) -- ++ (0,-0.85) arc (0:-90:0.15 and 0.55);
% braid region
\draw[link,looseness=1.5] (2,1.8) -- ++(2.8,0) to[out=0,in=270] ++(0.6,1) -- ++(0,0.4) to[out=90,in=0] ++(-0.6,0.6) -- ++(-2.8,0); % bottom strand
\draw[link,looseness=0.75] (2,2.2) -- ++(2.8,0) to[out=0,in=270,looseness=1.75] ++(0.4,0.4) -- ++(0,0.4) to[out=90,in=0,looseness=1.5] ++(-0.4,0.6) -- ++(-2.8,0); % middle strand

\foreach \y in {1.8} { \draw[link] (2,\y) -- ++(-0.8,0); }
\foreach \y in {3.6,3.8} {\draw[link,looseness=1] (2,\y) to[out=180,in=0] ++(-0.8,-0.4); }
\draw[link] (2,2.2) to[out=180,in=270,looseness=1] (1.4,2.5) to[out=90,in=0,looseness=1] (1.2,2.8); % coming into bottom right of twist

\draw[linkred,ultra thick] (3.25,3) -- ++(0,-0.45) arc (0:-90:0.15 and 0.55); % front of kappa
\draw[linkred,ultra thick] (3.25,3) to[out=90,in=90] ++(0.3,0) -- ++(0,-0.85) arc (180:270:0.15 and 0.55);
\draw[red,fill=white] (2.8,2.4) rectangle (3.4,3);
\node[red,rotate=90] at (3.1,2.7) {\small$-\epsilon$};

% add rest of diagram
% add a left-handed twist above the braid
\draw[link,looseness=0.75] (1.2,2.8) to[out=180,in=0] ++(-0.6,0.4) ++(0,-0.4) to[out=180,in=0] ++(-0.6,0.4);
\draw[link,looseness=0.75] (1.2,3.2) to[out=180,in=0] ++(-0.6,-0.4) ++(0,0.4) to[out=180,in=0] ++(-0.6,-0.4);

\draw[link] (2,1.8) -- (-0.9,1.8); % extend strand coming from bottom of braid region
\draw[link,looseness=2] (-1.6,2) -- ++(0,-0.2) to[out=270,in=270] ++(1.4,0) -- ++(0,0.2) to[out=90,in=300,looseness=1] ++(-0.4,1.2) to[out=120,in=180,looseness=1] ++(0.4,0.2) -- ++(1.4,0); % end up at top left of braid region
\draw[link] (-1.2,2) -- ++(0,-0.2) to[out=270,in=270,looseness=2.5] ++(0.6,-0) -- ++(0,0.2);
\draw[link] (-0.6,2) to[out=90,in=0,looseness=1] ++(-0.8,1) to[out=180,in=90,looseness=1.25] ++(-0.8,-0.75) to[out=270,in=180,looseness=1] ++(0.4,-0.45) -- ++(0.9,0);

\draw[link,looseness=1] (0,3.2) -- ++(-0.4,0) to[out=180,in=90] (-1.6,2); % come out left end of twist
\draw[link,looseness=1] (0,2.8) -- ++(-0.4,0) to[out=180,in=90] (-1.2,2);

% twist boxes
\draw[fill=white] (2,1.7) rectangle (2.6,2.3);
\node at (2.3,2) {\scriptsize$a$};
\draw[fill=white] (4.2,1.7) rectangle (4.8,2.3);
\node at (4.5,2) {\scriptsize$-a$};
\end{scope}

\end{tikzpicture}
\caption{An isotopy of the unknot $U = \tau \cup \beta$, where $\beta = y^a x^\epsilon y^{-a}$.}
\label{fig:T23-Kbeta-1}
\end{figure}
The bottom of Figure~\ref{fig:T23-Kbeta-1} makes it clear that up to isotopy the knot $K_\beta$ only depends on $\epsilon$ and the parity of $a$, because the tangle relation
\[ \begin{tikzpicture}[link/.append style={looseness=0.75}]
\foreach \x in {0,0.6} {
  \draw[link] (\x,-0.2) to[out=0,in=180] ++(0.6,0.4);
  \draw[link] (\x,0.2) to[out=0,in=180] ++(0.6,-0.4);
}
\foreach \x in {2.4,3} {
  \draw[link] (\x,0.2) to[out=0,in=180] ++(0.6,-0.4);
  \draw[link] (\x,-0.2) to[out=0,in=180] ++(0.6,0.4);
}
\draw[link] (1.2,-0.2) -- ++(1.2,0) (1.2,0.2) -- ++(1.2,0);
\draw[fill=white] (1.8,0) circle (0.45);
\node at (1.8,0) {$T$};

\foreach \x in {4.8} {
  \draw[link] (\x,-0.2) to[out=0,in=180] ++(0.6,0.4);
  \draw[link] (\x,0.2) to[out=0,in=180] ++(0.6,-0.4);
}
\foreach \x in {6.6} {
  \draw[link] (\x,0.2) to[out=0,in=180] ++(0.6,-0.4);
  \draw[link] (\x,-0.2) to[out=0,in=180] ++(0.6,0.4);
}
\draw[link] (5.4,-0.2) -- ++(1.2,0) (5.4,0.2) -- ++(1.2,0);
\draw[fill=white] (6,0) circle (0.45);
\node[yscale=-1] at (6,0) {$T$};

\draw[link] (8.4,-0.2) -- ++(1.2,0) (8.4,0.2) -- ++(1.2,0);
\draw[fill=white] (9,0) circle (0.45);
\node at (9,0) {$T$};

\node at (4.2,0) {$\cong$};
\node at (7.8,0) {$\cong$};
%\node at (9.75,0) {$\vphantom{\cong}.$};
\end{tikzpicture} \]
lets us identify the links $U\cup \kappa$ for $\beta=y^{a+2}x^\epsilon y^{-(a+2)}$ and for $\beta=y^ax^\epsilon y^{-a}$ up to isotopy.  Thus we need only consider the cases $a=0$ and $a=1$.

Starting from the bottom of Figure~\ref{fig:T23-Kbeta-1}, we simplify part of the corresponding diagrams by an isotopy in Figures~\ref{fig:T23-tangle-simplify-0} and \ref{fig:T23-tangle-simplify-1}, corresponding to $a=0$ and $a=1$ respectively.
\begin{figure}
\begin{tikzpicture}

\begin{scope}
\draw[linkred,ultra thick] (2.35,2) arc (270:180:0.15 and 0.4) to[out=90,in=180,looseness=1] ++(0.5,0.5); % end at (3.3,2.9)
\draw[linkred,ultra thick] (2.7,2.9) -- ++(1,0) arc (90:0:0.6 and 0.15);

% right end of braid region
\draw[link,looseness=1.5] (3.3,1.8) -- ++(0.1,0) to[out=0,in=270] ++(0.6,1) to[out=90,in=0] ++(-0.6,0.6) -- ++(-2.2,0); % bottom strand
\draw[link,looseness=0.75] (3.3,2.2) to[out=0,in=270,looseness=1.75] ++(0.4,0.4) to[out=90,in=0,looseness=1.5] ++(-0.4,0.6) -- ++(-2.1,0); % middle strand

\draw[link] (3.3,1.8) -- ++(-2.1,0);
\draw[link] (3.3,2.2) -- (2,2.2) to[out=180,in=270,looseness=1] (1.4,2.5) to[out=90,in=0,looseness=1] (1.2,2.8); % coming into bottom right of twist

% front of kappa and twist box
\draw[linkred, ultra thick] (2.35,2) arc (270:360:0.15 and 0.4) to[out=90,in=180,looseness=1] ++(0.2,0.2) -- ++(1,0) arc (270:360:0.6 and 0.15);
\draw[red,fill=white] (2.8,2.5) rectangle ++(0.6,0.5);
\node[red] at (3.1,2.75) {\small$-\epsilon$};

% add rest of diagram
% add a left-handed twist above the braid
\draw[link,looseness=0.75] (1.2,2.8) to[out=180,in=0] ++(-0.6,0.4) ++(0,-0.4) to[out=180,in=0] ++(-0.6,0.4);
\draw[link,looseness=0.75] (1.2,3.2) to[out=180,in=0] ++(-0.6,-0.4) ++(0,0.4) to[out=180,in=0] ++(-0.6,-0.4);

\draw[link] (1.2,1.8) -- ++(-1.2,0); % extend strands coming from top and bottom of braid region
\draw[link] (1.2,3.4) -- ++(-1.2,0);
\end{scope}

\begin{scope}[xshift=5cm]
\draw[linkred,ultra thick] (0.95,2) arc (270:180:0.15 and 0.4) to[out=90,in=180,looseness=1] ++(0.5,0.5); % end at (3.3,2.9)
\draw[linkred,ultra thick,looseness=0.75] (1.3,2.9) -- ++(1.2,0) coordinate (rtwist) to[out=0,in=180] ++(0.5,-0.3) ++(0,0.3) to[out=0,in=180] ++(0.5,-0.3) ++(0,0.3) arc (90:0:0.6 and 0.15);

% right end of braid region
\draw[link,looseness=1.5] (0,1.8) -- ++(3.1,0) -- ++(0.1,0) to[out=0,in=270] ++(0.6,1) to[out=90,in=0] ++(-0.6,0.6) -- ++(-3.2,0); % bottom strand
\draw[link,looseness=0.75] (1.9,2.2) to[out=0,in=270,looseness=1.75] ++(0.4,0.4) to[out=90,in=0,looseness=1.5] ++(-0.4,0.6) -- ++(-1.9,0); % middle strand
\draw[link] (1.9,2.2) -- ++(-1.1,0) to[out=180,in=270,looseness=1] ++(-0.6,0.3) to[out=90,in=0,looseness=1] ++(-0.2,0.3); % bottom half of middle strand

% front of kappa and twist box
\draw[linkred, ultra thick,looseness=0.75] (0.95,2) arc (270:360:0.15 and 0.4) to[out=90,in=180,looseness=1] ++(0.2,0.2) -- ++(1.2,0) coordinate (rtwist) to[out=0,in=180] ++(0.5,0.3) ++(0,-0.3) to[out=0,in=180] ++(0.5,0.3) ++(0,-0.3) arc (270:360:0.6 and 0.15);
\draw[red,fill=white] (1.4,2.5) rectangle ++(0.6,0.5);
\node[red] at (1.7,2.75) {\small$-\epsilon$};
\end{scope}

%%%

\begin{scope}[xshift=2cm]
\draw[->] (-2,-1.15) coordinate (arrowbase) ++(0.2,0.4) -- node[above,sloped] {\small$\epsilon=+1$} ++(1.25,0.8);
\draw[->] (arrowbase) ++(0.2,-0.4) -- node[below,sloped] {\small$\epsilon=-1$} ++(1.25,-0.8);

\begin{scope}[yshift=-2.5cm]
\draw[linkred,ultra thick] (0.95,2) arc (270:180:0.15 and 0.4) to[out=90,in=180,looseness=1] ++(0.5,0.5); % end at (3.3,2.9)
\draw[linkred,ultra thick,looseness=0.75] (1.3,2.9) -- ++(0.2,0) to[out=0,in=180] ++(0.5,-0.3) ++(0,0.3) -- ++(0.5,0) coordinate (rtwist) to[out=0,in=180] ++(0.5,-0.3) ++(0,0.3) to[out=0,in=180] ++(0.5,-0.3) ++(0,0.3) arc (90:0:0.6 and 0.15);

% right end of braid region
\draw[link,looseness=1.5] (0,1.8) -- ++(3.1,0) -- ++(0.1,0) to[out=0,in=270] ++(0.6,1) to[out=90,in=0] ++(-0.6,0.6) -- ++(-3.2,0); % bottom strand
\draw[link,looseness=0.75] (1.9,2.2) to[out=0,in=270,looseness=1.75] ++(0.4,0.4) to[out=90,in=0,looseness=1.5] ++(-0.4,0.6) -- ++(-1.9,0); % middle strand
\draw[link] (1.9,2.2) -- ++(-1.1,0) to[out=180,in=270,looseness=1] ++(-0.6,0.3) to[out=90,in=0,looseness=1] ++(-0.2,0.3); % bottom half of middle strand

% front of kappa and twist box
\draw[linkred, ultra thick,looseness=0.75] (0.95,2) arc (270:360:0.15 and 0.4) to[out=90,in=180,looseness=1] ++(0.2,0.2) -- ++(0.2,0) to[out=0,in=180] ++(0.5,0.3) ++(0,-0.3) -- ++(0.5,0) coordinate (rtwist) to[out=0,in=180] ++(0.5,0.3) ++(0,-0.3) to[out=0,in=180] ++(0.5,0.3) ++(0,-0.3) arc (270:360:0.6 and 0.15);
%\draw[red,fill=white] (1.4,2.5) rectangle ++(0.6,0.5);
%\node[red] at (1.7,2.75) {\small$-\epsilon$};
\end{scope}

\begin{scope}[yshift=-2.5cm,xshift=5cm]
% right end of braid region
\draw[linkred] (1.2,3.05) ++(-0.5,-0.15) coordinate (redl1) arc (180:270:0.3 and 0.15) arc (90:0:0.3 and 0.15) ++ (-0.6,-0.3) arc (180:270:0.5 and 0.15) coordinate (redr1);
\draw[linkred] (1.2,3.05) to[out=0,in=180,looseness=0.75] ++(0.9,-0.65) ++(0,0.4) arc (90:0:0.6 and 0.2);
\draw[link,looseness=1] (0,2.8) to[out=0,in=90] ++(0.2,-0.4) to[out=270,in=180] ++(0.4,-0.4) coordinate (loopbr) to[out=0,in=270] ++(0.4,0.6) to[out=90,in=0] ++(-0.4,0.6) -- ++(-0.6,0); % middle portion of U, from height 2 to 3.2. Maximum x = 1
\draw[linkred,looseness=0.75] (redl1) arc (180:90:0.5 and 0.15);
\draw[linkred,looseness=0.75] (redl1) ++ (0.6,-0.3) arc (0:-90:0.3 and 0.15) arc (90:180:0.3 and 0.15);

\draw[link,looseness=1.5] (0,1.8) -- ++(1.7,0) -- ++(0.1,0) to[out=0,in=270] ++(0.6,1) to[out=90,in=0] ++(-0.6,0.6) -- ++(-1.8,0); % outer strand
\draw[linkred,looseness=0.75] (redr1) to[out=0,in=180] ++(0.9,0.65) ++(0,-0.4) arc (-90:0:0.6 and 0.2);
\node[below] at (1.2,1.8) {\scriptsize$\beta = x$};
\end{scope}

\begin{scope}[yshift=-5cm]
\draw[linkred,ultra thick] (0.95,2) arc (270:180:0.15 and 0.4) to[out=90,in=180,looseness=1] ++(0.5,0.5); % end at (3.3,2.9)
\draw[linkred,ultra thick,looseness=0.75] (1.3,2.9) -- ++(0.2,0) ++(0,-0.3) to[out=0,in=180] ++(0.5,0.3) -- ++(0.5,0) coordinate (rtwist) to[out=0,in=180] ++(0.5,-0.3) ++(0,0.3) to[out=0,in=180] ++(0.5,-0.3) ++(0,0.3) arc (90:0:0.6 and 0.15);

% right end of braid region
\draw[link,looseness=1.5] (0,1.8) -- ++(3.1,0) -- ++(0.1,0) to[out=0,in=270] ++(0.6,1) to[out=90,in=0] ++(-0.6,0.6) -- ++(-3.2,0); % bottom strand
\draw[link,looseness=0.75] (1.9,2.2) to[out=0,in=270,looseness=1.75] ++(0.4,0.4) to[out=90,in=0,looseness=1.5] ++(-0.4,0.6) -- ++(-1.9,0); % middle strand
\draw[link] (1.9,2.2) -- ++(-1.1,0) to[out=180,in=270,looseness=1] ++(-0.6,0.3) to[out=90,in=0,looseness=1] ++(-0.2,0.3); % bottom half of middle strand

% front of kappa and twist box
\draw[linkred, ultra thick,looseness=0.75] (0.95,2) arc (270:360:0.15 and 0.4) to[out=90,in=180,looseness=1] ++(0.2,0.2) -- ++(0.2,0) ++(0,0.3) to[out=0,in=180] ++(0.5,-0.3) -- ++(0.5,0) coordinate (rtwist) to[out=0,in=180] ++(0.5,0.3) ++(0,-0.3) to[out=0,in=180] ++(0.5,0.3) ++(0,-0.3) arc (270:360:0.6 and 0.15);
\end{scope}

\begin{scope}[yshift=-5cm,xshift=5cm]
% right end of braid region
\draw[linkred] (1.2,3.05) arc (90:180:0.5 and 0.15) coordinate (redl1) ++(0.6,-0.3) arc (0:-90:0.3 and 0.15) arc (90:180:0.3 and 0.15) arc (180:270:0.5 and 0.15);
\draw[linkred] (1.2,3.05) to[out=0,in=180,looseness=0.75] ++(0.9,-0.65);
\draw[link,looseness=1] (0,2.8) to[out=0,in=90] ++(0.2,-0.4) to[out=270,in=180] ++(0.4,-0.4) coordinate (loopbr) to[out=0,in=270] ++(0.4,0.6) to[out=90,in=0] ++(-0.4,0.6) -- ++(-0.6,0); % middle portion of U, from height 2 to 3.2. Maximum x = 1
\draw[linkred,looseness=0.75] (redl1) arc (180:270:0.3 and 0.15) arc (90:0:0.3 and 0.15) ++(-0.6,-0.3) arc (180:270:0.5 and 0.15) to[out=0,in=180] ++(0.9,0.65) coordinate (redr1) to[out=0,in=180] ++(0.6,-0.4) ++(0,0.4) arc (90:0:0.6 and 0.2);

\draw[link,looseness=1.5] (0,1.8) -- ++(2.3,0) -- ++(0.1,0) to[out=0,in=270] ++(0.6,1) to[out=90,in=0] ++(-0.6,0.6) -- ++(-2.4,0); % outer strand
\draw[linkred,looseness=0.75] (redr1) ++(0,-0.4) to[out=0,in=180] ++(0.6,0.4) ++(0,-0.4) arc (-90:0:0.6 and 0.2);
\node[below] at (1.5,1.8) {\scriptsize$\beta = x^{-1}$};
\end{scope}

\draw[thin,densely dashed] (4.75,1.1) rectangle (8.45,-3.75);
\end{scope}
\end{tikzpicture}
\caption{Simplifying the case $a=0$, where $\beta=x^{\pm1}$, by an isotopy.}
\label{fig:T23-tangle-simplify-0}
\end{figure}
%%%%%
\begin{figure}
\begin{tikzpicture}

\begin{scope}
\draw[linkred,ultra thick] (2.95,2) arc (270:180:0.15 and 0.4) to[out=90,in=180,looseness=1] ++(0.5,0.5); % end at (3.3,2.9)
\draw[linkred,ultra thick] (3.3,2.9) -- ++(1,0) arc (90:0:0.6 and 0.15);

% right end of braid region
\draw[link,looseness=1.5] (3.9,1.8) -- ++(0.1,0) to[out=0,in=270] ++(0.6,1) to[out=90,in=0] ++(-0.6,0.6) -- ++(-2.8,0); % bottom strand
\draw[link,looseness=0.75] (3.9,2.2) to[out=0,in=270,looseness=1.75] ++(0.4,0.4) to[out=90,in=0,looseness=1.5] ++(-0.4,0.6) -- ++(-2.7,0); % middle strand

\foreach \y in {1.8} { \draw[link] (2,\y) -- ++(-0.8,0); }
%\foreach \y in {3.6,3.8} {\draw[link,looseness=1] (2,\y) to[out=180,in=0] ++(-0.8,0); }
\draw[link] (2,2.2) to[out=180,in=270,looseness=1] (1.4,2.5) to[out=90,in=0,looseness=1] (1.2,2.8); % coming into bottom right of twist

% twists for a=1
\begin{scope}[link/.append style={looseness=0.75}]
\draw[link] (2,1.8) to[out=0,in=180] ++(0.6,0.4) -- ++(0.7,0) to[out=0,in=180] ++(0.6,-0.4); % end at x=3.9
\draw[link] (2,2.2) to[out=0,in=180] ++(0.6,-0.4) -- ++(0.7,0) to[out=0,in=180] ++(0.6,0.4);
\end{scope}

% front of kappa and twist box
\draw[linkred, ultra thick] (2.95,2) arc (270:360:0.15 and 0.4) to[out=90,in=180,looseness=1] ++(0.2,0.2) -- ++(1,0) arc (270:360:0.6 and 0.15);
\draw[red,fill=white] (3.4,2.5) rectangle ++(0.6,0.5);
\node[red] at (3.7,2.75) {\small$-\epsilon$};

% add rest of diagram
% add a left-handed twist above the braid
\draw[link,looseness=0.75] (1.2,2.8) to[out=180,in=0] ++(-0.6,0.4) ++(0,-0.4) to[out=180,in=0] ++(-0.6,0.4);
\draw[link,looseness=0.75] (1.2,3.2) to[out=180,in=0] ++(-0.6,-0.4) ++(0,0.4) to[out=180,in=0] ++(-0.6,-0.4);

\draw[link] (1.2,1.8) -- ++(-1.2,0); % extend strands coming from top and bottom of braid region
\draw[link] (1.2,3.4) -- ++(-1.2,0);
%\draw[link,orange,looseness=2] (-1.6,2) -- ++(0,-0.2) to[out=270,in=270] ++(1.4,0) -- ++(0,0.2) to[out=90,in=300,looseness=1] ++(-0.4,1.2) to[out=120,in=180,looseness=1] ++(0.4,0.2) -- ++(1.4,0); % end up at top left of braid region
%\draw[link,yellow] (-1.2,2) -- ++(0,-0.2) to[out=270,in=270,looseness=2.5] ++(0.6,-0) -- ++(0,0.2);
%\draw[link,green] (-0.6,2) to[out=90,in=0,looseness=1] ++(-0.8,1) to[out=180,in=90,looseness=1.25] ++(-0.8,-0.75) to[out=270,in=180,looseness=1] ++(0.4,-0.45) -- ++(0.9,0);

%\draw[link,looseness=1] (0,3.2) -- ++(-0.4,0) to[out=180,in=90] (-1.6,2); % come out left end of twist
%\draw[link,looseness=1] (0,2.8) -- ++(-0.4,0) to[out=180,in=90] (-1.2,2);
\end{scope}

\begin{scope}[xshift=5.25cm]
\draw[linkred,ultra thick] (2.15,1.6) arc (270:180:0.15 and 0.4) -- ++(0,0.4) to[out=90,in=180,looseness=1] ++(0.5,0.5); % end at (3.3,2.9)
\draw[linkred,ultra thick] (2.5,2.9) -- ++(1,0) arc (90:0:0.6 and 0.15);

% right end of braid region
\draw[link,looseness=1.5] (3.1,1.8) -- ++(0.1,0) to[out=0,in=270] ++(0.6,1) to[out=90,in=0] ++(-0.6,0.6) -- ++(-2.8,0); % bottom strand
\draw[link,looseness=0.75] (3.1,2.2) to[out=0,in=270,looseness=1.75] ++(0.4,0.4) to[out=90,in=0,looseness=1.5] ++(-0.4,0.6) -- ++(-1.9,0); % middle strand

\foreach \y in {1.8} { \draw[link] (2,\y) -- ++(-0.8,0); }
%\foreach \y in {3.6,3.8} {\draw[link,looseness=1] (2,\y) to[out=180,in=0] ++(-0.8,0); }
\draw[link] (2,2.2) to[out=180,in=270,looseness=1] (1.4,2.5) to[out=90,in=0,looseness=1] (1.2,2.8); % coming into bottom right of twist

\draw[link] (2,1.8) -- ++(1.1,0); % bottom strand of braid

% front of kappa and twist box
\draw[linkred, ultra thick] (2.15,1.6) arc (270:360:0.15 and 0.4) -- ++(0,0.4) to[out=90,in=180,looseness=1] ++(0.2,0.2) -- ++(1,0) arc (270:360:0.6 and 0.15);
\draw[red,fill=white] (2.6,2.5) rectangle ++(0.6,0.5);
\node[red] at (2.9,2.75) {\small$-\epsilon$};

\draw[link] (2,2.2) -- ++(1.1,0); % top strand of braid

% add a left-handed twist above the braid
\draw[link,looseness=0.75] (1.2,2.8) to[out=180,in=0] ++(-0.6,0.4) ++(0,-0.4) to[out=180,in=0] ++(-0.6,0.4);
\draw[link,looseness=0.75] (1.2,3.2) to[out=180,in=0] ++(-0.6,-0.4) ++(0,0.4) to[out=180,in=0] ++(-0.6,-0.4);

\draw[link] (1.2,1.8) -- ++(-1.2,0); % extend strands coming from top and bottom of braid region
\draw[link] (1.2,3.4) -- ++(-1.2,0);
\end{scope}

\begin{scope}[xshift=9.75cm]
\draw[linkred,ultra thick,looseness=1] (3.9,2.95) arc (0:90:0.5 and 0.15) -- ++(-0.7,0) to[out=180,in=90] ++(-0.25,-0.5) to[out=270,in=180] ++(0.25,-0.5) -- ++(0.7,0) arc (270:360:0.5 and 0.15) coordinate (rrb);

%outer strand of braid region
\draw[link,looseness=1.5] (0,1.8) -- ++(3,0) to[out=0,in=270] ++(0.6,1) to[out=90,in=0] ++(-0.6,0.6) -- ++(-3,0); % bottom strand

\draw[link] (2,1.8) -- ++(-0.8,0);
\draw[link,looseness=1] (1.2,2.8) to[out=0,in=90] (1.4,2.4) to[out=270,in=180] (1.8,2) to[out=0,in=270] (2.2,2.6) to[out=90,in=0] (1.8,3.2) -- (1.2,3.2); % coming into bottom right of twist

% front of kappa and twist box
\draw[linkred,ultra thick] (rrb) arc (0:90:0.5 and 0.15) -- ++(-0.95,0) arc (270:90:0.5 and 0.2) -- ++(0.95,0) arc (270:360:0.5 and 0.15);
\draw[red,fill=white] (2.75,2) rectangle ++(0.6,0.5);
\node[red] at (3.05,2.25) {\small$-\epsilon$};

% redraw part of twist loop on left
\draw[link,looseness=1] (1.8,2) to[out=0,in=270] (2.2,2.6);

% add a left-handed twist above the braid
\draw[link,looseness=0.75] (1.2,2.8) to[out=180,in=0] ++(-0.6,0.4) ++(0,-0.4) to[out=180,in=0] ++(-0.6,0.4);
\draw[link,looseness=0.75] (1.2,3.2) to[out=180,in=0] ++(-0.6,-0.4) ++(0,0.4) to[out=180,in=0] ++(-0.6,-0.4);
\end{scope}

%%%

\begin{scope}[yshift=-3.75cm]
\draw[linkred,ultra thick,looseness=1] (3.9,2.95) arc (0:90:0.5 and 0.15) -- ++(-0.7,0) to[out=180,in=90] ++(-0.25,-0.5) to[out=270,in=180] ++(0.25,-0.5) -- ++(0.7,0) arc (270:360:0.5 and 0.15) coordinate (rrb);

%outer strand of braid region
\draw[link,looseness=1.5] (0,1.8) -- ++(3,0) to[out=0,in=270] ++(0.6,1) to[out=90,in=0] ++(-0.6,0.6) -- ++(-3,0); % bottom strand

\draw[link] (2,1.8) -- ++(-0.8,0);
\draw[link,looseness=1] (0,2.8) to[out=0,in=90] ++(0.2,-0.4) to[out=270,in=180] ++(0.4,-0.4) coordinate (loopbr) to[out=0,in=270] ++(0.4,0.6) to[out=90,in=0] ++(-0.4,0.6) -- ++(-0.6,0); % coming into bottom right of twist

% front of kappa and twist box
\draw[linkred,ultra thick] (rrb) arc (0:90:0.5 and 0.15) -- ++(-0.95,0) ++(-1.2,0) arc (270:90:0.5 and 0.2) coordinate (rtwist) ++ (1.2,0) -- ++(0.95,0) arc (270:360:0.5 and 0.15);
\draw[linkred,ultra thick,looseness=0.75] (rtwist) to[out=0,in=180] ++(0.6,-0.4) ++(0,0.4) to[out=0,in=180] ++(0.6,-0.4);
\draw[linkred,ultra thick,looseness=0.75] (rtwist) ++ (0,-0.4) to[out=0,in=180] ++(0.6,0.4) ++(0,-0.4) to[out=0,in=180] ++(0.6,0.4);
\draw[red,fill=white] (2.75,2) rectangle ++(0.6,0.5);
\node[red] at (3.05,2.25) {\small$-\epsilon$};

% redraw part of twist loop on left
\draw[link,looseness=1] (loopbr) to[out=0,in=270] ++(0.4,0.6);

% add a left-handed twist above the braid
%\draw[link,looseness=0.75] (1.2,2.8) to[out=180,in=0] ++(-0.6,0.4) ++(0,-0.4) to[out=180,in=0] ++(-0.6,0.4);
%\draw[link,looseness=0.75] (1.2,3.2) to[out=180,in=0] ++(-0.6,-0.4) ++(0,0.4) to[out=180,in=0] ++(-0.6,-0.4);

\draw[->] (4,2.6) ++(0.2,0.4) -- node[above,sloped] {\small$\epsilon=+1$} ++(1.25,0.8);
\draw[->] (4,2.6) ++(0.2,-0.4) -- node[below,sloped] {\small$\epsilon=-1$} ++(1.25,-0.8);
\end{scope}

\begin{scope}[xshift=0.5cm]

%%% epsilon=1
\begin{scope}[yshift=-2.5cm,xshift=5.25cm]
\draw[linkred,ultra thick,looseness=1] (3.9,2.95) arc (0:90:0.5 and 0.15) -- ++(-0.7,0) to[out=180,in=90] ++(-0.25,-0.5) to[out=270,in=180] ++(0.25,-0.5) coordinate (repsilon) ++(0.6,0) -- ++(0.1,0) arc (270:360:0.5 and 0.15) coordinate (rrb);

%outer strand of braid region
\draw[link,looseness=1.5] (0,1.8) -- ++(3,0) to[out=0,in=270] ++(0.6,1) to[out=90,in=0] ++(-0.6,0.6) -- ++(-3,0); % bottom strand

\draw[link] (2,1.8) -- ++(-0.8,0);
\draw[link,looseness=1] (0,2.8) to[out=0,in=90] ++(0.2,-0.4) to[out=270,in=180] ++(0.4,-0.4) coordinate (loopbr) to[out=0,in=270] ++(0.4,0.6) to[out=90,in=0] ++(-0.4,0.6) -- ++(-0.6,0); % coming into bottom right of twist

% front of kappa and twist box
\draw[linkred,ultra thick] (rrb) arc (0:90:0.5 and 0.15) -- ++(-0.1,0) ++(-0.6,0) -- ++(-0.25,0) ++(-1.2,0) arc (270:90:0.5 and 0.2) coordinate (rtwist) ++ (1.2,0) -- ++(0.95,0) arc (270:360:0.5 and 0.15);
\begin{scope}[linkred/.append style={looseness=0.75}]
\draw[linkred,ultra thick] (rtwist) to[out=0,in=180] ++(0.6,-0.4) ++(0,0.4) to[out=0,in=180] ++(0.6,-0.4);
\draw[linkred,ultra thick] (rtwist) ++ (0,-0.4) to[out=0,in=180] ++(0.6,0.4) ++(0,-0.4) to[out=0,in=180] ++(0.6,0.4);
%\draw[red,fill=white] (2.75,2) rectangle ++(0.6,0.5);
%\node[red] at (3.05,2.25) {\small$-\epsilon$};
\draw[linkred,ultra thick] (repsilon) ++(0,0.3) to[out=0,in=180] ++(0.6,-0.3);
\draw[linkred,ultra thick] (repsilon) to[out=0,in=180] ++(0.6,0.3);
\end{scope}

% redraw part of twist loop on left
\draw[link,looseness=1] (loopbr) to[out=0,in=270] ++(0.4,0.6);
\end{scope}

\begin{scope}[yshift=-2.5cm,xshift=9.75cm]
\draw[link,looseness=1] (0,2.8) to[out=0,in=90] ++(0.2,-0.4) to[out=270,in=180] ++(0.4,-0.4) coordinate (loopbr) to[out=0,in=270] ++(0.4,0.6) to[out=90,in=0] ++(-0.4,0.6) -- ++(-0.6,0); % coming into bottom right of twist

% front of kappa and twist box
\draw[linkred,ultra thick] (1.25,2.4) arc (270:90:0.5 and 0.2) coordinate (rtwist);
\path (1.85,2.8) to[out=0,in=180] ++(0.3,0.25) coordinate (linkingpt);
\draw[linkred,ultra thick] (linkingpt) arc (90:0:0.5 and 0.15) ++ (0,-0.6) arc (0:90:0.3 and 0.15) arc (270:180:0.3 and 0.15);
\draw[link,looseness=1.5] (0,1.8) -- ++(1.8,0) to[out=0,in=270] ++(0.6,1) to[out=90,in=0] ++(-0.6,0.6) -- ++(-1.8,0); % outer strand of braid region
\begin{scope}[linkred/.append style={looseness=0.75}]
\draw[linkred,ultra thick] (rtwist) to[out=0,in=180] ++(0.9,-0.65);
\draw[linkred,ultra thick] (rtwist) ++(0,-0.4) to[out=0,in=180] ++(0.9,0.65);
\draw[linkred,ultra thick] (linkingpt) ++(0.5,-0.15) arc (0:-90:0.3 and 0.15) arc (90:180:0.3 and 0.15);
\draw[linkred,ultra thick] (linkingpt) ++(0.5,-0.75) arc (0:-90:0.5 and 0.15);
\node[below] at (1.2,1.8) {\scriptsize$\beta = yxy^{-1}$};
\end{scope}

% redraw part of twist loop on left
\draw[link,looseness=1] (loopbr) to[out=0,in=270] ++(0.4,0.6);
\end{scope}

%%% epsilon=-1
\begin{scope}[yshift=-5cm,xshift=5.25cm]
\draw[linkred,ultra thick,looseness=1] (3.9,2.95) arc (0:90:0.5 and 0.15) -- ++(-0.7,0) to[out=180,in=90] ++(-0.25,-0.5) to[out=270,in=180] ++(0.25,-0.5) coordinate (repsilon) ++(0.6,0) -- ++(0.1,0) arc (270:360:0.5 and 0.15) coordinate (rrb);

%outer strand of braid region
\draw[link,looseness=1.5] (0,1.8) -- ++(3,0) to[out=0,in=270] ++(0.6,1) to[out=90,in=0] ++(-0.6,0.6) -- ++(-3,0); % bottom strand

%\draw[link] (2,1.8) -- ++(-0.8,0);
\draw[link,looseness=1] (0,2.8) to[out=0,in=90] ++(0.2,-0.4) to[out=270,in=180] ++(0.4,-0.4) coordinate (loopbr) to[out=0,in=270] ++(0.4,0.6) to[out=90,in=0] ++(-0.4,0.6) -- ++(-0.6,0); % coming into bottom right of twist

% front of kappa and twist box
\draw[linkred,ultra thick] (rrb) arc (0:90:0.5 and 0.15) -- ++(-0.1,0) ++(-0.6,0) -- ++(-0.25,0) ++(-1.2,0) arc (270:90:0.5 and 0.2) coordinate (rtwist) ++ (1.2,0) -- ++(0.95,0) arc (270:360:0.5 and 0.15);
\begin{scope}[linkred/.append style={looseness=0.75}]
\draw[linkred,ultra thick] (rtwist) to[out=0,in=180] ++(0.6,-0.4) ++(0,0.4) to[out=0,in=180] ++(0.6,-0.4);
\draw[linkred,ultra thick] (rtwist) ++ (0,-0.4) to[out=0,in=180] ++(0.6,0.4) ++(0,-0.4) to[out=0,in=180] ++(0.6,0.4);
%\draw[red,fill=white] (2.75,2) rectangle ++(0.6,0.5);
%\node[red] at (3.05,2.25) {\small$-\epsilon$};
\draw[linkred,ultra thick] (repsilon) to[out=0,in=180] ++(0.6,0.3);
\draw[linkred,ultra thick] (repsilon) ++(0,0.3) to[out=0,in=180] ++(0.6,-0.3);
\end{scope}

% redraw part of twist loop on left
\draw[link,looseness=1] (loopbr) to[out=0,in=270] ++(0.4,0.6);
\end{scope}

\begin{scope}[yshift=-5cm,xshift=9.75cm]
\draw[link,looseness=1] (0,2.8) to[out=0,in=90] ++(0.2,-0.4) to[out=270,in=180] ++(0.4,-0.4) coordinate (loopbr) to[out=0,in=270] ++(0.4,0.6) to[out=90,in=0] ++(-0.4,0.6) -- ++(-0.6,0); % coming into bottom right of twist

% front of kappa and twist box
\draw[linkred,ultra thick] (1.25,2.4) arc (270:90:0.5 and 0.2) coordinate (rtwist);
\path (1.85,2.8) to[out=0,in=180] ++(0.9,0.25) coordinate (linkingpt);
\draw[linkred,ultra thick,looseness=0.75] (linkingpt) ++(0.5,-0.15) arc (0:-90:0.3 and 0.15) arc (90:270:0.3 and 0.15);
\draw[linkred,ultra thick] (linkingpt) ++(0.5,-0.75) arc (0:-90:0.5 and 0.15);
\draw[link,looseness=1.5] (0,1.8) -- ++(2.4,0) to[out=0,in=270] ++(0.6,1) to[out=90,in=0] ++(-0.6,0.6) -- ++(-2.4,0); % outer strand of braid region
\draw[linkred,ultra thick] (linkingpt) arc (90:0:0.5 and 0.15) ++(0,-0.6) arc (0:90:0.3 and 0.15) arc (270:180:0.3 and 0.15);
\begin{scope}[linkred/.append style={looseness=0.75}]
\draw[linkred,ultra thick] (rtwist) to[out=0,in=180] ++(0.6,-0.4) ++(0,0.4) to[out=0,in=180] ++(0.9,-0.65);
\draw[linkred,ultra thick] (rtwist) ++(0,-0.4) to[out=0,in=180] ++(0.6,0.4) ++(0,-0.4) to[out=0,in=180] ++(0.9,0.65);
\node[below] at (1.5,1.8) {\scriptsize$\beta = yx^{-1}y^{-1}$};
\end{scope}

% redraw part of twist loop on left
\draw[link,looseness=1] (loopbr) to[out=0,in=270] ++(0.4,0.6);
\end{scope}

\draw[thin, densely dashed] (9.5,1.1) rectangle (13.2,-3.75);

\end{scope}
\end{tikzpicture}
\caption{Simplifying the case $a=1$, where $\beta = yx^{\pm1}y^{-1}$, by an isotopy.}
\label{fig:T23-tangle-simplify-1}
\end{figure}
%%%%%
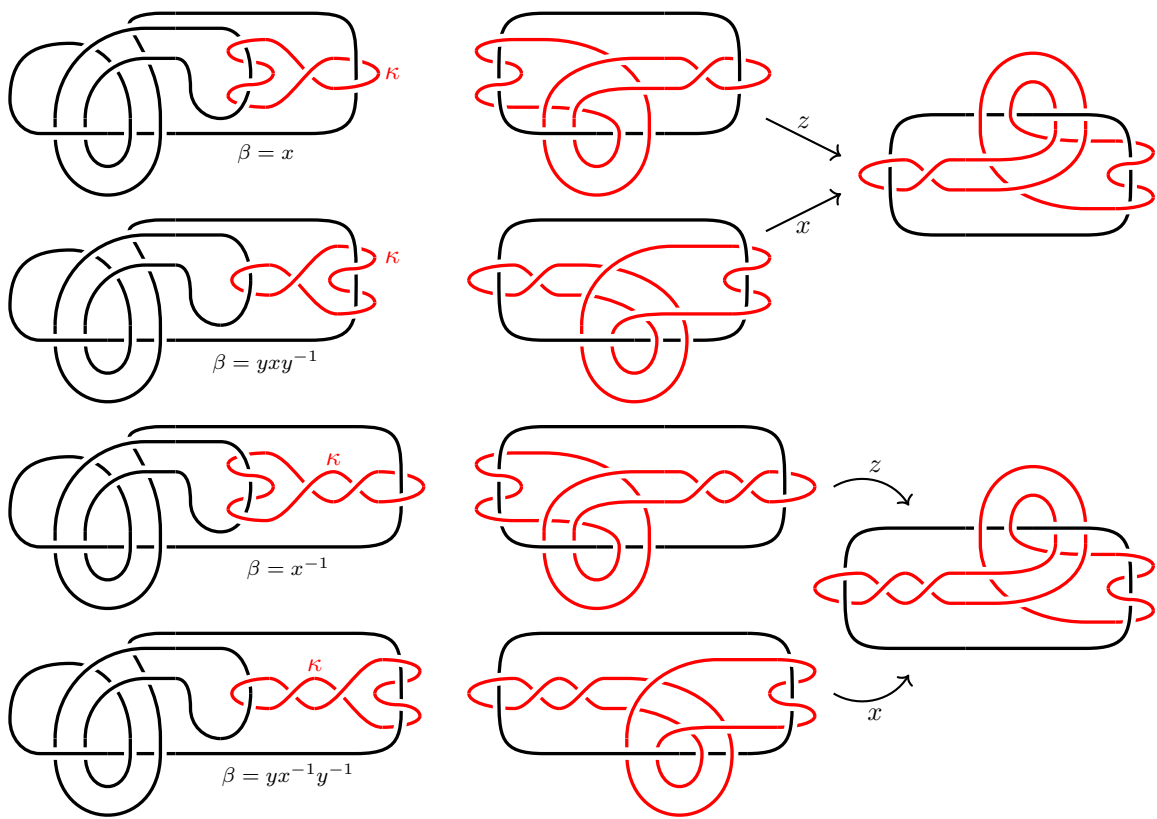
\begin{figure}
\begin{tikzpicture}

\foreach \y in {0cm,2.75cm,5.5cm,8.25cm} {
\begin{scope}[yshift=-\y]
\draw[link] (0,1.8) -- (-0.9,1.8); % extend strand coming from bottom of braid region
\draw[link,looseness=2] (-1.6,2) -- ++(0,-0.2) to[out=270,in=270] ++(1.4,0) -- ++(0,0.2) to[out=90,in=300,looseness=1] ++(-0.4,1.2) to[out=120,in=180,looseness=1] ++(0.4,0.2) -- ++(0.2,0); % end up at top left of braid region
\draw[link] (-1.2,2) -- ++(0,-0.2) to[out=270,in=270,looseness=2.5] ++(0.6,-0) -- ++(0,0.2);
\draw[link] (-0.6,2) to[out=90,in=0,looseness=1] ++(-0.8,1) to[out=180,in=90,looseness=1.25] ++(-0.8,-0.75) to[out=270,in=180,looseness=1] ++(0.4,-0.45) -- ++(0.9,0);

\draw[link,looseness=1] (0,3.2) -- ++(-0.4,0) to[out=180,in=90] (-1.6,2); % come out left end of twist
\draw[link,looseness=1] (0,2.8) -- ++(-0.4,0) to[out=180,in=90] (-1.2,2);
\end{scope}
}

\begin{scope}[yshift=0cm]
% right end of braid region
\draw[linkred] (1.2,3.05) ++(-0.5,-0.15) coordinate (redl1) arc (180:270:0.3 and 0.15) arc (90:0:0.3 and 0.15) ++ (-0.6,-0.3) arc (180:270:0.5 and 0.15) coordinate (redr1);
\draw[linkred] (1.2,3.05) to[out=0,in=180,looseness=0.75] ++(0.9,-0.65) ++(0,0.4) arc (90:0:0.6 and 0.2);
\draw[link,looseness=1] (0,2.8) to[out=0,in=90] ++(0.2,-0.4) to[out=270,in=180] ++(0.4,-0.4) coordinate (loopbr) to[out=0,in=270] ++(0.4,0.6) to[out=90,in=0] ++(-0.4,0.6) -- ++(-0.6,0); % middle portion of U, from height 2 to 3.2. Maximum x = 1
\draw[linkred,looseness=0.75] (redl1) arc (180:90:0.5 and 0.15);
\draw[linkred,looseness=0.75] (redl1) ++ (0.6,-0.3) arc (0:-90:0.3 and 0.15) arc (90:180:0.3 and 0.15);
\draw[link,looseness=1.5] (0,1.8) -- ++(1.7,0) -- ++(0.1,0) to[out=0,in=270] ++(0.6,1) to[out=90,in=0] ++(-0.6,0.6) -- ++(-1.8,0); % outer strand
\draw[linkred,looseness=0.75] (redr1) to[out=0,in=180] ++(0.9,0.65) ++(0,-0.4) arc (-90:0:0.6 and 0.2);
\node[below] at (1.2,1.8) {\scriptsize$\beta = x$};
\node[right,red,inner sep=2pt] at (2.7,2.6) {\small$\kappa$};
\end{scope}

\begin{scope}[yshift=0cm,xshift=6.5cm]
\draw[link] (0,1.8) -- (-0.9,1.8); % undercrossing part of U, should be drawn first
\draw[linkred,looseness=2] (-1.6,2) -- ++(0,-0.2) to[out=270,in=270] ++(1.4,0) -- ++(0,0.2) to[out=90,in=0,looseness=1] ++(-1.4,1.05) -- ++(-0.4,0) coordinate (ktl); % top left of kappa, end at (-2,3.05)
\draw[linkred] (-1.2,2) -- ++(0,-0.2) to[out=270,in=270,looseness=2.5] ++(0.6,0) to[out=90,in=0,looseness=0.75] ++(-1,0.35) -- ++(-0.4,0); % end at (-2,2.15)

\draw[linkred,looseness=1] (0,2.8) -- ++(-0.4,0) to[out=180,in=90] (-1.6,2); % connect to right tangle
\draw[linkred,looseness=1] (0,2.4) -- ++(-0.4,0) to[out=180,in=90] (-1.2,2);

\draw[linkred,looseness=0.75] (0,2.8) -- ++(0.2,0) to[out=0,in=180] ++(0.6,-0.4) ++(0,0.4) arc (90:0:0.6 and 0.2); % right crossing

\draw[linkred] (ktl) ++(-0.5,-0.15) arc (180:270:0.3 and 0.15) arc (90:0:0.3 and 0.15) ++ (-0.6,-0.3) arc (180:270:0.5 and 0.15);
\draw[link,looseness=1.5] (0,1.8) -- ++(0.4,0) to[out=0,in=270] ++(0.6,0.8) to[out=90,in=0] ++(-0.6,0.8) -- ++(-0.4,0); % U, right
\draw[link,looseness=1.5] (-0.9,1.8) -- ++(-0.7,0) to[out=180,in=270] ++(-0.6,0.8) to[out=90,in=180] ++(0.6,0.8) -- ++(1.6,0); % U, left
\draw[linkred] (ktl) arc (90:180:0.5 and 0.15) ++ (0.6,-0.3) arc (0:-90:0.3 and 0.15) arc (90:180:0.3 and 0.15);

\draw[linkred,looseness=0.75] (0,2.4) -- ++(0.2,0) to[out=0,in=180] ++(0.6,0.4) ++(0,-0.4) arc (-90:0:0.6 and 0.2); % right crossing
\end{scope}

\begin{scope}[yshift=-2.75cm]
\draw[link,looseness=1] (0,2.8) to[out=0,in=90] ++(0.2,-0.4) to[out=270,in=180] ++(0.4,-0.4) coordinate (loopbr) to[out=0,in=270] ++(0.4,0.6) to[out=90,in=0] ++(-0.4,0.6) -- ++(-0.6,0); % coming into bottom right of twist
%
% front of kappa and twist box
\draw[linkred,ultra thick] (1.25,2.4) arc (270:90:0.5 and 0.2) coordinate (rtwist);
\path (1.85,2.8) to[out=0,in=180] ++(0.3,0.25) coordinate (linkingpt);
\draw[linkred,ultra thick] (linkingpt) arc (90:0:0.5 and 0.15) ++ (0,-0.6) arc (0:90:0.3 and 0.15) arc (270:180:0.3 and 0.15);
\draw[link,looseness=1.5] (0,1.8) -- ++(1.8,0) to[out=0,in=270] ++(0.6,1) to[out=90,in=0] ++(-0.6,0.6) -- ++(-1.8,0); % outer strand of braid region
\begin{scope}[linkred/.append style={looseness=0.75}]
\draw[linkred,ultra thick] (rtwist) to[out=0,in=180] ++(0.9,-0.65);
\draw[linkred,ultra thick] (rtwist) ++(0,-0.4) to[out=0,in=180] ++(0.9,0.65);
\draw[linkred,ultra thick] (linkingpt) ++(0.5,-0.15) arc (0:-90:0.3 and 0.15) arc (90:180:0.3 and 0.15);
\draw[linkred,ultra thick] (linkingpt) ++(0.5,-0.75) arc (0:-90:0.5 and 0.15);
\node[below] at (1.2,1.8) {\scriptsize$\beta = yxy^{-1}$};
\end{scope}
% redraw part of twist loop on left
\draw[link,looseness=1] (loopbr) to[out=0,in=270] ++(0.4,0.6);
\node[right,red,inner sep=2pt] at (2.7,2.9) {\small$\kappa$};
\end{scope}

\begin{scope}[yshift=-2.75cm,xshift=7cm]
\draw[link] (0,1.8) -- (-0.9,1.8); % undercrossing part of U, should be drawn first
\draw[linkred] (-1.2,1.8) to[out=270,in=270,looseness=2.5] ++(0.6,0) to[out=90,in=0,looseness=0.75] ++(-1,0.6) -- ++(-0.3,0) to[out=180,in=0,looseness=0.75] ++(-0.6,0.4); % end at (-2,2.4) plus twist
\draw[linkred,looseness=2] (-1.6,2) -- ++(0,-0.2) to[out=270,in=270] ++(1.4,0) to[out=90,in=0,looseness=1] ++(-1.4,1) -- ++(-0.3,0) to[out=180,in=0,looseness=0.75] ++(-0.6,-0.4); % top left of kappa, end at (-2,2.8) plus twist

\draw[linkred,looseness=1] (0.4,3.05) coordinate (ktl) -- ++(-0.8,0) to[out=180,in=90] (-1.6,2); % connect to right tangle
\draw[linkred,looseness=1] (0.4,2.15) -- ++(-0.8,0) to[out=180,in=90] (-1.2,1.8);

\draw[linkred] (-2.5,2.4) arc (270:180:0.6 and 0.2);
\draw[linkred] (ktl) arc (90:0:0.5 and 0.15) ++(-0.6,-0.3) arc (180:270:0.3 and 0.15) arc (90:0:0.3 and 0.15);
\draw[link,looseness=1.5] (0,1.8) to[out=0,in=270] ++(0.6,0.8) to[out=90,in=0] ++(-0.6,0.8); % U, right
\draw[link,looseness=1.5] (-0.9,1.8) -- ++(-1.2,0) to[out=180,in=270] ++(-0.6,0.8) to[out=90,in=180] ++(0.6,0.8) -- ++(2.1,0); % U, left
\draw[linkred] (ktl) ++(0.5,-0.15) arc (0:-90:0.3 and 0.15) arc (90:180:0.3 and 0.15) ++(0.6,-0.3) arc (0:-90:0.5 and 0.15);
\draw[linkred] (-2.5,2.8) arc (90:180:0.6 and 0.2);
\end{scope}

\begin{scope}[yshift=3.85cm,xshift=10.5cm,rotate=180]
\draw[link] (0,1.8) -- (-0.9,1.8); % undercrossing part of U, should be drawn first
\draw[linkred,looseness=2] (-1.6,2) -- ++(0,-0.2) to[out=270,in=270] ++(1.4,0) -- ++(0,0.2) to[out=90,in=0,looseness=1] ++(-1.4,1.05) -- ++(-0.4,0) coordinate (ktl); % top left of kappa, end at (-2,3.05)
\draw[linkred] (-1.2,2) -- ++(0,-0.2) to[out=270,in=270,looseness=2.5] ++(0.6,0) to[out=90,in=0,looseness=0.75] ++(-1,0.35) -- ++(-0.4,0); % end at (-2,2.15)

\draw[linkred,looseness=1] (0,2.8) -- ++(-0.4,0) to[out=180,in=90] (-1.6,2); % connect to right tangle
\draw[linkred,looseness=1] (0,2.4) -- ++(-0.4,0) to[out=180,in=90] (-1.2,2);

\draw[linkred,looseness=0.75] (0,2.8) -- ++(0.2,0) to[out=0,in=180] ++(0.6,-0.4) ++(0,0.4) arc (90:0:0.6 and 0.2); % right crossing

\draw[linkred] (ktl) ++(-0.5,-0.15) arc (180:270:0.3 and 0.15) arc (90:0:0.3 and 0.15) ++ (-0.6,-0.3) arc (180:270:0.5 and 0.15);
\draw[link,looseness=1.5] (0,1.8) -- ++(0.4,0) to[out=0,in=270] ++(0.6,0.8) to[out=90,in=0] ++(-0.6,0.8) -- ++(-0.4,0); % U, right
\draw[link,looseness=1.5] (-0.9,1.8) -- ++(-0.7,0) to[out=180,in=270] ++(-0.6,0.8) to[out=90,in=180] ++(0.6,0.8) -- ++(1.6,0); % U, left
\draw[linkred] (ktl) arc (90:180:0.5 and 0.15) ++ (0.6,-0.3) arc (0:-90:0.3 and 0.15) arc (90:180:0.3 and 0.15);

\draw[linkred,looseness=0.75] (0,2.4) -- ++(0.2,0) to[out=0,in=180] ++(0.6,0.4) ++(0,-0.4) arc (-90:0:0.6 and 0.2); % right crossing
\end{scope}
\draw[->] (7.85,2) -- node[midway,above,inner sep=3pt] {\small$z$} ++(1,-0.5);
\draw[->] (7.85,0.5) -- node[midway,below,inner sep=3pt] {\small$x$} ++(1,0.5);

\begin{scope}[yshift=-5.5cm]
% right end of braid region
\draw[linkred] (1.2,3.05) arc (90:180:0.5 and 0.15) coordinate (redl1) ++(0.6,-0.3) arc (0:-90:0.3 and 0.15) arc (90:180:0.3 and 0.15) arc (180:270:0.5 and 0.15);
\draw[linkred] (1.2,3.05) to[out=0,in=180,looseness=0.75] ++(0.9,-0.65);
\draw[link,looseness=1] (0,2.8) to[out=0,in=90] ++(0.2,-0.4) to[out=270,in=180] ++(0.4,-0.4) coordinate (loopbr) to[out=0,in=270] ++(0.4,0.6) to[out=90,in=0] ++(-0.4,0.6) -- ++(-0.6,0); % middle portion of U, from height 2 to 3.2. Maximum x = 1
\draw[linkred,looseness=0.75] (redl1) arc (180:270:0.3 and 0.15) arc (90:0:0.3 and 0.15) ++(-0.6,-0.3) arc (180:270:0.5 and 0.15) to[out=0,in=180] ++(0.9,0.65) coordinate (redr1) to[out=0,in=180] ++(0.6,-0.4) ++(0,0.4) arc (90:0:0.6 and 0.2);

\draw[link,looseness=1.5] (0,1.8) -- ++(2.3,0) -- ++(0.1,0) to[out=0,in=270] ++(0.6,1) to[out=90,in=0] ++(-0.6,0.6) -- ++(-2.4,0); % outer strand
\draw[linkred,looseness=0.75] (redr1) ++(0,-0.4) to[out=0,in=180] ++(0.6,0.4) ++(0,-0.4) arc (-90:0:0.6 and 0.2);
\node[below] at (1.5,1.8) {\scriptsize$\beta = x^{-1}$};
\node[above,red,inner sep=2pt] at (redr1) {\small$\kappa$};
\end{scope}

\begin{scope}[yshift=-5.5cm,xshift=6.5cm]
\draw[link] (0,1.8) -- (-0.9,1.8); % undercrossing part of U, should be drawn first
\draw[linkred,looseness=2] (-1.6,2) -- ++(0,-0.2) to[out=270,in=270] ++(1.4,0) -- ++(0,0.2) to[out=90,in=0,looseness=1] ++(-1.4,1.05) -- ++(-0.4,0) coordinate (ktl); % top left of kappa, end at (-2,3.05)
\draw[linkred] (-1.2,2) -- ++(0,-0.2) to[out=270,in=270,looseness=2.5] ++(0.6,0) to[out=90,in=0,looseness=0.75] ++(-1,0.35) -- ++(-0.4,0); % end at (-2,2.15)

\draw[linkred,looseness=1] (0,2.8) -- ++(-0.4,0) to[out=180,in=90] (-1.6,2); % connect to right tangle
\draw[linkred,looseness=1] (0,2.4) -- ++(-0.4,0) to[out=180,in=90] (-1.2,2);

\draw[linkred,looseness=0.75] (0,2.8) -- ++(0.2,0) to[out=0,in=180] ++(0.6,-0.4) ++(0,0.4) to[out=0,in=180] ++(0.6,-0.4) ++(0,0.4) arc (90:0:0.6 and 0.2); % right crossing

\draw[linkred] (ktl) arc (90:180:0.5 and 0.15) ++ (0.6,-0.3) arc (0:-90:0.3 and 0.15) arc (90:180:0.3 and 0.15);
\draw[link,looseness=1.5] (0,1.8) -- ++(1,0) to[out=0,in=270] ++(0.6,0.8) to[out=90,in=0] ++(-0.6,0.8) -- ++(-1,0); % U, right
\draw[link,looseness=1.5] (-0.9,1.8) -- ++(-0.7,0) to[out=180,in=270] ++(-0.6,0.8) to[out=90,in=180] ++(0.6,0.8) -- ++(1.6,0); % U, left
\draw[linkred] (ktl) ++(-0.5,-0.15) arc (180:270:0.3 and 0.15) arc (90:0:0.3 and 0.15) ++ (-0.6,-0.3) arc (180:270:0.5 and 0.15);

\draw[linkred,looseness=0.75] (0,2.4) -- ++(0.2,0) to[out=0,in=180] ++(0.6,0.4) ++(0,-0.4) to[out=0,in=180] ++(0.6,0.4) ++(0,-0.4) arc (-90:0:0.6 and 0.2); % right crossing
\end{scope}

\begin{scope}[yshift=-8.25cm]
\draw[link,looseness=1] (0,2.8) to[out=0,in=90] ++(0.2,-0.4) to[out=270,in=180] ++(0.4,-0.4) coordinate (loopbr) to[out=0,in=270] ++(0.4,0.6) to[out=90,in=0] ++(-0.4,0.6) -- ++(-0.6,0); % coming into bottom right of twist
%
% front of kappa and twist box
\draw[linkred,ultra thick] (1.25,2.4) arc (270:90:0.5 and 0.2) coordinate (rtwist);
\path (1.85,2.8) to[out=0,in=180] ++(0.9,0.25) coordinate (linkingpt);
\draw[linkred,ultra thick,looseness=0.75] (linkingpt) ++(0.5,-0.15) arc (0:-90:0.3 and 0.15) arc (90:270:0.3 and 0.15);
\draw[linkred,ultra thick] (linkingpt) ++(0.5,-0.75) arc (0:-90:0.5 and 0.15);
\draw[link,looseness=1.5] (0,1.8) -- ++(2.4,0) to[out=0,in=270] ++(0.6,1) to[out=90,in=0] ++(-0.6,0.6) -- ++(-2.4,0); % outer strand of braid region
\draw[linkred,ultra thick] (linkingpt) arc (90:0:0.5 and 0.15) ++(0,-0.6) arc (0:90:0.3 and 0.15) arc (270:180:0.3 and 0.15);
\begin{scope}[linkred/.append style={looseness=0.75}]
\draw[linkred,ultra thick] (rtwist) to[out=0,in=180] ++(0.6,-0.4) ++(0,0.4) node[above,red,inner sep=2pt] {\small$\kappa$} to[out=0,in=180] ++(0.9,-0.65);
\draw[linkred,ultra thick] (rtwist) ++(0,-0.4) to[out=0,in=180] ++(0.6,0.4) ++(0,-0.4) to[out=0,in=180] ++(0.9,0.65);
\node[below] at (1.5,1.8) {\scriptsize$\beta = yx^{-1}y^{-1}$};
\end{scope}
% redraw part of twist loop on left
\draw[link,looseness=1] (loopbr) to[out=0,in=270] ++(0.4,0.6);
\end{scope}

\begin{scope}[yshift=-8.25cm,xshift=7.6cm]
\draw[link] (0,1.8) -- (-0.9,1.8); % undercrossing part of U, should be drawn first
\draw[linkred] (-1.2,1.8) to[out=270,in=270,looseness=2.5] ++(0.6,0) to[out=90,in=0,looseness=0.75] ++(-1,0.6) -- ++(-0.3,0) to[out=180,in=0,looseness=0.75] ++(-0.6,0.4) ++(0,-0.4) to[out=180,in=0,looseness=0.75] ++(-0.6,0.4); % end at (-2,2.4) plus twists
\draw[linkred,looseness=2] (-1.6,2) -- ++(0,-0.2) to[out=270,in=270] ++(1.4,0) to[out=90,in=0,looseness=1] ++(-1.4,1) -- ++(-0.3,0) to[out=180,in=0,looseness=0.75] ++(-0.6,-0.4) ++(0,0.4) to[out=180,in=0,looseness=0.75] ++(-0.6,-0.4); % top left of kappa, end at (-2,2.8) plus twists

\draw[linkred,looseness=1] (0.4,3.05) coordinate (ktl) -- ++(-0.8,0) to[out=180,in=90] (-1.6,2); % connect to right tangle
\draw[linkred,looseness=1] (0.4,2.15) -- ++(-0.8,0) to[out=180,in=90] (-1.2,1.8);

\draw[linkred] (-3.1,2.4) arc (270:180:0.6 and 0.2);
\draw[linkred] (ktl) ++(0.5,-0.15) arc (0:-90:0.3 and 0.15) arc (90:180:0.3 and 0.15) ++(0.6,-0.3) arc (0:-90:0.5 and 0.15);
\draw[link,looseness=1.5] (0,1.8) to[out=0,in=270] ++(0.6,0.8) to[out=90,in=0] ++(-0.6,0.8); % U, right
\draw[link,looseness=1.5] (-0.9,1.8) -- ++(-1.8,0) to[out=180,in=270] ++(-0.6,0.8) to[out=90,in=180] ++(0.6,0.8) -- ++(2.7,0); % U, left
\draw[linkred] (ktl) arc (90:0:0.5 and 0.15) ++(-0.6,-0.3) arc (180:270:0.3 and 0.15) arc (90:0:0.3 and 0.15);
\draw[linkred] (-3.1,2.8) arc (90:180:0.6 and 0.2);
\end{scope}

\begin{scope}[yshift=-1.65cm,xshift=10.5cm,rotate=180]
\draw[link] (0,1.8) -- (-0.9,1.8); % undercrossing part of U, should be drawn first
\draw[linkred,looseness=2] (-1.6,2) -- ++(0,-0.2) to[out=270,in=270] ++(1.4,0) -- ++(0,0.2) to[out=90,in=0,looseness=1] ++(-1.4,1.05) -- ++(-0.4,0) coordinate (ktl); % top left of kappa, end at (-2,3.05)
\draw[linkred] (-1.2,2) -- ++(0,-0.2) to[out=270,in=270,looseness=2.5] ++(0.6,0) to[out=90,in=0,looseness=0.75] ++(-1,0.35) -- ++(-0.4,0); % end at (-2,2.15)

\draw[linkred,looseness=1] (0,2.8) -- ++(-0.4,0) to[out=180,in=90] (-1.6,2); % connect to right tangle
\draw[linkred,looseness=1] (0,2.4) -- ++(-0.4,0) to[out=180,in=90] (-1.2,2);

\draw[linkred,looseness=0.75] (0,2.8) -- ++(0.2,0) to[out=0,in=180] ++(0.6,-0.4) ++(0,0.4) to[out=0,in=180] ++(0.6,-0.4) ++(0,0.4) arc (90:0:0.6 and 0.2); % right crossing

\draw[linkred] (ktl) arc (90:180:0.5 and 0.15) ++ (0.6,-0.3) arc (0:-90:0.3 and 0.15) arc (90:180:0.3 and 0.15);
\draw[link,looseness=1.5] (0,1.8) -- ++(1,0) to[out=0,in=270] ++(0.6,0.8) to[out=90,in=0] ++(-0.6,0.8) -- ++(-1,0); % U, right
\draw[link,looseness=1.5] (-0.9,1.8) -- ++(-0.7,0) to[out=180,in=270] ++(-0.6,0.8) to[out=90,in=180] ++(0.6,0.8) -- ++(1.6,0); % U, left
\draw[linkred] (ktl) ++(-0.5,-0.15) arc (180:270:0.3 and 0.15) arc (90:0:0.3 and 0.15) ++ (-0.6,-0.3) arc (180:270:0.5 and 0.15);

\draw[linkred,looseness=0.75] (0,2.4) -- ++(0.2,0) to[out=0,in=180] ++(0.6,0.4) ++(0,-0.4) to[out=0,in=180] ++(0.6,0.4) ++(0,-0.4) arc (-90:0:0.6 and 0.2); % right crossing
\end{scope}
\draw[->] (8.75,-2.9) to[bend left=45] node[midway,above,inner sep=3pt] {\small$z$} ++(1,-0.25);
\draw[->] (8.75,-5.65) to[bend right=45] node[midway,below,inner sep=3pt] {\small$x$} ++(1,0.25);

\end{tikzpicture}
\caption{The diagrams $U \cup \kappa$ for $\beta = x^{\pm1}$ and $\beta=yx^{\pm1}y^{-1}$.  Each arrow represents a $180^\circ$ rotation about the $z$-axis or the $x$-axis according to its label, where we view the page as the $xy$-plane.}
\label{fig:T23-four-diagrams}
\end{figure}
In Figure~\ref{fig:T23-four-diagrams} we further isotope the diagrams for each $U \cup \kappa$, starting from the simplifications in Figures~\ref{fig:T23-tangle-simplify-0} and \ref{fig:T23-tangle-simplify-1}, and we see that the corresponding links for $\beta=x$ and $\beta=yxy^{-1}$ are isotopic to each other in a way which carries $U$ to $U$ and $\kappa$ to $\kappa$, as are the links for $\beta=x^{-1}$ and $\beta=yxy^{-1}$.  It follows that
\[ K_{x\vphantom{y^a}} \cong K_{y^a x y^{-a}} \quad\text{and}\quad K_{x^{-1}} \cong K_{y^a x^{-1} y^{-a}} \]
for all $a\in\Z$, since it is true for $a=1$ and since for fixed $\epsilon$ the knot $K_{y^ax^\epsilon y^{-a}}$ depends only on the parity of $a$.  Thus every $K_\beta$ must be isotopic to either $K_x$ or $K_{x^{-1}}$ as claimed.
\end{proof}

\begin{proposition} \label{prop:recover-wh-plus}
We have $K_x \cong \Wh^+(T_{2,3},2).$
\end{proposition}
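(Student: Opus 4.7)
The plan is to proceed by direct diagrammatic computation, paralleling the strategy used for Propositions~\ref{prop:recover-5_2}, \ref{prop:recover-pretzels}, and \ref{prop:recover-15n43522}. Starting from the diagram of $U\cup\kappa$ in the $\beta=x$ case shown in the top left of Figure~\ref{fig:T23-four-diagrams}, I would first perform an explicit isotopy of $U$ so that it bounds an obvious spanning disk in the plane, while recording the resulting positions of the crossings in which $\kappa$ engages with $U$. Then, using the method illustrated in Figure~\ref{fig:braid-pretzel-2}, I would cut the plane along this disk, glue two copies of the complement together, and lift $\kappa$ to the resulting branched double cover $\Sigma_2(U)\cong S^3$.

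Concretely, I would first recognize the structure of the tangle $\tau$: removing the clasp and three-strand cabling  at the top, what remains is a diagram of the $(2,4)$-torus tangle obtained from the trefoil pattern in Figure~\ref{fig:E-C24-T23-quotient-2}. After branched double covering, the cabling portion will lift to a neighborhood of the right-handed trefoil $T_{2,3}\subset S^3$, and the ``clasp'' decoration at the top (where $\kappa$ sits) will lift to a positive clasp on two parallel strands running along the cabling annulus. This is exactly how a Whitehead double is constructed: one takes two parallel copies of $T_{2,3}$ (with some number of full twists controlling the framing), closes them up with a clasp, and the resulting knot is $\Wh^\pm(T_{2,3}, n)$ depending on the sign of the clasp and the framing $n$.

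The main work is therefore to (i)~verify that the two parallel copies of $T_{2,3}$ arising as the preimage of the cabling annulus carry the framing $n=2$, and (ii)~verify that the clasp coming from $\beta=x$ is positive. Step (i) amounts to counting writhe contributions: the horizontal strands of $\tau$ in Figure~\ref{fig:E-C24-T23-quotient-2} lift to a curve differing from the Seifert-framed parallel of $T_{2,3}$ by a computable number of positive twists, which I expect to come out to $+2$ given the way the $(2,4)$-cable was built. Step (ii) is immediate from the fact that $x$ is a positive crossing between the top two strands of $\tau$, which lifts to a single positive clasp between the two Whitehead double strands after the branched cover.

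The main obstacle will be keeping track of the framing in step (i): the cabling annulus inside $M_F\cong S^3\setminus N(C_{2,4}(T_{2,3}))$ lifts via $\iota$ from a concrete spanning surface for the tangle $\tau$, and the self-linking of that lift relative to the Seifert framing of $T_{2,3}$ requires a careful combinatorial accounting. As a sanity check, one could (as in Proposition~\ref{prop:recover-15n43522}) feed the explicit diagram of $U\cup\kappa$ into SnapPy, perform a $(2,0)$-orbifold Dehn filling on $U$, and ask it to identify the resulting manifold as the complement of $\Wh^+(T_{2,3},2)$; this gives a certified verification independent of the hand diagrammatic argument. Together these two approaches will establish $K_x\cong \Wh^+(T_{2,3},2)$.
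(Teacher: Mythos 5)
Your plan is essentially the paper's own proof: start from the $U\cup\kappa$ diagram for $\beta=x$ (top row of Figure~\ref{fig:T23-four-diagrams}), isotope $U$ into a position where it transparently bounds a disk, pass to the branched double cover, lift $\kappa$, and then identify the resulting knot as the 2-twisted, positively clasped Whitehead double of $T_{2,3}$ --- exactly what Figure~\ref{fig:wh-plus-rht} carries out. The paper does the entire computation by explicit planar isotopy rather than the writhe/framing count you sketch in your step~(i), but that count would be a legitimate shortcut once you trust the identification of the lifted cabling annulus; and your SnapPy sanity check mirrors what the paper actually does for the companion Proposition~\ref{prop:recover-15n43522}, so it is a sound backstop here as well.
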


\begin{proof}
We take the link with components $U = \tau \cup \beta$ (where $\beta=x$) and $\kappa$ from the top row of Figure~\ref{fig:T23-four-diagrams}, and isotope it into a convenient position in the first half of Figure~\ref{fig:wh-plus-rht}.  Having done so, in the remainder of Figure~\ref{fig:wh-plus-rht} we then take the branched double cover with respect to the unknot $U$, lifting $\kappa$ to the knot $\tilde\kappa = K_x$ as we do so, and then isotope it further until it is recognizable as the 2-twisted, positively clasped Whitehead double of $T_{2,3}$.
\begin{figure}
\begin{tikzpicture}
\begin{scope}
\draw[link] (0,1.8) -- (-0.9,1.8); % undercrossing part of U, should be drawn first
\draw[linkred,looseness=2] (-1.6,2) -- ++(0,-0.2) to[out=270,in=270] ++(1.4,0) -- ++(0,0.2) to[out=90,in=0,looseness=1] ++(-1.4,1.05) -- ++(-0.4,0) coordinate (ktl); % top left of kappa, end at (-2,3.05)
\draw[linkred] (-1.2,2) -- ++(0,-0.2) to[out=270,in=270,looseness=2.5] ++(0.6,0) to[out=90,in=0,looseness=0.75] ++(-1,0.35) -- ++(-0.4,0); % end at (-2,2.15)

\draw[linkred,looseness=1] (0,2.8) -- ++(-0.4,0) to[out=180,in=90] (-1.6,2); % connect to right tangle
\draw[linkred,looseness=1] (0,2.4) -- ++(-0.4,0) to[out=180,in=90] (-1.2,2);

\draw[linkred,looseness=0.75] (0,2.8) -- ++(0.2,0) to[out=0,in=180] ++(0.6,-0.4) ++(0,0.4) arc (90:0:0.6 and 0.2); % right crossing

\draw[linkred] (ktl) ++(-0.5,-0.15) arc (180:270:0.3 and 0.15) arc (90:0:0.3 and 0.15) ++ (-0.6,-0.3) arc (180:270:0.5 and 0.15);
\draw[link,looseness=1.5] (0,1.8) -- ++(0.4,0) to[out=0,in=270] ++(0.6,0.8) to[out=90,in=0] ++(-0.6,0.8) -- ++(-0.4,0); % U, right
\draw[link,looseness=1.5] (-0.9,1.8) -- ++(-0.7,0) to[out=180,in=270] ++(-0.6,0.8) to[out=90,in=180] ++(0.6,0.8) -- ++(1.6,0); % U, left
\draw[linkred] (ktl) arc (90:180:0.5 and 0.15) ++ (0.6,-0.3) arc (0:-90:0.3 and 0.15) arc (90:180:0.3 and 0.15);

\draw[linkred,looseness=0.75] (0,2.4) -- ++(0.2,0) to[out=0,in=180] ++(0.6,0.4) ++(0,-0.4) arc (-90:0:0.6 and 0.2) node[right,red,inner sep=2pt] {\small$\kappa$}; % right crossing
\end{scope}

\begin{scope}[xshift=6cm]
\draw[link] (0,1.8) -- ++(-0.9,0); % undercrossing part of U, should be drawn first
\draw[linkred,looseness=2] (-1.6,2) -- ++(0,-0.2) to[out=270,in=270] ++(1.4,0) -- ++(0,1.4) to[out=90,in=90] ++(0.8,0) coordinate (newfold); % top left of kappa, used to end at (-2,3.05)
\draw[linkred] (-1.2,2) -- ++(0,-0.2) to[out=270,in=270,looseness=2.5] ++(0.6,0) to[out=90,in=90,looseness=2] ++(-1.4,0) coordinate (y1) to[out=270,in=270] ++(2.2,0);

\draw[linkred,looseness=1] (0,2.8) -- ++(-0.4,0) to[out=180,in=90] (-1.6,2); % connect to right tangle
\draw[linkred,looseness=1] (0,2.4) -- ++(-0.4,0) to[out=180,in=90] (-1.2,2);

\draw[linkred,looseness=0.75] (0,2.8) -- ++(1,0) to[out=0,in=180] ++(0.6,-0.4) ++(0,0.4) arc (90:0:0.6 and 0.2); % right crossing

\draw[link,looseness=1.5] (0,1.8) -- ++(1.2,0) to[out=0,in=270] ++(0.6,0.8) -- ++(0,0.6) to[out=90,in=0] ++(-0.6,0.8) -- ++(-2,0); % U, right

\draw[linkred,looseness=0.75] (0,2.4) -- ++(1,0) to[out=0,in=180] ++(0.6,0.4) ++(0,-0.4) arc (-90:0:0.6 and 0.2) node[right,red,inner sep=2pt] {\small$\kappa$}; % right crossing
\draw[linkred] (newfold) -- ++(0,-1.2);
\begin{scope}
\clip ($(y1)+(0.5,-0.5)$) rectangle ($(newfold)+(0,0.25)$);
\draw[linkred] (y1) to[out=270,in=270] ++(2.2,0) -- ++(0,1) to[out=90,in=0,looseness=1.25] ++(-0.4,0.4) -- ++(-0.7,0) coordinate (y3);
\end{scope}
\draw[linkred] (y3) to[out=180,in=90,looseness=1.25] ++(-1.5,-1.2) -- ++(0,-0.2) coordinate (y4);
\draw[linkred] (y4) to[out=270,in=270] ++(3,0) -- ++(0,0.2);

\draw[link,looseness=1.5] (-0.9,1.8) -- ++(-1.5,0) to[out=180,in=270] ++(-0.6,0.8) -- ++(0,0.6) to[out=90,in=180] ++(0.6,0.8) -- ++(2.4,0); % U, left
\end{scope}

\begin{scope}[yshift=-4.25cm]
\draw[link] (0.3,1.8) -- ++(-1.2,0); % undercrossing part of U, should be drawn first
\draw[linkred,looseness=2] (-1.6,2) -- ++(0,-0.2) to[out=270,in=270] ++(1.4,0) -- ++(0,1.4) to[out=90,in=90] ++(0.8,0) coordinate (newfold); % top left of kappa, used to end at (-2,3.05)
\draw[linkred] (-1.2,2) -- ++(0,-0.2) to[out=270,in=270,looseness=2.5] ++(0.6,0) to[out=90,in=90,looseness=2] ++(-1.4,0) coordinate (y1) to[out=270,in=270] ++(2.2,0);

\draw[linkred,looseness=1] (0,2.8) -- ++(-0.4,0) to[out=180,in=90] (-1.6,2); % connect to right tangle
\draw[linkred,looseness=1] (0,2.4) -- ++(-0.4,0) to[out=180,in=90] (-1.2,2);

\draw[linkred,looseness=0.75] (0,2.8) -- ++(0.6,0) to[out=0,in=90,looseness=1] ++(0.4,-0.4) -- ++(0,-0.4) coordinate (outerbottom); % right crossing

\draw[link,looseness=1.5] (0,1.8) -- ++(1,0) to[out=0,in=270] ++(0.6,0.8) -- ++(0,1) to[out=90,in=0] ++(-0.6,0.8) -- ++(-2.1,0); % U, right

\draw[linkred] (outerbottom) -- ++(0,-0.2) to[out=270,in=270] ++(-3.8,0) -- ++(0,0.2) to[out=90,in=180,looseness=1.25] ++(1.9,2) -- ++(0.4,0) to[out=0,in=90,looseness=1.25] ++(1.5,-0.8);
\draw[linkred,looseness=0.75] (0,2.4) -- ++(0.6,0) to[out=0,in=270,looseness=1] ++(0.4,0.4) -- ++(0,0.4) node[right,red,inner sep=2pt] {\small$\kappa$}; % right crossing
\draw[linkred] (newfold) -- ++(0,-1.2);
\begin{scope}
\clip ($(y1)+(0.5,-0.5)$) rectangle ($(newfold)+(0,0.25)$);
\draw[linkred] (y1) to[out=270,in=270] ++(2.2,0) -- ++(0,1) to[out=90,in=0,looseness=1.25] ++(-0.4,0.4) -- ++(-0.7,0) coordinate (y3);
\end{scope}
\draw[linkred] (y3) to[out=180,in=90,looseness=1.25] ++(-1.5,-1.2) -- ++(0,-0.2) coordinate (y4);
\draw[linkred] (y4) to[out=270,in=270] ++(3,0) -- ++(0,0.2);

\draw[link,looseness=1.5] (-0.9,1.8) -- ++(-1.8,0) to[out=180,in=270] ++(-0.6,0.8) -- ++(0,1) to[out=90,in=180] ++(0.6,0.8) -- ++(2.4,0); % U, left

\node at (2.75,2.3) {\huge$\xrightarrow{\dcover}$};
\end{scope}

\begin{scope}[yshift=-4.25cm,xshift=6.5cm]
\begin{scope}
\draw[linkred,looseness=2] (-1.6,2) ++(1.4,0) -- ++(0,1.2) to[out=90,in=90] ++(0.8,0) coordinate (newfold); % top left of kappa, used to end at (-2,3.05)
\draw[linkred] (-1.2,2) ++(0.6,0) to[out=90,in=90,looseness=2] ++(-1.4,0) coordinate (y1) ++(2.2,-0.2);
\draw[linkred,looseness=1] (0,2.8) -- ++(-0.4,0) to[out=180,in=90] (-1.6,2); % connect to right tangle
\draw[linkred,looseness=1] (0,2.4) -- ++(-0.4,0) to[out=180,in=90] (-1.2,2);
\draw[linkred,looseness=0.75] (0,2.8) -- ++(0.6,0) to[out=0,in=90,looseness=1] ++(0.4,-0.4) -- ++(0,-0.4) coordinate (outerbottom); % right crossing
\draw[linkred] (outerbottom) ++(-3.8,0) to[out=90,in=180,looseness=1.25] ++(1.9,2) -- ++(0.4,0) to[out=0,in=90,looseness=1.25] ++(1.5,-0.8);
\draw[linkred,looseness=0.75] (0,2.4) -- ++(0.6,0) to[out=0,in=270,looseness=1] ++(0.4,0.4) -- ++(0,0.4) node[right,red,inner sep=2pt] {\small$\tilde\kappa$}; % right crossing
\draw[linkred] (newfold) -- ++(0,-1.2);
\draw[linkred] (y1) ++(2.2,0) -- ++(0,0.8) to[out=90,in=0,looseness=1.25] ++(-0.4,0.4) -- ++(-0.7,0) coordinate (y3);
\draw[linkred] (y3) to[out=180,in=90,looseness=1.25] ++(-1.5,-1.2) coordinate (y4);
\draw[linkred] (y4) ++(3,0) -- ++(0,0.2);
\end{scope}
\begin{scope}[rotate around={180:(-0.9,2)}]
\draw[linkred,looseness=2] (-1.6,2) ++(1.4,0) -- ++(0,1.2) to[out=90,in=90] ++(0.8,0) coordinate (newfold); % top left of kappa, used to end at (-2,3.05)
\draw[linkred] (-1.2,2) ++(0.6,0) to[out=90,in=90,looseness=2] ++(-1.4,0) coordinate (y1) ++(2.2,-0.2);
\draw[linkred,looseness=1] (0,2.8) -- ++(-0.4,0) to[out=180,in=90] (-1.6,2); % connect to right tangle
\draw[linkred,looseness=1] (0,2.4) -- ++(-0.4,0) to[out=180,in=90] (-1.2,2);
\draw[linkred,looseness=0.75] (0,2.8) -- ++(0.6,0) to[out=0,in=90,looseness=1] ++(0.4,-0.4) -- ++(0,-0.4) coordinate (outerbottom);%++(0,0.4) arc (90:0:0.6 and 0.2); % right crossing
\draw[linkred] (outerbottom) ++(-3.8,0) to[out=90,in=180,looseness=1.25] ++(1.9,2) -- ++(0.4,0) to[out=0,in=90,looseness=1.25] ++(1.5,-0.8);
\draw[linkred,looseness=0.75] (0,2.4) -- ++(0.6,0) to[out=0,in=270,looseness=1] ++(0.4,0.4) -- ++(0,0.4); %to[out=0,in=180] ++(0.6,0.4) ++(0,-0.4) arc (-90:0:0.6 and 0.2); % right crossing
\draw[linkred] (newfold) -- ++(0,-1.2);
\draw[linkred] (y1) ++(2.2,0) -- ++(0,0.8) to[out=90,in=0,looseness=1.25] ++(-0.4,0.4) -- ++(-0.7,0) coordinate (y3);
\draw[linkred] (y3) to[out=180,in=90,looseness=1.25] ++(-1.5,-1.2) coordinate (y4);
\draw[linkred] (y4) ++(3,0) -- ++(0,0.2);
\end{scope}
\end{scope}

\begin{scope}[xshift=-1cm] % shift bottom row

\begin{scope}[yshift=-9cm]
\begin{scope}
\draw[linkred,looseness=2] (-1.6,2) ++(1.4,0) -- ++(0,1.2) coordinate (flipo) to[out=90,in=90,looseness=2.5] ++(-0.4,0) -- ++(0,-0.2); % top left of kappa, used to end at (-2,3.05)
\path (-1.2,2) ++(0.6,0) ++(-1.4,0) coordinate (y1) ++(2.2,-0.2);
\draw[linkred] (-0.6,2) -- ++(0,1) coordinate (flipy);
\draw[linkred,looseness=1] (0,2.8) -- ++(-0.4,0) to[out=180,in=90] (-1.6,2); % connect to right tangle
\draw[linkred,looseness=1] (0,2.4) -- ++(-0.4,0) to[out=180,in=90] (-1.2,2);
\path (0,2.8) -- ++(1,0) to[out=0,in=270,looseness=1] ++(0.4,0.4) coordinate (flipob) ++(-0.4,-1.2) coordinate (outerbottom); % right crossing
\draw[linkred,looseness=0.75] (0,2.8) -- ++(0.6,0)  node[above,red,inner sep=2pt] {\small$\tilde\kappa$} to[out=0,in=90,looseness=1] ++(0.4,-0.4) -- ++(0,-0.4); % right crossing
\draw[linkred,looseness=1.25] (0,2.4) -- ++(1,0) to[out=0,in=90,looseness=1] ++(0.4,-0.4) to[out=270,in=0] ++(-2.3,-2.4) -- ++(-0.4,0) to[out=180,in=270] ++(-1.9,1.2);
\draw[linkred] (y1) ++(2.2,0) -- ++(0,0.8) to[out=90,in=0,looseness=1.25] ++(-0.4,0.4) -- ++(-0.7,0) coordinate (y3);
\draw[linkred] (y3) to[out=180,in=90,looseness=1.25] ++(-1.5,-1.2) coordinate (y4);
\begin{scope}
\clip ($(flipo)+(-0.2,-0.2)$) rectangle ++(-0.5,0.5);
\draw[linkred] (flipo) to[out=90,in=90,looseness=2.5] ++(-0.4,0) -- ++(0,-0.2);
\end{scope}
\end{scope}
\begin{scope}[rotate around={180:(-0.9,2)}]
\draw[linkred,looseness=2] (-1.6,2) ++(1.4,0) -- ++(0,1.2) to[out=90,in=90] ++(0.8,0) coordinate (newfold); % top left of kappa, used to end at (-2,3.05)
\draw[linkred] (-1.2,2) ++(0.6,0) to[out=90,in=90,looseness=2] ++(-1.4,0) coordinate (y1) ++(2.2,-0.2);
\draw[linkred,looseness=1] (0,2.8) -- ++(-0.4,0) to[out=180,in=90] (-1.6,2); % connect to right tangle
\draw[linkred,looseness=1] (0,2.4) -- ++(-0.4,0) to[out=180,in=90] (-1.2,2);
\path (0,2.8) -- ++(0.6,0) to[out=0,in=90,looseness=1] ++(0.4,-0.4) -- ++(0,-0.4) coordinate (outerbottom); % right crossing
\draw[linkred] (outerbottom) ++(-3.8,0) to[out=90,in=180,looseness=1.25] ++(1.9,2) -- ++(0.4,0) to[out=0,in=90,looseness=1.25] ++(1.5,-0.8);
\draw[linkred,looseness=0.75] (0,2.8) -- ++(1,0) to[out=0,in=270,looseness=1] ++(0.4,0.4) -- ++(0,0.2); % right crossing
\draw[linkred,looseness=0.75] (0,2.4) -- ++(0.6,0) to[out=0,in=270,looseness=1] ++(0.4,0.4) -- ++(0,0.4); % right crossing
\draw[linkred] (newfold) -- ++(0,-1.2);
\end{scope}
\end{scope}

\begin{scope}[yshift=-9cm,xshift=5.25cm]
\begin{scope}
\draw[linkred,looseness=2] (-1.6,2) ++(1.4,0) -- ++(0,1.2) coordinate (flipo) to[out=90,in=90,looseness=2.5] ++(-0.4,0) -- ++(0,-0.2);
\path (-1.2,2) ++(0.6,0) ++(-1.4,0) coordinate (y1) ++(2.2,-0.2);
\draw[linkred] (-0.6,2) -- ++(0,1) coordinate (flipy);
\draw[linkred,looseness=1] (0,2.8) -- ++(-0.4,0) to[out=180,in=90] (-1.6,2); % connect to right tangle
\draw[linkred,looseness=1] (0,2.4) -- ++(-0.4,0) to[out=180,in=90] (-1.2,2);
\draw[linkred,looseness=0.75] (0,2.8) -- ++(0.6,0) node[above,red,inner sep=2pt] {\small$\tilde\kappa$} to[out=0,in=90,looseness=1] ++(0.4,-0.4) -- ++(0,-0.4); % right crossing
\draw[linkred,looseness=1.25] (0,2.4) -- ++(1,0) to[out=0,in=90,looseness=1] ++(0.4,-0.4) to[out=270,in=0] ++(-2.3,-2.4) -- ++(-0.4,0) to[out=180,in=270] ++(-1.9,1.2);
\path (y1) ++(2.2,0) -- ++(0,0.8) to[out=90,in=0,looseness=1.25] ++(-0.4,0.4) -- ++(-0.7,0) coordinate (y3);
\draw[linkred] (y3) -- ++(0.7,0) to[out=0,in=0] ++(0,0.5) -- ++(-0.7,0) to[out=180,in=90,looseness=1.25] ++(-1.5,-1.7) coordinate (fixo) ++(0,-1.1) to[out=270,in=270] ++(1.2,0);
\draw[linkred] (y3) to[out=180,in=90,looseness=1.25] ++(-1.1,-1.2);
\begin{scope}
\clip ($(flipo)+(-0.2,-0.2)$) rectangle ++(-0.5,0.5);
\draw[linkred] (flipo) to[out=90,in=90,looseness=2.5] ++(-0.4,0) -- ++(0,-0.2);
\end{scope}
\end{scope}
\begin{scope}[rotate around={180:(-0.9,2)}]
\draw[linkred,looseness=2] (-1.6,2) ++(1.4,0) -- ++(0,1.2) to[out=90,in=90] ++(0.4,0) coordinate (newfold); 
\draw[linkred] (-0.6,2) -- ++(0,1.1);
\draw[linkred,looseness=1] (0,2.8) -- ++(-0.4,0) to[out=180,in=90] (-1.6,2); % connect to right tangle
\draw[linkred,looseness=1] (0,2.4) -- ++(-0.4,0) to[out=180,in=90] (-1.2,2);
\path (0,2.8) -- ++(0.6,0) to[out=0,in=90,looseness=1] ++(0.4,-0.4) -- ++(0,-0.4) coordinate (outerbottom); % right crossing
\draw[linkred] (outerbottom) ++(-3.8,0) to[out=90,in=180,looseness=1.25] ++(1.9,2) -- ++(0.4,0) to[out=0,in=90,looseness=1.25] ++(1.5,-0.8);
\draw[linkred,looseness=0.75] (0,2.8) -- ++(1,0) to[out=0,in=270,looseness=1] ++(0.4,0.4) -- ++(0,0.2); % right crossing
\draw[linkred,looseness=0.75] (0,2.4) -- ++(0.6,0) to[out=0,in=270,looseness=1] ++(0.4,0.4) -- ++(0,0.4); % right crossing
\draw[linkred] (newfold) -- ++(0,-1.2);
\end{scope}
\draw[linkred] (fixo) -- ++(0,-1.1);
\end{scope}

\begin{scope}[yshift=-6.5cm,xshift=8.5cm] % nice (imho) drawing of Wh^+(T(2,3),2)
\draw[linkred] (0.4,0.15) arc (90:180:0.6 and 0.15) ++ (0.4,0) arc (360:270:0.6 and 0.15); % clasp
\draw[linkred] (-0.4,0.15) arc (90:0:0.6 and 0.15) ++ (-0.4,0) arc (180:270:0.6 and 0.15);

\draw[linkred] (-1.35,-1.5) to[out=60,in=150] (0,-0.9) to[out=330,in=240] (1.15,-1) to[out=60,in=0,looseness=1] (0.4,-0.15); % clasp top right to one of the middle strands
\draw[linkred] (-1.15,-1.5) to[out=60,in=150] (0,-1.1) to[out=330,in=240] (1.35,-1) to[out=60,in=0,looseness=1] (0.4,0.15);

\draw[linkred] (-0.4,0.15) to[out=180,in=120,looseness=1] (-1.35,-1) to[out=300,in=210] (0,-1.1) to[out=30,in=120] (1.15,-1.5); % clasp top left to the other middle strands
\draw[linkred] (-0.4,-0.15) to[out=180,in=120,looseness=1] (-1.15,-1) to[out=300,in=210] (0,-0.9) to[out=30,in=120] (1.35,-1.5);

\begin{scope} % fix middle crossing
\clip (-0.5,-1.5) rectangle (0.5,-0.5);
\draw[linkred] (-1.35,-1.5) to[out=60,in=150] (0,-0.9) to[out=330,in=240] (1.15,-1);
\draw[linkred] (-1.15,-1.5) to[out=60,in=150] (0,-1.1) to[out=330,in=240] (1.35,-1);
\end{scope}

% bottom strand, minus clasp
\draw[linkred,looseness=1] (-1.35,-1.5) to[out=240,in=180] (-0.6,-2.15) ++ (1.2,0) to[out=0,in=300] (1.35,-1.5);
\draw[linkred,looseness=1] (-1.15,-1.5) to[out=240,in=180] (-0.6,-1.85) ++ (1.2,0) to[out=0,in=300] (1.15,-1.5);

% clasp
\draw[linkred,looseness=0.75] (-0.6,-1.85) to[out=0,in=180] ++(0.6,-0.3) ++(0,0.3) to[out=0,in=180] ++(0.6,-0.3); % clasp
\draw[linkred,looseness=0.75] (-0.6,-2.15) to[out=0,in=180] ++(0.6,0.3) ++(0,-0.3) to[out=0,in=180] ++(0.6,0.3);
\node[below,red,inner sep=3pt] at (0,-2.15) {\small$\tilde\kappa$};
\end{scope}

\end{scope}
\end{tikzpicture}
\caption{A proof that $K_x \cong \Wh^+(T_{2,3},2)$, beginning with the link $U \cup \kappa$ from the top row of Figure~\ref{fig:T23-four-diagrams} and ending with the lift $K_x = \tilde\kappa$ of $\kappa$ to $\dcover(U) \cong S^3$.}
\label{fig:wh-plus-rht}
\end{figure}
\end{proof}

\begin{proposition} \label{prop:recover-wh-minus}
We have $K_{x^{-1}} \cong \Wh^-(T_{2,3},2).$
\end{proposition}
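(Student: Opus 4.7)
The strategy is to mirror the proof of Proposition~\ref{prop:recover-wh-plus} essentially verbatim.  Starting from the diagram of $U\cup\kappa$ for $\beta = x^{-1}$, which appears in the third row of Figure~\ref{fig:T23-four-diagrams}, I would apply the same sequence of isotopies used in Figure~\ref{fig:wh-plus-rht}.  These isotopies take place in the complement of $\kappa$ and preserve all crossings, so they carry over without any change from the $\beta = x$ case; the resulting diagram differs from the one produced in Proposition~\ref{prop:recover-wh-plus} only in that the single crossing inside the box labeled $\beta$ now has negative sign rather than positive.

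I would then take the branched double cover of $U$ by the same procedure used before, lifting $\kappa$ to the knot $\tilde\kappa = K_{x^{-1}} \subset \dcover(U) \cong S^3$.  Since the sign-flipped crossing of $U$ sits in an embedded $3$-ball disjoint from the rest of the diagram, the branched double cover of that local $3$-ball is the only region in which the picture of $\tilde\kappa$ can differ from the picture of $K_x \cong \Wh^+(T_{2,3},2)$ produced in Proposition~\ref{prop:recover-wh-plus}.  In the $\beta = x$ case this local subdiagram is exactly the positive clasp of the Whitehead double, so under the crossing change it becomes a negative clasp, yielding $\tilde\kappa \cong \Wh^-(T_{2,3},2)$.

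The key step that requires verification is the assertion that the $\beta$-crossing is the one that ultimately produces the clasp of the Whitehead double in the final picture, rather than some other crossing.  I expect that the main work is to track this crossing through each stage of Figure~\ref{fig:wh-plus-rht} and through the branching construction, confirming that at every stage it remains the lone crossing between the two strands that eventually form the clasp.  Once this is done, the identification of $K_{x^{-1}}$ with $\Wh^-(T_{2,3},2)$ follows immediately from the $\Wh^+$ identification of Proposition~\ref{prop:recover-wh-plus} together with the sign flip at that single crossing.
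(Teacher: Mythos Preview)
The paper does take the same overall shape --- isotope $U\cup\kappa$, pass to the branched double cover, identify $\tilde\kappa$ --- but it does \emph{not} try to recycle the $\beta=x$ computation.  It carries out a fresh, explicit isotopy sequence recorded in its own figure (Figure~\ref{fig:wh-minus-rht}), parallel to but independent of Figure~\ref{fig:wh-plus-rht}.

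Your shortcut has a concrete obstruction at the point where you propose to start.  By the third row of Figure~\ref{fig:T23-four-diagrams}, the $\beta=x^{-1}$ diagram does \emph{not} differ from the $\beta=x$ diagram by the sign of a single crossing: the simplifications in Figure~\ref{fig:T23-tangle-simplify-0} already branched on the sign of $\epsilon$, and the two outputs differ both in the crossing signs of the small clasp region and by an extra half-twist of $\kappa$ on the right.  So ``apply the same sequence of isotopies used in Figure~\ref{fig:wh-plus-rht}'' does not parse from that starting point.

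If instead you retreat to the bottom of Figure~\ref{fig:T23-Kbeta-1}, where the only difference genuinely is the sign in the $-\epsilon$ box (by then a self-crossing of $\kappa$, not of $U$), your idea becomes viable.  That crossing lies in a ball disjoint from $U$, hence lifts to \emph{two} crossings of $\tilde\kappa$ sitting in disjoint balls exchanged by the deck involution; flipping the sign downstairs flips both upstairs, which is exactly what a clasp-sign change requires.  What you would then have to show is that, after the remaining isotopy in the cover, those two lifted crossings are the two clasp crossings of the Whitehead double.  This is plausible and would give a tidier proof than the paper's, but it is the entire content of the argument rather than a routine check --- and it is not what the paper does.
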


\begin{proof}
Just as in Proposition~\ref{prop:recover-wh-plus}, we take the link with components $U = \tau \cup \beta$ and $\kappa$, this time with $\beta=x^{-1}$, as pictured in the third row of Figure~\ref{fig:T23-four-diagrams}.  In Figure~\ref{fig:wh-minus-rht} we carry out an isotopy, take the branched double cover with respect to the unknot $U$, and then lift $\kappa$ to the knot $\tilde\kappa = K_{x^{-1}}$, which we recognize after further isotopy as the 2-twisted, negatively clasped Whitehead double of $T_{2,3}$.
\begin{figure}
\begin{tikzpicture}
\begin{scope}
\draw[link] (0,1.8) -- (-0.9,1.8); % undercrossing part of U, should be drawn first
\draw[linkred,looseness=2] (-1.6,2) -- ++(0,-0.2) to[out=270,in=270] ++(1.4,0) -- ++(0,0.2) to[out=90,in=0,looseness=1] ++(-1.4,1.05) -- ++(-0.4,0) coordinate (ktl); % top left of kappa, end at (-2,3.05)
\draw[linkred] (-1.2,2) -- ++(0,-0.2) to[out=270,in=270,looseness=2.5] ++(0.6,0) to[out=90,in=0,looseness=0.75] ++(-1,0.35) -- ++(-0.4,0); % end at (-2,2.15)

\draw[linkred,looseness=1] (0,2.8) -- ++(-0.4,0) to[out=180,in=90] (-1.6,2); % connect to right tangle
\draw[linkred,looseness=1] (0,2.4) -- ++(-0.4,0) to[out=180,in=90] (-1.2,2);

\draw[linkred,looseness=0.75] (0,2.8) -- ++(0.2,0) to[out=0,in=180] ++(0.6,-0.4) ++(0,0.4) to[out=0,in=180] ++(0.6,-0.4) ++(0,0.4) arc (90:0:0.6 and 0.2) node[right,red,inner sep=2pt] {\small$\kappa$}; % right crossings

\draw[linkred] (ktl) arc (90:180:0.5 and 0.15) ++ (0.6,-0.3) arc (0:-90:0.3 and 0.15) arc (90:180:0.3 and 0.15);
\draw[link,looseness=1.5] (0,1.8) -- ++(1,0) to[out=0,in=270] ++(0.6,0.8) to[out=90,in=0] ++(-0.6,0.8) -- ++(-1,0); % U, right
\draw[link,looseness=1.5] (-0.9,1.8) -- ++(-0.7,0) to[out=180,in=270] ++(-0.6,0.8) to[out=90,in=180] ++(0.6,0.8) -- ++(1.6,0); % U, left
\draw[linkred] (ktl) ++(-0.5,-0.15) arc (180:270:0.3 and 0.15) arc (90:0:0.3 and 0.15) ++ (-0.6,-0.3) arc (180:270:0.5 and 0.15);

\draw[linkred,looseness=0.75] (0,2.4) -- ++(0.2,0) to[out=0,in=180] ++(0.6,0.4) ++(0,-0.4) to[out=0,in=180] ++(0.6,0.4) ++(0,-0.4) arc (-90:0:0.6 and 0.2); % right crossings
\end{scope}

\begin{scope}[xshift=7cm]
\draw[link] (-0.9,1.8) -- ++(2,0); % undercrossing part of U, should be drawn first
\draw[linkred,looseness=2] (-1.6,2) -- ++(0,-0.2) to[out=270,in=270] ++(1.4,0) -- ++(0,1) to[out=90,in=0,looseness=1] ++(-0.4,0.4) -- ++(-1,0) coordinate (ktl); % top left of kappa, end at (-2,3.05)
\draw[linkred] (-1.2,2) -- ++(0,-0.2) to[out=270,in=270,looseness=2.5] ++(0.6,0) to[out=90,in=270] ++(0,1.2) to[out=90,in=90] ++(0.8,0) coordinate (yloop);

\draw[linkred,looseness=1] (0,2.8) -- ++(-0.4,0) to[out=180,in=90] (-1.6,2); % connect to right tangle
\draw[linkred,looseness=1] (0,2.4) -- ++(-0.4,0) to[out=180,in=90] (-1.2,2);

\draw[linkred,looseness=0.75] (0,2.8) -- ++(0.8,0) to[out=0,in=180] ++(0.6,-0.4) ++(0,0.4) to[out=0,in=180] ++(0.6,-0.4) ++(0,0.4) arc (90:0:0.6 and 0.2) node[right,red,inner sep=2pt] {\small$\kappa$}; % right crossings

\draw[link,looseness=1.5] (1.1,1.8) -- ++(0.5,0) to[out=0,in=270] ++(0.6,0.8) -- ++(0,0.8) to[out=90,in=0] ++(-0.6,0.8) -- ++(-1.6,0); % U, right
\draw[linkred,looseness=0.75] (0,2.4) -- ++(0.8,0) to[out=0,in=180] ++(0.6,0.4) ++(0,-0.4) to[out=0,in=180] ++(0.6,0.4) ++(0,-0.4) arc (-90:0:0.6 and 0.2); % right crossings
\draw[linkred] (ktl) to[out=180,in=90,looseness=1] ++(-0.8,-0.8) -- ++(0,-0.6) to[out=270,in=270] ++(3,0) -- ++(0,1.2) to[out=90,in=0,looseness=1] ++(-0.8,0.8);
\draw[linkred] (yloop) -- ++(0,-1.2) to[out=270,in=270] ++(-2.2,0) -- ++(0,0.6) to[out=90,in=180,looseness=1] ++(1.8,1.4);
\draw[link,looseness=1.5] (-0.9,1.8) -- ++(-1.5,0) to[out=180,in=270] ++(-0.6,0.8) -- ++(0,0.8) to[out=90,in=180] ++(0.6,0.8) -- ++(2.4,0); % U, left
\end{scope}

\begin{scope}[yshift=-4.25cm]
\draw[link,looseness=1.5] (-0.9,1.8) -- ++(2.5,0) to[out=0,in=270] ++(0.6,0.8) -- ++(0,1.2) to[out=90,in=0] ++(-0.6,0.8) -- ++(-1.6,0); % U, right

\draw[linkred,looseness=2] (-1.6,2) -- ++(0,-0.2) to[out=270,in=270] ++(1.4,0) -- ++(0,1) to[out=90,in=0,looseness=1] ++(-0.4,0.4) -- ++(-1,0) coordinate (ktl); % top left of kappa, end at (-2,3.05)
\draw[linkred] (-1.2,2) -- ++(0,-0.2) to[out=270,in=270,looseness=2.5] ++(0.6,0) to[out=90,in=270] ++(0,1.2) to[out=90,in=90] ++(0.8,0) coordinate (yloop);

\draw[linkred,looseness=1] (0,2.8) -- ++(-0.4,0) to[out=180,in=90] (-1.6,2); % connect to right tangle
\draw[linkred,looseness=1] (0,2.4) -- ++(-0.4,0) to[out=180,in=90] (-1.2,2);

\draw[linkred,looseness=0.75] (0,2.8) -- ++(0.8,0) to[out=0,in=180] ++(0.6,-0.3) ++(0,0.3) to[out=0,in=90] ++(0.4,-0.3) to[out=270,in=90,looseness=0.5] ++(-0.8,-0.5) coordinate (outerloop) -- ++(0,-0.1);
\draw[linkred] (outerloop) -- ++(0,-0.2) to[out=270,in=270,looseness=1.75] ++(-3.8,0) -- ++(0,1) to[out=90,in=180,looseness=1] ++(1.4,1.4) -- ++(1.2,0); % right crossings

\draw[linkred,looseness=0.75] (0,2.4) -- ++(0.8,0) to[out=0,in=180] ++(0.6,0.4) ++(0,-0.3) to[out=0,in=270] ++(0.4,0.3) node[right,red,inner sep=2pt] {\small$\kappa$} to[out=90,in=0,looseness=1] ++(-1.4,1.4) -- ++(-0.6,0); % right crossings
\draw[linkred] (ktl) to[out=180,in=90,looseness=1] ++(-0.8,-0.8) -- ++(0,-0.6) to[out=270,in=270] ++(3,0) -- ++(0,1.2) to[out=90,in=0,looseness=1] ++(-0.8,0.8);
\draw[linkred] (yloop) -- ++(0,-1.2) to[out=270,in=270] ++(-2.2,0) -- ++(0,0.6) to[out=90,in=180,looseness=1] ++(1.8,1.4);

\draw[link,looseness=1.5] (-0.9,1.8) -- ++(-1.9,0) to[out=180,in=270] ++(-0.6,0.8) -- ++(0,1.2) to[out=90,in=180] ++(0.6,0.8) -- ++(2.8,0); % U, left
\node at (2.9,2.3) {\huge$\xrightarrow{\dcover}$};
\end{scope}

\begin{scope}[yshift=-4.25cm,xshift=7cm]
\begin{scope}
\draw[linkred,looseness=2] (-1.6,2) ++ (1.4,0) -- ++(0,0.8) to[out=90,in=0,looseness=1] ++(-0.4,0.4) -- ++(-1,0) coordinate (ktl); % top left of kappa, end at (-2,3.05)
\draw[linkred] (-1.2,2) ++ (0.6,0) -- ++(0,1) to[out=90,in=90] ++(0.8,0) coordinate (yloop);
\draw[linkred,looseness=1] (0,2.8) -- ++(-0.4,0) to[out=180,in=90] (-1.6,2); % connect to right tangle
\draw[linkred,looseness=1] (0,2.4) -- ++(-0.4,0) to[out=180,in=90] (-1.2,2);
\draw[linkred,looseness=0.75] (0,2.8) -- ++(0.8,0) to[out=0,in=180] ++(0.6,-0.3) ++(0,0.3) to[out=0,in=90] ++(0.4,-0.3) to[out=270,in=90,looseness=0.5] ++(-0.8,-0.5) coordinate (outerloop);
\draw[linkred] (outerloop) ++(-3.8,0) -- ++(0,0.8) to[out=90,in=180,looseness=1] ++(1.4,1.4) -- ++(1.2,0); % right crossings
\draw[linkred,looseness=0.75] (0,2.4) -- ++(0.8,0) to[out=0,in=180] ++(0.6,0.4) ++(0,-0.3) to[out=0,in=270] ++(0.4,0.3) node[right,red,inner sep=2pt] {\small$\tilde\kappa$} to[out=90,in=0,looseness=1] ++(-1.4,1.4) -- ++(-0.6,0); % right crossings
\draw[linkred] (ktl) to[out=180,in=90,looseness=1] ++(-0.8,-0.8) -- ++(0,-0.4) ++(3,0) -- ++(0,1) to[out=90,in=0,looseness=1] ++(-0.8,0.8);
\draw[linkred] (yloop) -- ++(0,-1) ++(-2.2,0) -- ++(0,0.4) to[out=90,in=180,looseness=1] ++(1.8,1.4);
\end{scope}
\begin{scope}[rotate around={180:(-0.9,2)}]
\draw[linkred,looseness=2] (-1.6,2) ++ (1.4,0) -- ++(0,0.8) to[out=90,in=0,looseness=1] ++(-0.4,0.4) -- ++(-1,0) coordinate (ktl); % top left of kappa, end at (-2,3.05)
\draw[linkred] (-1.2,2) ++ (0.6,0) -- ++(0,1) to[out=90,in=90] ++(0.8,0) coordinate (yloop);
\draw[linkred,looseness=1] (0,2.8) -- ++(-0.4,0) to[out=180,in=90] (-1.6,2); % connect to right tangle
\draw[linkred,looseness=1] (0,2.4) -- ++(-0.4,0) to[out=180,in=90] (-1.2,2);
\draw[linkred,looseness=0.75] (0,2.8) -- ++(0.8,0) to[out=0,in=180] ++(0.6,-0.3) ++(0,0.3) to[out=0,in=90] ++(0.4,-0.3) to[out=270,in=90,looseness=0.5] ++(-0.8,-0.5) coordinate (outerloop);
\draw[linkred] (outerloop) ++(-3.8,0) -- ++(0,0.8) to[out=90,in=180,looseness=1] ++(1.4,1.4) -- ++(1.2,0); % right crossings
\draw[linkred,looseness=0.75] (0,2.4) -- ++(0.8,0) to[out=0,in=180] ++(0.6,0.4) ++(0,-0.3) to[out=0,in=270] ++(0.4,0.3) to[out=90,in=0,looseness=1] ++(-1.4,1.4) -- ++(-0.6,0); % right crossings
\draw[linkred] (ktl) to[out=180,in=90,looseness=1] ++(-0.8,-0.8) -- ++(0,-0.4) ++(3,0) -- ++(0,1) to[out=90,in=0,looseness=1] ++(-0.8,0.8);
\draw[linkred] (yloop) -- ++(0,-1) ++(-2.2,0) -- ++(0,0.4) to[out=90,in=180,looseness=1] ++(1.8,1.4);
\end{scope}
\end{scope}

\begin{scope}[xshift=-1cm] % shift bottom row

\begin{scope}[yshift=-9.5cm]
\begin{scope}
\path (-0.2,2) -- ++(0,0.8) to[out=90,in=0,looseness=1] ++(-0.4,0.4) -- ++(-1,0) coordinate (ktl); % top left of kappa, end at (-2,3.05)
\draw[linkred,looseness=2] (-0.2,2) -- ++(0,1.6) arc (0:90:0.2 and 0.6);
\draw[linkred] (-1.2,2) ++ (0.6,0) -- ++(0,1) ++(0.8,0) coordinate (yloop);
\draw[linkred,looseness=1] (0,2.8) -- ++(-0.4,0) to[out=180,in=90] (-1.6,2); % connect to right tangle
\draw[linkred,looseness=1] (0,2.4) -- ++(-0.4,0) to[out=180,in=90] (-1.2,2);
\draw[linkred,looseness=0.75] (0,2.8) -- ++(0.8,0) to[out=0,in=180] ++(0.6,-0.3) ++(0,0.3) to[out=0,in=90] ++(0.4,-0.4) to[out=270,in=90,looseness=0.5] ++(-0.4,-0.4) ++(-0.4,0) coordinate (outerloop);
\path[linkred] (outerloop) ++(-3.8,0) -- ++(0,0.8) to[out=90,in=180,looseness=1] ++(1.4,1.4) -- ++(1.2,0); % right crossings
\draw[linkred,looseness=0.75] (0,2.4) -- ++(0.8,0) to[out=0,in=180] ++(0.6,0.4) ++(0,-0.3) to[out=0,in=90] ++(0.4,-0.6) node[right,red,inner sep=2pt] {\small$\tilde\kappa$} to[out=270,in=0,looseness=1.25] ++(-1.8,-2.5) -- ++(-1.8,0) to[out=180,in=270,looseness=1] ++(-1.8,0.8) -- ++(0,0.2); % right crossings
\draw[linkred] (ktl) ++(-0.8,-0.8) ++(0,-0.4) ++(3,0) -- ++(0,1) to[out=90,in=0,looseness=1] ++(-0.8,0.8);
\draw[linkred] (yloop) ++(0,-1) ++(-2.2,0) -- ++(0,0.4) to[out=90,in=180,looseness=1] ++(1.8,1.4);
\draw[linkred] (-1.2,2) ++ (0.6,0) ++(0,1) -- ++(0,0.6) arc (180:90:0.2 and 0.6);
\end{scope}
\begin{scope}[rotate around={180:(-0.9,2)}]
\draw[linkred,looseness=2] (-1.6,2) ++ (1.4,0) -- ++(0,0.8) to[out=90,in=0,looseness=1] ++(-0.4,0.4) -- ++(-1,0) coordinate (ktl); % top left of kappa, end at (-2,3.05)
\draw[linkred] (-1.2,2) ++ (0.6,0) -- ++(0,1) to[out=90,in=90] ++(0.8,0) coordinate (yloop);
\draw[linkred,looseness=1] (0,2.8) -- ++(-0.4,0) to[out=180,in=90] (-1.6,2); % connect to right tangle
\draw[linkred,looseness=1] (0,2.4) -- ++(-0.4,0) to[out=180,in=90] (-1.2,2);
\draw[linkred,looseness=0.75] (0,2.8) -- ++(0.8,0) to[out=0,in=180] ++(0.6,-0.3) ++(0,0.3) to[out=0,in=270] ++(0.4,0.8) ++(-0.8,-1.6) coordinate (outerloop);
\draw[linkred] (outerloop) ++(-4.2,0) -- ++(0,0.8) to[out=90,in=180,looseness=1] ++(1.4,1.4) -- ++(1.6,0); % right crossings
\draw[linkred,looseness=0.75] (0,2.4) -- ++(0.8,0) to[out=0,in=180] ++(0.6,0.4) ++(0,-0.3) to[out=0,in=270] ++(0.4,0.3) to[out=90,in=0,looseness=1] ++(-1.4,1.4) -- ++(-0.6,0); % right crossings
\draw[linkred] (ktl) to[out=180,in=90,looseness=1] ++(-0.8,-0.8) -- ++(0,-0.4);
\draw[linkred] (yloop) -- ++(0,-1);
\end{scope}
\end{scope}

\begin{scope}[yshift=-9.5cm,xshift=6cm]
\begin{scope}
\path (-0.2,2) -- ++(0,0.8) to[out=90,in=0,looseness=1] ++(-0.4,0.4) -- ++(-1,0) coordinate (ktl); % top left of kappa, end at (-2,3.05)
\draw[linkred,looseness=2] (-0.2,2) -- ++(0,1.4) arc (0:90:0.2 and 0.6);
\draw[linkred] (-1.2,2) ++ (0.6,0) -- ++(0,1) ++(0.8,0) coordinate (yloop);
\draw[linkred,looseness=1] (0,2.8) -- ++(-0.4,0) to[out=180,in=90] (-1.6,2); % connect to right tangle
\draw[linkred,looseness=1] (0,2.4) -- ++(-0.4,0) to[out=180,in=90] (-1.2,2);
\draw[linkred,looseness=0.75] (0,2.8) -- ++(0.8,0) to[out=0,in=180] ++(0.6,-0.3) ++(0,0.3) to[out=0,in=90] ++(0.4,-0.4) to[out=270,in=90,looseness=0.5] ++(-0.4,-0.4) ++(-0.4,0) coordinate (outerloop);
\path[linkred] (outerloop) ++(-3.8,0) -- ++(0,0.8) to[out=90,in=180,looseness=1] ++(1.4,1.4) -- ++(1.2,0); % right crossings
\draw[linkred,looseness=0.75] (0,2.4) -- ++(0.8,0) to[out=0,in=180] ++(0.6,0.4) ++(0,-0.3) to[out=0,in=90] ++(0.4,-0.6) node[right,red,inner sep=2pt] {\small$\tilde\kappa$} to[out=270,in=0,looseness=1.25] ++(-1.8,-2.5) -- ++(-1.8,0) to[out=180,in=270,looseness=1] ++(-1.8,0.8) -- ++(0,0.2); % right crossings
\draw[linkred] (yloop) ++(0,-1) ++(-2.2,0) -- ++(0,0.4) to[out=90,in=180,looseness=1] ++(1.6,1.4) arc (-90:90:0.6 and 0.2) to[out=180,in=90,looseness=1.25] ++(-2,-1.8);
\draw[linkred] (-1.2,2) ++ (0.6,0) ++(0,1) -- ++(0,0.4) arc (180:90:0.2 and 0.6);
\end{scope}
\begin{scope}[rotate around={180:(-0.9,2)}]
\draw[linkred] (-0.2,2) -- ++(0,1) to[out=90,in=90] ++(0.8,0) coordinate (ploop);
\draw[linkred] (-1.2,2) ++ (0.6,0) -- ++(0,1) to[out=90,in=90] ++(0.8,0) coordinate (yloop);
\draw[linkred,looseness=1] (0,2.8) -- ++(-0.4,0) to[out=180,in=90] (-1.6,2); % connect to right tangle
\draw[linkred,looseness=1] (0,2.4) -- ++(-0.4,0) to[out=180,in=90] (-1.2,2);
\draw[linkred,looseness=0.75] (0,2.8) -- ++(0.8,0) to[out=0,in=180] ++(0.6,-0.3) ++(0,0.3) to[out=0,in=270] ++(0.4,0.8) ++(-0.8,-1.6) coordinate (outerloop);
\draw[linkred] (outerloop) ++(-4.2,0) -- ++(0,0.8) to[out=90,in=180,looseness=1] ++(1.4,1.4) -- ++(1.6,0); % right crossings
\draw[linkred,looseness=0.75] (0,2.4) -- ++(0.8,0) to[out=0,in=180] ++(0.6,0.4) ++(0,-0.3) to[out=0,in=270] ++(0.4,0.3) to[out=90,in=0,looseness=1] ++(-1.4,1.4) -- ++(-0.6,0); % right crossings
\draw[linkred] (ploop) -- ++(0,-1.4);
\draw[linkred] (yloop) -- ++(0,-1);
\end{scope}
\end{scope}

\begin{scope}[yshift=-6.5cm,xshift=10cm] % nice (imho) drawing of Wh^-(T(2,3),2)
\draw[linkred] (-0.4,0.15) arc (90:0:0.6 and 0.15) ++ (-0.4,0) arc (180:270:0.6 and 0.15); % clasp
\draw[linkred] (0.4,0.15) arc (90:180:0.6 and 0.15) ++ (0.4,0) arc (360:270:0.6 and 0.15);

\draw[linkred] (-1.35,-1.5) to[out=60,in=150] (0,-0.9) to[out=330,in=240] (1.15,-1) to[out=60,in=0,looseness=1] (0.4,-0.15); % clasp top right to one of the middle strands
\draw[linkred] (-1.15,-1.5) to[out=60,in=150] (0,-1.1) to[out=330,in=240] (1.35,-1) to[out=60,in=0,looseness=1] (0.4,0.15);

\draw[linkred] (-0.4,0.15) to[out=180,in=120,looseness=1] (-1.35,-1) to[out=300,in=210] (0,-1.1) to[out=30,in=120] (1.15,-1.5); % clasp top left to the other middle strands
\draw[linkred] (-0.4,-0.15) to[out=180,in=120,looseness=1] (-1.15,-1) to[out=300,in=210] (0,-0.9) to[out=30,in=120] (1.35,-1.5);

\begin{scope} % fix middle crossing
\clip (-0.5,-1.5) rectangle (0.5,-0.5);
\draw[linkred] (-1.35,-1.5) to[out=60,in=150] (0,-0.9) to[out=330,in=240] (1.15,-1);
\draw[linkred] (-1.15,-1.5) to[out=60,in=150] (0,-1.1) to[out=330,in=240] (1.35,-1);
\end{scope}

% bottom strand, minus clasp
\draw[linkred,looseness=1] (-1.35,-1.5) to[out=240,in=180] (-0.6,-2.15) ++ (1.2,0) to[out=0,in=300] (1.35,-1.5);
\draw[linkred,looseness=1] (-1.15,-1.5) to[out=240,in=180] (-0.6,-1.85) ++ (1.2,0) to[out=0,in=300] (1.15,-1.5);

% clasp
\draw[linkred,looseness=0.75] (-0.6,-1.85) to[out=0,in=180] ++(0.6,-0.3) ++(0,0.3) to[out=0,in=180] ++(0.6,-0.3); % clasp
\draw[linkred,looseness=0.75] (-0.6,-2.15) to[out=0,in=180] ++(0.6,0.3) ++(0,-0.3) to[out=0,in=180] ++(0.6,0.3);
\node[below,red,inner sep=3pt] at (0,-2.15) {\small$\tilde\kappa$};
\end{scope}

\end{scope}
\end{tikzpicture}
\caption{A proof that $K_{x^{-1}} \cong \Wh^-(T_{2,3},2)$, beginning with the link $U \cup \kappa$ from the third row of Figure~\ref{fig:T23-four-diagrams} and ending with the lift $K_{x^{-1}} = \tilde\kappa$ of $\kappa$ to $\dcover(U) \cong S^3$.}
\label{fig:wh-minus-rht}
\end{figure}
\end{proof}

We can now finish the proof of Theorem~\ref{thm:M_F-C24-T23}, and then conclude Theorem~\ref{thm:main-hfk}.

\begin{proof}[Proof of Theorem~\ref{thm:M_F-C24-T23}]
We apply Lemma~\ref{lem:C24-T23-as-branched-cover}, according to which $K$ is the lift of $\kappa$ in the branched double cover of the unknot $U = \tau \cup \beta$.  Although there are infinitely many such $\beta$ (see Proposition~\ref{prop:T23-braids}), Lemma~\ref{lem:T23-only-2} says that in fact $K$ must arise from this construction for either $\beta = x$ or $\beta = x^{-1}$.  In the case $\beta=x$, Proposition~\ref{prop:recover-wh-plus} says that $K \cong \Wh^+(T_{2,3},2)$, and if instead we have $\beta=x^{-1}$ then $K \cong \Wh^-(T_{2,3},2)$ by Proposition~\ref{prop:recover-wh-minus}.  This completes the proof.
\end{proof}

\begin{proof}[Proof of Theorem~\ref{thm:main-hfk}]
Letting $F$ be a genus-1 Seifert surface for $K$, we proved in Theorem~\ref{thm:identify-y-c} that up to replacing $K$ with its mirror, the manifold $M_F$ must be the complement of the $(2,4)$-cable of either the unknot or the right-handed trefoil.  In the unknot case, Theorem~\ref{thm:M_F-E24} says that $K$ is one of
\[ 5_2, \, 15n_{43522}, \, \text{or} \, P(-3,3,2n+1) \]
for some $n\in\Z$.  Likewise, in the trefoil case, Theorem~\ref{thm:M_F-C24-T23} tells us that $K$ is either
\[ \Wh^+(T_{2,3},2) \,\text{or}\, \Wh^-(T_{2,3},2). \]
Thus either $K$ or its mirror must be one of the knots listed above.
\end{proof}

\section{Detection results for Khovanov homology} \label{sec:kh-to-hfk}
Our goal in this section is to prove the detection results for reduced Khovanov homology stated in Theorems \ref{thm:main-kh} and \ref{thm:main-kh-pretzel}.  We will do so after establishing some preliminary results.  We continue to work with coefficients in $\Q$ throughout this section.
 
Recall that both   reduced Khovanov homology and knot Floer homology admit bigradings, which can be collapsed to a single $\delta$-grading, defined for these two theories by \begin{align*}
\gr_\delta &= \tfrac{1}{2}\gr_q - \gr_h,\\
\gr_\delta &= \gr_m - \gr_a,
\end{align*} respectively.  We say that either invariant is \emph{thin} if it is supported in a unique $\delta$-grading. Given a knot $K\subset S^3$, Dowlin's spectral sequence \cite{dowlin} \[\Khr(K) \implies \hfkhat(\mirror{K})\] from reduced Khovanov homology to knot Floer homology respects the $\delta$-gradings on either side, up to an overall shift.  This implies the following:

\begin{lemma}\label{lem:dim-equal}
Let $K\subset S^3$ be a knot for which $\Khr(K)$ is thin. Then $\hfkhat(K)$ is thin and \[ \dim\hfkhat(K) = \dim\Khr(K) = \det(K).\] \end{lemma}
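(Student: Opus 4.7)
The plan is to transfer thinness from $\Khr(K)$ to $\hfkhat(K)$ using Dowlin's spectral sequence, and then to identify both dimensions with $\det(K)$ via decategorification. First I would invoke that Dowlin's spectral sequence $\Khr(K) \Rightarrow \hfkhat(\mirror{K})$ respects the $\delta$-grading up to a uniform shift. Thus if $\Khr(K)$ is supported in a single $\delta$-grading, the same is true of the $E_\infty$-page. Since $E_\infty$ is the associated graded of a $\delta$-compatible filtration on $\hfkhat(\mirror{K})$, this forces $\hfkhat(\mirror{K})$ to be thin; then the mirror symmetry $\hfkhat_m(K,a) \cong \hfkhat_{-m}(\mirror{K},-a)$, which negates $\delta$-gradings, shows that $\hfkhat(K)$ is thin.

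Next I would compute $\dim \hfkhat(K)$ via the Euler characteristic. Let $\delta_0$ be the single $\delta$-grading carrying $\hfkhat(K)$; every generator in bigrading $(m,a)$ satisfies $m - a = \delta_0$, so $(-1)^{m+a} = (-1)^{2a+\delta_0} = (-1)^{\delta_0}$ uniformly. From the standard identification
\[
\Delta_K(t) = \sum_{m,a}(-1)^m \dim\hfkhat_m(K,a)\, t^a,
\]
setting $t = -1$ yields $\Delta_K(-1) = (-1)^{\delta_0}\dim \hfkhat(K)$, hence $\dim \hfkhat(K) = |\Delta_K(-1)| = \det(K)$.

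An entirely analogous computation handles $\Khr(K)$. If $\Khr(K)$ is concentrated in $\delta = \tfrac{1}{2}\gr_q - \gr_h$ equal to $\delta_0$, then each bigrading carrying nonzero homology has the form $(h,j) = (h, 2h+2\delta_0)$, so the contribution of a generator to $\sum_{h,j} (-1)^h q^j \dim \Khr^{h,j}(K)$ is $q^{2\delta_0}(-q^2)^h$ times its multiplicity. Evaluating the resulting Euler characteristic at $q = i$ collapses $(-q^2)^h$ to $1$, leaving a uniform factor $(-1)^{\delta_0}$. Since this Euler characteristic recovers the reduced Jones polynomial $V_K(q^2)$, and $|V_K(-1)| = \det(K)$, we conclude $\dim \Khr(K) = |V_K(-1)| = \det(K)$, and hence $\dim \hfkhat(K) = \dim \Khr(K) = \det(K)$.

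The main obstacle is not a deep one but rather a careful citation: one must confirm that Dowlin's spectral sequence preserves $\delta$-gradings on every page, and that the limiting filtration on $\hfkhat(\mirror{K})$ is itself compatible with the $\delta$-grading, so that thinness of $E_\infty$ genuinely implies thinness of $\hfkhat(\mirror{K})$ rather than merely bounding one $\delta$-graded dimension by another. Both points are built into Dowlin's construction, and once invoked the remainder of the proof is just the two Euler characteristic computations above.
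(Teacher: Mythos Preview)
Your proof is correct and follows essentially the same approach as the paper: transfer thinness via Dowlin's $\delta$-grading-preserving spectral sequence together with the mirror symmetry of $\hfkhat$, and then identify both dimensions with $\det(K)$ by evaluating the graded Euler characteristics (the Alexander and Jones polynomials) at $-1$. The paper's argument is slightly terser but otherwise identical.
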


\begin{proof}
Suppose that $\Khr(K)$ is thin. Then the fact that Dowlin's spectral sequence respects the $\delta$-grading up to an overall shift, together with the symmetry \cite{osz-knot}\[\hfkhat_m(K,a) \cong \hfkhat_{-m}(\mirror{K},-a),\]  implies that $\hfkhat(K)$ is also thin. Recall that the graded Euler characteristics of reduced Khovanov homology and knot Floer homology recover the Jones and Alexander polynomials, respectively \cite{khovanov-patterns, osz-knot}:
\begin{align}
\label{eqn:jones}V_K(t) &= \sum_{h,q} (-1)^h t^{q/2} \dim \Khr^{h,q}(K),\\
\label{eqn:alex}\Delta_K(t) &= \sum_{m,a} (-1)^m t^a \dim \hfkhat_m(K,a).
\end{align}
Supposing that $\Khr(K)$ and $\hfkhat(K)$ are supported in $\delta$-gradings $\delta_1$ and $\delta_2$, respectively, it follows that 
\begin{align*}
V_K(-1) &= (-1)^{\delta_1} \dim \Khr(K),\\
\Delta_K(-1)&=(-1)^{\delta_2}\dim\hfkhat(K),
\end{align*}
and thus \[\dim\hfkhat(K) = |\Delta_K(-1)| = \det(K) =|V_K(-1)| =\dim\Khr(K),\] as claimed.
\end{proof}

The next result pertains to the geography of knot Floer homology. For this result, recall that for any knot $K\subset S^3$, there are two differentials on knot Floer homology, 
 \begin{align*}
 \xi &=  \xi^1 +  \xi^2 + \dots + \\
 \omega &=  \omega^1 +  \omega^2 + \dots +,
 \end{align*}  where $\xi^i$ and $ \omega^i$ are, respectively,   sums of maps of the form
 \begin{align}
 \label{eqn:grading-shift-1}\xi^i_a& :\hfkhat_m(K,a)\to\hfkhat_{m-1}(K,a-i)\\
  \label{eqn:grading-shift-2}\omega^i_a& :\hfkhat_m(K,a)\to\hfkhat_{m-1}(K,a+i).
 \end{align}Indeed, given a doubly-pointed Heegaard diagram  for the knot $K\subset S^3$, \[(\Sigma,\alpha,\beta,z,w),\] the differential $\partial$ in the  Heegaard Floer complex \[\cfhat(S^3)=\cfhat(\Sigma,\alpha,\beta,w)\]  is a sum $\partial  = d_0 + d_1,$ where $d_0$ counts those  disks that avoid the basepoint $z$, and $d_1$ counts the rest. Then \[\hfkhat(K) \cong H_*(\cfhat(\Sigma,\alpha,\beta,w), d_0),\] and $\xi$ is the differential on this homology induced by $d_1$. The map $\omega$ is defined in the same way but with the roles of $z$ and $w$ swapped.
It follows from the definition that the homology with respect to either differential recovers the  Heegaard Floer homology of $S^3$, \begin{equation}\label{eqn:hom}H_*(\hfkhat(K),\xi)\cong H_*(\hfkhat(K),\omega)\cong\Q.\end{equation} Furthermore, the components $\xi^1$ and $\omega^1$ anticommute. (This follows from Ozsv{\'a}th--Szab{\'o}'s original construction of  $\mathit{CFK}^\infty(K)$ in \cite{osz-knot}; it is also stated explicitly in \cite[Equation~(3.7)]{bls} where our $\xi^1$ and $\omega^1$ correspond to their $\Psi^p$ and $\Omega^p$.)

When $\hfkhat(K)$ is thin, we have that $\xi = \xi^1$ and $\omega = \omega^1$ according to the grading shifts in \eqref{eqn:grading-shift-1} and \eqref{eqn:grading-shift-2}. In particular, \[\xi \omega = -\omega\xi.\] Moreover, in this case, the two homology groups in \eqref{eqn:hom} are supported in Alexander gradings $\tau(K)$ and $-\tau(K)$, respectively, where $\tau(K)$ is the Ozsv{\'a}th--Szab{\'o} tau invariant \cite{osz-tau}. With this background in place, we may now prove the following:
 
\begin{lemma}\label{lem:thin-fibered}
Let $K\subset S^3$ be a  knot of genus $g\geq 1$ for which $\hfkhat(K)$ is thin. Then \[\dim\hfkhat(K,g) \leq \dim\hfkhat(K,g-1).\] If in addition $K$ is fibered with $|\tau(K)|<g$, then this is a strict inequality.
\end{lemma}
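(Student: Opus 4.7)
The plan is to exploit the two commuting differentials $\xi^1$ and $\omega^1$ that thinness makes available on $\hfkhat(K)$. Recall that under the thinness hypothesis one has $\xi=\xi^1$ and $\omega=\omega^1$, where $\xi^1$ decreases the Alexander grading by one and $\omega^1$ increases it by one, and that the resulting homologies are each isomorphic to $\Q$, supported in Alexander gradings $\tau(K)$ and $-\tau(K)$, respectively.

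For the non-strict inequality, since $\hfkhat(K,g+1)=0$ the $\xi^1$-image into $\hfkhat(K,g)$ is zero, so the homology of $\xi^1$ at Alexander grading $g$ coincides with
\[ \ker\bigl(\xi^1\colon \hfkhat(K,g)\to\hfkhat(K,g-1)\bigr); \]
this kernel vanishes unless $\tau(K)=g$. Symmetrically, $\omega^1$ maps $\hfkhat(K,g)$ into $\hfkhat(K,g+1)=0$, so the $\omega^1$-homology at grading $g$ is the quotient $\hfkhat(K,g)/\omega^1\bigl(\hfkhat(K,g-1)\bigr)$, which vanishes unless $\tau(K)=-g$. Since $g\ge 1$ rules out the simultaneous equalities $\tau(K)=g$ and $\tau(K)=-g$, at least one of $\xi^1|_{\hfkhat(K,g)}$ is injective or $\omega^1\colon\hfkhat(K,g-1)\to\hfkhat(K,g)$ is surjective, and either conclusion yields $\dim\hfkhat(K,g)\le\dim\hfkhat(K,g-1)$.

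For the strict inequality, suppose now that $K$ is fibered with $|\tau(K)|<g$. Then $\dim\hfkhat(K,g)=1$, and by the previous paragraph both $\xi^1|_{\hfkhat(K,g)}$ is injective and $\omega^1\colon\hfkhat(K,g-1)\to\hfkhat(K,g)$ is surjective. Using the commutativity $\xi^1\omega^1=\omega^1\xi^1$ together with the fact that $\omega^1$ vanishes on $\hfkhat(K,g)$, we find
\[ \xi^1\bigl(\hfkhat(K,g)\bigr)\subseteq\ker\bigl(\omega^1|_{\hfkhat(K,g-1)}\bigr). \]
The left side has dimension $1$, while surjectivity of $\omega^1$ onto $\hfkhat(K,g)\cong\Q$ forces the right side to have codimension $1$ in $\hfkhat(K,g-1)$; hence $\dim\hfkhat(K,g-1)\ge 2>1=\dim\hfkhat(K,g)$, as required. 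The main obstacle here is really just organizing the bookkeeping of invoking both differentials to handle both signs of $\tau(K)$, and then chaining them together through commutativity to promote the weak bound into a strict one in the fibered case.
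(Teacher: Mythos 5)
Your proof is correct, and the strict-inequality argument is essentially the paper's argument rephrased: both use the commutativity $\xi^1\omega^1 = \omega^1\xi^1$ together with the vanishing of $\omega^1$ on the top Alexander grading, though you organize it as a containment $\xi^1(\hfkhat(K,g))\subseteq\ker\omega^1$ plus dimension count rather than as a contradiction $\omega\xi(x)\neq 0 = \xi\omega(x)$.

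The non-strict inequality is where you genuinely diverge from the paper. The paper works only with the $\xi$-complex, supposes $\dim\hfkhat(K,g) > \dim\hfkhat(K,g-1)$ for contradiction, applies conjugation symmetry to get the mirror inequality $\dim\hfkhat(K,-g) > \dim\hfkhat(K,1-g)$, and deduces nontrivial $\xi$-homology in both extreme gradings $\pm g$, contradicting $H_*(\hfkhat(K),\xi)\cong\Q$. You instead stay entirely at the top grading $g$ and invoke \emph{both} differentials: the $\xi$-homology at $g$ is $\ker\xi^1_g$ and the $\omega$-homology at $g$ is $\operatorname{coker}\omega^1_{g-1}$, supported respectively at $\tau(K)$ and $-\tau(K)$, and since $g\geq 1$ rules out $\tau(K)=g=-g$ simultaneously, at least one of them vanishes at $g$, giving injectivity of $\xi^1_g$ or surjectivity of $\omega^1_{g-1}$. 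This is a clean variant that avoids conjugation symmetry at the cost of bringing $\omega$ into the non-strict case as well; the paper's route keeps that case reliant on a single differential. Both approaches are sound and of comparable length.
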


\begin{proof}
Suppose that $g\geq 1$ and  $\hfkhat(K)$ is thin. Then $\xi = \xi^1$ and $\omega = \omega^1$ and $\xi\omega = -\omega\xi$. If \[\dim\hfkhat(K,g) > \dim\hfkhat(K,g-1),\] then we have also that \[\dim\hfkhat(K,-g) >\dim\hfkhat(K,1-g),\] by conjugation symmetry. The complex $(\hfkhat(K),\xi)$, given by \[\hfkhat(K,g) \xrightarrow{\xi_g} \hfkhat(K,g-1) \xrightarrow{\xi_{g-1}} \dots  \xrightarrow{\xi_{2-g}}\hfkhat(K,1-g) \xrightarrow{\xi_{1-g}} \hfkhat(K,-g),\] then has nontrivial homology in both of the Alexander gradings $g$ and $-g$, meaning that \[ \dim H_*(\hfkhat(K),\xi) \geq 2,\] a contradiction. This proves the first claim.

Now suppose that $K$ is also fibered, and assume for a contradiction that $|\tau(K)|<g$ but  \[\dim\hfkhat(K,g) = \dim\hfkhat(K,g-1) =1.\] The fact that $\tau(K)\neq \pm g$ implies that  the complexes $(\hfkhat(K),\xi)$ and $(\hfkhat(K),\omega)$ both have    trivial homology in Alexander grading $g$. This  implies  that the  components  \[\hfkhat(K,g)\xrightarrow{\xi_g}\hfkhat(K,g-1)\xrightarrow{\omega_{g-1}}\hfkhat(K,g)\] of $\xi$ and $\omega$ are both nontrivial, and hence so is their composition, since \[\hfkhat(K,g)\cong\hfkhat(K,g-1)\cong \Q.\]  Letting $x$ be a generator of $\hfkhat(K,g)$,  this shows that  $\omega(\xi(x)) \neq 0$. On the other hand, $\xi(\omega(x)) = \xi(0)=0$, which contradicts the fact that $\xi\omega = -\omega\xi.$
\end{proof}
 
We  now prove Theorem \ref{thm:main-kh}, which states that reduced Khovanov homology detects  $5_2$.

\begin{proof}[Proof of Theorem \ref{thm:main-kh}] Suppose that \[\Khr(K)\cong\Khr(5_2)\] as bigraded vector spaces. Note that $\Khr(5_2)$ is thin since $5_2$ is alternating \cite{lee-endomorphism}. It then follows from Lemma \ref{lem:dim-equal} that the knot Floer homology of $K$ is thin, and that \[\dim\hfkhat(K) =  \det(5_2) = 7.\] Let $g\geq 1$ be the genus of $K$, and let us first suppose that $K$ is not fibered. Then \[\dim \hfkhat(K,\pm g) \geq 2.\] Together with the fact from Lemma \ref{lem:thin-fibered} that \[\dim\hfkhat(K,g) \leq \dim\hfkhat(K,g-1),\] and the fact that the total dimension is 7, this implies  that $g=1$ and the sequence \[(\dim\hfkhat(K,a)\mid -1\leq a \leq 1)=(2,3,2).\] In particular, $K$ is a nearly fibered knot of genus 1, and it follows from Theorem \ref{thm:main-hfk} and Table \ref{fig:hfk-table} that $K$ is either $5_2$ or $\mirror{5_2}$. But reduced Khovanov homology distinguishes $5_2$ from its mirror, so we have that $K = 5_2$, as desired.

Finally, let us suppose for a contradiction that $K$ is fibered. First, note that \begin{equation}\label{eqn:tau-in}|\tau(K)|<g.\end{equation} Indeed, if $|\tau(K)| =  g$ instead, then either $K$ or its mirror is strongly quasipositive \cite[Theorem~1.2]{hedden-positivity}. In this case, \cite[Proposition~4]{plamenevskaya-kh} implies that Rasmussen's invariant \cite{rasmussen-s} satisfies  $s(K) = \pm 2g$. Since $\Khr(K)$ is thin, it is    supported in the $\delta$-grading \[\tfrac{1}{2}s(K) = \pm g,\] as argued at the end of \cite[Proof of Theorem~1]{bdlls}. Since $\Khr(5_2)$ is supported in $\delta$-grading 1, it follows that $g=1$. Then $K$ is a  fibered knot of genus 1, and hence   a trefoil or the figure eight, but this violates our assumption that \[\Khr(K)\cong \Khr(5_2).\] 
The strict inequality in \eqref{eqn:tau-in} therefore holds. 

It then follows from Lemma \ref{lem:thin-fibered} that \[1=\dim\hfkhat(K,g) < \dim\hfkhat(K,g-1).\] The fact that $\dim\hfkhat(K)=7$ then implies that either $g=1$, which cannot happen (since $K$ is not a trefoil or the figure eight, as discussed above), or else $g>1$ and  \[\hfkhat(K,a) \cong \begin{cases}
\Q& \textrm{if } a=\pm g\\
\Q^2& \textrm{if }  a=\pm (g-1)\\
\Q&\textrm{if } a=0\\
0& \textrm{otherwise}.
\end{cases}\] Let us assume the latter holds. Note in this case that if $g>2$ then the complex $(\hfkhat(K),\xi)$ must have nontrivial homology in both Alexander gradings $g-1$ and $1-g$, meaning that \[ \dim H_*(\hfkhat(K),\xi) \geq 2,\] a contradiction. Therefore, $g=2$ and \[(\dim\hfkhat(K,a)\mid -2\leq a \leq 2)=(1,2,1,2,1).\] 

The complexes $(\hfkhat(K),\xi)$ and $(\hfkhat(K),\omega)$ therefore take the forms \[\Q_2\xrightarrow{\xi_{2}}\Q^2_1\xrightarrow{\xi_{1}}\Q_0\xrightarrow{\xi_{0}}\Q^2_{-1}\xrightarrow{\xi_{-1}}\Q_{-2}\] and \[\Q_2\xleftarrow{\omega_{1}}\Q^2_1\xleftarrow{\omega_{0}}\Q_0\xleftarrow{\omega_{-1}}\Q^2_{-1}\xleftarrow{\omega_{-2}}\Q_{-2},\] respectively, where the subscripts indicate the Alexander grading. The fact that \[\tau(K) \neq \pm g = \pm 2\] implies that the homologies of these complexes are trivial in Alexander gradings $\pm 2$. This implies that  the components $\xi_2,\xi_{-1},\omega_{-2},$ and $\omega_1$ are all nontrivial. Moreover, $\xi_1$ and $\xi_0$ cannot  both be nontrivial, as this would imply that their composition is nontrivial, which would violate $\xi^2 = 0$. Let us assume without loss of generality that \[\xi_1 \neq 0\,\textrm{ and }\,\xi_0= 0.\] Let $x$ be an element of $\hfkhat(K,{-1})$ for which $\xi_{-1}(x)\neq 0$. Then \[\omega(\xi(x)) = \omega_{-2}(\xi_{-1}(x))\neq 0,\] while \[\xi(\omega(x)) = \xi_0(\omega_{-1}(x)) = 0,\] contradicting the fact that $\omega\xi = -\xi\omega$. We have therefore ruled out the possibility that $K$ is fibered, completing the proof of Theorem \ref{thm:main-kh}. \end{proof}

\begin{remark}
One can use a similar argument to prove the slightly stronger result that if $\Khr(K)$ is 7-dimensional and supported in a unique $\delta$-grading $d$ then, up to taking mirrors, either $K=5_2$, or else $d=3$ and \[\hfkhat(K)\cong\hfkhat(T_{2,7})\] as bigraded vector spaces. Though relatively straightforward, proving this takes quite a bit of room, so we do not pursue it here.
\end{remark}

Finally, we prove Theorem \ref{thm:main-kh-pretzel}, which states that reduced Khovanov homology together with the degree of the Alexander polynomial detects each  pretzel knot $P(-3,3,2n+1)$.

\begin{proof}[Proof of Theorem \ref{thm:main-kh-pretzel}]
Suppose that \[\Khr(K)\cong \Khr(P(-3,3,2n+1))\] as bigraded vector spaces, and that $\Delta_K(t)$ has degree one. Then $K$ is not fibered. Starkston proved \cite[Theorem~4.1]{starkston-pretzel} that the reduced Khovanov homology of this pretzel  is thin. It then follows from Lemma \ref{lem:dim-equal} that the knot Floer homology of $K$ is thin, and that \[\dim\hfkhat(K) =  \det(P(-3,3,2n+1)) = 9.\] Since $\hfkhat(K)$ is thin and $\Delta_K(t)$ has degree one, we conclude from \eqref{eqn:alex} and the genus detection \eqref{eqn:genus-detection} that $g(K)=1$. Since $K$ is not fibered, we have that \[\dim \hfkhat(K,\pm 1) \geq 2.\] Together with the fact from Lemma \ref{lem:thin-fibered} that \[\dim\hfkhat(K,1) \leq \dim\hfkhat(K,0),\] and the fact that the total dimension is 9, this implies that the sequence \[(\dim\hfkhat(K,a)\mid -1\leq a \leq 1)=(2,5,2) \textrm{ or } (3,3,3).\] But in the latter case, we would have \[\Delta_K(t) = \pm (3t-3+3t^{-1}),\] which would imply that $\Delta_K(1) = \pm 3$, but $\Delta_K(1) = 1$ for any knot $K\subset S^3$. Therefore,\[\dim\hfkhat(K,1) = 2,\] and hence $K$ is   nearly fibered of genus 1. The fact that $\hfkhat(K)$ is thin and 9-dimensional then means, by Theorem \ref{thm:main-hfk} and  Table \ref{fig:hfk-table}, that $K$ must be   a pretzel  knot $P(-3,3,2m+1)$ for some $m\in \Z$ (the mirror of any such pretzel is another such pretzel). But \[\Khr(P(-3,3,2m+1)) \not\cong \Khr(P(-3,3,2n+1))\] for $m\neq n$, by \cite[Theorem~4.1]{starkston-pretzel} or the more general \cite[Theorem~3.2]{hedden-watson}. We conclude that $K = P(-3,3,2n+1),$ as desired. \end{proof}

\section{Detection results for HOMFLY homology} \label{sec:homfly}

As mentioned in \S\ref{ssec:results},   reduced HOMFLY homology,  defined by Khovanov--Rozansky in \cite{khovanov-rozansky-2}, assigns to a knot $K\subset S^3$ a triply-graded vector space over $\Q$, \[\bar{H}(K) = \bigoplus_{i,j,k}\bar{H}^{i,j,k}(K),\] which determines the HOMFLY polynomial of $K$ by the relation \[ P_K(a,q) = \sum_{i,j,k} (-1)^{(k-j)/2} a^j q^i \dim \bar{H}^{i,j,k}(K). \] Our goal in this section is to prove Theorem \ref{thm:main-homfly-detection}, which says that reduced HOMFLY homology detects  each pretzel knot $P(-3,3,2n+1)$. We begin with the following computation:

\begin{lemma} \label{lem:compute-pretzel-homfly}
We have $\dim \bar{H}(P(-3,3,2n+1)) = 9$ for all $n \in \Z$.
\end{lemma}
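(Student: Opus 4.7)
The plan is to bracket $\dim \bar{H}(P(-3,3,2n+1))$ between lower and upper bounds of $9$.

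For the lower bound, we will invoke the spectral sequence from reduced HOMFLY homology to reduced Khovanov homology (arising from the $sl(2)$ specialization of Khovanov--Rozansky theory), which gives the rank inequality $\dim \bar{H}(K) \geq \dim \Khr(K)$ for any knot $K$. Combined with Starkston's theorem \cite[Theorem~4.1]{starkston-pretzel} that $\dim \Khr(P(-3,3,2n+1)) = 9$, this immediately yields the lower bound $\dim \bar{H}(P(-3,3,2n+1)) \geq 9$.

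For the upper bound, the plan is to show these pretzel knots are \emph{HOMFLY-thin}, i.e.\ that $\bar{H}$ is concentrated in a single $\delta$-grading within the triple grading. Under HOMFLY-thinness, the total dimension is read off the HOMFLY polynomial as the sum of the absolute values of its coefficients, with no cancellation in the graded Euler characteristic. For the base case $n=0$, the knot $P(-3,3,1) = 6_1$ is alternating and two-bridge, for which HOMFLY-thinness is classical, and one checks directly that the HOMFLY polynomial has coefficient $\ell^1$-norm equal to $\det(6_1) = 9$. For general $n$, we would propagate thinness inductively using the HOMFLY skein exact triangle applied to a crossing inside the twist box of $2n+1$ crossings, and simultaneously compute the HOMFLY polynomial of $P(-3,3,2n+1)$ to verify that its coefficient $\ell^1$-norm remains $9$ at each step.

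The main obstacle is establishing HOMFLY-thinness uniformly in $n$, since the oriented resolution in the HOMFLY skein triangle is a two-component link rather than a knot, so the induction requires controlling the HOMFLY homology of the intermediate links that arise. We anticipate that the same features of the pretzel family that give rise to Khovanov and knot Floer thinness --- in particular the rigid two-bridge structure on the $(-3,3)$-part of the tangle decomposition --- can be exploited to force a parallel $\delta$-collapse in the HOMFLY setting, after which the polynomial computation completes the proof.
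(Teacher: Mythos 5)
Your lower bound $\dim \bar{H}(K) \geq \dim \Khr(K)$ via Rasmussen's $\slfrak_2$ spectral sequence, combined with Starkston's thinness result, is correct and clean. However, your upper bound has a genuine gap that you yourself flag: you never actually establish HOMFLY-thinness of $P(-3,3,2n+1)$ for general $n$, and the proposed induction via the HOMFLY skein triangle at a crossing in the twist box runs into exactly the problem you name --- the oriented resolution is a two-component link, and propagating a $\delta$-collapse through such triangles (where the homologies of links live in different grading conventions and carry different normalizations) is not a routine matter. As it stands the upper bound is a plan, not a proof.

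The paper sidesteps thinness entirely. It observes that $P(-3,3,1) = 6_1$ is obtained from the split link $P(-3,3)$ (a 2-component unlink) by attaching a band, and that $P(-3,3,2n+1)$ is obtained from $6_1$ by inserting $n$ full twists in that band. Wang's theorem \cite[Proposition~1.7]{wang-split} then says that for every $N \geq 2$ the dimension $\dim \bar{H}_N(P(-3,3,2n+1))$ is \emph{independent of $n$}, because one of the two band resolutions is split. This reduces the whole family to the $n=0$ case, where $6_1$ is a 2-bridge knot and Rasmussen's results \cite[Theorem~1, Corollary~4.3]{rasmussen-2bridge} give $\dim \bar{H}_N(6_1) = \det(6_1) = 9$ for $N > 4$, hence $\dim \bar{H}(K_n) = 9$ since $\bar{H}_N$ agrees with $\bar{H}$ for large $N$. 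If you want to salvage your approach, the missing ingredient is precisely this twist-insensitivity result; with it, you would not need to run any induction or prove thinness for $n \neq 0$, and the lower bound you found would become redundant.
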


In order to prove this lemma, let us first recall that Khovanov--Rozansky also defined  for each integer $N\geq 1$ a reduced  $\slfrak_N$ homology  theory \cite{khovanov-rozansky-1}, which assign to a knot $K\subset S^3$ a bigraded vector space over $\Q$, \[\bar{H}_N(K) = \bigoplus_{i,j}\bar{H}^{i,j}_N(K).\] Khovanov homology is related to the $\slfrak_2$ theory by the following change in gradings,\begin{equation}\label{eqn:kh-sl2}\Khr^{h,q}(K) \cong \bar{H}^{q,-h}_2(K).\end{equation}  Rasmussen proved in  \cite[Theorem~2]{rasmussen-differentials} that there is a spectral sequence  which starts at $\bar{H}(K)$ and converges to $ \bar{H}_N(K),$ for each $N\geq 1$. Moreover, when this spectral sequence collapses at the first page, as it  does  for $N$ sufficiently large, the reduced HOMFLY homology determines the $\slfrak_N$ theory  \cite[Theorem 1]{rasmussen-differentials} by
\begin{equation} \label{eq:kr-from-homfly}
\bar{H}^{I,J}_N(K) \cong \bigoplus_{\substack{i+Nj=I\\(k-j)/2=J}} \bar{H}^{i,j,k}(K).
\end{equation}
In particular,  $\dim \bar{H}(K) = \dim \bar{H}_N(K)$ for $N\gg 0$. 

\begin{proof}[Proof of Lemma \ref{lem:compute-pretzel-homfly}]
Let us write \[K_n = P(-3,3,2n+1)\] for convenience.  First, note that $K_0$ is the 2-bridge knot $6_1$. It therefore follows from  \cite[Theorem~1]{rasmussen-2bridge}  that $K_0$ is \emph{$N$-thin} for all $N > 4$, which implies by \cite[Corollary~4.3]{rasmussen-2bridge} that
\[ \dim \bar{H}_N(K_0) = \det(K_0) = 9 \quad\text{for all } N>4. \]
Next, observe that $K_0$ can be obtained via band surgery on the 2-stranded pretzel link $P(-3,3)$, which is a split link (in fact, a 2-component unlink), as shown in Figure~\ref{fig:pretzel-band}.
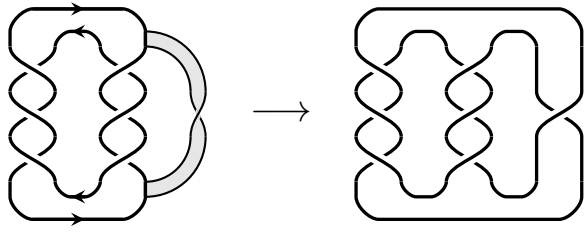
\begin{figure}
\begin{tikzpicture}[link/.append style={looseness=0.75}]
\begin{scope}
\foreach \i in {0,1,2} {
  \draw[link] (0,0.6*\i) to[out=90,in=270] ++(0.6,0.6);
  \draw[link] (1.8,0.6*\i) to[out=90,in=270] ++(-0.6,0.6);
}
\foreach \i in {0,1,2} {
  \draw[link] (0.6,0.6*\i) to[out=90,in=270] ++(-0.6,0.6);
  \draw[link] (1.2,0.6*\i) to[out=90,in=270] ++(0.6,0.6);
}
\draw[link] (0,1.8) -- ++(0,0.2) to[out=90,in=180] ++(0.3,0.3) -- ++(1.2,0) to[out=0,in=90] ++(0.3,-0.3) -- ++(0,-0.2);
\draw[link] (0.6,1.8) to[out=90,in=180] ++(0.2,0.2) -- ++(0.2,0) to[out=0,in=90] ++(0.2,-0.2);
\draw[link] (0,0) -- ++(0,-0.2) to[out=270,in=180] ++(0.3,-0.3) -- ++(1.2,0) to[out=0,in=270] ++(0.3,0.3) -- ++(0,0.2);
\draw[link] (0.6,0) to[out=270,in=180] ++(0.2,-0.2) -- ++(0.2,0) to[out=0,in=270] ++(0.2,0.2);
\begin{scope}[very thick,decoration={markings,mark=at position 0.575 with {\arrow{stealth}}}]
    \draw[postaction={decorate}] (0.3,2.3) -- ++(1.2,0);
    \draw[postaction={decorate}] (0.3,-0.5) -- ++(1.2,0);
\end{scope}
\begin{scope}[very thick,decoration={markings,mark=at position 0.9 with {\arrow{stealth}}}]
    \draw[postaction={decorate}] (1,2) -- ++(-0.2,0);
    \draw[postaction={decorate}] (1,-0.2) -- ++(-0.2,0);
\end{scope}
\path[fill=gray!20] (1.8,2) to[out=0,in=90] ++(0.8,-0.8) to[out=270,in=60] ++(-0.1,-0.3) to[out=120,in=270] ++(-0.1,0.3) to[out=90,in=0] ++(-0.6,0.6) -- ++(0,0.2);
\path[fill=gray!20] (1.8,-0.2) to[out=0,in=270] ++(0.8,0.8) to[out=90,in=-60] ++(-0.1,0.3) to[out=-120,in=90] ++(-0.1,-0.3) to[out=270,in=0] ++(-0.6,-0.6) -- ++(0,-0.2);
\draw (1.8,2) to[out=0,in=90] ++(0.8,-0.8);
\draw (1.8,1.8) to[out=0,in=90] ++(0.6,-0.6);
\draw (1.8,0) to[out=0,in=270] ++(0.6,0.6);
\draw (1.8,-0.2) to[out=0,in=270] ++(0.8,0.8);
\draw[line width=0.775pt] (2.4,1.2) to[out=270,in=90] (2.6,0.6);
\begin{scope} % fix a crossing
\clip[rotate around={60:(2.5,0.9)}] (2.4,0.85) rectangle (2.6,0.95);
\draw[gray!20,line width=0.825] (2.4,1.2) to[out=270,in=90] (2.6,0.6);
\end{scope}
\draw (2.6,1.2) to[out=270,in=90] (2.4,0.6);
\draw[line width=1.25pt] (1.8,-0.2) -- ++(0,0.2) (1.8,1.8) -- ++(0,0.2);
\end{scope}

\node at (3.6,0.9) {\Large$\longrightarrow$};

\begin{scope}[xshift=4.6cm]
\foreach \i in {0,1,2} {
  \draw[link] (0,0.6*\i) to[out=90,in=270] ++(0.6,0.6);
  \draw[link] (1.8,0.6*\i) to[out=90,in=270] ++(-0.6,0.6);
}
\foreach \i in {0,1,2} {
  \draw[link] (0.6,0.6*\i) to[out=90,in=270] ++(-0.6,0.6);
  \draw[link] (1.2,0.6*\i) to[out=90,in=270] ++(0.6,0.6);
}
\draw[link] (2.4,1.8) -- ++(0,-0.6) to[out=270,in=90] ++(0.6,-0.6) -- ++(0,-0.6);
\draw[link] (3,1.8) -- ++(0,-0.6) to[out=270,in=90] ++(-0.6,-0.6) -- ++(0,-0.6);
\draw[link] (0,1.8) -- ++(0,0.2) to[out=90,in=180] ++(0.3,0.3) -- ++(2.4,0) to[out=0,in=90] ++(0.3,-0.3) -- ++(0,-0.2);
\draw[link] (0.6,1.8) to[out=90,in=180] ++(0.2,0.2) -- ++(0.2,0) to[out=0,in=90] ++(0.2,-0.2);
\draw[link] (1.8,1.8) to[out=90,in=180] ++(0.2,0.2) -- ++(0.2,0) to[out=0,in=90] ++(0.2,-0.2);
\draw[link] (0.6,0) to[out=270,in=180] ++(0.2,-0.2) -- ++(0.2,0) to[out=0,in=270] ++(0.2,0.2);
\draw[link] (1.8,0) to[out=270,in=180] ++(0.2,-0.2) -- ++(0.2,0) to[out=0,in=270] ++(0.2,0.2);
\draw[link] (0,0) -- ++(0,-0.2) to[out=270,in=180] ++(0.3,-0.3) -- ++(2.4,0) to[out=0,in=270] ++(0.3,0.3) -- ++(0,0.2);
\end{scope}

\end{tikzpicture}
\caption{Building $P(-3,3,1)$ by attaching a band to a 2-component unlink.}
\label{fig:pretzel-band}
\end{figure}
Each $K_n$ can then be obtained from $K_0$ by adding $n$ full twists to that band, so a theorem of Wang \cite[Proposition~1.7]{wang-split} says that for any $N \geq 2$, the dimension
\[ \dim \bar{H}_N(K_n) \]
is independent of $n$.  Thus, for any $n\in\Z$, the above computation for $K_0$  tells us that
\[ \dim \bar{H}_N(K_n) = 9 \quad\text{for all } N>4, \]
and hence that $\dim \bar{H}(K_n) = 9$, as desired.
\end{proof}

With this computation in hand, we may now prove Theorem~\ref{thm:main-homfly-detection}.

\begin{proof}[Proof of Theorem~\ref{thm:main-homfly-detection}]
Suppose that \[\bar{H}(K) \cong \bar{H}(P(-3,3,2n+1))\] as triply-graded vector spaces.  Then $K$ has the same HOMFLY polynomial as $P(-3,3,2n+1)$. Since the HOMFLY polynomial  specializes to the Alexander polynomial, we   have that \[ \Delta_K(t) = \Delta_{P(-3,3,2n+1)}(t) = -2t + 5 - 2t^{-1}. \]
In particular,   \[\dim \bar{H}_2(K) = \dim\Khr(K) \geq \det(K) = |\Delta_K(-1)| = 9.\]
Since we also know from the computation in Lemma \ref{lem:compute-pretzel-homfly} that
\[ \dim \bar{H}(K) = \dim \bar{H}(P(-3,3,2n+1)) = 9, \]  it follows that the spectral sequence from $\bar{H}(K)$ to $\bar{H}_2(K)$ must collapse at the first page. Therefore, $\bar{H}(K)$ determines $\bar{H}_2(K)$ as in  \eqref{eq:kr-from-homfly}. In particular, it follows that \[\bar{H}_2(K) \cong \bar{H}_2(P(-3,3,2n+1))\] as bigraded vector spaces. Then we have   by \eqref{eqn:kh-sl2} that \[\Khr(K) \cong \Khr(P(-3,3,2n+1))\] as bigraded vector spaces. Since $K$ has the same Alexander polynomial and reduced Khovanov homology as $P(-3,3,2n+1)$, Theorem~\ref{thm:main-kh-pretzel} says that $K=P(-3,3,2n+1)$.
\end{proof}

\appendix
\section{Computations of knot Floer homology} \label{sec:hfk-15n43522}

In this appendix, we explain the knot Floer homology calculations recorded in Table \ref{fig:hfk-table}.  The computation for $5_2$ follows from the fact that it is alternating \cite[Theorem~1.3]{osz-alternating}. For the pretzel knots $P(-3,3,2n+1)$, we apply \cite[Theorem~1.3]{osz-hfk-mutation} (but see also \cite[Theorem~1]{hedden-watson}).  For the twisted Whitehead doubles, Hedden \cite[Theorem~1.2]{hedden-wh} computed their knot Floer homology over $\Z/2\Z$, but his results work over arbitrary fields.  This leaves only the knot $15n_{43522}$, which will occupy the remainder of this appendix.

\begin{proposition} \label{prop:hfk-15n43522}
We have that
\[ \hfkhat(15n_{43522},a;\Q) \cong \begin{cases} \Q_{(0)}^2 & a=1 \\ \Q_{(-1)}^4 \oplus \Q_{(0)}^{\vphantom{4}} & a=0 \\ \Q_{(-2)}^2 & a=-1, \end{cases} \]
where the subscripts denote Maslov gradings.
\end{proposition}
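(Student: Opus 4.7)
The plan is to compute $\hfkhat(K;\Q)$ for $K = 15n_{43522}$ directly via a combinatorial model. The most natural choice is grid homology: by the theorem of Manolescu--Ozsv\'ath--Sarkar, together with the sign and absolute grading refinements of Manolescu--Ozsv\'ath--Szab\'o--Thurston, the bigraded homology of the signed grid chain complex $\widehat{GC}(G;\Q)$ associated to any grid diagram $G$ for $K$ is isomorphic to $\hfkhat(K;\Q)$.

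First I would exhibit an explicit grid diagram $G$ for $K$. Although $K$ has $15$ crossings, it is a relatively simple hyperbolic knot (it appears as $k8_{218}$ in the census referenced in Remark~\ref{}, indicating an ideal triangulation with only $8$ tetrahedra), and one expects a modest grid index. From $G$, the generators of $\widehat{GC}(G)$ and their bigradings are determined by standard combinatorial formulas in terms of the positions of the $X$- and $O$-markings, and the differential counts empty rectangles with an appropriate sign assignment.

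I would then carry out the bigraded homology computation. As consistency checks, I would independently compute $\Delta_K(t) = 2t - 3 + 2t^{-1}$ from the diagram of $K$ in Figure~\ref{fig:main-knots}, verify that the Euler characteristics in each Alexander grading match, and confirm that the conjugation symmetry $\hfkhat_m(K,a;\Q) \cong \hfkhat_{m-2a}(K,-a;\Q)$ is manifest. These constraints alone force $\dim\hfkhat(K,\pm 1;\Q) \geq 2$ and fix the Euler characteristic in Alexander grading $0$ at $-3$, so the remaining content is a dimension count and Maslov grading identification in the middle grading. The absolute Maslov grading lift is pinned down by the standard $O$-marking formula at the top of the grid.

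The principal obstacle is purely the combinatorial size of the computation: for a grid of index $8$ there are $8!=40320$ generators, so enumerating empty rectangles with signs over $\Q$ is not realistically feasible by hand. Nevertheless, grid homology is entirely combinatorial and the computation is routine with existing software implementations; the substance of the appendix thus lies in exhibiting a specific grid diagram for $K$ and recording the explicit outcome. One conceptual subtlety worth noting is that since the $\Q$-Betti numbers are bounded below by the $\Q$-Alexander polynomial and above by the corresponding $\mathbb{F}_2$-Betti numbers, one can in principle run the computation over $\mathbb{F}_2$ first and use these pinching bounds to conclude the $\Q$ answer.
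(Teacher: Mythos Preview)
Your plan is valid in principle but takes a different route from the paper. The paper computes $\hfkhat(15n_{43522};\F)$ using Szab\'o's program (over $\F=\Z/2\Z$), obtaining the stated bigraded dimensions, and then argues by contradiction that $\hfkhat(15n_{43522};\Z)$ has no $2$-torsion: any such torsion must contribute $\F$ summands in consecutive Maslov gradings, and by inspection the only possibility is that the $a=0$ piece drops to $\Q^3_{(-1)}$ over $\Q$, making $\hfkhat(15n_{43522};\Q)$ thin and bigraded-isomorphic to $\hfkhat(\mirror{5_2};\Q)$. Petkova's lemma on thin complexes then forces $\cfkinfty(15n_{43522};\Q)\simeq\cfkinfty(\mirror{5_2};\Q)$, whence $\dim\hfhat(S^3_1(15n_{43522});\Q)=3$. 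But SnapPy identifies $S^3_{\pm 1}(15n_{43522})$ with $\pm1$-surgeries on $9_{42}$ and $8_{20}$, both genus-$2$ knots, and a short argument with the $0$-surgery exact triangle shows $\dim\hfhat(S^3_{\pm1}(J);\Q)\geq 5$ whenever $g(J)\geq 2$, a contradiction. Your direct approach avoids this topological detour at the cost of needing a trusted signed grid-homology computation over $\Z$; the paper trades that for an $\F_2$-only computation plus SnapPy and the extra argument.

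One correction to your closing remark: the pinching between the Alexander polynomial and the $\F_2$-Betti numbers does \emph{not} settle this knot. In Alexander grading $0$ the bounds read $3\leq\dim_\Q\leq 5$, which is precisely the ambiguity the paper's torsion argument is there to resolve; the pinching succeeds only in gradings $\pm 1$.
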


To start, we can carry out the same computation with coefficients in a finite field using a program by Zolt\'an Szab\'o \cite{szabo-program}, and over $\F=\Z/2\Z$ we find that
\[ \hfkhat(15n_{43522},a;\F) \cong \begin{cases} \F_{(0)}^2 & a=1 \\ \F_{(-1)}^4 \oplus \F_{(0)}^{\vphantom{4}} & a=0 \\ \F_{(-2)}^2 & a=-1. \end{cases} \]
Proposition \ref{prop:hfk-15n43522} will then follow from the universal coefficient theorem if we can show that $\hfkhat(15n_{43522};\Z)$ has no 2-torsion.

Suppose, for a contradiction, that there is 2-torsion in some Alexander grading $a$. Then by the universal coefficient theorem, it must contribute $\F$ summands to consecutive homological (i.e., Maslov) gradings of $\hfkhat(15n_{43522},a;\F)$.  By inspection, it can only possibly contribute $\F_{(-1)}\oplus \F_{(0)}$ to $\hfkhat(15n_{43522},0;\F)$, and therefore
\[ \hfkhat(15n_{43522},a;\Q) \cong \begin{cases} \Q_{(0)}^2 & a=1 \\ \Q_{(-1)}^3 & a=0 \\ \Q_{(-2)}^2 & a=-1. \end{cases} \]
That is, \[\hfkhat(15n_{43522};\Q) \cong \hfkhat(\mirror{5_2};\Q)\] as bigraded vector spaces.  Since this knot Floer homology is thin, we have that \[\cfkinfty(15n_{43522};\Q) \cong \cfkinfty(\mirror{5_2};\Q)\] up to filtered chain homotopy equivalence \cite[Lemma~5]{petkova-thin}. Since the complex $\cfkinfty(K)$ determines    \cite{osz-knot} the Heegaard Floer homology of  $n$-surgery on a knot $K\subset S^3$  for integers \[n \geq 2g(K)-1=1,\]  it follows that
\begin{align*}
\dim \hfhat(S^3_1(15n_{43522});\Q) &= \dim \hfhat(S^3_1(\mirror{5_2});\Q) \\
&= \dim \hfhat(-\Sigma(2,3,11);\Q) = 3.
\end{align*}
We will use this together with the following lemma to get a contradiction.

\begin{lemma} \label{lem:1-surgery-dim-5}
If $K \subset S^3$ is a knot of genus at least 2, then $\dim \hfhat(S^3_{\pm1}(K);\Q) \geq 5$.
\end{lemma}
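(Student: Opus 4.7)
The plan is to bound $\dim \hfhat(S^3_0(K); \Q)$ from below and then reduce to the $\pm 1$-surgery case. The surgery exact triangle relating $\hfhat(S^3)$, $\hfhat(S^3_0(K))$, and $\hfhat(S^3_{\pm 1}(K))$ yields the inequality
$$\dim \hfhat(S^3_{\pm 1}(K); \Q) \geq \dim \hfhat(S^3_0(K); \Q) - \dim \hfhat(S^3; \Q) = \dim \hfhat(S^3_0(K); \Q) - 1,$$
so it suffices to prove that $\dim \hfhat(S^3_0(K); \Q) \geq 6$ whenever $g := g(K) \geq 2$.

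To establish this, I would isolate the contributions of three distinct $\spc$-structures on $S^3_0(K)$, labeled by $k \in \Z$ via $c_1(\spinc_k) = 2k \cdot \PD[\hat F]$, with $\spinc_0$ the unique torsion one. For $\spinc_0$, the fact that $b_1(S^3_0(K)) = 1$ gives $\hfinfty(S^3_0(K), \spinc_0) \cong \Lambda^* H^1(S^3_0(K);\Q) \otimes \Q[U,U^{-1}] \cong \Q[U,U^{-1}]^{\oplus 2}$, so $\hfp(S^3_0(K), \spinc_0)$ contains two $\tower$-summands which contribute $2$ to $\dim \hfhat(S^3_0(K), \spinc_0)$ through the long exact sequence induced by multiplication by $U$ on $\hfp$.

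For $\spinc_{\pm(g-1)}$, which are non-torsion when $g \geq 2$, the standard vanishing of $\hfinfty$ in non-torsion $\spc$-structures forces $\hfp(S^3_0(K), \spinc_{\pm(g-1)})$ to be finite-dimensional and $U$-nilpotent. The same LES then gives
\[\dim \hfhat(S^3_0(K), \spinc) = 2 \dim\bigl( \hfp(S^3_0(K),\spinc) / U \cdot \hfp(S^3_0(K),\spinc) \bigr) \geq 2\]
as soon as $\hfp$ is nonzero there. Nonvanishing of $\hfp(S^3_0(K), \spinc_{g-1})$ follows from Ozsv\'ath--Szab\'o's identification $\hfp(S^3_0(K), \spinc_{g-1}) \cong \hfkhat(K, g)$ combined with the knot Floer genus detection $\hfkhat(K, g) \neq 0$, while the case $\spinc_{-(g-1)}$ follows by conjugation symmetry. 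Summing the three contributions (distinct since $g \geq 2$ forces $\pm(g-1) \neq 0$) yields $\dim \hfhat(S^3_0(K); \Q) \geq 6$, as needed.

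No individual step presents a serious obstacle: the argument is essentially a bookkeeping combination of the surgery exact triangle with two standard ingredients, namely Ozsv\'ath--Szab\'o's genus detection in $\hfp$ of $0$-surgery and the torsion/non-torsion dichotomy for $\hfinfty$. The only mild subtlety is confirming that the $U$-LES truly delivers $\dim \hfhat \geq 2$ in each nonzero non-torsion $\spc$-structure, which relies on $\hfp$ being $U$-nilpotent there rather than containing a $\tower$.
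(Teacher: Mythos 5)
Your proposal is correct and follows the same overall strategy as the paper: reduce via the surgery exact triangle to bounding $\dim\hfhat(S^3_0(K);\Q) \geq 6$, and then show that the three $\spc$ structures $\spinc_0, \spinc_{\pm(g-1)}$ each contribute at least $2$, using the identification $\hfp(S^3_0(K),\spinc_{g-1}) \cong \hfkhat(K,g)$, conjugation symmetry, and the torsion condition for $\spinc_0$. The only real divergence is in how you deduce $\dim\hfhat(S^3_0(K),\spinc) \geq 2$ from $\hfp(S^3_0(K),\spinc) \neq 0$: the paper invokes the Euler characteristic vanishing $\chi(\hfhat(Y,\spinc))=0$ for $b_1(Y)>0$, which makes the estimate uniform across all three $\spc$ structures, while you treat the torsion and non-torsion cases separately via explicit $\Q[U]$-module structure (the two towers arising from $\hfinfty \cong \Q[U,U^{-1}]^2$ for $\spinc_0$, and finite $U$-nilpotent $\hfp$ with $\dim\hfhat = 2\dim\coker(U)$ for the others). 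Both are valid; the paper's route is a bit cleaner in that it does not need to verify that the image of $\hfinfty$ splits off as $\tower \oplus \tower$ inside $\hfp$.
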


\begin{proof}
By the surgery exact triangles 
\[ \dots \to \hfhat(S^3;\Q) \to \hfhat(S^3_0(K);\Q) \to \hfhat(S^3_1(K);\Q) \to \dots, \] and
\[ \dots \to \hfhat(S^3;\Q) \to \hfhat(S^3_{-1}(K);\Q) \to \hfhat(S^3_0(K);\Q) \to \dots, \]
it suffices to show that $\dim \hfhat(S^3_0(K);\Q) \geq 6$.

Let $\spinc_i \in \spc(S^3_0(K))$ be the $\spc$ structure with
\[ \langle c_1(\spinc_i), [\hat\Sigma]\rangle = 2i, \]
where $\hat\Sigma \subset S^3_0(K)$ is a capped-off Seifert surface for $K$.  Then according to \cite[Corollary~4.5]{osz-knot} and the way in which knot Floer homology detects the genus $g = g(K)$, which is at least 2, we have
\[ \hfp(S^3_0(K),\spinc_{g-1};\Q) \cong \hfkhat(K,g;\Q) \not\cong 0. \]
Likewise, \[\hfp(S^3_0(K),\spinc_{1-g};\Q) \not\cong 0,\] by the conjugation symmetry of Heegaard Floer homology.  Furthermore, $\hfp(S^3_0(K),\spinc_0;\Q)$ is nontrivial because $\spinc_0$ is torsion (see \cite[\S10.6]{osz-hf1}).

We now recall from \cite[Proposition~2.1]{osz-hf2} that $\hfhat(Y,\spinc)$ is nonzero if and only if $\hfp(Y,\spinc)$ is nonzero, so we have shown that
\[\hfhat(S^3_0(K),\spinc_i)\not\cong 0\]
for each $i=g-1,0,1-g$.  In fact, each of these $\spc$ summands has Euler characteristic zero \cite[Proposition~5.1]{osz-hf2} and hence even dimension, so the total dimension of $\hfhat(S^3_0(K))$ must be at least $2+2+2=6$, as claimed.
\end{proof}

\begin{proof}[Proof of Proposition~\ref{prop:hfk-15n43522}]
Supposing otherwise, we have already argued that
\[ \dim \hfhat(S^3_1(15n_{43522});\Q) = 3. \]
We now observe the following coincidences in SnapPy \cite{snappy}:
\begin{verbatim}
In[1]: M1 = Manifold('K15n43522(1,1)')
In[2]: N1 = Manifold('9_42(-1,1)')
In[3]: M1.is_isometric_to(N1)
Out[3]: True
In[4]: M2 = Manifold('K15n43522(-1,1)')
In[5]: N2 = Manifold('8_20(-1,1)')
In[6]: M2.is_isometric_to(N2)
Out[6]: True
\end{verbatim}
In other words, if $\texttt{K15n43522}$, $\texttt{8\char`_20}$, and $\texttt{9\char`_42}$ denote each of $15n_{43522}$, $8_{20}$, and $9_{42}$ with the fixed chirality given by SnapPy (which may or may not be mirror to their usual chiralities), then we have
\begin{align*}
S^3_1(\texttt{K15n43522}) &\cong \pm S^3_{-1}(\texttt{9\char`_42}), &
S^3_{-1}(\texttt{K15n43522}) &\cong \pm S^3_{-1}(\texttt{8\char`_20}).
\end{align*}
But $8_{20}$ and $9_{42}$ both have genus 2, so we can apply Lemma~\ref{lem:1-surgery-dim-5} to conclude that
\[ \dim \hfhat(S^3_{\pm1}(15n_{43522});\Q) \geq 5 \]
and we have a contradiction.
\end{proof}

\bibliographystyle{alpha}
\bibliography{References}

\end{document}